  \newskip\prethm \prethm3.0pt plus1.3pt minus.4pt
  \newskip\posthm \posthm2.7pt plus1.4pt minus.3pt
  \newtheoremstyle{STATEMENT}%
       {\prethm}{\posthm}{\itshape}{\parindent}{\scshape}
       {.}{.6em plus.2em minus.1em}{}
  \newtheoremstyle{EXPLANATION}%
       {\prethm}{\posthm}{}{\parindent}{\scshape}
       {.}{.6em plus.2em minus.1em}{}
\theoremstyle{STATEMENT}
\newtheorem{theorem}{Theorem}[section]
\newtheorem{proposition}{Proposition}[section]
\newtheorem{lemma}{Lemma}[section]
\newtheorem{assertion}{Assertion}[section]
\newtheorem{corollary}{Corollary}[section]
\newtheorem{claim}{Claim}[section]
\newtheorem{conjecture}{Conjecture}[section]
\newtheorem{question}{Question}[section]
\theoremstyle{EXPLANATION}
\newtheorem{definition}{Definition}[section]
\newtheorem{remark}{Remark}[section]
\newtheorem{problem}{Problem}[section]
\newtheorem{example}{Example}[section]
\title{Geometry of bi-Lagrangian Grassmannian}
\author{I.\,K.~Kozlov\thanks{No Affiliation, Moscow, Russia. E-mail: {\tt ikozlov90@gmail.com} }}
\date{}
\begin{document}

\maketitle
\begin{abstract} This paper explores the structure of bi-Lagrangian Grassmanians for pencils of $2$-forms on real or complex vector spaces. We reduce the analysis to the pencils whose Jordan–Kronecker Canonical Form consists of Jordan blocks with the same eigenvalue. We demonstrate that this is equivalent to studying Lagrangian subspaces invariant under a nilpotent self-adjoint operator. We calculate the dimension of bi-Lagrangian Grassmanians and describe their open orbit under the automorphism group. We completely describe the automorphism orbits in the following three cases: for one Jordan block, for sums of equal Jordan blocks and for a sum of two distinct Jordan blocks. \end{abstract}

\smallskip
\noindent \textbf{Keywords:} Bi-Lagrangian Grassmanian, Jordan--Kronecker theorem,  matrix pencils, bi-Poisson geometry, invariant Lagrangian subspaces.

\tableofcontents

\section{Introduction} \label{S:IntroSec}

Let $V$  be a complex finite-dimensional vector space. Consider a pencil $\mathcal{P}$ generated by two skew-symmetric bilinear forms $A$ and $B$ on $V$:  \[\mathcal{P} = \left\{A_{\lambda} =  A + \lambda B \,\, \bigr| \, \, \lambda \in \bar{\mathbb{C}} \right\}.\] Here $\bar{\mathbb{C}} = \mathbb{C}  \cup \left\{\infty \right\} $ and $A_{\infty}= B$. A subspace $L \subset V$ is a \textbf{bi-Lagrangian} subspace if it satisfies the following two conditions:
\begin{enumerate}

\item $L$ is bi-isotropic, i.e. it is isotropic w.r.t. both forms $A$ and $B$. 

\item $L$ has maximal possible dimension, i.e. \[\operatorname{dim} L = \dim V - \frac{1}{2} \max_{\lambda \in \bar{\mathbb{C}}} \operatorname{rk} (A + \lambda B)\]

\end{enumerate}
In other words, $L$ is Lagrangian (i.e. maximal isotropic) w.r.t. almost all forms $A_{\lambda} \in \mathcal{P}$. In this paper we study the properties of the collection of all bi-Lagrangian subspaces for a given pencil $\mathcal{P}$, which we call a \textbf{bi-Lagrangian Grassmanian} and denote by $\operatorname{BLG}\left(V, \mathcal{P}\right)$.

\subsection{Structure of the paper}

In this section, we provide a road map of the paper, highlighting the key topics covered in each section. Sections~\ref{S:IntroSec}--\ref{S:BiLagraGr} serve as an introduction, providing essential foundational results.

\begin{enumerate}

\item Section~\ref{S:IntroSec} outlines the paper's structure. Optional Section~\ref{SubS:Motivation} discusses our motivation rooted in integrable systems theory.

\item Section~\ref{S:Basic} introduces the basic definitions we will utilize throughout this paper. In particular, in Section~\ref{S:JKTheorem} we describe the canonical form\footnote{These canonical forms are skew-symmetric counterparts of Kronecker Canonical Forms (KCFs) for matrix pencils under strict equivalence \cite{Gantmacher88}} for a pencil of $2$-forms $\mathcal{P}= \left\{A + \lambda B \right\}$ given by \textbf{the Jordan--Kronecker theorem}. This form expresses $\mathcal{P}$ as a sum of Jordan and Kronecker blocks. The corresponding \textbf{Jordan-Kronecker decomposition} of  $(V,\mathcal{P})$ is expressed as \begin{equation} \label{Eq:JKDecomIntro}
 (V, \mathcal{P}) = \bigoplus_{j=1}^{S}\left(\bigoplus_{k=1}^{N_j} \mathcal{J}_{\lambda_j, 2n_{j,k}} \right) \oplus  \bigoplus_{j=1}^q \mathcal{K}_{2k_j+1},
\end{equation} where $\mathcal{J}_{\lambda_j, 2n_{j,k}}$ represent Jordan blocks (with eigenvalues $\lambda_j$) and $\mathcal{K}_{2k_j+1}$ represent Kronecker blocks.  Obviously, the structure of the bi-Lagrangian Grassmanian $\operatorname{BLG}(V, \mathcal{P})$ depends on the decomposition \eqref{Eq:JKDecomIntro}. 

\item Section~\ref{S:BiLagraGr} establishes some fundamental properties of bi-Lagrangian Grassmanians:

\begin{enumerate}

\item  $\operatorname{BLG}(V, \mathcal{P})$ 
 is a projective subvariety of the corresponding Grassmanian (Lemma~\ref{L:BiLagrProjVar}). $\operatorname{BLG}(V, \mathcal{P})$ is always non-empty\footnote{We also prove in Theorem~\ref{T:BLGConnected} that $\operatorname{BLG}(V, \mathcal{P})$ is connected.} (Assertion~\ref{A:ExistBiLagr}). 

\item In the decomposition~\eqref{Eq:JKDecomIntro} we can group Jordan blocks with the same eigenvalue and combine Kronecker blocks: \begin{equation} \label{Eq:JKDecomIntro2}
 (V, \mathcal{P}) = \bigoplus_{j=1}^{S}\mathcal{J}_{\lambda_j}  \oplus  \mathcal{K}
\end{equation} By Theorems~\ref{T:BiLagrKronPart} and \ref{T:JordaMultEigen} any bi-Lagrangian subspace $L$ inherits this structure and decomposes into a direct product: \[ L =  \bigoplus_{j=1}^{S}\left(L \cap \mathcal{J}_{\lambda_j}\right)  \oplus  \left( L \cap \mathcal{K}\right).\] Hence, the bi-Lagrangian Grassmanian is isomorphic to the direct product: \begin{equation} \label{Eq:DecomIntroBLG} \operatorname{BLG}(V, \mathcal{P}) \approx \prod_{j=1}^S \operatorname{BLG} \left(\mathcal{J}_{\lambda_j}\right)  \times \operatorname{BLG} \left( \mathcal{K} \right)\end{equation} 

\item By Theorem~\ref{T:BiLagrKronPart} the ``Kronecker part" $\operatorname{BLG} \left(\mathcal{K} \right)$ consists of one element,  the core subspace (introduced in Section~\ref{SubS:CoreMantle}): \[\operatorname{BLG} \left(\mathcal{K} \right) = \left\{K \right\}.\] In particular, in the Kronecker case the core subspace $K$ is the only bi-Lagrangian subspace.

\item In the Jordan case, if $\operatorname{Ker} B =0$, then we can replace pencil $\mathcal{P}$ with symplectic form $B$ and self-adjoint operator $P = B^{-1}A$. By Lemma~\ref{L:NonGenDescBiLagr}, \[ L \subset (V, \mathcal{P}) \quad \text{is bi-Lagrangian} \Leftrightarrow L \subset (V, B) \quad \text{is $P$-invariant Lagrangian}.\]

\item Decomposition~\eqref{Eq:DecomIntroBLG} lets us analyze only pencils $\mathcal{P}$ that are sums of Jordan blocks with a common eigenvalue $\lambda$. Theorem~\ref{T:NonDependEigen} lets us assume $\lambda =0$. 

\item In Section~\ref{S:OneJordBlock} we describe the bi-Lagrangian Grassmanian for one Jordan block $\operatorname{BLG}(\mathcal{J}_{0, 2n})$.

 \begin{enumerate}

\item By Theorem~\ref{T:BiLagr_One_Jordan_Canonical_Form}, for any bi-Lagrangian subspace $L \subset \mathcal{J}_{0, 2n}$ there is a  standard basis $ e_1, \dots, e_n, f_1, \dots, f_ {n} $ from the Jordan--Kronecker theorem such that \[ L = \operatorname{Span}\left\{ e_{n-h+1}, e_{n-h}, \dots, e_n, f_1, \dots, f_{n-h}\right\}. \]

\item  Theorem~\ref{Th:OneJordBiLagrOrbits} proves that $\operatorname{BLG}(\mathcal{J}_{\lambda, 2n})$ consists of $\left[\frac{n+1}{2}\right]$ automorphism orbits that are diffeomorphic to \[\bigoplus_{n-1} T \mathbb{CP}^1, \qquad \bigoplus_{n-3} T \mathbb{CP}^1, \qquad \dots \qquad \mathbb{CP}^1 \quad \text{ or } \left\{ 0\right\}. \]

 \end{enumerate}

\end{enumerate}

\end{enumerate}

Next, we turn to the study of automorphism orbits\footnote{\cite[Problem 13]{BolsinovIzosimomKonyaevOshemkov12},  \cite[Problem in Section 2.2.2]{Rosemann15} and \cite[Problem 10]{BolsinovTsonev17} asked about partitioning of $\operatorname{BLG}(V,\mathcal{P})$ into automorphism orbits.} of $\operatorname{BLG}(V,\mathcal{P})$. Two preparatory steps (developed in Sections~\ref{S:BiPoisSect} and \ref{S:AutGroup}) are crucial before proceeding: bi-Poisson reduction and the structure of the automorphism group $\operatorname{Aut}(V,\mathcal{P})$. 

\begin{enumerate}
\setcounter{enumi}{3}

\item Section~\ref{S:BiPoisSect} introduces \textbf{bi-Poisson reduction}, generalizing symplectic reduction to a vector spaces equipped with a pencil of $2$-forms.

\begin{enumerate}
    \item  In Section~\ref{S:LinearRedDecr} we prove that for any admissible bi-isotropic subspace $U$ we can induce the pencil and bi-Lagrangian subspaces on $U^{\perp}/U$. ($U$ is \textbf{admissible} if $U^{\perp}$ is the same for almost all forms $A_{\lambda} \in \mathcal{P}$, see Section~\ref{S:Admissible}.)
    
    \item In Section~\ref{S:IsomAlgBiPRed} we show that the variety of bi-Lagrangian subspaces containing $U$ is isomorphic to $\operatorname{BLG}(U^{\perp}/ U)$ (see Lemma~\ref{L:AlgIsomBiisotrFactor}).
    
    \item  In Section~\ref{SubS:BiIsotr2BiLagr}  we answer the following question (see \cite[Problem 14]{BolsinovIzosimomKonyaevOshemkov12} and \cite[Problem 11]{BolsinovTsonev17}): \[\text{\textit{``When a bi-isotropic subspace extends to a bi-Lagrangian subspace?''}}\]
    
\end{enumerate}

\item In Section~\ref{S:AutGroup} we describe the group of bi-Poisson automorphsims $\operatorname{Aut}(V,\mathcal{P})$.

\end{enumerate}

Utilizing the tools developed earlier, in Section~\ref{S:DimBiLagrGrassm} we describe the open orbit of $ \operatorname{BLG}(V,\mathcal{P})$ and calculate its dimension. In Sections~\ref{S:InvSubspaces} we describe all invariant bi-isotropic and bi-Lagrangian subspaces. In Section~\ref{S:BiLagrContInv} we show that, roughy speaking, certain smaller orbits of $ \operatorname{BLG}(V,\mathcal{P})$ form smaller bi-Lagrangian Grassmanians.

\begin{enumerate}
\setcounter{enumi}{5}

\item In Section~\ref{S:DimBiLagrGrassm} we investigate generic bi-Lagrangian subspaces and calculate dimension of the bi-Lagrangian Grassmanian.

\begin{enumerate}
    \item $\operatorname{BLG}(V,\mathcal{P})$ has a unique open $\operatorname{Aut}(V,\mathcal{P})$-orbit $O_{\max}$ (Theorem~\ref{T:BiLagrGenDecomp}).
    \item Let $(V,\mathcal{P})= \bigoplus_{j=1}^N \mathcal{J}_{0, 2n_j}$. By Theorem~\ref{T:BiLagrGenDecomp}, for any bi-Lagrangian subspaces $L \in O_{\max}$ there is a standard basis 
$e^j_{k}, f^j_k$, where $j=1,\dots, N, k=1,\dots, n_j$, from the Jordan--Kronecker theorem such that \[ L = \operatorname{Span}\left\{e^j_k\right\}_{j=1,\dots, N, k=1,\dots, n_j}.\]

\item In Theorem~\ref{T:BiLagrJord} we prove that \[ \dim \operatorname{BLG}\left(\bigoplus_{j=1}^N \mathcal{J}_{0, 2n_j}\right) = \sum_{j=1}^N j \cdot  n_j, \] where $n_1 \geq \dots \geq n_N$.
\end{enumerate}

\item Section~\ref{S:InvSubspaces} analyzes automorphism orbits:

\begin{enumerate}

\item In Section~\ref{SubS:InvSubspaces} we describe all  $\operatorname{Aut}(V,\mathcal{P})$-invariant subspaces $U \subset (V,\mathcal{P})$. 

\item In Section~\ref{SubS:OrbitsVect} we describe all  $\operatorname{Aut}(V,\mathcal{P})$-orbits of vectors $v \in (V,\mathcal{P})$.

\item In Section~\ref{S:InvBiLagr} we describe all $\operatorname{Aut}(V,\mathcal{P})$-invariant bi-isotropic and bi-Lagrangian subspaces.

\end{enumerate}

\item In Section~\ref{S:BiLagrContInv} we analyze, for a fixed $\operatorname{Aut}(V,\mathcal{P})$-invariant bi-isotropic subspace $U$, how $\operatorname{BLG}(U^{\perp}/U)$ decomposes into   $\operatorname{Aut}(V,\mathcal{P})$-orbits  within  $\operatorname{BLG}(V,\mathcal{P})$. Our analysis yields two noteworthy results.

\begin{enumerate}

\item Section~\ref{SubS:InfOrb} provides an example of bi-Lagrangian Grassmanian $\operatorname{BLG}(V,\mathcal{P})$ with an infinite number of $\operatorname{Aut}(V,\mathcal{P})$-orbits.

\item Section~\ref{SubS:NonDecomp} shows that there exist indecomposable bi-Lagrangian subspaces $L \subset (V,\mathcal{P})$ (see Definition~\ref{Def:Decomp}).

\end{enumerate}

\end{enumerate}

Section~\ref{S:EqualJordBlocks}--\ref{S:TwoJordanBlocks} investigate the topology of $\operatorname{Aut}(V,\mathcal{P})$-orbits within the bi-Lagrangian Grassmannian $\operatorname{BLG}(V,\mathcal{P})$. We classify all orbits if $(V,\mathcal{P})$ is either a sum of equal Jordan blocks or a sum of two distinct Jordan blocks.  We also describe generic orbits for any space $(V,\mathcal{P})$.

\begin{enumerate}
\setcounter{enumi}{8}
\item  In Section~\ref{S:EqualJordBlocks} we study the bi-Lagrangian Grassmanian for a sum of $l$ equal Jordan blocks $(V,\mathcal{P}) = \bigoplus_{i=1}^l \mathcal{J}_{0, 2n}$.

\begin{enumerate}
    \item By Theorem~\ref{T:BiLagrEqJordBl}, for any bi-Lagrangian subspaces $L \in (V,\mathcal{P})$ there is a standard basis 
$e^i_j, f^i_j$, $i=1, \dots, l, j=1, \dots, n$, from the Jordan--Kronecker theorem such that   \[ L = \bigoplus_i  L_i, \qquad L_i = \operatorname{Span}\left\{e^i_{n-h_i +1}, \dots, e^i_n, f^i_1, \dots f^i_{n-h_i}\right\}. \]

    \item By Theorem~\ref{T:BiLagrEqTypesNum}, the heights $h_1 \geq h_2 \geq \dots \geq h_l \geq \frac{n}{2}$ are uniquely defined. Hence, the number of $\operatorname{Aut}(V,\mathcal{P})$-orbits is \[\binom{\left[ \frac{n}{2} \right] +  l}{l}.\] The dimension of the orbit is \[ \dim O_L = \sum_{1 \leq i \leq j \leq l} 2h_i - n.\]

    \item In Theorem~\ref{Th:EqualJordBiLagrMaxOrbit} we prove that the maximal orbit $O_{\max}$ is diffeomorphic to the jet space of the Lagrangian Grassmanian: \[ O_{\max} \approx J^{n-1}_0(\mathbb{C}, \Lambda(l)).\]
    
    \item In Theorem~\ref{T:EqualJordTopolOrb} we show that each $\operatorname{Aut}(V,\mathcal{P})$-orbit $O_L$ is a $\mathbb{C}^{M_L}$-fibre bundle over an isotropic flag variety  \[\pi: O_L  \xrightarrow{\mathbb{C}^{M_L}}  \operatorname{SF}(d_1, ..., d_k; 2 D).\]
\end{enumerate}

\item In Section~\ref{S:TopGenOrbit} we study the topology of the 
 maximal $\operatorname{Aut}(V,\mathcal{P})$-orbit $O_{\max}$ for a sum of a sum of Jordan blocks  \[\left(V, \mathcal{P}\right) = \bigoplus_{i=1}^t \left(\bigoplus_{j=1}^{l_i} \mathcal{J}_{0, 2n_i} \right),  \qquad n_1 > n_2 > \dots > n_t.\] Theorem~\ref{T:GenJordTopolMaxOrb} proves that $O_{\max}$ is a $\mathbb{C}^{M}$-fibre bundle over a product of Lagrangian Grassmanians:  \[  \pi: O_{\max}  \xrightarrow{\mathbb{C}^M}  \Lambda(l_1) \times \dots \times \Lambda(l_t). \]

\item In Section~\ref{S:TwoJordanBlocks} we describe the bi-Lagrangian Grassmanian for a sum of two different Jordan blocks $\mathcal{J}_{0, 2n_1} \oplus \mathcal{J}_{0, 2n_2}$. There are two distinct types of bi-Lagrangian subspaces: \textbf{semisimple} subspaces, constructed as direct sums of bi-Lagrangian subspaces within Jordan blocks, and \textbf{indecomposable} subspaces (see Definition~\ref{Def:Decomp}).

\begin{enumerate}

\item Section~\ref{SubS:TwoDistSemi} classifies semisimple bi-Lagrangian subspaces. By Theorem~\ref{T:BiLagrTwoDistSemi}, each semisimple bi-Lagrangian subspace admits a canonical form \[ L =  \operatorname{Span}\left\{f_1,\dots, f_{n_1 - h_1}, \quad e_{n_1 - h_1 + 1}, \dots, e_{n_1}, \quad \hat{f}_1,\dots, \hat{f}_{n_2 - h_2}, \quad \hat{e}_{n_2-h_2 +1}, \dots, \hat{e}_{n_2}\right\} \] in some standard basis \eqref{Eq:StandBasisTwoBlocks}. The heights $\displaystyle h_1 \geq \frac{n_1}{2}$, and $\displaystyle h_2 \geq \frac{n_2}{2}$ are uniquely defined. Hence, the number of semisimple $\operatorname{Aut}(V, \mathcal{P})$-orbits is
\[ \left( \left[\frac{n_1}{2}\right] + 1 \right) \left( \left[\frac{n_2}{2}\right] + 1 \right). \] The dimension and topology of the orbits are specified in Theorem~\ref{T:2DistJordTopolMaxOrbIndecTypeI}.

\item  In Section~\ref{SubS:TwoDistIndecom} we describe indecomposable bi-Lagrangian subspaces. According to Theorem~\ref{SubS:TwoDistIndecom}, any indecomposable subspace admits the form \[ \begin{gathered} L = \operatorname{Span} \left\{u, Pu, \dots, P^{r-1}u, \quad v, Pv, \dots, P^{r-1}v \right\} \oplus \\ \oplus \operatorname{Span} \left\{e_{n_1}, \dots, e_{n_1 - p_1 +1},\quad f_1, \dots, f_{q_1}, \quad \hat{e}_{n_2}, \dots, \hat{e}_{n_2 - p_2 +1},\quad \hat{f}_1, \dots, \hat{f}_{q_2}\right\}.   \end{gathered} \]
in some standard basis \eqref{Eq:StandBasisTwoBlocks}. Furthermore, the vectors $u$ and $v$ can be brought into one of the forms presented in Theorems~\ref{T:CanonIndecompTypeI} and \ref{T:CanonIndecompTypeII}. There is a finite number of indecomposable $\operatorname{Aut}(V, \mathcal{P})$-orbits. Their dimensions and topology are specified in Theorems~\ref{T:2DistJordTopolMaxOrbIndecTypeI},  \ref{T:Type2Special} and \ref{T:2DistJordTopolMaxOrbIndecTypeII}. Of particular interest are the orbits $O_r$ from Theorem~\ref{T:2DistJordTopolMaxOrbIndecTypeI}, which are diffeomorphic to  $(r-1)$-order jet space space of the $2$-dimensional special linear group: \[ O_r \approx J^{r-1}_0(\mathbb{K}, \operatorname{SL}(2,\mathbb{K})). \] \end{enumerate}

\end{enumerate}

A complete description of all bi-Lagrangian subspaces seems infeasible (cf. Section~\ref{S:TwoJordanBlocks}). However, Section~\ref{S:DecomposableSection} offers a silver lining: effective methods for describing specific types of these subspaces. 

\begin{enumerate}
\setcounter{enumi}{11}
\item Section~\ref{S:DecomposableSection} classifies \textbf{semisimple} bi-Lagrangian subspaces (sums of bi-Lagrangian subspaces within Jordan blocks) and computes their orbit dimensions. 
We also establish the equivalence of semisimple and marked bi-Lagrangian subspaces (Definition~\ref{Def:Marked}) and characterize spaces where all bi-Lagrangian subspaces are semisimple.
 
\end{enumerate}

The paper concludes with two sections offering additional content.

\begin{enumerate}
\setcounter{enumi}{12}
\item  In Section~\ref{S:RealCase} we describe the bi-Lagrangian Grassmanian in the real case. 

\item Section~\ref{S:OpenProblems} concludes the paper by posing several open problems for future investigation.
\end{enumerate}

\subsection{Motivation: integrable and bi-Hamiltonian systems} \label{SubS:Motivation}

Although bi-Lagrangian Grassmannians possess independent mathematical significance, our investigation is primarily driven by their relationship to bi-Hamiltonian systems. This section may be of interest to specialists in integrable systems, its content is not utilized in the subsequent analysis. Readers can choose to skip this section and proceed to the next section without any loss of continuity.

Since the pioneering work by Franco Magri \cite{Magri78} (which was futher developed in \cite{Gelfand79}, \cite{Magri84} and \cite{Reiman80}), it is well known that many  bi-Hamiltonian systems in mathematical physics, geometry and mechanics  are in fact integrable. However, an interesting asymmetry arises: these systems are Hamiltonian w.r.t. two compatible Poisson pencils, but integrability is typically achieved for only one of them. This naturally leads to the following question: 

\begin{question} \label{Q:BiIntegrable} Let $M$ be a (finite-dimensional) manifold and $\mathcal{A}, \mathcal{B}$ be compatible Poisson brackets on it. What bi-Hamiltonian systems $v = \mathcal{A} df = \mathcal{B} dg$ are bi-integrable? In other words, is there a (natural) way to construct a complete family of functions in involution w.r.t. both Poisson pencils $\mathcal{A}$ and $\mathcal{B}$? 
\end{question}

This question finds its roots in the study of integrable and bi-Hamiltonian systems on Lie algebras. While the details are beyond the scope of this paper, we can roughly sketch the path leading to this problem (for details see e.~g. \cite{BolsZhang}).   

\begin{itemize}

\item In 1978 A.\,S.~Mischenko  and  A.\,T.~Fomenko in \cite{MishchenkoFomenko78EulerEquations} suggested argument shift method  as  a  generalisation  of  S.\,V.~Manakov’s construction \cite{Manakov76}. For a Lie algebra $\mathfrak{g}$ and an element $a \in\mathfrak{g}^*$ they constructed a family of functions $\mathcal{F}_a$ in bi-involution w.r.t.  the standard linear Lie-Poisson bracket  $\left\{ , \right\}$  and a constant "frozen argument" bracket $\left\{ , \right\}_a$ on $\mathfrak{g}^*$. This family $\mathcal{F}_a$ is complete for any semisimple Lie algebra $\mathfrak{g}$ and any regular $a$.  

\item In 2004 S.\,T.~Sadetov in \cite{Sadetov2004} proved the Mischenko–Fomenko  conjecture that for any Lie algebra $\mathfrak{g}$ there exists a complete family of commuting polynomials w.r.t. the Lie-Poisson bracket  $\left\{ , \right\}$. 

\item In general, S.\,T.~Sadetov's family of function is not in bi-involution. Hence, in 2012 in A.\,V.~Bolsinov and P.~Zhang  in \cite{BolsZhang} proposed the following bi-Hamiltonian version of the Mischenko–Fomenko conjecture.

\begin{conjecture}[Generalised Argument Shift Conjecture, \cite{BolsZhang}] \label{Conj:GenArgConj} Let $\mathfrak{g}$ be a finite-dimensional Lie algebra. Then for every regular element $a \in \mathfrak{g}^*$,
there exists a complete family $\mathcal{G}_a$ of polynomials in bi-involution, i.e. in involution w.r.t. the
two brackets $\left\{ , \right\}$ and $\left\{ , \right\}_a$.\end{conjecture}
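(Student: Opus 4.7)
The plan is to recast the conjecture geometrically using the bi-Lagrangian Grassmannian machinery of this paper. A complete family $\mathcal{G}_a$ of polynomials in bi-involution determines at every regular $x \in \mathfrak{g}^*$ a subspace $L(x) = \operatorname{Span}\{dg(x) \mid g \in \mathcal{G}_a\} \subset T_x^*\mathfrak{g}^* \cong \mathfrak{g}$, and completeness plus bi-involutivity say exactly that $L(x)$ is bi-Lagrangian for the pencil $A_x + \lambda B_a$ given by $A_x(\xi,\eta) = \langle x,[\xi,\eta]\rangle$ and $B_a(\xi,\eta) = \langle a,[\xi,\eta]\rangle$. The conjecture therefore splits into a \emph{pointwise} algebraic problem (choose a bi-Lagrangian subspace in each $T_x^*\mathfrak{g}^*$) and an \emph{integrability} problem (realise this distribution by polynomial differentials), and the pointwise problem is precisely the subject of this paper.

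For the pointwise construction I would exploit decomposition~\eqref{Eq:DecomIntroBLG}. At a generic regular point the pencil splits into its Kronecker summand $\mathcal{K}(x)$ and its Jordan summands $\bigoplus_j \mathcal{J}_{\lambda_j(x)}$. By the analogue of Theorem~\ref{T:BiLagrKronPart} alluded to in the introduction, the only bi-Lagrangian subspace inside $\mathcal{K}(x)$ is the canonical core $K(x)$, which classically is spanned by differentials of shifts of Casimirs of $\{,\}$ and hence is already cut out by the Mischenko--Fomenko family $\mathcal{F}_a$, which is in bi-involution. It remains to specify, inside each Jordan summand $\mathcal{J}_{\lambda_j(x)}$, a bi-Lagrangian subspace; for this I would select a representative of the open orbit from Theorem~\ref{T:BiLagrGenDecomp} and try to realise the resulting fibrewise distribution by differentials of polynomials, taking Sadetov's complete family as a starting point and correcting it to achieve bi-involutivity via the bi-Poisson reduction of Section~\ref{S:BiPoisSect} in place of ordinary symplectic reduction.

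The hard part will be integrability --- turning a fibrewise prescription into an actual polynomial family --- and this is where the conjecture really lives. Two intertwined difficulties arise. First, the Jordan eigenvalues $\lambda_j(x)$ are algebraic but typically not rational in $x$, so the Jordan summands twist over the base, and a Zariski-coherent choice of bi-Lagrangian fibre inside $\mathcal{J}_{\lambda_j(x)}$ need not come from polynomial differentials; one must either work on an algebraic cover of $\mathfrak{g}^*$ where the spectrum becomes rational, or identify polynomial conditions that pin down the right fibre invariantly. Second, the Jordan--Kronecker type can degenerate on proper subvarieties, so any construction has to be controlled along a stratification of $\mathfrak{g}^*$ by type. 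A reasonable first target would be Lie algebras whose pencil has generically constant Jordan--Kronecker type and rational Jordan spectrum, where one could attempt an inductive construction mirroring Sadetov's chain of symplectic reductions while tracking bi-Lagrangianity at each step; the general case appears to require a genuinely new algebraic idea beyond what the present paper supplies.
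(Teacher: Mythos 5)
This statement is a \emph{conjecture}, not a theorem: the paper quotes it from \cite{BolsZhang} as motivation in Section~\ref{SubS:Motivation} and does not prove it. There is therefore no proof in the paper to compare against, and your proposal --- which you honestly flag as incomplete --- is not a proof either, so the task as posed does not quite apply.

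That said, your reading of how the conjecture connects to the paper's machinery is sound and matches the author's own discussion. The translation of ``complete family in bi-involution'' into ``integrable bi-Lagrangian distribution $\mathcal{L} \subset (T^*M,\mathcal{P})$'' is exactly the paper's framing; the split into a pointwise choice of bi-Lagrangian fibre and a global integrability/realisability problem is the correct way to see what the paper does and does not supply (it supplies the fibrewise linear-algebra classification, not the realisation by polynomials); and your observation that the Kronecker part is handled by the core subspace and the Mischenko--Fomenko shifts of Casimirs mirrors the paper's discussion of the Kronecker case and Theorem~\ref{T:BiLagrKronPart}. The two difficulties you identify --- the Jordan spectrum $\lambda_j(x)$ being algebraic rather than rational, and type degeneration along subvarieties --- are genuine and are not resolved in the paper; the author's own remarks (e.g.\ the example following the conjecture, where no $\operatorname{Aut}(V,\mathcal{P})$-invariant bi-Lagrangian subspace exists for simple spectrum, and the open problems in Section~\ref{SubS:BiIntProb}) concede that the linear theory developed here is at best a tool and not yet a route to the conjecture. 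One small caution: Theorem~\ref{T:InvarBilagr} shows that an $\operatorname{Aut}$-invariant bi-Lagrangian subspace exists only when every Jordan block has size divisible by $4$, so the ``pick a representative of the open orbit from Theorem~\ref{T:BiLagrGenDecomp}'' step in your sketch requires a non-canonical choice in general, and turning that choice into polynomials is precisely where a new idea is needed --- as you correctly conclude.
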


\end{itemize}

The Question~\ref{Q:BiIntegrable} is an analogue of the Generalised Argument Shift Conjecture  for arbitrary bi-Hamiltonian systems. Let us briefly explain how bi-integrable systems are related to bi-Lagrangian subspaces. If $f_1,\dots, f_k$ is a complete family of function in involution on a Poisson manifold $(M, \mathcal{A})$, then \[L_x = \operatorname{Span} \left\{ df_1(x), \dots, df_k(x) \right\} \subset (T_x^*M, \mathcal{A}_x) \] is a Lagrangian (i.e. maximal isotropic) subspace in the cotangent space. Similarly, if $\mathcal{P} = \left\{\mathcal{A} + \lambda \mathcal{B} \right\}$ is a pencil of compatible Poisson structures on a manifold $M$, then instead of complete family of functions in bi-involution we can consider integrable bi-Lagrangian distributions \[\mathcal{L} \subset (T^*M, \mathcal{P}).\] Simply speaking, in Question~\ref{Q:BiIntegrable} we search for a "natural way" to construct bi-Lagrangian subspaces in the cotangent spaces \[ L_x \subset \left( T_x^* M, \mathcal{P}_x\right), \qquad \mathcal{P}_x = \left\{\mathcal{A}_x + \lambda \mathcal{B}_x \right\}. \] By "natural" in this context, we imply that the bi-Lagrangian subspaces $L_x$ are independent of the choice of local coordinates on the manifold $M$. One straightforward approach to achieve this is to consider invariant subspaces within each cotangent bundle $\left(T_x^*M, \mathcal{P}_x\right)$. This leads us to the following question:

\begin{question} What bi-Lagrangian subspaces $L \subset (V, \left\{ A + \lambda B\right\})$ are $\operatorname{Aut}(V, \mathcal{P})$-invariant? \end{question}

We completely answer that question in Section~\ref{S:InvBiLagr}. We  present two contrasting examples to illustrate the potential and limitations of this approach. 

\begin{enumerate}

\item The Kronecker case, i.e. $\operatorname{rk} (A + \lambda B) = \operatorname{rk}B$ for all $\lambda \in \mathbb{C}$. In this case, as it is shown in \cite[Corollary 5]{BolsZhang} (see also Theorem~\ref{T:BiLagrKronPart}), there is only one bi-Lagrangian subspace, namely \[ L = \sum_{\lambda \in \bar{\mathbb{C}}} \operatorname{Ker} \left( A + \lambda B\right).\] In the context of bi-Hamiltonian systems, this result sheds light on A.S. Mishchenko and A.T. Fomenko's argument shift method \cite{MishchenkoFomenko78EulerEquations}: in the Kronecker case  the (local) Casimir functions of the Poisson brackets $\mathcal{A}_{\lambda} = \mathcal{A} + \lambda \mathcal{B}$ form a complete family of functions in bi-involution. 

\item The Jordan case, when also $\operatorname{Ker} B = 0$ and the spectrum of the recursion operator $P = B^{-1}A$ are $\frac{1}{2} \dim V$ pairs of distinct eigenvalues $\lambda_j$. There are no invariant bi-Lagrangian subspaces $L\subset (V, \left\{ A + \lambda B\right\})$.  Yet, for bi-Hamiltonian systems it is well-known (see e.g.
\cite[Corollary 9.10]{Kozlov23JKRealization}) that the eigenvalues $\lambda_j(x)$ are integrals of motion in bi-involution.  In this case, our approach falls short of establishing bi-integrability. This stems from the fact that the local automorphisms of a bi-Hamiltonian system may not realise the whole group $\operatorname{Aut}(V, \mathcal{P})$.

\end{enumerate}

To date, there is no known general integration method for bi-Hamiltonian systems in the Jordan case (i.e., with non-degenerate Poisson pencils) beyond utilizing the eigenvalues $\lambda_j(x)$ as integrals of motion. However, Theorem~\ref{T:InvarBilagr} demonstrates the existence of $\operatorname{Aut}(V, \mathcal{P})$-invariant subspaces in certain scenarios. These findings hold promise for the development of novel integration techniques for bi-Hamiltonian systems (see Section~\ref{SubS:BiIntProb}). However, a detailed exploration of this connection and its applications merits further investigation, which is a problem for another paper.

\subsection{Conventions and acknowledgements}

\textbf{Conventions.}  

\begin{enumerate}
    \item All vector spaces are finite-dimensional.

    \item Throughout this paper, we assume the field $\mathbb{K}$ to be either the complex numbers $\mathbb{C}$ or the real numbers $\mathbb{R}$, unless explicitly stated otherwise. Although, most of the results hold true for any algebraically closed field with characteristic zero.
    
    \item However, there's one exception for the real numbers:
    
    \begin{itemize}
        \item  General real case $\mathbb{K} = \mathbb{R}$ is studied in Section~\ref{S:RealCase}. Here no assumptions are made about the eigenvalues of operators. 
    
        \item In all other sections, when we consider the real case $\mathbb{K} = \mathbb{R}$, we require all eigenvalues $\lambda_i$ of operators to be real numbers.

    \end{itemize}

    \item $I_n$ denotes the $n\times n$ identity matrix.
    
    \item We use the following notation $\mathbb{KP}^1 = \bar{\mathbb{K}} = \mathbb{K} \cup \left\{ \infty \right\}$.

    \item For brevity, we'll use $\operatorname{Sp}(2n)$ to denote the symplectic group $\operatorname{Sp}(2n,\mathbb{K})$.

    \item We denote the Lagrangian Grassmanian for a $2n$-dimensional symplectic space as $\Lambda(n)$.

    \item We denote a bi-Lagrangian Grassmanian as $\operatorname{BLG}(V,\mathcal{P})$. For brevity, when considering a subspace  $U^{\perp}/U$ with its induced pencil, we write $\operatorname{BLG}(U^{\perp}/U)$.

\item Throughout this paper, "smooth" refers to:

\begin{itemize}
    \item \textbf{Real case}: $C^{\infty}$-smooth (infinitely differentiable). 
    \item \textbf{Complex case}:  complex-analytic (holomorphic).
\end{itemize}

\item Some property holds ``almost everywhere'' or ``at a generic point'' of a manifold $M$ if it holds on an open dense subset of $M$.
    
    \item We can encounter subsets of Grassmanians $\operatorname{Gr}(k, n)$ that are also algebraic subvarieties. Such subsets inherit two distinct topologies: the Zariski topology and the standard topology.
    
    \begin{itemize}
        \item When considering these subsets equipped with the standard topology, we employ the term ``\textbf{homeomorphic}'' (or ``\textbf{diffeomorphic}'' for smooth manifolds) to describe a bijective map preserving the basic topological structure. 
        \item  In contrast, the term ``\textbf{isomorphic}'' is used to establish one-to-one correspondences that preserve the algebraic structure of algebraic varieties.
    \end{itemize}

\item \textbf{JK} stands for Jordan-Kronecker (theorem/decomposition) throughout this paper.
    
\end{enumerate}

\par\medskip

\textbf{Acknowledgements.} The author would like to thank A.\,V.~Bolsinov and A.\,M.~Izosimov for useful comments. The study of bi-Lagrangian Grassmannians was initiated by A.\,V.~Bolsinov, who posed the fundamental questions regarding their structure.

\section{Basic definitions} \label{S:Basic}

 Let  $A, B$ be two skew-symmetric bilinear forms on a finite-dimensional vector space  $V$ over a field $\mathbb{K}$ (we are mostly interested in the cases $\mathbb{K} = \mathbb{C}$ and $\mathbb{R}$). We denote the corresponding pencil by \[\mathcal{P} = \left\{  A + \lambda B, \lambda \in \overline{\mathbb{K}} = \mathbb{K} \cup \{\infty\} \right\}.\] A form $A+\lambda B$ from the pencil $\mathcal{P}$ is also denoted by $A_{\lambda}$. We formally put $A_{\infty} = B$.

 \begin{definition} A pair $(V,\mathcal{P})$ consisting of a finite-dimensional vector space $V$ and a pencil of $2$-forms $\mathcal{P}$ on it will be called a \textbf{bi-Poisson vector space}. \end{definition} 

Two bi-Poisson vector spaces, $(V, \left\{A_{\lambda}\right\})$ and $(\hat{V}, \left\{\hat{A}_{\lambda}\right\})$ are \textbf{isomorphic} if there exists a linear isomorphism $f: V \to \hat{V}$ that preserves the pencils: \[\hat{A}_{\lambda}(fu, fv) = A_{\lambda}(u,v), \qquad \forall \lambda \in \bar{\mathbb{K}}, \quad \forall u, v\in V.\] In the same space $(V,\mathcal{P})$ a map $f: V \to V$ satisfying these conditions is called a \textbf{bi-Poisson automorphism}. The collection of all such automorphisms forms a group, denoted by $\operatorname{Aut}(V, \mathcal{P})$.

\subsection{Jordan--Kronecker theorem} \label{S:JKTheorem} 

First, let us recall the \textbf{Jordan–Kronecker Canonical Form} for a pair of skew-symmetric forms. The next theorem that describes it, which we call the Jordan--Kronecker theorem, is a classical result that goes back to Weierstrass and Kronecker. Its proof can be found in Thompson's paper \cite{Thompson91}, which builds upon the work of Gantmacher \cite{Gantmacher88}. This result has also been established in other sources, including \cite{Gurevich50} and the references cited in \cite{Lancaster05}.

\begin{theorem}[Jordan--Kronecker theorem]\label{T:Jordan-Kronecker_theorem}
Let $A$ and $B$ be skew-symmetric bilinear forms on a
finite-dimension vector space $V$ over an algebraically closed field $\mathbb{K}$ with $\textmd{char }  \mathbb{K} =0$. Then, there exists a basis for $V$ such that the matrices of both forms $A$ and $B$ are block-diagonal
matrices:

{\footnotesize
$$
A =
\begin{pmatrix}
A_1 &     &        &      \\
    & A_2 &        &      \\
    &     & \ddots &      \\
    &     &        & A_k  \\
\end{pmatrix}
\quad  B=
\begin{pmatrix}
B_1 &     &        &      \\
    & B_2 &        &      \\
    &     & \ddots &      \\
    &     &        & B_k  \\
\end{pmatrix}
$$
}

where each pair of corresponding blocks $A_i$ and $B_i$ is one of
the following:

\begin{itemize}

\item Jordan block with eigenvalue $\lambda_i \in \mathbb{K}$: {\scriptsize  \[A_i =\left(
\begin{array}{c|c}
  0 & \begin{matrix}
   \lambda_i &1&        & \\
      & \lambda_i & \ddots &     \\
      &        & \ddots & 1  \\
      &        &        & \lambda_i   \\
    \end{matrix} \\
  \hline
  \begin{matrix}
  \minus\lambda_i  &        &   & \\
  \minus1   & \minus\lambda_i &     &\\
      & \ddots & \ddots &  \\
      &        & \minus1   & \minus\lambda_i \\
  \end{matrix} & 0
 \end{array}
 \right)
\quad  B_i= \left(
\begin{array}{c|c}
  0 & \begin{matrix}
    1 & &        & \\
      & 1 &  &     \\
      &        & \ddots &   \\
      &        &        & 1   \\
    \end{matrix} \\
  \hline
  \begin{matrix}
  \minus1  &        &   & \\
     & \minus1 &     &\\
      &  & \ddots &  \\
      &        &    & \minus1 \\
  \end{matrix} & 0
 \end{array}
 \right)
\]} \item Jordan block with eigenvalue $\infty$ {\scriptsize \[
A_i = \left(
\begin{array}{c|c}
  0 & \begin{matrix}
   1 & &        & \\
      &1 &  &     \\
      &        & \ddots &   \\
      &        &        & 1   \\
    \end{matrix} \\
  \hline
  \begin{matrix}
  \minus1  &        &   & \\
     & \minus1 &     &\\
      &  & \ddots &  \\
      &        &    & \minus1 \\
  \end{matrix} & 0
 \end{array}
 \right)
\quad B_i = \left(
\begin{array}{c|c}
  0 & \begin{matrix}
    0 & 1&        & \\
      & 0 & \ddots &     \\
      &        & \ddots & 1  \\
      &        &        & 0   \\
    \end{matrix} \\
  \hline
  \begin{matrix}
  0  &        &   & \\
  \minus1   & 0 &     &\\
      & \ddots & \ddots &  \\
      &        & \minus1   & 0 \\
  \end{matrix} & 0
 \end{array}
 \right)
 \] } \item   Kronecker block {\scriptsize \[ A_i = \left(
\begin{array}{c|c}
  0 & \begin{matrix}
   1 & 0      &        &     \\
      & \ddots & \ddots &     \\
      &        & 1    &  0  \\
    \end{matrix} \\
  \hline
  \begin{matrix}
  \minus1  &        &    \\
  0   & \ddots &    \\
      & \ddots & \minus1 \\
      &        & 0  \\
  \end{matrix} & 0
 \end{array}
 \right) \quad  B_i= \left(
\begin{array}{c|c}
  0 & \begin{matrix}
    0 & 1      &        &     \\
      & \ddots & \ddots &     \\
      &        &   0    & 1  \\
    \end{matrix} \\
  \hline
  \begin{matrix}
  0  &        &    \\
  \minus1   & \ddots &    \\
      & \ddots & 0 \\
      &        & \minus1  \\
  \end{matrix} & 0
 \end{array}
 \right)
 \] }
 \end{itemize}

\end{theorem}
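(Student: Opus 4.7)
The plan is to prove the theorem in two stages: a \emph{regular--singular decomposition} that splits off the Kronecker part from the Jordan part, followed by a separate canonical-form analysis on each piece. As a preliminary, define the Kronecker subspace $V_K \subset V$ as the linear span of all polynomial paths $v(\lambda) \in V[\lambda]$ satisfying $(A + \lambda B)v(\lambda) \equiv 0$. A standard argument (using that $V_K$ is automatically bi-isotropic and that the induced pencil on a suitable quotient is regular) produces a splitting $V = V_K \oplus V_J$ in which the restriction of the pencil to $V_J$ is regular, i.e.\ $A + \lambda B$ is non-degenerate for almost all $\lambda$, and both forms $A, B$ decompose as direct sums with respect to this splitting.

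For the Jordan part on $V_J$, after possibly shifting $\lambda$ by a scalar to ensure $B$ is non-degenerate (so that $\infty$ is not an eigenvalue), I would introduce the recursion operator $P = B^{-1}A$ and observe that it is $B$-self-adjoint: skew-symmetry of $A$ gives $B(Pu, v) = A(u,v) = -A(v,u) = -B(Pv, u) = B(u, Pv)$. Decomposing $V_J$ into generalised eigenspaces of $P$ yields a $B$-orthogonal splitting, and on each generalised eigenspace one further reduces to cyclic $P$-invariant subspaces on which $B$ remains non-degenerate. A cyclic-vector argument tailored to the $B$-self-adjointness of $P$ then realises each indecomposable piece as the stated $2n$-dimensional block $\mathcal{J}_{\lambda_i, 2n}$; the $\infty$-eigenvalue case is handled by swapping the roles of $A$ and $B$ in the pencil.

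For the singular part on $V_K$, I would analyse the minimal indices of the pencil. Picking a polynomial solution $v(\lambda) = v_0 + \lambda v_1 + \cdots + \lambda^k v_k$ of smallest degree not yet accounted for, the vectors $v_0, \dots, v_k$ span an isotropic piece; dual vectors $w_1, \dots, w_k$ are then constructed from the recursion $Av_i = -Bv_{i-1}$ so as to complete a $(2k+1)$-dimensional indecomposable block on which $A$ and $B$ take exactly the stated canonical form $\mathcal{K}_{2k+1}$. The key point exploiting skew-symmetry is that in a skew-symmetric pencil the left and right minimal indices coincide, which forces every singular indecomposable piece to be an odd-dimensional $\mathcal{K}_{2k+1}$ block rather than one of the general rectangular blocks allowed by the full Kronecker theorem for arbitrary matrix pencils.

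The main obstacle is the regular--singular splitting itself: constructing the complement $V_J$ to $V_K$ so that both forms respect the direct sum, and then in the Kronecker part inductively peeling off one $\mathcal{K}_{2k+1}$ block at a time while preserving the canonical structure on what remains. In classical treatments this is handled either via Weierstrass' reduction of regular pencils combined with Kronecker's theory of minimal indices (as in Gantmacher and Thompson), or via a more modern sheaf-theoretic approach to the family $\{A_\lambda\}$; either way, once the decomposition is in hand the explicit canonical forms follow from straightforward cyclic-basis computations.
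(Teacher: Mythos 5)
The paper does not prove this theorem: it is stated as a classical result, and the proof is delegated to Thompson \cite{Thompson91} (building on Gantmacher \cite{Gantmacher88}, with alternatives in \cite{Gurevich50} and \cite{Lancaster05}). So there is no paper-internal argument to compare against; what you have written is a sketch of the classical strategy.

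Your sketch has a genuine gap in the very first step. The subspace you call $V_K$ — the span of the coefficient vectors of polynomial solutions of $(A + \lambda B)v(\lambda)\equiv 0$ — is exactly what the paper later calls the \emph{core}, and you are right that it is bi-isotropic. But the core is \emph{not} the Kronecker direct summand. In the standard basis $e_1,\dots,e_k,f_0,\dots,f_k$ for one block $\mathcal{K}_{2k+1}$, the polynomial null-paths sweep out only the $(k+1)$-dimensional $f$-part; the $e$-part is missing. Precisely because $V_K$ is bi-isotropic, both $A$ and $B$ vanish on it, so a bi-orthogonal splitting $V = V_K \oplus V_J$ with $V_J$ regular is impossible unless $V_K$ is trivial: the singular summand you need to split off must itself carry a non-zero pencil of the shape $\mathcal{K}_{2k+1}$, so it cannot be isotropic. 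For the same reason the recursion $Av_i = -Bv_{i-1}$ that you invoke to manufacture the dual vectors $w_j$ cannot take place inside your $V_K$, since both forms are identically zero there. The actual singular summand is generated by each null-path \emph{together with} its dual chain $w_1,\dots,w_k$, and the real work in Thompson's and Gantmacher's proofs is an induction that peels off one such $(2k+1)$-dimensional block at a time, verifying at each stage that the restriction to the constructed block admits a bi-orthogonal complement. Your description collapses this delicate induction into a single ill-defined split. The Jordan-part half of your sketch — reparametrize so $B$ is non-degenerate, introduce the $B$-self-adjoint operator $P = B^{-1}A$, decompose into generalized eigenspaces, reduce to cyclic pieces — is standard and essentially correct.
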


Each Kronecker block is a $(2k_i-1) \times (2k_i-1)$ block, where
$k_i \in \mathbb{N}$. If $k_i=1$, then the blocks are $1\times 1$
zero matrices \[A_i =
\begin{pmatrix}
0
\end{pmatrix}, \qquad B_i=
\begin{pmatrix}
0
\end{pmatrix}.\]

\begin{remark} Section~\ref{S:RealCase} describes the real analog of the JK theorem. \end{remark}

\subsubsection{Jordan--Kronecker decomposition}

\begin{definition}
We call a decomposition of $(V, \mathcal{P})$ into a sum of subspaces corresponding to the Jordan and Kronecker blocks in $\mathcal{P}$ a \textbf{Jordan-Kronecker decomposition}:  
\begin{equation} \label{Eq:JKDecomp} (V, \mathcal{P}) = \bigoplus_{j=1}^{S}\left(\bigoplus_{k=1}^{N_j} \mathcal{J}_{\lambda_j, 2n_{j,k}} \right) \oplus  \bigoplus_{i=1}^q \mathcal{K}_{2k_i+1}.\end{equation}
\end{definition} We use the following notation to represent specific types of bi-Poisson vector spaces:
\begin{itemize}
    \item $\mathcal{J}_{\lambda, 2n}$ corresponds to a Jordan block of dimension 2n and eigenvalue $\lambda$.

    \item $\mathcal{K}_{2k+1}$ corresponds to a Kronecker block of dimension $(2k+1)$. 
\end{itemize}

\begin{remark} The JK decomposition \eqref{Eq:JKDecomp} is not unique, just as the basis in the JK theorem. But the sizes and types of blocks in the JK theorem are uniquely defined. \end{remark} 

The sum \eqref{Eq:JKDecomp} is \textbf{bi-orthogonal}, meaning that it is a direct sum of subspaces that are pairwise orthogonal w.r..t all forms from the pencil $\mathcal{P}$.

\begin{definition}There are two special cases of bi-Poisson vector spaces and pencils based on the decomposition~\eqref{Eq:JKDecomp}):

\begin{itemize}

\item If the JK decomposition consists solely of Jordan blocks, then  $(V,\mathcal{P})$ is a \textbf{Jordan bi-Poisson space} (and  $\mathcal{P}$ is  a  \textbf{Jordan pencil}).

\item If the JK decomposition consists only of Kronecker blocks, then  $(V,\mathcal{P})$ is a \textbf{Kronecker bi-Poisson space} (and  $\mathcal{P}$ is  a  \textbf{Kronecker pencil}).

\end{itemize}
\end{definition}

\subsubsection{Standard basis}

The basis from the JK theorem will be called a standard basis. For instance: 
\begin{itemize}

\item A basis $e_1,\dots, e_n, f_1,\dots, f_n$ is considered the \textbf{standard basis for a Jordan block} of dimension $2n$ and eigenvalue $\lambda_0$ if the pencil P has the following form: \begin{equation} \label{Eq:StandJord} A + \lambda B = \left( \begin{matrix} 0 & J_{\lambda_0 + \lambda} \\ -J_{\lambda_0 + \lambda}^T & 0 \end{matrix}  \right), \qquad J_{\mu} = \left(\begin{matrix} \mu & 1 & &  \\  & \ddots & \ddots & & \\   &  & \ddots & 1 \\   & & & \mu \end{matrix}  \right)\end{equation}

\item A basis $e_1,\dots, e_n, f_0,\dots, f_n$ is considered the \textbf{standard basis for a Kronecker block} if the matrices of the forms in the pencil $\mathcal{P}$ take the form \begin{equation} \label{Eq:StandKron} A + \lambda B = \left( \begin{matrix} 0 & P \\ -P^T & 0 \end{matrix}  \right), \qquad P = \left(\begin{matrix} 1 & \lambda & & \\ & 1 & \ddots & & \\ & & \ddots & \ddots & \\ & & & 1 & \lambda  \end{matrix}  \right)\end{equation} 

\end{itemize}

\subsection{Rank and eigenvalues of a linear pencil}

\begin{definition} The \textbf{rank} of a pencil $\mathcal{P} = \left\{ A_{\lambda} = A + \lambda B\right\}$ is \[ \operatorname{rk} \mathcal{P} = \max_{\lambda \in \bar{\mathbb{K}}} \, \operatorname{rk} A_{\lambda}.\] A form $A_\lambda$ is \textbf{regular} if $\operatorname{rk}A_{\lambda} = \operatorname{rk} \mathcal{P}$. Non-regular forms $A_{\lambda}$ are called \textbf{singular}. \end{definition}

\begin{definition}  Consider JK decomposition~\eqref{Eq:JKDecomp}. The values $\lambda_j \in \bar{\mathbb{K}}$ are \textbf{eigenvalues} of $\mathcal{P}$. The \textbf{spectrum} of $\mathcal{P}$ consists of its eigenvlaues: \[ \sigma(\mathcal{P}) = \left\{\lambda_1, \dots, \lambda_S \right\}.\] \end{definition}

Due to our sign convention in the JK theorem, singular forms correspond to the eigenvalues \textbf{multiplied by $-1$}. Formally,
\begin{assertion} \label{A:RegForms} $A_{\lambda_0} \in \mathcal{P}$ is singular if and only if $-\lambda_0 \in \sigma(\mathcal{P})$. \end{assertion}

Below we use the following simple statement.

\begin{assertion} \label{A:DimVRnP} For any bi-Poisson space $(V, \mathcal{P})$ the number of Kronecker blocks in the JK decomposition is $\dim V - \operatorname{rk} \mathcal{P}$. Thus \[ \dim V - \frac{1}{2}\operatorname{rk} \mathcal{P} = \dim K + \frac{1}{2} \dim V_J.\]

\end{assertion}

\subsection{Admissible subspaces} \label{S:Admissible}

Let $\mathcal{P}= \left\{A_{\lambda} \right\}$ be a linear pencil on $V$. For a subspace $U\subset (V, \mathcal{P})$ we denote by $U^{\perp_{A_\lambda}}$ or $U^{\perp_{\lambda}}$ its skew-orthogonal complement w.r.t. the form $A_{\lambda}$: \[ U^{\perp_{\lambda}} = \left\{ v\in V \, \, \bigr| \, \, A_{\lambda}(v, U) = 0\right\}.\]

\begin{definition}  A subspace $U \subset  (V, \mathcal{L})$ is \textbf{admissible} if its skew-orthogonal complements $U^{\perp_{A_\lambda}}$ coincide for almost all forms $A_\lambda$ of the pencil $\mathcal{P}$. We denote this complement as $U^{\perp_{\mathcal{P}}}$ or $U^{\perp}$. \end{definition}

Recall that we can regard a bilinear form $A_{\lambda}$ as a linear map $A_{\lambda}: V \to V^*$. For a subspace of a bi-Poisson space $U \subset (V, \mathcal{P})$ we have \begin{equation} \label{Eq:PerpAnn} U^{\perp_{\lambda}} = \operatorname{Ann} \left( A_{\lambda} U \right).\end{equation}  Hence, a subspace $U \subset (V, \mathcal{P})$ is admissible if and only if the images $A_{\lambda}(U)$ coincide for a generic $\lambda \in \bar{\mathbb{C}}$. We denote that image as $\mathcal{P}(U)$. Below we need the following statement.

\begin{assertion} \label{A:AddmImagePen} For any admissible subspace $U\subset (V, \mathcal{P})$ and any $\lambda \in \bar{\mathbb{C}}$ the following holds: \begin{equation} \label{Eq:EquivAdm} A_{\lambda} (U) \subseteq \mathcal{P}(U) \qquad \Leftrightarrow \qquad U^{\perp_{\lambda}} \supseteq U^{\perp}. \end{equation}
\end{assertion}

\begin{proof}[Proof of Assertion~\ref{A:AddmImagePen}]   Without loss of generality $A(U) = B(U) = \mathcal{P}(U)$. Then it is obvious that $A_{\lambda}(U) \subset \mathcal{P}(U)$. The conditions in \eqref{Eq:EquivAdm}  are equivalent by \eqref{Eq:PerpAnn}. Assertion~\ref{A:AddmImagePen} is proved.  \end{proof}

In the Jordan case, where the regular forms are non-degenerate, admissible subspaces exhibit a particularly simple characterization.  Simply speaking, ``admissible = $P$-invariant''.   (The proof of the next statement is straightforward and omitted here.)

\begin{assertion} \label{A:AdmPInv}
Let $\mathcal{P} = \left\{ A + \lambda B\right\}$ be a linear pencil on $V$. Assume that $B$ is nondegenerate (i.e. $\operatorname{Ker} B = 0$) and let $P = B^{-1}A$ be the recursion operator. A subspace $U \subset (V, \mathcal{P})$ is admissible if and only if $U$ is $P$-invariant.
\end{assertion}

\subsection{Core and mantle subspaces} \label{SubS:CoreMantle} 

Consider a JK decomposition \eqref{Eq:JKDecomp}. Denote by $(V_J, \mathcal{P}_J)$ and $(V_K, \mathcal{P}_K)$ the sum of all Jordan and all Kronecker blocks respectively \[ (V_J, \mathcal{P}_J) = \bigoplus_{j=1}^{S}\left(\bigoplus_{k=1}^{N_j} \mathcal{J}_{\lambda_j, 2n_{j,k}} \right), \qquad (V_K, \mathcal{P}_K) = \bigoplus_{i=1}^q \mathcal{K}_{2k_i+1}.\] The decomposition \[ V = V_J \oplus V_K \] for a linear pencil $\mathcal{P}$ is not as natural as it seems. Informally, the Jordan part  $V_J$ is "sandwiched" between "larger" and "smaller halves" of the Kronecker blocks. To formalize this intuition and pave the way for further investigation, we introduce two crucial invariant subspaces.

\begin{definition} \label{Def:CoreMantle} Consider a pencil of skew-symmetric forms $\left\{ A_{\lambda} = A + \lambda B\right\}$.

\begin{enumerate}

\item The \textbf{core} subspace is  the sum of the kernels of all regular forms in the pencil:\[ K = \sum_{-\lambda \not \in \sigma(\mathcal{P})} \operatorname{Ker} A_{\lambda}. \] 

\item The \textbf{mantle} subspace is the skew-orthogonal complement to the core (w.r.t. any regular form  $A_{\lambda}$): \begin{equation} \label{Eq:Mantle} M = K^{\perp}. \end{equation}

\end{enumerate}

\end{definition} 

\begin{remark} The mantle $M$, given by \eqref{Eq:Mantle}, is well-defined, since by Corollary~\ref{Cor:CoreMantle} below the core $K$ is an admissible space. \end{remark}

Now fix any basis from the JK theorem and let us describe the core and mantle subspaces in it. First, let us describe $\operatorname{Ker} A_{\lambda}$ for each type of block. This statement is readily verified for matrices in the standard basis, where they take the form of \eqref{Eq:StandJord} or \eqref{Eq:StandKron}.

\begin{proposition} \label{Prop:KernelJordKronBlocks}

\begin{enumerate}

\item For one Jordan block with eigenvalue $\lambda_0$ we have $\operatorname{Ker} (A + \lambda B ) = 0$ except when $\lambda = - \lambda_0$. This holds true even if $\lambda_0 = \infty$. 

\item  In the standard basis  $e_1, \dots, e_n, f_0, \dots, f_n$ for one Kronecker block we have \[ \operatorname{Ker} \left( A + \lambda B \right) = \langle f_n - \lambda f_{n-1} + \lambda^2 f_{n-2} - \dots \rangle. \]

\end{enumerate}

\end{proposition}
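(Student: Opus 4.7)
The plan is to reduce everything to a direct computation in the standard bases given by equations~\eqref{Eq:StandJord} and \eqref{Eq:StandKron}, since in both cases the matrix of $A+\lambda B$ is block anti-diagonal and its kernel reduces to the kernels of the two off-diagonal blocks.

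For the Jordan block part, I would write $A+\lambda B$ in the standard basis as the block anti-diagonal matrix with upper right block $J_{\lambda_0+\lambda}$ (when $\lambda_0\in\mathbb{K}$) or $I+\lambda N$ with $N$ nilpotent (when $\lambda_0=\infty$). In either case the two off-diagonal blocks are transposes of each other up to sign, so $\det(A+\lambda B)=\pm\det(J_{\lambda_0+\lambda})^2=\pm(\lambda_0+\lambda)^{2n}$ in the finite case and $\pm\det(I+\lambda N)^2=\pm 1$ in the infinite case. Hence the form is nondegenerate for all $\lambda\neq -\lambda_0$ (with the convention $-\infty=\infty$), which is exactly the claim.

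For the Kronecker block part, the matrix of $A+\lambda B$ splits as $\left(\begin{smallmatrix} 0 & P\\ -P^T & 0\end{smallmatrix}\right)$ where $P$ is the $n\times(n+1)$ bidiagonal matrix from \eqref{Eq:StandKron}. A vector $\sum a_i e_i+\sum b_j f_j$ lies in $\operatorname{Ker}(A+\lambda B)$ iff $P^T a=0$ and $Pb=0$. Since $P$ has full row rank $n$ for every $\lambda$, the first equation forces $a=0$. The second equation is the recursion $b_{i-1}+\lambda b_i=0$ for $i=1,\dots,n$, which yields a one-dimensional kernel with $b_{n-k}=(-\lambda)^k b_n$. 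Setting $b_n=1$ recovers the stated generator $f_n-\lambda f_{n-1}+\lambda^2 f_{n-2}-\cdots$.

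There is no real obstacle; the only mild subtlety is bookkeeping the sign convention and reconciling $-\lambda_0=\infty$ with the role of $A_\infty=B$ in the infinite-eigenvalue case, which is handled once one notices that $\det(I+\lambda N)$ is identically $1$ while $\det B=0$, so the singular form sits at $\lambda=\infty$ as required.
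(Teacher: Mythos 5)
Your proof is correct and takes the same approach as the paper, which simply states that the proposition "is readily verified for matrices in the standard basis" and leaves the computation to the reader. You have carried out that verification explicitly: the determinant computation $\det(A+\lambda B) = \pm(\lambda_0+\lambda)^{2n}$ (respectively $\pm 1$ for $\lambda_0=\infty$) for the Jordan block, and the block-kernel reduction $P^T a=0$, $Pb=0$ for the Kronecker block, with the full-rank observation forcing $a=0$ and the two-term recursion producing the alternating generator.
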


As a consequence we get another easy description of the core and mantle.

\begin{corollary} \label{Cor:CoreMantle} For any JK decomposition we have the following. 

\begin{enumerate}

\item  The core subspace K is spanned by vectors corresponding to the down-right zero matrices of Kronecker blocks, like this one:  \begin{equation} \label{Eq:CoreKronBlock} A_i + \lambda B_i = \left(
\begin{array}{c|c}
  0 & \begin{matrix}
   1 & \lambda      &        &     \\
      & \ddots & \ddots &     \\
      &        & 1    & \lambda  \\
    \end{matrix} \\
  \hline
  \begin{matrix}
  \minus1  &        &    \\
  \minus \lambda   & \ddots &    \\
      & \ddots & \minus1 \\
      &        & \minus\lambda  \\
  \end{matrix} &\cellcolor{blue!25} 0 
 \end{array}
 \right).
 \end{equation} In particular, the core $K$ is an admissible space.

\item The mantle subspace is the core plus all Jordan blocks: \[ M = K \oplus V_J. \]

\end{enumerate}

\end{corollary}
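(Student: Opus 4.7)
The plan is to reduce everything to a block-by-block analysis, since the JK decomposition is bi-orthogonal and both the operation $\operatorname{Ker} A_\lambda$ and the skew-orthogonal complement respect this decomposition. Part 1 will follow from Proposition~\ref{Prop:KernelJordKronBlocks} combined with a Vandermonde argument, and part 2 will follow from a direct computation of $K^{\perp_\lambda}$ for a regular form.

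For part 1, fix a standard JK basis. By Proposition~\ref{Prop:KernelJordKronBlocks}(1), every Jordan block with eigenvalue $\lambda_j$ contributes zero to $\operatorname{Ker} A_\lambda$ whenever $\lambda \neq -\lambda_j$, so Jordan blocks contribute nothing to $K = \sum_{-\lambda \notin \sigma(\mathcal{P})} \operatorname{Ker} A_\lambda$. By Proposition~\ref{Prop:KernelJordKronBlocks}(2), in each Kronecker block $\mathcal{K}_{2k+1}$ with standard basis $e_1,\dots,e_k,f_0,\dots,f_k$, every regular form contributes the single vector $v_\lambda = f_k - \lambda f_{k-1} + \lambda^2 f_{k-2} - \dots + (-\lambda)^k f_0$. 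Since almost all $\lambda$ are regular, I can pick $k+1$ distinct such $\lambda$'s; the coordinates of $v_{\lambda_0},\dots,v_{\lambda_k}$ with respect to $f_0,\dots,f_k$ form a Vandermonde matrix, hence these vectors span $\langle f_0,\dots,f_k\rangle$. This is precisely the subspace corresponding to the highlighted lower-right zero block in~\eqref{Eq:CoreKronBlock}.

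For admissibility and part 2, it suffices to compute $K^{\perp_\lambda}$ for an arbitrary regular form $A_\lambda$ and to show the answer does not depend on $\lambda$. By bi-orthogonality of the JK decomposition, I can compute block by block. On a Jordan block, $K$ contributes nothing, so $K^{\perp_\lambda}$ restricted there is the whole block. On a Kronecker block, $K$ is spanned by the $f_j$'s; writing a general $v = \sum a_i e_i + \sum b_j f_j$, the condition $A_\lambda(v, f_j) = 0$ for all $j$ becomes a homogeneous linear system in the $a_i$'s whose matrix is the $k\times(k+1)$ block $P$ from~\eqref{Eq:StandKron}. Since $P$ has full row rank (independently of $\lambda$), this forces $a_i=0$, yielding $K^{\perp_\lambda}\cap V_K = \langle f_0,\dots,f_k\rangle$. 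Summing over blocks, $K^{\perp_\lambda} = K \oplus V_J$ for every regular $\lambda$. This simultaneously proves admissibility of $K$ and gives $M = K^\perp = K \oplus V_J$.

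The only real pitfall is book-keeping: making sure to quote $U^{\perp_\lambda} = \operatorname{Ann}(A_\lambda U)$ only after passing to regular $\lambda$, and being careful that the Vandermonde matrix uses distinct regular values (which exist because $\sigma(\mathcal{P})$ is finite). No genuinely hard step arises; everything reduces to Proposition~\ref{Prop:KernelJordKronBlocks} and elementary linear algebra on the explicit matrices~\eqref{Eq:StandJord} and~\eqref{Eq:StandKron}.
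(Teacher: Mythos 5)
Your proof is correct and takes the same route the paper leaves implicit (the corollary is stated as an immediate consequence of Proposition~\ref{Prop:KernelJordKronBlocks} with no written-out argument): reduce to one block at a time via bi-orthogonality, use the kernel formula on each block, and then compute $K^{\perp_\lambda}$ directly. The Vandermonde argument to see that the one-dimensional kernels actually sweep out all of $\langle f_0,\dots,f_k\rangle$, and the full-row-rank observation on the $k\times(k+1)$ block $P$ to force $a_i=0$, are exactly the elementary linear-algebra steps the paper leaves to the reader; you have simply made them explicit.
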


Now, the analogy with the Earth in Fig.~\ref{Fig:CoreMantle} becomes obvious. The mantle $M$ ``wraps around'' the core $K$, the rest of Kronecker blocks is ``kind of a crust''. 

\begin{figure}[ht!]
  \centering
    \includegraphics[width=0.3\textwidth]{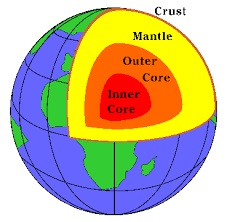}
      \caption{Earth structure.}
      \label{Fig:CoreMantle}
\end{figure}

\section{Bi-Lagrangian Grassmanian: definition and basic properties} \label{S:BiLagrLinear} \label{S:BiLagraGr}

\subsection{Bi-Lagrangian and bi-isotropic subspaces}

Let $B$ be a skew-symmetric form on a vector space $V$. Recall that a subspace $U \subset V$ is called 

\begin{itemize}

\item \textbf{isotropic} if $B(u,v) = 0$ for all $u,v \in V$.

\item \textbf{Lagrangian} or \textbf{maximal isotropic} if it is isotropic and \[ \dim U = \dim V - \frac{1}{2} \operatorname{rk} B.\]

\end{itemize} These well-known notions of isotropic and  Lagrangian subspaces can be naturally extend to the context of bi-Poisson vector spaces. However, we have to distinguish two types of bi-isotropic subspaces: ``inclusion-maximal'' (maximal bi-isotropic) and ``dimension-maximal'' (bi-Lagrangian). Let $\mathcal{P} = \left\{ A + \lambda B\right\}$ be a pencil of $2$-forms on a vector space $V$.

\begin{definition} A subspace $U \subset V$ of a bi-Poisson vector space $(V, \mathcal{P})$ is called 

\begin{itemize}

\item \textbf{bi-isotropic} if $A_{\lambda}(u, v) = 0$ for all $u, v\in V$ and all $A_{\lambda} \in \mathcal{P}$;

\item \textbf{maximal bi-isotropic} if it is bi-isotropic and is not contained in any larger bi-isotropic subspace;

\item  \textbf{bi-Lagrangian} if it is bi-isotropic and \[\dim U = \dim V - \frac{1}{2}  \operatorname{rk} \mathcal{P}.\]

\end{itemize}

 \end{definition}

Subspaces $U \subset (V, \mathcal{P})$ that are isotropic  w.r.t. forms $A$ and $B$ are also isotropic w.r.t. any their linear combination $\mu A + \lambda B$. So, in practice, it is easier to check that a subspace $U$ is bi-isotropic as being ``twice isotropic''.

 \begin{example} The core subspace $K$ (see Definition~\ref{Def:CoreMantle}) is bi-isotropic for any bi-Poisson vector space  $(V, \mathcal{P})$, since both forms $A$ and $B$ vanish on it. It is bi-Lagrangian if and only if $(V, \mathcal{P})$ is Kronecker. 
 \end{example} 

Bi-Lagrangian subspaces can be also characterized as subspaces that are Lagrangian w.r.t. all regular forms of the pencil $\mathcal{P}$. Indeed, $ \operatorname{rk} \mathcal{P} =  \operatorname{rk} A_{\lambda}$ for regular forms $A_{\lambda}$ and $\operatorname{rk} \mathcal{P} >  \operatorname{rk} A_{\mu}$ for singular forms $A_{\mu}$.  It is easy to see that the following holds. 

\begin{assertion} \label{A:BiLagrMaxIsotrRegular}  Let $(V, \mathcal{P})$ be a bi-Poisson vector space.
\begin{enumerate}

\item A bi-Lagrangian subspace $L \subset (V, \mathcal{P})$ is Lagrangian (i.e. maximal isotropic) w.r.t. a form $A_{\lambda} \in \mathcal{P}$ if and only if $A_{\lambda}$ is regular. 

\item If a subspace $U \subset (V, \mathcal{P})$ is Lagrangian w.r.t. any two forms $A_{\lambda}, A_{\mu} \in \mathcal{P}$, then $U$ is a bi-Lagrangian subspace.

\end{enumerate}

\end{assertion}

Since a bi-Lagrangian subspace $L$ has maximal possible dimension, it is maximal bi-isotropic. The next example shows that the inverse is not true.  

\begin{example} Let $e_1, \dots, e_n, f_0, \dots, f_n$ be a standard basis for a Kronecker block, i.e. the matrices of the forms  are \[ A + \lambda B = \left(
\begin{array}{c|c}
  \cellcolor{blue!25} 0 & \begin{matrix}
   1 & \lambda      &        &     \\
      & \ddots & \ddots &     \\
      &        & 1    & \lambda  \\
    \end{matrix} \\
  \hline
  \begin{matrix}
  \minus1  &        &    \\
  \minus \lambda   & \ddots &    \\
      & \ddots & \minus1 \\
      &        & \minus\lambda  \\
  \end{matrix} &0 
 \end{array}
 \right). 
 \] Then the subspace $\operatorname{Span}\langle e_1, \dots, e_n \rangle$, corresponding to the shaded block of $A + \lambda B$, is maximal bi-isotropic but not bi-Lagrangian.\end{example}
 
The following statement shows the difference between maximal bi-isotropic and bi-Lagrangian subspaces. Recall that we defined admissible subspaces in Section~\ref{S:Admissible}.

\begin{assertion} \label{A:BiLagrMaxBiIsotAdm}
A subspace $U\subset (V, \mathcal{P})$ is bi-Lagrangian if and only if it is maximal bi-isotropic and admissible.
\end{assertion}

\begin{proof}[Proof of Assertion~\ref{A:BiLagrMaxBiIsotAdm}] 

\begin{itemize}

\item[$(\Rightarrow)$] Let $U$ be a bi-Lagrangian subspace.  By Assertion~\ref{A:BiLagrMaxIsotrRegular}  $U$ is Lagrangian w.r.t. all regular forms $A_{\lambda}$ and hence $U^{\perp_{\lambda}} = U$. Thus, $U$ is admissible.

\item[$(\Leftarrow)$]  Let $U$ be maximal bi-isotropic and admissible. Then $U \subseteq U^{\perp}$. Since we can't extend $U$ to a bigger bi-isotropic subspace, $U^{\perp} = U$. Thus, $U$ is Lagrangian w.r.t. almost all forms $A_{\lambda} \in \mathcal{P}$. By Assertion~\ref{A:BiLagrMaxIsotrRegular} $U$ is bi-Lagrangian.

\end{itemize}

Assertion~\ref{A:BiLagrMaxBiIsotAdm} is proved.  \end{proof}

We discuss when a bi-isotropic subspace can be extended to a bi-Lagrangian subspace in Section~\ref{SubS:BiIsotr2BiLagr}. We are mostly interested in the structure of the set of bi-Lagrangian subspaces.

\begin{definition} The collection of all bi-Lagrangian subspaces of $(V, \mathcal{P})$ is called a \textbf{bi-Lagrangian Grassmannian} and denoted by $\operatorname{BLG}\left(V, \mathcal{P}\right)$. \end{definition}

\subsubsection{Existence of bi-Lagrangian subspaces}

First, we prove that  in each bi-Poisson space there is a bi-Lagrangian subspace. The next obvious statement shows that it suffices to find a bi-Lagrangian subspace in each Jordan and Kronecker block.

\begin{assertion} \label{A:DecompBiLagr}
Let $(V, \mathcal{P}) = \bigoplus_{i=1}^N \left(V_i, \mathcal{P}_i\right)$ be a sum of bi-orthogonal subspaces. Assume that $L \subset (V, \mathcal{P})$ is a direct sum of subspaces \[ L = \bigoplus_i L_i, \qquad L_i = L \cap (V_i, \mathcal{P}_i). \] The subspace $L$ is a bi-Lagrangian (bi-isotropic, maximal bi-isotropic) subset of $(V, \mathcal{P})$ if and only if each $L_i$ is a bi-Lagrangian (respectively, bi-isotropic, maximal bi-isotropic) subspace of $(V_i, \mathcal{P}_i)$.
\end{assertion}

Now it is easy to prove that $\operatorname{BLG}(V, \mathcal{P}) \not = \emptyset$. 

\begin{assertion} \label{A:ExistBiLagr} For any bi-Poisson space $(V, \mathcal{P})$ there exists a bi-Lagrangian subspace $L \subset (V, \mathcal{P})$.\end{assertion}

\begin{proof}[Proof of Assertion~\ref{A:ExistBiLagr}] Note that the matrices of each Jordan and Kronecker block have the form \[ \left( \begin{array}{c|c} 0 & X \\ \hline -X^T & \cellcolor{blue!25} 0 \end{array}  \right) \] for some matrix $X$. The subspace corresponding to to the shaded lower-right zero block is a bi-Lagrangian subspace for each block. By Assertion~\ref{A:DecompBiLagr} the sum of these subspaces is a bi-Lagrangian subspace of $(V, \mathcal{P})$. Assertion~\ref{A:ExistBiLagr} is proved. \end{proof}

We discuss the structure of $\operatorname{BLG}\left(V, \mathcal{P}\right)$  in the next sections.

\subsubsection{Bi-Lagrangian Grassmanian as an algebraic variety}

Bi-Lagrangian Grassmannians are subspaces of the corresponding Grassmanians $\operatorname{Gr}_k{(V)}$ and we can study them as topological subpaces with the induced topology. Unlike Lagrangian Grassmanians, which are smooth manifolds, a bi-Lagrangian Grassmanian has a natural structure of an algebraic variety. 

\begin{lemma} \label{L:BiLagrProjVar} A bi-Lagrangian Grassmannian $\operatorname{BLG}\left(V, \mathcal{P}\right)$ is a projective subvariety of $\operatorname{Gr}(n-k, n)$, where $n= \dim V$ and $k = \frac{1}{2} \operatorname{rk} \mathcal{P}$. \end{lemma}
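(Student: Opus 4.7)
The plan is to realize $\operatorname{BLG}(V, \mathcal{P})$ explicitly as an intersection of closed subvarieties of $\operatorname{Gr}(n-k, n)$. First, recall that $\operatorname{Gr}(n-k, n)$ is itself a projective variety (for instance, via the Plücker embedding into $\mathbb{P}(\Lambda^{n-k}V)$), and that any closed subvariety of a projective variety is again projective. So it suffices to exhibit $\operatorname{BLG}(V, \mathcal{P})$ as a Zariski-closed subset of $\operatorname{Gr}(n-k, n)$.

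The key observation is that being bi-Lagrangian can be rephrased as a pair of purely closed conditions in this fixed Grassmannian: namely, the subspace $L$ has dimension exactly $n-k$ (this is already enforced by living in $\operatorname{Gr}(n-k, n)$), and $L$ is simultaneously isotropic with respect to $A$ and $B$. Indeed, an isotropic subspace of dimension $n - k = \dim V - \tfrac12\operatorname{rk}\mathcal{P}$ w.r.t. a form of rank $\le \operatorname{rk}\mathcal{P}$ is automatically Lagrangian w.r.t. that form, so by Assertion~\ref{A:BiLagrMaxIsotrRegular} (item 2), this is equivalent to being bi-Lagrangian.

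The main technical step is to check that, for a fixed skew form $C$ on $V$, the set
\[ \operatorname{Isotr}_{n-k}(C) = \{ L \in \operatorname{Gr}(n-k, n) \mid C(u,v) = 0 \text{ for all } u,v \in L \} \]
is Zariski-closed in $\operatorname{Gr}(n-k, n)$. I would verify this in the standard affine charts: on the open chart where $L$ is the row space of an $(n-k) \times n$ matrix of the form $[I_{n-k} \mid X]$ (after a coordinate permutation), the isotropy condition becomes the matrix equation $[I \mid X]\,C\,[I \mid X]^T = 0$, whose entries are polynomials in the entries of $X$. Hence $\operatorname{Isotr}_{n-k}(C)$ is cut out locally by polynomial equations, making it Zariski-closed. (Equivalently, one can phrase this intrinsically: $C$ induces a global section of $\Lambda^2 \mathcal{S}^*$, where $\mathcal{S}$ is the tautological subbundle on $\operatorname{Gr}(n-k, n)$, and $\operatorname{Isotr}_{n-k}(C)$ is its zero locus.)

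Applying this to the two forms $A$ and $B$, we obtain
\[ \operatorname{BLG}(V, \mathcal{P}) = \operatorname{Isotr}_{n-k}(A) \cap \operatorname{Isotr}_{n-k}(B), \]
which is an intersection of two Zariski-closed subsets of $\operatorname{Gr}(n-k, n)$, hence closed. Combined with the projectivity of $\operatorname{Gr}(n-k, n)$, this finishes the proof. I do not anticipate any serious obstacle here; the only thing to watch is the reduction from ``bi-Lagrangian'' to ``bi-isotropic of the correct dimension,'' but that follows directly from Assertion~\ref{A:BiLagrMaxIsotrRegular} since any isotropic subspace of the prescribed dimension forces the corresponding form to have rank exactly $\operatorname{rk}\mathcal{P}$ on the quotient, i.e.\ to be regular, and hence makes $L$ Lagrangian with respect to it.
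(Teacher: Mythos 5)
Your proof is correct and takes a more elementary and more direct route than the paper's. The paper's own proof goes via Lagrangian Grassmannians of single forms: it recalls that $\Lambda(k)$ is a projective subvariety of $\operatorname{Gr}(k,2k)$, observes via Assertion~\ref{A:ProdGrass} that the locus of maximal isotropic subspaces of a (possibly degenerate) form sits inside $\operatorname{Gr}(n-k,n)$ as a subvariety, and then invokes Assertion~\ref{A:BiLagrMaxIsotrRegular} to realize $\operatorname{BLG}(V,\mathcal{P})$ as the intersection of two such loci attached to two \emph{regular} forms $A_\lambda, A_\mu \in \mathcal{P}$. You instead work with isotropy rather than maximal isotropy, cut out $\operatorname{Isotr}_{n-k}(C)$ by polynomial equations in affine charts (equivalently, as the zero locus of a section of $\Lambda^2\mathcal{S}^*$), and intersect the conditions for $A$ and $B$ directly. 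This avoids any appeal to properties of Lagrangian Grassmannians, avoids the regular/singular distinction the paper has to navigate, and gets the dimension constraint for free from the choice of ambient Grassmannian. It is a clean and valid variant.

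One remark on bookkeeping: your middle and closing paragraphs try to justify the identification ``bi-Lagrangian $\Leftrightarrow$ bi-isotropic of dimension $n-k$'' via Assertion~\ref{A:BiLagrMaxIsotrRegular}, and the justification offered is not correct as stated — an isotropic subspace of dimension $n-k$ with respect to a \emph{singular} form of the pencil is not Lagrangian with respect to that form, and it certainly does not force that form to have rank $\operatorname{rk}\mathcal{P}$. None of that is actually needed, though: the equivalence is precisely the definition of ``bi-Lagrangian'' in Section~\ref{S:BiLagraGr} (bi-isotropic plus $\dim L = \dim V - \tfrac12\operatorname{rk}\mathcal{P}$). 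You can simply cite the definition and drop the detour through Assertion~\ref{A:BiLagrMaxIsotrRegular} entirely; that tightens the argument and removes the only dubious step.
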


\begin{corollary} Bi-Lagrangian Grassmanians $\operatorname{BLG}(V, \mathcal{P})$ are compact. \end{corollary}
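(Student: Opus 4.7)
The plan is to exhibit $\operatorname{BLG}(V,\mathcal{P})$ as the intersection of two closed algebraic subvarieties of the Grassmannian $\operatorname{Gr}(n-k,n)$, which itself is a projective variety through the Plücker embedding into $\mathbb{P}(\Lambda^{n-k} V)$.

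The first step is to reduce the bi-isotropy condition to a pair of single-form isotropy conditions: a subspace $L$ of dimension $n-k$ lies in $\operatorname{BLG}(V,\mathcal{P})$ exactly when $A|_L = 0$ and $B|_L = 0$ (isotropy with respect to all other forms in the pencil follows by linearity). So it suffices to show that, for an arbitrary skew-symmetric form $\omega$ on $V$, the isotropic Grassmannian
$$\operatorname{IG}_{\omega}(n-k,V) = \bigl\{\, L \in \operatorname{Gr}(n-k,n) \, \bigl|\, \omega|_L = 0 \,\bigr\}$$
is a Zariski-closed subvariety. Then $\operatorname{BLG}(V,\mathcal{P}) = \operatorname{IG}_A(n-k,V) \cap \operatorname{IG}_B(n-k,V)$ is an intersection of closed subvarieties, hence itself a projective subvariety.

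The second step, the closedness of $\operatorname{IG}_{\omega}$, is standard but is the most concrete part of the argument. I would verify it by working in a standard affine atlas: fix a multi-index $I \subset \{1,\dots,n\}$ of size $n-k$, consider the affine open chart $U_I \subset \operatorname{Gr}(n-k,n)$ whose points correspond to subspaces $L$ represented as column spans of $n \times (n-k)$ matrices whose $I$-rows form the identity, the remaining entries being free coordinates on $U_I$. If $v_1,\dots,v_{n-k}$ are the columns of such a matrix, then the entries of the $(n-k)\times(n-k)$ Gram matrix $\bigl(\omega(v_i,v_j)\bigr)_{i,j}$ are polynomials in these coordinates, and the condition $\omega|_L = 0$ is the simultaneous vanishing of these polynomials. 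Since a finite union of such affine charts covers $\operatorname{Gr}(n-k,n)$, the isotropy condition defines a closed subvariety. (Equivalently, one can describe $\operatorname{IG}_{\omega}$ directly in Plücker coordinates via the vanishing of the contraction of $\omega$ with the Plücker vector.)

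There is no serious obstacle here: the main content is the routine but necessary verification that isotropy is an algebraic, rather than merely analytic, condition. The corollary that $\operatorname{BLG}(V,\mathcal{P})$ is compact then follows immediately from the compactness of $\operatorname{Gr}(n-k,n)$ together with the fact that a closed subset of a compact space is compact.
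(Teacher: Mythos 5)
Your proof is correct and reaches the same conclusion — that $\operatorname{BLG}(V,\mathcal{P})$ is a closed (projective) subvariety of the Grassmannian, hence compact — but by a somewhat more elementary route than the paper. The paper's Lemma~\ref{L:BiLagrProjVar} proceeds by citing the known fact that Lagrangian Grassmannians $\Lambda(k)$ are smooth projective subvarieties, handles degenerate forms via the decomposition $\Lambda(k)\times\operatorname{Gr}(n-2k,n-2k)$ of maximal isotropic subspaces, invokes Assertion~\ref{A:ProdGrass} to embed this product in $\operatorname{Gr}(n-k,n)$, and then realizes $\operatorname{BLG}(V,\mathcal{P})$ as the intersection of two such Lagrangian Grassmannians taken with respect to two \emph{regular} forms of the pencil (using Assertion~\ref{A:BiLagrMaxIsotrRegular}). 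You instead observe that since the dimension $n-k$ is already fixed, the bi-Lagrangian condition is just $A|_L=0$ and $B|_L=0$, so $\operatorname{BLG}(V,\mathcal{P})$ is the intersection of the two isotropic incidence loci $\operatorname{IG}_A(n-k,V)$ and $\operatorname{IG}_B(n-k,V)$; you then directly verify that each such locus is Zariski-closed by writing out the Gram-matrix equations in the standard affine charts. This avoids any discussion of which forms are regular, the structure theory of Lagrangian Grassmannians, and the product-embedding assertion; the trade-off is that the paper's route additionally exhibits $\operatorname{BLG}(V,\mathcal{P})$ as an intersection of smooth subvarieties with recognizable geometry, which is more informative than merely Zariski-closed. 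Both arguments are valid, and yours is arguably the cleaner path if the only goal is the compactness corollary.
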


We use the following simple statement.

\begin{assertion} \label{A:ProdGrass} Let $k_1+k_2 = k$ and $n_1 +n_2 =n$. Then $\operatorname{Gr}(k_1, n_1) \times \operatorname{Gr}(k_2, n_2)$ is a subvariety of $\operatorname{Gr}(k, n)$.  \end{assertion} 

\begin{proof}[Proof of Assertion~\ref{A:ProdGrass}] The subvariety of $k$-planes whose intersection with a fixed $n_1$-plane has dimension at least $k_1$ is the closure of a specific Schubert cell. The product of Grassmanians is the intersection of that subavariety with a similar one, the space of $k$-planes whose intersection with a fixed $n_2$-plane has dimension at least $k_2$. Assertion~\ref{A:ProdGrass} is proved. \end{proof}

\begin{proof}[Proof of Lemma~\ref{L:BiLagrProjVar}] It is well-known that a Lagrangian Grassmanian $\Lambda(k)$ is a smooth projective subvariety of the Grassmannian $\operatorname{Gr}(k, 2k)$ (see e.g.\cite{Kolhatkar04}). For a Poisson vector space $(V, B)$ any 
maximal isotropic subspace has the form $L \oplus \operatorname{Ker} B$, where $L$ is a Lagrangian subspace of a symplectic space. Thus, the set of maximal isotropic subspaces  (which we also call a Lagrangian Grassmanian) is a subvariety \[ \Lambda(k) \times \operatorname{Gr}(n-2k, n-2k) \subset \operatorname{Gr}(k, 2k)\times \operatorname{Gr}(n-2k, n-2k).\] By Assertion~\ref{A:ProdGrass} it is also a subvariety of $\operatorname{Gr}(n-k, n)$. By Assertion~\ref{A:BiLagrMaxIsotrRegular} the bi-Lagrangian Grassmanian $\operatorname{BLG}\left(V, \mathcal{P}\right)$ is the intersection of two such Lagrangian Grassmanians corresponding to any two regular forms $A_\lambda, A_\mu \in \mathcal{P}$, and thus it is also a projective subvariety of $\operatorname{Gr}(n-k, n)$. Lemma~\ref{L:BiLagrProjVar} is proved. \end{proof}

\subsection{Kronecker part of bi-Lagrangian subspaces}

In this section we show that a bi-Lagrangian Grassmannian does not depend on Kronecker blocks.

\begin{theorem} \label{T:BiLagrKronPart} A subspace $L \subset (V, \mathcal{P})$ is bi-Lagrangian if and only if \[ L = K \oplus L_J,\]  where $K$ is the core subspace and  $L_J$ is a bi-Lagrangian subspace of the sum of Jordan blocks $V_J$. In particular, the core $K$ is the only bi-Lagrangian subspace  of the sum of Kronecker blocks $V_K$. \end{theorem}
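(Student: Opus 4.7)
The plan is to prove both implications separately, leaning on the core/mantle machinery from Section~\ref{SubS:CoreMantle} together with Assertions~\ref{A:DimVRnP} and \ref{A:BiLagrMaxIsotrRegular}.

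For the easy direction, suppose $L = K \oplus L_J$ with $L_J$ bi-Lagrangian in $V_J$. The Jordan-Kronecker decomposition is bi-orthogonal, so $K \subset V_K$ is bi-orthogonal to $L_J \subset V_J$ w.r.t.\ every form of $\mathcal{P}$. The core $K$ is itself bi-isotropic (by Corollary~\ref{Cor:CoreMantle} its standard-basis generators sit inside the shaded lower-right zero blocks \eqref{Eq:CoreKronBlock}, on which both $A$ and $B$ vanish), and $L_J$ is bi-isotropic in $V_J$ by assumption, so $L$ is bi-isotropic. For the dimension, since $\mathcal{P}_J$ is nondegenerate generically we have $\operatorname{rk} \mathcal{P}_J = \dim V_J$, so $\dim L_J = \tfrac{1}{2}\dim V_J$; combining with Assertion~\ref{A:DimVRnP} gives $\dim L = \dim K + \tfrac12 \dim V_J = \dim V - \tfrac12 \operatorname{rk}\mathcal{P}$, so $L$ is bi-Lagrangian.

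For the nontrivial direction, assume $L$ is bi-Lagrangian. The first step is to show $K \subseteq L$: by Assertion~\ref{A:BiLagrMaxIsotrRegular}, $L$ is Lagrangian w.r.t.\ every regular form $A_\lambda$, and a maximal isotropic subspace for a (possibly degenerate) form always contains its kernel (otherwise one could enlarge $L$ by adding a kernel vector and preserve isotropy, contradicting maximality). Summing over all regular $\lambda$ yields $L \supseteq K$. The second step is to show $L \subseteq M = K \oplus V_J$. Since $K$ is admissible (Corollary~\ref{Cor:CoreMantle}), its perpendicular $K^{\perp_\lambda}$ equals $M$ for every regular $\lambda$; and since $K \subseteq L$ with $L = L^{\perp_\lambda}$ for regular $\lambda$ (Lagrangian in the sense of Assertion~\ref{A:BiLagrMaxIsotrRegular}), we get $L = L^{\perp_\lambda} \subseteq K^{\perp_\lambda} = M$.

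Now define $L_J := L \cap V_J$. The two inclusions $K \subseteq L \subseteq K \oplus V_J$ force $L = K \oplus L_J$: any $\ell \in L$ decomposes uniquely as $\ell = k + v$ with $k \in K$, $v \in V_J$, and then $v = \ell - k \in L \cap V_J = L_J$. It remains to verify $L_J$ is bi-Lagrangian in $V_J$; it is bi-isotropic by restriction, and a dimension count using Assertion~\ref{A:DimVRnP} gives $\dim L_J = \dim L - \dim K = \tfrac12 \dim V_J$, which is the bi-Lagrangian dimension for the Jordan pencil $\mathcal{P}_J$. The ``In particular'' statement is the special case $V_J = 0$, forcing $L_J = 0$ and $L = K$.

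The only step that is genuinely delicate is showing $L \subseteq M$; the rest is bookkeeping. I expect this step to go through cleanly via the admissibility of $K$, which is exactly what makes $K^\perp$ well-defined and independent of the choice of regular $\lambda$. The main thing to be careful about is not accidentally invoking $L = L^{\perp_\lambda}$ for a singular $\lambda$, where it can fail; restricting the argument to regular forms sidesteps this.
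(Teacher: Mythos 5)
Your proof is correct and takes essentially the same route as the paper: you establish $K \subseteq L \subseteq M$ exactly as in Lemma~\ref{L:KLM} (kernels of regular forms land in a maximal isotropic, and $L = L^{\perp_\lambda} \subseteq K^{\perp_\lambda} = M$ via admissibility of $K$), then split off $L_J$ and close with the same dimension count via Assertion~\ref{A:DimVRnP}. The only cosmetic difference is that you work with $L_J = L \cap V_J$ directly rather than passing to $L/K \subset M/K \approx V_J$, and you reprove the content of Assertion~\ref{A:MaxIsotrKer} inline instead of citing it.
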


Before proving Theorem~\ref{T:BiLagrKronPart} at the end of this section, we present two corollaries that illustrate its applications. Denote by $\mathcal{P}_{K}$ and $\mathcal{P}_J$ the restrictions of $\mathcal{P}$ on the sums of Kronecker and Jordan blocks $V_K$ and $V_J$ respectively. 

\begin{corollary} \label{C:IsomJordPart} A bi-Lagrangian Grassmanian is isomorphic (as a projective variety) to the bi-Lagrangian Grassmanian for its Jordan blocks\[ \operatorname{BLG}\left( V, \mathcal{P}\right) 
\approx \operatorname{BLG}(V_J, \mathcal{P}_J).\] If there are no Jordan blocks, then we formally assume that $\operatorname{BLG}(V_J, \mathcal{P}_J)$ consists of one point. 
\end{corollary}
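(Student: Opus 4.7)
The plan is to upgrade the set-theoretic bijection furnished by Theorem~\ref{T:BiLagrKronPart} into an isomorphism of projective varieties. Fix a JK decomposition $V = V_K \oplus V_J$, so in particular $K \subset V_K$ and $K \cap V_J = 0$. Set $d = \dim K$ and let $m = \dim V - \tfrac{1}{2}\operatorname{rk}\mathcal{P}$ be the common dimension of all bi-Lagrangian subspaces. By Assertion~\ref{A:DimVRnP}, the dimension of any bi-Lagrangian subspace of $(V_J,\mathcal{P}_J)$ equals $m-d$, so both Grassmannians involved sit inside fixed ordinary Grassmannians of a single dimension.

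First I would define $\Phi: \operatorname{BLG}(V_J,\mathcal{P}_J) \to \operatorname{BLG}(V,\mathcal{P})$ by $\Phi(L_J) = K \oplus L_J$. The direct-sum map $\operatorname{Gr}(m-d, V_J) \to \operatorname{Gr}(m, V)$, $W \mapsto K + W$, is a well-known closed embedding of projective varieties (its algebraicity is immediate from the Plücker picture: the Plücker coordinates of $K + W$ are, up to sign, products of the fixed Plücker coordinates of $K$ with those of $W$, since $K \cap V_J = 0$). By Theorem~\ref{T:BiLagrKronPart}, $\Phi$ is a bijection onto $\operatorname{BLG}(V,\mathcal{P})$, which is a projective subvariety of $\operatorname{Gr}(m,V)$ by Lemma~\ref{L:BiLagrProjVar}. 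Thus $\Phi$ is a morphism of projective varieties whose image is exactly $\operatorname{BLG}(V,\mathcal{P})$.

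Next I would show that the inverse $\Psi = \Phi^{-1}$ is also a morphism. Again by Theorem~\ref{T:BiLagrKronPart}, for any $L \in \operatorname{BLG}(V,\mathcal{P})$ the decomposition $L = K \oplus L_J$ forces $L_J = L \cap V_J$, and this intersection has constant dimension $m-d$. The map $L \mapsto L \cap V_J$ from the locally closed stratum of $\operatorname{Gr}(m,V)$ where the intersection with $V_J$ has dimension exactly $m-d$ to $\operatorname{Gr}(m-d, V_J)$ is a standard morphism of varieties (cut out by incidence conditions and resolved via a suitable projection; alternatively, restricted to $\operatorname{BLG}(V,\mathcal{P})$ it is a regular section of the Plücker equations cutting out $K$). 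So $\Psi$ is a morphism, and the composition $\Psi \circ \Phi$ is the identity by construction.

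The case with no Jordan blocks is automatic: then $V_J = 0$, Theorem~\ref{T:BiLagrKronPart} says $L = K$ is the unique bi-Lagrangian subspace, and we take $\operatorname{BLG}(V_J,\mathcal{P}_J)$ to be a point by convention. The only genuine subtlety to check carefully will be step three, namely that the intersection-with-$V_J$ map is algebraic on $\operatorname{BLG}(V,\mathcal{P})$ rather than merely constructible; but since the dimension of $L \cap V_J$ is constant equal to $m-d$ throughout $\operatorname{BLG}(V,\mathcal{P})$ (as $L \supset K$ and $L/K \subset V/K \cong V_J$ both have constant dimension), this map is regular, which is the only point requiring a careful Plücker-coordinate verification.
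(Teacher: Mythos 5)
Your proof is correct and takes essentially the same route as the paper: both hinge entirely on Theorem~\ref{T:BiLagrKronPart} for the set-theoretic bijection $L \leftrightarrow L \cap V_J$. The paper's own proof is a one-liner that asserts $\operatorname{BLG}(V,\mathcal{P}) \approx \operatorname{BLG}(V_K,\mathcal{P}_K) \times \operatorname{BLG}(V_J,\mathcal{P}_J)$ and notes the Kronecker factor is a point; it does not spell out why the bijection is an isomorphism of projective varieties. You do spell this out, via the Plücker description of the direct-sum embedding $W \mapsto K + W$ and the constant-dimension argument for regularity of the intersection-with-$V_J$ map, which is exactly the missing justification. One small streamlining: once you know $\Phi$ is the restriction of the closed embedding $\operatorname{Gr}(m-d,V_J)\hookrightarrow\operatorname{Gr}(m,V)$ to the closed subvariety $\operatorname{BLG}(V_J,\mathcal{P}_J)$, its image is a reduced closed subvariety of $\operatorname{Gr}(m,V)$ which by Theorem~\ref{T:BiLagrKronPart} has the same underlying set as $\operatorname{BLG}(V,\mathcal{P})$; two reduced closed subschemes with the same support coincide, so $\Phi$ is already an isomorphism onto $\operatorname{BLG}(V,\mathcal{P})$ and the separate verification that $\Psi$ is a morphism can be skipped. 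That said, your explicit inverse is also correct, and it is the version that generalizes more cleanly to Lemma~\ref{L:AlgIsomBiisotrFactor}.
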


\begin{proof}[Proof of Corollary~\ref{C:IsomJordPart}] By Theorem~\ref{T:BiLagrKronPart} \[ \operatorname{BLG}\left(V, \mathcal{P}\right) \approx \operatorname{BLG}(V_K, \mathcal{P}_{K}) \times \operatorname{BLG}(V_J, \mathcal{P}_{J})\]  and $\operatorname{BLG}(V_K, \mathcal{P}_{K}) $ consists of one point (the core $K$). Corollary~\ref{C:IsomJordPart} is proved. \end{proof}

Theorem~\ref{T:BiLagrKronPart} also gives us a simple criterion when there is a unique bi-Lagrangian subspace. 

\begin{corollary} \label{C:UniqBiLagrKron} Let $(V, \mathcal{P})$ be a Poisson vector space. The following conditions are equivalent:

\begin{enumerate}

\item $\mathcal{P}$ is of Kronecker type.

\item The core subspace $K$ is the only bi-Lagrange subspace in $(V, \mathcal{P})$. 

\end{enumerate}
  
\end{corollary}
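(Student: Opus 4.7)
The plan is to derive this corollary directly from Theorem~\ref{T:BiLagrKronPart} (or equivalently Corollary~\ref{C:IsomJordPart}) combined with an explicit exhibition of two distinct bi-Lagrangian subspaces inside any single Jordan block.

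For the forward direction $(1)\Rightarrow(2)$, I would simply observe that if $\mathcal{P}$ is Kronecker, then the Jordan part $V_J$ of the Jordan--Kronecker decomposition is trivial. Theorem~\ref{T:BiLagrKronPart} says that every bi-Lagrangian subspace has the form $L = K\oplus L_J$ with $L_J\subset V_J$, so the only possibility is $L_J = 0$ and hence $L = K$.

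For the reverse direction $(2)\Rightarrow(1)$, I would argue by contrapositive: if $\mathcal{P}$ has at least one Jordan block $\mathcal{J}_{\lambda_0,2n}$ in its JK decomposition, I exhibit two different bi-Lagrangian subspaces of $(V,\mathcal{P})$. Pick a standard basis $e_1,\dots,e_n,f_1,\dots,f_n$ for this block in which $A+\lambda B$ has the block-antidiagonal form \eqref{Eq:StandJord}. Then both
\[ L_J' \;=\; \operatorname{Span}\{e_1,\dots,e_n\} \qquad\text{and}\qquad L_J'' \;=\; \operatorname{Span}\{f_1,\dots,f_n\} \]
are $n$-dimensional subspaces of $\mathcal{J}_{\lambda_0,2n}$ on which every form $A+\lambda B$ restricted to the block vanishes (they correspond to the two diagonal zero blocks of \eqref{Eq:StandJord}); since a regular form restricted to the block has rank $2n$, each of them is bi-Lagrangian in $\mathcal{J}_{\lambda_0,2n}$. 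Now fix, arbitrarily, a bi-Lagrangian subspace in each of the remaining Jordan and Kronecker blocks (which exist by the proof of Assertion~\ref{A:ExistBiLagr}); by Assertion~\ref{A:DecompBiLagr} the two direct sums obtained by using $L_J'$ or $L_J''$ in the distinguished Jordan block, together with the same choices in all other blocks, are two distinct bi-Lagrangian subspaces of $(V,\mathcal{P})$, contradicting uniqueness.

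There is essentially no hard step: everything is just an application of already-proved structural results. The only thing one needs to verify by hand is the easy claim that the two ``halves'' $\operatorname{Span}\{e_i\}$ and $\operatorname{Span}\{f_i\}$ of a single Jordan block are bi-Lagrangian, and this is immediate from the matrix shape in \eqref{Eq:StandJord}. Alternatively, one can package the whole argument through Corollary~\ref{C:IsomJordPart}: $\operatorname{BLG}(V,\mathcal{P})\approx\operatorname{BLG}(V_J,\mathcal{P}_J)$, so the right-hand side is a point iff $V_J = 0$, and the exhibition of $L_J'\ne L_J''$ above shows $|\operatorname{BLG}(V_J,\mathcal{P}_J)|>1$ whenever $V_J\ne 0$.
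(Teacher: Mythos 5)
Your proof is correct and follows essentially the same route as the paper: the forward direction is immediate from Theorem~\ref{T:BiLagrKronPart}, and the reverse direction uses exactly the same two witnesses $\operatorname{Span}\{e_i\}$ and $\operatorname{Span}\{f_i\}$ inside a Jordan block. Your version is slightly more careful in spelling out that Assertion~\ref{A:DecompBiLagr} and Assertion~\ref{A:ExistBiLagr} let you complete these to bi-Lagrangian subspaces of all of $(V,\mathcal{P})$, but this is just filling in a step the paper leaves implicit.
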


\begin{proof}[Proof of Corollary~\ref{C:UniqBiLagrKron}]

In the Kronecker case the core subspace $K$ is the only bi-Lagrangian subspace by Theorem~\ref{T:BiLagrKronPart}. If there is at least one Jordan block in the JK decomposition, then we can easily construct several bi-Lagrangian subspaces similarly to Assertion~\ref{A:ExistBiLagr}. For example, for a Jordan block with a standard basis $e_1, \dots, e_{n_i}, f_1, \dots, f_{n_i}$ we can take  either $\operatorname{Span}(f_1, \dots, f_{n_i})$ or $\operatorname{Span}(e_1, \dots, e_{n_i})$ as a bi-Lagrangian subspace. Corollary~\ref{C:UniqBiLagrKron} is proved. \end{proof}

In order to prove Theorem~\ref{T:BiLagrKronPart} we use the following simple fact. 

\begin{assertion} \label{A:MaxIsotrKer} Let  $\omega$ be a bilinear skew-symmetric form on a space $V$. Then any maximally isotropic subspace  $L \subset V$ contains the kernel of the form $\omega$:
\[\operatorname{Ker} \omega \subset L.\] \end{assertion}

Now we can prove that any bi-Lagrangian subpace L ``is between K and M''. 

\begin{lemma} \label{L:KLM}  Any bi-Lagrangian subspace $L \subset (V, \mathcal{P})$ contains the core subspace $K$ and is contained in the mantle subspace: \[ K \subset L \subset M.\] \end{lemma}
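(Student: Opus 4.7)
The plan is to prove the two inclusions $K \subset L$ and $L \subset M$ independently, each by combining Assertion~\ref{A:BiLagrMaxIsotrRegular} with one additional basic observation.

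For the first inclusion $K \subset L$: by Assertion~\ref{A:BiLagrMaxIsotrRegular}, the bi-Lagrangian subspace $L$ is maximal isotropic with respect to every regular form $A_\lambda \in \mathcal{P}$ (i.e.\ for every $\lambda$ with $-\lambda \notin \sigma(\mathcal{P})$). Applying Assertion~\ref{A:MaxIsotrKer} to each such $A_\lambda$ yields $\operatorname{Ker} A_\lambda \subset L$. Summing over all regular $\lambda$ and using the definition of the core subspace then gives
\[
K \;=\; \sum_{-\lambda \notin \sigma(\mathcal{P})} \operatorname{Ker} A_\lambda \;\subset\; L.
\]

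For the second inclusion $L \subset M$: by Assertion~\ref{A:BiLagrMaxBiIsotAdm} the bi-Lagrangian subspace $L$ is admissible and satisfies $L^{\perp} = L$. By Corollary~\ref{Cor:CoreMantle} the core $K$ is admissible as well, and by definition $K^{\perp} = M$. Fixing any regular form $A_\lambda \in \mathcal{P}$, the inclusion $K \subset L$ from the first step gives $L^{\perp_\lambda} \subset K^{\perp_\lambda}$, and since both $K$ and $L$ are admissible this becomes
\[
L \;=\; L^{\perp} \;\subset\; K^{\perp} \;=\; M.
\]

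There is no real obstacle here: the lemma is essentially a bookkeeping consequence of the characterizations of bi-Lagrangian subspaces already proved (Assertions~\ref{A:BiLagrMaxIsotrRegular} and \ref{A:BiLagrMaxBiIsotAdm}), together with the elementary fact (Assertion~\ref{A:MaxIsotrKer}) that maximal isotropic subspaces contain the kernel of the corresponding form. The only subtlety worth flagging is that one must know $K$ is admissible in order to make sense of $K^\perp$ and to conclude that $K^{\perp_\lambda}$ does not depend on the choice of regular form $A_\lambda$; this is precisely the content of Corollary~\ref{Cor:CoreMantle}.
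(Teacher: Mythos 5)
Your proposal is correct and follows essentially the same path as the paper's proof: the first inclusion combines Assertion~\ref{A:BiLagrMaxIsotrRegular} with Assertion~\ref{A:MaxIsotrKer}, and the second inclusion takes skew-orthogonal complements of $K \subset L$ with respect to a regular form and uses $L = L^{\perp_\mu}$ and $K^{\perp_\mu} = M$. Your extra remark flagging that $K$ must be admissible (Corollary~\ref{Cor:CoreMantle}) for $M = K^\perp$ to be well-defined is a sound observation that the paper relegates to the remark following Definition~\ref{Def:CoreMantle}.
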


\begin{proof}[Proof of Lemma~\ref{L:KLM}] By Assertion~\ref{A:BiLagrMaxIsotrRegular} a bi-Lagrangian subspace is maximal isotropic w.r.t. all regular forms $A_{\lambda}$. By  Assertion~\ref{A:MaxIsotrKer} it contains the kernels of regular forms $\operatorname{Ker} A_{\lambda}$ and, thus, also  contains their sum, i.e. the core subspace $K$. Since $K \subset L$ and $L$ is bi-Lagrangian, for any regular form $A_{\mu}$ we have $L = L^{\perp_{\mu}} \subset K^{\perp_{\mu}} = M$. Lemma~\ref{L:KLM} is proved.  \end{proof}

\begin{proof}[Proof of Theorem~\ref{T:BiLagrKronPart}] 

\begin{itemize}

\item[$\left(\Rightarrow\right)$] Let $L_J \subset V_J$ be a bi-Lagrangian subspace. Then the sum $K \oplus V$ is bi-Lagrangian in $V$, since $K$ is bi-Lagrangian in $V_K$ and the terms in the sum $V = V_K \oplus V_J$ are bi-orthogonal. 

\item[$\left(\Leftarrow\right)$] Let $L$ be a bi-Lagrangian subspace in $V$. Recall that $M/K \approx V_J$ and thus by Lemma~\ref{L:KLM} we have $L = K\oplus L/K$ where $L/ K \subset M/K$. The restriction of all forms on $L$ is trivial, hence $L/ K$ is bi-isotropic. By Assertion~\ref{A:DimVRnP},  \[ \dim L = \dim V - \frac{1}{2}  \operatorname{rk} \mathcal{P} = \dim K + \frac{1}{2} \dim V_J.\] Thus, $\dim L/K = \frac{1}{2} \dim V_J$, and $L/K$ is a bi-Lagrangian subspace of $M/K \approx V_J$. 

\end{itemize}

Theorem~\ref{T:BiLagrKronPart} is proved. 
\end{proof}

\subsection{Eigendecomposition in the Jordan case}

Let us now consider the Jordan case. In this section we reduce it to the case of one eigenvalue. Namely, we prove the following.

\begin{theorem} \label{T:JordaMultEigen} Assume that a bi-Poisson space $(V, \mathcal{P})$ is a sum of Jordan blocks: \begin{equation} \label{Eq:GenJordJKDec}  (V, \mathcal{P}) = \bigoplus_{j=1}^{S}\left(\bigoplus_{k=1}^{N_j} \mathcal{J}_{\lambda_j, 2n_{j,k}} \right).\end{equation} Denote by $\mathcal{J}_{\lambda_j} =\bigoplus_{k=1}^{N_j} \mathcal{J}_{\lambda_j, 2n_{j,k}}$ the sum of all Jordan blocks with eigenvalue $\lambda_j$. Then any bi-Lagrangian subspace $L \subset (V, \mathcal{P})$ is a sum of bi-Lagrangian subspaces $L_j \subset \mathcal{J}_{\lambda_j} $: \[ L = \bigoplus_{j=1}^S L_{j}, \qquad L_{j} = L \cap \mathcal{J}_{\lambda_j}.\] Thus, the bi-Lagragnain Grassmanian is isomorphic to the direct product: \begin{equation}  \label{Eq:DecompJordBiLagr} \operatorname{BLG}\left( V, \mathcal{P}\right) \approx \prod_{j=1}^S  \operatorname{BLG}\left(\mathcal{J}_{\lambda_j} \right).\end{equation} \end{theorem}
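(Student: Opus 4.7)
The plan is to translate the problem into invariant-subspace theory for a semisimple operator. Since $(V,\mathcal{P})$ has only Jordan blocks, $\operatorname{rk}\mathcal{P} = \dim V$, so the pencil contains non-degenerate forms; after a Möbius change of the spectral parameter (which preserves the pencil, the JK decomposition, and the notion of bi-Lagrangian) I may assume $B$ is non-degenerate and form the recursion operator $P = B^{-1}A$. By Assertion~\ref{A:BiLagrMaxBiIsotAdm} every bi-Lagrangian subspace $L$ is admissible, and by Assertion~\ref{A:AdmPInv} admissibility is equivalent to $P$-invariance; so $L$ is $P$-invariant.

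Next I would identify the JK eigencomponents as the generalized eigenspaces of $P$. A direct computation in the standard basis from Theorem~\ref{T:Jordan-Kronecker_theorem} shows that on each block $\mathcal{J}_{\lambda_j, 2n_{j,k}}$ the operator $P$ acts as $\lambda_j \cdot \operatorname{id}$ plus a nilpotent, so $\mathcal{J}_{\lambda_j}$ is precisely the generalized eigenspace of $P$ with eigenvalue $\lambda_j$. Because the $\lambda_j$ are pairwise distinct, $V = \bigoplus_{j=1}^S \mathcal{J}_{\lambda_j}$ is the primary decomposition of $V$ under $P$, and the projectors $\pi_j : V \to \mathcal{J}_{\lambda_j}$ are polynomials in $P$ (Lagrange interpolation applied to the minimal polynomial of $P$). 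Any $P$-invariant subspace is therefore stable under every $\pi_j$, which forces the desired splitting $L = \bigoplus_j \pi_j(L) = \bigoplus_j (L \cap \mathcal{J}_{\lambda_j})$.

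The last step is to apply Assertion~\ref{A:DecompBiLagr}: since $V = \bigoplus_j \mathcal{J}_{\lambda_j}$ is bi-orthogonal and $L$ splits along it, each summand $L_j = L \cap \mathcal{J}_{\lambda_j}$ must be bi-Lagrangian in $\mathcal{J}_{\lambda_j}$, and conversely any such family assembles to a bi-Lagrangian subspace of $V$. The maps ``intersect with the fixed subspaces $\mathcal{J}_{\lambda_j}$'' and ``take the direct sum'' are both regular, giving the isomorphism of projective varieties in \eqref{Eq:DecompJordBiLagr}.

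The only nontrivial ingredient is the dictionary ``admissible $=$ $P$-invariant'' combined with the identification of JK components as generalized eigenspaces of $P$; once this is set up, the theorem reduces to the classical fact that any invariant subspace of an operator splits along its primary decomposition, which is essentially automatic and should not require more than a short paragraph.
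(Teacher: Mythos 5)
Your proposal is correct and takes essentially the same route as the paper: reduce to a nondegenerate $B$, observe that bi-Lagrangian implies $P$-invariant (the paper does this directly via Lemma~\ref{L:NonGenDescBiLagr}, you via Assertions~\ref{A:BiLagrMaxBiIsotAdm} and \ref{A:AdmPInv}, which is the same fact), identify the $\mathcal{J}_{\lambda_j}$ with the generalized eigenspaces of $P$, and invoke the primary decomposition of invariant subspaces. The only cosmetic differences are that you supply the spectral-projector argument in place of the paper's textbook citation, and you conclude with Assertion~\ref{A:DecompBiLagr} where the paper runs the Lagrangian-plus-$P$-invariant characterization once more; both are equivalent short steps.
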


\begin{remark} In the real case $\mathbb{K} = \mathbb{R}$ Theorem~\ref{T:JordaMultEigen} holds if all eigenvalues are real $\lambda_j \in \mathbb{R}$. The general real case is studied in Section~\ref{S:RealCase}. \end{remark}

Without loss of generality, we can assume that the form $B$ is non-degenerate (otherwise we can replace $B$ with a regular form $A_{\lambda}$). Consider the  recursion operator $P = B^{-1} A$. The recursion operator $P$ is self-adjoint w.r.t. $A$ and $B$: \[ A(Pu, v) = A(u, Pv), \qquad B(Pu, v) = B(u, Pv).\] Thus, instead of the pair of forms $(A, B)$ on $V$ we can study a symplectic space $(V, B)$ with a self-adjoint operator $P$. First, let us recall some simple facts about self-adoint operators that we use below.

\begin{assertion}\label{A:SelfAdjProp} Let $P$  be a self-adjoint operator on a symplectic space $(V, B)$. Then the following holds:

\begin{enumerate}

\item The skew-orthogonal complement of any $P$-invariant subspace $W \subset V$ is also $P$-invariant. In other words, \[ PW \subset W \qquad \Rightarrow PW^{\perp} \subset W^{\perp}\]

\item For any $v \in V$ the vectors $v, Pv, \dots, P^nv,\dots$ are pairwise orthogonal w.r.t. $B$. 

\end{enumerate}

\end{assertion}

Recall that by Assertion~\ref{A:AdmPInv} in the Jordan case a subspace $U$ is admissible if and only if it is $P$-invariant. We get the following description of bi-isotropic and bi-Lagrangian subspaces.

\begin{lemma} \label{L:NonGenDescBiLagr} Let $\mathcal{P} = \left\{ A + \lambda B \right\}$ be a linear pencil on $V$. Assume that $B$ is non-degenerate (i.e. $\operatorname{Ker} B = 0$) and let $P = B^{-1}A$ be the recursion operator. 

\begin{enumerate}

\item A subspace $U \subset (V, \mathcal{P})$ is bi-isotropic if and only if it is isotropic w.r.t. $B$ and $P$-invariant.

\item A subspace $L \subset (V, \mathcal{P})$ is bi-Lagrangian w.r.t. $B$ if and only if it is Lagrangian w.r.t. $B$ and $P$-invariant.

\end{enumerate}

\end{lemma}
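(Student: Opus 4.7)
The plan is to leverage two ingredients throughout: the identity $A(u,v) = B(Pu, v)$, which simply unpacks the definition $A = BP$ of the recursion operator; and Assertion~\ref{A:AdmPInv}, which in this non-degenerate setting identifies admissibility with $P$-invariance. Both parts then reduce to mechanical bookkeeping.

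First I would handle part~(2). Since $B$ is non-degenerate, $\operatorname{rk}B = \dim V$, so $B$ is a regular form of $\mathcal{P}$ and $\operatorname{rk}\mathcal{P} = \dim V$. For the forward direction I would chain together Assertion~\ref{A:BiLagrMaxIsotrRegular} (bi-Lagrangian implies Lagrangian with respect to every regular form, in particular $B$), Assertion~\ref{A:BiLagrMaxBiIsotAdm} (bi-Lagrangian implies admissible), and Assertion~\ref{A:AdmPInv} (admissible iff $P$-invariant). For the converse, the key computation is that for $u, v \in L$ we have
\[
A(u,v) \;=\; B(Pu, v) \;=\; 0,
\]
since $Pu \in L$ by $P$-invariance and $B|_L = 0$ by Lagrangianity; combined with $\dim L = \tfrac{1}{2}\dim V = \dim V - \tfrac{1}{2}\operatorname{rk}\mathcal{P}$, this upgrades bi-isotropy to bi-Lagrangianity.

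Part~(1) follows by the same template. The direction ``$\Leftarrow$'' is again the identity $A(u,v) = B(Pu, v) = 0$ for $u, v$ in any $B$-isotropic, $P$-invariant $U$; and the direction ``$\Rightarrow$'' uses $B \in \mathcal{P}$ for $B$-isotropy together with Assertion~\ref{A:AdmPInv} for $P$-invariance.

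The main obstacle I anticipate is the forward direction of part~(1): bi-isotropy alone does not in general force $P$-invariance (for instance $\operatorname{Span}(e_1)$ inside a single $4$-dimensional Jordan block with $\lambda_0 = 0$ is bi-isotropic, yet $Pe_1 = e_2 \notin \operatorname{Span}(e_1)$). I therefore expect this implication to tacitly restrict to admissible bi-isotropic subspaces, at which point Assertion~\ref{A:AdmPInv} closes the argument.
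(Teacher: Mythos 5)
Your diagnosis of part~(1) is correct, and your counterexample stands: in $\mathcal{J}_{0,4}$ with standard basis $e_1,e_2,f_1,f_2$, the line $U=\operatorname{Span}(e_1)$ is bi-isotropic (any one-dimensional subspace is isotropic for a skew form), yet $Pe_1=e_2\notin U$. The paper's own proof contains exactly the gap you anticipated. It writes
\[
A(u,v) = B(u,Pv) = 0 \qquad \forall u\in U^{\perp_B},\ \forall v\in U,
\]
and from this reads off $P$-invariance, since $B(u,Pv)=0$ for all $u\in U^{\perp_B}$ forces $Pv\in(U^{\perp_B})^{\perp_B}=U$. But bi-isotropy of $U$ only supplies $A(u,v)=0$ for $u,v\in U$, not over the larger range $u\in U^{\perp_B}$; with the correct quantifier one gets only $P(U)\subset U^{\perp_B}$, which coincides with $P(U)\subset U$ precisely when $U=U^{\perp_B}$, i.e.\ when $U$ is $B$-Lagrangian. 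For a proper bi-isotropic $U\subsetneq U^{\perp_B}$ the implication fails, exactly as your example shows. The honest version of (1) is the one you guessed: a $B$-isotropic $P$-invariant subspace is automatically bi-isotropic (your ``$\Leftarrow$'', which is fine), while the ``$\Rightarrow$'' requires the extra hypothesis of admissibility (equivalently $P$-invariance, by Assertion~\ref{A:AdmPInv}), after which it is tautological. Only part~(2) is actually invoked elsewhere in the paper (e.g.\ in Theorems~\ref{T:JordaMultEigen} and~\ref{T:BiLagr_One_Jordan_Canonical_Form}), so no downstream results depend on the broken implication.

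Your argument for part~(2) is correct and deviates from the paper's route in a useful way. The paper derives (2) from (1): for nondegenerate $B$, bi-Lagrangian is bi-isotropic of dimension $\tfrac12\dim V$, then apply the characterization of bi-isotropic subspaces. That route only survives the defect in (1) because a Lagrangian $L$ satisfies $L=L^{\perp_B}$, which restores the quantifier. You instead prove the forward direction directly from Assertions~\ref{A:BiLagrMaxIsotrRegular}, \ref{A:BiLagrMaxBiIsotAdm} and~\ref{A:AdmPInv}, and the converse by the computation $A(u,v)=B(Pu,v)=0$ together with the dimension count, never touching part~(1). Both halves are sound, and given the trouble in (1) your version is the cleaner of the two.
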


\begin{proof}[Proof of Lemma~\ref{L:NonGenDescBiLagr}]  \begin{enumerate}

\item A subspace $U$ is bi-isotropic iff it is isotropic w.r.t. $A$ and $B$. Assume that $U$ be isotropic w.r.t. $B$. Since \[ A(u, v) = B(u, Pv) = 0, \qquad \forall u \in U^{\perp_B}, \quad \forall v \in U,\] the subspace $U$ is isotropic w.r.t. $A$ if and only if it is $P$-invariant.

\item Since $B$ is nondegenerate, a subspace $L$ is bi-Lagrangian iff it is both bi-isotropic and satisfies $\dim L = \frac{1}{2} \dim V$. Hence, bi-Lagrangian subspaces are those Lagrangian w.r.t. B and P-invariant (follows from the characterization of bi-isotropic subspaces established earlier).

\end{enumerate}

Lemma~\ref{L:NonGenDescBiLagr}  is proved. 
\end{proof}

Now, we are ready to prove the decomposition~\eqref{Eq:DecompJordBiLagr}.

\begin{proof}[Proof of Theorem~\ref{T:JordaMultEigen}] Without loss of generality the form $B$ is nondegenerate. Note that for one Jordan  block $\mathcal{J}_{\lambda_0, 2n}$ in the JK decompostion of $(V, \mathcal{P})$ the recursion operator $P = B^{-1}A$ consists of two $n \times n$ Jordan blocks with the same eigenvalue: \[ P_i = B_i^{-1} A_{i} = \left( \begin{matrix} J_{\lambda_0}^T & 0 \\ 0 & J_{\lambda_0} \end{matrix} \right), \qquad J_{\lambda_0} = \left(\begin{matrix} \lambda_0 & 1 & &  \\  & \ddots & \ddots & & \\   &  & \ddots & 1 \\   & & & \lambda_0 \end{matrix}  \right). \]  Thus the sums of Jordan blocks with same eigenvalue $\mathcal{J}_{\lambda_j}$ are the generalized eigenspaces of the recursion operator $P$. By Lemma~\ref{L:NonGenDescBiLagr} any bi-Lagrangian subspace is $P$-invariant. It is well-known fact that any invariant subspace is the direct sum of its intersection with generalized eigenspaces (see e.g. \cite[Section 7.5, Lemma on page 263]{HoffmanKunze}). Thus $\displaystyle L = \bigoplus_{j=1}^S \left(L \cap \mathcal{J}_{\lambda_j}\right)$ and all $L_{j} = L \cap \mathcal{J}_{\lambda_j}$ are $P$-invariant. Since $L$ is Lagrangian w.r.t. $B$, all $L_{j}$ are also Lagrangian. By Lemma~\ref{L:NonGenDescBiLagr}, all $L_{j}$ are bi-Lagrangian. Theorem~\ref{T:JordaMultEigen} is proved.
\end{proof}

\begin{remark} If the field $\mathbb{K}$ (with $\operatorname{char}\mathbb{K} \not = 2$) is not algebraically closed\footnote{We exclude the case $\operatorname{char}\mathbb{K} = 2$, given that skew-symmetric and symmetric matrices become synonymous in this scenario. Alternate matrices may be explored in this context.}, then invariant Lagrangian subspaces can be decomposed according to the factorization of the minimal polynomial. Let $P$ be a self-adjoint operator on a symplectic space $(V,B)$ over $\mathbb{K}$ and let \[p = p_1^{r_1} \cdots p_k^{r_k}\] be the prime factorization of its minimal polynomial. Then $V$ decomposes into a direct sum (see e.g. \cite{Malagon17}): \[V = W_1 \oplus \dots \oplus W_k, \qquad  W_j = \operatorname{Ker} p_j(P)^{r_j}\] Analogous to Theorem~\ref{T:JordaMultEigen}, any $P$-invariant bi-Lagrangian $L \subset (V, B)$ splits as  \[ L = \sum_{j=1}^k L \cap W_j.\] \end{remark}

\subsubsection{Eigenvalue independence}

A bi-Lagrangian Grassmannian $\operatorname{BLG}(V, \mathcal{P})$ is independent of the specific eigenvalues\footnote{A bundle of a matrix pencil $\mathcal{B}(\mathcal{P})$ is the union of all pencils with the same canonical form as $\mathcal{P}$ up to specific eigenvalues (distinct eigenvalue remain distinct). Hence, bi-Lagrangian Grassmanian $\operatorname{BLG}(V, \mathcal{P})$ depends only on the bundle $\mathcal{B}(\mathcal{P})$.} $\lambda_j$ of the pencil $\mathcal{P}$. The next statement is trivial.

\begin{theorem} \label{T:NonDependEigen} For any sum Jordan blocks with the same eigenvalue we have an isomorphism \[ \operatorname{BLG}(\bigoplus_{k=1}^{N} \mathcal{J}_{\lambda, 2n_k}) \approx \operatorname{BLG}(\bigoplus_{k=1}^{N} \mathcal{J}_{0, 2n_k}).\] Hence, the bi-Lagrangian Grassmanian for $(V, \mathcal{P}) = \bigoplus_{j=1}^{S}\left(\bigoplus_{k=1}^{N_j} \mathcal{J}_{\lambda_j, 2n_{j,k}}\right) $ is isomorphic to \[ \operatorname{BLG}(V, \mathcal{P}) \approx \prod_{j=1}^{S}\operatorname{BLG}\left(\bigoplus_{k=1}^{N_j} \mathcal{J}_{0, 2n_{j,k}}\right). \]
\end{theorem}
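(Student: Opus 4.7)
The plan is to observe that $\operatorname{BLG}(V, \mathcal{P})$ depends on the pencil $\mathcal{P}$ only as a \emph{set of forms} (equivalently, as the $2$-dimensional subspace $\operatorname{span}(A, B)$ of bilinear forms), and is in particular invariant under any change of generators $(A, B) \mapsto (\alpha A + \beta B, \gamma A + \delta B)$ with invertible matrix of coefficients. The eigenvalue-shifting isomorphism then arises from the specific change of generators $(A, B) \mapsto (A - \lambda B, B)$, which implements the parameter shift $\mu \mapsto \mu - \lambda$ and thus moves the eigenvalue from $\lambda$ to $0$.

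Concretely, fix a standard basis of $\bigoplus_{k=1}^{N} \mathcal{J}_{\lambda, 2n_k}$ so that, in the notation of \eqref{Eq:StandJord}, the $k$-th block of $A + \mu B$ is $\begin{pmatrix} 0 & J_{\lambda + \mu} \\ -J_{\lambda+\mu}^T & 0 \end{pmatrix}$. Setting $A' := A - \lambda B$, the $k$-th block of $A' + \nu B$ in the same basis becomes $\begin{pmatrix} 0 & J_{\nu} \\ -J_{\nu}^T & 0 \end{pmatrix}$, which is precisely the standard form for $\bigoplus_{k} \mathcal{J}_{0, 2n_k}$. The pencils $\{A + \mu B\}$ and $\{A' + \nu B\}$ coincide as subsets of the space of bilinear forms on $V$, since $A = A' + \lambda B$ implies $\operatorname{span}(A, B) = \operatorname{span}(A', B)$. (For the case $\lambda = \infty$ one instead swaps $(A, B) \mapsto (B, A)$; the argument is otherwise identical.) Because the conditions defining a bi-Lagrangian subspace — isotropy with respect to every form in $\mathcal{P}$, together with the equation $\dim L = \dim V - \tfrac{1}{2}\operatorname{rk} \mathcal{P}$ — refer only to the underlying set of forms, the identity map of $V$ identifies the two bi-Lagrangian Grassmannians as the \emph{same} subvariety of $\operatorname{Gr}(\dim V - \tfrac{1}{2}\operatorname{rk} \mathcal{P}, \dim V)$, yielding the first isomorphism.

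The product decomposition in the second half of the statement is then immediate: apply Theorem~\ref{T:JordaMultEigen} to factor $\operatorname{BLG}(V, \mathcal{P})$ as a product over the distinct eigenvalues $\lambda_j$, and invoke the first half of the theorem on each factor to replace every eigenvalue by $0$. There is no genuine obstacle here; the only mild subtlety is conceptual — keeping straight the distinction between a pencil (an unparameterized set of forms, i.e.\ a 2-dimensional subspace of bilinear forms) and a chosen parameterization of that set. Once this is unwound, the theorem is essentially tautological, which is presumably why it is stated without proof.
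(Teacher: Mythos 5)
Your proposal is correct and is exactly the argument the paper has in mind: the paper states the theorem without proof (``trivial''), precisely because, as you observe, the change of generators $(A,B)\mapsto(A-\lambda B, B)$ (or $(A,B)\mapsto(B,A)$ for $\lambda=\infty$) does not change the pencil as a subset of bilinear forms, hence does not change the bi-Lagrangian Grassmannian as a subvariety of the ambient Grassmannian. Your handling of the $\lambda=\infty$ case and the reduction of the product formula to Theorem~\ref{T:JordaMultEigen} are both correct.
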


\begin{remark} Although not essential, we can reformulate our investigation in a purely algebraic framework by considering $(V, \mathcal{P}) =\bigoplus_j \mathcal{J}_{0, 2n_{j}}$ as an $R = \mathbb{K}[x]/x^n$-module and reinterpreting bi-Lagrangian subspaces as Lagrangian $R$-submodules.  \end{remark}

\subsection{One Jordan block} \label{S:OneJordBlock}

In this section we begin the study of the $\operatorname{Aut}\left(V, \mathcal{P}\right)$-orbits of $\operatorname{BLG}\left(V, \mathcal{P}\right)$. We start with the simplest Jordan case, when $(V, \mathcal{P})$ consists of only one Jordan block $\mathcal{J}_{\lambda_0, 2n}$. Without loss of generality, we can assume that $\lambda_0=0$ (otherwise we can change the generators of $\mathcal{P}$, replacing $A$ with $A_{-\lambda_0} = A - \lambda_0 B$). Consequently, the recursion operator  $P = B^{-1}A$ becomes nilpotent.

Simply speaking, we study Lagrangian subspaces $L$ of a $2n$-dimensional symplectic space $(V^{2n}, B)$ that are invariant w.r.t. a nilpotent self-adjoint operator $N$.

\begin{definition} If $N$ is a nilpotent operator on a vector space $V$, then the \textbf{height} of a vector $v\in V$ (w.r.t. $N$) is \[ \operatorname{height}(v) = \min \left\{ k \in \mathbb{N} \,\, \bigr|\,\, N^k v  = 0\right\}.\] The \textbf{height of a subspace} $U \subset V$ is \[\operatorname{height}(U) = \max_{u \in U} \operatorname{height}(u).\] The \textbf{height of the operator} $N$ is the height of $V$. \end{definition}

\subsubsection{Canonical form of bi-Lagrangian subspaces}

If $(V, \mathcal{P})$ is just one Jordan block, then there is a very simple canonical form for bi-Lagrangian subspaces.

\begin{theorem}\label{T:BiLagr_One_Jordan_Canonical_Form} Consider a $2n\times 2n$ Jordan block $\mathcal{J}_{0, 2n}$ associated with the pencil $\mathcal{P} = \left\{ A+\lambda B\right\}$. Then for any Lagrangian subspace $L \subset \mathcal{J}_{0, 2n}$ there exists a standard basis $ e_1, \dots, e_n, f_1, \dots, f_ {n} $, i.e. a basis such that the matrices of the forms $A$ and $B$ are as in the JK theorem:
\[
A =\left(\begin{matrix} 0 & J_n(0) \\ -J_n^T(0) & 0  \end{matrix} \right), \qquad B = \left( \begin{matrix} 0 & I_n \\ -I_n & 0 \end{matrix}\right),\] 
and the Lagrangian subspace $L$ has the form  \begin{equation} \label{Eq:BiLagr_One_Jordan_Canonical_Form} L = \operatorname{Span}\left\{ e_{n-h+1}, e_{n-h}, \dots, e_n, f_1, \dots, f_{n-h} \right\} \end{equation} for some $h \geq \frac{n}{2}$. The number 
$h$ is the height of $L$, i.e. \begin{equation} \label{Eq:CondSOneJord} L \not \subset \operatorname{Ker} P^{h-1}, \qquad L \subset \operatorname{Ker} P^h. \end{equation} \end{theorem}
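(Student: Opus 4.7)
By Lemma~\ref{L:NonGenDescBiLagr}, bi-Lagrangian subspaces of $\mathcal{J}_{0,2n}$ are precisely the $P$-invariant Lagrangian subspaces of the symplectic space $(V,B)$, where $P = B^{-1}A$ is nilpotent and self-adjoint; in any standard basis $P$ decomposes as the sum of two $n \times n$ Jordan blocks with eigenvalue $0$, so $\dim \ker P^k = 2k$ and $\operatorname{Im} P^k = \ker P^{n-k}$ for $0 \leq k \leq n$. Let $h = \operatorname{height}(L)$. From $L \subset \ker P^h$ and $\dim L = n \leq 2h$ I get $h \geq n/2$. Self-adjointness of $P$ gives $(\ker P^k)^{\perp} = \operatorname{Im} P^k = \ker P^{n-k}$, so the identity $L = L^{\perp}$ together with a short dimension count yields $\dim(L \cap \ker P^{n-h}) = 2(n-h)$, that is, $\ker P^{n-h} \subset L$. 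Thus every bi-Lagrangian $L$ is sandwiched as
\[
\ker P^{n-h} \,\subset\, L \,\subset\, \ker P^h,
\]
and my goal is to build a standard basis adapted to this sandwich.

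\textbf{Construction of the basis.} Pick $v \in L$ with $P^{h-1}v \neq 0$ and define $e_{n-h+i} := P^{i-1}v$ for $i = 1, \dots, h$. Since $v \in \ker P^h = \operatorname{Im} P^{n-h}$, there is some $e_1 \in V$ with $P^{n-h}e_1 = v$; extending by $e_i := P^{i-1}e_1$ is consistent with the previous definition. The key point is that every preimage of $v$ under $P^{n-h}$ has the same $P^{n-1}$-image, namely $P^{h-1}v \neq 0$, because preimages differ by $\ker P^{n-h} \subset \ker P^{n-1}$; hence $e_1$ automatically has height $n$, so $V_1 := \operatorname{Span}(e_1,\dots,e_n)$ is $n$-dimensional. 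Being a cyclic subspace for the self-adjoint $P$, $V_1$ is isotropic, hence Lagrangian. Now pick $f \in V$ with $B(e_i,f) = \delta_{in}$ (such $f$ exists because $V_1$ is Lagrangian and $B$ is non-degenerate), set $f_j := P^{n-j}f$, and invoke self-adjointness $B(P^{i-1}e_1, P^{n-j}f) = B(e_{i+n-j}, f)$ to upgrade these identities to $B(e_i, f_j) = \delta_{ij}$. If $\sum a_j f_j$ belonged to $V_1 = V_1^{\perp}$, pairing with each $e_i$ would force all $a_j = 0$, so $V_1 \cap V_2 = 0$ for $V_2 := \operatorname{Span}(f_1,\dots,f_n)$. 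Since $V_2$ is again cyclic for the self-adjoint $P$ it is automatically isotropic, so $\{e_i, f_j\}$ is a standard basis for $\mathcal{J}_{0,2n}$.

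\textbf{Verification and main obstacle.} In this basis $\ker P^{n-h} = \operatorname{Span}(e_{h+1},\dots,e_n, f_1,\dots,f_{n-h})$, so the inclusions $\ker P^{n-h} \subset L$ and $e_{n-h+1},\dots,e_n \in L$ give
\[
L \supseteq \operatorname{Span}(e_{n-h+1},\dots,e_n, f_1,\dots,f_{n-h});
\]
the right-hand side already has dimension $h + (n-h) = n = \dim L$, so equality holds, which is \eqref{Eq:BiLagr_One_Jordan_Canonical_Form}. Condition~\eqref{Eq:CondSOneJord} then follows from the choice of $v$ together with $L \subset \ker P^h$. The only step that could a priori fail --- producing a preimage $e_1$ of height exactly $n$ --- turns out to be free thanks to the rigidity of $P^{n-1}$ on fibres of $P^{n-h}$; so the main technical content of the proof is really the linear-algebraic bookkeeping that makes the dual vector $f$ realise the correct standard-basis pairing.
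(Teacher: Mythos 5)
Your proof is correct and follows essentially the same route as the paper: pick a top-height vector $v\in L$, lift it to a height-$n$ vector $e_1$, build the Jordan chain $e_i=P^{i-1}e_1$ and the dual chain $f_j=P^{n-j}f$ using Lagrangianity of the cyclic subspace $V_1$ and self-adjointness of $P$, then show $L\supseteq\operatorname{Span}\{e_{n-h+1},\dots,e_n,f_1,\dots,f_{n-h}\}$ via $\ker P^{n-h}=(\ker P^h)^{\perp}\subset L^{\perp}=L$ and conclude by a dimension count. The one spot where you go beyond the paper's write-up is the argument that the preimage $e_1$ of $v$ under $P^{n-h}$ automatically has height $n$ (any two preimages differ by $\ker P^{n-h}\subset\ker P^{n-1}$); the paper asserts this without proof (and in fact contains a typo there, writing $e_1\in L$, which is impossible when $h<n$), so your added justification is a genuine, if small, improvement.
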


If we arrange the basis vectors in the following $2\times n$ table  \begin{equation} \label{Eq:OneJordanBlock_VectorInTable} \begin{array}{|c|c|} \hline e_1 & f_n \\ \hline e_2 & f_{n-1} \\ \hline \vdots & \vdots \\ \hline e_{n-1} & f_{2}  \\ \hline e_n & f_{1} \\ \hline \end{array}, \end{equation} then the bases of subspaces \eqref{Eq:BiLagr_One_Jordan_Canonical_Form} for $h=n,\dots \lceil \frac{n}{2} \rceil $ can be visualized as the following subsets \[ \begin{array}{|c|} \hline e_1 \\ \hline e_2 \\ \hline e_3 \\ \hline \vdots \\ \hline e_{n-1}   \\ \hline e_n  \\ \hline \end{array} , \qquad \begin{array}{|c|c} \multicolumn{1}{c}{} & \\ \cline{1-1} e_2 & \\ \cline{1-1} e_3 & \\ \cline{1-1} \vdots & \\ \cline{1-1} e_{n-1} &   \\ \hline e_n  & \multicolumn{1}{|c|}{f_1} \\ \hline \end{array},\qquad \begin{array}{|c|c} \multicolumn{1}{c}{} & \\ \multicolumn{1}{c}{}  & \\ \cline{1-1} e_3 & \\ \cline{1-1} \vdots & \\ \hline e_{n-1} & \multicolumn{1}{|c|}{f_2}   \\ \hline e_n  & \multicolumn{1}{|c|}{f_1} \\ \hline \end{array}, \qquad \dots,  \] which go up to \[ \begin{array}{|c|c|} \cline{1-1} e_{\frac{n-1}{2}} & \multicolumn{1}{c}{} \\ \hline e_{\frac{n+1}{2}} & f_{\frac{n-1}{2}} \\ \hline \vdots & \vdots \\ \hline e_n  & \multicolumn{1}{|c|}{f_1} \\ \hline \end{array} , \qquad \text{or} \qquad \begin{array}{|c|c|}  \multicolumn{1}{c}{} & \multicolumn{1}{c}{}   \\ \hline  e_{\frac{n}{2}} & f_{\frac{n}{2}} \\ \hline \vdots & \vdots \\ \hline e_n  & \multicolumn{1}{|c|}{f_1} \\ \hline \end{array} \] The vectors $e_i, f_j$ are arranged in the table~\eqref{Eq:OneJordanBlock_VectorInTable} by their heights. Simply speaking,  the recursion operator $P$ ``pushes them down'': \[P e_i = e_{i+1}, \qquad  P f_j = f_{j-1}.\] Thus all $\operatorname{Im} P^i$ and $\operatorname{Ker} P^j$ are spanned by vectors from some bottom rows of the table~\eqref{Eq:OneJordanBlock_VectorInTable}: \begin{equation} \label{Eq:ImKerOneNilp}  \operatorname{Im} P^{n-k} = \operatorname{Ker} P^k = \langle e_{n-k+1}, \dots, e_n, f_1, \dots, f_k \rangle  \end{equation}

\begin{proof}[Proof of Theorem~\ref{T:BiLagr_One_Jordan_Canonical_Form}]  The proof is in several steps.

\begin{enumerate}

\item \textit{Construction of the standard basis $e_i, f_j$.} Take $v \in L$ of maximal height. There exists $e_1 \in L$ of height $n$ such that $P^{n-h} e_1 = v$. Extend it to the canonical basis $e_1, \dots, e_n, f_1, \dots, f_n$ as follows: put $e_i = P^{i-1}e_1$, take $f_n$ such that $B(e_i, f_n) = \delta^i_n $ and then put $f_{n-i} = P^i f_n$.

\item \textit{$L$ has the form \eqref{Eq:BiLagr_One_Jordan_Canonical_Form}}. By construction $v = e_{n-h+1} \in L$. First, by Lemma~\ref{L:NonGenDescBiLagr} $L$ is $P$-invariant and hence $e_{n-h+1}, \dots, e_n \in L$. Second, $e_{n-h+1}$ is a vector of $L$ of maximal height. Therefore, $L \subset \operatorname{Im} P^{n-h}$. Since $L$ is bi-Lagrangian, \[L = L^{\perp_B} \supset \left(\operatorname{Im} P^{n-h} \right)^{\perp_B} = \operatorname{Ker} P^{n-h}.\] The vectors $f_1, \dots, f_{n-h}$ lie in $L$, since $\operatorname{Ker} P^{n-h}$ has the form~\eqref{Eq:ImKerOneNilp}. Thus, $L$ contains \eqref{Eq:BiLagr_One_Jordan_Canonical_Form}. These subspaces coincide, since $\dim L = n$. 

\item \textit{$h$ is the height of $L$ and $h \geq \frac{n}{2}$.} By construction. 

\end{enumerate}

Theorem~\ref{T:BiLagr_One_Jordan_Canonical_Form} is proved. \end{proof}

From Theorem~\ref{T:BiLagr_One_Jordan_Canonical_Form} it is easy to see that any bi-Lagrangian subspace is ``generated by any its top vector''. 

\begin{corollary} \label{Cor:OneJordBiLagrSpannedByVector} Let $L \subset{\mathcal{J}}_{0, 2n}$ be a bi-Lagrangian subspace. Then its height $h \geq \frac{n}{2}$ and for any vector $v \in  L$ of maximal height we  have \[ L = \langle v, Pv, \dots, P^{h-1} v \rangle \oplus \operatorname{Ker} P^{n-h}. \] \end{corollary}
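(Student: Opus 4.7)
The plan is to read off the corollary directly from Theorem~\ref{T:BiLagr_One_Jordan_Canonical_Form}. The bound $h \geq n/2$ is already part of the theorem's statement, so nothing extra is needed there. What remains is the structural description of $L$ in terms of an arbitrary vector of maximal height.

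Let $v \in L$ be any vector of maximal height $h$. By Lemma~\ref{L:NonGenDescBiLagr} the subspace $L$ is $P$-invariant, so $v, Pv, \ldots, P^{h-1}v$ all lie in $L$, and since $v$ has height exactly $h$ these vectors are linearly independent (apply $P^{h-1}, P^{h-2}, \ldots$ successively to any vanishing linear combination). For the second summand, I would recycle the argument from the proof of Theorem~\ref{T:BiLagr_One_Jordan_Canonical_Form}: because every element of $L$ has height at most $h$, we have $L \subset \operatorname{Ker} P^{h} = \operatorname{Im} P^{n-h}$ by~\eqref{Eq:ImKerOneNilp}; taking skew-orthogonal complements with respect to $B$ and using the self-adjointness of $P$ gives $\operatorname{Ker} P^{n-h} = (\operatorname{Im} P^{n-h})^{\perp_B} \subset L^{\perp_B} = L$.

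Both summands on the right-hand side therefore sit inside $L$, and equality will follow from a dimension count. The cyclic piece has dimension $h$, the kernel $\operatorname{Ker} P^{n-h}$ has dimension $2(n-h)$, and their intersection equals $\{\sum_{k \geq 2h-n} \alpha_k P^k v\}$ of dimension $n-h$ (a vector $\sum \alpha_k P^k v$ lies in $\operatorname{Ker} P^{n-h}$ iff $\alpha_k = 0$ for all $k < 2h-n$, again by a height argument using the independence of $v, Pv, \ldots, P^{h-1}v$). Hence the sum has dimension $h + 2(n-h) - (n-h) = n = \dim L$, giving the desired equality. The one subtlety to flag is the symbol $\oplus$ itself: since this intersection has dimension $n-h$, the decomposition should be read as a sum of subspaces rather than an internal direct sum in the strict sense, with strict directness occurring only in the extreme case $h = n$.
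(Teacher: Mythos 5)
Your argument is correct and is essentially the paper's own intended reading: the paper offers no separate proof, remarking only that the corollary ``is easy to see'' from Theorem~\ref{T:BiLagr_One_Jordan_Canonical_Form}, and the two containments plus the dimension count you supply are the natural way to make that explicit. In particular, $P$-invariance of $L$ gives the cyclic piece, and the chain $L \subset \operatorname{Ker} P^{h} = \operatorname{Im} P^{n-h}$, $P$ self-adjoint, $L = L^{\perp_B}$, gives $\operatorname{Ker} P^{n-h} \subset L$, exactly as in the proof of the theorem.

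Your observation about the symbol $\oplus$ is a genuine catch and worth stating plainly. In the standard basis produced by Theorem~\ref{T:BiLagr_One_Jordan_Canonical_Form}, taking $v = e_{n-h+1}$, one has $\langle v, Pv, \dots, P^{h-1}v\rangle = \operatorname{Span}\{e_{n-h+1}, \dots, e_n\}$ while $\operatorname{Ker} P^{n-h} = \operatorname{Span}\{e_{h+1}, \dots, e_n, f_1, \dots, f_{n-h}\}$; the intersection $\operatorname{Span}\{e_{h+1}, \dots, e_n\}$ has dimension $n - h$, matching your computation, so the sum is direct only when $h = n$. If one wants a genuine internal direct sum, the cyclic summand should be truncated to $\langle v, Pv, \dots, P^{2h-n-1}v\rangle$ of dimension $2h - n$, giving $(2h-n) + 2(n-h) = n = \dim L$ with trivial overlap. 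So the corollary is true read as a (non-direct) sum of subspaces, and the $\oplus$ in the paper should be taken in that looser sense.
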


\subsubsection{Topology of \texorpdfstring{$\operatorname{BLG}\left(\mathcal{J}_{\lambda, 2n}\right)$}{BLG(J(lambda, 2n)}}

In this section we describe $\operatorname{Aut}(V, \mathcal{P})$-orbits for one Jordan block $(V, \mathcal{P}) = \mathcal{J}_{\lambda, 2n}$. We are mostly interested in the real or complex case.

\begin{theorem} \label{Th:OneJordBiLagrOrbits} Consider a $2n\times 2n$ Jordan block $\mathcal{J}_{\lambda, 2n}$ over the field $\mathbb{K}= \mathbb{C}$ or $\mathbb{R}$. Let  $\mathcal{P} = \left\{ A+\lambda B\right\}$ be the associated linear pencil and $P = B^{-1}A$ be the recursion operator.

\begin{enumerate}

\item \label{Item:OneJordNumOrbits} There are $\left[\frac{n}{2}+1\right]$ orbits of the $\operatorname{Aut}(\mathcal{J}_{\lambda, 2n})$-action on the 
 bi-Lagrangian Grassmannian $\operatorname{BLG}(\mathcal{J}_{\lambda, 2n})$. Each orbit consists of bi-Lagrangian subspaces of the same height (w.r.t. the operator $P-\lambda E$): \[ O_h = \left\{ L \in \operatorname{BLG}(\mathcal{J}_{\lambda, 2n}) \quad \bigr| \quad \operatorname{height} (L) = h\right\}. \]

\item \label{Item:OneJordOrbitsTopology} The orbits $O_h$ for $h > \frac{n+1}{2}$ are diffeomorphic to the sum of $(2h - n - 1)$ tangent bundles to $\mathbb{KP}^1 = \operatorname{\Lambda}(1)$. In particular, the maximal orbit is \begin{equation}\label{Eq:TopOrbitOneBlock}  O_n \approx \bigoplus_{n-1} T \mathbb{KP}^1.\end{equation} If $n$ is odd, then the minimal orbit $O_{\frac{n+1}{2}}$ is $\mathbb{KP}^1$. If $n$ is even, then the minimal orbit $O_{\frac{n}{2}}$ is a single point, namely, the subspace $\displaystyle \operatorname{Ker} (P - \lambda E)^{\frac{n}{2}}$.

\end{enumerate}

\end{theorem}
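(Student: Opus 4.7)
The proof naturally splits along the two parts.

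For part~(\ref{Item:OneJordNumOrbits}), I would use Theorem~\ref{T:BiLagr_One_Jordan_Canonical_Form} directly. Every bi-Lagrangian $L\subset\mathcal{J}_{\lambda,2n}$ has the canonical form $\operatorname{Span}\{e_{n-h+1},\dots,e_n,f_1,\dots,f_{n-h}\}$ in some standard basis, where $h\in\{\lceil n/2\rceil,\dots,n\}$; this gives exactly $[n/2]+1$ candidate values of the invariant. Bi-Lagrangians of the same height lie in a common orbit: choose standard bases adapted to each, and the induced linear isomorphism is automatically a bi-Poisson automorphism. Conversely, every bi-Poisson automorphism preserves $P=B^{-1}A$, hence the filtration $\operatorname{Ker}(P-\lambda E)^k$, hence the height of every subspace, so distinct heights give distinct orbits.

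For part~(\ref{Item:OneJordOrbitsTopology}), the strategy is bi-Poisson reduction with $U=\operatorname{Ker}(P-\lambda E)^{n-h}$. Being $P$-invariant, $U$ is admissible and bi-isotropic; moreover every $L\in O_h$ satisfies $L\subset\operatorname{Ker}(P-\lambda E)^h=U^{\perp_B}$ (by the height condition and self-adjointness), so taking skew-orthogonals yields $U\subset L^{\perp_B}=L$. A direct computation in the standard basis produces a bi-Poisson isomorphism $U^{\perp}/U\cong\mathcal{J}_{\lambda,2m}$ with $m:=2h-n$, and by Lemma~\ref{L:AlgIsomBiisotrFactor} the orbit $O_h$ is identified with the maximal-height orbit $O_{m}^{\max}\subset\operatorname{BLG}(\mathcal{J}_{\lambda,2m})$. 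The two extremal cases fall out immediately: $m=0$ (i.e. $h=n/2$ for $n$ even) forces $L=U=\operatorname{Ker}(P-\lambda E)^{n/2}$, a single point; $m=1$ (i.e. $h=(n+1)/2$ for $n$ odd) reduces to bi-Lagrangians in $\mathcal{J}_{\lambda,2}$, which are arbitrary lines in $\mathbb{K}^2$, yielding $\mathbb{KP}^1$.

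For $m\geq 2$, I would identify $O_m^{\max}$ with $\bigoplus_{m-1}T\mathbb{KP}^1$ through the action of $R=\mathbb{K}[t]/(t^m)$ on the $2m$-dimensional block via $t\mapsto P-\lambda E$. Then $V\cong R^2$ as a free $R$-module, and the form $B$ lifts to an alternating $R$-bilinear form on $R^2$ via the trace pairing $R\to\mathbb{K}$ picking the top coefficient. By Corollary~\ref{Cor:OneJordBiLagrSpannedByVector}, bi-Lagrangians of height $m$ are cyclically generated, so they correspond bijectively to free rank-one $R$-submodules of $R^2$, i.e.\ to the $R$-points $\mathbb{P}^1(R)$ of the projective line. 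The reduction $R\to\mathbb{K}$, $t\mapsto 0$, induces the projection $\rho:L\mapsto(P-\lambda E)^{m-1}L$, exhibiting $O_m^{\max}=\mathbb{P}^1(R)$ as the $(m-1)$-jet bundle $J^{m-1}_0(\mathbb{K},\mathbb{KP}^1)$ over $\mathbb{KP}^1$.

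The main obstacle is the final identification of this jet bundle with the Whitney sum $\bigoplus_{m-1}T\mathbb{KP}^1$. While the base $\mathbb{KP}^1$ and the fiber rank $m-1$ are immediate, establishing the precise vector-bundle isomorphism requires matching the transition maps of $\mathbb{P}^1(R)$ between the two standard charts of $\mathbb{KP}^1$ against those of the line bundle $T\mathbb{KP}^1=\mathcal{O}(2)$ on each graded piece. This can be carried out by an explicit change-of-coordinate calculation in $R$—writing a ratio $p(t)/q(t)$ with $q(0)\neq 0$ as a truncated Taylor series and reading off how each coefficient transforms under $t\mapsto 1/t$—or, more conceptually, by invoking the standard description of $J^{m-1}_0(\mathbb{K},M)$ as an iterated affine bundle whose successive models are copies of $TM$, which on $M=\mathbb{KP}^1$ splits canonically as a Whitney sum.
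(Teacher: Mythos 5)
Your argument for part~(\ref{Item:OneJordNumOrbits}) matches the paper's. Your part~(\ref{Item:OneJordOrbitsTopology}) is a genuinely different proof. The paper proceeds in the opposite order: it first identifies the top orbit $O_n$ with $\bigoplus_{n-1}T\mathbb{KP}^1$ by an explicit parametrization (fix an inner product on $\mathbb{K}^2$, write $L=\operatorname{Span}(v,Pv,\dots,P^{n-1}v)$, and normalize $v$ modulo $\operatorname{Span}(Pv,\dots,P^{n-1}v)$ so that the pairs $(p_i,q_{n-i+1})$ for $i\geq 2$ are orthogonal to $(p_1,q_n)$), and then disposes of $h<n$ by an inductive reduction passing through $\mathcal{J}_{\lambda,2n-4}$ one step at a time. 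Your single reduction by $U=\operatorname{Ker}(P-\lambda E)^{n-h}$ directly to the top orbit of $\mathcal{J}_{\lambda,2m}$ with $m=2h-n$ is tidier, and your reading of that top orbit as $\mathbb{P}^1(R)$ for $R=\mathbb{K}[t]/(t^m)$ via the trace-pairing lift of $B$ — hence as the jet space $J^{m-1}_0(\mathbb{K},\mathbb{KP}^1)$ — is a more conceptual route that in fact anticipates Theorem~\ref{Th:EqualJordBiLagrMaxOrbit} and the remark immediately following the theorem you are proving.

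The one substantive issue is exactly the step you flag, and one phrase in it overreaches. The iterated affine bundle $J^{m-1}_0(\mathbb{K},\mathbb{KP}^1)\to\cdots\to T\mathbb{KP}^1\to\mathbb{KP}^1$ does \emph{not} split canonically. Each projection $J^{k}_0\to J^{k-1}_0$ is an affine bundle modeled on a pullback of $T\mathbb{KP}^1$; affine bundles over paracompact bases always admit global sections, so after choosing one at each stage you do get a diffeomorphism onto the total space of $\bigoplus_{m-1}T\mathbb{KP}^1$ — but the diffeomorphism is not canonical, and "canonically" is precisely what is false. The choice you leave implicit is exactly what the paper's Step~1 makes explicit through the auxiliary inner product on $\mathbb{K}^2$. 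To complete your route, either perform the transition-function calculation you outline (expand $1/q(t)$ as a truncated power series in the two charts and check the resulting change of jet coordinates is fibrewise affine with linear part that of $\bigoplus_{m-1}\mathcal{O}(2)$), or invoke the existence of sections of affine bundles and iterate. Either closes the gap; the "canonical splitting" as written does not.
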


\begin{remark} Theorem~\ref{Th:OneJordBiLagrOrbits} is slightly misleading. Complex vector bundles over $\mathbb{CP}^1$ have a very simple structure: they are classified by one number, namely their first Chern class\footnote{It is also widely known that by the Birkhoff-Grothendick theorem every holomorphic vector bundle over ${\displaystyle \mathbb {CP} ^{1}}$ is a direct sum of holomorphic line bundles (see e.g. \cite{HM82}).} (see e.g. \cite{McLean}). As it is shown in Theorem~\ref{Th:EqualJordBiLagrMaxOrbit} for the sum of equal Jordan blocks $\bigoplus_{k=1}^l \mathcal{J}_{\lambda, 2n}$  the top orbit is the jet space of the Lagrangian Grassmanian: \begin{equation} \label{Eq:TopOrbitEqJord1} O_{max} \approx T^{n-1} \Lambda(l) =  J^{n-1}_0(\mathbb{K},  \Lambda(l)).\end{equation}It is better to view the sum of tangent bundles in \eqref{Eq:TopOrbitOneBlock} as a particular case of the jet space \eqref{Eq:TopOrbitEqJord1}. \end{remark}

The orbits of $\operatorname{BLG}(\mathcal{J}_{\lambda, 2n})$ have dimensions $n, n-2, \dots$ and are diffeomorphic to \[\bigoplus_{n-1} T \mathbb{KP}^1, \qquad \bigoplus_{n-3} T \mathbb{KP}^1, \qquad \dots \qquad \mathbb{KP}^1 \quad \text{ or } \left\{ 0\right\}. \] For instance, for a single $4\times 4$ real Jordan block the bi-Lagrangian Grassmanian is a pinched torus as in Fig.~\ref{Fig:PinchedTorus}. In general, the bi-Lagrangian Grassmanian of a single Jordan block $\operatorname{BLG}(\mathcal{J}_{\lambda, 2n})$ consists of a top orbit $O_h$ and a smaller bi-Lagrangian Grassmanian $\operatorname{BLG}(\mathcal{J}_{\lambda, 2n-2})$. Thus, we get a ``matryoshka of bi-Lagrangian Grassmanians'': \begin{equation} \label{Eq:MatrOneJord} \operatorname{BLG}(\mathcal{J}_{\lambda, 2n}) \supset \operatorname{BLG}(\mathcal{J}_{\lambda, 2n-2}) \supset \operatorname{BLG}(\mathcal{J}_{\lambda, 2n-4}) \supset \dots \end{equation} We discuss nested Bi-Lagrangian Grassmanians in Section~\ref{SubS:MatrBLG}.

\begin{figure}[ht!]
  \centering
    \includegraphics[width=0.25\textwidth]{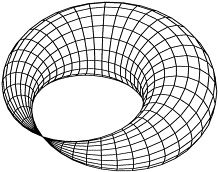}
      \caption{The real $\operatorname{BLG}(\mathcal{J}_{\lambda, 4})$ is a pinched torus}
      \label{Fig:PinchedTorus}
\end{figure}

\begin{proof}[Proof of Theorem~\ref{Th:OneJordBiLagrOrbits}] The bi-Lagrangian Grassmanian does not depend on the eigenvalue $\lambda$. Hence, without loss of generality $\lambda = 0$. Item~\ref{Item:OneJordNumOrbits} follows from Theorem~\ref{T:BiLagr_One_Jordan_Canonical_Form}. The proof of Item~\ref{Item:OneJordOrbitsTopology} is in several steps.

\begin{enumerate}

\item[Step 1.] \textit{The maximal orbit $O_n$ is diffeomorphic to $\bigoplus_{n-1} T \mathbb{KP}^1$.} First, fix an inner product on $\mathbb{K}^2$. Any element of $T \mathbb{KP}^1$ is given by two orthogonal vectors $u \perp v$, where $u \not = 0$. These pairs are equivalent under scalar multiplication by any non-zero number $\lambda \in \mathbb{K}^*$, meaning $(u, v) \sim (\lambda u, \lambda v)$. This is informally shown in Fig.~\ref{Fig:TangentCircle}.

\begin{figure}[ht!]
  \centering
    \includegraphics[width=0.25\textwidth]{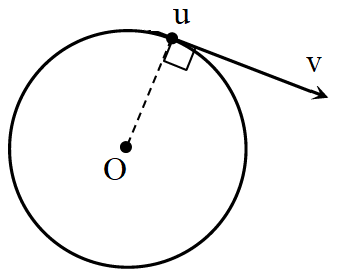}
      \caption{Tangent to $\mathbb{KP}^1$}
      \label{Fig:TangentCircle}
\end{figure} 

Now, fix a standard basis $e_1, \dots, e_n, f_1, \dots, f_n$ of a Jordan block. By Corollary~\ref{Cor:OneJordBiLagrSpannedByVector} any bi-Lagrangian subspace $L \in O_h$ has the  form \begin{equation} \label{Eq:VSpanL} L = \operatorname{Span} \left(v, Pv, \dots, P^{n-1} v \right). \end{equation} If the vector $v$ has the form \[ v= p_1 e_1 + \dots + p_n e_n + q_1 f_1 + \dots + q_n f_n,\] then $L$ is spanned by rows of the matrix \begin{equation} \label{Eq:SpanBiLagrOneJordMatrix}  \left( \begin{matrix} p_1 & \dots & p_n & q_1 & \dots & q_n \\ & \ddots & \vdots & \vdots & \udots & \\ & & p_1 & q_n & & \end{matrix} \right). \end{equation} The vector $v$ is defined up to proportionality and addition of a linear combination of $Pv, P^2v, \dots P^{n-1}v$. Since $\operatorname{height} (v) = n$ we have $\left(p_1, q_n\right) \not = 0$. Thus, replacing $v$ with a linear combination $\sum_i c_i P^iv$ we can make \begin{equation} \label{Eq:OrthogParts}  \left(p_i, q_{n-i+1}\right) \perp \left(p_1, q_n\right), \qquad i =2, \dots, n.\end{equation} A vector $v \in L$ such that \eqref{Eq:VSpanL} and \eqref{Eq:OrthogParts} hold is unique up to proportionality. Thus any bi-Lagrangian subspace $L \in O_n$ is uniquely defined by a point $(p_1: q_n) \in \mathbb{KP}^1$ and $(n-1)$ tangent vectors at these point \[\left( p_2, q_{n-1}\right), \quad \dots, \quad \left( p_n, q_{1}\right) \in T_{\left(p_1:q_n\right)}\mathbb{KP}^1. \]   We get a diffeomorphism $O_n\approx \bigoplus_{n-1} T \mathbb{KP}^1$.

\item[Step 2.] \textit{Non-maximal orbits $O_h$, $h <n$ are as in Theorem~\ref{Th:OneJordBiLagrOrbits}.} By Theorem~\ref{T:BiLagr_One_Jordan_Canonical_Form} bi-Lagrangian subspaces $L$ with $\operatorname{height}(L) < \operatorname{height}(V)$ contain $\operatorname{Ker} V$. Using Corollary~\ref{Cor:OneJordBiLagrSpannedByVector} it is easy to see that\footnote{This can also be proved using the bi-Poisson reduction, described in Lemma~\ref{L:AlgIsomBiisotrFactor}.} the orbit $O_{h}$ is diffeomorphic to the top orbit of $\operatorname{BLG}(\mathcal{J}_{0, 2n-4})$. Since we already know the structure of the maximal orbit $O_n$, the rest of the proof is by a simple induction. Note that the minimal orbit is either $\operatorname{BLG}(\mathcal{J}_{\lambda, 2}) \approx \mathbb{KP}^1$ for odd $n$, or, formally, the point $\operatorname{BLG}(\mathcal{J}_{\lambda, 0}) \approx \left\{ 0\right\}$ for even $n$.

  \end{enumerate}  
  
  Theorem~\ref{Th:OneJordBiLagrOrbits} is proved. \end{proof}

\begin{corollary} \label{Cor:ConnOneJord} The generic orbit $O_{\max}$ is dense (in the standard topology) in the bi-Lagrangian Grassmanian of a single Jordan block: \[ \bar{O}_{\max} = \operatorname{BLG}(\mathcal{J}_{0, 2n}).\] Since the automorphism group $\operatorname{Aut}(V,\mathcal{P})$ is connected (Theorem~\ref{T:AutConnected}), it preserves irreducible components of $\operatorname{BLG}(\mathcal{J}_{0, 2n})$. Therefore, $\operatorname{BLG}(\mathcal{J}_{0, 2n})$ coincides with the irreducible component containing the generic orbit $O_{\max}$. Thus, $\operatorname{BLG}(\mathcal{J}_{0, 2n})$  is an irreducible algebraic variety.  \end{corollary}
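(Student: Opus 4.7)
The statement has two parts: the density claim $\overline{O_{\max}} = \operatorname{BLG}(\mathcal{J}_{0,2n})$ in the standard topology, and the consequent irreducibility. The second part follows formally from the first combined with connectedness of $\operatorname{Aut}(V,\mathcal{P})$, so the bulk of the work lies in the density statement.

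For density, the plan is to show that every non-maximal orbit lies in the closure of $O_{\max}=O_n$. Fix $L \in O_h$ with $h < n$ and, by Theorem~\ref{T:BiLagr_One_Jordan_Canonical_Form}, choose a standard basis in which
\[
L = \operatorname{Span}\{e_{n-h+1}, \dots, e_n, f_1, \dots, f_{n-h}\}.
\]
I would perturb the ``top'' generator $e_{n-h+1}$ by the height-$n$ vector $f_n$: set $v_\epsilon = e_{n-h+1} + \epsilon f_n$ and $L_\epsilon = \operatorname{Span}\{v_\epsilon, P v_\epsilon, \dots, P^{n-1}v_\epsilon\}$. For $\epsilon\neq 0$ the vector $v_\epsilon$ has height $n$, so $L_\epsilon$ is $n$-dimensional and $P$-invariant. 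Using the explicit form $B(e_i,f_j)=\delta_{ij}$ in the standard basis, a direct computation gives $B(v_\epsilon, P^k v_\epsilon) = 0$ for every $k$, and then self-adjointness of $P$ yields $B(P^i v_\epsilon, P^j v_\epsilon)=0$ for all $i,j$. Thus $L_\epsilon$ is Lagrangian w.r.t.\ $B$ and $P$-invariant, hence bi-Lagrangian of maximal height, i.e.\ $L_\epsilon \in O_n$ by Lemma~\ref{L:NonGenDescBiLagr}.

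To compute $\lim_{\epsilon \to 0} L_\epsilon$, I would use the explicit generators: $P^k v_\epsilon = e_{n-h+1+k} + \epsilon f_{n-k}$ for $k < h$ and $P^k v_\epsilon = \epsilon f_{n-k}$ for $h \le k \le n-1$. Rescaling the latter by $\epsilon^{-1}$ does not change $L_\epsilon$ and yields a basis whose entries are polynomial in $\epsilon$; letting $\epsilon \to 0$ gives the Grassmannian limit $\operatorname{Span}\{e_{n-h+1},\dots,e_n,f_{n-h},\dots,f_1\} = L$. This proves $L \in \overline{O_n}$ and hence $\overline{O_{\max}} = \operatorname{BLG}(\mathcal{J}_{0,2n})$.

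For irreducibility, I would invoke Theorem~\ref{T:AutConnected}: since $\operatorname{Aut}(V,\mathcal{P})$ is connected, its natural action on the finite set of irreducible components of $\operatorname{BLG}(\mathcal{J}_{0,2n})$ is trivial, so every component is $\operatorname{Aut}$-invariant. The orbit $O_{\max}$ is itself irreducible --- either as the continuous image of a connected group or from its description as a vector bundle over $\mathbb{KP}^1$ in Theorem~\ref{Th:OneJordBiLagrOrbits} --- hence lies in a single irreducible component $C$, giving $\operatorname{BLG}(\mathcal{J}_{0,2n}) = \overline{O_{\max}} \subseteq C$. Therefore $C = \operatorname{BLG}(\mathcal{J}_{0,2n})$ and there is no room for a second component. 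The main obstacle is the explicit verification that $L_\epsilon$ is bi-Lagrangian and that its Grassmannian limit is indeed $L$ rather than some lower-dimensional degeneration.
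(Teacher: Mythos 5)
Your proof is correct, and it actually supplies something the paper leaves implicit. In the paper, Corollary~\ref{Cor:ConnOneJord} is stated immediately after Theorem~\ref{Th:OneJordBiLagrOrbits}; the density claim $\bar{O}_{\max}=\operatorname{BLG}(\mathcal{J}_{0,2n})$ is asserted, not argued, and the text of the corollary only spells out the passage from density to irreducibility. You instead prove density directly by exhibiting the one-parameter degeneration $v_\epsilon=e_{n-h+1}+\epsilon f_n$, $L_\epsilon=\operatorname{Span}\{P^kv_\epsilon\}_{k=0}^{n-1}$. I checked the two steps that carry the weight: (i) $B(v_\epsilon,P^kv_\epsilon)=0$ for all $k$ --- the two potentially nonzero terms $\epsilon\,B(e_{n-h+1},f_{n-k})$ and $\epsilon\,B(f_n,e_{n-h+1+k})$ both concentrate at $k=h-1$ and cancel, while for $k\geq h$ the remaining term $\epsilon\,\delta_{k,h-1}$ vanishes --- so, combined with self-adjointness of $P$ and $\dim L_\epsilon=n$ (since $\operatorname{height}(v_\epsilon)=n$ for $\epsilon\neq 0$), Lemma~\ref{L:NonGenDescBiLagr} gives $L_\epsilon\in O_n$; and (ii) after rescaling the last $n-h$ generators by $\epsilon^{-1}$ the spanning set depends polynomially on $\epsilon$ and tends to a basis of $L$, so $L_\epsilon\to L$ in the Grassmannian. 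Your irreducibility argument coincides with the paper's, though once $\bar{O}_{\max}$ is the whole space and $O_{\max}$ is irreducible (being the image of the connected group $\operatorname{Aut}(V,\mathcal{P})$), irreducibility already follows without the separate remark about $\operatorname{Aut}$ permuting components; that remark is nevertheless harmless and matches the paper's phrasing. Net: correct, and more self-contained than what is written in the source.
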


\section{Bi-Poisson reduction} \label{S:BiPoisSect}

In Section \ref{S:OneJordBlock} we showed that for one Jordan block $\mathcal{J}_{\lambda, 2n}$ the $\operatorname{Aut}(\mathcal{J}_{\lambda, 2n})$-orbits $O_h$ follow the sequence  \eqref{Eq:MatrOneJord}. It is easy to see that the collection of bi-Lagrangian subspaces containing the subspace $U = \operatorname{Ker}P^j$ is isomorphic to $\operatorname{BLG}(U^{\perp}/ U)$. In this section we demonstrate that this result is not limited to a single Jordan block case. 

\begin{itemize}
    \item  In Section~\ref{S:LinearRedDecr} we introduce a powerful technique called \textbf{bi-Poisson reduction}. Simply speaking, for any admissible bi-isotropic subspace $U$ we can induce the pencil and bi-Lagrangian subspaces on $U^{\perp}/U$.

    \item In Section~\ref{S:IsomAlgBiPRed} we show that the variety of bi-Lagrangian subspaces containing $U$ is isomorphic to $\operatorname{BLG}(U^{\perp}/ U)$ (see Lemma~\ref{L:AlgIsomBiisotrFactor}).
    
    \item  In Section~\ref{SubS:BiIsotr2BiLagr}  we answer the question ``When a bi-isotropic subspace extends to a bi-Lagrangian subspace?''
    
\end{itemize}

\subsection{Introducing bi-Poisson reduction} \label{S:LinearRedDecr}

Symplectic reduction, also known as the Marsden-Weinstein quotient, is a cornerstone technique in symplectic geometry. Linear symplectic reduction is described in detail in standard references like McDuff and Salamon's textbook \cite{McDuffSalamon}.  For Poisson vector spaces it takes the following form:

\begin{theorem}[Linear Symplectic Reduction] \label{T:SympReduction} Let $W$ be an isotropic subspace of a Poisson vector space  $(V, B)$. Then

\begin{enumerate}

\item The induced form $B'$ on $W^{\perp}/ W$ is well-defined and \[\operatorname{Ker} B' = \operatorname{Ker} B / (\operatorname{Ker} B \cap W).\] 

\item If $L$ is a Lagrangian (or isotropic) subspace of $(V, B)$, then \[ L' = \left( \left( L \cap W^{\perp}\right) + W \right) / W\] is a Lagrangian (respectively, isotropic) subspace of $W^{\perp}/W$. 

\end{enumerate}

\end{theorem}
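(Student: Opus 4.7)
The plan is to check both claims directly from the definitions, relying on two standard identities for skew-symmetric forms: $(U^\perp)^\perp = U + \operatorname{Ker} B$ and $\operatorname{Ker} B \subseteq U^\perp$ for every subspace $U$.

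For part 1, I would define $B'([u],[v]) := B(u,v)$ for $u,v \in W^\perp$. Well-definedness is immediate: shifting representatives by $w_1, w_2 \in W$ changes $B(u,v)$ by $B(u,w_2) + B(w_1,v) + B(w_1,w_2)$, where the first two terms vanish because $u,v \in W^\perp$ while the last vanishes because $W$ is isotropic. Skew-symmetry descends from $B$. For the kernel, $[u] \in \operatorname{Ker} B'$ means $u \in W^\perp \cap (W^\perp)^\perp$. Substituting $(W^\perp)^\perp = W + \operatorname{Ker} B$ and noting that both $W$ (by isotropy) and $\operatorname{Ker} B$ lie in $W^\perp$ gives $W^\perp \cap (W^\perp)^\perp = W + \operatorname{Ker} B$, whence the second isomorphism theorem yields $\operatorname{Ker} B' \cong \operatorname{Ker} B/(\operatorname{Ker} B \cap W)$.

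For part 2, the isotropic case is a direct expansion: for $\ell_1+w_1, \ell_2+w_2$ with $\ell_i \in L \cap W^\perp$ and $w_i \in W$, the bilinear form $B(\ell_1+w_1,\ell_2+w_2)$ splits into four terms, each vanishing by the isotropy of $L$, the definition of $W^\perp$, and the isotropy of $W$. Hence $(L \cap W^\perp) + W$ is isotropic, and $L'$ is isotropic in $(W^\perp/W, B')$.

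The main obstacle is the Lagrangian case: one must verify $\dim L' = \dim(W^\perp/W) - \tfrac{1}{2}\operatorname{rk} B'$. By Assertion~\ref{A:MaxIsotrKer} a Lagrangian $L$ contains $\operatorname{Ker} B$ and satisfies $L = L^\perp$. The key auxiliary identity is $(L \cap W^\perp)^\perp = L^\perp + (W^\perp)^\perp = L + W$, which follows, after quotienting by $\operatorname{Ker} B$ (contained in both $L$ and $W^\perp$), from the symplectic identity $(A \cap B)^\perp = A^\perp + B^\perp$. Combining this with $\dim A + \dim A^\perp = \dim V + \dim(A \cap \operatorname{Ker} B)$ gives $\dim(L \cap W^\perp) = n - k - \dim W + \dim(L \cap W)$, where $n = \dim V$ and $k = \tfrac{1}{2}\operatorname{rk} B$. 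Since $L \cap W^\perp \cap W = L \cap W$ (because $W \subseteq W^\perp$), this yields $\dim L' = n - k - \dim W$. Comparing with $\dim(W^\perp/W) - \tfrac{1}{2}\operatorname{rk} B'$, computed from part 1 together with $\dim W^\perp = n - \dim W + \dim(W \cap \operatorname{Ker} B)$, produces the same value, completing the proof. A cleaner alternative is to first reduce to the nondegenerate case by passing to $V/\operatorname{Ker} B$ and then invoke the classical linear symplectic reduction of McDuff--Salamon.
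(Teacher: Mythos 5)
Your proof is correct. The paper itself offers no proof of Theorem~\ref{T:SympReduction}: it is stated as a classical result with a citation to McDuff--Salamon (who treat the nondegenerate case), so there is no argument in the text to compare against. Your direct verification in the possibly degenerate setting is complete. The well-definedness and kernel computation in part~1 are right, using $(W^\perp)^\perp = W + \operatorname{Ker} B$ together with $W, \operatorname{Ker} B \subseteq W^\perp$ and the second isomorphism theorem. The isotropic case of part~2 is an immediate expansion. In the Lagrangian case the auxiliary identity $(L\cap W^\perp)^\perp = L + W$ is valid: since $\operatorname{Ker}B \subseteq L = L^\perp$ and $\operatorname{Ker}B \subseteq W^\perp$, both $L$ and $W^\perp$ are full preimages of their images in the symplectic quotient $V/\operatorname{Ker}B$, so the nondegenerate identity $(\tilde A \cap \tilde C)^{\tilde\perp} = \tilde A^{\tilde\perp} + \tilde C^{\tilde\perp}$ pulls back. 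The bookkeeping with $\dim A + \dim A^\perp = \dim V + \dim(A \cap \operatorname{Ker}B)$ then gives $\dim L' = n - k - \dim W$ on one side, and part~1 gives $\dim(W^\perp/W) - \tfrac{1}{2}\operatorname{rk}B' = n - k - \dim W$ on the other, so $L'$ is Lagrangian. Your closing remark is also the cleanest route and is what the paper's citation implicitly does: mod out $\operatorname{Ker}B$ first (this preserves isotropic and Lagrangian subspaces in the Poisson sense) and then invoke the classical symplectic statement; your explicit argument is a worthwhile self-contained substitute for that.
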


Luckily for us, there is a similar bi-Poisson reduction for admissible bi-isotropic subspaces that we describe in the next theorem. (Recall that we defined admissible subspaces and described their basic properties in Section~\ref{S:Admissible}.)

\begin{theorem} \label{T:BiPoissReduction} Let $\mathcal{P} = \left\{A_{\lambda} \right\}$ be a linear pencil on $V$ and let $U\subset \left(V, \mathcal{P}\right)$ be an admissible bi-isotropic subspace. Then

\begin{enumerate}

\item The induced pencil $\mathcal{P}' = \left\{A'_{\lambda}\right\}$ on $U^{\perp}/ U$ is well-defined. 

\item If $L$ is a bi-Lagrangian (or bi-isotropic) subspace of $(V, \mathcal{P})$, then \[ L' = \left( \left( L \cap U^{\perp}\right) + U \right) / U\] is a bi-Lagrangian (respectively, bi-isotropic) subspace of $U^{\perp}/U$. 

\end{enumerate}

\end{theorem}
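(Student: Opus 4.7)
The plan is to reduce the bi-Poisson statement to the classical linear symplectic reduction of Theorem~\ref{T:SympReduction}, applied separately to each regular form $A_\lambda \in \mathcal{P}$, and then use admissibility of $U$ to glue these form-level statements into a single pencil-level statement on $U^\perp/U$.

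The preparatory observation driving everything is the inclusion chain
\[
U \;\subseteq\; U^\perp \;\subseteq\; U^{\perp_\lambda}, \qquad \lambda \in \bar{\mathbb{K}}.
\]
The leftmost inclusion follows from bi-isotropy: every $U^{\perp_\lambda}$ contains $U$, hence so does their generic common value $U^\perp$. For the rightmost, Assertion~\ref{A:AddmImagePen} reduces the task to checking $A_\lambda(U) \subseteq \mathcal{P}(U)$ for \emph{every} $\lambda$; writing $A$ and $B$ as linear combinations of two generic regular forms $A_\mu, A_\nu$ shows $A(U), B(U) \subseteq \mathcal{P}(U)$, and hence $A_\lambda(U) \subseteq \mathcal{P}(U)$ by linearity in $\lambda$. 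Once the chain is in place, $A_\lambda(U, U^\perp) = 0$ for every $\lambda$, so each $A_\lambda$ descends to a well-defined form $A'_\lambda$ on $U^\perp/U$; linearity in $\lambda$ makes $\{A'_\lambda\}$ a linear pencil, settling part~1. The bi-isotropic half of part~2 is then an immediate check: any $x = a + u$ and $y = b + v$ in $(L \cap U^\perp) + U$ satisfy $A_\lambda(x, y) = 0$, because the four cross-terms all vanish using bi-isotropy of $L$, the inclusion $U^\perp \subseteq U^{\perp_\lambda}$ just established, and bi-isotropy of $U$.

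The bi-Lagrangian case is the only substantive step, and I would treat it form by form. By Assertion~\ref{A:BiLagrMaxIsotrRegular}, $L$ is Lagrangian with respect to every regular $A_\lambda \in \mathcal{P}$. I would pick two distinct $\lambda, \mu$ that are simultaneously regular for $\mathcal{P}$ and satisfy $U^{\perp_\lambda} = U^\perp = U^{\perp_\mu}$; this is a cofinite condition on $\bar{\mathbb{K}}$, hence attainable. Applying Theorem~\ref{T:SympReduction} to the Poisson space $(V, A_\lambda)$ with isotropic subspace $U$ realises the reduced Poisson space $U^{\perp_\lambda}/U = U^\perp/U$ equipped with the induced form, which by construction coincides with $A'_\lambda$, and it produces $((L \cap U^{\perp_\lambda}) + U)/U = L'$ as a Lagrangian subspace of it. The same holds for $A'_\mu$. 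A second appeal to Assertion~\ref{A:BiLagrMaxIsotrRegular} then upgrades ``Lagrangian with respect to two distinct forms'' to bi-Lagrangian in the reduced pencil $\mathcal{P}'$. The only real obstacle I anticipate is precisely the reconciliation between the pencil-level reduction by $U^\perp$ and the form-level reduction by $U^{\perp_\lambda}$; this compatibility is exactly what admissibility provides on a cofinite set of $\lambda$, so once the inclusion chain above is verified the rest is formal.
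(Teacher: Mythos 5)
Your proof is correct and follows essentially the same route as the paper: establish $U \subseteq U^\perp \subseteq U^{\perp_\lambda}$ for all $\lambda$ from admissibility (Assertion~\ref{A:AddmImagePen}), then feed two regular forms with $U^{\perp_\lambda}=U^\perp$ into the linear symplectic reduction of Theorem~\ref{T:SympReduction} and conclude via Assertion~\ref{A:BiLagrMaxIsotrRegular}. The only minor divergence is in part~1: for a singular $\mu$ with $U^{\perp_\mu}\supsetneq U^\perp$ you descend $A_\mu$ to $U^\perp/U$ directly by observing that $U$ lies in the radical of $A_\mu\bigr|_{U^\perp}$, whereas the paper first reduces on $U^{\perp_\mu}/U$ and then restricts to $U^\perp/U$ --- both produce the same induced form.
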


\begin{proof}[Proof of Theorem~\ref{T:BiPoissReduction}] By Assertion~\ref{A:AddmImagePen} $U^{\perp_{\lambda}} \supseteq U^{\perp}$ and the equality holds for almost all $\lambda \in \bar{\mathbb{C}}$. Thus we can perform the symplectic reduction w.r.t. almost all forms $A_{\lambda}$ using Theorem~\ref{T:SympReduction}. Consequently, $L'$ becomes Lagrangian (or isotropic) for a generic form $A'_{\lambda}$ and, by definition, bi-Lagrangian (bi-isotropic).

The remaining step is to demonstrate that the induced form $A'_{\mu}$ is well-defined if $U^{\perp_{\mu}} \not =  U^{\perp}$. In this case we can proceed by first inducing $A_{\mu}$ on $U^{\perp_{\mu}}/U$ via symplectic reduction. Then we can further restrict it to the subspace $U^{\perp}/U \subseteq U^{\perp_{\mu}}/U$ to obtain the desired induced form. Theorem~\ref{T:BiPoissReduction} is proved. \end{proof}

Note that $U \subset L$ for a bi-Lagrangian $L$ if and only if $L = L^{\perp} \subset U^{\perp}$. In that case Theorem~\ref{T:BiPoissReduction} takes the following simpler form.

\begin{corollary}\label{C:ContainBiisotAdm} Let $U \subset (V, \mathcal{P})$ be an admissible bi-isotropic subspace.  
 
\begin{enumerate}

\item If $L$ is bi-Lagrangian and $U\subset L$, then $L \subset U^{\perp}$.

\item A subspace $L$ such that $U \subset L \subset U^{\perp}$ is bi-Lagrangian if and only if $L/ U$ is bi-Lagrangian in $U^{\perp}/ U$.

\end{enumerate} 
\end{corollary}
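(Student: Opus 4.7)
The plan is to treat the two parts separately; both reduce almost immediately to results proved earlier in the paper.

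Part (1) is essentially a one-line argument. By Assertion~\ref{A:BiLagrMaxBiIsotAdm}, any bi-Lagrangian subspace $L$ is admissible and maximal bi-isotropic, so $L = L^{\perp}$. The hypothesis $U \subset L$ then gives $L = L^{\perp} \subset U^{\perp}$ by the order-reversing property of $\perp$.

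For the forward direction of part (2), the hypothesis $U \subset L \subset U^{\perp}$ simplifies the reduction construction from Theorem~\ref{T:BiPoissReduction}: one has $L \cap U^{\perp} = L$ and $L + U = L$, so the induced subspace $L'$ equals $L/U$. Thus Theorem~\ref{T:BiPoissReduction} directly yields that $L/U$ is bi-Lagrangian in $U^{\perp}/U$.

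For the reverse direction, I would pick $\lambda \in \bar{\mathbb{K}}$ such that simultaneously $A_{\lambda}$ is regular in $\mathcal{P}$ and the induced form $A'_{\lambda}$ is regular in the induced pencil $\mathcal{P}'$; each condition excludes only finitely many values of $\lambda$, so a generic $\lambda$ works. Since $L/U$ is bi-Lagrangian in $U^{\perp}/U$, by Assertion~\ref{A:BiLagrMaxIsotrRegular}(1) it is Lagrangian with respect to the regular form $A'_{\lambda}$. Now invoke the reverse of Theorem~\ref{T:SympReduction} (the classical correspondence between Lagrangian subspaces of $V$ containing an isotropic subspace $W$ and Lagrangian subspaces of $W^{\perp}/W$): applied to the isotropic subspace $U$ of $(V, A_{\lambda})$, this shows that the preimage $L$ in $U^{\perp_{\lambda}} = U^{\perp}$ is Lagrangian in $(V, A_{\lambda})$. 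Choosing two distinct such $\lambda$ and applying Assertion~\ref{A:BiLagrMaxIsotrRegular}(2) concludes that $L$ is bi-Lagrangian.

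The only mild obstacle is invoking the reverse of linear symplectic reduction, which is not stated explicitly in Theorem~\ref{T:SympReduction} but is standard. If one prefers to avoid it, one can instead check bi-isotropy and the correct dimension of $L$ directly: bi-isotropy of $L$ follows from isotropy of $L/U$ in $U^{\perp}/U$ via the identity $A_{\lambda}(v,w) = A'_{\lambda}(\bar v, \bar w)$ for $v, w \in U^{\perp}$ together with the fact that $A_{\lambda}(v,w)$ depends polynomially on $\lambda$, while the dimension condition follows by combining $\dim(L/U) = \dim(U^{\perp}/U) - \tfrac{1}{2}\operatorname{rk}\mathcal{P}'$ with the rank identity $\operatorname{rk}\mathcal{P}' = \operatorname{rk}\mathcal{P} - 2\dim\mathcal{P}(U)$ and $\dim U^{\perp} = \dim V - \dim\mathcal{P}(U)$.
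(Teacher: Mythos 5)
Your proof is correct. The paper is terse here---it states the corollary as ``the simpler form'' of Theorem~\ref{T:BiPoissReduction} with no explicit proof of the converse in Part~2---so you supply detail the paper omits.

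Part~(1) matches what the paper itself sketches (``Note that $U\subset L$ for a bi-Lagrangian $L$ if and only if $L=L^\perp\subset U^\perp$''), though routing the admissibility of $L$ through Assertion~\ref{A:BiLagrMaxBiIsotAdm} before concluding $L=L^\perp$ is a clean and appropriate way to justify it.

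For the converse in Part~(2), your main route (reverse symplectic reduction for a regular $A_\lambda$, applied twice, then Assertion~\ref{A:BiLagrMaxIsotrRegular}(2)) is sound, but you silently treat $U^{\perp_\lambda}=U^{\perp}$ as automatic when you write ``the preimage $L$ in $U^{\perp_{\lambda}} = U^{\perp}$''. That equality is not forced by regularity of $A_\lambda$ alone: by Assertion~\ref{A:AddmImagePen} one only gets $U^{\perp_\lambda}\supseteq U^\perp$, with equality for a generic $\lambda$. Since admissibility of $U$ makes the failure set finite, this is just a third open condition to add to your genericity list, so the argument still goes through --- but you should state it, because without it the preimage under reduction lives in $U^{\perp_\lambda}/U$, which may be strictly larger than $U^\perp/U$.

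Your fallback argument (check bi-isotropy via affine dependence of $A_\lambda(u,v)$ on $\lambda$, then match dimensions using $\operatorname{rk}\mathcal{P}'=\operatorname{rk}\mathcal{P}-2\dim\mathcal{P}(U)$ and $\dim U^\perp = \dim V - \dim\mathcal{P}(U)$) is essentially what the paper uses implicitly in the proof of Lemma~\ref{L:AlgIsomBiisotrFactor} (``both sides consist of bi-isotropic subspaces of $U^\perp$ that have the same dimension''). Your version makes the rank bookkeeping explicit, which is a genuine improvement; note that the rank identity again requires a $\lambda$ that is regular for $\mathcal{P}$ and satisfies $U^{\perp_\lambda}=U^\perp$, for the same reason as above.
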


\subsection{Isomorphism of algebraic varieties under reduction} \label{S:IsomAlgBiPRed}

Let $U \subset (V, \mathcal{P})$ be bi-isotropic and admissible. The set of bi-Lagrangian subspace $L$  such that $U \subset L$ is the projective variety $\operatorname{BLG}(V, \mathcal{P}) \cap \operatorname{Gr}_k (U^{\perp})$ for $k = \dim V - \frac{1}{2}  \operatorname{rk}  \mathcal{P}$. Note that $L \subset U^{\perp}$ by Corollary~\ref{C:ContainBiisotAdm}.  Furthermore, Corollary~\ref{C:ContainBiisotAdm} induces an isomorphism of this set with the corresponding bi-Lagrangian Grassmanian.

\begin{lemma} \label{L:AlgIsomBiisotrFactor}
Let $U\subset \left(V, \mathcal{P}\right)$ be bi-isotropic and admissible and $k = \dim V - \frac{1}{2}  \operatorname{rk} \mathcal{P}$. Denote by $\mathcal{P}_{U}$ the induced pencil on $U^{\perp}/U$. Then the variety of bi-Lagrangian subspaces containing $U$ is isomorphic to the bi-Lagrangian Grassmanian of $U^{\perp}/U$:
 \begin{equation} \label{Eq:BiPoissBLG} \operatorname{BLG}(V, \mathcal{P}) \cap \operatorname{Gr}_k (U^{\perp}) \approx \operatorname{BLG}(U^{\perp}/U, \mathcal{P}_U). \end{equation}
\end{lemma}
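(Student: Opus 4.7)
The plan is to exhibit an explicit pair of mutually inverse algebraic morphisms realizing the asserted isomorphism, using Corollary~\ref{C:ContainBiisotAdm} to handle the bi-Lagrangian condition on both sides. Let $\pi : U^{\perp} \to U^{\perp}/U$ denote the canonical projection and set $d = \dim U$. Define
\[
\Phi : \operatorname{BLG}(V, \mathcal{P}) \cap \operatorname{Gr}_k(U^{\perp}) \longrightarrow \operatorname{BLG}(U^{\perp}/U, \mathcal{P}_U), \qquad L \longmapsto L/U,
\]
and conversely
\[
\Psi : \operatorname{BLG}(U^{\perp}/U, \mathcal{P}_U) \longrightarrow \operatorname{BLG}(V, \mathcal{P}) \cap \operatorname{Gr}_k(U^{\perp}), \qquad \bar{L} \longmapsto \pi^{-1}(\bar{L}).
\]

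First I would check well-definedness. For $L$ bi-Lagrangian with $U \subset L$, Corollary~\ref{C:ContainBiisotAdm}(1) gives $L \subset U^{\perp}$, so $L/U$ is well-defined and of the correct dimension $k - d = \dim(U^{\perp}/U) - \frac{1}{2}\operatorname{rk}\mathcal{P}_U$; by Corollary~\ref{C:ContainBiisotAdm}(2) it is bi-Lagrangian in the induced pencil. Conversely, for any $\bar{L}$ bi-Lagrangian in $U^{\perp}/U$, the preimage $\pi^{-1}(\bar{L})$ is a subspace of $V$ containing $U$ and contained in $U^{\perp}$, again bi-Lagrangian by Corollary~\ref{C:ContainBiisotAdm}(2). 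These two maps are visibly mutual inverses, since $\pi^{-1}(L/U) = L$ whenever $U \subset L \subset U^{\perp}$, and $\pi^{-1}(\bar L)/U = \bar L$.

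Next I would promote this bijection to an isomorphism of projective varieties. The key point is the standard fact that for any fixed subspace $U \subset W \subset V$, the ``quotient by $U$'' map
\[
\{\, X \in \operatorname{Gr}_k(W) \ :\ U \subset X \,\} \xrightarrow{\ \sim\ } \operatorname{Gr}_{k-d}(W/U),\qquad X \mapsto X/U,
\]
is an isomorphism of algebraic varieties, with inverse $\bar{X} \mapsto \pi^{-1}(\bar{X})$. This can be seen either directly from Plücker coordinates (choosing a complement to $U$ in $W$) or from the interpretation as a Schubert subvariety of $\operatorname{Gr}_k(W)$. Restricting this global isomorphism to the closed subvarieties $\operatorname{BLG}(V, \mathcal{P}) \cap \operatorname{Gr}_k(U^{\perp})$ and $\operatorname{BLG}(U^{\perp}/U, \mathcal{P}_U)$, which correspond to each other setwise by the previous paragraph, yields \eqref{Eq:BiPoissBLG}.

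The main (mild) obstacle is verifying the algebraic, not merely bijective, nature of $\Phi$ and $\Psi$; but this reduces to the standard Grassmannian computation above, so no subtle new input is required beyond the bi-Poisson reduction developed in Theorem~\ref{T:BiPoissReduction} and Corollary~\ref{C:ContainBiisotAdm}. With these ingredients in place, the conclusion of Lemma~\ref{L:AlgIsomBiisotrFactor} is immediate.
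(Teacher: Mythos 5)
Your proposal is correct, but it follows a different route than the paper's proof. The paper first observes that $\operatorname{BLG}(V, \mathcal{P}) \cap \operatorname{Gr}_k(U^{\perp}) = \operatorname{BLG}(U^{\perp}, \mathcal{P}\bigr|_{U^{\perp}})$, since both sides consist of bi-isotropic subspaces of $U^{\perp}$ of the common dimension $k$. It then notes that $U$ sits inside the core subspace of $(U^{\perp}, \mathcal{P}\bigr|_{U^{\perp}})$, so that Corollary~\ref{C:IsomJordPart} (the statement that $\operatorname{BLG}$ depends only on the Jordan part and, in particular, is unchanged by passing to the quotient by any piece of the core) yields the isomorphism with $\operatorname{BLG}(U^{\perp}/U)$. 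Your argument instead constructs explicit mutually inverse maps $L \mapsto L/U$ and $\bar{L} \mapsto \pi^{-1}(\bar{L})$, verifies well-definedness from Corollary~\ref{C:ContainBiisotAdm}, and upgrades the set-theoretic bijection to an isomorphism of projective varieties by restricting the standard isomorphism between the Schubert subvariety $\{X \in \operatorname{Gr}_k(U^{\perp}) : U \subset X\}$ and $\operatorname{Gr}_{k - \dim U}(U^{\perp}/U)$. The content is equivalent (Corollary~\ref{C:ContainBiisotAdm} and Corollary~\ref{C:IsomJordPart} both trace back to Theorem~\ref{T:BiLagrKronPart}), but your version is more explicit and self-contained, in particular making visible exactly where the algebraic-variety structure comes from; the paper's version is shorter because it reuses the already-established core/Jordan decomposition of $\operatorname{BLG}$. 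One small point that you state without proof — that the two Grassmannians appearing in the explicit bijection have compatible dimensions — is, as you implicitly note, automatic once Corollary~\ref{C:ContainBiisotAdm}(2) is applied, since a bi-Lagrangian subspace of a given pencil has a forced dimension; so the proof holds together.
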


\begin{proof}[Proof of Lemma~\ref{L:AlgIsomBiisotrFactor}] It is easy to see that \begin{equation} \label{Eq:biPoissUperp}  \operatorname{BLG}(V, \mathcal{P}) \cap \operatorname{Gr}_k (U^{\perp}) =\operatorname{BLG}\left(U^{\perp}\right).  \end{equation}  Both sides of \eqref{Eq:biPoissUperp} consist of bi-isotropic subspaces of $U^{\perp}$ that have the same dimension $\dim V - \frac{1}{2}\operatorname{rk}\mathcal{P}$. Note that $U$ is a part of the core subspace of $U^{\perp}$. Therefore, by Corollary~\ref{C:IsomJordPart}, \[\operatorname{BLG}\left(U^{\perp}\right) \approx \operatorname{BLG}(U^{\perp}/U, \mathcal{P}_U). \] Lemma~\ref{L:AlgIsomBiisotrFactor} is proved. \end{proof}

\begin{remark} \label{Rem:RedDisc} Bi-Poisson reduction map \[ f_U: \operatorname{BLG}(V,\mathcal{P}) \to \operatorname{BLG}(U^{\perp}/U)\] is generally not continuous (in the standard topology). Let $(V,\mathcal{P}) = \mathcal{J}_{0, 6}$,  $P$ be the recursion operator and $U = \operatorname{Im} P^2$. In a standard basis $e_1,e_2, e_3, f_1, f_2, f_3$ the subspace $U = \operatorname{Span}\left\{e_3, f_1\right\}$. Consider limit of (row-spanned) bi-Lagrangian subspaces: \[ \left(\begin{array}{cccccc} \varepsilon & 1 & 0 & 0 & 1 & 0   \\ 0 & \varepsilon & 1 & 1 & 0 & 0 \\ 0 & 0 & \varepsilon & 0 & 0 & 0 \end{array}\right) \xrightarrow{\varepsilon \to 0 } \left(\begin{array}{cccccc} 0 & 1 & 0 & 0 & 1 & 0  \\ 0 & 0 & 1 & 1 & 0 & 0  \\ 0 & 0 & 1 & 0 & 0 & 0  \end{array} \right). \] Limit subspace after reduction differs from reduction of the limit subspace: \[ \left(\begin{array}{cc} \varepsilon & 0  \end{array} \right) \xrightarrow{\varepsilon \to 0 } \left(\begin{array}{cc} 0 & 0 \end{array} \right) \not = \left(\begin{array}{cc} 1 & 1 \end{array} \right).\] The map $f_U$ requires additional conditions (such as $\dim U \cap L = \operatorname{const}$) for continuity. \end{remark}

\subsection{Extending bi-isotropic subspaces to bi-Lagrangian} \label{SubS:BiIsotr2BiLagr} 

In this section we answer the question (see \cite[Problem 14]{BolsinovIzosimomKonyaevOshemkov12} and \cite[Problem 11]{BolsinovTsonev17}): 

\begin{itemize}

\item when a subspace $U$ extends to a bi-Lagrangian subspace $L$? 

\end{itemize} Obviously, $U$ must be bi-isotropic. Since the bi-Lagrangian Grassmanian is not an empty set by Assertion~\ref{A:ExistBiLagr}, the next statement immediately follows from Lemma~\ref{L:AlgIsomBiisotrFactor}. 

\begin{corollary} \label{Cor:ExtendsBiIsotJordan}
Any bi-isotropic and admissible subspace $U\subset \left(V, \mathcal{P}\right)$ extends to a bi-Lagrangian subspace $L \subset (V, \mathcal{P})$.
\end{corollary}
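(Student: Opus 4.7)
The plan is to deduce this immediately from the two ingredients already in hand: the bi-Poisson reduction isomorphism of Lemma~\ref{L:AlgIsomBiisotrFactor} and the universal existence result of Assertion~\ref{A:ExistBiLagr}. First, I would form the quotient bi-Poisson space $(U^{\perp}/U, \mathcal{P}_U)$, which is well-defined precisely because $U$ is admissible (so Theorem~\ref{T:BiPoissReduction} applies to induce a pencil on $U^{\perp}/U$). Since $U$ is also bi-isotropic, we have $U \subseteq U^{\perp}$, so the quotient is a genuine bi-Poisson vector space.

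Next, I would apply Assertion~\ref{A:ExistBiLagr} to $(U^{\perp}/U, \mathcal{P}_U)$ to produce some bi-Lagrangian subspace $\bar L \subset U^{\perp}/U$. By Lemma~\ref{L:AlgIsomBiisotrFactor}, the preimage of $\bar L$ under the quotient map $U^{\perp} \to U^{\perp}/U$ is a bi-Lagrangian subspace $L \subset (V,\mathcal{P})$ containing $U$, which is exactly the extension we need.

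There is essentially no obstacle here; the statement is a one-line corollary of the two results cited in the excerpt. The only thing to be careful about is verifying that the isomorphism of Lemma~\ref{L:AlgIsomBiisotrFactor} really does send bi-Lagrangian subspaces of the quotient to bi-Lagrangian subspaces of $V$ containing $U$ (which it does, by construction), so that non-emptiness of $\operatorname{BLG}(U^{\perp}/U, \mathcal{P}_U)$ transfers to non-emptiness of $\operatorname{BLG}(V,\mathcal{P}) \cap \operatorname{Gr}_k(U^{\perp})$ with the additional property $U \subset L$.
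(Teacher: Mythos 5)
Your proposal is correct and is essentially identical to the paper's own one-line proof: the paper likewise observes that since $\operatorname{BLG}(U^{\perp}/U,\mathcal{P}_U)$ is non-empty by Assertion~\ref{A:ExistBiLagr}, the isomorphism of Lemma~\ref{L:AlgIsomBiisotrFactor} produces a bi-Lagrangian $L\supset U$. Your additional remark about verifying that the isomorphism respects the containment $U\subset L$ is precisely what Corollary~\ref{C:ContainBiisotAdm} supplies, so nothing is missing.
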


However, what if we consider a bi-isotropic subspace $U \subset (V, \mathcal{P})$ that is not admissible? To address this case, let's revisit the structure of bi-Lagrangian subspaces. Combining Theorem~\ref{T:BiLagrKronPart}  and Assertion~\ref{A:AdmPInv} we get the following:

\begin{assertion} \label{A:WhenBiLagr} Let $(V, \mathcal{P})$ be a bi-Poisson vector space, $K$ be the core subpsace, $M$ be the mantle subspace and $P$ be the induced recursion operator on $M/K$. A subspace $L \subset (V, \mathcal{P})$ is bi-Lagrangian if and only if the following holds:

\begin{enumerate}

\item $K \subset L \subset M$,

\item $L/K$ is bi-isotropic and admissible (i.e. $P$-invariant) in $M/K$.

\end{enumerate}

\end{assertion}

Simple extension criteria emerge for Kronecker and Jordan pencils.

 \begin{corollary} Consider a linear pencil $\mathcal{P} = \left\{ A+ \lambda B \right\} $ on a vector space $V$. 
 
 \begin{enumerate}
 
 \item If $\mathcal{P}$ is a Kronecker pencil, then a subspace $U\subset (V, \mathcal{P})$ extends to a bi-Lagrangian subspace $L$ if and only if it is contained in the core subspace $U \subset K$. 
 
 \item If $\mathcal{P}$ is a Jordan pencil and $P=B^{-1}A$, then a subspace $U\subset (V, \mathcal{P})$ extends to a bi-Lagrangian subspace $L$  if and only if the $P$-invariant subspace generated by $U$ is bi-isotropic.
 
 \end{enumerate}
 
 \end{corollary}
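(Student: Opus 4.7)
My plan is to derive both parts directly from results already established in the excerpt, so no new machinery is required; the proof becomes essentially a synthesis.

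For part (1), I would invoke Corollary~\ref{C:UniqBiLagrKron}, which asserts that in the Kronecker case the core $K$ is the \emph{only} bi-Lagrangian subspace of $(V,\mathcal{P})$. Thus a subspace $U \subset V$ extends to some bi-Lagrangian $L$ if and only if $U \subset K$, and there is nothing further to verify.

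For part (2), I would argue both implications separately using the characterization of bi-Lagrangian subspaces in the Jordan case via $P = B^{-1}A$. Let $\langle U \rangle_P$ denote the smallest $P$-invariant subspace containing $U$.
\begin{itemize}
\item[$(\Rightarrow)$] Suppose $U \subset L$ for some bi-Lagrangian $L$. By Lemma~\ref{L:NonGenDescBiLagr}, $L$ is $P$-invariant, so $\langle U \rangle_P \subset L$. Since $L$ is bi-isotropic, so is the subspace $\langle U \rangle_P$.
\item[$(\Leftarrow)$] Suppose $\langle U \rangle_P$ is bi-isotropic. Being $P$-invariant, by Assertion~\ref{A:AdmPInv} it is admissible. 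Then Corollary~\ref{Cor:ExtendsBiIsotJordan} supplies a bi-Lagrangian subspace $L \supset \langle U \rangle_P \supset U$.
\end{itemize}

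I do not anticipate a genuine obstacle here: the corollary is really a bookkeeping consequence of the three main inputs (Corollary~\ref{C:UniqBiLagrKron}, Lemma~\ref{L:NonGenDescBiLagr}, and Corollary~\ref{Cor:ExtendsBiIsotJordan}), together with the identification ``admissible $=$ $P$-invariant'' in the Jordan case. The only point worth flagging is that writing $P = B^{-1}A$ implicitly requires $B$ to be nondegenerate; in a Jordan pencil this corresponds to the eigenvalue $\infty$ not appearing, and one may assume this without loss of generality (otherwise exchange the roles of $A$ and $B$, or more generally replace $B$ by a regular form $A_\lambda$, as done in Section~\ref{S:JKTheorem} discussions). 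With that remark, the two implications close up and the corollary follows.
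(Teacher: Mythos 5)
Your proof is correct and follows the route the paper evidently has in mind: the corollary is stated without proof, and the intended derivation is exactly this assembly of Corollary~\ref{C:UniqBiLagrKron} (Kronecker case), Lemma~\ref{L:NonGenDescBiLagr} together with Assertion~\ref{A:AdmPInv} (``admissible $=$ $P$-invariant''), and Corollary~\ref{Cor:ExtendsBiIsotJordan}. Your remark about the implicit nondegeneracy of $B$ in the Jordan case is also well placed, since the hypothesis $P=B^{-1}A$ silently requires it and the fix (replacing $B$ by a regular form of the pencil) is standard in the paper.
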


It is not hard to see that in the general case we have the following extension criteria.
 
 \begin{corollary}\label{Cor:BiIsotrExtendsBiLagr} Let $K$ and $M$ be the core and mantle subspaces of a bi-Poisson vector space $(V, \mathcal{P})$ and $P$ be the induced recursion operator on $M/K$. A subspace $U \subset (V, \mathcal{P})$ extends to a bi-Lagrangian subspace $L \supset U$ if and only if the following holds:
 
 \begin{enumerate}
 
 \item \label{I:Cond1BiIsot} $U \subset M$ (or, equivalently, $U$ is bi-orthogonal to $K$, since $M = K^{\perp}$),
  
 \item the $P$-invariant subspace generated by $\left(U + K\right)/ K$ in $M/K$ is bi-isotropic.

\end{enumerate}
 
 \end{corollary}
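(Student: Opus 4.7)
The plan is to reduce the extension question on $(V,\mathcal{P})$ to a purely Jordan extension question on $M/K$, where we can apply Corollary~\ref{Cor:ExtendsBiIsotJordan}. The key bridge is Assertion~\ref{A:WhenBiLagr}, which tells us that bi-Lagrangian subspaces $L$ are exactly the preimages in $M$ of $P$-invariant bi-isotropic subspaces $\tilde L \subset M/K$ of the right dimension (equivalently, bi-Lagrangian subspaces of the Jordan pencil induced on $M/K$).

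For the forward direction ($\Rightarrow$), suppose $U \subset L$ with $L$ bi-Lagrangian. By Assertion~\ref{A:WhenBiLagr}, $K \subset L \subset M$, so $U \subset M$, giving condition~\ref{I:Cond1BiIsot}. Next, $L/K$ is bi-isotropic and $P$-invariant in $M/K$ by the same assertion, so it contains the $P$-invariant subspace generated by $(U+K)/K$; a subspace of a bi-isotropic subspace is bi-isotropic, establishing the second condition.

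For the reverse direction ($\Leftarrow$), let $\tilde U \subset M/K$ denote the $P$-invariant subspace generated by $(U+K)/K$, which is bi-isotropic by hypothesis. Since the induced pencil on $M/K$ is Jordan and $\tilde U$ is $P$-invariant, $\tilde U$ is admissible by Assertion~\ref{A:AdmPInv}. Corollary~\ref{Cor:ExtendsBiIsotJordan} then provides a bi-Lagrangian subspace $\tilde L \subset M/K$ with $\tilde U \subset \tilde L$. Setting $L := \pi^{-1}(\tilde L)$, where $\pi: M \to M/K$ is the quotient map (well-defined because $K \subset K^\perp = M$), we obtain $K \subset L \subset M$ with $L/K = \tilde L$ bi-isotropic and $P$-invariant, so Assertion~\ref{A:WhenBiLagr} makes $L$ bi-Lagrangian in $(V,\mathcal{P})$. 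Finally, condition~\ref{I:Cond1BiIsot} gives $U \subset M$, and the containment $(U+K)/K \subset \tilde U \subset \tilde L = L/K$ yields $U \subset U + K \subset L$.

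The argument is essentially bookkeeping on top of Assertion~\ref{A:WhenBiLagr} and Corollary~\ref{Cor:ExtendsBiIsotJordan}; no step looks genuinely hard, but the one spot worth double-checking is the reverse direction's use of Corollary~\ref{Cor:ExtendsBiIsotJordan}, which required admissibility of $\tilde U$ in $M/K$. That is why it is important that the pencil on $M/K$ is Jordan (admissibility coincides with $P$-invariance), and why we pass to $M/K$ rather than working directly with $U$ in $(V,\mathcal{P})$, where $U$ itself need not be admissible.
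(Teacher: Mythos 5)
Your proof is correct and follows exactly the route the paper itself implicitly suggests: the paper offers no explicit proof for this corollary, saying only that ``it is not hard to see'' it follows from Assertion~\ref{A:WhenBiLagr} together with Corollary~\ref{Cor:ExtendsBiIsotJordan}, and those are precisely the two ingredients you use, with the bi-Poisson reduction to $M/K$ supplying the Jordan setting where admissibility equals $P$-invariance.

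One small caveat worth flagging, though it does not damage your argument: Assertion~\ref{A:WhenBiLagr} as literally printed in the paper characterizes bi-Lagrangian $L$ by ``$K \subset L \subset M$ and $L/K$ bi-isotropic and $P$-invariant,'' which omits a maximality or dimension condition (e.g.\ $L = K$ satisfies both bullets but is bi-Lagrangian only in the Kronecker case). The intended statement must require $L/K$ to be bi-Lagrangian in $M/K$. In your reverse direction you write that $L/K = \tilde L$ is ``bi-isotropic and $P$-invariant'' and appeal to Assertion~\ref{A:WhenBiLagr}; since Corollary~\ref{Cor:ExtendsBiIsotJordan} in fact gave you $\tilde L$ bi-\emph{Lagrangian}, your conclusion is sound under the corrected reading of the assertion, but it would be cleaner to say directly that $\tilde L$ is bi-Lagrangian, so $\pi^{-1}(\tilde L)$ is bi-Lagrangian, rather than downgrading to ``bi-isotropic and $P$-invariant'' and relying on the paper's imprecise phrasing.
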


 \section{Group of automorphisms \texorpdfstring{$\operatorname{Aut}(V, \mathcal{P})$}{Aut(V, P)}} \label{S:AutGroup}

The Lie algebra for the group of bi-Poisson automorphisms $\operatorname{Aut}(V, \mathcal{P})$ was previously described in \cite{Pumei10}, see also \cite{Dmytryshyn2013}. In this section we describe this Lie algebra $\operatorname{aut}(V, \mathcal{P})$ in the Jordan case in a way that suits our needs.  Let $(V, \mathcal{P})$ be a sum of Jordan blocks with zero eigenvalue: \[(V, \mathcal{P}) = \bigoplus_{i=1}^t \left(\bigoplus_{j=1}^{l_i} \mathcal{J}_{0, 2n_i} \right),\] where $n_1
\geq n_2 \geq \dots \geq n_t$. First, we choose a convenient basis.

\begin{definition} We say that $e^{ij}_k, f^{ij}_k$ for $i=1, \dots, t, j=1,\dots, l_i, k=1, \dots, n_i$ is a \textbf{standard basis} of $(V, \mathcal{P})$, if for each fixed $i, j$ it is a standard basis for one of the Jordan blocks.  \end{definition}

It will be convenient to rearrange the basis $e^{ij}_k, f^{ij}_k$ as follows:

\begin{itemize}

\item Global Loop: Iterate over the index $i=1, \dots, t$;

\item Inner Loop (Fixed $i$): For a fixed value of $i$ we iterate over the index $k =1, \dots, n_i$;

\item Innermost Loop (Fixed $i$ and $k$): For a fixed pair $(i, k)$ first we take $e^{ij}_k$  for $j=1, \dots, l_i$. Then we take $f^{ij}_{n_i-k+1}$ for $j=1, \dots, l_i$.

\end{itemize}

Simply speaking, we take the following basis:

\begin{equation} \label{Eq:SeverJordBlocksGroupedBasisSt} \begin{gathered}
 e^{11}_1, \dots, e^{1 l_1}_{1}, \quad f^{11}_{n_1}, \dots, f^{1 l_1}_{n_1}, \quad \dots \quad  e^{11}_{n_1}, \dots, e^{1 l_1}_{n_1}, \quad f^{11}_{1}, \quad \dots, f^{1 l_1}_{1}, \quad \dots \\  e^{t1}_1, \dots, e^{t l_t}_{1}, \quad f^{t1}_{n_t}, \dots, f^{t l_t}_{n_t}, \quad \dots \quad  e^{t1}_{n_t}, \dots, e^{t l_t}_{n_t}, \quad f^{t1}_{1}, \dots, f^{t l_t}_{1}.
\end{gathered} \end{equation}

\begin{theorem}[P.~Zhang, \cite{Pumei10}] \label{T:BiSymp_General_Jordan_Case_Mega}
Consider a sum of Jordan blocks $(V, \left\{A + \lambda B\right\}) = \bigoplus_{i=1}^t \left(\bigoplus_{j=1}^{l_i} \mathcal{J}_{0, 2n_i} \right),$ where $n_1
\geq n_2 \geq \dots \geq n_t$. Then in the basis \eqref{Eq:SeverJordBlocksGroupedBasisSt} the matrices of the operator 
$P=B^{-1}A$ and the form $B$ are block-diagonal
\begin{equation}\label{E:Basis_Jordan_General_Case}
P = \begin{pmatrix} P_1 & & \\ & \ddots & \\ & & P_t  \end{pmatrix},
\qquad  B =  \begin{pmatrix} B_1 & & \\ & \ddots & \\ & & B_t
\end{pmatrix},
\end{equation}  where the blocks $P_i$ and $B_i$ are equal to \begin{equation} P_i = \left( \begin{array}{cccc} 0 &
& & \\ I_{2l_i} & \ddots & & \\ & \ddots & \ddots & \\
& & I_{2l_i} \end{array} \right) , \quad \text{and} \quad B_i = \left(
\begin{array}{cccc}  & & & Q_{2l_i} \\ & & \udots &  \\ & \udots &  &  \\
Q_{2l_i} & &  &  \end{array} \right),  \end{equation} where $Q_{2s} =
\begin{pmatrix} 0 & I_s \\ - I_s & 0 \end{pmatrix}$ and $I_s$ is the $s\times s$ identity matrix.  Then the Lie algebra $\textnormal{aut}(V, B, P)$ of the automorphism group consists of elements with the following structure: \begin{equation} \label{E:bsp_algebra_matrix}
C = \left( \begin{array}{ccccc|ccc|c} C^{1,1}_1 & & & & & &  & & \multirow{5}{*}{$\cdots$} \\
C^{1,1}_2 & C^{1,1}_1 & & & & & & &  \\ \vdots & \ddots & \ddots &  & & C^{1,2}_1 & & & \\
\vdots & \ddots & \ddots & \ddots & & \vdots & \ddots & &
\\ C^{1,1}_{n_1} & \cdots & \cdots & \cdots & C^{1,1}_1  & C^{1,2}_{n_2} & \cdots & C^{1,2}_1 &
\\ \hline C^{2,1}_1 &  & & & & C^{2,2}_1 & &  & \multirow{3}{*}{$\cdots$} \\
\vdots & \ddots & & & & \vdots & \ddots  & & \\ C^{2,1}_{n_2} & \cdots &
C^{2,1}_1 & & & C^{2,2}_{n_2} & \cdots & C^{2,2}_1 & \\ \hline
\multicolumn{5}{c|}{\cdots}   & \multicolumn{3}{c|}{\cdots} & \cdots
\end{array} \right).
\end{equation}
Each $C^{i,j}_s$, where $i,j =1, \dots, t$, $s = 1, \dots, n_{\max(i, j)}$, are  $2l_i \times 2l_j$ matrices satisfying the relations
\begin{equation} \label{E:Cond_on_BiSymp_Jordan_Case} (C^{j,i}_s)^T Q_{2l_j} + Q_{2l_i} C^{i, j}_s =0
\end{equation}
In particular, the elements of the diagonal blocks $C^{i,i}_s$ are elements of the symplectic Lie algebra $sp(2l_i)$ since \begin{equation} (C^{i,i}_s)^{T} Q_{2l_i} + Q_{2l_i} C^{i, i}_s =0
\end{equation} \end{theorem}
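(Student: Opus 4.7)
The plan is to reduce the problem to linear algebra over matrices that commute with a nilpotent operator and are skew-symmetric with respect to a non-degenerate bilinear form. A bi-Poisson automorphism of $(V,\mathcal{P})$ is a linear map $C \in \operatorname{GL}(V)$ preserving both $A$ and $B$, which in the non-degenerate case is equivalent to preserving $B$ and commuting with the recursion operator $P = B^{-1}A$. At the Lie algebra level, $\operatorname{aut}(V,\mathcal{P})$ consists of $C\in \mathfrak{gl}(V)$ satisfying $C^T B + B C = 0$ and $[C,P]=0$. So I would separately characterize the centralizer of $P$ and the symplectic Lie algebra of $B$, and then intersect.

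First I would verify the block structure \eqref{E:Basis_Jordan_General_Case} by direct inspection of the rearranged basis \eqref{Eq:SeverJordBlocksGroupedBasisSt}. Within one Jordan block $\mathcal{J}_{0,2n_i}$ with standard basis $e^{ij}_1,\dots,e^{ij}_{n_i},f^{ij}_1,\dots,f^{ij}_{n_i}$, one has $P e^{ij}_k = e^{ij}_{k+1}$, $Pf^{ij}_k = f^{ij}_{k-1}$, with the convention that out-of-range vectors are zero. Grouping by $i$ and then, for each fixed $i$, by the pair $(e^{i,\cdot}_k, f^{i,\cdot}_{n_i-k+1})$ across $j = 1,\dots,l_i$, turns $P_i$ into the desired block-shift with $2l_i \times 2l_i$ identity blocks on the subdiagonal, and turns $B_i$ into the block-antidiagonal form with $Q_{2l_i}$ blocks; this is a routine rewriting.

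Next I would describe the centralizer $Z(P)$. Since $P$ is block-diagonal, write $C = (C^{i,j})$ with $C^{i,j}: V_j \to V_i$; the relation $P_i C^{i,j} = C^{i,j} P_j$ decouples into independent systems for each $(i,j)$. Because each $P_i$ is a nilpotent ``shift by $2l_i$'' on $V_i = (\mathbb{K}^{2l_i})^{n_i}$, a standard argument (the centralizer of a single Jordan block is the algebra of upper triangular Toeplitz matrices, extended to the $2l_i \times 2l_j$ block version) shows that $C^{i,j}$ is forced to be a block-Toeplitz lower-triangular rectangular matrix aligned at the bottom-right: it is uniquely determined by $2l_i \times 2l_j$ matrices $C^{i,j}_1,\dots,C^{i,j}_{\min(n_i,n_j)}$, with entries above the aligned diagonals equal to zero. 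This yields the global shape displayed in \eqref{E:bsp_algebra_matrix}.

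Finally I would impose the skew-symmetry condition $C^T B + B C = 0$. In the chosen basis $B$ is the block-diagonal sum of the block-antidiagonal matrices $B_i$ built from $Q_{2l_i}$, so conjugation by $B$ swaps the Toeplitz ``levels'' indexed by $s$ with a single overall sign and pairs the block $C^{i,j}_s$ with $C^{j,i}_s$; the resulting equation on the $2l_i \times 2l_j$ coefficients is exactly \eqref{E:Cond_on_BiSymp_Jordan_Case}, which for $i=j$ reduces to the defining relation of $\mathfrak{sp}(2l_i)$. The only real subtlety, and the main obstacle, is the bookkeeping for the off-diagonal rectangular blocks when $n_i \neq n_j$: one must check that the Toeplitz indexing forced by $[C,P]=0$ is compatible with the position-reversal induced by the antidiagonal $B_i$, so that \eqref{E:Cond_on_BiSymp_Jordan_Case} is well-posed on the common range $s=1,\dots,\min(n_i,n_j)$ and imposes no further constraints on the ``overhanging'' levels where only one of $C^{i,j}_s$ or $C^{j,i}_s$ is present.
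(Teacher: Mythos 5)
Your proposal matches the paper's proof in both strategy and detail: verify the block form of $P$ and $B$ in the rearranged basis, use $[C,P]=0$ to force the block-Toeplitz lower-triangular shape on each $X_{ij}$, then impose $C^T B + BC = 0$ to get the relations \eqref{E:Cond_on_BiSymp_Jordan_Case}. The "bookkeeping" subtlety you flag for rectangular off-diagonal blocks with $n_i \neq n_j$ is exactly what the paper handles by observing that $P_i X_{ij}$ shifts entries down by $l_i$ rows while $X_{ij}P_j$ shifts them left by $l_j$ columns, forcing $X_{ij}$ into the padded form $\bigl(\begin{smallmatrix} 0 \\ Y_{ij}\end{smallmatrix}\bigr)$ or $\bigl(\begin{smallmatrix} Y_{ij} & 0\end{smallmatrix}\bigr)$ so the antidiagonal $B$-pairing and the Toeplitz levels stay aligned.
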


Here \eqref{E:bsp_algebra_matrix} means that $C$ is a block-matrix \begin{equation} \label{Eq:BiPoissJordAutBigBlocks} C = \left( \begin{matrix} X_{11} & \dots & X_{t1} \\ \vdots & \ddots &\vdots \\ X_{t1} & \dots & X_{tt} \end{matrix} \right),\end{equation} where each block $X_{ij}$ is a $2n_i l_i \times 2n_j l_j$ matrix. $X_{ij}$ has the form $\left(\begin{matrix} 0 \\ Y_{ij} \end{matrix} \right)$ for $i \geq j$ and the form $\left( \begin{matrix} Y_{ij} & 0 \end{matrix} \right)$ for $i < j$, where \begin{equation} \label{Eq:BiPoissJordAutBlocksSmall} Y_{ij} = \left(\begin{matrix} C^{i,j}_1 & & & \\ C^{i, j}_2 & C^{i,j}_1 &  & \\ \vdots & \ddots & \ddots  & \\ C^{i,j}_{n} & \dots & \dots  & C^{i,j}_1\\ \end{matrix}  \right), \qquad n = \max(n_i, n_j). \end{equation}

\begin{remark} We emphasize that Theorem~\ref{T:BiSymp_General_Jordan_Case_Mega} allows for both \textit{strict} and \textit{non-strict} inequalities in the sequence \[n_1 \geq n_2 \geq \dots \geq n_t.\] This makes Theorem~\ref{T:BiSymp_General_Jordan_Case_Mega}  more general and more flexible. If needed, we can group together all Jordan blocks with the same size and get strict inequalities \[n_1 > n_2 > \dots > n_q.\] However, in some cases it may be more convenient to allow equal block sizes $n_i = n_{j}$. 
\end{remark}

\begin{remark} Note that Theorem~\ref{T:BiSymp_General_Jordan_Case_Mega} is about the Lie algebra $\operatorname{aut}(V, \mathcal{P})$. Elements of the Lie group $\operatorname{Aut}(V, \mathcal{P})$ have the same form  \eqref{E:bsp_algebra_matrix}  but the conditions on blocks become much more convoluted (especially if the equal Jordan blocks  are not grouped together).
\end{remark}

\begin{proof}[Proof of Theorem~\ref{T:BiSymp_General_Jordan_Case_Mega}]
First of all, let us show that in the basis~\eqref{Eq:SeverJordBlocksGroupedBasisSt} the matrices of $P$ and $B$ are as required. It suffices to describe such basis for $l$ Jordan $2n \times 2n$ blocks. Let $e_1^{j}, \dots, e_n^{j}, f_1^{j}, \dots, f_{k}^{j}, \dots, f_{n}^{j}$, where $j =1, \dots l$, be the basis for these Jordan blocks from the JK theorem. That is, for each Jordan block the matrices of forms are {\scriptsize
\[
A_j =\left(
\begin{array}{c|c}
  0 & \begin{matrix}
   0 &1&        & \\
      & 0 & \ddots &     \\
      &        & \ddots & 1  \\
      &        &        & 0   \\
    \end{matrix} \\
  \hline
  \begin{matrix}
  0  &        &   & \\
  \minus1   & 0 &     &\\
      & \ddots & \ddots &  \\
      &        & \minus1   & 0 \\
  \end{matrix} & 0
 \end{array}
 \right)
\quad  B_j = \left(
\begin{array}{c|c}
  0 & \begin{matrix}
    1 & &        & \\
      & 1 &  &     \\
      &        & \ddots &   \\
      &        &        & 1   \\
    \end{matrix} \\
  \hline
  \begin{matrix}
  \minus1  &        &   & \\
     & \minus1 &     &\\
      &  & \ddots &  \\
      &        &    & \minus1 \\
  \end{matrix} & 0
 \end{array}
 \right)
\]
}
Then in the basis \[e_1^{1}, e_1^2, \dots, e^{l}_1, f_n^1,  f_n^2, \dots,
f^{l}_n, \quad \dots \quad e^{1}_{n}, \dots, e_{l}^{n}, f^{1}_1,
\dots, f^{l}_1\] the matrices of the operator $P$ and form $B$ are \begin{equation}\label{E:Basis_Same_Jordan_Blocks}  P = \left( \begin{matrix} 0 & & &  \\ I_{2l} & \ddots && \\ &
\ddots & \ddots & \\ & & I_{2l} & 0 \end{matrix} \right) \qquad B =
\left( \begin{matrix} & & & Q_{2l} \\ & &\udots & \\ &\udots & &  \\
Q_{2l}&  & & \end{matrix} \right). \end{equation}  

The rest of the proof is by direct calculation. Elements $ C \in \textnormal{aut}(V, B, P)$ have the form \eqref{E:bsp_algebra_matrix} because they commute with operator $P$. Indeed, $C$, given by \eqref{Eq:BiPoissJordAutBigBlocks}, commutes with $P$ if and only if for all $i,j=1, \dots, t$ the following holds: \[ P_i X_{ij} = X_{ij} P_j.\] Note that 
\begin{itemize}

\item $P_i X_{ij}$ consists of the elements of $X_{ij}$ ``shifted down by $l_i$ rows'';

\item $X_{ij} P_{j}$ consists of the elements of $X_{ij}$ ``shifted left by $l_j$ columns''.

\end{itemize}

It is easy to see that $X_{ij}$  has the form $\left(\begin{matrix} 0 \\ Y_{ij} \end{matrix} \right)$ for $i \geq j$ and the form $\left( \begin{matrix} Y_{ij} & 0 \end{matrix} \right)$ for $i < j$, where $Y_{ij}$ is given by \eqref{Eq:BiPoissJordAutBlocksSmall}, as required. Thus, any $C \in \operatorname{aut}(V, \mathcal{P})$ has the form \eqref{E:bsp_algebra_matrix}. Such $C$ preserves the form $B$, i.e. $C^T B + BC = 0$ if and only if the conditions \eqref{E:Cond_on_BiSymp_Jordan_Case} 
are satisfied. Theorem~\ref{T:BiSymp_General_Jordan_Case_Mega} is proved. \end{proof}

\begin{corollary}[P.~Zhang, \cite{Pumei10}] \label{C:DimAutJordanCase}  
Let $(V, \mathcal{P}) = \bigoplus_{j=1}^N \mathcal{J}_{\lambda, 2n_j}$, where $n_1
\geq n_2 \geq \dots \geq n_N$. Then \begin{equation} \label{Eq:DimAutJordanCase} \dim  \operatorname{Aut}(V, \mathcal{P}) = \sum_{j=1}^N (4j-1) n_j. \end{equation} 
\end{corollary}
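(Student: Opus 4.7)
The plan is to apply Theorem~\ref{T:BiSymp_General_Jordan_Case_Mega} with $t = N$ and all multiplicities $l_i = 1$, so that every block $C^{i,j}_s$ appearing in~\eqref{E:bsp_algebra_matrix} becomes a $2\times 2$ matrix and $Q_{2l_i} = Q_2$. Since the Lie algebra $\operatorname{aut}(V,\mathcal{P})$ is parameterized by the family $\{C^{i,j}_s\}$, $s = 1, \dots, n_{\max(i,j)}$, subject only to the relations~\eqref{E:Cond_on_BiSymp_Jordan_Case}, and since we want only a dimension, the proof reduces to a parameter count over three types of blocks (diagonal, upper off-diagonal, lower off-diagonal).

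First I would handle the diagonal contribution. Each $C^{i,i}_s$ lies in $\mathfrak{sp}(2,\mathbb{K}) = \mathfrak{sl}(2,\mathbb{K})$, which is $3$-dimensional, and for each $i$ the index $s$ runs over $1, \dots, n_i$. Summing yields $3 \sum_{i=1}^N n_i$ diagonal parameters.

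Next I would count the off-diagonal contribution. The relation~\eqref{E:Cond_on_BiSymp_Jordan_Case} expresses $C^{j,i}_s$ uniquely in terms of $C^{i,j}_s$, so each unordered pair $\{i,j\}$ with $i<j$ contributes the $4$ free entries of the $2\times 2$ matrix $C^{i,j}_s$ for each admissible $s$. Under the ordering $n_1 \geq \dots \geq n_N$, if $i < j$ then $n_{\max(i,j)} = n_j$, giving off-diagonal contribution
\[
4 \sum_{1 \leq i < j \leq N} n_j \;=\; 4 \sum_{j=1}^{N} (j-1)\, n_j.
\]
Adding both pieces,
\[
\dim \operatorname{Aut}(V,\mathcal{P}) \;=\; \sum_{j=1}^N \bigl(3 + 4(j-1)\bigr)\, n_j \;=\; \sum_{j=1}^N (4j-1)\, n_j,
\]
which is the claimed formula.

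There is no genuine obstacle here; the statement is essentially a bookkeeping exercise given the explicit normal form for $\operatorname{aut}(V,\mathcal{P})$. The only point that deserves flagging is the indexing convention: because $n_1 \geq \dots \geq n_N$, the quantity $n_{\max(i,j)}$ (largest \emph{index}) equals $\min(n_i, n_j)$ (smallest \emph{block size}), which is the usual source of confusion. As a sanity check, the formula specializes correctly in the smallest cases: for $N = 1$ it gives $3n_1$, matching $\dim \mathfrak{sp}(2) \cdot n_1$, and for $N = 2$ with $n_1 = n_2 = n$ it gives $3n + 7n = 10n$, matching $2 \cdot 3n$ (diagonal) $+ 4n$ (off-diagonal).
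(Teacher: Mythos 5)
Your proof is correct and is the obvious parameter count that the paper implicitly intends when stating this as a corollary of Theorem~\ref{T:BiSymp_General_Jordan_Case_Mega} (the paper gives no explicit proof, attributing the result to Zhang). Your handling of the $n_{\max(i,j)} = \min(n_i,n_j)$ indexing subtlety and the sanity checks for $N=1,2$ confirm the bookkeeping.
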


\subsection{Connectedness of automorphism group}

\begin{theorem} \label{T:AutConnected} For any Jordan bi-Poisson space $(V,\mathcal{P})$ its group of automorphisms $\operatorname{Aut}(V,\mathcal{P})$ is connected.  \end{theorem}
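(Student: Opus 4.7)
The plan is to reduce to the one-eigenvalue case and then exhibit $\operatorname{Aut}(V,\mathcal{P})$ as an extension of a connected classical group by a connected unipotent group, using the explicit block description from Theorem~\ref{T:BiSymp_General_Jordan_Case_Mega}.

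First I would reduce to a single eigenvalue. Since $(V,\mathcal{P})$ is Jordan, we may assume $B$ nondegenerate and set $P = B^{-1}A$. Any $g \in \operatorname{Aut}(V,\mathcal{P})$ commutes with $P$, hence preserves the generalized eigenspace decomposition $V = \bigoplus_j \mathcal{J}_{\lambda_j}$ as in Theorem~\ref{T:JordaMultEigen}. This gives $\operatorname{Aut}(V,\mathcal{P}) = \prod_j \operatorname{Aut}(\mathcal{J}_{\lambda_j})$, and by Theorem~\ref{T:NonDependEigen} each factor is isomorphic to the automorphism group of a bi-Poisson space of the form $(V_0,\mathcal{P}_0) = \bigoplus_{i=1}^t \bigoplus_{j=1}^{l_i} \mathcal{J}_{0,2n_i}$ with $n_1 > \dots > n_t$. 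It suffices to prove connectedness for such $(V_0,\mathcal{P}_0)$.

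Next I would define the map $\pi : \operatorname{Aut}(V_0,\mathcal{P}_0) \to \prod_{i=1}^t \operatorname{Sp}(2l_i,\mathbb{K})$ sending $g$ to the tuple $(g^{i,i}_1)_{i=1}^t$ of its leading diagonal blocks from the block description \eqref{E:bsp_algebra_matrix} (at the group level). The group condition $g^T B g = B$ forces $(g^{i,i}_1)^T Q_{2l_i} g^{i,i}_1 = Q_{2l_i}$, so $\pi$ lands in the symplectic groups. The map $\pi$ is clearly a Lie group homomorphism; surjectivity is shown by exhibiting an explicit section, namely the ``purely diagonal'' lift that takes $(s_1,\ldots,s_t)$ to the element with $g^{i,i}_1 = s_i$ and $g^{i,j}_s = 0$ for $s \geq 2$ or $i \neq j$ (this indeed satisfies the constraints \eqref{E:Cond_on_BiSymp_Jordan_Case} since only the diagonal-leading conditions remain).

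Then I would analyze the kernel $U = \ker \pi$. It consists of automorphisms with $g^{i,i}_1 = I_{2l_i}$ for all $i$; equivalently, $g - I$ is strictly block-triangular in the ordering of Theorem~\ref{T:BiSymp_General_Jordan_Case_Mega}, hence nilpotent, so $g$ is unipotent and $U$ is a unipotent algebraic group over $\mathbb{K}$. Because $\operatorname{char}\mathbb{K} = 0$, the exponential $\exp : \mathfrak{u} \to U$ (with inverse given by the truncated logarithm) is a bijective polynomial map, providing a diffeomorphism from the vector space $\mathfrak{u}$ onto $U$. In particular $U$ is connected.

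Finally, $\prod_i \operatorname{Sp}(2l_i,\mathbb{K})$ is connected for both $\mathbb{K}=\mathbb{R}$ and $\mathbb{K}=\mathbb{C}$ (classical fact). The split surjection
\[
1 \longrightarrow U \longrightarrow \operatorname{Aut}(V_0,\mathcal{P}_0) \longrightarrow \prod_{i=1}^t \operatorname{Sp}(2l_i,\mathbb{K}) \longrightarrow 1
\]
is a principal $U$-bundle with connected base and connected fiber, so its total space is connected. This proves the theorem. The main technical obstacle I anticipate is the third step: confirming that \emph{all} elements of $\ker\pi$ are unipotent (not just that their leading blocks are $I$) and that the truncated-log formula lands in $\operatorname{aut}(V,\mathcal{P})$. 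Both follow from the strict triangularity of the block pattern \eqref{E:bsp_algebra_matrix} together with the fact that the linear constraints \eqref{E:Cond_on_BiSymp_Jordan_Case} are preserved under taking polynomials in $g - I$.
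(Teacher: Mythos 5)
Your approach is genuinely different from the paper's. The paper proves connectedness by induction on $\dim V$: it fibers $\operatorname{Aut}(V,\mathcal{P})$ over the orbit of a maximal-height basis vector $e^{11}_1$ (which is connected, being the complement of a subspace of real or complex codimension $\geq 2$), then shows the stabilizer is connected by mapping it to $\operatorname{Aut}(U^{\perp}/U)$ and analyzing the kernel of that map by hand. Your approach instead exhibits a Levi-type split extension $1 \to U \to \operatorname{Aut}(V_0,\mathcal{P}_0) \to \prod_i \operatorname{Sp}(2l_i) \to 1$ with $U$ unipotent, which is more structural and also essentially proves the decomposition posed as Claim~\ref{Claim:Aut} in Section~\ref{S:OpenProblems}. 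Both routes are valid; yours gets the connectedness of the radical from general theory (unipotent groups in characteristic zero are affine spaces via $\exp/\log$), whereas the paper's argument is elementary and self-contained.

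There is, however, a concrete gap in your Step 3 as written. You assert that for $g \in \ker\pi$ the matrix $g - I$ is ``strictly block-triangular in the ordering of Theorem~\ref{T:BiSymp_General_Jordan_Case_Mega}.'' This is false. In the basis ordering \eqref{Eq:SeverJordBlocksGroupedBasisSt} the off-diagonal block $X_{ij}$ for $i < j$ sits \emph{above} the diagonal and its leading sub-block $C^{i,j}_1$ is generally nonzero, while the blocks $C^{i,i}_s$ with $s \geq 2$ and $C^{i,j}_s$ with $i > j$ sit \emph{below} the diagonal; so $g - I$ has nonzero entries on both sides of the diagonal. A concrete instance with $(V,\mathcal{P}) = \mathcal{J}_{0,4} \oplus \mathcal{J}_{0,2}$ and basis $(e^{11}_1, f^{11}_2, e^{11}_2, f^{11}_1, e^{21}_1, f^{21}_1)$ gives
\[
g = \begin{pmatrix} I_2 & 0 & 0 \\ A & I_2 & C \\ D & 0 & I_2 \end{pmatrix} \in \ker\pi,
\]
with $C = Q_2 D^T Q_2$ and the antisymmetric part of $Q_2 A$ determined by $D$, where $D$ is arbitrary; the $C$-block makes $g - I$ non-lower-triangular and the $A$, $D$-blocks make it non-upper-triangular.

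The conclusion that every element of $\ker\pi$ is unipotent is nevertheless correct; it just needs a different justification. Two standard fixes: (i) Reorder the basis of $V$ by \emph{increasing} height, breaking ties by the block-size index $i$. With respect to any complete flag refining this ordering, $g - I$ becomes strictly (upper) triangular, because $g$ preserves the flag $\operatorname{Ker}P^1 \subset \operatorname{Ker}P^2 \subset \cdots$, and on each graded piece $\operatorname{Ker}P^h/\operatorname{Ker}P^{h-1}$ the condition $g^{i,i}_1 = I$ forces $g$ to act as a strictly triangular perturbation of the identity (the only height-preserving off-diagonal contributions are the blocks $C^{i,j}_1$ with $i < j$, which go in one direction across the tie-break). (ii) Observe that the centralizer algebra $Z(P)$ is a finite-dimensional associative algebra whose Jacobson radical $N$ is exactly the set of elements with all leading diagonal blocks zero, and $Z(P)/N \cong \prod_i \operatorname{Mat}_{2l_i}$; then $g - I \in N$ and $N$ is nilpotent as an ideal, so $(g - I)^m = 0$ for $m$ large. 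Either fix closes the gap and the rest of your argument (explicit symplectic section, $\exp/\log$ diffeomorphism for the unipotent kernel, connectedness of $\prod_i \operatorname{Sp}(2l_i)$ over $\mathbb{R}$ and $\mathbb{C}$, and the product topology of the split extension) goes through.
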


\begin{proof}[Proof of Theorem~\ref{T:AutConnected}] The automorphism group admits a decomposition according to the eigenvalues. Without loss of generality, we may assume that the only eigenvalue is zero. The proof is by induction on $\dim V$. 

\begin{itemize}

\item \textit{Base case ($\dim V = 2$)}. Then $\operatorname{Aut}(V,\mathcal{P}) \approx \operatorname{Sp}(2,\mathbb{K})$, which is connected. 

\item \textit{Inductive step}. Let $(V,\mathcal{P}) = \oplus_{i=1}^N\mathcal{J}_{0, 2n_i}$, where $n_1 \geq \dots \geq n_N$. Fix a standard basis $e^{i}_j, f^{i}_j$, where $i=1,\dots, N, j=1,\dots, n_i$, of $(V,\mathcal{P})$. Consider the action of $\operatorname{Aut}(V,\mathcal{P})$ on the basis vector $e^{11}_1$.  Denote by $O_{e}$ its orbit and by $\operatorname{St}_e$ its stabilizer. We get a fiber bundle  \[ \pi: \operatorname{Aut}(V,\mathcal{P})  \xrightarrow{St_e} O_e. \] The orbit $O_e$ is connected, since it is diffeomorphic to $V - \operatorname{Ker}P^{n_1 - 1}$ (see Theorem~\ref{T:VectOrbits}). It remains to prove that the stabilizer group $\operatorname{St}_e$ is connected. Since elements of  $\operatorname{Aut}(V,\mathcal{P})$ preserve the recursion operator $P$, the stabilizer group $\operatorname{St}_e$ leaves invariant the  preserve the $P$-invariant subspace \[ U = \operatorname{Span} \left\{e^{11}_1, \dots, e^{11}_{n_1} \right\}.\] The action of $\operatorname{St}_e$ of $U^{\perp}/U$ induces a surjective Lie group homomorphism \[ f: \operatorname{St}_e  \to  \operatorname{Aut}(U^{\perp}/U). \] By the induction hypothesis, the group $\operatorname{Aut}(U^{\perp}/U)$ is connected. It remains to establish the connectedness of the subgroup $H=\operatorname{Ker}f$, consisting of elements $C \in  \operatorname{Aut}(V,\mathcal{P}) $ such that \[ C\bigr|_{U} = \operatorname{id}, \qquad C\bigr|_{U^{\perp}/U} = \operatorname{id}.\] Since $C$ preserves the recursion operator $P$, it is determined by the images of the ``top-height'' vectors in the Jordan chains: \[ \begin{gathered} Ce^{1}_1 = e^{1}_1, \qquad Cf^{1}_{n_1} = \sum_{i=1}^N \sum_{j=1}^{n_1} \left( a^i_j e^{i}_{j} + b^i_j f^{i}_j  \right)  \\ Ce^{i}_{1} = e^{i}_1 + \sum_{j=1}^{n_1} c^i_j e^1_j, \qquad  Cf^{i}_{n} = f^{i}_n + \sum_{j=1}^{n_1} d^i_j e^1_j.  \end{gathered} \] Since $C$ preserves the bilinear form $B$, the coefficients $b^1_j = \delta^N_j$ and other $a^i_j, b^i_j$ for $i > 1$ are uniquely determined by $c^i_j$ and $d^i_j$. It can be verified that the remaining coefficients $a^i_1, c^i_j$ and $d^i_j$ can assume arbitrary values, resulting in the connectedness of both $H=\operatorname{Ker}f$ and the full automorphism group $\operatorname{Aut}(V,\mathcal{P})$. \end{itemize}

Theorem~\ref{T:AutConnected} is proved. \end{proof}

\section{Dimension of bi-Lagrangian Grassmanian} \label{S:DimBiLagrGrassm}

In this section we calculate  the dimension of a bi-Lagrangian Grassmanian $\operatorname{BLG}(V,\mathcal{P})$. We do it as follows:

\begin{itemize}

\item In Section~\ref{SubS:ExtractOneJordSimple} we examine when it's possible to extract a maximal Jordan block with a generic bi-Lagrangian subspace in it.

\item In Section~\ref{SubS:GenBiLag} we show that $\operatorname{BLG}(V,\mathcal{P})$ has a unique open orbit $O_{\max}$ and describe a canonical form for the bi-Lagrangian subspaces $L \in O_{\max}$. 

\item  Finally, in Section~\ref{SubS:DimBLG} we calculate $\operatorname{dim} \operatorname{BLG}(V,\mathcal{P})$.

\end{itemize}

\subsection{Extraction of one Jordan block} \label{SubS:ExtractOneJordSimple}

Let $(V, \mathcal{P}) = \bigoplus_{i=1}^t \left(\bigoplus_{j=1}^{l_i} \mathcal{J}_{0, 2n_i} \right)$, where $n_1 > n_2 > \dots > n_t$. Denote by $P$ the associated nilpotent operator on $V$. There are two types of bi-Lagrangian subspaces $L$.

\begin{enumerate}

\item \textbf{Height of $L$ is less than $n_1$}, i.e. $L \subset \operatorname{Ker} P^{n_1 - 1}$. Taking the orthogonal complements, we get  $\operatorname{Im}P^{n_1 -1} \subset L^T = L$.  Therefore, for such bi-Lagrangian subspaces L, we can perform a bi-Poisson reduction w.r.t. $U = \operatorname{Im}P^{n_1 -1}$.

\item \textbf{Height of $L$ equals $n_1$.} The next statement shows that in that case we can extract one maximal Jordan blocks with a generic bi-Lagrangian subspace. 

\end{enumerate}

\begin{theorem} \label{T:ExtractMaxBlockGen} If $(V, \mathcal{P})$ is a sum of Jordan $0$-blocks and $L \subset (V, \mathcal{P})$ is a bi-Lagrangian subspace with \[\operatorname{height}(L) = \operatorname{height}(V) = n_1\] there exists a decomposition of $(V, \mathcal{P})$ and $L$: \begin{equation} \label{Eq:Decomp2CompBiLagr} (V, \mathcal{P}) = (V_1,\mathcal{P}_1) \oplus (V_2, \mathcal{P}_2), \quad L = L_1 \oplus L_2, \quad L_i = L \cap V_i, \end{equation} such that $(V_1,\mathcal{P}_1) =\mathcal{J}_{0, 2n_1}$ and $L_1$ is generic in $\mathcal{J}_{0, 2n_1}$, i.e. $\operatorname{height} (L_1) = n_1$.\end{theorem}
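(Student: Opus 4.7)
Since $(V,\mathcal{P})$ is a sum of Jordan $0$-blocks, $B$ is non-degenerate and the recursion operator $P = B^{-1}A$ is nilpotent and self-adjoint with nilpotency index exactly $n_1$. By Lemma~\ref{L:NonGenDescBiLagr}, $L$ is Lagrangian with respect to $B$ and $P$-invariant. The strategy is to carve out a cyclic ``top half'' Jordan chain of $L$, build a dual chain in $V$ to complete it to a Jordan block of size $2n_1$, and then observe that once this block is in place the decomposition of $L$ follows automatically from the Lagrangian property.

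First I pick $v \in L$ with $\operatorname{height}(v) = n_1$ and set
\[ V_1' = \operatorname{Span}\{v, Pv, \ldots, P^{n_1-1}v\}. \]
By $P$-invariance $V_1' \subset L$, hence $V_1'$ is bi-isotropic; by Assertion~\ref{A:SelfAdjProp} the chain is $B$-isotropic in the cyclic sense. Next I construct a partner vector $w \in V$ satisfying
\[ B(v, P^{n_1-1}w) = 1 \quad \text{and} \quad B(v, P^j w) = 0 \text{ for } 0 \le j \le n_1-2. \]
This reduces, by self-adjointness, to finding $w$ with prescribed values under the functionals $\psi_j(u) = B(P^j v, u)$; these are linearly independent because $v, Pv, \ldots, P^{n_1-1}v$ are linearly independent and $B$ is non-degenerate, so surjectivity onto $\mathbb{K}^{n_1}$ gives the needed $w$. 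Setting $e_i := P^{i-1}v$ and $f_j := P^{n_1-j}w$, one checks directly that $B(e_i,f_j) = \delta_{ij}$, while both subchains are $B$-isotropic by Assertion~\ref{A:SelfAdjProp}, and $P^{n_1} = 0$ forces the chains to close. Thus $V_1 := V_1' \oplus V_1''$, where $V_1'' = \operatorname{Span}\{w, Pw, \ldots, P^{n_1-1}w\}$, is a $P$-invariant subspace on which the pencil restricts precisely to the standard $\mathcal{J}_{0,2n_1}$ form (both $A$ and $B$). Since $B|_{V_1}$ is non-degenerate, $V_2 := V_1^{\perp}$ yields a bi-orthogonal decomposition $V = V_1 \oplus V_2$, and $V_2$ is automatically $P$-invariant (hence a sub-bi-Poisson space) by Assertion~\ref{A:SelfAdjProp}.

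The final step, splitting $L$, is essentially free. Because $V_1' \subset L$ and $L$ is bi-Lagrangian, $L \subset V_1'^{\perp}$; combined with $V_2 \subset V_1^{\perp} \subset V_1'^{\perp}$ this gives $V_1'^{\perp} = V_1' \oplus V_2$. For any $\ell \in L$, decomposing $\ell = \ell_1 + \ell_2$ along $V = V_1 \oplus V_2$, the component $\ell_2 \in V_2 \subset V_1'^{\perp}$, forcing $\ell_1 \in V_1 \cap V_1'^{\perp} = V_1'$ (since $V_1'$ is Lagrangian in $V_1$); hence $\ell_1 \in L$ and then $\ell_2 = \ell - \ell_1 \in L$ as well. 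This simultaneously shows $L \cap V_1 = V_1'$ and $L = V_1' \oplus (L \cap V_2) = L_1 \oplus L_2$, with $L_1 = V_1'$ of height $n_1$ as required. The only non-trivial point in the entire argument is the existence and correct normalization of the partner chain generated by $w$; once $V_1 \cong \mathcal{J}_{0,2n_1}$ is produced, the compatibility of $L$ with the decomposition is a direct consequence of the Lagrangian/self-orthogonality conditions.
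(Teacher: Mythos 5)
Your proof is correct and takes essentially the same route as the paper: pick $v \in L$ of height $n_1$, extend its Jordan chain to a standard symplectic basis of a $\mathcal J_{0,2n_1}$ block $V_1$ by solving for a dual chain $w$, set $V_2 = V_1^\perp$, and then split $L$. Where the paper's proof simply invokes bi-Poisson reduction (Theorem~\ref{T:BiPoissReduction}) with $U = L_1$ to conclude $L = L_1 \oplus L_2$, you instead carry out the final step directly by showing $V_1 \cap (V_1')^\perp = V_1'$ and using $L \subset (V_1')^\perp$ to force the $V_1$-component of each $\ell \in L$ into $V_1' \subset L$; this is a valid and somewhat more explicit verification that the image under reduction really is $L \cap V_2$, a detail the paper leaves implicit.
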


Imagine representing $(V, \mathcal{P})$ as a Young-like diagram, where each block $\mathcal{J}_{0, 2n_i}$ is depicted as a $2n\times 2$ rectangle similar to \eqref{Eq:OneJordanBlock_VectorInTable}. Each cell corresponds to a basic vector and the nilpotent operator $P$ "pushes" the blocks down the diagram. Then we have the following alternative for bi-Lagrangian subspaces $L$:

\begin{itemize}

    \item either $L$  contains $\operatorname{Im}P^{n_1-1}$, i.e. the "bottom row" of the largest $2n_1 \times 2$ rectangles, 

    \item or (in some basis) $L$ contains  a "vertical  column" in a largest  $2n_1 \times 2$ rectangle.
\end{itemize} In \eqref{Eq:Alternative} we illustrate these two possibilities for the sum $\mathcal{J}_{0, 8} \oplus\mathcal{J}_{0, 8} \oplus \mathcal{J}_{0, 4}$. The shaded region depicts the subpace that $L$ contains.

\begin{equation} \label{Eq:Alternative}
  \begin{tabular}{|c|c|c|c|c|c|} 
  \cline{1-4} & & &  & \multicolumn{1}{c}{} & \multicolumn{1}{c}{} \\
 \cline{1-4} & & &  & \multicolumn{1}{c}{} & \multicolumn{1}{c}{} \\
     \hline  & &  & & &  \\ 
   \hline {\cellcolor{gray!25} } & {\cellcolor{gray!25} }  & {\cellcolor{gray!25} } & {\cellcolor{gray!25} }  & &  \\ \hline
  \end{tabular} 
  \qquad
  \begin{tabular}{|c|c|c|c|c|c|} 
  \cline{1-4} {\cellcolor{gray!25} } & & &  & \multicolumn{1}{c}{} & \multicolumn{1}{c}{} \\
 \cline{1-4} {\cellcolor{gray!25} } & & &  & \multicolumn{1}{c}{} & \multicolumn{1}{c}{} \\
     \hline {\cellcolor{gray!25} } & &  & & &  \\ 
   \hline {\cellcolor{gray!25} } &  &  &   & &  \\ \hline
  \end{tabular} 
\end{equation}

\begin{proof}[Proof of Theorem~\ref{T:ExtractMaxBlockGen}]  Take a vector $e_1 \in L$ with $\operatorname{height} e_1 = n_1$. Then take the canonical basis $e_1, \dots, e_{n_1}, f_1, \dots, f_{n_1}$ as in Theorem~\ref{T:BiLagr_One_Jordan_Canonical_Form}. Simply speaking, put $e_i = P^{i-1}e_1$, take $f_n$ such that $B(e_i, f_{n_1}) = \delta^i_{n_1} $ and put $f_{n_1-i} = P^i f_{n_1}$. We separated one block \[V_1  = \operatorname{Span}\left\{e_1, \dots, e_{n_1}, f_1, \dots, f_{n_1}\right\}. \] Then we take $V_2 = V_1^{\perp}$. Using the bi-Poisson reduction (Theorem~\ref{T:BiPoissReduction}) for $U = L_1$ we get that $L = L_1 \oplus L_2$, where $L_2$ is a bi-Lagrangian subspace of $(V_2, \mathcal{P}_2)$. We got the required decomposition \eqref{Eq:Decomp2CompBiLagr}. Theorem~\ref{T:ExtractMaxBlockGen}  is proved. \end{proof}

Obviously, we can apply Theorem~\ref{T:ExtractMaxBlockGen} multiple times and potentially extract several Jordan blocks. 

\begin{corollary} \label{Cor:JordExtr}
Let $(V, \mathcal{P}) = \bigoplus_{i=1}^t \left(\bigoplus_{j=1}^{l_i} \mathcal{J}_{0, 2n_i} \right)$, where $n_1  > n_2 >\dots > n_t$, $P$ be the recursion operator and $L$ be a bi-Lagrnagian subspace with \[\dim (L + \operatorname{Ker}P^{n_1 - 1} ) / \operatorname{Ker}P^{n_1 - 1}  = k.\]
Then there exists a decomposition of $(V, \mathcal{P})$ into a bi-orthogonal direct sum with the corresponding decomposition of $L$: \[(V, \mathcal{P}) = \bigoplus_{i=1}^k  \mathcal{J}_{0, 2n_1} \oplus \left(\hat{V}, \hat{\mathcal{P}}\right), \qquad L = \bigoplus_{i=1}^k L_i \oplus \hat{L},\] where $L_i$ are generic bi-Lagrangian subspaces of $\mathcal{J}_{0, 2n_1}$  and $\hat{L} $ is a bi-Lagrangian subspace of $\left(\hat{V}, \hat{\mathcal{P}}\right)$ that  satisfies \[ \operatorname{Im}P^{n_1 - 1} \subset \hat{L} \subset \operatorname{Ker} P^{n_1 - 1}.\] \end{corollary}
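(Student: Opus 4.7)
The strategy is induction on $k$, peeling off one maximal Jordan block at a time via Theorem~\ref{T:ExtractMaxBlockGen}.

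\textit{Base case} $k=0$: The hypothesis $\dim(L+\operatorname{Ker}P^{n_1-1})/\operatorname{Ker}P^{n_1-1}=0$ reads $L\subset \operatorname{Ker}P^{n_1-1}$. Set $\hat{V}=V$ and $\hat{L}=L$. The opposite inclusion $\operatorname{Im}P^{n_1-1}\subset L$ follows from $L=L^{\perp}$ together with the standard identity $(\operatorname{Ker}P^{n_1-1})^{\perp}=\operatorname{Im}P^{n_1-1}$ for a self-adjoint nilpotent $P$ on a symplectic space (one inclusion is immediate from $B(P^{n_1-1}v,u)=B(v,P^{n_1-1}u)$, and dimensions match since $B$ is nondegenerate).

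\textit{Inductive step} $k\geq 1$: Positivity of the parameter yields $v\in L$ with $P^{n_1-1}v\neq 0$, hence $\operatorname{height}(v)=n_1=\operatorname{height}(V)$. Invoke Theorem~\ref{T:ExtractMaxBlockGen} to obtain a bi-orthogonal splitting $V=V_1\oplus V_2$ and $L=L_1\oplus L_2$ with $V_1\cong\mathcal{J}_{0,2n_1}$, $L_1$ generic in $V_1$, and $L_2$ bi-Lagrangian in $V_2$. The complement $V_2=V_1^{\perp}$ is $P$-invariant by Assertion~\ref{A:SelfAdjProp}; by Krull--Schmidt (or directly inspecting JK decompositions) $V_2$ is itself a sum of Jordan $0$-blocks with one fewer copy of $\mathcal{J}_{0,2n_1}$.

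The key computation is that $L_2\subset V_2$ has parameter $k-1$ when measured with the same $n_1$. Writing $K=\operatorname{Ker}P^{n_1-1}$ and $K_i=K\cap V_i=\operatorname{Ker}(P|_{V_i})^{n_1-1}$, the fact that both $L$ and $K$ split along $V=V_1\oplus V_2$ gives
\[
(L+K)/K \;\cong\; (L_1+K_1)/K_1 \,\oplus\, (L_2+K_2)/K_2.
\]
For a generic $L_1\subset\mathcal{J}_{0,2n_1}$, Theorem~\ref{T:BiLagr_One_Jordan_Canonical_Form} supplies the explicit form $L_1=\operatorname{Span}\{e_1,\dots,e_{n_1}\}$, from which $(L_1+K_1)/K_1$ is easily seen to be $1$-dimensional. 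Hence $(L_2+K_2)/K_2$ has dimension $k-1$, and the inductive hypothesis applied to $(V_2,L_2)$ produces $V_2=\bigoplus_{i=1}^{k-1}\mathcal{J}_{0,2n_1}\oplus\hat{V}$ and $L_2=\bigoplus_{i=1}^{k-1}L_i'\oplus\hat{L}$ with $\operatorname{Im}(P|_{\hat V})^{n_1-1}\subset\hat{L}\subset\operatorname{Ker}(P|_{\hat V})^{n_1-1}$; concatenating with $V_1$ and $L_1$ gives the required decomposition.

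The only substantive step is the parameter-drop verification, which is essentially bookkeeping once Theorem~\ref{T:ExtractMaxBlockGen} has produced the bi-orthogonal $P$-compatible splitting. A minor subtlety is that in the recursive call, the max block size of $V_2$ may be smaller than $n_1$; this is harmless because the induction is on $k$ (not on the block sizes), and when the max size of $V_2$ drops below $n_1$ one automatically has parameter zero, terminating the induction at the base case.
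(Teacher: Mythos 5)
Your proof is correct, and it is a careful formalization of what the paper leaves implicit: the paper states the corollary with only the remark ``obviously, we can apply Theorem~\ref{T:ExtractMaxBlockGen} multiple times,'' and your induction on $k$ is exactly the right way to make that iteration precise. The two points worth checking were both handled correctly: the base case $k=0$ gets the lower inclusion $\operatorname{Im}P^{n_1-1}\subset L$ from $L=L^{\perp}$ together with $(\operatorname{Ker}P^{n_1-1})^{\perp}=\operatorname{Im}P^{n_1-1}$, and the parameter-drop computation uses that both $L$ and $\operatorname{Ker}P^{n_1-1}$ split across the bi-orthogonal $P$-invariant decomposition $V=V_1\oplus V_2$, with $(L_1+K_1)/K_1$ exactly one-dimensional for a generic $L_1$ in the extracted $\mathcal{J}_{0,2n_1}$ by the explicit canonical form of Theorem~\ref{T:BiLagr_One_Jordan_Canonical_Form} together with \eqref{Eq:ImKerOneNilp}. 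The remark about the induction bottoming out automatically once the largest block size of the complement drops below $n_1$ (forcing the parameter to zero) is correct and is the only genuinely delicate bookkeeping step; it is good that you noticed and addressed it.
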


\subsection{Generic bi-Lagrangian subspaces} \label{SubS:GenBiLag}

In this section we prove that there is a unique maximal dimensional $\operatorname{Aut} \left(V, \mathcal{P}\right)$-orbit in $\operatorname{BLG}\left(V, \mathcal{P}\right)$ and describe all bi-Lagrangian subspaces in it.

\begin{theorem} \label{T:BiLagrGenDecomp} Let $(V, \mathcal{P}) = \bigoplus_{i=1}^t \left(\bigoplus_{j=1}^{l_i} \mathcal{J}_{0, 2n_i} \right)$, where $n_1  > n_2 >\dots > n_t$, $P$ be the associated nilpotent recursion operator  and $L \subset (V, \mathcal{P})$ be a bi-Lagrangian subspace. Then the following conditions are equivalent. 

\begin{enumerate}

\item $L$ belongs to the unique top-dimensional $\operatorname{Aut} \left(V, \mathcal{P}\right)$-orbit $O_{\max}$.

\item For each $j = 1,\dots, t$ we have  \begin{equation} \label{Eq:DimTopPart} \dim \left( L \cap \operatorname{Ker} P^{n_j} / \left(\operatorname{Ker} P^{n_j} \cap  \operatorname{Im} P + \operatorname{Ker} P^{n_j-1} \right)  \right)  = l_j.\end{equation}

\item There is a standard basis  $e^{ij}_k, f^{ij}_k$, where $i=1, \dots, t, j=1,\dots, l_i, k=1, \dots, n_i$, from the JK theorem such that \begin{equation} \label{Eq:ExDecomptop} L = \operatorname{Span}\left\{e^{ij}_k\right\}_{i=1, \dots, t, j=1,\dots, l_i, k=1, \dots, n_i}.\end{equation}

\end{enumerate}

\end{theorem}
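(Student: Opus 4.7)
The plan is to prove the equivalence $(2)\Leftrightarrow(3)$ directly, and then use the $\operatorname{Aut}(V,\mathcal{P})$-invariance of condition $(2)$ together with a transitivity argument on standard bases to conclude that both conditions single out a single orbit, which must be the unique top-dimensional one. Note that condition $(2)$ depends only on the $P$-module structure on $V$ and hence is automatically $\operatorname{Aut}(V,\mathcal{P})$-invariant.

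The implication $(3)\Rightarrow(2)$ is direct bookkeeping in the standard basis: the subspaces $\operatorname{Ker}P^{n_j}$, $\operatorname{Ker}P^{n_j-1}$, and $\operatorname{Im}P$ are spanned by prescribed basis vectors, and one checks that the $l_j$ vectors $e^{jk}_1$ ($k=1,\dots,l_j$) are exactly those elements of $L\cap\operatorname{Ker}P^{n_j}$ whose classes survive in the quotient. For $(2)\Rightarrow(3)$ I would induct on the number $t$ of distinct block sizes. Condition $(2)$ at $j=1$ supplies $l_1$ vectors in $L$ of height $n_1$ whose top components are linearly independent modulo $\operatorname{Ker}P^{n_1-1}+\operatorname{Im}P$, so Corollary~\ref{Cor:JordExtr} with $k=l_1$ yields a bi-orthogonal splitting $V=\bigoplus_{i=1}^{l_1}\mathcal{J}_{0,2n_1}\oplus\hat V$ and $L=\bigoplus_i L_i\oplus\hat L$, with each $L_i$ a height-$n_1$ bi-Lagrangian subspace inside its extracted block. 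Theorem~\ref{T:BiLagr_One_Jordan_Canonical_Form} applied with $h=n_1$ provides a standard basis of each extracted block realizing $L_i$ as the span of its $e$-vectors. One then verifies that $\hat L\subset\hat V$ satisfies the analogous condition $(2)$ for $j=2,\dots,t$: since the splitting $V=V_1\oplus\hat V$ is $P$-invariant, and the $V_1$-contribution to the quotient $\operatorname{Ker}P^{n_j}/(\operatorname{Ker}P^{n_j}\cap\operatorname{Im}P+\operatorname{Ker}P^{n_j-1})$ vanishes for $j\geq 2$ (every height-$n_j$ vector in $V_1$ lies in $\operatorname{Im}P$, because the blocks of $V_1$ have size $n_1>n_j$), so the quotient is identified with its $\hat V$-version. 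The inductive hypothesis applied to $\hat L$ then produces a standard basis for $\hat V$, which together with the extracted standard bases assembles into a standard basis for $(V,\mathcal{P})$ of the required form.

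To conclude, observe that any two standard bases of $(V,\mathcal{P})$ arising from the Jordan--Kronecker theorem are related by an element of $\operatorname{Aut}(V,\mathcal{P})$, since the change-of-basis matrix preserves the matrices of $A$ and $B$ by definition; transporting one subspace of the form \eqref{Eq:ExDecomptop} to another shows that all such $L$ lie in a single orbit $O_{\max}$. For openness of $(2)$, the maps $L\mapsto\dim(L\cap W)$ are upper semi-continuous in $L$ on the Grassmannian for fixed $W$, and $l_j$ is the maximal value of the dimension appearing in $(2)$ because the image of $L\cap\operatorname{Ker}P^{n_j}$ in the $2l_j$-dimensional induced symplectic quotient must be isotropic. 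Hence $O_{\max}$ is open in $\operatorname{BLG}(V,\mathcal{P})$, which forces it to be of maximal dimension; any other $\operatorname{Aut}(V,\mathcal{P})$-orbit is contained in the proper closed complement $\operatorname{BLG}(V,\mathcal{P})\setminus O_{\max}$ and therefore has strictly smaller dimension, yielding the uniqueness asserted in $(1)$. The main obstacle is the bookkeeping in the inductive step of $(2)\Rightarrow(3)$, verifying precisely how condition $(2)$ descends to $\hat L\subset\hat V$ under the splitting from Corollary~\ref{Cor:JordExtr}; the remaining pieces are either direct computations or appeals to already established structural results.
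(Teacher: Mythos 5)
Your route reorders the paper's chain of implications---you establish $(2)\Leftrightarrow(3)$ first and then try to deduce item $(1)$, whereas the paper proves $(1)\Rightarrow(2)\Rightarrow(3)\Rightarrow(2)\Rightarrow(1)$---but the working parts are the same: Corollary~\ref{Cor:JordExtr} to peel off the $l_1$ largest Jordan blocks, Theorem~\ref{T:BiLagr_One_Jordan_Canonical_Form} for the canonical form inside each block, Assertion~\ref{A:InducedImP} for the upper bound $\leq l_j$, and standard-basis transitivity to see that $(3)$ cuts out a single $\operatorname{Aut}(V,\mathcal{P})$-orbit. Your inductive descent of condition $(2)$ to $\hat L\subset\hat V$ is correct: for $j\geq 2$, every vector of $\operatorname{Ker}P^{n_j}$ inside the extracted $V_1$ lies in $\operatorname{Im}P\cap\operatorname{Ker}P^{n_j}\subset W_j$, so it vanishes in the quotient and the condition indeed reduces to the $\hat V$-version.

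The genuine gap is in the final paragraph. First, the openness claim does not follow from the ingredients you cite. The quantity in $(2)$ equals
\[
\dim\bigl(L\cap\operatorname{Ker}P^{n_j}\bigr)-\dim\bigl(L\cap W_j\bigr),
\qquad
W_j=\operatorname{Ker}P^{n_j}\cap\operatorname{Im}P+\operatorname{Ker}P^{n_j-1},
\]
a \emph{difference} of two upper semicontinuous functions, which has no a~priori semicontinuity. It is only at $j=1$, where $\operatorname{Ker}P^{n_1}=V$ makes the first term constant, that the difference is genuinely lower semicontinuous---this is precisely the quantity the paper exploits in its $(1)\Rightarrow(2)$ step. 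For $j\geq 2$ you have given no reason why being at the maximal value $l_j$ is an open condition. Second, even granting openness, the sentence ``any other orbit is contained in the proper closed complement and therefore has strictly smaller dimension'' is false in general: $\operatorname{BLG}(V,\mathcal{P})$ need not be irreducible (Section~\ref{SubS:Exam} exhibits two components for $\mathcal{J}_{0,6}\oplus\mathcal{J}_{0,2}$), and a proper closed subvariety of a reducible variety can have full dimension. You therefore still owe an argument for uniqueness of the top-dimensional orbit; one clean repair is to notice that $(2)/(3)$ fixes the Jordan type of $P\bigr|_L$ to be $J(n_1)\oplus\dots\oplus J(n_N)$, which is the componentwise-largest possible for a bi-Lagrangian $L$, and that the set where all the upper semicontinuous invariants $\dim(L\cap\operatorname{Ker}P^{k})$ attain their joint minimum genuinely is open; a direct stabilizer-dimension comparison as in Theorem~\ref{T:BiLagrJord} would also do.
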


\begin{definition} Bi-Lagrangian subspaces $L$ as in Theorem~\ref{T:BiLagrGenDecomp} are called \textbf{generic}.
\end{definition}

Simply speaking, $L$ decomposes into a sum $L = \oplus_{i=1}^N L_i$ of top-dimensional subspaces $L_i \subset \mathcal{J}_{0, 2n_i}$ (for some JK decomposition  $(V, \mathcal{P}) =  \bigoplus_{i=1}^N \mathcal{J}_{0, 2n_i}$). In \eqref{Eq:TopOrbit} we realize $\mathcal{J}_{0, 8} \oplus\mathcal{J}_{0, 8} \oplus \mathcal{J}_{0, 4}$ as a Young-like diagram. The shaded area corresponds to a generic bi-Lagrangian subspace. 
\begin{equation} \label{Eq:TopOrbit}
  \begin{tabular}{|c|c|c|c|c|c|} 
  \cline{1-4} {\cellcolor{gray!25} } & &  {\cellcolor{gray!25} }  &  & \multicolumn{1}{c}{} & \multicolumn{1}{c}{} \\
 \cline{1-4} {\cellcolor{gray!25} } & &  {\cellcolor{gray!25} }  &  & \multicolumn{1}{c}{} & \multicolumn{1}{c}{} \\
     \hline {\cellcolor{gray!25} } & &  {\cellcolor{gray!25} }  & &  {\cellcolor{gray!25} }  &  \\ 
   \hline {\cellcolor{gray!25} } &  &  {\cellcolor{gray!25} }  &   &  {\cellcolor{gray!25} }  &  \\ \hline
  \end{tabular} 
\end{equation}

\subsubsection{Symplectic structure on top parts of Jordan blocks}

Before proving Theorem~\ref{T:BiLagrGenDecomp} let us explain the condition~\eqref{Eq:DimTopPart}. Consider a subspace \[ U_i = \operatorname{Ker} P^{n_i} /\left(\operatorname{Ker} P^{n_i} \cap \operatorname{Im} P  +  \operatorname{Ker} P^{n_i-1}\right). \] If we represent $(V, \mathcal{P})$ as a Young-like diagram similar to \eqref{Eq:Alternative}, then $U_i$ is the ``top row'' of Jordan blocks with height $n_i$. For example, in \eqref{Eq:TopRows} for $\mathcal{J}_{0, 8} \oplus\mathcal{J}_{0, 6} \oplus \mathcal{J}_{0, 4}$ shaded cells represent $\left(\operatorname{Ker} P^{n_i} \cap \operatorname{Im} P\right)  +  \operatorname{Ker} P^{n_i-1}$ for $i=2$ and ``x'' marks the basis of $U_2$.  \begin{equation} \label{Eq:TopRows}
  \begin{tabular}{|c|c|c|c|c|c|} 
  \cline{1-2}  &  &  \multicolumn{1}{c}{} & \multicolumn{1}{c}{} & \multicolumn{1}{c}{} & \multicolumn{1}{c}{} \\
 \cline{1-4} {\cellcolor{gray!25} } &  {\cellcolor{gray!25} }   &  X  & X & \multicolumn{1}{c}{} & \multicolumn{1}{c}{} \\
     \hline {\cellcolor{gray!25} } & {\cellcolor{gray!25} }&  {\cellcolor{gray!25} }  & {\cellcolor{gray!25} }  &  {\cellcolor{gray!25} }  &  {\cellcolor{gray!25} } \\ 
   \hline {\cellcolor{gray!25} } &  {\cellcolor{gray!25} } &  {\cellcolor{gray!25} }  &  {\cellcolor{gray!25} }  &  {\cellcolor{gray!25} }  &  {\cellcolor{gray!25} }  \\ \hline
  \end{tabular} 
\end{equation}

First, note that a pair of forms $A$ and $B$ actually define a sequence of $2$-forms $A_n$ on $V$. The next statement is trivial. 

\begin{assertion} \label{A:SeqForms} Let $\mathcal{P} = \left\{A + \lambda B\right\}$ be a Jordan pencil on $V$ and $P = B^{-1}A$. Consider the  $2$-forms \begin{equation} \label{Eq:AnForms} A_0 = B, \quad A_1 = A = B \circ P, \quad \dots, \quad A_k = B \circ P^k, \quad  \dots \end{equation} Then any bi-Lagrangian subspace $L$ is isotropic w.r.t. all forms $A_k$, $k \geq 0$. \end{assertion}

Now it is easy to prove the following. 

\begin{assertion} \label{A:InducedImP} Let $(V, \mathcal{P}) = \bigoplus_{i=1}^t \left(\bigoplus_{j=1}^{l_i} \mathcal{J}_{0, 2n_i} \right)$, where $n_1 > \dots > n_t$. The form $A_{n_i-1}$ given by \eqref{Eq:AnForms} induces a symplectic structure on each  \[ U_i = \operatorname{Ker} P^{n_i} /\left(\operatorname{Ker} P^{n_i} \cap \operatorname{Im} P  +  \operatorname{Ker} P^{n_i-1}\right)  \]  for $i=1,\dots, t$. Moreover, for any bi-Lagrangian subspace $L \subset (V, \mathcal{P})$ the subspace \begin{equation} \label{Eq:QuotTopJord} \hat{L}_j = (L \cap \operatorname{Ker} P^{n_j} + W_j)/W_j,  \end{equation} where \[W_j = \left(\operatorname{Ker} P^{n_j} \cap  \operatorname{Im} P + \operatorname{Ker} P^{n_j-1} \right)\] is isotropic w.r.t. the induced symplectic form on $U_i$. \end{assertion}

\begin{proof}[Proof of Assertion~\ref{A:InducedImP}] It is easy to see that \[  \operatorname{Ker} A_{n_i-1} \bigr|_{\operatorname{Ker} P^{n_i} } = \operatorname{Ker} P^{n_i} \cap \operatorname{Im} P  +  \operatorname{Ker} P^{n_i-1}.\] Therefore, $A_{n_i-1}$ induces the symplectic structure on $U_i$. By Assertion~\ref{A:SeqForms} $L$ isotropic w.r.t. $A_{n_i-1}$ and hence the subspace \eqref{Eq:QuotTopJord} is isotropic in $U_i$. Assertion~\ref{A:InducedImP} is proved. \end{proof}

\subsubsection{Proof of Theorem~\ref{T:BiLagrGenDecomp}}

\begin{itemize}

\item[$(1 \Rightarrow 2)$.] The dimension \[d = \dim \left( L + \operatorname{Ker} P^{n_1 -1}   / \operatorname{Ker} P^{n_1 -1} \right) \] is a lower semicontinuous function on $\operatorname{BLG}(V, \mathcal{P})$ (i.e. it can only increase if we slightly change $L$). By Assertion~\ref{A:InducedImP} $d \leq l_1$ and  the equality hods for the subspace \eqref{Eq:ExDecomptop} (for any JK decomposition). Hence $d = l_1$. Using Corollary~\ref{Cor:JordExtr} we can extract $l_1$ maximal Jordan blocks. Therefore, a generic bi-Lagrangian subspace $L$ has the form \[ L = L_1 \oplus L_2,\] where $L_1$ is the sum  of $l_1$ generic bi-Lagrangian subspaces in $\mathcal{J}_{0, 2n_i}$ and $L_2$ is a bi-Lagrangian subspace in the sum of smaller Jordan blocks $ \bigoplus_{i=2}^t \left(\bigoplus_{j=1}^{l_i} \mathcal{J}_{0, 2n_i} \right)$. By repeating the same logic for $L_2$, we get \eqref{Eq:DimTopPart}.

\item[$(2 \Rightarrow 3)$.] Decompose $L$ into a sum of bi-Lagrangian subspaces of each Jordan block using Corollary~\ref{Cor:JordExtr} and then take the basis as in Theorem~\ref{T:BiLagr_One_Jordan_Canonical_Form}.

\item[$(3 \Rightarrow 2)$.] Trivial.

\item[$(2 \Rightarrow 1)$.] We've already established that \eqref{Eq:DimTopPart} holds for the maximal orbit $O_{\max}$. Also, any bi-Lagrangian subspace $L$ satisfying \eqref{Eq:DimTopPart} belong the orbit of the same subspace \eqref{Eq:ExDecomptop}. Therefore, these subspaces $L$ form the maximal orbit $O_{\max}$.
 \end{itemize}

Theorem~\ref{T:BiLagrGenDecomp} is proved. 

\subsubsection{Jordan normal forms for bi-Lagrangian subspaces}

As a $P$-invariant subspace, any bi-Lagrangian subspace $L \subset (V,\mathcal{P})$ has the following obvious invariants:
\begin{itemize}
\item Jordan normal form for the operators $P\bigr|_{L}$ and $P\bigr|_{V/L}$.
\end{itemize}

The next statement shows that a bi-Lagrangian space $L$ is generic if and only the Jordan normal form of $P\bigr|_{L}$ is a ``half'' of $P$'s Jordan form. We denote the $n\times n$ Jordan block (from the Jordan normal form) as $ J(n)$.

\begin{lemma} \label{L:JNFGeneric} Let $(V, \mathcal{P}) = \bigoplus_{i=1}^N  \mathcal{J}_{0, 2n_i}$, where $n_1 \geq \dots \geq n_N$, $P$ be the nilpotent recursion operator and $L \subset (V,\mathcal{P})$ be a bi-Lagrangian subspace.

\begin{enumerate}
    \item \label{Item:LemmaInvForm} If  $P\bigr|_{L} \sim  J(h_1) \oplus \dots \oplus J(h_M)$, where $h_1 \geq \dots \geq h_M$, then $M \leq 2N$ and \begin{equation} \label{Eq:Inequality} (h_1, \dots, h_M,0, \dots, 0) \leq (n_1, n_1, \dots, n_N, n_N) \end{equation} component-wise. 
    
    \item $L$ is generic if and only if $M = N$, in which case $P\bigr|_{L} \sim  J(n_1) \oplus \dots J(n_M)$.
\end{enumerate}  

\end{lemma}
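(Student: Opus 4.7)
For Part~1, I would exploit the filtration $0 \subset L\cap\operatorname{Ker}P \subset L\cap\operatorname{Ker}P^2 \subset \dots \subset L$. Because $L$ is $P$-invariant (Lemma~\ref{L:NonGenDescBiLagr}), the successive quotients inject into the ambient ones: $(L\cap\operatorname{Ker}P^k)/(L\cap\operatorname{Ker}P^{k-1}) \hookrightarrow \operatorname{Ker}P^k/\operatorname{Ker}P^{k-1}$. Denoting by $\mu'_k$ the left-hand dimension (this is the $k$-th entry of the conjugate partition of the Jordan type of $P|_L$) and by $\lambda'_k = \dim\operatorname{Ker}P^k - \dim\operatorname{Ker}P^{k-1} = 2\,\#\{i : n_i \geq k\}$ the right-hand dimension, one obtains $\mu'_k \leq \lambda'_k$ for every $k\geq 1$. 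Passing back to the partitions themselves via conjugation, this is exactly the componentwise inequality $(h_1,h_2,\dots,h_M,0,\dots) \leq (n_1,n_1,n_2,n_2,\dots,n_N,n_N)$ claimed in Part~1. Taking $k=1$ gives the bound $M = \mu'_1 \leq \lambda'_1 = 2N$.

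For the forward direction of Part~2 ($L$ generic $\Rightarrow$ $M=N$), I would invoke condition~3 of Theorem~\ref{T:BiLagrGenDecomp}: there is a standard basis in which $L = \operatorname{Span}\{e^{ij}_k\}$. Each basis vector $e^{ij}_k$ lies in a separate $P$-chain $e^{ij}_1,\dots,e^{ij}_{n_i}$ of length $n_i$, and these chains are disjoint across $(i,j)$. Hence $P|_L$ decomposes as one block $J(n_i)$ per pair $(i,j)$, giving exactly $\sum_i l_i = N$ Jordan blocks with Jordan type $J(n_1)\oplus\dots\oplus J(n_N)$.

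For the converse ($M=N \Rightarrow L$ generic), the natural strategy is induction on $\dim V$ via extraction of a maximal block. If $L$ contains a vector of maximal height $n_1$, then Theorem~\ref{T:ExtractMaxBlockGen} produces a bi-orthogonal decomposition $(V,\mathcal{P}) = \mathcal{J}_{0,2n_1} \oplus (\hat V,\hat\mathcal{P})$ with $L = L_1 \oplus L'$, where $L_1$ is generic in $\mathcal{J}_{0,2n_1}$, $L'$ is bi-Lagrangian in $\hat V$, and $\hat N = N-1$. Since $P|_{L_1}\sim J(n_1)$ contributes a single Jordan block, we have $M' = M-1 = N-1 = \hat N$, so the inductive hypothesis applied to $L'$ gives $L'$ generic in $\hat V$. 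Combining the canonical forms from condition~3 of Theorem~\ref{T:BiLagrGenDecomp} for $L_1$ and $L'$ then produces a standard basis of $V$ in which $L$ has the required form, yielding genericity of $L$ and the Jordan-type identity.

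The main obstacle is the step asserting that $M=N$ \emph{already guarantees} a vector of maximal height $n_1$ in $L$. Part~1 alone does not force this, since its conjugate-partition bound is compatible with $h_1<n_1$ together with $M=N$. To close this gap one must exploit the bi-Lagrangian condition itself, not merely $P$-invariance. The approach I would pursue is to use the sequence of forms $A_k = B\circ P^k$ from Assertion~\ref{A:SeqForms}, which restrict $L$ to be isotropic for the induced symplectic structures on the top-row quotients $U_j$ of Assertion~\ref{A:InducedImP}, and to combine this with the duality $L\cap\operatorname{Ker}P^k = (L+\operatorname{Im}P^k)^{\perp_B}$ coming from the Lagrangian property. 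The goal would be to convert $M=N$ into the statement that the image of $L\cap\operatorname{Ker}P^{n_j}$ in $U_j$ is Lagrangian (of dimension $l_j$) for every $j$, which is precisely condition~2 of Theorem~\ref{T:BiLagrGenDecomp} and hence genericity; this symplectic-dimension bookkeeping is where all the subtlety of the lemma resides.
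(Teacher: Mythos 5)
Your Part~1 and the forward implication of Part~2 are correct and essentially the paper's own argument: the kernel filtration gives $\mu'_k\le\lambda'_k$ for the conjugate partitions, which by transposition is exactly the componentwise containment \eqref{Eq:Inequality} (the paper phrases this as the standard containment of the Jordan structure of $P\bigr|_L$ in that of $P$, counted via $\dim\operatorname{Ker}P^p-\dim\operatorname{Ker}P^{p-1}$), and ``generic $\Rightarrow M=N$'' is read off from the canonical form in Theorem~\ref{T:BiLagrGenDecomp}, just as in the paper.

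The converse is where your proposal stops, so as a proof it is incomplete: the ``symplectic-dimension bookkeeping'' that is supposed to turn $M=N$ into condition~2 of Theorem~\ref{T:BiLagrGenDecomp} is never carried out. But your diagnosis of the obstacle is exactly right, and in fact no bookkeeping can close it, because the implication ``$M=N\Rightarrow L$ generic'' is false as stated. The paper's own proof of this direction hinges on the assertion that $\sum h_i=\sum n_i$ forces at least $l_1$ values $h_i=n_1$; this does not follow from Part~1 (the first $N$ entries of $(n_1,n_1,n_2,n_2,\dots)$ sum to more than $\sum n_i$ unless all blocks are equal, which is the only case where the argument works), and it is refuted by the paper's own example: for $\mathcal{J}_{0,6}\oplus\mathcal{J}_{0,2}$ the indecomposable bi-Lagrangian subspace $L=\operatorname{Span}\{e_3,f_1,e_2+\hat e_1,f_2-\hat f_1\}$ of Assertion~\ref{A:NonDecomp} satisfies $P(e_2+\hat e_1)=e_3$ and $P(f_2-\hat f_1)=f_1$, so $P\bigr|_L\sim J(2)\oplus J(2)$ and $M=2=N$, while Part~1 only gives the consistent bound $(2,2)\le(3,3)$; yet this $L$ lies in the $3$-dimensional orbit $O_{ind}$, not in the $5$-dimensional $O_{\max}$ (Section~\ref{SubS:Exam}), and condition~2 of Theorem~\ref{T:BiLagrGenDecomp} fails for it. What your block-extraction induction does prove is the corrected equivalence: $L$ is generic if and only if $P\bigr|_L\sim J(n_1)\oplus\dots\oplus J(n_N)$. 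Under that stronger hypothesis $L$ contains a vector of height $n_1$ at every stage, Theorem~\ref{T:ExtractMaxBlockGen} splits off a $\mathcal{J}_{0,2n_1}$ whose contribution to $P\bigr|_L$ is one block $J(n_1)$, the remaining subspace has restricted type $J(n_2)\oplus\dots\oplus J(n_N)$ inside $\bigoplus_{i\ge 2}\mathcal{J}_{0,2n_i}$, and your induction closes. You should state and prove that version rather than the one with the hypothesis $M=N$.
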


\begin{proof}[Proof of Lemma~\ref{L:JNFGeneric}]

\begin{enumerate}

\item It is standard knowledge that the Jordan structure of Jordan normal form of $P\bigr|_{L}$ is ``contained'' within the Jordan normal form of $P$  for any P-invariant subspace L. \eqref{Eq:Inequality} follows from counting Jordan blocks of size at least $p$ via  $\dim \operatorname{Ker}P^p - \dim \operatorname{Ker}P^{p-1}$.

\item For generic L, the claim follows from Theorem~\ref{T:BiLagrGenDecomp}. If $M = N$, let $l_1$  be the multiplicity of $n_1$ in $\left\{n_i\right\}$. As $\sum h_i = \sum n_i$, there are at least $l_1$ values $h_i = n_1$.  Corollary~\ref{Cor:JordExtr} yields remaining $h_j \leq n_2$, and iterating gives $h_i = n_i$ for all $i$. \end{enumerate}

Lemma~\ref{L:JNFGeneric} is proved. \end{proof}

\begin{remark} The result in Item~\ref{Item:LemmaInvForm} of Lemma~\ref{L:JNFGeneric} can be derived from a theorem due to  P.\,R.~Halmos (\cite{Halmos71}, simplified in \cite{Faouzi01}, \cite{Domanov10}): for any $P$-invariant subspace $W$ within a finite-dimensional complex vector space $V$ there exists an operator $B$ which commutes with $A$ and such that $W = B(V)$. Operators $B$ commuting with $A$ have block-matrix structure \eqref{E:bsp_algebra_matrix}  (with arbitrary matrices $C^{i,j}_s$).  \end{remark}

\subsubsection{Complementary bi-Lagrangian subspaces}

Any Lagrangian subspace of a symplectic space has a complementary Lagrangian subspace. For bi-Lagrangians subspaces this is only true in the Jordan case for generic bi-Lagrangian subspaces.

\begin{theorem} \label{T:Complem} Let $(V, \mathcal{P}) = \bigoplus_{i=1}^N  \mathcal{J}_{0, 2n_i}$, where $n_1  \geq n_2 \geq \dots \geq n_N$, and $L \subset (V, \mathcal{P})$ be a bi-Lagrangian subspace.

\begin{enumerate} 

\item There is a complementary bi-Lagrangian subspace $L'$, i.e.\begin{equation} \label{Eq:ComplPair} (V, \mathcal{P})  = L \oplus L',\end{equation} if and only if $L$ is generic.

\item Moreover, for any complementary pair of bi-Lagrangian subspaces $L, L'$ there is a standard basis $e^i_{j}, f^i_j$, where $i=1,\dots, N, j=1,\dots, n_i$ such that \begin{equation} \label{Eq:ExDecomptop2} L = \operatorname{Span}\left\{e^i_j\right\}_{i=1,\dots, N, j=1,\dots, n_i}, \qquad L' = \operatorname{Span}\left\{f^i_j\right\}_{i=1,\dots, N, j=1,\dots, n_i}.\end{equation}

\end{enumerate}

\end{theorem}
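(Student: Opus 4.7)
My proof is built around a single observation: whenever $L,L'\subset V$ are complementary bi-Lagrangians, the restriction of $B$ gives a perfect pairing $L\times L'\to\mathbb{K}$ (non-degeneracy follows from $L^{\perp_B}=L$ and $L\cap L'=0$), and because $P$ is $B$-self-adjoint, the map $v'\mapsto B(\cdot,v')$ is a $P$-module isomorphism $L'\cong L^*$. For the easy direction of part~(1), if $L$ is generic then by Theorem~\ref{T:BiLagrGenDecomp} we may pick a standard basis $e^{ij}_k,f^{ij}_k$ with $L=\operatorname{Span}\{e^{ij}_k\}$; the subspace $L'=\operatorname{Span}\{f^{ij}_k\}$ is then a sum of bi-isotropic halves in bi-orthogonal Jordan blocks, hence bi-Lagrangian and complementary to $L$.

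For the converse, suppose a complementary bi-Lagrangian $L'$ exists. Since the Jordan normal form of a nilpotent operator coincides with that of its transpose, the isomorphism $L'\cong L^*$ yields $P|_{L'}\sim P|_L$ in Jordan type. Because $L,L'$ are $P$-invariant and $V=L\oplus L'$, we also have $P|_V\sim P|_L\oplus P|_{L'}$; thus the Jordan type of $P|_V$ consists of that of $P|_L$ counted twice. Comparing with the known structure $P|_V\sim J(n_1)^{\oplus 2}\oplus\cdots\oplus J(n_N)^{\oplus 2}$ forces $P|_L\sim J(n_1)\oplus\cdots\oplus J(n_N)$, which by Lemma~\ref{L:JNFGeneric} characterizes $L$ as generic.

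For part~(2) I build the standard basis by induction on $\dim V$ (the case $\dim V=0$ being vacuous). Pick $u_1\in L$ of maximal height $n_1$ and set $e^1_j=P^{j-1}u_1\in L$. The perfect pairing on $L\times L'$ shows that the map $L'\to\mathbb{K}^{n_1}$, $v\mapsto(B(e^1_j,v))_j$, is surjective, so one can choose $v_1\in L'$ with $B(e^1_j,v_1)=\delta_{j,n_1}$; put $f^1_j=P^{n_1-j}v_1\in L'$. Using $B$-self-adjointness $B(P^au,P^bw)=B(u,P^{a+b}w)$ together with $P^{n_1}=0$ on $V$, a direct computation gives $B(e^1_j,f^1_k)=\delta_{jk}$, while the bi-isotropy of $L,L'$ and the identity $A=B\circ P$ supply the remaining vanishings. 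Hence $V_1=\operatorname{Span}\{e^1_j,f^1_j\}$ is isomorphic as a bi-Poisson space to $\mathcal{J}_{0,2n_1}$ in the standard form~\eqref{Eq:StandJord}. Since $B|_{V_1}$ is non-degenerate and $V_1$ is $P$-invariant, we obtain a bi-orthogonal decomposition $V=V_1\oplus V_1^{\perp_B}$. The key step: for any $l\in L$ written as $l=l_1+l_2$ along this decomposition, the equality $B(l_1,e^1_k)=B(l,e^1_k)=0$ for all $k$ forces $l_1\in(\operatorname{Span}\{e^1_j\})^{\perp_B}\cap V_1=\operatorname{Span}\{e^1_j\}\subset L$, so $l_2\in L\cap V_1^{\perp_B}$. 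Hence $L=(L\cap V_1)\oplus(L\cap V_1^{\perp_B})$; the same argument applies to $L'$, and a dimension count shows that $L\cap V_1^{\perp_B}$ and $L'\cap V_1^{\perp_B}$ form a complementary pair of bi-Lagrangians in $V_1^{\perp_B}$, to which the inductive hypothesis applies.

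The main obstacle is the converse of part~(1): obtaining the sharp equality $P|_L\sim P|_{L'}$ of Jordan types requires the $B$-self-adjointness of $P$ to promote the pairing $L\times L'\to\mathbb{K}$ into an isomorphism of $P$-modules $L'\cong L^*$. Without this duality, Lemma~\ref{L:JNFGeneric} would only supply inequalities on the Jordan type of $L$, far too weak to force genericity.
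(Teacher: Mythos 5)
Your proof is correct. The underlying observation — that $B$ restricts to a perfect pairing $L\times L'\to\mathbb{K}$ and, because $P$ is $B$-self-adjoint, this pairing intertwines $P|_{L'}$ with the transpose of $P|_L$ — is also the engine of the paper's proof, but you deploy it differently. The paper proves part~(2) directly in one shot: take any Jordan basis $\hat e^i_j$ of $P|_L$, take its $B$-dual basis $\hat f^i_j$ in $L'$, use self-adjointness to compute $P\hat f^k_l=\hat f^k_{l-1}$, and then invoke uniqueness of the Jordan--Kronecker canonical form to conclude that the block sizes match the $n_i$; part~(1) then falls out for free. You, instead, prove the converse of~(1) by a purely module-theoretic count: $L'\cong L^*$ gives $P|_{L'}\sim P|_L$, so $P|_V\sim P|_L^{\oplus 2}$, which against the known type $\bigoplus_i J(n_i)^{\oplus 2}$ forces $P|_L\sim\bigoplus_i J(n_i)$, and Lemma~\ref{L:JNFGeneric} finishes. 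Then you re-derive part~(2) by an inductive block extraction (pick a top-height vector, build $V_1\cong\mathcal J_{0,2n_1}$ using the dual vector in $L'$, show $L$ and $L'$ split across $V_1\oplus V_1^{\perp_B}$, recurse). Your version of~(1) is conceptually cleaner and does not require constructing a basis, which is a genuine plus; the cost is that you effectively prove genericity twice, since your part~(2) construction already yields the standard basis $\{e^i_j\}\cup\{f^i_j\}$ and hence genericity directly. The paper's one-pass argument is terser; your split exposes the Jordan-type symmetry $P|_{L'}\sim P|_L$ explicitly, which the paper leaves implicit.
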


\begin{proof}[Proof of Theorem~\ref{T:Complem}] For any generic $L \in O_{\max}$ we can take the complementary subspace \eqref{Eq:ExDecomptop2} in the coordinates from Theorem~\ref{T:BiLagrGenDecomp}. Next, consider a complementary pair \eqref{Eq:ComplPair}. Let $P$ be the recursion operator on $(V, \mathcal{P})$. Since $L$ is $P$-invariant, we can choose its basis  \[L =\operatorname{Span}\left\{\hat{e}^i_j\right\}_{i=1,\dots, M, j=1,\dots, q_i} \] such that \[ P\hat{e}^i_j =\hat{e}^i_{j+1}, \qquad P\hat{e}^i_{q_i} = 0. \] Since $(V, B)$ is a symplectic space, we can choose the complementary basis in $L'$: \[L' =\operatorname{Span}\left\{\hat{e}^i_j\right\}_{i=1,\dots, M, j=1,\dots, q_i}, \qquad B(\hat{e}^i_j, \hat{f}^k_l) = \delta^i_k \delta^j_l.\] Recall that  $L'$ is $P$-invariant and $P$ is self-adjoint w.r.t. $B$. Hence, we can easily find $P\hat{f}^k_l$ from the equalities \[ B(\hat{e}^i_j, P\hat{f}^k_l) = B(P\hat{e}^i_j, \hat{f}^k_l).\] We get that \[P\hat{f}^k_l = \hat{f}^k_{l-1},\qquad P\hat{f}^k_1 = 0.\] It is easy to see that $\hat{e}^i_j, \hat{f}^i_j$ is a standard basis for the sum of $M$ Jordan blocks $\mathcal{J}_{0, 2q_i}, i=1,\dots, M$. The sizes of Jordan blocks in the JK theorem are uniquely defined, hence $M=N$ and the sizes $n_i$ match the sizes $q_i$ (except for possibly a different order). Therefore, \eqref{Eq:ExDecomptop2} holds and $L$ belongs to the top-dimensional $\operatorname{Aut}(V, \mathcal{P})$-orbit $O_{\max}$. Theorem~\ref{T:Complem} is proved. 
\end{proof}

\begin{corollary} Let $(V,\mathcal{P})  = \bigoplus_{i=1}^N  \mathcal{J}_{0, 2n_i}$. The following conditions are equivalent:

\begin{enumerate}

\item \label{Item:Complem} For any bi-Lagrangian subspace $L \subset (V,\mathcal{P})$ there is a complementary bi-Lagrangian subspace $L'$.

\item \label{Item:Jord22} All Jordan blocks are $2\times 2$, i.e. $n_i = 2$ for all $i$.

\end{enumerate}

\end{corollary}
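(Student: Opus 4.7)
By Theorem~\ref{T:Complem}, a bi-Lagrangian subspace admits a complementary bi-Lagrangian if and only if it is generic; hence item~\ref{Item:Complem} is equivalent to the condition that \emph{every} bi-Lagrangian subspace of $(V,\mathcal{P})$ is generic. The plan is to prove this latter condition is equivalent to item~\ref{Item:Jord22}.

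For the easy implication, suppose all Jordan blocks are $2 \times 2$, i.e.\ every $n_i$ equals $1$ in the paper's convention $\mathcal{J}_{0, 2n_i}$. Then the recursion operator $P = B^{-1}A$ vanishes on each $\mathcal{J}_{0,2}$-summand and hence on all of $V$, so $\operatorname{Im}P = 0$ and $\operatorname{Ker}P = V$. The genericity condition~\eqref{Eq:DimTopPart} from Theorem~\ref{T:BiLagrGenDecomp} (taken with one distinct block size, $n_1=1$, and multiplicity $l_1=N$) collapses to $\dim L = N$, which is automatic since any bi-Lagrangian has dimension $\sum_i n_i = N$. Thus every bi-Lagrangian is generic, and Theorem~\ref{T:Complem} supplies the complement.

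For the converse I argue by contrapositive: if some $n_i \geq 2$, I will exhibit a non-generic bi-Lagrangian, so that Theorem~\ref{T:Complem} denies it a complement. Assume $n_1 = \max_i n_i \geq 2$. Inside the first summand $\mathcal{J}_{0, 2n_1}$, fix a standard basis and set $L_1 = \operatorname{Span}\{e_2, \ldots, e_{n_1}, f_1\}$; by Theorem~\ref{T:BiLagr_One_Jordan_Canonical_Form} this is a bi-Lagrangian of height $n_1 - 1 < n_1$. For every remaining summand $\mathcal{J}_{0, 2n_j}$, choose any bi-Lagrangian $L_j$, taking those of maximal size $2n_1$ to be generic (height $n_1$). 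By Assertion~\ref{A:DecompBiLagr} the direct sum $L = L_1 \oplus \bigoplus_{j \geq 2} L_j$ is bi-Lagrangian in $(V, \mathcal{P})$. To see that $L$ is not generic, apply Assertion~\ref{A:InducedImP}: the top row $U_1 = V/(\operatorname{Im}P + \operatorname{Ker}P^{n_1-1})$ is the direct sum of the $2$-dimensional top rows of the $l_1$ largest blocks, since every smaller block lies entirely in $\operatorname{Ker}P^{n_1-1}$ and contributes nothing. The summand $L_1 \subseteq \operatorname{Ker}P^{n_1 - 1}$ projects trivially to $U_1$, whereas each of the remaining $l_1-1$ generic summands inside a size-$2n_1$ block projects onto a $1$-dimensional line. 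Hence the image of $L$ in $U_1$ has dimension at most $l_1 - 1 < l_1$, so condition~\eqref{Eq:DimTopPart} fails at $j=1$ and $L$ is not generic. The main obstacle is this dimension bookkeeping in $U_1$: one must verify that dropping $L_1$ to a sub-maximal height forces a strict loss of one dimension in the top-row projection, with no compensation from the other summands.
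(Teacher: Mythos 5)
Your proof is correct. A note on the statement itself: you rightly read item~\ref{Item:Jord22} as $n_i = 1$ for all $i$, so that each summand $\mathcal{J}_{0,2n_i}$ is $2\times 2$; the paper's ``$n_i = 2$'' is evidently a typo (a $4\times 4$ block $\mathcal{J}_{0,4}$ already contains the non-complemented bi-Lagrangian $\operatorname{Ker}P$).

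The paper does not actually prove this corollary; it merely records it after Theorem~\ref{T:Complem} and cites \cite{RanRodman93} for the implication from item~\ref{Item:Jord22} to item~\ref{Item:Complem}. So you are supplying an argument where the paper supplies none, and it is a natural one: by Theorem~\ref{T:Complem}, item~\ref{Item:Complem} is equivalent to every bi-Lagrangian subspace being generic, and you test genericity with criterion~\eqref{Eq:DimTopPart}. Two small remarks. In the easy direction, when $P=0$ one can argue even more cheaply without invoking genericity at all: bi-Lagrangian then just means $B$-Lagrangian, and Lagrangian complements always exist in a symplectic space. In the hard direction, your dimension bookkeeping is correct: the non-generic summand $L_1 \subset \operatorname{Ker}P^{n_1-1}$ contributes $0$ to $\hat{L}_1$, the other $l_1-1$ maximal-size blocks contribute at most $1$ each, and the smaller blocks contribute $0$, so $\dim\hat{L}_1 \leq l_1 - 1 < l_1$. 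A perhaps crisper variant of the same step uses Lemma~\ref{L:JNFGeneric}: $P\bigr|_{L_1} \sim J(n_1-1)\oplus J(1)$, so $P\bigr|_{L}$ decomposes into at least $N+1$ Jordan blocks, whereas genericity would force exactly $N$.
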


It was shown in \cite[Theorem 2.4]{RanRodman93} that Condition~\ref{Item:Jord22} implies  Condition~\ref{Item:Complem}.

\subsection{Dimension of bi-Lagrangian Grassmanian} \label{SubS:DimBLG}

\begin{theorem} \label{T:BiLagrJord} Let $(V, \mathcal{P}) = \bigoplus_{j=1}^N \mathcal{J}_{\lambda, 2n_j}$, where $n_1 \geq \dots \geq n_N$. Then \begin{equation} \label{Eq:DimBiLagrJord} \dim \operatorname{BLG}(V, \mathcal{P}) = \sum_{j=1}^N j \cdot  n_j. \end{equation} 
\end{theorem}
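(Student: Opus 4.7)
By Theorem~\ref{T:NonDependEigen} we may reduce to $\lambda = 0$, so the recursion operator $P = B^{-1}A$ is nilpotent on $V = \bigoplus_{j=1}^N \mathcal{J}_{0,2n_j}$. Theorem~\ref{T:BiLagrGenDecomp} provides a unique open $\operatorname{Aut}(V,\mathcal{P})$-orbit $O_{\max} \subseteq \operatorname{BLG}(V,\mathcal{P})$, so $\dim \operatorname{BLG}(V,\mathcal{P}) = \dim O_{\max}$; since $O_{\max}$ is a group orbit (hence smooth), this equals the Zariski tangent space dimension at any $L_0 \in O_{\max}$.

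The plan is to pick $L_0$ together with a complementary generic bi-Lagrangian $L'_0$ supplied by Theorem~\ref{T:Complem}, so that in a standard basis $\{e^i_j, f^i_j\}_{1 \leq i \leq N,\, 1 \leq j \leq n_i}$ one has $L_0 = \operatorname{Span}\{e^i_j\}$ and $L'_0 = \operatorname{Span}\{f^i_j\}$, and then identify first-order deformations $L_t = \{u + t\phi(u) : u \in L_0\}$ with linear maps $\phi \colon L_0 \to L'_0$. The bi-Lagrangian condition linearizes to two requirements: (a) $P$-equivariance $\phi \circ P = P \circ \phi$, ensuring that $L_t$ stays $P$-invariant (see Lemma~\ref{L:NonGenDescBiLagr}); and (b) Lagrangian symmetry $B(\phi u, v) + B(u, \phi v) = 0$ for all $u, v \in L_0$, ensuring isotropy with respect to $B$.

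Decomposing $\phi = (\phi_{ij})$ with $\phi_{ij} \colon L_i \to L'_j$ for $L_i = \operatorname{Span}\{e^i_1,\dots,e^i_{n_i}\}$ and analogously $L'_j$, each $L_i$ and $L'_j$ is a cyclic $\mathbb{K}[P]/(P^{n_i})$-module, hence $\dim \operatorname{Hom}_P(L_i, L'_j) = \min(n_i, n_j)$. A direct computation using $B(e^i_k, f^{i'}_l) = \delta^{ii'}\delta_{kl}$ shows that condition (b) identifies the parameters of $\phi_{ij}$ with those of $\phi_{ji}$, so the free parameters are indexed by unordered pairs $\{i,j\}$ with $\min(n_i, n_j)$ parameters per pair. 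With $n_1 \geq \dots \geq n_N$:
\[
\dim T_{L_0}\operatorname{BLG}(V,\mathcal{P}) \;=\; \sum_{i=1}^N n_i + \sum_{1 \leq i < j \leq N} n_j \;=\; \sum_{j=1}^N j \cdot n_j,
\]
as required.

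The delicate point is confirming that conditions (a) and (b) cut out the full Zariski tangent space with no further obstructions, which follows from the smoothness of the orbit $O_{\max}$ together with the fact that $T_{L_0} O_{\max} \subseteq T_{L_0}\operatorname{BLG}(V,\mathcal{P})$ is trivially contained in the space cut out by (a)+(b) (both are preserved by $\operatorname{Aut}(V,\mathcal{P})$). As a cross-check, one may instead compute $\dim O_{\max} = \dim \operatorname{Aut}(V,\mathcal{P}) - \dim \operatorname{Stab}(L_0)$: using Corollary~\ref{C:DimAutJordanCase} and the block description \eqref{E:bsp_algebra_matrix} of $\operatorname{aut}(V,\mathcal{P})$, the stabilizer condition kills the ``$e$-to-$f$'' blocks, leaving $\sum_j (3j-1) n_j$ parameters in $\operatorname{Stab}(L_0)$, and $\sum_j (4j-1) n_j - \sum_j (3j-1) n_j = \sum_j j \cdot n_j$ recovers the same formula.
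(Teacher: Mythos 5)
Your proposal is correct, and your main route — computing the Zariski tangent space at a generic $L_0 \in O_{\max}$ via first-order deformations $\phi\colon L_0 \to L'_0$ — is genuinely different from the paper's proof, which goes straight to the orbit–stabilizer formula $\dim O_{\max} = \dim\operatorname{Aut}(V,\mathcal{P}) - \dim\operatorname{St}_{L_0}$ using the block description \eqref{E:bsp_algebra_matrix} of $\operatorname{aut}(V,\mathcal{P})$ and Corollary~\ref{C:DimAutJordanCase}. (Your ``cross-check'' paragraph is precisely the paper's argument.) What the tangent-space route buys is a conceptually cleaner count: $\operatorname{Hom}_P(L_i, L'_j) \cong \min(n_i,n_j)$ as cyclic $\mathbb{K}[P]$-modules, with the $B$-symmetry pairing off $\phi_{ij}$ against $\phi_{ji}$, so the count by unordered pairs $\{i,j\}$ is immediate — and your observation that on the diagonal the $P$-equivariance makes the symmetry of $\beta(u,v)=B(\phi_{ii}u,v)$ automatic is a nice point.

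One remark on the step you flag as delicate: the argument ``$T_{L_0}O_{\max} \subseteq$ (space cut out by (a)+(b))'' only gives $\sum_j j\,n_j \geq \dim O_{\max}$, i.e.\ an upper bound on $\dim\operatorname{BLG}$, since the space cut out by the linearized equations is the Zariski tangent space of a possibly non-reduced scheme structure. The cleanest way to close the gap without falling back on orbit–stabilizer is to observe that every $\phi$ satisfying (a)+(b) integrates to an honest curve: $L_t = \{u + t\phi(u): u\in L_0\}$ is bi-Lagrangian for all $t$, because the $t^2$-term $B(\phi u,\phi v)$ vanishes ($\phi u,\phi v \in L'_0$ and $L'_0$ is Lagrangian) and $P$-invariance is preserved by (a). This unobstructedness shows the local dimension equals $\sum_j j\,n_j$, matching the tangent-space count, and makes your main line of argument self-contained.
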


\begin{remark} Let us group all equal values $n_i$ from Theorem~\ref{T:BiLagrJord} together. Assume that there are $l_1$ values equal to $\hat{n}_1$, $l_2$ values equal to $\hat{n}_2$, etc., $l_t$ values equal to $\hat{n}_t$. Then  \begin{equation} \label{Eq:DimBLGComb} \dim \operatorname{BLG}(V, \mathcal{P}) = \sum_{j=1}^t \frac{l_j(l_j + 1)}{2} \hat{n}_j  + \sum_{1 \leq i < j \leq t}  l_i l_j\hat{n}_j.\end{equation} \end{remark}

\begin{proof}[Proof of Theorem~\ref{T:BiLagrJord} ] The dimension of $\operatorname{BLG}(V, \mathcal{P})$ is equal to the dimension of the open orbit $O_{\max}$ of the $\operatorname{Aut}(V, \mathcal{P})$-action on it. By 
Theorem~\ref{T:BiLagrGenDecomp} the subspace $L = \operatorname{Span}\left\{e^i_j\right\}$  lies in the open orbit $O$.  It suffices to find the dimension of the stabilizer in the automorphism Lie algebra \[\operatorname{St}_L = \left\{ C \in \operatorname{aut}(V, \mathcal{P}) \quad \bigr| \quad CL = L\right\},\] since \[\dim O_{\max} = \dim \operatorname{Aut}(V, \mathcal{P}) - \dim \operatorname{St}_L.\]  
Let us describe the Lie algebra of $\operatorname{St}_L$ By Theorem~\ref{T:BiSymp_General_Jordan_Case_Mega} all elements of the Lie algebra $C \in \operatorname{aut}(V, \mathcal{P})$ have the form 
\eqref{E:bsp_algebra_matrix}, where the blocks $C^{i, j}_s$ satisfy \eqref{E:Cond_on_BiSymp_Jordan_Case}. Such an element  $C$ preserves $L = \operatorname{Span}\left\{e^i_j\right\}$ if and only if each $2l_i \times 2l_j$ block $C^{i,j}_s$ has the form \begin{equation} \label{Eq:StabBiLagrGenBlock} C^{i, j}_s = \left( \begin{matrix} X^{i,j}_s & Y^{i,j}_s \\ 0 & Z^{i,j}_s\end{matrix}\right). \end{equation}

Now, let us count $\dim \operatorname{St}_L $.

\begin{itemize}

\item For $i <j$ all $C^{i,j}_s$ are arbitrary matrices of the form~\eqref{Eq:StabBiLagrGenBlock} and $C^{j,i}_s$ are uniquely determined by \eqref{E:Cond_on_BiSymp_Jordan_Case}. Each matrix $C^{i,j}_s$  has $3l_i l_j$ coefficients.

\item For $i = j$ the matrices $C^{i,i}_s$ are $2l_i \times 2l_i$ matrices of the form \[  C^{i, j}_s = \left( \begin{matrix} X^{i,i}_s & Y^{i,i}_s \\ 0 & -\left(X^{i,i}_s\right)^T\end{matrix}\right),\] where $Y^{i,i}_s  = \left(Y^{i,i}_s \right)^T$. Each matrix $C^{i,i}_s$  has $\displaystyle \frac{l_i \left( 3l_i + 1\right)}{2}$ coefficients.

\end{itemize}
In total, \[\dim \operatorname{St}_L = \sum_{j=1}^t \hat{n}_j l_j \left(\frac{3l_j + 1}{2} + 3 \sum_{i=1}^{j-1} l_i \right). \]  We know $\dim \operatorname{Aut}(V, \mathcal{P})$ from Corollary~\ref{C:DimAutJordanCase}. Thus, \[ \dim \operatorname{BLG}(V, \mathcal{P}) = \dim \operatorname{Aut}(V, \mathcal{P}) - \dim \operatorname{St}_L  =\sum_{j=1}^t \hat{n}_j l_j \left(\frac{l_j + 1}{2} +  \sum_{i=1}^{j-1} l_i \right) = \sum_{j=1}^s j \cdot  n_j .\] Theorem~\ref{T:BiLagrJord}  is proved. \end{proof} 

\begin{remark} We generalize Theorem~\ref{T:BiLagrJord} in Section~\ref{S:DecomposableSection} below.  In that section we show that all generic bi-Lagrangian subspaces are semisimple. In Theorem~\ref{T:DimDecomp} we calculate the dimension of any $\operatorname{Aut}(V,\mathcal{P})$-orbit for an arbitrary semisimple bi-Lagrangian subspace $L \in (V,\mathcal{P})$. \end{remark}

\section{Invariant subspaces} \label{S:InvSubspaces}

In this section we study $\operatorname{Aut}(V,\mathcal{P})$-orbits of vectors and $\operatorname{Aut}(V,\mathcal{P})$-invariant subspaces (for a Jordan pencil $\mathcal{P})$). 

\begin{itemize}

\item In Section~\ref{SubS:InvSubspaces} we describe all  $\operatorname{Aut}(V,\mathcal{P})$-invariant subspaces $U \subset (V,\mathcal{P})$. Also, in Section~\ref{SubS:OrbitsVect} we describe all  $\operatorname{Aut}(V,\mathcal{P})$-orbits of vectors $v \in (V,\mathcal{P})$.

\item In Section~\ref{S:InvBiLagr} we describe all $\operatorname{Aut}(V,\mathcal{P})$-invariant bi-isotropic and bi-Lagrangian subspaces.

\end{itemize}

\subsection{Structure of invariant subspaces} \label{SubS:InvSubspaces}

Let $\mathcal{P} = \left\{ A + \lambda B\right\}$ be a Jordan pencil on $V$. What subspaces $U \subset (V, \mathcal{P})$ are $\operatorname{Aut}(V, \mathcal{P})$-invariant? In other words, what subspaces $U$ are invariantly defined and do not depend on a JK decomposition of $(V, \mathcal{P})$? We'll simply refer to such subspaces as \textbf{invariant}. For the sum of equal Jordan blocks $\bigoplus_{i=1}^l \mathcal{J}_{0, 2n}$ there is only $n+1$ invariant subspace, characterized by the maximum height of the vectors they contain.

\begin{assertion} \label{A:InvSubsJord_OneJord}
The invariant subspaces of $(V, \mathcal{P}) = \bigoplus_{i=1}^l \mathcal{J}_{0, 2n}$ are \[\textnormal{Ker} P^i = \textnormal{Im} P^{n-i}, \qquad i = 0, \dots, n.\]
\end{assertion}

\begin{proof}[Proof of Assertion~\ref{A:InvSubsJord_OneJord}] Obviously, the subspaces $\operatorname{Ker} P^i$ are invariant. It remains to prove that there are no other invariant subspaces. A subspace $U \subset (V, \mathcal{P})$ is  $\operatorname{Aut}(V, \mathcal{P})$-invariant if for any element of the Lie algebra $C \in \operatorname{aut}(V, \mathcal{P})$ we have $C U \subset U$. By Theorem~\ref{T:BiSymp_General_Jordan_Case_Mega} the elements of $\operatorname{aut}(V, \mathcal{P})$  have the form \[C=\left( \begin{matrix} A_1 & & \\ \vdots & \ddots & \\ A_n & \dots & A_{1}\end{matrix} \right), \] where $A_i \in \operatorname{sp}(2)$. For any vector $v\in (V, \mathcal{P})$ with height $h$ it is easy to see that the vectors $Cv$ span the subspace $\operatorname{Ker} P^h$. Thus, if $U$ is invariant, then it is one of the kernels $\operatorname{Ker} P^i$. Assertion~\ref{A:InvSubsJord_OneJord} is proved. \end{proof}

In general case, for a sum of Jordan blocks $(V, \mathcal{P}) = \bigoplus_{i=1}^N \mathcal{J}_{0, 2n_i}$, all the subspaces $\operatorname{Ker} P^k$ and $\operatorname{Im} P^l$ are obviously invariant. And any their sums and intersections are also invariant. It turns out, as we prove below, that there are no other invariant subspaces. A priori the number of such subspaces \[W = \bigoplus_{i=1}^M (\textnormal{Ker} P^{k_i}
\cap \textnormal{Im} P^{l_i}),\] could be infinite, but we also prove that any invariant subspace $U\subset \bigoplus_{i=1}^t \mathcal{J}_{0, 2n_i}$ is a sum of invariant subspaces of the Jordan blocks \[\mathcal{J}_{0, 2n_i} \cap \operatorname{Ker} P^{h_i}\] with some simple conditions on the heights $h_i$. Thus, in the Jordan case, there is a finite number of invariant subspaces $U \subset  \bigoplus_{i=1}^N \mathcal{J}_{0, 2n_i}$.

\begin{theorem} \label{T:JordInvSubs}
Let $\mathcal{P} = \left\{ A + \lambda B\right\}$ be a Jordan pencil on $V$, the recursion operator $P = B^{-1}A$ be nilpotent and \[ (V, \mathcal{P}) = \bigoplus_{i=1}^N \mathcal{J}_{0, 2n_i} \] be a Jordan--Kronecker decomposition, where $n_1 \geq n_2 \geq \dots \geq n_N$. Let $U \subset (V, \mathcal{P})$ be a subspace. The following conditions are equivalent:

\begin{enumerate}

\item $U$ is $\operatorname{Aut}(V, \mathcal{P})$-invariant.

\item $U$ has the form   \begin{equation}
\label{Eq:InvSubsJord_ImKer} W = \bigoplus_{i=1}^M (\textnormal{Ker} P^{k_i}
\cap \textnormal{Im} P^{l_i}), \end{equation} for some $M \in \mathbb{N}$ and 
$k_i, l_i \geq 0$, $i=1, \dots, M$. 

\item There exists $h_1,\dots, h_N$ such that \begin{equation} \label{Eq:InvHeightCond} 0 \leq h_{i} - h_{i+1} \leq n_{i} - n_{i+1}, \qquad i = 1, \dots, N-1,\end{equation} and $U$ has the form \begin{equation} \label{Eq:JordInvSumHeight} U =  \bigoplus_{i=1}^N \mathcal{J}_{0, 2n_i}^{\leq h_i},\end{equation}  where \begin{equation} \label{Eq:JordSubBlockHeightLeqH} \mathcal{J}_{0, 2n}^{\leq h} = \mathcal{J}_{0, 2n} \cap \operatorname{Ker} P^{h}.  \end{equation}

\end{enumerate}

\end{theorem}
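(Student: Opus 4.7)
The plan is to establish the cyclic chain $(3) \Rightarrow (2) \Rightarrow (1) \Rightarrow (3)$; the first two steps are short, and the main content lies in $(1) \Rightarrow (3)$.

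For $(3) \Rightarrow (2)$, I would set $k_i = h_i$ and $l_i = n_i - h_i$ and verify the identity
\[
\bigoplus_{i=1}^N \mathcal{J}_{0,2n_i}^{\leq h_i} \;=\; \sum_{i=1}^N \bigl(\operatorname{Ker} P^{h_i} \cap \operatorname{Im} P^{n_i - h_i}\bigr).
\]
Intersecting each summand with the block $\mathcal{J}_{0, 2n_j}$ and using \eqref{Eq:ImKerOneNilp} within that block, one sees that $\operatorname{Ker} P^{h_i} \cap \operatorname{Im} P^{n_i - h_i} \cap \mathcal{J}_{0, 2n_j}$ equals $\mathcal{J}_{0, 2n_j}^{\leq \min(h_i,\, n_j + h_i - n_i)}$ when $n_i - h_i \leq n_j$, and is zero otherwise. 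The $i = j$ term yields $\mathcal{J}_{0, 2n_j}^{\leq h_j}$ exactly, and the telescoped form $h_i - h_j \leq n_i - n_j$ of \eqref{Eq:InvHeightCond} confines the remaining terms inside. The implication $(2) \Rightarrow (1)$ is immediate, since any $g \in \operatorname{Aut}(V, \mathcal{P})$ preserves both $A$ and $B$, hence commutes with $P = B^{-1} A$, and therefore preserves each $\operatorname{Ker} P^k$, each $\operatorname{Im} P^l$, and all their intersections and sums.

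The essential step is $(1) \Rightarrow (3)$, which I would attack by analysing the Lie algebra orbit $\operatorname{aut}(V, \mathcal{P}) \cdot v$ for $v \in U$, generalizing the one-block argument from Assertion~\ref{A:InvSubsJord_OneJord}. Fix a JK decomposition with the standard basis \eqref{Eq:SeverJordBlocksGroupedBasisSt} and decompose $v = \sum_i v_i$ with $v_i \in \mathcal{J}_{0, 2n_i}$ and $h_i(v) = \operatorname{height}(v_i)$. By the explicit description \eqref{E:bsp_algebra_matrix}, the action on $v$ decomposes into two mechanisms: the diagonal blocks $C^{i,i}_s$ (with $C^{i,i}_1 \in \operatorname{sp}(2 l_i)$) mix the $l_i$ copies of size-$n_i$ blocks and lower heights by $s-1$, while the off-diagonal blocks $C^{i,j}_s$ for $i \neq j$ transport vectors between blocks of different sizes with a prescribed height shift, subject to \eqref{E:Cond_on_BiSymp_Jordan_Case}. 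A direct computation along these lines shows $\operatorname{aut}(V, \mathcal{P}) \cdot v = \bigoplus_j \mathcal{J}_{0, 2n_j}^{\leq \tilde h_j(v)}$ for an explicit sequence $\tilde h_j(v)$ satisfying \eqref{Eq:InvHeightCond}; the bound $\tilde h_i - \tilde h_{i+1} \leq n_i - n_{i+1}$ reflects precisely the maximal height drop imposed by the off-diagonal transport operators across size classes.

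Assembling these orbits, $U = \sum_{v \in U} \operatorname{aut}(V, \mathcal{P}) \cdot v = \bigoplus_j \mathcal{J}_{0, 2n_j}^{\leq h_j}$ with $h_j = \max_{v \in U} \tilde h_j(v)$, and a short case analysis verifies that coordinatewise maxima of height sequences satisfying \eqref{Eq:InvHeightCond} again satisfy \eqref{Eq:InvHeightCond}. The main obstacle will be the detailed computation of $\operatorname{aut}(V, \mathcal{P}) \cdot v$ in step $(1) \Rightarrow (3)$: establishing the upper bound $\tilde h_j(v)$ requires tracing how each generator $C^{i,j}_s$ shifts the height index across blocks, while the matching lower bound requires exhibiting explicit $C \in \operatorname{aut}(V, \mathcal{P})$ realizing each target vector without violating \eqref{E:Cond_on_BiSymp_Jordan_Case}. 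Everything else is routine bookkeeping.
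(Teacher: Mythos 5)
Your overall strategy is sound and lands on essentially the same technical core as the paper's proof: both use the explicit description of $\operatorname{aut}(V,\mathcal{P})$ from Theorem~\ref{T:BiSymp_General_Jordan_Case_Mega} to track how Lie algebra elements transport vectors within and between Jordan blocks. Your $(3) \Rightarrow (2)$ is a genuine improvement in clarity: you give the concrete $k_i = h_i$, $l_i = n_i - h_i$ and verify the containment via the telescoped form of \eqref{Eq:InvHeightCond}, whereas the paper only says it is ``easy to see'' that the class \eqref{Eq:JordInvSumHeight} is closed under sums and intersections. The verification you sketch is correct (one checks $\operatorname{Ker}P^{h_i} \cap \operatorname{Im}P^{n_i-h_i} \cap \mathcal{J}_{0,2n_j} = \mathcal{J}_{0,2n_j}^{\leq\min(h_i,\, n_j - n_i + h_i)}$ and that the telescoped inequalities confine this in $\mathcal{J}_{0,2n_j}^{\leq h_j}$).

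For $(1) \Rightarrow (3)$, your ``compute $\operatorname{aut}(V,\mathcal{P})\cdot v$ in one pass'' framing is a repackaging, not a shortcut: to show that the span of $\{Cv\}$ has the form $\bigoplus_j \mathcal{J}_{0,2n_j}^{\leq \tilde h_j(v)}$ you must first establish that the span decomposes block-by-block (this is the paper's Assertion~\ref{A:InvDecomposes}, which uses the product subgroup $\prod_i\operatorname{Aut}(\mathcal{J}_{0,2n_i})$ together with the single-block Assertion~\ref{A:InvSubsJord_OneJord}) and then pin down $\tilde h_j$ via the off-diagonal transport operators $C^{i,j}_s$ and the constraints \eqref{E:Cond_on_BiSymp_Jordan_Case} (this is the paper's Assertion~\ref{A:InvTwoBlocks}). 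So the ``direct computation along these lines'' you defer is precisely the two assertions of the paper, and nothing is saved by deferring them. One small caution on the final assembly $U = \sum_{v\in U}\operatorname{aut}(V,\mathcal{P})\cdot v$: the identity is not in $\operatorname{aut}(V,\mathcal{P})$, so $v \in \operatorname{aut}(V,\mathcal{P})\cdot v$ is not automatic and must be extracted from the claimed form of the orbit span (it does hold, as in the paper via a non-degenerate $g_i - I$, but it deserves a sentence).
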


\begin{corollary} Let $(V,\mathcal{P})$ be a Jordan bi-Poisson vector space and $P$ be the recursion operator. All $\operatorname{Aut}(V,\mathcal{P})$-subspaces  are $P$-invariant (and, hence, admissible). 
\end{corollary}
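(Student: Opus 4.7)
The plan is to read the conclusion off directly from Theorem~\ref{T:JordInvSubs}, after first reducing to the nilpotent situation that Theorem~\ref{T:JordInvSubs} is stated for. The key observation is that the explicit form \eqref{Eq:JordInvSumHeight} given by that theorem already makes $P$-invariance manifest: each summand $\mathcal{J}_{0, 2n_i}^{\leq h_i} = \mathcal{J}_{0, 2n_i} \cap \operatorname{Ker}P^{h_i}$ is an intersection of two $P$-invariant subspaces (the individual Jordan block and a kernel of a polynomial in $P$), hence is $P$-invariant; and a direct sum of $P$-invariant subspaces is $P$-invariant.

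To handle the reduction, first I would use Theorem~\ref{T:JordaMultEigen} (together with its proof) to decompose $(V,\mathcal{P}) = \bigoplus_{j=1}^S \mathcal{J}_{\lambda_j}$ into generalized eigenspaces of $P$. Since any $\varphi \in \operatorname{Aut}(V,\mathcal{P})$ commutes with $P$, it preserves each $\mathcal{J}_{\lambda_j}$. Consequently any $\operatorname{Aut}(V,\mathcal{P})$-invariant subspace $U$ splits as $U = \bigoplus_j (U \cap \mathcal{J}_{\lambda_j})$, and each $U \cap \mathcal{J}_{\lambda_j}$ is invariant under the restriction $\operatorname{Aut}(\mathcal{J}_{\lambda_j})$. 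It therefore suffices to show $P$-invariance on a single eigenblock $\mathcal{J}_{\lambda_j}$, and replacing $P$ by $P - \lambda_j \operatorname{id}$ (which has the same invariant subspaces as $P$) reduces the problem exactly to the nilpotent setup of Theorem~\ref{T:JordInvSubs}.

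In that setup, Theorem~\ref{T:JordInvSubs} gives $U = \bigoplus_{i=1}^N \mathcal{J}_{0, 2n_i}^{\leq h_i}$, which is $P$-invariant by the observation above. Passing back through the eigenvalue decomposition, the original $U$ is $P$-invariant. For admissibility, I would invoke Assertion~\ref{A:AdmPInv}: in the Jordan case one may assume (after replacing $B$ by a regular form of the pencil, which does not change the automorphism group) that $B$ is nondegenerate, and then admissibility is equivalent to $P$-invariance.

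I do not anticipate any serious obstacle here: the corollary is essentially a repackaging of \eqref{Eq:JordInvSumHeight}. The only mildly nontrivial step is the eigenvalue reduction, but that follows immediately from the fact that $\operatorname{Aut}(V,\mathcal{P})$ commutes with $P$ and hence respects the generalized eigenspace decomposition.
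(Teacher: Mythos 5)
Your overall route is the paper's: in the nilpotent case the explicit form \eqref{Eq:JordInvSumHeight} (or \eqref{Eq:InvSubsJord_ImKer}) from Theorem~\ref{T:JordInvSubs} is transparently $P$-invariant, and Assertion~\ref{A:AdmPInv} converts $P$-invariance to admissibility. That part is fine.

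There is, however, a non-sequitur in your eigenvalue reduction. From ``every $\varphi \in \operatorname{Aut}(V,\mathcal{P})$ commutes with $P$, hence preserves each $\mathcal{J}_{\lambda_j}$'' you conclude ``consequently any $\operatorname{Aut}$-invariant $U$ splits as $\bigoplus_j (U \cap \mathcal{J}_{\lambda_j})$.'' That implication does not hold in general: a group can preserve every summand of a direct-sum decomposition without every invariant subspace splitting along it (take the trivial group, or $\{\pm\operatorname{id}\}$ acting diagonally on $V_1 \oplus V_2$, and $U$ a diagonal line). What actually forces the splitting is the product subgroup $\prod_j \operatorname{Aut}(\mathcal{J}_{\lambda_j}) \subseteq \operatorname{Aut}(V,\mathcal{P})$ together with the observation that $-\operatorname{id}$ is a bi-Poisson automorphism of each block (since $(-1)^2 = 1$); applying $(\operatorname{id},\dots,-\operatorname{id},\dots,\operatorname{id})$ to $u = \sum_k u_k \in U$ and subtracting yields $2u_j \in U$. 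This is exactly the mechanism of Assertion~\ref{A:InvDecomposes}, applied blockwise to eigenblocks, and it is what Remark~\ref{Rem:GenJordInv} records. Also be careful about invoking ``Theorem~\ref{T:JordaMultEigen} together with its proof'' here: that theorem concerns bi-Lagrangian subspaces, which are already known to be $P$-invariant via Lemma~\ref{L:NonGenDescBiLagr}, and its proof uses $P$-invariance as an input; for an arbitrary $\operatorname{Aut}$-invariant $U$, $P$-invariance is precisely what you are trying to establish, so that citation would be circular. The gap is easy to fill, but the reasoning as written does not yet close it.
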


If we put $\Delta_N = h_N$ and $\Delta_i = h_i - h_{i+1}$ for $i=1, \dots, N-1$, then \[ h_i = \sum_{j =i}^{N} \Delta_j\] and the condition~\eqref{Eq:InvHeightCond} takes the form \[ 0 \leq \Delta_i \leq n_i - n_{i+1},\] where we formally assume that $n_{N+1} = 0$. Now we can easily count the number of possible $\Delta_i$ (and $h_i$). 

\begin{corollary}  For a bi-Poisson subspace $ (V, \mathcal{P}) = \bigoplus_{i=1}^N \mathcal{J}_{0, 2n_i}$, where $n_1 \geq n_2 \geq \dots \geq n_N$, the number of $\operatorname{Aut}(V, \mathcal{P})$-invariant subspaces is \begin{equation} \label{Eq:NumInvSubs}  \prod_{i=1}^{N} \left( n_{i} - n_{i+1} + 1 \right). \end{equation}  Here we formally put $n_{N+1} = 0$.
\end{corollary}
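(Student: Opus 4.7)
By Theorem~\ref{T:JordInvSubs}, every $\operatorname{Aut}(V,\mathcal{P})$-invariant subspace has the canonical form $U = \bigoplus_{i=1}^N \mathcal{J}_{0,2n_i}^{\leq h_i}$ for some tuple $(h_1,\ldots,h_N)$ satisfying the constraints \eqref{Eq:InvHeightCond}. So the plan is simply to count such tuples and verify that distinct tuples yield distinct subspaces; the formula~\eqref{Eq:NumInvSubs} will then follow by a decoupling substitution.

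For the counting, I will use the substitution sketched in the paragraph preceding the corollary. Set $h_{N+1} = n_{N+1} = 0$ and $\Delta_i = h_i - h_{i+1}$ for $i = 1,\ldots, N$; then the map $(h_1,\ldots,h_N) \leftrightarrow (\Delta_1,\ldots,\Delta_N)$ is a bijection between $N$-tuples of integers, with inverse $h_i = \sum_{j=i}^N \Delta_j$. Under this substitution, the conditions $0 \leq h_i - h_{i+1} \leq n_i - n_{i+1}$ for $i < N$ together with $0 \leq h_N \leq n_N$ translate into the single family of independent inequalities $0 \leq \Delta_i \leq n_i - n_{i+1}$ for $i=1,\ldots,N$ (the endpoint case matches the convention $n_{N+1}=0$). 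Telescoping gives $h_i = \sum_{j=i}^N \Delta_j \leq \sum_{j=i}^N (n_j - n_{j+1}) = n_i$, so each $h_i$ lies automatically in the valid range $[0,n_i]$. Since the $\Delta_i$ are now decoupled, the number of admissible tuples is exactly $\prod_{i=1}^N (n_i - n_{i+1} + 1)$.

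It remains to confirm that distinct tuples produce distinct invariant subspaces. Inside a single block $\mathcal{J}_{0,2n_i}$ one has $\dim(\mathcal{J}_{0,2n_i} \cap \operatorname{Ker} P^h) = 2h$ for $0 \leq h \leq n_i$, so $\dim \mathcal{J}_{0,2n_i}^{\leq h_i} = 2h_i$ and hence $\dim(U \cap \operatorname{Ker} P^h) = 2\sum_{i=1}^N \min(h_i, h)$ for every $h$. This quantity is intrinsic to $U$, and differencing in $h$ recovers the multiset $\{h_i\}$; matching its non-increasing ordering against the non-increasing ordering of the $n_i$ recovers the tuple itself. Since the entire argument is a combinatorial repackaging of Theorem~\ref{T:JordInvSubs}, I foresee no substantive obstacle — the only point that needs any care is tracking the boundary condition at $i=N$ under the convention $n_{N+1}=0$, which I have handled above.
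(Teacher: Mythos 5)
Your proof is correct and follows essentially the same route as the paper: both use the substitution $\Delta_i = h_i - h_{i+1}$ (with the convention $h_{N+1}=n_{N+1}=0$) to decouple the inequalities of Theorem~\ref{T:JordInvSubs} and then multiply the ranges. The only addition you make is the explicit dimension-count argument showing that distinct tuples $(h_1,\ldots,h_N)$ give distinct subspaces, a point the paper leaves implicit; that check is sound and harmless.
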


\begin{remark} The resulting count corresponds to the number of hyperinvariant subspaces outlined in  \cite[Theorem 9.4.2]{Gohberg86}
(cf. Remark~\ref{Rem:HyperInv} below).
\end{remark}

Note that if $n_{i+1} =n_i$, then $h_{i+1} = h_i$. In particular, if all Jordan blocks are $2n \times 2n$, then we get $n+1$ invariant subspace, as in Assertion~\ref{A:InvSubsJord_OneJord}. For a sum of two Jordan blocks $\mathcal{J}_{0, 2n_1} \oplus \mathcal{J}_{0, 2n_2}$ for the difference of heights $h_2 - h_1$ we have the following:

\begin{itemize}

\item the difference is minimal $ h_1 - h_2 = 0$ for $\operatorname{Ker} P^{h_2}$, where $h_2 \leq n_2$.

\item the difference is maximal $ h_1 - h_2 = n_1 - n_2$ for  $\operatorname{Im} P^{h_1}$, where $h_1 \leq n_1 - n_2$.

\end{itemize}

For example, the proper invariant subspaces of $\mathcal{J}_{0, 6} \oplus \mathcal{J}_{0, 4}$ are informally visualized in \eqref{Eq:SumBlocksInvariant}. Here $\mathcal{J}_{0, 6}$ and $\mathcal{J}_{0, 4}$ are represented as $3 \times 2$ and $2 \times 2$ rectangles. The shaded cells represent an invariant subspace.

\begin{equation} \label{Eq:SumBlocksInvariant}
  \begin{tabular}{|c|c|c|c|} 
 \cline{1-2} &  & \multicolumn{1}{c}{} & \multicolumn{1}{c}{} \\
     \hline   & & &  \\ 
   \hline   {\cellcolor{gray!25} } & {\cellcolor{gray!25} } & &  \\ \hline
  \end{tabular} 
  \qquad
  \begin{tabular}{|c|c|c|c|} 
 \cline{1-2} &  & \multicolumn{1}{c}{} & \multicolumn{1}{c}{} \\
     \hline   & & &  \\ 
   \hline   {\cellcolor{gray!25} } & {\cellcolor{gray!25} } &   {\cellcolor{gray!25} } & {\cellcolor{gray!25} }   \\ \hline
  \end{tabular} 
\qquad
 \begin{tabular}{|c|c|c|c|} 
 \cline{1-2} &  & \multicolumn{1}{c}{} & \multicolumn{1}{c}{} \\
     \hline     {\cellcolor{gray!25} } & {\cellcolor{gray!25} }  & &  \\ 
   \hline   {\cellcolor{gray!25} } & {\cellcolor{gray!25} } &   {\cellcolor{gray!25} } & {\cellcolor{gray!25} }  \\ \hline
  \end{tabular} 
  \qquad
   \begin{tabular}{|c|c|c|c|} 
 \cline{1-2} &  & \multicolumn{1}{c}{} & \multicolumn{1}{c}{} \\
     \hline     {\cellcolor{gray!25} } & {\cellcolor{gray!25} }  &   {\cellcolor{gray!25} } & {\cellcolor{gray!25} }  \\ 
   \hline   {\cellcolor{gray!25} } & {\cellcolor{gray!25} } &  {\cellcolor{gray!25} } & {\cellcolor{gray!25} }   \\ \hline
  \end{tabular} 
\end{equation}

In the general case, the invariant subspaces have a similar structure.

\begin{proof}[Proof of Theorem~\ref{T:JordInvSubs}] The proof is in several steps.

\begin{enumerate}

\item[$2) \Rightarrow 1)$.] The subspaces $\operatorname{Ker} P^k$ and $\operatorname{Im} P^l$ are obviously $\operatorname{Aut}(V, \mathcal{P})$-invariant. And any sums and intersections of $\operatorname{Aut}(V, \mathcal{P})$-invariant subspaces is also $\operatorname{Aut}(V, \mathcal{P})$-invariant.

\item[$3) \Rightarrow 2)$.] It is easy to see that $\operatorname{Ker} P^k$ and $\operatorname{Im} P^l$ have the form \eqref{Eq:JordInvSumHeight} and that the set of subspaces \eqref{Eq:JordInvSumHeight} is closed under the operations of sum and intersection of subspaces. 

\item[$1) \Rightarrow 3)$.] We split these part of the proof into two statements.

\begin{assertion}\label{A:InvDecomposes}
Any invariant subspace $U \subset \bigoplus_{i=1}^N \mathcal{J}_{0, 2n_i}$ has the from \eqref{Eq:JordInvSumHeight}, i.e. it is a sum of invariant subspaces of each Jordan block: $U = \bigoplus_{i=1}^N \mathcal{J}_{0, 2n_i}^{\leq h_i}$. 
\end{assertion}

\begin{proof}[Proof of Assertion~\ref{A:InvDecomposes}] 
The group of bi-Poisson automorphisms $\operatorname{Aut}\left(\bigoplus_{i=1}^N \mathcal{J}_{0, 2n_i} \right)$ obviously contains the direct product of bi-Poisson automorphisms for each Jordan block: \begin{equation} \label{Eq:ProdSubGroupAut} \prod_{i=1}^N \operatorname{Aut}\left(\mathcal{J}_{0, 2n_i} \right) \subset \operatorname{Aut}\left(\bigoplus_{i=1}^N \mathcal{J}_{0, 2n_i}\right).\end{equation} Thus, if $U\subset \bigoplus_{i=1}^N \mathcal{J}_{0, 2n_i}$  is invariant, then the intersection with each Jordan block $U \cap \mathcal{J}_{0, 2n_i}$ is also an invariant subspace of this block. It remains to prove that \[ U = \bigoplus_{i=1}^N U \cap \mathcal{J}_{0, 2n_i}.\] Take any vector $v \in U$, it has the form $v = \sum_{i=1}^N v_i$, where $v_i \in \mathcal{J}_{0, 2n_i}$. We want to prove that $v_i \in U \cap \mathcal{J}_{0, 2n_i}$. By Assertion~\ref{A:InvSubsJord_OneJord} $U \cap \mathcal{J}_{0, 2n_i} = \mathcal{J}_{0, 2n_i}^{\leq h_i}$.  Thus, it suffices to find a vector $u_i \in U \cap \mathcal{J}_{0, 2n_i}$ with the same height as $v_i$. Since $U$ is $\operatorname{Aut}\left(\bigoplus_{i=1}^N \mathcal{J}_{0, 2n_i}\right)$-invariant and we act on $v$ by a direct product subgroup \eqref{Eq:ProdSubGroupAut}, \[ \left(g_i - \operatorname{Id} \right) v_i \in U, \qquad \forall g_i \in  \operatorname{Aut}\left(\mathcal{J}_{0, 2n_i} \right).\] By Theorem~\ref{T:BiSymp_General_Jordan_Case_Mega} there are non-degenerate elements $g_i - \operatorname{Id}$, and then the height of $\left(g_i - \operatorname{Id} \right) v_i$ equals the height of $v_i$. Thus, all $v_i \in U$ and $U$ decomposes into a sum of invariant subspaces.  Assertion~\ref{A:InvDecomposes} is proved.
\end{proof}

It remains to prove the inequalities~\eqref{Eq:InvHeightCond}. It suffices to consider a case of two Jordan blocks.

\begin{assertion}\label{A:InvTwoBlocks}
Let $U \subset \mathcal{J}_{0, 2n_1} \oplus \mathcal{J}_{0, 2n_2}$ be an invariant subspace, where $n_1 \geq n_2$.

\begin{enumerate}

\item \label{I:InvTwoBlocksI1} If $\mathcal{J}_{0, 2n_1}^{\leq h_1} \subset U$, where $h_1 \geq n_1 - n_2$, then $\mathcal{J}_{0, 2n_2}^{\leq h_1 - (n_1 -n_2)} \subset U$. 

\item \label{I:InvTwoBlocksI2} If $\mathcal{J}_{0, 2n_2}^{\leq h_2} \subset U$, where $h_2 \leq n_2$, then $\mathcal{J}_{0, 2n_1}^{\leq h_2}  \subset U$.

\end{enumerate}

\end{assertion}

\begin{proof}[Proof of Assertion~\ref{A:InvTwoBlocks}] If  $U$ is  $\operatorname{Aut}(V, \mathcal{P})$-invariant, then it is also invariant w.r.t. the corresponding Lie algebra $\operatorname{aut}(V, \mathcal{P})$. By Theorem~\ref{T:BiSymp_General_Jordan_Case_Mega} this Lie algebra contains the elements of the form \[
C = \left( \begin{array}{ccccc|ccc} 0 & & & & & &  &  
\\& 0 & & & & & & 
 \\ &  & \ddots &  & & C^{1,2}_1 &  & 
\\  &  & & \ddots & & \vdots & \ddots & 
\\  & &  & & 0  & C^{1,2}_{n_2} & \cdots & C^{1,2}_1 
\\ \hline C^{2,1}_1 &  & & & & 0 & &   
\\ \vdots & \ddots & & & &  & \ddots  &  
\\ C^{2,1}_{n_2} & \cdots & C^{2,1}_1 & & & &  & 0
\end{array} \right),
\] where \eqref{E:Cond_on_BiSymp_Jordan_Case} holds. We can solve \eqref{E:Cond_on_BiSymp_Jordan_Case}  for any matrices $C^{1,2}_k$ (or any matrices $C^{2,1}_k$). Thus, for any $v \in U \cap \mathcal{J}_{0, 2n_1}$ with height $h_1 \geq n_1 - n_2$ we can easily find $C \in \operatorname{aut}(V, \mathcal{P})$ such that the vector $C v \in U \cap \mathcal{J}_{0, 2n_2}$ has the height $h_1 - (n_1-n_2)$. By Assertion~\ref{A:InvDecomposes}  $\mathcal{J}_{0, 2n_2}^{\leq h_1 - (n_1 -n_2)} \subset U$. This proves the first part of Assertion~\ref{A:InvTwoBlocks}. The second part is proved similarly. Assertion~\ref{A:InvTwoBlocks} is proved. 
\end{proof}

\end{enumerate}

Theorem~\ref{T:JordInvSubs} is proved.  \end{proof}

\begin{remark} \label{Rem:HyperInv} $\operatorname{Aut}(V,\mathcal{P})$-invariant subspaces characterized in Theorem~\ref{T:JordInvSubs} coincide with the lattice of $P$-hyperinvariant subspaces described in \cite[Theorem 9.4.2]{Gohberg86}. A subspace $U \subset V$ is hyperinvariant if it is invariant w.r.t. any transformation that commutes with $P$. Note that operators commuting with $P$ are block-matrices \eqref{E:bsp_algebra_matrix}, with arbitrary matrices $C^{i,j}_s$.
\end{remark}

\begin{remark} \label{Rem:GenJordInv} Consider a general Jordan case, where  $(V, \mathcal{P})$ has the form \eqref{Eq:GenJordJKDec}. Denote by  $\mathcal{J}_{\lambda_j}$ the sum of all Jordan blocks with eigenvalue $\lambda_j$. Then any invariant  subspace $U \subset (V, \mathcal{P})$ is a sum of invariant subspaces $U_j  = U \cap \mathcal{J}_{\lambda_j} $. \end{remark}

\subsubsection{Properties of invariant subspaces}

Finding orthogonal complements of invariant subspaces is straightforward.

\begin{assertion}
Let $\mathcal{P} = \left\{ A + \lambda B\right\}$ be a Jordan pencil on $V$. Then the following holds.

\begin{enumerate}

\item Any $\operatorname{Aut}(V, \mathcal{P})$-invariant subspace $U$ is admissible. 

\item If $U = \bigoplus_{i=1}^N \mathcal{J}_{\lambda_i, 2n_i}^{\leq h_i}$, then $U^{\perp} =  \bigoplus_{i=1}^N \mathcal{J}_{\lambda_i, 2n_i}^{\leq n_i - h_i}$. 

\end{enumerate}
\end{assertion}

In particular, we have the following.

\begin{corollary} 
Let $\mathcal{P} = \left\{ A + \lambda B\right\}$ be a Jordan pencil on $V$ with the recursion operator $P$. Then subspaces $\operatorname{Im} P^i$ and $\operatorname{Ker} P^j$ are admissible and \begin{equation} \label{Eq:OrthKerIm} \left(\operatorname{Im} P^i\right)^{\perp} = \operatorname{Ker} P^i, \qquad \left(\operatorname{Ker} P^j\right)^{\perp} = \operatorname{Im} P^j. \end{equation}
\end{corollary}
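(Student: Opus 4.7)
The plan is to deduce both statements from the preceding assertion by identifying $\operatorname{Im} P^i$ and $\operatorname{Ker} P^j$ with subspaces of the canonical form $\bigoplus_{k=1}^N \mathcal{J}_{\lambda_k, 2n_k}^{\leq h_k}$, and then computing the indicated orthogonal complements via the rule $h_k \mapsto n_k - h_k$.

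First I would observe that both $\operatorname{Im} P^i$ and $\operatorname{Ker} P^j$ are $\operatorname{Aut}(V,\mathcal{P})$-invariant: any automorphism preserves the pencil and hence commutes with the recursion operator $P = B^{-1}A$ (assuming a regular $B$, which we may arrange by replacing $B$ with a regular form in the pencil without affecting either side of \eqref{Eq:OrthKerIm}). By the preceding assertion this yields admissibility of both families of subspaces.

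Next, by Remark~\ref{Rem:GenJordInv} and Theorem~\ref{T:NonDependEigen} it suffices to verify \eqref{Eq:OrthKerIm} on the generalised eigenspace $\mathcal{J}_{\lambda_k}$ for a single eigenvalue, say $\lambda = 0$. Fix a JK decomposition $(V,\mathcal{P}) = \bigoplus_{k=1}^N \mathcal{J}_{0, 2n_k}$. From \eqref{Eq:ImKerOneNilp} applied block-by-block we read off
\[
\operatorname{Ker} P^j = \bigoplus_{k=1}^N \mathcal{J}_{0, 2n_k}^{\leq \min(j, n_k)}, \qquad \operatorname{Im} P^i = \bigoplus_{k=1}^N \mathcal{J}_{0, 2n_k}^{\leq \max(n_k - i, 0)}.
\]
Applying the orthogonal-complement formula $h_k \mapsto n_k - h_k$ from the preceding assertion gives exactly $\operatorname{Ker} P^i$ and $\operatorname{Im} P^j$ respectively, using $n_k - \min(j, n_k) = \max(n_k - j, 0)$.

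There is really no hard step: the identity $(\operatorname{Im} P^i)^{\perp} = \operatorname{Ker} P^i$ also admits a one-line direct proof from self-adjointness of $P$ with respect to a regular form $B$, namely $B(u, P^i v) = B(P^i u, v)$ for all $u,v\in V$, so $u \perp_B \operatorname{Im} P^i$ if and only if $P^i u = 0$; the other equality is symmetric. The only mild care needed is in the indexing of blocks, which the $\min$ / $\max$ bookkeeping above handles uniformly.
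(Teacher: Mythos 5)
Your primary argument — identifying $\operatorname{Im} P^i$ and $\operatorname{Ker} P^j$ as $\operatorname{Aut}(V,\mathcal{P})$-invariant subspaces, reading them off in the form $\bigoplus_k \mathcal{J}_{0,2n_k}^{\leq h_k}$, and applying the complement rule $h_k \mapsto n_k - h_k$ from the preceding assertion — is exactly the route the paper intends (the corollary is given as an immediate ``in particular'' with no separate proof). Your aside that $(\operatorname{Im} P^i)^{\perp} = \operatorname{Ker} P^i$ follows in one line from self-adjointness of $P$ with respect to any regular $A_\lambda$ is also correct and is in fact a cleaner, more self-contained derivation than the route through the invariant-subspace classification; the only nit is the parenthetical about ``replacing $B$ with a regular form'' — that would change $P$, but it is moot since the statement of the corollary already presupposes that the recursion operator $P = B^{-1}A$ is defined, i.e. that $B$ is regular.
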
 

The next statement describes  unique minimal and maximal (proper) invariant subspaces. 

\begin{corollary} For  $(V, \mathcal{P}) =  \bigoplus_{i=1}^N \mathcal{J}_{0, 2n_i}$, where $n_1 \geq n_2 \geq \dots \geq n_N$ and $P$ is the recursion operator, any proper $\operatorname{Aut}(V,\mathcal{P})$-invariant subspace $U$ is contained between the minimal invariant subspace  $\operatorname{Im}P^{n_1 - 1}$ and the maximal invariant subspace $\operatorname{Ker}P^{n_1 - 1}$: \[\operatorname{Im}P^{n_1 - 1} \subset U \subset \operatorname{Ker}P^{n_1 - 1}.\] \end{corollary}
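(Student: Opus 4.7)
The plan is to deduce both containments directly from the structural classification of invariant subspaces in Theorem~\ref{T:JordInvSubs}, translating the inclusions into monotonicity statements about the height sequence $(h_i)$. Note that "proper" here must be read as $0 \neq U \neq V$, since $U = 0$ obviously fails to contain the nontrivial subspace $\operatorname{Im}P^{n_1-1}$.

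By Theorem~\ref{T:JordInvSubs}, any such $U$ has the form $U = \bigoplus_{i=1}^N \mathcal{J}_{0, 2n_i}^{\leq h_i}$ where $0 \leq h_i - h_{i+1} \leq n_i - n_{i+1}$. In particular $(h_i)$ is non-increasing, and $h_i = h_{i+1}$ whenever $n_i = n_{i+1}$. The two extreme invariant subspaces translate as follows: $\operatorname{Ker}P^{n_1-1}$ corresponds to heights $\min(n_i, n_1-1)$, while $\operatorname{Im}P^{n_1-1}$ corresponds to heights $1$ on the indices with $n_i = n_1$ and $0$ on the rest (since $n_1$ is the largest block size).

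For the upper inclusion $U \subset \operatorname{Ker}P^{n_1-1}$, I will show that $U \neq V$ forces $h_1 \leq n_1 - 1$. Indeed, if $h_1 = n_1$, then $h_2 \geq h_1 - (n_1 - n_2) = n_2$ together with the trivial bound $h_2 \leq n_2$ gives $h_2 = n_2$, and iterating down the chain yields $h_i = n_i$ for all $i$, so $U = V$, a contradiction. Monotonicity then gives $h_i \leq h_1 \leq n_1 - 1$, hence $h_i \leq \min(n_i, n_1 - 1)$, which is the required inclusion.

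For the lower inclusion $U \supset \operatorname{Im}P^{n_1-1}$, I will show that $U \neq 0$ forces $h_1 \geq 1$; this is immediate, since some $h_i$ is positive and $(h_i)$ is non-increasing. For every $i$ with $n_i = n_1$ the equality clause then propagates $h_i = h_1 \geq 1$, while for $n_i < n_1$ the bound $h_i \geq 0$ is automatic; both lower bounds match the heights of $\operatorname{Im}P^{n_1-1}$, yielding the inclusion. The only obstacle is the bookkeeping around the boundary clauses of the height constraint; no tool beyond Theorem~\ref{T:JordInvSubs} is needed.
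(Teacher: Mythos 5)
Your proof is correct and fills in the natural argument that the paper leaves implicit (it states the corollary without proof, as a direct consequence of Theorem~\ref{T:JordInvSubs}). Translating the two inclusions into height inequalities via the classification $U = \bigoplus_i \mathcal{J}_{0,2n_i}^{\leq h_i}$ with $0 \leq h_i - h_{i+1} \leq n_i - n_{i+1}$, then ruling out $h_1 = n_1$ (for $U \neq V$) and $h_1 = 0$ (for $U \neq 0$) by propagating the difference constraints down the chain, is exactly what is needed. One small spot where you should be a touch more explicit: the bound $h_i \leq n_i$ you invoke in the upper-inclusion step is not literally part of condition~\eqref{Eq:InvHeightCond} in the theorem, which only constrains consecutive differences; you need to observe (as the paper does in the remark following the theorem, with the convention $\Delta_N = h_N$ and $n_{N+1}=0$) that the heights can and should be normalized to $0 \leq h_i \leq n_i$ since $\mathcal{J}_{0,2n}^{\leq h}$ is unchanged once $h \geq n$. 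With that normalization spelled out, the argument is airtight, including the degenerate case $n_1 = 1$ where the statement is vacuous.
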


\subsubsection{Orbits of vectors under \texorpdfstring{$\operatorname{Aut}(V, \mathcal{P})$}{Aut(V, P)}-action} \label{SubS:OrbitsVect}

After we described $\operatorname{Aut}(V, \mathcal{P})$-invariant subspaces of $(V, \mathcal{P})$ we can easily describe the orbits of vectors under the action of $\operatorname{Aut}(V, \mathcal{P})$ action. These invariant subspaces partition $V$ into distinct sets, each one uniquely corresponding to a specific $\operatorname{Aut}(V, \mathcal{P})$-orbit. 

\begin{theorem} \label{T:VectOrbits} Let $(V, \mathcal{P}) = \bigoplus_{i=1}^t \left(\bigoplus_{j=1}^{l_i} \mathcal{J}_{0, 2n_i} \right)$ be a bi-Poisson space with $n_1 > n_2 > \dots > n_t$. For the action of the Lie group $\operatorname{Aut}(V, \mathcal{P})$ we have:

\begin{enumerate}
 
\item \textbf{Orbits of vectors}. Let $U_{\alpha}, \alpha \in A$ be the collection of all $\operatorname{Aut}(V, \mathcal{P})$-orbits. Given a vector $v \in  (V, \mathcal{P})$, define:

\begin{itemize}
    \item $W_v = \bigcap_{v \in U_{\alpha}} U_{\alpha}$ (minimal invariant subspace  containing $v$);
    \item $Z = \bigcup_{\alpha \in A, W_v\not \subset U_{\alpha}} \left( U_{\alpha} \cap W_v \right)$ (union of invariant subspaces strictly contained in $W_v$).
\end{itemize} Then the $\operatorname{Aut}(V, \mathcal{P})$-orbit of $v$ is the difference \begin{equation} \label{Eq:OvDiff} O_v = W_v - Z.\end{equation}

\item \textbf{Orbit Representation}. Fix any standard basis $e^{ij}_k, f^{ij}_k,$ where $i=1, \dots, t, j=1,\dots, l_i, k=1, \dots, n_i$. Any orbit $O_v$ contains a vector of the form  \begin{equation} \label{Eq:FormAutAct} w = \sum_{j=1}^t f^{j1}_{h_j}, \end{equation} where the heights $h_j$ satisfy \begin{equation} \label{Eq:autOrbHcond} 0 \leq h_j \leq n_j, \qquad 0 \leq h_j - h_{j+1} \leq n_j - n_{j+1}.\end{equation} We formally put $n_{t+1} = 0$ and $f^j_0  = 0$. The orbit $O_v$ is uniquely determined by the set of heights $\left\{h_j\right\}_{j=1,\dots, t}$.
\end{enumerate}

\end{theorem}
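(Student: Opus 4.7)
The plan is to establish Part~2 (canonical form) first and then deduce Part~1 from it by formal manipulation. First, I would identify the heights $h_j$ intrinsically. For $v\in V$, let $W_v$ be the minimal $\operatorname{Aut}(V,\mathcal{P})$-invariant subspace containing $v$. By Theorem~\ref{T:JordInvSubs}, combined with the observation that each subgroup $\operatorname{Sp}(2l_i)\subset\operatorname{Aut}(V,\mathcal{P})$ (coming from the diagonal block $C^{i,i}_1$ in \eqref{E:bsp_algebra_matrix}) acts transitively on nonzero top-layer vectors inside $\bigoplus_{j=1}^{l_i}\mathcal{J}_{0,2n_i}$, every invariant subspace has uniform heights across the $l_i$ copies of $\mathcal{J}_{0,2n_i}$. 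Hence $W_v=\bigoplus_{i=1}^t\bigoplus_{j=1}^{l_i}\mathcal{J}_{0,2n_i}^{\leq h_i}$ for heights $h_i$ satisfying \eqref{Eq:autOrbHcond}, and these heights are manifestly orbit invariants.

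Second, I would construct an automorphism $g\in\operatorname{Aut}(V,\mathcal{P})$ with $gv=\sum_j f^{j1}_{h_j}$ by induction on $\dim W_v$. Using the diagonal block $C^{1,1}_1\in\operatorname{sp}(2l_1)$ to rotate the top layer of the $i=1$ component, one brings it to $f^{11}_{h_1}$ plus lower-height residue. The sub-diagonal blocks $C^{i,i}_s$ for $s\geq 2$, together with the off-diagonal blocks $C^{i,j}_s$ (subject to \eqref{E:Cond_on_BiSymp_Jordan_Case}), generate downward shifts through which all lower-height residuals can be normalized, except for those components that are \emph{forced} by the invariance constraints: these forced components are precisely $f^{j1}_{h_j}$ for $j>1$ with $h_j>0$. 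After peeling off the $i=1$ block's contribution, one applies the inductive hypothesis to the residue living in the smaller bi-Poisson space $\bigoplus_{i=2}^t\bigoplus_{j=1}^{l_i}\mathcal{J}_{0,2n_i}$.

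Finally, for Part~1, the inclusion $O_v\subseteq W_v$ is immediate since $W_v$ is invariant. If $u=gv$ lay in a proper invariant subspace $U\subsetneq W_v$, then $v=g^{-1}u\in U$ would contradict the minimality of $W_v$, so $O_v\cap Z=\emptyset$ and $O_v\subseteq W_v-Z$. Conversely, for $u\in W_v-Z$, the minimal invariant subspace $W_u$ is contained in $W_v$ but is not any proper invariant subspace of $W_v$ (otherwise $u\in Z$), forcing $W_u=W_v$; by Part~2, both $u$ and $v$ have canonical form $\sum_j f^{j1}_{h_j}$ with identical heights, hence lie in a single orbit. The main obstacle lies in the clearing step of the second paragraph: one must verify that the Lie-subgroup obtained after fixing the top part acts transitively, modulo the forced components, on the space of lower-order residuals. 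This reduces to a direct matrix computation tracking how the blocks $C^{i,i}_s$ and $C^{i,j}_s$ interact under \eqref{E:bsp_algebra_matrix} and \eqref{E:Cond_on_BiSymp_Jordan_Case}, and it is precisely at this point that the asymmetric bound $h_j-h_{j+1}\leq n_j-n_{j+1}$ in \eqref{Eq:autOrbHcond} enters.
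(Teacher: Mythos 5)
Your proof is correct and follows essentially the same strategy as the paper: the paper's own proof is one short paragraph that observes $W_v - Z$ is $\operatorname{Aut}(V,\mathcal{P})$-invariant and then reduces everything to bringing $v$ to the canonical form $\sum_j f^{j1}_{h_j}$, deferring that normalization to arguments ``similar to Assertion~\ref{A:InvDecomposes} and \ref{A:InvTwoBlocks}.'' Your proposal reconstructs precisely those deferred arguments --- uniform heights from $\operatorname{Sp}(2l_i)$-transitivity plus Theorem~\ref{T:JordInvSubs}, the inductive clearing of lower-height residuals via the sub-/off-diagonal Lie algebra blocks, and the derivation of Part~1 from Part~2 by minimality of $W_v$ --- so it is a fleshed-out version of the same route rather than a genuinely different one. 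The one spot that would need to be made fully rigorous if written out (as you yourself flag) is the ``clearing step'': verifying that the stabilizer of the normalized top layer still acts transitively on the remaining residue modulo the forced $f^{j1}_{h_j}$ components, which is exactly the computation carried out in Assertion~\ref{A:InvTwoBlocks} and is where the asymmetric bound $0\leq h_j-h_{j+1}\leq n_j-n_{j+1}$ originates.
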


Since orbits of vectors and invariant subspaces are in one-to-one correspondence, the number of vector orbits is given by \eqref{Eq:NumInvSubs}. 

In Figure~\eqref{Eq:VecOrbit} we visualise an example of an orbit representative within the space $\mathcal{J}_{0, 6} \bigoplus \mathcal{J}_{0, 4} \oplus \mathcal{J}_{0, 4} \oplus \mathcal{J}_{0, 2}$. The representative element $f^{11}_2 + f^{21}_1 + f^{31}_1$ is depicted using shaded cells. Alternatively, the representative can be visualized as a polyline with vertices specified by coordinates $(j, h_j), j=1,\dots, t$, as shown on the right side of Figure~\eqref{Eq:VecOrbit}.

\begin{equation} \label{Eq:VecOrbit}
\begin{minipage}{0.45\textwidth}
  \begin{tabular}{|c|c|c|c|c|c|c|c|c} 
 \cline{1-2}  & & \multicolumn{1}{c}{}  & \multicolumn{1}{c}{} & \multicolumn{1}{c}{} & \multicolumn{1}{c}{}  & \multicolumn{1}{c}{} & \multicolumn{1}{c}{} \\
   \cline{1-6}  & {\cellcolor{gray!25} } &   & &  &   & \multicolumn{1}{c}{} & \multicolumn{1}{c}{} \\ 
   \hline &  &   & {\cellcolor{gray!25} }  &    &   &  & {\cellcolor{gray!25} }  \\ \hline
  \end{tabular} 
\end{minipage}
\hfill
\begin{minipage}{0.45\textwidth}
\begin{tikzpicture}
\draw (1,2) node[left] {(1, 2)} -- (2,1) node[below] {(2, 1)} -- (3,1) node[right] {(3, 1)};
\end{tikzpicture}
\end{minipage}
\end{equation}

\begin{remark} We described the orbits for one eigenvalue $\lambda = 0$. As in Remark~\ref{Rem:GenJordInv} in the general case the orbits  decompose into a product $O_v = \prod_j O_{v_j}$ corresponding to different eigenvalues $\lambda_j$.  \end{remark}

\begin{proof}[Proof of Theorem~\ref{T:VectOrbits}] The subspaces $O_v$, given by \eqref{Eq:OvDiff}, are invariant under the action of $\operatorname{Aut}(V, \mathcal{P})$. It remains to prove that each subset $O_v$ contains exactly one orbit. It suffices to prove that each vector $v$ can be brought to the form \eqref{Eq:FormAutAct} that satisfies \eqref{Eq:autOrbHcond}. This is done similarly to 
Assertion~\ref{A:InvDecomposes} and \ref{A:InvTwoBlocks}. Theorem~\ref{T:VectOrbits} is proved.\end{proof}

\begin{remark} Similar to Assertion~\ref{A:InvTwoBlocks} we can show that the orbit of $f^{j1}_{h_j}$ includes $f^{j-1,1}_{h_j}$ and $f^{j1}_{h_j - n_j + n_{j+1}}$. This allows us to:

\begin{itemize}

\item \textbf{Construct a Representative for a basis vector}. For any vector $e^{jl}_{n-h_j+1}$ or $f^{jl}_{h_j}$ its representative is \[ \sum_{i \leq j} f^{i1}_{h_j} + \sum_{i > j} f^{i1}_{h_j - \delta_i}, \qquad \delta_i = \sum_{k= j}^{i-1} n_k - n_{k+1}, \] where $f_m = 0$ for $m\leq 0$.

\item \textbf{Simplify the Representative}. We can reduce the number of terms in \eqref{Eq:FormAutAct}. Specifically, we can remove $f^{j1}_{h_j}$ if either $h_j = h_{j+1}$ or $h_{j} = h_{j-1} - n_{j-1} + n_j$.  This optimized representative takes the form \[ w' = \sum_{k=1}^S f^{j_k 1}_{h_{j_k}}, \qquad 1 \leq j_1 < \dots < j_S \leq t,\] where $0 < h_{j_k} - h_{j_{k+1}} < n_{j_k} - n_{j_{k+1}}$.

\end{itemize}

 \end{remark}

In Figure~\eqref{Eq:SimpVecOrbit} we demonstrate that for the space $\mathcal{J}_{0, 8} \bigoplus \mathcal{J}_{0, 6} \oplus \mathcal{J}_{0, 6} \oplus \mathcal{J}_{0, 4}$ the orbit representative for the basis vector $f^{21}_2$  is  $f^{11}_2 + f^{21}_2 + f^{31}_1$.

\begin{equation} \label{Eq:SimpVecOrbit}
\begin{minipage}{0.45\textwidth}
  \begin{tabular}{|c|c|c|c|c|c|c|c|c} 
 \cline{1-2}  & & \multicolumn{1}{c}{}  & \multicolumn{1}{c}{} & \multicolumn{1}{c}{} & \multicolumn{1}{c}{}  & \multicolumn{1}{c}{} & \multicolumn{1}{c}{} \\
   \cline{1-6}  &  &   & &  &   & \multicolumn{1}{c}{} & \multicolumn{1}{c}{} \\ 
   \hline &  {\cellcolor{gray!25} }  & & {\cellcolor{gray!25} }  &    &   &  &  \\    \hline &  &   &   &    &   &  & {\cellcolor{gray!25} }  \\ \hline
  \end{tabular} 
\end{minipage}
\begin{minipage}{0.45\textwidth}
  \begin{tabular}{|c|c|c|c|c|c|c|c|c} 
 \cline{1-2}  & & \multicolumn{1}{c}{}  & \multicolumn{1}{c}{} & \multicolumn{1}{c}{} & \multicolumn{1}{c}{}  & \multicolumn{1}{c}{} & \multicolumn{1}{c}{} \\
   \cline{1-6}  &  &   & &  &   & \multicolumn{1}{c}{} & \multicolumn{1}{c}{} \\ 
   \hline &   & & {\cellcolor{gray!25} }  &    &   &  &  \\    \hline &  &   &   &    &   &  &   \\ \hline
  \end{tabular} 
\end{minipage}
\end{equation}

\begin{corollary} The biggest vector orbit $\operatorname{Aut}(V, \mathcal{P})$-orbit is the set of vectors with the maximal height $V - \operatorname{Ker} P^{n_1}$. \end{corollary}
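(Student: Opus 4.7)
The plan is to combine Theorem~\ref{T:VectOrbits} with the explicit classification of $\operatorname{Aut}(V,\mathcal{P})$-invariant subspaces from Theorem~\ref{T:JordInvSubs} (and the preceding corollary identifying $\operatorname{Ker}P^{n_1-1}$ as the maximal proper invariant subspace). Fix a vector $v\in V$ of maximal height $n_1$. I will compute the minimal invariant subspace $W_v$ containing $v$ and the union $Z$ of all proper invariant subspaces contained in $W_v$, and then apply formula \eqref{Eq:OvDiff} to conclude $O_v = V - \operatorname{Ker}P^{n_1-1}$ (which is the intended reading of the set of vectors of maximal height).

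First, I would establish $W_v = V$. Decompose $v = \sum_i v_i$ according to a fixed JK decomposition $(V,\mathcal{P}) = \bigoplus_i \mathcal{J}_{0,2n_i}$. Since $\operatorname{height}(v) = n_1$, at least one component $v_i$ sits in a largest block $\mathcal{J}_{0,2n_1}$ and has height $n_1$. By Theorem~\ref{T:JordInvSubs}, any invariant subspace has the form $\bigoplus_i \mathcal{J}_{0,2n_i}^{\leq h_i}$ with $0 \leq h_j - h_{j+1} \leq n_j - n_{j+1}$. Containing a height-$n_1$ vector in a block of size $n_1$ forces $h_1 \geq n_1$, hence $h_1 = n_1$; then the inequalities give $h_j \geq h_1 - (n_1 - n_j) = n_j$, and combined with $h_j \leq n_j$ we obtain $h_j = n_j$ for every $j$. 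Thus $W_v = V$.

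Next, I would identify $Z$. By the corollary stated just after Theorem~\ref{T:JordInvSubs}, every proper $\operatorname{Aut}(V,\mathcal{P})$-invariant subspace is contained in $\operatorname{Ker}P^{n_1 - 1}$, and $\operatorname{Ker}P^{n_1 - 1}$ is itself a proper invariant subspace. Hence $Z = \operatorname{Ker}P^{n_1 - 1}$. Plugging into \eqref{Eq:OvDiff} yields $O_v = V - \operatorname{Ker}P^{n_1 - 1}$, i.e.\ precisely the locus of vectors of maximal height.

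Finally, ''biggest'' follows for free: every other orbit $O_w$ with $w \in \operatorname{Ker}P^{n_1 - 1}$ is contained in this proper invariant subspace, so is contained in a set of strictly smaller dimension than $V$. There is no real obstacle here; the only step that needs attention is the computation of $W_v$, which reduces immediately to reading off the height inequalities \eqref{Eq:InvHeightCond}.
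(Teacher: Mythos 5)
Your proof is correct and uses the intended route: Theorem~\ref{T:VectOrbits} together with the height constraints of Theorem~\ref{T:JordInvSubs} to force $W_v = V$, and the identification of $\operatorname{Ker}P^{n_1-1}$ as the maximal proper invariant subspace to get $Z = \operatorname{Ker}P^{n_1-1}$, so $O_v = V - \operatorname{Ker}P^{n_1-1}$. You are also right to read the statement as $V - \operatorname{Ker}P^{n_1-1}$: as printed, the exponent is a typo, since $P^{n_1}=0$ makes $V - \operatorname{Ker}P^{n_1}$ empty, whereas a vector has maximal height $n_1$ exactly when $P^{n_1-1}v \neq 0$.
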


\subsection{Invariant bi-isotropic and bi-Lagrangian subspaces} \label{S:InvBiLagr}

Theorem~\ref{T:JordInvSubs} gives us the full list of $\operatorname{Aut}(V, \mathcal{P})$-invariant subspaces for a sum of Jordan blocks. We can easily list all of them that are bi-isotropic. 

\begin{lemma} \label{L:InvSubBiIsotr} An invariant subspace $ U =  \bigoplus_{i=1}^N \mathcal{J}_{0, 2n_i}^{\leq h_i}$ of  $(V, \mathcal{P}) =  \bigoplus_{i=1}^N \mathcal{J}_{0, 2n_i}$ is bi-isotropic if and only if $h_i \leq \frac{n_i}{2}$ for all $i=1,\dots, N$.
\end{lemma}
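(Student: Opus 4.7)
The plan is to reduce the lemma to a single Jordan block and then verify the claim there by a direct computation of the $B$-orthogonal complement of $\operatorname{Ker}P^h$.

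First, since the Jordan–Kronecker decomposition $(V, \mathcal{P}) = \bigoplus_{i=1}^N \mathcal{J}_{0, 2n_i}$ is bi-orthogonal and $U$ splits as $U = \bigoplus_{i=1}^N U_i$ with $U_i = \mathcal{J}_{0, 2n_i}^{\leq h_i}$, the pencil restricted to $U \times U$ is block-diagonal. Hence $U$ is bi-isotropic in $(V, \mathcal{P})$ if and only if each $U_i$ is bi-isotropic in $\mathcal{J}_{0, 2n_i}$. This reduces the problem to a single Jordan block $\mathcal{J}_{0, 2n}$ with the recursion operator $P$, where I need to show that $\operatorname{Ker}P^h$ is bi-isotropic iff $h \leq n/2$.

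Next, I would note that $\operatorname{Ker}P^h$ is manifestly $P$-invariant, so by Lemma~\ref{L:NonGenDescBiLagr} bi-isotropy is equivalent to plain isotropy with respect to $B$ (the form $B$ here being the restriction of the non-degenerate form on $V$ to one block, which is symplectic on $\mathcal{J}_{0, 2n}$). Thus I only need to check when $\operatorname{Ker}P^h \subseteq (\operatorname{Ker}P^h)^{\perp_B}$. Using the standard basis \eqref{Eq:ImKerOneNilp}, I have $\operatorname{Ker}P^h = \langle e_{n-h+1}, \dots, e_n, f_1, \dots, f_h\rangle$, and the self-adjointness identity $B(P^h u, v) = B(u, P^h v)$ immediately gives $(\operatorname{Ker}P^h)^{\perp_B} = \operatorname{Im}P^h = \operatorname{Ker}P^{n-h}$. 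The inclusion $\operatorname{Ker}P^h \subseteq \operatorname{Ker}P^{n-h}$ is equivalent to $h \leq n-h$, i.e.\ $h \leq n/2$, which is the desired condition.

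There is no real obstacle here: the decomposition step uses bi-orthogonality of the Jordan summands, and the single-block step is a one-line computation once Lemma~\ref{L:NonGenDescBiLagr} is invoked. The only point that deserves a brief justification is the identity $(\operatorname{Ker}P^h)^{\perp_B} = \operatorname{Im}P^h$ inside a single block, which follows either from self-adjointness of $P$ plus a dimension count ($\dim \operatorname{Ker}P^h = 2h$, $\dim \operatorname{Im}P^h = 2n - 2h$, and $\operatorname{Ker}P^h$ annihilates $\operatorname{Im}P^h$ by the standard basis description) or as a direct consequence of \eqref{Eq:OrthKerIm} applied within the block.
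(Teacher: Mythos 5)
Your proof is correct. The paper itself offers no explicit proof of Lemma~\ref{L:InvSubBiIsotr} (the text preceding it only says one ``can easily list all of them that are bi-isotropic''), so there is no argument of the paper's to compare against; your fill-in supplies what is needed, and takes what is clearly the intended route.

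Two small remarks on the writing, neither of which affects correctness. First, your appeal to Lemma~\ref{L:NonGenDescBiLagr} within one block is sound, but it is worth saying (as you do in a half-sentence) that $B$ restricted to $\mathcal{J}_{0,2n}$ is non-degenerate and $P$ restricted to the block is again the recursion operator, so the lemma's hypotheses are met after the reduction. Second, once you have reduced to one block, the computation can be stated in one line: $(\operatorname{Ker}P^h)^{\perp_B} = \operatorname{Im}P^h = \operatorname{Ker}P^{n-h}$ by \eqref{Eq:ImKerOneNilp} and \eqref{Eq:OrthKerIm}, so $\operatorname{Ker}P^h$ is isotropic iff $\operatorname{Ker}P^h \subseteq \operatorname{Ker}P^{n-h}$ iff $h \leq n-h$. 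You give exactly this, plus an independent justification via self-adjointness of $P$ and a dimension count ($\dim\operatorname{Ker}P^h = 2h$ on a single block); either suffices.
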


\begin{remark} Lemma~\ref{L:InvSubBiIsotr} addresses the single-eigenvalue case. In the general case, let $\mathcal{J}_{\lambda_j}$ be the sum of all Jordan blocks with eigenvalue $\lambda_j$. By Remark~\ref{Rem:GenJordInv} any bi-isotropic invariant  subspace $U \subset (V, \mathcal{P})$ decomposes into the direct sum $U_j  = U \cap \mathcal{J}_{\lambda_j} $.
\end{remark}

Combining Theorem~\ref{T:BiLagrKronPart} and Lemma~\ref{L:InvSubBiIsotr} we get all $\operatorname{Aut}(V, \mathcal{P})$-invariant bi-Lagrangian subspaces.

\begin{theorem} \label{T:InvarBilagr} Let $(V, \mathcal{P})$ be a bi-Poisson space. The following statements are equivalent:
\begin{enumerate}

\item  $(V, \mathcal{P})$ contains an $\operatorname{Aut}(V, \mathcal{P})$-invariant bi-Lagrangian subspace. 
\item In a JK decomposition of $(V, \mathcal{P})$, the size of each Jordan block is divisible by 4.
\end{enumerate}

Furthermore, if the JK decomposition has the form:
\[ (V, \mathcal{P}) = \left( \bigoplus_{i=1}^q \mathcal{K}_{2k_i + 1} \right) \oplus \left( \bigoplus_{j=1}^N \mathcal{J}_{\lambda_j, 4m_j}\right),\] then the only  $\operatorname{Aut}(V, \mathcal{P})$-invariant bi-Lagrangian subspace is \[ L = K  \oplus \left( \bigoplus_{j=1}^N \mathcal{J}^{\leq m_j}_{\lambda_j, 4m_j}\right).\] \end{theorem}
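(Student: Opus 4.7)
The plan is to reduce the problem to a single Jordan block at eigenvalue $0$ via the decomposition results of Section~\ref{S:BiLagraGr}, and then apply the explicit classification of invariant subspaces (Theorem~\ref{T:JordInvSubs}) together with the bi-isotropy criterion (Lemma~\ref{L:InvSubBiIsotr}). The core $K$ and mantle $M$ are canonically defined by $\mathcal{P}$, hence $\operatorname{Aut}(V,\mathcal{P})$-invariant; by Theorem~\ref{T:BiLagrKronPart} every bi-Lagrangian $L$ satisfies $K \subset L \subset M$ and $L/K$ is bi-Lagrangian in $M/K$, which is canonically identified with the Jordan part. By Theorem~\ref{T:JordaMultEigen} this quotient splits canonically as $L/K = \bigoplus_j L_j$ with $L_j$ bi-Lagrangian in the generalized eigenspace $\mathcal{J}_{\lambda_j}$. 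Invariance of $L$ is therefore equivalent to invariance of each $L_j$, and Theorem~\ref{T:NonDependEigen} lets me further reduce to the case $\lambda_j = 0$.

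In the reduced problem $(V, \mathcal{P}) = \bigoplus_{i=1}^N \mathcal{J}_{0, 2n_i}$ with $n_1 \geq \dots \geq n_N$, Theorem~\ref{T:JordInvSubs} asserts that every $\operatorname{Aut}(V,\mathcal{P})$-invariant subspace has the form $U = \bigoplus_i \mathcal{J}_{0, 2n_i}^{\leq h_i}$ subject to $0 \leq h_i - h_{i+1} \leq n_i - n_{i+1}$, while Lemma~\ref{L:InvSubBiIsotr} characterises bi-isotropy by $h_i \leq n_i/2$. Since $\dim \mathcal{J}_{0, 2n_i}^{\leq h_i} = 2h_i$, the bi-Lagrangian dimension condition is $\sum_i 2h_i = \sum_i n_i$; combined with the pointwise bound $2h_i \leq n_i$, equality must hold term-by-term, forcing $2h_i = n_i$ for every $i$. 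Thus every $n_i$ is even, so every Jordan block has dimension divisible by $4$. Conversely, writing $n_i = 2m_i$, the heights $h_i = m_i$ satisfy the monotonicity automatically (both sides of the inequality reduce to $m_i - m_{i+1} \geq 0$), so the subspace $\bigoplus_i \mathcal{J}_{0, 4m_i}^{\leq m_i}$ exists and, by the dimension argument above, is the unique invariant bi-Lagrangian subspace in this case.

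Reassembling over eigenvalues and adjoining $K$ yields the required $L = K \oplus \bigoplus_j \mathcal{J}_{\lambda_j, 4m_j}^{\leq m_j}$, and uniqueness propagates from each step of the reduction. I do not anticipate any genuine obstacle: the heavy lifting is already packaged in Theorem~\ref{T:BiLagrKronPart}, Theorem~\ref{T:JordaMultEigen}, Theorem~\ref{T:JordInvSubs}, and Lemma~\ref{L:InvSubBiIsotr}, and the present theorem is essentially a dimension count on top of that classification. The only point that deserves care is confirming that at each stage the reduction preserves invariance in both directions, which amounts to the observation that $K$, $M$, the generalized eigenspaces $\mathcal{J}_{\lambda_j}$, and the subspaces $\mathcal{J}_{0,2n}^{\leq h}$ appearing in the classification are all intrinsically defined by $\mathcal{P}$ and therefore preserved by $\operatorname{Aut}(V,\mathcal{P})$.
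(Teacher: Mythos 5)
Your proposal is correct and takes essentially the same route the paper does: the paper does not spell out a proof but points to Theorem~\ref{T:BiLagrKronPart} and Lemma~\ref{L:InvSubBiIsotr}, and your argument is exactly the filling-in of that outline (reduce to the Jordan part via the core/mantle decomposition and eigendecomposition, then note that $\dim\bigl(\bigoplus_i \mathcal{J}_{0,2n_i}^{\leq h_i}\bigr)=\sum_i 2h_i$ together with the bi-isotropy bound $2h_i\leq n_i$ forces $2h_i=n_i$ termwise). The only place where you glide over a detail is the claim that invariance passes faithfully between $L\subset V$ and $L/K\subset M/K$: this is justified because the subgroup $\operatorname{Aut}(V_K,\mathcal{P}_K)\times\operatorname{Aut}(V_J,\mathcal{P}_J)\subset\operatorname{Aut}(V,\mathcal{P})$ already surjects onto $\operatorname{Aut}(M/K,\mathcal{P}_{M/K})$, so no extra invariant subspaces can appear upstairs.
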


For one Jordan block $\mathcal{J}_{\lambda, 4n}$ in the standard basis $e_1, \dots, e_{2m}, f_1, \dots, f_{2m}$ the $\operatorname{Aut}(V, \mathcal{P})$-invariant bi-Lagrangian subspace is \begin{equation} \label{Eq:OneJordInvBiLagr} L = \operatorname{Span}\left\{ e_{m+1}, \dots, e_{2m}, f_1, \dots, f_m\right\}.\end{equation} For a sum of several Jordan blocks, the invariant bi-Lagrangian subspace is the direct sum of the subspaces given in \eqref{Eq:OneJordInvBiLagr} for each individual block. For instance, consider the sum $\mathcal{J}_{0, 8} \oplus \mathcal{J}_{0, 4}$. Similar to the visualization in equation~\eqref{Eq:SumBlocksInvariant}, the corresponding invariant bi-Lagrangian subspace can be depicted as:
 
\begin{equation} \label{Eq:BiLagrSum}
 \begin{tabular}{|c|c|c|c|} 
  \cline{1-2} &  & \multicolumn{1}{c}{} & \multicolumn{1}{c}{} \\
 \cline{1-2} &  & \multicolumn{1}{c}{} & \multicolumn{1}{c}{} \\
     \hline     {\cellcolor{gray!25} } & {\cellcolor{gray!25} }  & &  \\ 
   \hline   {\cellcolor{gray!25} } & {\cellcolor{gray!25} } &   {\cellcolor{gray!25} } & {\cellcolor{gray!25} }  \\ \hline
  \end{tabular} 
\end{equation}

\section{Bi-Lagrangian subspaces containing invariant subspaces} \label{S:BiLagrContInv}

Let $\mathcal{P}$ be a Jordan pencil on $V$. In this section we combine our two previous results:

\begin{itemize}
    \item In Section~\ref{S:BiPoisSect} we described bi-Poisson reduction. We showed that for any admissible bi-isotropic subspace $U \subset (V,\mathcal{P})$ there is an inclusion of bi-Lagrangian Grassmanians \begin{equation} \label{Eq:IncBLG} \operatorname{BLG}(U^{\perp}/U) \subset \operatorname{BLG}(V, \mathcal{P})\end{equation} given by \eqref{Eq:BiPoissBLG} 

\item In Section~\ref{S:InvSubspaces} we described all bi-isotropic $\operatorname{Aut}(V,\mathcal{P})$-invariant subspaces $U \subset (V,\mathcal{P})$ (and proved that they are all admissible). 
    
\end{itemize}

Leveraging our previous findings, we arrive at the following. Consider any $\operatorname{Aut}(V,\mathcal{P})$-invariant subspace $U \subset (V,\mathcal{P})$. Then the bi-Lagrangian subspaces $L\subset (V,\mathcal{P})$ that satisfy \[ U \subset L \subset U^{\perp}\]  form a smaller bi-Lagrangian Grassmanian $\operatorname{BLG}(U^{\perp}/U)$. It's important to note that the inclusion of Grassmanians \eqref{Eq:IncBLG} does not necessarily imply the inclusion of corresponding orbits under the automorphism group action. The automorphism group $\operatorname{Aut}(V,\mathcal{P})$ acts on the quotient $U^{\perp}/U$ through its image under the Lie group homomorphism \begin{equation} f: \label{Eq:HomoGroupAut} \operatorname{Aut}\left(V, \mathcal{P}\right) \to \operatorname{Aut} \left(U^{\perp} / U\right).\end{equation} In this section we study how $\operatorname{BLG}(U^{\perp}/U, \mathcal{P}_U)$ is broken into  $\operatorname{Aut}(V,\mathcal{P})$-orbits  in  $\operatorname{BLG}(V,\mathcal{P})$.

\begin{enumerate}

\item In Section~\ref{SubS:ActionOnQuotient} we describe the homomorphism \eqref{Eq:HomoGroupAut}. 

\item In Section~\ref{SubS:MatrBLG} focus on subspaces $U$ where the homomorphism \eqref{Eq:HomoGroupAut} is surjective (and hence the inclusion \eqref{Eq:IncBLG} preserves orbits).  We will show that this occurs only when $U =\operatorname{Ker}P^h$, where $P$ is the recursion operator and $2h$ is not bigger that the size of the smallest Jordan block. 

\item Then we explore two interesting cases where the homomorphism \eqref{Eq:HomoGroupAut} is not surjective.

\begin{itemize}

\item In Section~\ref{SubS:InfOrb} we show that $\operatorname{BLG}(V,\mathcal{P})$ may have an infinite number of $\operatorname{Aut}(V,\mathcal{P})$-orbits.

\item In Section~\ref{SubS:NonDecomp} we show that not all bi-Lagrangian subspaces $L \subset (V,\mathcal{P})\approx \bigoplus_{i=1}^N \mathcal{J}_{0, 2n_i}$ can be decomposed into a sum of bi-Lagrangian subspaces restricted to individual Jordan blocks $L_i \subset \mathcal{J}_{0, 2n_i}$. 

\end{itemize}

\end{enumerate}

\subsection{Action of \texorpdfstring{$\operatorname{Aut}(V,\mathcal{P})$}{Aut(V,P)} on \texorpdfstring{$U^{\perp}/U$}{Uperp/U}} \label{SubS:ActionOnQuotient}

Let $(V, \mathcal{P}\}) = \bigoplus_{i=1}^t \left(\bigoplus_{j=1}^{l_i} \mathcal{J}_{0, 2n_i} \right)$, where $n_1
\geq n_2 \geq \dots \geq n_t$, and $U =  \bigoplus_{i=1}^t \left(\bigoplus_{j=1}^{l_i} \mathcal{J}^{\leq h_{i}}_{0, 2n_i} \right)$ be an isotropic invariant subspace. Then the JK decomposition of $U^{\perp}/U$ of 
$\bigoplus_{i=1}^t \left(\bigoplus_{j=1}^{l_i} \mathcal{J}_{0, 2h_{i} - n_i} \right)$. The bi-Poisson reduction w.r.t. $U$ induces the Lie algebra homomorphim \[\varphi:  \operatorname{aut}(V,\mathcal{P}) \to \operatorname{aut}(U^{\perp}/U).\]  The following lemma describes the image $\varphi(C)$ of a an element $C$ belonging to the bi-Poisson automorphism Lie algebra  $\operatorname{aut}(V,\mathcal{P})$.

\begin{lemma} \label{L:ImageAut} Consider the basis of $(V,\mathcal{P})$ from Theorem~\ref{T:BiSymp_General_Jordan_Case_Mega}, in which an element $C$ of the bi-Poisson automorphism Lie algebra $\operatorname{aut}(V,\mathcal{P})$ has the form \[ C = \left( \begin{matrix} X_{11} & \dots & X_{t1} \\ \vdots & \ddots &\vdots \\ X_{t1} & \dots & X_{tt} \end{matrix} \right),\] given by \eqref{E:bsp_algebra_matrix}. Then its image $\varphi(C) \in \operatorname{aut}(U^{\perp}/U)$ has the form \[\varphi(C) = \left( \begin{matrix} \hat{X}_{11} & \dots & \hat{X}_{t1} \\ \vdots & \ddots &\vdots \\ \hat{X}_{t1} & \dots & \hat{X}_{tt} \end{matrix} \right), \] where $\hat{X}_{pq}$ is the $(n_{h_p} - 2h_p) \times (n_{h_q} - 2h_q)$ central matrix of $X_{pq}$. \end{lemma}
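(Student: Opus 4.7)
The plan is a direct basis-level computation. By Theorem~\ref{T:JordInvSubs} both $U$ and $U^{\perp}$ are $\operatorname{Aut}(V,\mathcal{P})$-invariant, hence admissible and preserved by every $C \in \operatorname{aut}(V,\mathcal{P})$; consequently every such $C$ descends to a well-defined bi-Poisson endomorphism $\varphi(C)$ of $U^{\perp}/U$. Computing $\varphi(C)$ in the basis \eqref{Eq:SeverJordBlocksGroupedBasisSt} thus reduces to identifying a specific submatrix of $C$, namely the rows and columns indexed by a lift of a basis of $U^{\perp}/U$.

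Next I would pin down which basis vectors survive. Within the $i$-th big block of \eqref{Eq:SeverJordBlocksGroupedBasisSt}, the $2l_i$ vectors $\{e^{ij}_k,\, f^{ij}_{n_i-k+1}\}_{j=1,\dots,l_i}$ appear as a consecutive group and all share height $n_i-k+1$; the levels run from $k=1$ at the top (height $n_i$) to $k=n_i$ at the bottom (height $1$). Since Lemma~\ref{L:InvSubBiIsotr} forces $h_i \leq n_i/2$, the intersection of $U$ with the $i$-th big block is $\operatorname{Ker}P^{h_i}$, spanned by the bottom $2l_ih_i$ vectors (levels $k=n_i-h_i+1,\dots,n_i$), while the intersection of $U^{\perp}$ with the $i$-th big block is $\operatorname{Im}P^{h_i}$, spanned by all but the top $2l_ih_i$ vectors (levels $k=h_i+1,\dots,n_i$). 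Consequently the middle $2l_i(n_i-2h_i)$ vectors at levels $k=h_i+1,\dots,n_i-h_i$ project to a basis of the $i$-th big block of $(U^{\perp}/U,\mathcal{P}_U)$, and this projected basis is precisely an instance of \eqref{Eq:SeverJordBlocksGroupedBasisSt} for the reduced space.

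With the bases fixed, $\varphi(C)$ is obtained from $C$ by deleting the top $2l_ph_p$ rows and the bottom $2l_ph_p$ rows in the $p$-th big block-row, and symmetrically deleting the top $2l_qh_q$ and the bottom $2l_qh_q$ columns in the $q$-th block-column. Applied to the block $X_{pq}$ in the form \eqref{Eq:BiPoissJordAutBigBlocks}, \eqref{Eq:BiPoissJordAutBlocksSmall}, this cropping yields exactly the central submatrix of $X_{pq}$ of the claimed dimensions; because the entries $C^{p,q}_s$ are constant along the diagonals of $Y_{pq}$, the result retains the Toeplitz shape of \eqref{Eq:BiPoissJordAutBlocksSmall}, now with width $\max(n_p-2h_p,\, n_q-2h_q)$, and the surviving $C^{p,q}_s$ inherit the relations \eqref{E:Cond_on_BiSymp_Jordan_Case}, so $\varphi(C) \in \operatorname{aut}(U^{\perp}/U)$ has the claimed form.

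The main subtlety is bookkeeping: one must track the asymmetric zero-padding of $X_{pq}$ (with the zero block placed above or below $Y_{pq}$ depending on whether $p\geq q$ or $p<q$) together with the corresponding zero-padding in the $q$-th block-column, and verify that both the rows cropped from outside $U^{\perp}$ and the rows cropped into $U$ are absorbed into the appropriate zero bands, leaving the true Toeplitz core. This is immediate from the diagonal-constant structure of $Y_{pq}$, so the entire verification reduces to careful index counting rather than any genuinely delicate argument.
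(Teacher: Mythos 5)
Your proof is correct and supplies the computation that the paper leaves implicit, since Lemma~\ref{L:ImageAut} is stated without proof (the paper gives only the schematic \eqref{Eq:ImageAut}); your identification of the middle-level vectors of each big block as lifts of a standard basis of $U^{\perp}/U$, together with the use of $CU\subset U$ and $CU^{\perp}\subset U^{\perp}$ to read the matrix of $\varphi(C)$ off as the central submatrix of $C$, is exactly the intended argument. A few cosmetic slips in your closing bookkeeping remarks are harmless and partly echo typos in the paper's own \eqref{Eq:BiPoissJordAutBlocksSmall} and surrounding text: the surviving Toeplitz band has width $\min(n_p-2h_p,\,n_q-2h_q)$ rather than $\max$, and the zero band of $X_{pq}$ sits above $Y_{pq}$ when the row block is taller ($n_p>n_q$) and to the right of $Y_{pq}$ when the column block is wider ($n_p<n_q$), never below — none of this affects the conclusion, since the lemma asserts only that $\hat{X}_{pq}$ is the central submatrix, and membership of $\varphi(C)$ in $\operatorname{aut}(U^{\perp}/U)$ follows automatically from $\varphi$ being the Lie-algebra homomorphism induced by bi-Poisson reduction.
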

Roughly speaking, the image has the form 
\begin{equation} \label{Eq:ImageAut}
\varphi(C) = \left( \begin{array}{ccccc|cccc|c} C^{1,1}_1 & & & & & &  & & & \multirow{5}{*}{$\cdots$} \\ C^{1,1}_2 & C^{1,1}_1 & & &  & 0 & & & & \\ \vdots & \ddots & \ddots &  & & C^{1,2}_1 & \ddots & & & \\\vdots & \ddots & \ddots & \ddots & & \vdots & \ddots & \ddots & &\\ C^{1,1}_{n_1 - 2h_1} & \cdots & \cdots & \cdots & C^{1,1}_1  & C^{1,2}_{n_2 - h_1-h_2} & \cdots & C^{1,2}_1 & 0 &
\\ \hline 0 &  & & & & C^{2,2}_1 & &  & & \multirow{3}{*}{$\cdots$} \\ C^{2,1}_1 & \ddots &  & & & \vdots & \ddots &  & & \multirow{3}{*}{$\cdots$} \\ \vdots & \ddots & \ddots & & & \vdots & \ddots  & \ddots & &
\\ C^{2,1}_{n_2- h_1 - h_2} & \cdots &C^{2,1}_1 & 0 & & C^{2,2}_{n_2 -2h_2} & \cdots & \cdots & C^{2,2}_1 & \\ \hline\multicolumn{5}{c|}{\cdots}     & \multicolumn{4}{c|}{\cdots} & \cdots
\end{array} \right).
\end{equation} Equation~\eqref{Eq:ImageAut} resembles Equation~\eqref{E:bsp_algebra_matrix} in structure, where both represent automorphisms as block diagonal matrices. However, they differ in the following way:
\begin{itemize}

\item Within each block  $\hat{X}_{ij}$ of the image $\varphi(C)$,  we only keep a $\min(n_i, n_j) - h_j - h_i$ of non-zero diagonals of $X_{ij}$.

\item If $\min(n_i, n_j) - h_j - h_i<0$, then  the corresponding block   $\hat{X}_{ij}$  becomes a zero submatrix.

\end{itemize} 

The Lie algebra homomorphism \[ \varphi: \operatorname{aut}(V,\mathcal{P}) \to \operatorname{aut}\left(U^{\perp}/U\right)\] is  surjective if and only if $h_1 = h_2 = \dots = h_t$. Given the connectedness of the automorphism groups (see Theorem~\ref{T:AutConnected}), the corresponding Lie group homomorphism is also surjective. We get the following.

\begin{corollary} \label{Cor:ImageAutIso} Let $(V, \mathcal{P}) = \bigoplus_{i=1}^N \mathcal{J}_{0, 2n_i}$, where $n_1 \geq n_2 \geq \dots \geq n_N$, and $P$ be the recursion operator. For any $h \leq \frac{n_N}{2}$ we have the following.

\begin{enumerate}
\item $\operatorname{Ker} P^h$ is a bi-isotropic $\operatorname{Aut}(V,\mathcal{P})$-invariant subspace and $(\operatorname{Ker}P^h)^{\perp} = \operatorname{Im} P^h$.

\item The Lie algebra homomorphism \[ \operatorname{Aut}\left(V, \mathcal{P}\right) \to \operatorname{Aut} (\operatorname{Im} P^h / \operatorname{Ker} P^h) = \operatorname{Aut}\left(\bigoplus_{i=1}^N \mathcal{J}_{0, 2n-4h}\right)\] is surjective.

\end{enumerate}
\end{corollary}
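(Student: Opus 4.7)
The plan is to deduce the corollary almost immediately from the preceding results by specializing them to the case of uniform heights $h_i = h$.

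For Part (1), I would first identify $\operatorname{Ker} P^h$ with the invariant subspace $U = \bigoplus_{i=1}^N \mathcal{J}_{0, 2n_i}^{\leq h}$ in the notation of Theorem~\ref{T:JordInvSubs}, corresponding to the constant height vector $h_i = h$ for all $i$. The condition \eqref{Eq:InvHeightCond} reduces to $0 \leq 0 \leq n_i - n_{i+1}$, which is automatic, so $U$ is $\operatorname{Aut}(V,\mathcal{P})$-invariant. Bi-isotropy follows from Lemma~\ref{L:InvSubBiIsotr} once we observe $h \leq \tfrac{n_N}{2} \leq \tfrac{n_i}{2}$ for every $i$. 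The identity $(\operatorname{Ker}P^h)^\perp = \operatorname{Im}P^h$ is then \eqref{Eq:OrthKerIm}.

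For Part (2), the key is to compare the block-diagonal descriptions of $\operatorname{aut}(V,\mathcal{P})$ and $\operatorname{aut}(U^{\perp}/U)$ produced by Theorem~\ref{T:BiSymp_General_Jordan_Case_Mega}. Applying Lemma~\ref{L:ImageAut} in the special case $h_i = h$, each block $\hat{X}_{ij}$ of the image retains $\min(n_i, n_j) - 2h$ nontrivial diagonals of coefficients $C^{i,j}_s$. On the other hand, applying Theorem~\ref{T:BiSymp_General_Jordan_Case_Mega} directly to $U^{\perp}/U \cong \bigoplus_{i=1}^N \mathcal{J}_{0, 2n_i - 4h}$, the corresponding block of $\operatorname{aut}(U^{\perp}/U)$ is parametrized by exactly $\min(n_i - 2h, n_j - 2h) = \min(n_i, n_j) - 2h$ free matrices. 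These dimensions match, and the constraints \eqref{E:Cond_on_BiSymp_Jordan_Case} pull back, so the induced Lie algebra homomorphism $\varphi$ is surjective.

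Finally, to pass from Lie algebras to Lie groups, I invoke the connectedness of $\operatorname{Aut}(V,\mathcal{P})$ established in Theorem~\ref{T:AutConnected}. A morphism of connected Lie groups whose differential is surjective is itself surjective onto the identity component of the target; since $\operatorname{Aut}(U^{\perp}/U)$ is also connected by Theorem~\ref{T:AutConnected} (applied to the quotient pencil), this yields surjectivity of the group homomorphism.

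No step poses a genuine obstacle; the entire content lies in checking that the truncation described by Lemma~\ref{L:ImageAut} does not cut off any of the diagonals needed on the target side, which is precisely the phenomenon that fails as soon as the $h_i$ are unequal or $h > \tfrac{n_N}{2}$.
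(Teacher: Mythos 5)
Your proof is correct and follows essentially the same route as the paper. The paper derives this corollary directly from Lemma~\ref{L:ImageAut} (specialized to $h_i = h$, which makes $\varphi$ surjective on Lie algebras) together with the connectedness of both automorphism groups from Theorem~\ref{T:AutConnected}, which is precisely what you do; your dimension count that $\min(n_i, n_j) - 2h$ diagonals survive and match the block structure of $\operatorname{aut}\bigl(\bigoplus_i \mathcal{J}_{0, 2n_i - 4h}\bigr)$ just spells out what the paper states more tersely.
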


\subsection{Nested Bi-Lagrangian Grassmanians} \label{SubS:MatrBLG}

If the homomorphism \eqref{Eq:HomoGroupAut} is surjective, then the inclusion \eqref{Eq:IncBLG} is orbit-preserving. Aligning with Corollary~\ref{Cor:ImageAutIso} we get the following.

\begin{theorem} Let $(V, \mathcal{P}) = \bigoplus_{i=1}^N \mathcal{J}_{0, 2n_i}$, where $n_1 \geq n_2 \geq \dots \geq n_N$, and $P$ be the recursion operator. For any $h \leq \frac{n_N}{2}$ and any bi-Lagrangian subspace $L \subset (V,\mathcal{P})$ such that \[ \operatorname{Ker}P^h \subset L \subset  \operatorname{Im} P^h \]  the following two orbits are isomorphis:

\begin{enumerate}
    \item the $\operatorname{Aut}\left(V, \mathcal{P}\right)$-orbit of $L$ in $\operatorname{BLG}(V,\mathcal{P})$
    \item and the $\operatorname{Aut} (\operatorname{Im} P^h / \operatorname{Ker} P^h)$-orbit of $L/\operatorname{Ker} P^h$ in $\operatorname{BLG} (\operatorname{Im} P^h / \operatorname{Ker} P^h)$.

\end{enumerate}  
    \end{theorem}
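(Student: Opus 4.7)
The plan is to combine the bi-Poisson reduction isomorphism from Lemma~\ref{L:AlgIsomBiisotrFactor} with the surjectivity of the induced automorphism homomorphism from Corollary~\ref{Cor:ImageAutIso}. Set $U = \operatorname{Ker} P^h$. Since $h \leq n_N/2 \leq n_i/2$ for every $i$, Lemma~\ref{L:InvSubBiIsotr} guarantees that $U$ is bi-isotropic, Theorem~\ref{T:JordInvSubs} identifies it as an $\operatorname{Aut}(V,\mathcal{P})$-invariant (hence admissible) subspace, and Corollary~\ref{Cor:ImageAutIso} yields $U^{\perp} = \operatorname{Im} P^h$. The hypothesis places $L$ precisely in the set $\{L' \in \operatorname{BLG}(V,\mathcal{P}) : U \subset L' \subset U^{\perp}\}$.

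Next, I would apply Lemma~\ref{L:AlgIsomBiisotrFactor} to the admissible bi-isotropic subspace $U$, producing an isomorphism of projective varieties
\[ \Phi: \operatorname{BLG}(V,\mathcal{P}) \cap \operatorname{Gr}_k(U^{\perp}) \xrightarrow{\sim} \operatorname{BLG}(U^{\perp}/U), \qquad L' \longmapsto L'/U. \]
The central point is that $\Phi$ is equivariant: because $U$ and $U^{\perp}$ are both $\operatorname{Aut}(V,\mathcal{P})$-invariant, every $g \in \operatorname{Aut}(V,\mathcal{P})$ descends to $f(g) \in \operatorname{Aut}(U^{\perp}/U)$ via the Lie group homomorphism $f$ of Corollary~\ref{Cor:ImageAutIso}, and $\Phi(g \cdot L') = f(g) \cdot \Phi(L')$ for every $L'$ in the domain. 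Invariance of $U$ also shows that the entire $\operatorname{Aut}(V,\mathcal{P})$-orbit of $L$ stays inside the domain of $\Phi$, so $\Phi$ carries this orbit bijectively onto the $f(\operatorname{Aut}(V,\mathcal{P}))$-orbit of $L/U$.

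To finish, I would invoke the surjectivity statement of Corollary~\ref{Cor:ImageAutIso}: the image $f(\operatorname{Aut}(V,\mathcal{P}))$ coincides with the full group $\operatorname{Aut}(U^{\perp}/U)$. Consequently the $f(\operatorname{Aut}(V,\mathcal{P}))$-orbit of $L/U$ equals the $\operatorname{Aut}(U^{\perp}/U)$-orbit of $L/U$ in $\operatorname{BLG}(U^{\perp}/U)$. Since $\Phi$ is an isomorphism of algebraic varieties, its restriction to the orbit is an isomorphism of quasi-projective subvarieties, giving the desired identification of the two orbits.

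The main obstacle, and the step requiring the most care, is the equivariance combined with the surjectivity of $f$. The inclusion $\operatorname{BLG}(U^{\perp}/U) \hookrightarrow \operatorname{BLG}(V,\mathcal{P})$ in general does not preserve orbits, because an arbitrary invariant $U$ yields only a proper image of $\operatorname{Aut}(V,\mathcal{P})$ inside $\operatorname{Aut}(U^{\perp}/U)$ (compare the general formula \eqref{Eq:ImageAut}). The condition $h \leq n_N/2$ is exactly what forces $h_1 = \dots = h_t = h$ in the notation of Lemma~\ref{L:ImageAut}, so every diagonal $\hat{X}_{ii}$ is a genuine central submatrix of $X_{ii}$ and every off-diagonal $\hat{X}_{ij}$ retains its full nontrivial band, producing a surjective $\varphi$ on Lie algebras and, by Theorem~\ref{T:AutConnected}, a surjective $f$ on groups. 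Once this is invoked, the orbit identification is formal.
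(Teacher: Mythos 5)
Your proposal is correct and reconstructs exactly the argument the paper intends: the paper presents this theorem as an immediate consequence of Lemma~\ref{L:AlgIsomBiisotrFactor} (the reduction isomorphism) and Corollary~\ref{Cor:ImageAutIso} (surjectivity of the induced homomorphism when $h \leq n_N/2$), which is precisely your $\Phi$ plus the equivariance and surjectivity steps you spell out. You have merely made explicit the equivariance verification that the paper leaves implicit.
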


We get an orbit-preserving inclusion of bi-Lagrangian Grassmanians \begin{equation} \label{Eq:MatrEqualJordSuper} \operatorname{BLG}\left(\bigoplus_{i=1}^N \mathcal{J}_{0, 2n_i}\right) \supset \operatorname{BLG}\left(\bigoplus_{i=1}^N \mathcal{J}_{0, 2n_i-4}\right) \supset \dots \supset \operatorname{BLG}\left(\bigoplus_{i=1}^N \mathcal{J}_{0, 2n_i- 4 \left[ \frac{n_N}{2}\right]}\right)   \end{equation} Formula~\eqref{Eq:MatrEqualJordSuper} generalizes ``matryoshka of bi-Lagrangians'' \eqref{Eq:MatrOneJord}. 

\begin{remark} Consider $(V, \mathcal{P}) = \bigoplus_{i=1}^N \mathcal{J}_{0, 2n_i}$. Bi-isotropic invariant subspaces within $(V,\mathcal{P})$ have the form $U = \bigoplus_{i=1}^N \mathcal{J}^{\leq h_i}_{0, 2n_i}$, where $h_i \leq \frac{n_i}{2}$. For any such $U$ there is an inclusion \[ \operatorname{BLG}(U^{\perp}/ U) =  \operatorname{BLG}\left(\bigoplus_{i=1}^N \mathcal{J}_{0, 2n_i- 4h_i }\right)\subset  \operatorname{BLG}\left(\bigoplus_{i=1}^N \mathcal{J}_{0, 2n_i}\right)=  \operatorname{BLG}\left(V,\mathcal{P}\right). \] But if not all heights $h_1,\dots, h_t$ are equal, then $\operatorname{Aut}\left(V,\mathcal{P}\right)$ further divide the orbits of $\operatorname{BLG}\left(U^{\perp}/U\right)$ into smaller orbits within $\operatorname{BLG}\left(V,\mathcal{P}\right)$. 

\end{remark}

\subsection{Example with infinite number of \texorpdfstring{$\operatorname{Aut}(V, \mathcal{P})$}{Aut(V,P)}-orbits} \label{SubS:InfOrb}

\begin{lemma} \label{L:InfOrb} Consider the sum of Jordan blocks \[(V, \mathcal{P}) =\bigoplus_{i=1}^n \mathcal{J}_{0,4i-2} = \mathcal{J}_{0,2} \oplus \mathcal{J}_{0,6} \oplus \dots \oplus \mathcal{J}_{0,4n-2} .\] If $n>5$, then the bi-Lagrangian Grassmanian consists of an infinite number of orbits under the action of automorphism group $\operatorname{Aut}(V, \mathcal{P})$. \end{lemma}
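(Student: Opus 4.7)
The plan is to exhibit a single $\operatorname{Aut}(V,\mathcal{P})$-invariant bi-isotropic subspace $U$ such that the bi-Poisson reduction $U^{\perp}/U$ is simple (a sum of equal blocks), but the image of $\operatorname{Aut}(V,\mathcal{P})$ in $\operatorname{Aut}(U^{\perp}/U)$ is so small that it cannot act on $\operatorname{BLG}(U^{\perp}/U)$ with finitely many orbits, for purely dimensional reasons.

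Concretely, set
\[
U \;=\; \bigoplus_{i=1}^n \mathcal{J}_{0,4i-2}^{\leq i-1}.
\]
Sorting the Jordan blocks in decreasing size, the heights $h_i = i-1$ satisfy $0 \leq h_i - h_{i+1} = 1 \leq 2 = n_i - n_{i+1}$, so by Theorem~\ref{T:JordInvSubs} the subspace $U$ is $\operatorname{Aut}(V,\mathcal{P})$-invariant, and by Lemma~\ref{L:InvSubBiIsotr} it is bi-isotropic (since $h_i = i-1 \leq \tfrac{2i-1}{2} = \tfrac{n_i}{2}$ in the paper's convention). A direct computation using the formula $(\mathcal{J}^{\leq h}_{0,2n})^{\perp} = \mathcal{J}^{\leq n-h}_{0,2n}$ inside each block shows that every quotient block has size $2(n_i - 2h_i) = 2$, so that
\[
U^{\perp}/U \;\cong\; \bigoplus_{i=1}^{n} \mathcal{J}_{0,2}.
\]
Since the recursion operator on $U^{\perp}/U$ is zero, every subspace is $P$-invariant; hence by Lemma~\ref{L:NonGenDescBiLagr} we have $\operatorname{BLG}(U^{\perp}/U) = \Lambda(n)$, the Lagrangian Grassmannian, whose dimension is $\tfrac{n(n+1)}{2}$. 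By Lemma~\ref{L:AlgIsomBiisotrFactor}, $\Lambda(n)$ embeds into $\operatorname{BLG}(V,\mathcal{P})$ as the subvariety of bi-Lagrangian subspaces containing $U$.

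The crucial step is to identify the image of $\operatorname{Aut}(V,\mathcal{P})$ inside $\operatorname{Aut}(U^{\perp}/U) = \operatorname{Sp}(2n,\mathbb{K})$. Applying Lemma~\ref{L:ImageAut}, the number of surviving diagonals in the $(i,j)$-block of the image equals $\min(n_i,n_j) - h_i - h_j$. With $n_i = 2i-1$, $h_i = i-1$, this equals $1$ if $i=j$ and is non-positive otherwise, so the image Lie algebra is exactly the block-diagonal subalgebra $\bigoplus_{i=1}^n \operatorname{sp}(2)$ of dimension $3n$. By Theorem~\ref{T:AutConnected} the group itself is connected, so its image is the block-diagonal $\operatorname{Sp}(2)^n \subset \operatorname{Sp}(2n)$.

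To conclude, the action of $\operatorname{Aut}(V,\mathcal{P})$ on the subvariety $\Lambda(n) \subset \operatorname{BLG}(V,\mathcal{P})$ factors through $\operatorname{Sp}(2)^n$, an algebraic group of dimension $3n$. Since every orbit is a constructible set of dimension at most $3n$, a finite union of orbits has dimension at most $3n$. For $n > 5$ we have $\tfrac{n(n+1)}{2} > 3n$, so $\Lambda(n)$ cannot be covered by finitely many such orbits; hence even the subset $\{L \in \operatorname{BLG}(V,\mathcal{P}) : U \subset L\}$ decomposes into infinitely many $\operatorname{Aut}(V,\mathcal{P})$-orbits. The main obstacle is the precise description of the image Lie algebra inside $\operatorname{sp}(2n)$; once one is confident in the direct application of the bookkeeping in Lemma~\ref{L:ImageAut}, the dimension-count $3n < \tfrac{n(n+1)}{2}$ finishes the argument. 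Note that the numerical threshold $n>5$ is exactly the one appearing in the statement, which is reassuring.
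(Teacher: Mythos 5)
Your proof is correct and takes essentially the same route as the paper: same choice of invariant bi-isotropic subspace $U$, same reduction to $\Lambda(n)$ via Lemma~\ref{L:AlgIsomBiisotrFactor}, and the same dimension mismatch $3n < \tfrac{n(n+1)}{2}$ for $n>5$. The only (superficial) difference is that you compute the exact image via Lemma~\ref{L:ImageAut}, whereas the paper observes directly that $\operatorname{Aut}(V,\mathcal{P})$ preserves each block $H_j = (U^\perp/U)\cap\operatorname{Im}P^{j-1}\cap\operatorname{Ker}P^j$ and so its image lands inside $\operatorname{Sp}(2)^n$ — and notes your approach in a footnote as an alternative.
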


\begin{proof}[Proof of Lemma~\ref{L:InfOrb}] Consider the $\operatorname{Aut}(V, \mathcal{P})$-invariant bi-isotropic subspace \[ 
U = \bigoplus_{i=1}^n \mathcal{J}^{\leq i-1}_{0, 4i-2}.\] 
We're interested in orbits of bi-Lagrangian subspaces $L$ such that \[ U \subset L \subset U^{\perp}.\] The situation is visualized in \eqref{Eq:BiLagrInfOrbEx}. The Jordan blocks are realized as Young-like diagram similar to \eqref{Eq:OneJordanBlock_VectorInTable}.  The colored cells represent the subspace $U$, while $U^{\perp}$ is spanned by both colored and marked cells \begin{equation} \label{Eq:BiLagrInfOrbEx}
\begin{tabular}{|c|c|}\hline
 & \\ \hline
 \vdots & \vdots \\ \hline
    & \\ \hline
   X & X  \\ \hline
       {\cellcolor{gray!25} } & {\cellcolor{gray!25} } \\ \hline
   \vdots & \vdots   \\ \hline
    {\cellcolor{gray!25} } & {\cellcolor{gray!25} } \\ \hline
  \end{tabular} \quad  \cdots \quad \begin{tabular}{|c|c|cc}
       \multicolumn{1}{c}{} & \multicolumn{1}{c}{}& & \\
       \multicolumn{1}{c}{} & \multicolumn{1}{c}{}& & \\
       \multicolumn{1}{c}{} & \multicolumn{1}{c}{}& & \\
       \multicolumn{1}{c}{} & \multicolumn{1}{c}{}& & \\    \cline{1-2}  
   & & & \\ \cline{1-2}   
   X & X & & \\ \hline
    {\cellcolor{gray!25} } & {\cellcolor{gray!25} }  & X &   \multicolumn{1}{|c|}{X}\\ \hline
  \end{tabular}
\end{equation}

\begin{itemize}

\item On one hand, $\dim \operatorname{BLG}(U^{\perp}/U)$ is rather high. More precisely, $U^{\perp}/U$ is isomorphic to the sum  $\bigoplus_{i=1}^N \mathcal{J}_{0,2}$. Therefore, $\operatorname{BLG}{\left(U^{\perp}/U\right)} \approx \Lambda(n)$ and \[\dim \operatorname{BLG}(U^{\perp}/U)  = \dim \Lambda(n)  = \frac{n(n+1)}{2}\]

\item On the other hand, the image of the homomorphism  $f: \operatorname{Aut}(V, \mathcal{P}) \to \operatorname{Aut}(U^{\perp}/U)$ is a rather small subgroup. As seen in \eqref{Eq:BiLagrInfOrbEx}, $U^{\perp}/U$ decomposes in $n$ Jordan blocks: \[U^{\perp}/U = \bigoplus_{i=1}^n H_j, \qquad H_j =  (U^{\perp}/U) \cap \operatorname{Im} P^{j-1} \cap \operatorname{Ker} P^j.  \] Since  $\operatorname{Aut}(V, \mathcal{P})$ preserves each block $H_j=\mathcal{J}_{0,2}$ , its image is contained in a product of $n$ symplectic groups\footnote{We could also use Lemma~\ref{L:ImageAut}}. Hence, 
  \[\dim f( \operatorname{Aut}(V, \mathcal{P})) \leq \dim \left(  \underbrace{\operatorname{Sp}(2)\times \dots \times  \operatorname{Sp}(2)}_n \right) = 3 n \]

\end{itemize}

A group of dimension at most $3n$ acts on a $\frac{n(n+1)}{2}$ manifold. This mismatch in sizes leads to an an infinite number of orbits. Lemma~\ref{L:InfOrb} is proved. \end{proof}

\subsection{Example of a indecomposable bi-Lagrangian subspace} \label{SubS:NonDecomp}

We say that a bi-Lagrangian subspace $L \subset (V,\mathcal{P})$ of a Jordan bi-Poisson space is \textbf{indecomposable} if it is not isomorphic to a direct sum of two non-trivial bi-Lagrangian subspaces (see Definition~\ref{Def:Decomp}). In the next statement we  construct an example of a indecomposable bi-Lagrangian subspace. This example shares similarities with the one discussed in Section~\ref{SubS:InfOrb}.

\begin{assertion} \label{A:NonDecomp} Let $(V, \mathcal{P}) =  \mathcal{J}_{0, 6}  \oplus \mathcal{J}_{0, 2}$, and $e_1,e_2, e_3, f_1, f_2, f_3, \hat{e}_1, \hat{f}_1$ be its standard basis  (i.e. the basis from the JK theorem).  The subspace \begin{equation} \label{Eq:BiLagr_NotDecomp} L = \operatorname{Span}\left\{ e_{3}, f_{1}, e_{2} + \hat{e}_{1}, f_{2} - \hat{f}_{1}. \right\} \end{equation}  is bi-Lagrangian and  indecomposable.
\end{assertion}

\eqref{Eq:BiLagr_NotDecomp_Fig} illustrates the concept.  The Jordan blocks are represented by Young-like diagrams similar to \eqref{Eq:OneJordanBlock_VectorInTable}. The basis of the indecomposable bi-Lagrangian subspace $L$ is visualized by two colored cells and two pairs of linked cells (each pair distinguished by the same letter P or M).

\begin{equation}  \label{Eq:BiLagr_NotDecomp_Fig} 
  \begin{tabular}{|c|c|cc}\cline{1-2}
   & & & \\ \cline{1-2}   
     P  &  M & & \\ \hline
    {\cellcolor{gray!25} } & {\cellcolor{gray!25} }  & P &   \multicolumn{1}{|c|}{M }\\ \hline
  \end{tabular}
\end{equation}

\begin{proof}[Proof of Assertion~\ref{A:NonDecomp}] The subspace $L$ satisfies \[\dim L +\operatorname{Ker} P/ \operatorname{Ker} P = 2.\] No decomposable bi-Lagrangian subspace of $ \mathcal{J}_{0, 6}  \oplus \mathcal{J}_{0, 2}$  can fulfill this property. Assertion~\ref{A:NonDecomp} is proved. \end{proof}

A more in-depth analysis of $\operatorname{BLG}( \mathcal{J}_{0, 6}  \oplus \mathcal{J}_{0, 2})$ is presented in Section~\ref{SubS:Exam}.

\section{Several equal Jordan blocks} \label{S:EqualJordBlocks}

In this section we study the bi-Lagrangian Grassmanian $\operatorname{BLG}(V, \mathcal{P})$ for the sum of $l$ Jordan $2n \times 2n$ blocks with the same eigenvalue $\lambda =0$, i.e. $(V, \mathcal{P})=\bigoplus_{i=1}^l \mathcal{J}_{0, 2n}$. Namely, we describe the $\operatorname{Aut}(V, \mathcal{P})$-orbits of $\operatorname{BLG}(V, \mathcal{P})$. As usual, $P$ denotes the recursion operator.

We say that a basis $e^i_j, f^i_j$, $i=1, \dots, l, j=1, \dots, n$ is a \textbf{standard basis} of $\bigoplus_{i=1}^l \mathcal{J}_{0, 2n}$ if for each $i=1,\dots, l$ the vectors $e^i_j, f^i_j$ form a standard basis of a $2n\times 2n$ Jordan block. Simply speaking, \[ e^1_1, \dots, e^1_n, f^1_1, \dots, f^1_n, \dots e^l_1, \dots, e^l_n, f^l_1, \dots, f^l_n,\] is a basis from the JK theorem~\ref{T:Jordan-Kronecker_theorem}.

\subsection{Dimension and types of \texorpdfstring{$\operatorname{Aut}(V, \mathcal{P})$}{Aut(V,P)}-orbits}

First, we describe a canonical form for a bi-Lagrangian subspace $L\subset \bigoplus_{i=1}^l \mathcal{J}_{0, 2n}$.

\begin{theorem} \label{T:BiLagrEqJordBl} Let $(V, \mathcal{P}) = \bigoplus_{i=1}^l \mathcal{J}_{0, 2n}$ a sum of $l$ equal Jordan blocks. For any bi-Lagrangian subspace $L \subset \bigoplus_{i=1}^l \mathcal{J}_{\lambda, 2n}$ there exist numbers $h_1, \dots, h_l$ such that \[ h_1 \geq h_2 \geq \dots \geq h_l \geq \frac{n}{2}\] and a standard basis $e^i_j, f^i_j$, $i=1, \dots, l, j=1, \dots, n$ such that \begin{equation} \label{Eq:CanonEqJord} L = \bigoplus_i  L_i, \qquad L_i = \operatorname{Span}\left\{e^i_{n-h_i +1}, \dots, e^i_n, f^i_1, \dots f^i_{n-h_i} \right\} .\end{equation} \end{theorem}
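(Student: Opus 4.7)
The plan is to induct on the number $l$ of blocks; the base case $l=1$ is exactly Theorem~\ref{T:BiLagr_One_Jordan_Canonical_Form}. Two features of the equal-block setting are used throughout: since the recursion operator $P = B^{-1}A$ on $V = \bigoplus_{i=1}^l \mathcal{J}_{0,2n}$ has all $2l$ Jordan chains of the same length $n$, one has $\operatorname{Im} P^h = \operatorname{Ker} P^{n-h}$ and $(\operatorname{Ker} P^h)^{\perp_B} = \operatorname{Im} P^h$ for every $h$. Setting $h_1 := \operatorname{height}(L)$, the inclusion $L \subset \operatorname{Ker} P^{h_1}$ combined with $L = L^{\perp_B}$ gives $L \supset \operatorname{Im} P^{h_1}$, and the dimension count $ln = \dim L \geq \dim \operatorname{Im} P^{h_1} = 2l(n-h_1)$ yields $h_1 \geq n/2$.

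To extract the first block, I pick $v_1 \in L$ of height $h_1$. Since $v_1 \in \operatorname{Ker} P^{h_1} = \operatorname{Im} P^{n-h_1}$, there is some $e^1_1 \in V$ of height $n$ with $P^{n-h_1} e^1_1 = v_1$. Setting $e^1_j = P^{j-1} e^1_1$, choosing $f^1_n$ with $B(e^1_j, f^1_n) = \delta^j_n$, and defining $f^1_j := P^{n-j} f^1_n$ produces a standard basis of a $P$-invariant, $B$-nondegenerate subspace $V_1 \cong \mathcal{J}_{0,2n}$, exactly as in the proof of Theorem~\ref{T:BiLagr_One_Jordan_Canonical_Form}. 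By $P$-invariance of $L$, the vectors $e^1_{n-h_1+1} = v_1, \ldots, e^1_n$ lie in $L$, while $f^1_1, \ldots, f^1_{n-h_1} \in \operatorname{Ker} P^{n-h_1} = \operatorname{Im} P^{h_1} \subset L$. Hence $L_1 := \operatorname{Span}(e^1_{n-h_1+1}, \ldots, e^1_n, f^1_1, \ldots, f^1_{n-h_1}) \subset L \cap V_1$, and because $L \cap V_1$ is bi-isotropic in the single Jordan block $V_1$ its dimension is bounded by $n = \dim L_1$, forcing $L \cap V_1 = L_1$.

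The crux, which I expect to be the main obstacle, is the splitting $L = L_1 \oplus (L \cap V_1^{\perp_B})$. Let $V'_1 := V_1^{\perp_B}$; because $P$ is $B$-self-adjoint and $V_1$ is $P$-invariant, $V'_1$ is also $P$-invariant, so $V = V_1 \oplus V'_1$ is bi-orthogonal, and by uniqueness of the JK decomposition $V'_1 \cong \bigoplus_{i=2}^l \mathcal{J}_{0,2n}$. Given $v \in L$, decompose $v = v_1 + v'_1$ with $v_1 \in V_1$, $v'_1 \in V'_1$. For any $w \in L_1 \subset V_1$, bi-orthogonality of the summands gives $B(v'_1, w) = 0$, and $B(v, w) = 0$ because $v, w \in L$; therefore $B(v_1, w) = 0$. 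Thus $v_1$ lies in the $B|_{V_1}$-orthogonal of $L_1$ inside $V_1$, which equals $L_1$ itself since $L_1$ is Lagrangian in the symplectic space $(V_1, B|_{V_1})$. Hence $v_1 \in L_1$, and $L = L_1 \oplus L'$ with $L' := L \cap V'_1$ bi-Lagrangian in $V'_1$ (by Assertion~\ref{A:DecompBiLagr}).

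Applying the inductive hypothesis to $L' \subset V'_1 \cong \bigoplus_{i=2}^l \mathcal{J}_{0,2n}$ furnishes heights $h_2 \geq \cdots \geq h_l \geq n/2$ and standard basis vectors $e^i_j, f^i_j$ for $i = 2, \ldots, l$ realizing the required form on $L'$. Bi-orthogonality of $V_1 \oplus V'_1$ guarantees that the combined basis is standard on $V$, and the monotonicity $h_1 \geq h_2$ follows from $h_1 = \operatorname{height}(L) \geq \operatorname{height}(L')$.
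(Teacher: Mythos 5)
Your proof is correct, and it takes a genuinely different route from the paper. The paper inducts on $\dim L$ and splits into two cases according to whether $\operatorname{Ker}P\subset L$: if not, it invokes Corollary~\ref{Cor:JordExtr} (which applies only when $\operatorname{height}(L)=n$) to strip off top blocks; if so, it reduces to $\operatorname{Im}P/\operatorname{Ker}P$ and then lifts the standard basis back up using the surjectivity of $\operatorname{Aut}(V,\mathcal{P})\to\operatorname{Aut}(\operatorname{Im}P/\operatorname{Ker}P)$ from Corollary~\ref{Cor:ImageAutIso}, which in turn rests on the explicit description of the Lie algebra $\operatorname{aut}(V,\mathcal{P})$ in Theorem~\ref{T:BiSymp_General_Jordan_Case_Mega} and the connectedness result Theorem~\ref{T:AutConnected}. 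You instead induct on the number of blocks $l$ and exploit the equal-block identity $\operatorname{Im}P^{n-h}=\operatorname{Ker}P^{h}$ to lift a top vector of $L$ (of any height $h_1\geq n/2$) to a full-height Jordan chain, so that a single block can be split off even when $\operatorname{height}(L)<n$; the splitting $L=L_1\oplus(L\cap V_1^{\perp_B})$ is then verified by a direct $B$-orthogonality computation rather than by invoking the general bi-Poisson reduction apparatus. The payoff is a noticeably more self-contained argument for this special case: you avoid the lifting-of-automorphisms step altogether and need only Theorem~\ref{T:BiLagr_One_Jordan_Canonical_Form}, the self-adjointness of $P$, and Assertion~\ref{A:DecompBiLagr}. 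What the paper's route buys in exchange is uniformity — the same two-case induction template and the reduction/lifting machinery are reused verbatim for the mixed-block situations in later sections, where the identity $\operatorname{Im}P^{n-h}=\operatorname{Ker}P^h$ you rely on no longer holds.
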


Simply speaking, any bi-Lagrangian subspace $L \subset \bigoplus_{i=1}^l \mathcal{J}_{\lambda, 2n}$ is semisimple (see Definition~\ref{Def:Decomp}).  For example, consider \eqref{Eq:EqJordOrb} where we realize $\mathcal{J}_{0, 8} \oplus\mathcal{J}_{0, 8} \oplus \mathcal{J}_{0, 8}$ as a Young-like diagram. The shaded area represents a bi-Lagrangian subspace with the corresponding heights $h_1 = 4, h_2 = 3$ and $h_3 = 2$. 

\begin{equation} \label{Eq:EqJordOrb}
  \begin{tabular}{|c|c||c|c||c|c|} 
  \hline {\cellcolor{gray!25} } & &    &  &  &  \\
 \hline {\cellcolor{gray!25} } & &  {\cellcolor{gray!25} }  &  & &  \\
     \hline {\cellcolor{gray!25} } & &  {\cellcolor{gray!25} }  & &  {\cellcolor{gray!25} }  & {\cellcolor{gray!25} }  \\ 
   \hline {\cellcolor{gray!25} } &  &  {\cellcolor{gray!25} }  &  {\cellcolor{gray!25} } &  {\cellcolor{gray!25} }  & {\cellcolor{gray!25} } \\ \hline
  \end{tabular} 
\end{equation}

\begin{proof}[Proof of Theorem~\ref{T:BiLagrEqJordBl}] The proof is by induction on $\dim L$. 

\begin{itemize}

\item The base case $(\dim L = 1)$ is trivial.

\item Induction Step: We consider two cases:

\begin{enumerate}
    \item \textit{Case $1$: $L$ does not contain $\operatorname{Ker} P$.} We can extract several Jordan blocks using Corollary~\ref{Cor:JordExtr}. Since the remaining subspace has lower dimension, the induction hypothesis applies.

    \item \textit{Case $2$: $\operatorname{Ker} P\subset L$.} By induction, $L/ \operatorname{Ker}P$ can be brought to the standard form~\eqref{Eq:CanonEqJord} in $\operatorname{Im}P / \operatorname{Ker}P$ using an automorphism $C \in \operatorname{Aut} (\operatorname{Im} P / \operatorname{Ker} P)$. Corollary~\ref{Cor:ImageAutIso}  guarantees that there exists a corresponding automorphism $C' \in \operatorname{Aut}\left(V, \mathcal{P}\right)$ that brings $L$ itself to the standard form~\eqref{Eq:CanonEqJord} within $(V, \mathcal{P})$.
\end{enumerate} 

\end{itemize}

Theorem~\ref{T:BiLagrEqJordBl} is proved.  \end{proof}

\begin{definition} For a bi-Lagrangian subspace $L \subset \bigoplus_{i=1}^l \mathcal{J}_{0, 2n}$ we define its \textbf{type} $H(L)$, as a collection of $l$ heights $h_i$ obtained after applying Theorem~\ref{T:BiLagrEqJordBl}:  \[ H(L) = \left\{ h_1, \dots, h_l\right\}\] \end{definition}  We always assume that $h_i \in H(L)$ are sorted in the descending order: \begin{equation} \label{Eq:IneqH} h_1 \geq h_2 \geq \dots \geq h_l \geq \frac{n}{2}.\end{equation}

\begin{theorem} \label{T:BiLagrEqTypesNum}  Let $(V, \mathcal{P}) = \bigoplus_{i=1}^l \mathcal{J}_{0, 2n}$ a sum of $l$ equal Jordan blocks.

\begin{enumerate}

\item Two bi-Lagrangian subspaces $L_1,L_2 \in \operatorname{BLG}
(V,\mathcal{P})$ belong to the same $\operatorname{Aut}(V, \mathcal{P})$-orbit if and only if they have the same type $H(L_1) = H(L_2)$. 

\item The total number of $\operatorname{Aut}(V, \mathcal{P})$-orbits in $\operatorname{BLG}(V,\mathcal{P})$ is \begin{equation} \label{Eq:NumOrbEqJord} \displaystyle\left(\!\! \binom{\left[\frac{n}{2}\right] + 1} {l}\!\!\right)= \binom{\left[ \frac{n}{2} \right] +  l}{l}.\end{equation}

\item For a bi-Lagrangian subspace $L$ with type $H(L) = \left\{h_1, \dots, h_l\right\}$  the dimension of its $\operatorname{Aut}(V, \mathcal{P})$-orbit $O_L$ is \begin{equation} \label{Eq:DimOEqualJord} \dim O_L = \sum_{1 \leq i \leq j \leq l} 2h_i - n.\end{equation}

\end{enumerate}

\end{theorem}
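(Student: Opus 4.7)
The plan is to establish the three claims in sequence, using Theorem~\ref{T:BiLagrEqJordBl} (canonical form) and Theorem~\ref{T:BiSymp_General_Jordan_Case_Mega} (matrix description of $\operatorname{aut}(V,\mathcal{P})$). For Part~1, first suppose $H(L_1) = H(L_2) = (h_1,\ldots,h_l)$; Theorem~\ref{T:BiLagrEqJordBl} provides standard bases $\mathcal{B}_1,\mathcal{B}_2$ in which $L_1,L_2$ both take the form \eqref{Eq:CanonEqJord}, and the linear map $\mathcal{B}_1 \to \mathcal{B}_2$ preserves the matrices of $A$ and $B$ (being standard for the same pencil $\mathcal{P}$), hence lies in $\operatorname{Aut}(V,\mathcal{P})$ and carries $L_1$ onto $L_2$. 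Conversely, since each $\operatorname{Ker} P^k$ is $\operatorname{Aut}(V,\mathcal{P})$-invariant (Theorem~\ref{T:JordInvSubs}), the integers $d_k(L) := \dim(L \cap \operatorname{Ker} P^k)$ are orbit invariants; in the canonical form one computes $d_k(L) = \sum_{i=1}^{l}[\min(h_i,k) + \min(n-h_i,k)]$, and using $h_i \geq n/2$ the successive differences $d_k - d_{k-1}$ (each $0$, $1$, or $2$) uniquely determine the multiset $\{h_i\}$.

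Part~2 is then combinatorial: the orbits correspond to non-increasing integer tuples $h_1 \geq \cdots \geq h_l$ with $n/2 \leq h_i \leq n$. This range contains exactly $\left[\frac{n}{2}\right]+1$ integers in both parities of $n$, and the number of multisets of size $l$ drawn from a set of that cardinality is $\binom{\left[\frac{n}{2}\right]+l}{l}$.

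For Part~3 the strategy is to compute $\dim O_L = \dim \operatorname{Aut}(V,\mathcal{P}) - \dim \operatorname{Stab}(L)$, where by Corollary~\ref{C:DimAutJordanCase} one has $\dim \operatorname{Aut}(V,\mathcal{P}) = n(2l^2 + l)$. Theorem~\ref{T:BiSymp_General_Jordan_Case_Mega} identifies $\operatorname{aut}(V,\mathcal{P})$ with tuples $(C_1,\ldots,C_n) \in \operatorname{sp}(2l)^n$ acting as lower-triangular Toeplitz block matrices on the level decomposition $V = \bigoplus_{k=1}^n W_k$ with each $W_k \cong \mathbb{K}^{2l}$; in this basis the canonical bi-Lagrangian splits as $L = \bigoplus_k L^{(k)}$ with the coordinate subspace $L^{(k)} = \operatorname{Span}\{x^i : h_i > n-k\} \oplus \operatorname{Span}\{y^i : h_i < k\} \subset \mathbb{K}^{2l}$. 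The invariance $CL \subset L$ translates into $C_s L^{(k)} \subset L^{(k+s-1)}$ for all $s, k \geq 1$ with $k+s-1 \leq n$. Writing $C_s$ in the standard symplectic block form $\left(\begin{smallmatrix}A & B \\ D & -A^T\end{smallmatrix}\right)$ with $B, D$ symmetric $l\times l$, these inclusions are expected to unpack into independent entrywise freedoms: $A_{j,i}$ free iff $s > h_i - h_j$; the symmetric entry $B_{j,i}$ free iff $s > n - h_i - h_j$ (automatic since $h_i + h_j \geq n$); and $D_{j,i}$ free iff $s > h_i + h_j - n$. Summing the free-parameter counts over $s = 1, \ldots, n$ and all index pairs, and applying the identity $\sum_{i \leq j}(h_i + h_j) = (l+1)\sum_i h_i$, one should obtain $\dim \operatorname{Stab}(L) = \tfrac{1}{2}(5nl^2 + 3nl) - 2\sum_i h_i(l+1-i)$, whence $\dim O_L = 2\sum_i h_i(l+1-i) - \tfrac{1}{2}nl(l+1) = \sum_{1 \leq i \leq j \leq l}(2h_i - n)$.

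The main technical hurdle is the unpacking step in Part~3: for each fixed $s$ the conditions $C_s L^{(k)} \subset L^{(k+s-1)}$ coming from varying $k$ must be shown to collapse to the three clean pointwise inequalities above. This rests on the monotonicity $L^{(k-1)} \subset L^{(k)}$ inside $\mathbb{K}^{2l}$ together with a short case analysis comparing the activation ranges of the constraints produced by the actions $C_s x^i$ (which constrain columns of $A$ and $D$) and $C_s y^i$ (which constrain columns of $B$ and rows of $A$); once the three inequalities are established, the remaining dimension count is a routine sum over index pairs.
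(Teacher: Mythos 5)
Your proposal is correct and takes essentially the same route as the paper: Part 1 via the canonical form of Theorem~\ref{T:BiLagrEqJordBl} combined with the invariants $\dim(L \cap \operatorname{Ker}P^k)$, Part 2 as the multiset count, and Part 3 by computing the Lie-algebra stabilizer inside the block-Toeplitz description of $\operatorname{aut}(V,\mathcal{P})$ from Theorem~\ref{T:BiSymp_General_Jordan_Case_Mega}. The paper carries out the stabilizer count in its more general Theorem~\ref{T:DimDecomp} (via Assertions~\ref{A:PreserDecompBiLagr} and~\ref{A:AutPresLOneBlock}), organized by pairs of Jordan-block subgroups, whereas you sum constraint counts over the Toeplitz index $s$ with an explicit per-level analysis; both yield the same entrywise freedoms and hence the same formula. (Minor note: in your converse argument for Part~1 the phrase ``each $0$, $1$, or $2$'' should refer to the per-$i$ contribution to $d_k - d_{k-1}$, not the total difference, but this does not affect the conclusion.)
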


\begin{remark} For convenience, we can group together equal $h_i$  within the type $H(L)$. Assume that there are $l_1$ values of $h_i$ equal to $\hat{h}_1$,  $l_2$ values equal to $\hat{h}_2$, and so on,  $l_t$ values equal to $\hat{h}_t$, where $\hat{h}_1 \geq \hat{h}_2 \geq \dots \geq \hat{h}_t$ (descending order). We denote the type $H(L)$ as \[ H(L) = \left\{ (h_i, n) \times l_i\right\}_{i=1,\dots, l}.\] Formula~\eqref{Eq:DimOEqualJord} takes the form \begin{equation} \label{Eq:DimOLEqJord} \dim O_L = \sum_{i=1}^t \frac{l_i(l_i + 1)}{2} (2h_i -n) + \sum_{1 \leq i < j \leq t} l_i l_j (2h_i -n).\end{equation}
\end{remark}

\begin{proof}[Proof of Theorem~\ref{T:BiLagrEqTypesNum} ]

\begin{enumerate}

\item The heights $h_i$ are uniquely determined by the basis-independent dimensions $\dim (L \cap \operatorname{Ker}P^j)$.

\item \eqref{Eq:NumOrbEqJord} is the number of possible heights $h_i$ that satisfy \eqref{Eq:IneqH}.

\item  It is proved similar to Theorem~\ref{T:BiLagrJord} (also, below we prove a more general statement in Corollary~\ref{Cor:NumTypes}).

\end{enumerate}

Theorem~\ref{T:BiLagrEqTypesNum} is proved. \end{proof}

Similar to Corollary~\ref{Cor:ConnOneJord} we get the following.

\begin{corollary} The generic orbit $O_{\max}$ is dense (in the standard topology) in the bi-Lagrangian Grassmanian for a sum of equal Jordan blocks: \[ \bar{O}_{\max} = \operatorname{BLG}(\bigoplus_{i=1}^l\mathcal{J}_{0, 2n}).\]  Thus, $\operatorname{BLG}(\bigoplus_{i=1}^l\mathcal{J}_{0, 2n})$  is an irreducible algebraic variety. \end{corollary}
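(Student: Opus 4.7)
The plan is to mimic the argument already used for Corollary~\ref{Cor:ConnOneJord} in the single-block case, now leveraging the complete classification of orbits provided by Theorem~\ref{T:BiLagrEqTypesNum}. The two assertions (density and irreducibility) are logically separate, so I would handle them in turn.

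First, for density of $O_{\max}$, I would note that by Theorem~\ref{T:BiLagrEqTypesNum} the bi-Lagrangian Grassmannian decomposes into finitely many $\operatorname{Aut}(V,\mathcal{P})$-orbits $O_L$, indexed by admissible height-tuples $(h_1,\dots,h_l)$ with $h_1\geq \dots\geq h_l\geq n/2$, and the dimension formula \eqref{Eq:DimOEqualJord} shows that the unique orbit with all $h_i=n$, namely $O_{\max}$, strictly maximizes the dimension. Since the complement $\operatorname{BLG}(V,\mathcal{P})\setminus O_{\max}$ is a finite union of orbits, each of strictly smaller dimension, it is a proper closed subvariety (one can describe it, for instance, by the Schubert-type condition $\dim(L\cap \operatorname{Ker}P^{n-1})>l$, which is closed). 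Therefore $O_{\max}$ is open in $\operatorname{BLG}(V,\mathcal{P})$ and its complement has strictly smaller dimension, so $\overline{O_{\max}}=\operatorname{BLG}(V,\mathcal{P})$ in the standard topology.

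Second, for irreducibility, I would invoke Theorem~\ref{T:AutConnected}: the group $\operatorname{Aut}(V,\mathcal{P})$ is connected, and therefore so is its image in the automorphism group of the ambient Grassmannian. Hence every orbit $O_L$ is the continuous image of a connected (indeed irreducible, being a smooth quotient of a connected algebraic group) variety, so each $O_L$ is irreducible. In particular $\overline{O_{\max}}$ is an irreducible closed algebraic subvariety of $\operatorname{BLG}(V,\mathcal{P})$. Combined with the density established above, $\overline{O_{\max}}=\operatorname{BLG}(V,\mathcal{P})$, so $\operatorname{BLG}(V,\mathcal{P})$ is irreducible as an algebraic variety.

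The proof is essentially bookkeeping once Theorems~\ref{T:BiLagrEqTypesNum} and \ref{T:AutConnected} are in hand; there is no real obstacle, only the minor point of checking that the union of non-maximal orbits is genuinely closed (equivalently, that the maximal orbit is Zariski-open). This follows either by the semicontinuity argument already used in the proof of Theorem~\ref{T:BiLagrGenDecomp} applied to $\dim(L\cap \operatorname{Ker}P^{n-j}/\operatorname{Ker}P^{n-j-1})$, or simply from the general fact that in an action of a connected algebraic group on a projective variety with finitely many orbits, each orbit is locally closed and the top-dimensional one is open.
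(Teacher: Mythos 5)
The irreducibility deduction from density is fine, but your density argument has a real gap at the step ``$O_{\max}$ is open in $\operatorname{BLG}(V,\mathcal{P})$ and its complement has strictly smaller dimension, so $\overline{O_{\max}}=\operatorname{BLG}(V,\mathcal{P})$''. Openness of the top orbit together with a lower-dimensional complement does \emph{not} imply density: the variety could have a separate, lower-dimensional irreducible component sitting entirely inside the complement. The paper itself furnishes a counterexample to your implication in Section~\ref{SubS:Exam}: for $\mathcal{J}_{0,6}\oplus\mathcal{J}_{0,2}$ the orbit $O_{\max}$ is $5$-dimensional and open, its complement $O_{ind}\cup O_s$ is only $3$-dimensional, and yet Lemma~\ref{L:ClosureTop} shows $\overline{O}_{\max}=O_{\max}\cup O_{s}\neq\operatorname{BLG}(\mathcal{J}_{0,6}\oplus\mathcal{J}_{0,2})$, so density fails and that bi-Lagrangian Grassmannian is reducible (Remark~\ref{Rem:ClosureOrbitsJ2}). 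Your orbit count and dimension formula from Theorem~\ref{T:BiLagrEqTypesNum} therefore cannot establish density on their own; you must invoke structure specific to the equal-blocks case.

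What makes the equal-blocks case work is Theorem~\ref{T:BiLagrEqJordBl}: every bi-Lagrangian subspace $L\subset\bigoplus_{i=1}^l\mathcal{J}_{0,2n}$ is semisimple, i.e.\ there is a standard basis in which $L=\bigoplus_i L_i$ with each $L_i$ a bi-Lagrangian subspace of a single block. By Corollary~\ref{Cor:ConnOneJord} (the one-block case, stated just before the theorem being used as a template) each $L_i$ is a limit of height-$n$ bi-Lagrangian subspaces $L_i^\varepsilon$ inside its block; then $\bigoplus_i L_i^\varepsilon$ is bi-Lagrangian by Assertion~\ref{A:DecompBiLagr}, lies in $O_{\max}$ because every height equals $n$, and converges to $L$. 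This gives $\overline{O}_{\max}=\operatorname{BLG}(\bigoplus_{i=1}^l\mathcal{J}_{0,2n})$, after which your second paragraph (irreducibility of the orbit via connectedness of $\operatorname{Aut}(V,\mathcal{P})$, hence irreducibility of the closure) goes through unchanged.
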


\subsection{Maximal \texorpdfstring{$\operatorname{Aut}(\bigoplus_{i=1}^l \mathcal{J}_{\lambda, 2n})$}{Aut(l sum J2n)}-orbit is a jet space to Lagrangian Grassmanian}

Now, let us describe the topology of the maximal orbit of $\operatorname{BLG}(\bigoplus_{i=1}^l \mathcal{J}_{0, 2n})$. Recall that the $k$-th-order jet space\footnote{Sometimes $T^kM$ is also called $k$-th-order tangent bundle, although this may lead to confusion. For that reason we prefer the notation $J^k_0(\mathbb{K}, M)$.}, denoted by $T^kM$ or $J^k_0(\mathbb{K}, M)$, consists of the k-th jets (at zero) of curves $\gamma: \mathbb{K} \to M$. The jet space $J^k_0(\mathbb{K}, M)$ is a fibre bundle over $M$. For $k=1$ it is the tangent bundle $TM$ and for $k > 1$ the canonical projection $J^k_0(\mathbb{K}, M) \to J^{k-1}_0(\mathbb{K}, M)$ is an affine bundle. For a more detailed discussion of jet spaces see e.g. \cite{Kolar93}. In the holomorphic case, the constructions are straightforward adaptations of the real ones, see e.g. \cite{LewisJet}.

\begin{theorem} \label{Th:EqualJordBiLagrMaxOrbit} There is a unique maximal $\operatorname{Aut}\left(\bigoplus_{i=1}^l \mathcal{J}_{0, 2n}\right)$-orbit $O_{max}$ in $\operatorname{BLG}\left(\bigoplus_{i=1}^l \mathcal{J}_{0, 2n}\right)$, its dimension is \[ \dim O_{max} = n \frac{l(l+1)}{2}.\] If the field $\mathbb{K} = \mathbb{C}$ or $\mathbb{R}$, then this orbit is diffeomorphic to the $(n-1)$-th order jet space of the Lagrangian Grassmannian $\Lambda(l)$: \[ O_{max} \approx T^{n-1} \Lambda(l) =  J^{n-1}_0(\mathbb{K}, \Lambda(l)).\] \end{theorem}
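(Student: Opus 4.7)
The plan is to prove the three assertions in turn. Existence and uniqueness of the top-dimensional orbit $O_{\max}$ follow at once from Theorem~\ref{T:BiLagrGenDecomp} applied in the case $t=1$, $n_1 = n$, $l_1 = l$. Its dimension is then computed by Theorem~\ref{T:BiLagrJord} with every $n_j = n$:
\[ \dim O_{\max} = \dim \operatorname{BLG}(V,\mathcal{P}) = \sum_{j=1}^{l} j \cdot n = n\, \frac{l(l+1)}{2}, \]
which already equals $\dim J^{n-1}_0(\mathbb{K}, \Lambda(l)) = n \dim \Lambda(l)$, as required for the announced diffeomorphism.

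The main step is to match $O_{\max}$ with the jet bundle, and my approach is to view $(V,\mathcal{P})$ as a free symplectic module of rank $2l$ over the truncated polynomial ring $R = \mathbb{K}[x]/(x^n)$, with $x$ acting as the recursion operator $P$. Define an $R$-valued skew form
\[ \beta(u,v) = \sum_{k=0}^{n-1} B(P^k u, v)\, x^{n-1-k} \in R. \]
Self-adjointness of $P$ with respect to $B$ together with $P^n = 0$ yields $\beta(Pu, v) = \beta(u, Pv) = x\, \beta(u,v)$, so $\beta$ is $R$-bilinear; skew-symmetry and non-degeneracy descend from the corresponding properties of $B$. By Lemma~\ref{L:NonGenDescBiLagr} a subspace $L \subset V$ is bi-Lagrangian exactly when it is an $R$-submodule with $\beta(L, L) = 0$ and $\dim_{\mathbb{K}} L = nl$. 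By Lemma~\ref{L:JNFGeneric} the extra condition $L \in O_{\max}$ amounts to $P\bigr|_L$ having Jordan type $(n, \ldots, n)$ with $l$ blocks, equivalently, to $L$ being a free $R$-module of rank $l$; that is, a free Lagrangian $R$-submodule of $(V, \beta)$.

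It then remains to identify the variety of free Lagrangian $R$-submodules of $(V, \beta) \cong (R^{2l}, \beta_0)$ with $J^{n-1}_0(\mathbb{K}, \Lambda(l))$. The conceptual packaging is Weil's functor: since $R \cong J^{n-1}_0(\mathbb{K},\mathbb{K})$ is a local Weil algebra, applying the functor $M \mapsto M(R)$ to $\Lambda(l)$ yields the $(n-1)$-st jet bundle of $\Lambda(l)$. Concretely, I would work in distinguished affine charts. Fix the complementary pair of generic bi-Lagrangians $L_0 = \operatorname{Span}\{e^i_j\}$ and $L_0' = \operatorname{Span}\{f^i_j\}$ from Theorem~\ref{T:Complem}; they correspond respectively to the free $R$-submodules generated by the top-height vectors $\{e^i_1\}$ and $\{f^i_n\}$. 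Any free Lagrangian $R$-submodule $L$ transverse to $L_0'$ is the graph over $L_0$ of a symmetric $l \times l$ matrix with entries in $R$,
\[ S(x) = S_0 + S_1 x + \cdots + S_{n-1} x^{n-1}, \qquad S_k \in \operatorname{Sym}_l(\mathbb{K}), \]
and the assignment $L \mapsto (S_0, S_1, \ldots, S_{n-1})$ is precisely the $(n-1)$-jet at $0$ of a curve of symmetric matrices, i.e.\ of a curve in the standard affine chart of $\Lambda(l)$. Covering $\Lambda(l)$ by its usual atlas of such affine charts should produce a global diffeomorphism $O_{\max} \to J^{n-1}_0(\mathbb{K}, \Lambda(l))$.

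The main obstacle will be the compatibility of these local trivializations on overlaps, together with checking that the $\operatorname{Aut}(V, \mathcal{P})$-action on $O_{\max}$ corresponds on the jet side to the natural lift of the symplectic group to the jet bundle. Both should reduce to routine calculations once one observes, using the block description of $\operatorname{aut}(V,\mathcal{P})$ in Theorem~\ref{T:BiSymp_General_Jordan_Case_Mega}, that $\operatorname{Aut}(V, \mathcal{P})$ coincides with the group of $R$-linear symplectic automorphisms of $(V, \beta)$, so that its reduction modulo $x$ is $\operatorname{Sp}(2l, \mathbb{K})$ acting on the base $\Lambda(l)$ in the standard way. With these checks in place, the announced diffeomorphism $O_{\max} \approx J^{n-1}_0(\mathbb{K}, \Lambda(l))$ follows.
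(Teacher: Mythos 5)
Your proof is correct, and it takes a genuinely different route from the paper's. The paper argues by direct matrix computation: it realizes a generic $L$ as the row span of a block-Toeplitz matrix, rewrites the Lagrangian condition as $P(\lambda)Q(\lambda)^T \equiv (P(\lambda)Q(\lambda)^T)^T \pmod{\lambda^n}$, then establishes the fiber bundle structure over $\Lambda(l)$ chart by chart and verifies explicitly on overlaps (e.g.\ between the $\det P_1 \neq 0$ and $\det Q_n \neq 0$ charts) that the resulting jets agree. You instead equip $V$ with the structure of a free symplectic $R = \mathbb{K}[x]/(x^n)$-module via the $R$-valued form $\beta$, identify $O_{\max}$ with the space of free Lagrangian $R$-submodules using Lemma~\ref{L:JNFGeneric} and the complementary-pair result (Theorem~\ref{T:Complem}, which guarantees that a generic $L$ is a direct summand), and then invoke the Weil-functor identity $\Lambda(l)(R) = J^{n-1}_0(\mathbb{K},\Lambda(l))$. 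This is cleaner on two counts: the chart compatibility the paper verifies by hand is built into the Weil-functor framework, and you get the extra observation that $\operatorname{Aut}(V,\mathcal{P}) \cong \operatorname{Sp}(2l, R) = J^{n-1}_0(\mathbb{K},\operatorname{Sp}(2l,\mathbb{K}))$ essentially for free, which the paper does not explicitly record here (though it points in this direction for Type~II-S orbits in Theorem~\ref{T:Type2Special}). What the paper buys with its computational route is self-containedness: it never leaves elementary linear algebra, whereas your argument relies on the reader knowing that $R$-points of a smooth variety over a local Weil algebra give the corresponding jet bundle. The one step you should spell out rather than defer is the equivalence ``$L$ generic $\Leftrightarrow$ $L$ is a free rank-$l$ direct summand'': the forward direction is Theorem~\ref{T:Complem}, the reverse is the rank count plus $B$-isotropy from the $x^{n-1}$-coefficient of $\beta$. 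With that noted, the argument is complete and, if anything, the remark in the paper following Theorem~\ref{T:NonDependEigen} suggests the author anticipated exactly this $R$-module reading but chose not to pursue it.
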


For example, if $n =1$, then the form $A$ is proportional to $B$ and \[ \operatorname{BLG}\left(\bigoplus_{i=1}^l \mathcal{J}_{0, 2}\right) = \Lambda(l).\] If $n=2$, then the maximal orbit of $\operatorname{BLG}(\bigoplus_{i=1}^l \mathcal{J}_{0, 4})$ is the tangent bundle to the Lagrangian Grassmanian $T\Lambda(l)$.

\begin{proof}[Proof of Theorem~\ref{Th:EqualJordBiLagrMaxOrbit}] The proof is in several steps. 

\begin{enumerate}

    \item \textit{Fix a standard basis}. Let $e^i_j, f^i_j$, $i=1, \dots, l, j= 1, \dots, n$
be a standard basis from the JK theorem. This time it is more convenient to rearrange the basis as follows: \[e_1^{1}, e^2_1, \dots, e^l_1, e^1_2, \dots, e^l_n, f_1^1, f^2_1, \dots, f^l_n.\] Then the matrix of $B$ is the symplectic matrix \[ B = \left( \begin{matrix} 0 & I_{nl} \\ -I_{nl} & 0 \end{matrix} \right)\] and the matrix of the recursion operator $P$ is \[ P = \left( \begin{matrix} J^T & 0 \\ 0 & J \end{matrix} \right), \qquad J = \left( \begin{matrix} 0_l & & & \\ I_{l} &\ddots &   & \\ & \ddots & \ddots & \\ & & I_{l} & 0_l  \end{matrix} \right). \]

\item \textit{Maximal orbit $O_{max}$ as a homogeneous space $X/G$.} By Theorem~\ref{T:BiLagrEqJordBl} any  bi-Lagrangian subspace $L \in O_{max}$  is generated by rows of a matrix   \begin{equation} \label{Eq:SpanBiLagrEqualJordMatrix} M = \left( \begin{matrix} P_1 & \dots & P_n & Q_1 & \dots & Q_n \\ & \ddots & \vdots & \vdots & \udots & \\ & & P_1 & Q_n & & \end{matrix} \right), \end{equation} where $P_i$ and $Q_j$ are $l\times l$ matrices and \begin{equation} \label{Eq:RankLMaxOrb} \operatorname{rk} \left( \begin{matrix} P_1 & Q_n \end{matrix} \right) = l.\end{equation} The subspace $L$ generated by $M$ is $P$-invariant. This is because the remaining rows of $M$ are are formed by applying $P$ to the first $l$ rows. The subspace $L$ is bi-Lagrangian if and only if \begin{equation} \label{Eq:TopOrbLagCond} M B M^T = M \left(\begin{matrix} 0 & I_{nl} \\  - I_{nl} & 0 \end{matrix}\right) M^T = 0.\end{equation} The matrix $M$ is defined up to left multiplication by a matrix \begin{equation} \label{Eq:TopOrbMatrixC} C = \left( \begin{matrix} C_1 & \dots & C_n \\ & \ddots & \vdots   \\ & & C_1 \end{matrix} \right), \qquad \det C_1 \not = 0. \end{equation} This specific form of $C$ ensures that the later rows of $CM$ result from applying $P$ to the first $l$ rows. We get that $O_{max}$ is diffeomorphic to the homogeneous space $X/G$, where 
\begin{itemize}

    \item $X \subset \operatorname{Gr}(nl, 2nl)$ is the space of matrices $M$, given by \eqref{Eq:SpanBiLagrEqualJordMatrix}, \eqref{Eq:RankLMaxOrb} and  \eqref{Eq:TopOrbLagCond}. 

    \item $G \subset \operatorname{GL}(nl)$ is the subgroup of matrices $C$, given by \eqref{Eq:TopOrbMatrixC}.
\end{itemize}

\item \textit{Rephrasing condition \eqref{Eq:RankLMaxOrb} in terms of the matrices $P_j, Q_k$.}  \eqref{Eq:RankLMaxOrb} is equivalent to \begin{equation} \label{Eq:CondEqJordBiLagr} \begin{gathered} P_1 Q_n^T = \left(P_1 Q_n\right)^T \\ P_1 Q_{n-1}^T + P_2 Q_n^T = \left(P_1 Q_{n-1}^T + P_2 Q_n^T\right)^T  \\ \dots \\ P_1 Q_{1}^T + \dots +  P_n Q_n^T = \left( P_1 Q_{1}^T + \dots +  P_n Q_n^T \right)^T. \end{gathered} \end{equation} Recall the following trivial fact.

\begin{assertion} \label{A:LagrStand}
Let $(V, \omega)$ be a symplectic space, $\omega = \left(\begin{matrix} 0 & I \\ -I & 0 \end{matrix} \right)$. A subspace spanned by the rows of a matrix $\left( \begin{matrix} P & Q \end{matrix} \right)$, where $P$ and $Q$ are $n\times n$ matrices, is Lagrangian iff \[PQ^T = \left(PQ^T \right)^T.\]

\end{assertion} 
 Put \begin{equation} \label{Eq:PQPolynom} P(\lambda) = P_1 + \lambda P_2 + \dots + \lambda^{n-1} P_n, \qquad Q(\lambda) = Q_n + \lambda Q_{n-1} + \dots + \lambda^{n-1} Q_1.\end{equation} It is easy to see that \eqref{Eq:RankLMaxOrb} is equivalent to \begin{equation} \label{Eq:PlambdaQlambdaUpto} P(\lambda)Q(\lambda)^T = \left(P(\lambda)Q(\lambda)^T\right)^T \mod \lambda^{n}.\end{equation} 

\item \textit{The orbit $O_{max}$ is a fiber bundle over the Lagrangian Grassmanian $\operatorname{\Lambda}(l))$. Moreover, locally it is isomorphic to the $(n-1)$-order jet space space $J^{n-1}_0(\mathbb{K}, \Lambda(l))$.} The projection $\pi: O_{max} \to \Lambda(l))$ is given by \begin{equation} \label{Eq:TopOrbProjection} \left( \begin{matrix} P_1 & \dots & P_n & Q_1 & \dots & Q_n \\ & \ddots & \vdots & \vdots & \udots & \\ & & P_1 & Q_n & & \end{matrix} \right) \to \left(\begin{matrix} P_1& Q_n\end{matrix} \right). \end{equation}  Assertion~\ref{A:LagrStand} and \eqref{Eq:PlambdaQlambdaUpto}   guarantee that $\left(\begin{matrix} P_1 & Q_n\end{matrix} \right) \in \operatorname{\Lambda}(l)$. To prove  $\pi$ defines a fiber bundle, we consider the standard atlas on $\Lambda(l)$ (see e.g.\cite[Section 3.3]{Arnold67}). We'll analyze a specific chart where  $\det P_1 \not = 0$ (similar analysis for other charts). In this chart elements of $\Lambda(l)$ take the simpler form: \[\left(\begin{matrix} E & Q \end{matrix}\right), \qquad  Q = Q^T. \] Since $\det P_1 \not = 0$, there is a unique matrix $C$  given by \eqref{Eq:TopOrbMatrixC}, such that \begin{equation} \label{Eq:BringLagrToGood1} CM =\left( \begin{matrix} I_l &  & & Q_1 & \dots & Q_n \\ & \ddots &  & \vdots & \udots & \\ & & I_l & Q_n & & \end{matrix} \right).  \end{equation} By \eqref{Eq:PlambdaQlambdaUpto}, all $Q_j$ are symmetric. This establishes $O_{max}$ as the fiber bundle over $\Lambda(l)$ with fibers $\mathbb{C}^N, N =  (n-1) \frac{l(l-1)}{2}$. Furthermore, by considering polynomial curves of the form: \[\gamma(\lambda) = \left( \begin{matrix} I_l &  Q(\lambda)\end{matrix} \right), \qquad Q(\lambda) = Q_n + \lambda Q_{n-1} + \dots + \lambda^{n-1} Q_1. \] we establish a local isomorphism between $O_{max}$ and the jet space $J^{n-1}_0(\mathbb{K}, \Lambda(l))$.

\item \textit{The orbit $O_{max}$ is diffeomorphic to  $(n-1)$-order jet space space $J^{n-1}_0(\mathbb{K}, \operatorname{\Lambda}(l))$.} This requires demonstrating that elements in different charts define the same jet. For simplicity sake we consider two charts of $\Lambda(l)$:  \[U_p =\left\{\left(\begin{matrix} P & I_{l} \end{matrix}\right) \, \,\bigr| \, \, P = P^T \right\},\qquad U_q = \left\{\left(\begin{matrix} I_l & Q \end{matrix}\right) \, \,\bigr| \, \, Q = Q^T \right\}, \] other charts are considered similarly. If for the chart $U_q$ an element of $O_{max}$ has the form \[ M_q = \left(\begin{matrix} I_{nl} & Q \end{matrix}\right), \qquad Q = \left(\begin{matrix}  Q_1 & \dots & Q_n \\  \vdots & \udots & \\  Q_n & & \end{matrix} \right),\] then for the chart $U_q$ it is takes the form \[ M_p = \left(\begin{matrix} P & \hat{I} \end{matrix}\right), \qquad \hat{I} = \left(\begin{matrix}  &  & I_{nl} \\   & \udots & \\  I_{nl} & & \end{matrix} \right) \quad P = \left( \begin{matrix} P_1 & \dots & P_n  \\ & \ddots &  \vdots \\ & & P_1  \end{matrix} \right)= \left( \begin{matrix} Q_1 & \dots & Q_n  \\ & \ddots & \vdots \\ & & Q_1  \end{matrix} \right)^{-1}.\] The corresponding elements of the jet space are given by the curves \[ \gamma_p = \left( \begin{matrix}P(\lambda) & I_l   \end{matrix} \right), \qquad P(\lambda) = P_{1} + \lambda P_{2} + \dots + \lambda^{n-1} P_n\]  and \[ \gamma_q(\lambda) =  \left( \begin{matrix} I_l &  Q(\lambda)\end{matrix} \right), \qquad Q(\lambda) = Q_n + \lambda Q_{n-1} + \dots + \lambda^{n-1} Q_1.\] Since $Q(\lambda)$ is nondegenerate  for small $\lambda$ we can define another curve \[ \hat{\gamma}_q (\lambda) = \left( \begin{matrix}  Q(\lambda)^{-1} & I_l  \end{matrix} \right).\] Both $\gamma_q(\lambda)$ and $\hat{\gamma}_q(\lambda)$ represent the same curve in the Grassmanian, leading to the same jet. Furthermore, the Taylor expansions of $ \hat{\gamma}_q (\lambda)$  and $\gamma_p(\lambda)$ agree up to order $\lambda^n$, ensuring they define the same $(n-1)$-order jet. Since the jet representation is independent of the chart, the top orbit $O_{max}$ is isomorphic to $J^{n-1}_0(\mathbb{K}, \Lambda(l))$.

\end{enumerate}

 Theorem~\ref{Th:EqualJordBiLagrMaxOrbit} is proved. \end{proof}

\subsection{Topology of orbits}

We outline the topology of non-maximal $\operatorname{Aut}(V, \mathcal{P})$-orbits for the sum of equal Jordan blocks.
\begin{definition} Let $(W^{2n}, \omega)$ be a $2n$-dimensional symplectic vector space and $1 \leq d_1 < \dots < d_k \leq n$. The \textbf{partial isotropic flag variety} in $(W^{2n}, \omega)$ with signature $(d_1, \dots, d_k)$, which we denote by  $\operatorname{SF}(d_1, ..., d_k; 2n)$, is the space of all flags \[\left\{0\right\} = V_0 \subset V_1 \subset \dots \subset V_k \subset W^{2n}, \] where each $V_i$ is an isotropic subspace and $\dim V_i = d_i$ and for all $i = 1,\dots, k$. \end{definition}

  $\operatorname{SF}(d_1, ..., d_k; 2n)$ is a partial flag variety $\operatorname{Sp}(V)/P$ (see e.g. \cite[S23.3]{Fulton91}). Its dimension is
\begin{equation} \label{Eq:DimsSF} \dim \operatorname{SF}(d_1, ..., d_k; 2n) = 2n d_k - \frac{3d_k^2 - d_k}{2} + \sum_{j=1}^{k-1} d_j (d_{j+1} - d_j)\end{equation} (see e.g. \cite{Coskun2014}). $\operatorname{SF}(d; 2n)$ is the \textbf{symplectic isotropic Grassmannian} parameterizing $d$-dimensional isotropic subspaces of $(W^{2n}, \omega)$. In the case $n=d$ it coincides with the Lagrangian Grassmanian $\operatorname{SF}(n; 2n) = \Lambda(n)$. 

For any bi-Lagrangian subspace $L \subset \bigoplus_{i=1}^t \left(\bigoplus_{j=1}^{l_i} \mathcal{J}_{0, 2n} \right)$ we can construct a flag of isotropic subspaces in a $2n$-dimensional symplectic space as follows:

\begin{itemize}

\item First, define the symplectic structure on $\operatorname{Ker}P$. Consider the symplectic structure $\omega$ on $V/ \operatorname{Im}P$ given by Assertion~\ref{A:InducedImP} and then induce it on $\operatorname{Ker}P$ using the isomorphism: \begin{equation} \label{Eq:Pnminus1} P^{n-1}: (V/ \operatorname{Im}P, \omega) \to \operatorname{Ker} P. \end{equation}

\item For a bi-Lagrangian subspace $L$ consider the subspaces \begin{equation} \label{Eq:ImageLagr} \left\{0\right\} = W_0 \subset W_1 \subset \dots \subset W_{\left[ \frac{n+1}{2} \right]} \subset \operatorname{Ker}P, \qquad W_j = P^{n-j}(L) \cap \operatorname{Ker} P.  \end{equation}  
\end{itemize}

\begin{assertion} \label{A:SubsWIsotr} The subspaces $W_j$, given by \eqref{Eq:ImageLagr}, are isotropic w.r.t. the symplectic structure given by \eqref{Eq:Pnminus1}.  \end{assertion}

\begin{proof}[Proof of Assertion~\ref{A:SubsWIsotr}] In the standard basis $e^i_j, f^i_j$ from Theorem~\ref{T:BiLagrEqJordBl}, the vectors $e^i_n, f^i_1, i=1,\dots, l$ form a symplectic basis of $\operatorname{Ker}P$. Furthermore, all subspaces $W_j$ are contained within the Lagrangian subspace $\operatorname{Span}\left\{e^i_n\right\}_{i=1,\dots, l}$. Assertion~\ref{A:SubsWIsotr} is proved. \end{proof}

For example, consider \eqref{Eq:ProjEqJord} where we realize $\mathcal{J}_{0, 8} \oplus\mathcal{J}_{0, 8} \oplus \mathcal{J}_{0, 8}$ as a Young-like diagram. The shaded area and cells with numbers represents a bi-Lagrangian subspace with the corresponding heights $h_1 = 4, h_2 = 3$ and $h_3 = 2$. The right-hand side shows the subspaces $W_j \subset \operatorname{Ker} P$. A cell with number $j$ correspond to vectors that belong to all subspaces $W_i$ for $i \leq j$.

\begin{equation} \label{Eq:ProjEqJord}
  \begin{tabular}{|c|c||c|c||c|c|} 
  \hline 1 & &    &  &  &  \\
 \hline {\cellcolor{gray!25} } & &  2  &  & &  \\
     \hline {\cellcolor{gray!25} } & &  {\cellcolor{gray!25} }  & &  {\cellcolor{gray!25} }  & {\cellcolor{gray!25} }  \\ 
   \hline {\cellcolor{gray!25} } &  &  {\cellcolor{gray!25} }  &  {\cellcolor{gray!25} } &  {\cellcolor{gray!25} }  & {\cellcolor{gray!25} } \\ \hline
  \end{tabular} \qquad \to \qquad \begin{tabular}{|c|c||c|c||c|c|} 
   \multicolumn{1}{c}{}  & \multicolumn{1}{c}{}  &  \multicolumn{1}{c}{}   &  \multicolumn{1}{c}{}  & \multicolumn{1}{c}{}  & \multicolumn{1}{c}{}  \\
 \multicolumn{1}{c}{}  & \multicolumn{1}{c}{}  &  \multicolumn{1}{c}{}   &  \multicolumn{1}{c}{}  & \multicolumn{1}{c}{}  & \multicolumn{1}{c}{}  \\
     \multicolumn{1}{c}{}  & \multicolumn{1}{c}{}  &  \multicolumn{1}{c}{}   &  \multicolumn{1}{c}{}  & \multicolumn{1}{c}{}  & \multicolumn{1}{c}{}  \\
   \hline 1 &  &  2  &   &   &  \\ \hline
  \end{tabular}
\end{equation}

\begin{theorem} \label{T:EqualJordTopolOrb} Consider a sum of equal Jordan blocks $(V, \mathcal{P}) = \bigoplus_{i=1}^t \left(\bigoplus_{j=1}^{l_i} \mathcal{J}_{0, 2n} \right)$. Let $L \subset (V, \mathcal{P})$ be  a bi-Lagrangian subspace with type $\left\{(h_i, n) \times l_i \right\}_{i=1,\dots, t}$, where $h_1 > h_2 > \dots > h_t$. Put $d_i = \sum_{j=1}^{i} l_i$. Then 
the $\operatorname{Aut}(V, \mathcal{P})$-orbit $O_L$ is a $\mathbb{K}^N$-fibre bundle over a partial isotropic flag variety: \begin{equation} \label{Eq:ProjOrbEqJord} \pi: O_L  \xrightarrow{\mathbb{K}^N}  \operatorname{SF}(d_1, ..., d_k; 2 D), \quad D = \sum_{i=1}^t l_i, \quad k = \begin{cases} t, \quad &\mbox{if } 2h_t > n, \\ t-1, \quad &\mbox{if } 2h_t = n.\end{cases}.\end{equation} In the special case $t=1$ and $2h_1 = n$, the orbit $O_{L}$ consists of one point. The projection $\pi$ is given by \eqref{Eq:ImageLagr}, where we consider only one representative of the equal subspaces $W_j$.    \end{theorem}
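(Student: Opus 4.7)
The plan is to analyze $\pi$ via the canonical form in Theorem~\ref{T:BiLagrEqJordBl} together with the structure of $\operatorname{Aut}(V,\mathcal{P})$ from Theorems~\ref{T:BiSymp_General_Jordan_Case_Mega} and \ref{T:AutConnected}, then identify the fibers using a unipotent subgroup of $\operatorname{Aut}(V,\mathcal{P})$.

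First I would verify that $\pi$ lands in $\operatorname{SF}(d_1, \ldots, d_k; 2D)$. Taking $L$ in canonical form $L = \bigoplus_{i,s} L_i^{(s)}$ in a standard basis, a direct calculation using $Pe^{i,s}_p = e^{i,s}_{p+1}$ and $Pf^{i,s}_q = f^{i,s}_{q-1}$ shows that $e^{i,s}_n$ lies in $P^{n-j}(L) \cap \operatorname{Ker}P$ exactly when $j \geq n - h_i + 1$, whereas $f^{i,s}_1$ lies there exactly when $j \geq h_i + 1$. Since the type constraint $h_i \geq n/2$ forces $h_i + 1 > [(n+1)/2]$, no $f$-vector ever enters within the range of~\eqref{Eq:ImageLagr}, with the boundary case $h_t = n/2$ accounted for by the drop from $k=t$ to $k=t-1$. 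Hence the distinct subspaces $W_{n-h_i+1}$ form an increasing flag of dimensions $d_1 < \ldots < d_k$; isotropy is Assertion~\ref{A:SubsWIsotr}, and concretely follows from the fact that only $e^{i,s}_n$'s (and not their symplectic partners $f^{i,s}_1$) contribute in the relevant range.

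Next I would establish equivariance and transitivity on the target. Every $C \in \operatorname{Aut}(V,\mathcal{P})$ preserves $P$ and thus $\operatorname{Ker}P$, and by Assertion~\ref{A:InducedImP} preserves the induced symplectic form, so $\pi$ is manifestly equivariant with respect to the resulting action on $\operatorname{SF}(d_1,\ldots,d_k;2D)$. By Theorem~\ref{T:BiSymp_General_Jordan_Case_Mega} the restriction homomorphism $\operatorname{aut}(V,\mathcal{P}) \to \operatorname{sp}(\operatorname{Ker}P) = \operatorname{sp}(2D)$, sending $C$ to its diagonal block $C^{1,1}_1$, is surjective; by Theorem~\ref{T:AutConnected} the corresponding group map surjects onto $\operatorname{Sp}(2D)$, which acts transitively on $\operatorname{SF}(d_1,\ldots,d_k;2D)$. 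Therefore $\pi$ is an equivariant surjection and a fiber bundle by standard homogeneous-space arguments.

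Finally I would identify the fiber. Let $H_0 \subset \operatorname{Aut}(V,\mathcal{P})$ be the unipotent kernel of the surjection onto $\operatorname{Sp}(2D)$, i.e.\ the subgroup whose Lie algebra has $C^{1,1}_1 = 0$ in Theorem~\ref{T:BiSymp_General_Jordan_Case_Mega}. Since $H_0$ acts trivially on $\operatorname{Ker}P$, it preserves each $W_j$ and thus acts on $\pi^{-1}(\pi(L_0)) \cap O_L$. Using the block parameterization of $\operatorname{aut}(V,\mathcal{P})$, every $L$ in the fiber can be written as a graph over $L_0$ controlled by the subdiagonal entries $C^{i,j}_s$ with $s \geq 2$; the conditions that the perturbed subspace be bi-isotropic are linear in those entries, imposed by~\eqref{E:Cond_on_BiSymp_Jordan_Case}, so the fiber is cut out as a linear subspace of the $H_0$-perturbation space modulo the stabilizer of $L_0$, giving an affine $\mathbb{K}^N$ with $N = \dim O_L - \dim \operatorname{SF}(d_1,\ldots,d_k;2D)$ computed from~\eqref{Eq:DimOEqualJord} and~\eqref{Eq:DimsSF}. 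The special case $t=1, 2h_1 = n$ yields $k=0$, the target reduces to a point, and $L_0$ is the unique invariant bi-Lagrangian subspace of Theorem~\ref{T:InvarBilagr}, so $O_L$ is a single point. The hardest step is this affine-space identification of the fiber: it demands a careful bookkeeping of which subdiagonal blocks $C^{i,j}_s$ freely deform $L_0$ in the $\pi$-fiber direction and which already stabilize $L_0$, so that the quotient is itself linear of the claimed dimension.
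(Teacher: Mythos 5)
Your overall route is genuinely different from the paper's: you set up $\pi$ as an equivariant map, prove surjectivity via the surjection $\operatorname{Aut}(V,\mathcal{P}) \twoheadrightarrow \operatorname{Sp}(2D)$ acting transitively on $\operatorname{SF}(d_1,\dots,d_k;2D)$, and then try to identify the fiber group-theoretically. The paper instead proceeds by induction on $n$: it factors $\pi$ through the bi-Poisson reduction $L \mapsto (L \cap \operatorname{Im}P + \operatorname{Ker}P)/\operatorname{Ker}P$, shows that the two maps $p_1,p_2$ in the resulting commutative triangle are $\mathbb{K}^{M_i}$-fiber bundles by explicit coordinate computations (Assertion~\ref{A:ProjFiberBundles}), and composes. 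Your first two steps are correct: the height bookkeeping showing that only $e$-vectors enter $W_j$ for $j\leq \lceil n/2\rceil$ matches the paper's Assertion~\ref{A:SubsWIsotr}, and equivariance plus surjectivity of $\operatorname{aut}(V,\mathcal{P})\to\operatorname{sp}(2D)$ via $C\mapsto C^{1,1}_1$ combined with Theorem~\ref{T:AutConnected} indeed gives transitivity on the base.

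The gap is in the fiber identification. The fiber of $\pi$ over $\pi(L_0)$ inside $O_L$ is $\operatorname{Stab}_{\operatorname{Aut}}(\pi(L_0))\cdot L_0 \cong \operatorname{Stab}(\pi(L_0))/\operatorname{Stab}(L_0)$, where $\operatorname{Stab}(\pi(L_0))$ is the \emph{full preimage} of the parabolic subgroup $P\subset\operatorname{Sp}(2D)$ fixing the flag --- not just your kernel $H_0$. You implicitly replace the former by $H_0\cdot L_0/(H_0\cap\operatorname{Stab}(L_0))$, which requires showing either that the parabolic part $\operatorname{Stab}(\pi(L_0))/H_0 \cong P$ admits a section landing in $\operatorname{Stab}(L_0)$ (so that the extra parabolic directions contribute nothing new to the orbit of $L_0$), or more directly that $\operatorname{Stab}(\pi(L_0))\cdot L_0 = H_0\cdot L_0$. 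Neither is obvious: the Levi factor of $P$ includes a $\operatorname{GL}(l_1)\times\cdots$ part acting nontrivially on the $L_0$-coordinates, and whether it can be absorbed into $\operatorname{Stab}(L_0)$ is precisely the bookkeeping the paper's Assertion~\ref{A:ProjFiberBundles} does by hand. Even granted that the fiber is a unipotent orbit, the assertion that it is an affine space over $\mathbb{R}$ (not just over $\mathbb{C}$, where Kostant--Rosenlicht applies) also needs a word. The dimension count you cite at the end is fine once the structure is established, but it cannot substitute for the structural claim.
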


\begin{remark} The dimension of the fiber $N$ can be easily obtained by subtracting the dimension of the base (given by \eqref{Eq:DimsSF}) from  the orbit dimension (given by \eqref{Eq:DimOLEqJord}). We get \begin{equation} \label{Eq:DimNEqJord} N = \sum_{i=1}^t \left( \frac{l_i(l_i + 1)}{2} (2h_i -n) \right) + \sum_{1 \leq i < j \leq t} \left(l_i l_j (2h_i -n - 1)\right) - \frac{D(D+1)}{2} + \delta,\end{equation} where \[D = \sum_{i=1}^t l_i, \qquad  \delta = \begin{cases} 0, \quad &\mbox{if } 2h_t > n, \\ \frac{l_t(l_t + 1)}{2}, \quad &\mbox{if } 2h_t = n.\end{cases}\] \end{remark}

\begin{proof}[Proof of Theorem~\ref{T:EqualJordTopolOrb}] First, we need to ensure the projection $\pi$ is well-defined and surjective. When applying Equation~\eqref{Eq:ImageLagr}, we consider only one representative for any equal subspaces $W_j$. This ensures that the resulting dimensions of isotropic subspaces are  $d_1,\dots, d_k$. It can be verified using the basis from Theorem~\ref{T:BiLagrEqJordBl} that \begin{equation} \label{Eq:DimIm} \dim P^{h_i -1}(L) = d_i =  \sum_{j=1}^i l_i, \end{equation} and that $\pi$ is surjective. Assuming  \[ h_1 = n\] (otherwise  perform bi-Poisson reduction w.r.t. $\operatorname{Ker} P$), we proceed by induction on  $n$. 

\begin{itemize}

\item \textbf{Base cases ($n=1$ and $2$)}. If $n=1$, then there is only one orbit diffeomorphic to the Lagrangian Grassmanian $O_L = \operatorname{\Lambda}(l)$. Now, consider the case $n=2$. Let a bi-Lagrangian subspace $L \subset \left(\bigoplus_{j=1}^{l_1} \mathcal{J}_{0, 2n} \right) \oplus \left(\bigoplus_{j=1}^{l_2} \mathcal{J}_{0, 2n} \right)$ have type \[H(L) = \left\{(2,2) \times l_1, (1,2) \times l_2\right\}.\] Simply speaking, $\dim P(L) = l_1$. Let's select a standard basis as follows: \[ \vec{e}_1, \vec{e}_2, \vec{f}_1, \vec{f}_2, \vec{e}'_1, \vec{e}'_2, \vec{f}'_1, \vec{f}'_2,\] Each vector in this basis represents a block of elements from the full basis.  The first four vectors $\vec{e}_1, \vec{e}_2, \vec{f}_1, \vec{f}_2$ correspond to $l_1$ Jordan block, and the other four vectors correspond to $l_2$ Jordan blocks. For example, $\vec{e}_1 = (e^1_1, \dots, e^{l_1}_1)$. We choose this basis such that the image \[P(L) = \operatorname{Span}\left\{\vec{e}_2\right\} = \operatorname{Span}\left\{e_2^1,\dots, e_2^{l_1}\right\}.\] The projection of the orbit has the form \[\pi(L) = W_1 = P(L) \subset \operatorname{SF}(l_1, 2n).\] Any such bi-Lagrangian subspace has the form \[ L = \operatorname{Span}\left\{ \vec{e}_1 + X\vec{f}_1, \vec{e}_2, \vec{e}'_1, \vec{f}'_2\right\}, \qquad X^T =X \in \operatorname{Mat}_{l_1}(\mathbb{K}). \] It is easy to see that the projection $\pi$ defines a fiber bundle with $\mathbb{K}^{l_1(l_1+1)/2}$-fibers (given by symmetric matrices).

\item \textbf{Induction step}. Consider the projection \eqref{Eq:ProjOrbEqJord}. Let \[ \hat{L} = (L \cap \operatorname{Im}P + \operatorname{Ker}P)/ \operatorname{Ker}P\] be the image of $L$ in $\operatorname{BLG}(\operatorname{Im}P/\operatorname{Ker}P)$ after the bi-Poisson reduction w.r.t. $U =\operatorname{Ker}P $. Denote its $\operatorname{Aut}(\operatorname{Im}P/\operatorname{Ker}P)$-orbit in $\operatorname{BLG}(\operatorname{Im}P/\operatorname{Ker}P)$ as $\hat{O}_{\hat{L}}$. Let the projection  \eqref{Eq:ProjOrbEqJord} have the form \[ \pi(L)= \left\{ U_1\subset \dots \subset U_k\right\} \in \operatorname{SF}(d_1, ..., d_k; 2 D).\] The similar projection of $\hat{O}_{\hat{L}}$ has the form \[ \hat{\pi}(\hat{L})= \begin{cases} \left\{ U_1\subset \dots \subset U_k\right\} \in \operatorname{SF}(d_1, ..., d_k; 2 D), \quad &\mbox{if }  h_1 > h_2 + 1,\\ \left\{ U_2\subset \dots \subset U_k\right\} \in \operatorname{SF}(d_2, ..., d_k; 2 D), \quad &\mbox{if }  h_1 = h_2 + 1.
\end{cases} \] Consider the commutative diagram:
\[
\begin{tikzcd}
    O_L \arrow[dr, "\pi"] \arrow[d, "p_1"']\\
    (\hat{O}_{\hat{L}}, U_1) \arrow[r,"p_2"'] & \operatorname{SF}(d_1, ..., d_k; 2 D)
\end{tikzcd}
\] The map $p_1$ is given by $p_1(L) = (\hat{L}, U_1)$. If $h_1 = h_2 +1$, then the map $p_2$ adds $U_1$ to the beginning of the flag $\hat{\pi}(\hat{L})$. Otherwise, if $h_1 > h_2 + 1$, the map $p_2$ is the projection $\pi_2$.

\begin{assertion} \label{A:ProjFiberBundles} The map $p_1$ is a $\mathbb{K}^{M_1}$-fiber bundle, where \[ M_1 =  \begin{cases} l_1(2D-l_1 + 1), \quad &\mbox{if }  h_1 > h_2 + 1,\\  l_1(2D-l_1 + 1) - l_1 l_2, \quad &\mbox{if }  h_1 = h_2 + 1.
\end{cases} \] The map $p_2$ is a 
$\mathbb{K}^{M_2}$-fiber bundle, where $M_1 + M_2 = N$, given by \eqref{Eq:DimNEqJord}. \end{assertion}

\begin{proof}[Proof of Assertion~\ref{A:ProjFiberBundles}] We start by examining the case $h_1 > h_2 + 1$. Then, we'll discuss the necessary adjustments to the formulas when $h_1 = h_2 + 1$. 

\begin{itemize}
    \item \textbf{Case $h_1 > h_2 +1$}. then $p_2$ is the projection $\hat{\pi}$ for the orbit of the type \[\hat{H}(\hat{L}) = \left\{(\hat{h}_j, \hat{n}) \times l_j\right\}_{j=1,\dots, t}, \qquad \hat{h}_j = \begin{cases} n - 2, \quad &\mbox{if }  j = 1,\\ h_j - 1, \quad &\mbox{if }  j > 1,\end{cases} \qquad \hat{n} = n-2.\] Hence, $p_2$ is a $\mathbb{K}^{M_2}$-fiber bundle, where $M_2$ can be found by formula \eqref{Eq:DimNEqJord}. Next, consider the map $p_1$. Using Theorem~\ref{T:BiLagrEqJordBl} and Corollary~\ref{Cor:ImageAutIso} we can find a canonical basis $\vec{e}^i_j,\vec{f}^i_j, i=1,\dots, t, j=1,\dots, n$ in $V$ that brings $\hat{L}$ to its canonical form: \[ \hat{L}  = \operatorname{Span}\left\{\vec{f}^i_2,\dots, \vec{f}^i_{n-\hat{h}_j - 1},  \vec{e}^i_{n-\hat{h}_j}, \dots, \vec{e}^i_{n-1}\right\}_{i=1,\dots, t}\] Here $\vec{e}^i_j$ and $\vec{f}^i_j$ are short notations for $l_i$ basis vectors. For instance, $\vec{e}^i_j = \left(e^{i,1}_j, \dots, e^{i,l_i}_j \right)$. The subspace $U = P(\hat{L})$ of $V$ is well-defined and has the form \[  U  = \operatorname{Span}\left\{\vec{f}^i_1,\dots, \vec{f}^i_{n-\hat{h}_j-2},  \vec{e}^i_{n-\hat{h}_j+1}, \dots, \vec{e}^i_{n}\right\}_{i=1,\dots, t}.\] By construction, \[ U \subset L \subset U^{\perp},\] where \[ U^{\perp} = U \oplus \operatorname{Span}\left\{\vec{f}^i_{n-\hat{h}_j-1}, \vec{f}^i_{n-\hat{h}_j},  \vec{e}^i_{n-\hat{h}_j}, \vec{e}^i_{n-\hat{h}_j-1}\right\}_{i=1,\dots, t}.\] Theorem~\ref{T:BiLagrEqJordBl} guarantees a specific structure for the bi-Lagrangian space $L$: \begin{equation} \label{Eq:TopGenStrL} L = Z + P(Z) + P^2 (Z) + \dots, \qquad Z = \operatorname{Span} \left\{\vec{u}_1, \vec{v}_1, \dots, \vec{u}_t, \vec{v}_t \right\}. \end{equation} The subspace $L$ has the type $\left\{(h_i, n) \times l_i \right\}_{i=1,\dots, t}$ and we know the images $P^{h_i -1}(L)$ from the projection $\pi(L)$. Therefore, the ``top generators'' have the form \[  \begin{gathered} \vec{u}_1 = \vec{e}^1_1 +  \text{lower terms}, \qquad \vec{v}_1 = 0, \\ \vec{u}_i =  \vec{e}^i_{n-h_i+1} +  \text{lower terms}, \qquad \vec{v}_i = \vec{f}^i_{n-h_i} +  \text{lower terms}. \end{gathered}   \] We can bring these vectors $\vec{u}_i$ and $\vec{v}_i$ to the following form: \begin{equation} \label{Eq:TopGen} \begin{gathered} \vec{u}_1 = \vec{e}^1_1 + X_1 \vec{f}^1_1 + X_2 \vec{f}^1_2 + \sum_{j=2}^n \left( Y_j \vec{e}^j_{n-h_j} + Z_j \vec{f}^j_{n-h_j +1}\right), \qquad \vec{v}_1 = 0, \\ \vec{u}_i =  \vec{e}^i_{n-h_i+1} + Q_i \vec{f}^1_1, \qquad \vec{v}_i = \vec{f}^i_{n-h_i} + R_i \vec{f}_1^1.   \end{gathered}\end{equation} The subspace $L$ given by \eqref{Eq:TopGenStrL} is bi-Lagrangian if and only if \[ X_1 = X_1^T, \quad X_2 = X_2^T, \quad Q_i = Z_j^T, \quad R_i = - Y_j^T.\]  Since the map $p_1$ is defined by these specific matrices, it becomes a $\mathbb{K}^{l_1(2D-l_1 + 1)}$-fiber bundle.

\item \textbf{The case $h_1 = h_2 + 1$}. The type of orbit $\hat{O}_{\hat{L}}$ changes: \[\hat{H}(\hat{L}) = \left\{(\hat{h}_j, \hat{n}) \times \hat{l}_j\right\}_{j=2,\dots, t}, \quad \hat{h}_j = h_j - 1, \quad \hat{l}_j = \begin{cases} l_1 + l_2 , \quad &\mbox{if }  j = 2,\\ l_j, \quad &\mbox{if }  j > 2,\end{cases} \qquad \hat{n} = n-2.\] 
Fiber dimension for $p_2$ gains $l_1 l_2$ due to choosing $U_1$ within $U_2$. This gain is "compensated" by a loss of $l_1 l_2$ in the fibers of $p_1$. Fixing  $U_1 = P^{n-1}(L)$ introduces an additional constraint: $Y_2 = 0$ in the formulas \eqref{Eq:TopGenStrL} and \eqref{Eq:TopGen}.  Overall, the sum of fiber dimensions for $p_1$ and $p_2$ remains the same.

\end{itemize}

Assertion~\ref{A:ProjFiberBundles} is proved.  \end{proof}

By Assertion~\ref{A:ProjFiberBundles} the maps $p_i$ define $\mathbb{K}^{M_i}$-fiber bundle. Since composite fiber bundles are themselves fiber bundles (see e.g. \cite{Poor81}), $\pi$ defines a  $\mathbb{K}^{M_1+M_2}$-fiber bundle, as required. \end{itemize}

Theorem~\ref{T:EqualJordTopolOrb} is proved. \end{proof}

\section{Topology of generic orbits} \label{S:TopGenOrbit}

In  Section~\ref{SubS:GenBiLag} we identified the maximal orbit of the automorphism group $\operatorname{Aut} \left(V, \mathcal{P}\right)$-orbit in $\operatorname{BLG}\left(V, \mathcal{P}\right)$. In this section we analyze the topology of this maximal orbit $O_{\max}$ similar to Theorem~\ref{T:EqualJordTopolOrb}. 

Let $\left(V, \mathcal{P}\right) = \bigoplus_{i=1}^t \left(\bigoplus_{j=1}^{l_i} \mathcal{J}_{0, 2n_i} \right)$, where $n_1 > n_2 > \dots > n_t$. For any generic bi-Lagrangian subspace $L \in O_{\max}$ we can construct $t$ Lagrangian subspaces \begin{equation} \label{Eq:ProdLag} (L_1, \dots, L_t) \in \Lambda(l_1) \times \dots \times \Lambda(l_t)\end{equation} as follows:

\begin{itemize}

\item First, using Assertion~\ref{A:InducedImP} we can induce a symplectic structure $\omega$ on $\operatorname{Ker}P$. Moreover, it allows us to construct a flag of symplectic subspaces within   \begin{equation} \label{Eq:FlagSympGen} \left\{0\right\} \subset S_1 \subset \dots \subset S_t, \qquad S_j = \operatorname{Im}P^{n_j -1} \cap \operatorname{Ker} P. \end{equation} Furthermore, we consider the quotient spaces of these subspaces, which also inherit the symplectic structure: \[ \hat{S}_j = S_j / S_{j-1}, \qquad j=1,\dots, t. \] Here $\dim \hat{S}_j = l_j$ and we formally put $S_{0} = \left\{0\right\}, \hat{S}_1 = S_1$.

\item By Theorem~\ref{T:BiLagrGenDecomp} for any generic bi-Lagrangian subspace $L$ there are $t$ Lagrangian subspace \[ \hat{L}_j \subset S_j, \qquad \hat{L}_j =  P^{n_j -1} \cap \operatorname{Ker} P. \] We define the elements $L_j$ in \eqref{Eq:ProdLag} as \begin{equation} \label{Eq:ProjGenJord} L_j = \hat{L}_j / \hat{L}_{j-1}, \qquad j=1,\dots, t.\end{equation} We set $L_{0} = \left\{0\right\}, L_1 = \hat{L}_1$.

\end{itemize}

For example, consider \eqref{Eq:ProjGenJordTopol} where we realize $\mathcal{J}_{0, 8} \oplus\mathcal{J}_{0, 6} \oplus \mathcal{J}_{0, 4}$ as a Young-like diagram. The shaded area and cells with numbers represent a generic bi-Lagrangian subspace $L$. The right-hand side depicts the Lagrangian subspaces $\hat{L}_j \subset \operatorname{Ker} P$. A cell with number $j$ correspond to vectors that belong to all subspaces $\hat{L}_i$ for $i \leq j$.

\begin{equation} \label{Eq:ProjGenJordTopol}
  \begin{tabular}{|c|c||c|c||c|c|} 
  \cline{1-2} 1 & &   \multicolumn{1}{c}{} & \multicolumn{1}{c}{}  & \multicolumn{1}{c}{} & \multicolumn{1}{c}{} \\
  \cline{1-4} {\cellcolor{gray!25} } & &  2  &  &\multicolumn{1}{c}{} &\multicolumn{1}{c}{}  \\
     \hline {\cellcolor{gray!25} } & &  {\cellcolor{gray!25} }  & &  3  &   \\ 
   \hline {\cellcolor{gray!25} } &  &  {\cellcolor{gray!25} }  & &  {\cellcolor{gray!25} }  &  \\ \hline
  \end{tabular} \qquad \to \qquad \begin{tabular}{|c|c||c|c||c|c|} 
   \multicolumn{1}{c}{}  & \multicolumn{1}{c}{}  &  \multicolumn{1}{c}{}   &  \multicolumn{1}{c}{}  & \multicolumn{1}{c}{}  & \multicolumn{1}{c}{}  \\
 \multicolumn{1}{c}{}  & \multicolumn{1}{c}{}  &  \multicolumn{1}{c}{}   &  \multicolumn{1}{c}{}  & \multicolumn{1}{c}{}  & \multicolumn{1}{c}{}  \\
     \multicolumn{1}{c}{}  & \multicolumn{1}{c}{}  &  \multicolumn{1}{c}{}   &  \multicolumn{1}{c}{}  & \multicolumn{1}{c}{}  & \multicolumn{1}{c}{}  \\
   \hline 1 &  &  2  &   &  3 &  \\ \hline
  \end{tabular}
\end{equation}

\begin{theorem} \label{T:GenJordTopolMaxOrb}  Let $(V, \mathcal{P}) = \bigoplus_{i=1}^t \left(\bigoplus_{j=1}^{l_i} \mathcal{J}_{0, 2n_i} \right)$, where $n_1 > n_2 > \dots > n_t$, and $O_{\max}$ be the maximal 
 $\operatorname{Aut} \left(V, \mathcal{P}\right)$-orbit in $\operatorname{BLG}\left(V, \mathcal{P}\right)$. This orbit has a  structure of a $\mathbb{K}^{N}$-fibre bundle over a product of Lagrangian Grassmanians: \begin{equation} \label{Eq:ProjOrbGenJord} \pi: O_{\max}  \xrightarrow{\mathbb{K}^N}  \Lambda(l_1) \times \dots \times \Lambda(l_t). \end{equation} The projection $\pi$ is given by \eqref{Eq:ProjGenJord}.    \end{theorem}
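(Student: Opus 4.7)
The plan is to verify first that $\pi$ is a well-defined surjection with affine fibers, then exhibit it as a locally trivial fibration via the transitive action of $\operatorname{Aut}(V,\mathcal{P})$. For well-definedness, I take $L \in O_{\max}$ together with the standard basis from Theorem~\ref{T:BiLagrGenDecomp}, and read off directly that $\hat{L}_j = P^{n_j-1}(L) \cap \operatorname{Ker} P$ has dimension $\sum_{i \leq j} l_i$ and lies inside $S_j$. By Assertion~\ref{A:SeqForms}, $L$ is isotropic with respect to every $A_k = B \circ P^k$, and Assertion~\ref{A:InducedImP} equips each quotient $\hat{S}_j = S_j/S_{j-1}$ with a symplectic form compatible with the induced pushforwards; thus $L_j = \hat{L}_j/\hat{L}_{j-1}$ is an $l_j$-dimensional isotropic subspace of the $2l_j$-dimensional symplectic space $\hat{S}_j$, hence Lagrangian. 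Surjectivity of $\pi$ is immediate by reversing Theorem~\ref{T:BiLagrGenDecomp}: given any tuple $(L_1,\dots,L_t)$, I pick a standard basis whose top-row vectors $e^{ij}_{n_i}$ realize the prescribed Lagrangians, and then $L = \operatorname{Span}\{e^{ij}_k\}$ projects to $(L_1,\dots,L_t)$.

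Next I would promote $\pi$ to a homogeneous fiber bundle. The natural Lie-algebra homomorphism $\varphi:\operatorname{aut}(V,\mathcal{P}) \to \operatorname{sp}(\hat{S}_1) \oplus \dots \oplus \operatorname{sp}(\hat{S}_t)$ sends a matrix $C$ of the shape \eqref{E:bsp_algebra_matrix} to the tuple $(C^{1,1}_1,\dots,C^{t,t}_1)$ of its leading diagonal blocks, and Theorem~\ref{T:BiSymp_General_Jordan_Case_Mega} asserts these blocks range over all of $\prod_j \operatorname{sp}(2l_j)$. Combined with Theorem~\ref{T:AutConnected}, the corresponding Lie-group map $\operatorname{Aut}(V,\mathcal{P}) \to \operatorname{Sp}(\hat{S}_1) \times \dots \times \operatorname{Sp}(\hat{S}_t)$ is surjective, so the induced action on $\Lambda(l_1) \times \dots \times \Lambda(l_t)$ is transitive and $\pi$-equivariant. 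Since $\operatorname{Aut}(V,\mathcal{P})$ also acts transitively on $O_{\max}$ by Theorem~\ref{T:BiLagrGenDecomp}, $\pi$ is a bundle associated to a homogeneous space, and it suffices to identify a single fiber.

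To describe the fiber as $\mathbb{K}^N$, I fix a reference point $(L^0_1,\dots,L^0_t)$ together with a standard basis realizing it, and parametrize $F = \pi^{-1}(L^0_1,\dots,L^0_t)$ using the canonical form of Theorem~\ref{T:BiLagrGenDecomp}. Every $L \in F$ is freely generated under $P$ by top vectors of the form $u^{ij} = e^{ij}_1 + (\text{lower-height terms in } e^{i'j'}_{k'} \text{ and } f^{i'j'}_{k'})$, subject to the bi-isotropy constraints $A(u^{ij}, P^s u^{i'j'}) = B(u^{ij}, P^s u^{i'j'}) = 0$. These are linear, symmetric, and consistent (directly generalizing \eqref{Eq:CondEqJordBiLagr} from the proof of Theorem~\ref{Th:EqualJordBiLagrMaxOrbit}), so they cut out an affine subspace of a linear parameter space; its dimension is forced by Theorem~\ref{T:BiLagrJord} together with $\dim \prod_j \Lambda(l_j) = \sum_j \tfrac{l_j(l_j+1)}{2}$, yielding $N = \sum_j \tfrac{l_j(l_j+1)}{2}(n_j-1) + \sum_{i<j} l_i l_j n_j$. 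Local trivializations over a coordinate chart in each $\Lambda(l_j)$ are obtained by normalizing the basis as in the proof of Theorem~\ref{Th:EqualJordBiLagrMaxOrbit}, and they glue via the transitive $\prod_j \operatorname{Sp}(\hat{S}_j)$-action on the base.

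The main obstacle I anticipate is the bookkeeping for the fiber equations, since the cross-block coupling \eqref{E:Cond_on_BiSymp_Jordan_Case} mixes coordinates belonging to Jordan blocks of different sizes $n_i \neq n_j$, and one must check that the bi-isotropy relations impose exactly the expected codimension so the dimension count closes. The cleanest route I see is induction on $t$ via Corollary~\ref{Cor:JordExtr}: extract $l_1$ maximal blocks of size $2n_1$ carrying a generic bi-Lagrangian summand projecting to $L^0_1$, so that $L = L^{(1)} \oplus L'$ with $L' \in \operatorname{BLG}\bigl(\bigoplus_{i\geq 2}\bigoplus_{j=1}^{l_i}\mathcal{J}_{0,2n_i}\bigr)$; the inductive fibration for $L'$ over $\prod_{j\geq 2}\Lambda(l_j)$ combines with the jet-space fiber from Theorem~\ref{Th:EqualJordBiLagrMaxOrbit} for $L^{(1)}$ over $\Lambda(l_1)$, plus a residual affine factor of coupling parameters, to give the full $\mathbb{K}^N$-fiber. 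The only subtlety in the induction is verifying that the Lie-group surjection to $\prod_j \operatorname{Sp}(\hat{S}_j)$ respects this decomposition, which follows from the block-diagonal structure of the $C^{i,i}_1$ terms.
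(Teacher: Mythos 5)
Your proposal is correct in outline but follows a genuinely different route from the paper's proof, which proceeds by induction on the maximal Jordan size $n_1$ using bi-Poisson reduction with respect to $\operatorname{Ker} P$, factoring $\pi$ through a commutative diagram $p_2 \circ p_1$ and proving each map is an affine fiber bundle (Assertion~\ref{A:ProjFiberBundlesGenJord}). Your homogeneous-space argument — establishing that $\pi$ is $\operatorname{Aut}(V,\mathcal{P})$-equivariant, that the induced map to $\prod_j \operatorname{Sp}(2l_j)$ is surjective (via Theorems~\ref{T:BiSymp_General_Jordan_Case_Mega} and~\ref{T:AutConnected}), so that $\pi$ is automatically the projection $G/H \to G/K$ and hence a locally trivial bundle — is cleaner for getting the bundle structure and avoids the composite-bundle machinery the paper invokes. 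Two points need sharpening, though: (1) the bi-isotropy constraints $A(u^{ij},P^s u^{i'j'}) = B(u^{ij},P^s u^{i'j'}) = 0$ are \emph{bilinear} in the raw parameters, not linear; they become linear only after you spend the gauge freedom (replacing $u^{ij}$ by linear combinations of the $P^s u^{i'j'}$, which is exactly the normalization step $P_1 = I$ in \eqref{Eq:BringLagrToGood1}) and fix the base point, and that normalization step should be made explicit. (2) In your proposed induction on $t$ via Corollary~\ref{Cor:JordExtr}, the decomposition $V = V_1 \oplus V_2$ and $L = L^{(1)} \oplus L'$ depends on $L$ — it is not a splitting of the fiber over a fixed reference decomposition — so the "residual affine factor of coupling parameters" carries real weight, and verifying that it actually trivializes requires essentially the same bookkeeping the paper does in its Assertion~\ref{A:ProjFiberBundlesGenJord}; the paper's choice to reduce along $\operatorname{Ker} P$ instead of extracting a block has the advantage that $\operatorname{Ker} P$ is a canonical, $L$-independent subspace. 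Your dimension formula $N = \sum_j \tfrac{l_j(l_j+1)}{2}(n_j-1) + \sum_{i<j} l_i l_j n_j$ agrees with $\dim O_{\max} - \dim\prod_j\Lambda(l_j)$ computed from \eqref{Eq:DimBLGComb}, so the count closes once the local parametrization is nailed down.
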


\begin{remark} The dimension of the fiber $N$ can be easily obtained by subtracting the dimension of the base from  the orbit dimension (given by \eqref{Eq:DimBLGComb}). We get \begin{equation} \label{Eq:DimNGenJord} N = \sum_{i=1}^t \left( \frac{l_i(l_i + 1)}{2} (2h_i -n - 1) \right) + \sum_{1 \leq i < j \leq t} \left(l_i l_j (2h_i -n )\right). \end{equation} \end{remark}

\begin{proof}[Proof of Theorem~\ref{T:GenJordTopolMaxOrb}] The proof follows a similar approach as in Theorem~\ref{T:EqualJordTopolOrb}. If all Jordan blocks have equal size, i.e. for $t=1$, then the statement follows from Theorem~\ref{Th:EqualJordBiLagrMaxOrbit}. Hence, we focus on the case $t \geq 2$. The projection $\pi$ is well-defined and surjective by Theorem~\ref{T:BiLagrGenDecomp}. For the remainder of the proof, we'll proceed by induction on  $n$. 

\begin{itemize}

\item \textbf{Base cases ($n=1$)}. If $n=1$, then there is only one orbit diffeomorphic to the Lagrangian Grassmanian $O_L = \operatorname{\Lambda}(l_1)$. The projection $\pi$ is the identity map.

\item \textbf{Induction step}.Let $L \in \operatorname{BLG}(V,\mathcal{P})$ be a generic bi-Lagrangian subspace and \[ \hat{L} = (L \cap \operatorname{Im}P + \operatorname{Ker}P)/ \operatorname{Ker}P\] be the image of $L$ in $\operatorname{BLG}(\operatorname{Im}P/\operatorname{Ker}P)$ after the bi-Poisson reduction w.r.t. $U =\operatorname{Ker}P $. Denote the 
maximal orbit in $\operatorname{BLG}(\operatorname{Im}P/\operatorname{Ker}P)$ as $\hat{O}_{\max}$. Let the projection  \eqref{Eq:ProjOrbGenJord} have the form \[ \pi(L)= (L_1, \dots, L_t) \in \Lambda(l_1) \times \dots \times \Lambda(l_t).\] The similar projection of $\hat{O}_{\hat{L}}$ has the form \[ \hat{\pi}(\hat{L})= \begin{cases} (L_1, \dots, L_t) \in \Lambda(l_1) \times \dots \times \Lambda(l_t), \quad &\mbox{if }  n_t > 2,\\ (L_1, \dots, L_{t-1}) \in \Lambda(l_1) \times \dots \times \Lambda(l_{t-1}), \quad &\mbox{if }  n_{t-1} > 2, n_{t} \leq 2, \\ (L_1, \dots, L_{t-2}) \in \Lambda(l_1) \times \dots \times \Lambda(l_{t-2}), \quad &\mbox{if }  n_{t-1} =  2. 
\end{cases} \] Consider the commutative diagram:
\[
\begin{tikzcd}
    O_{\max} \arrow[dr, "\pi"] \arrow[d, "p_1"']\\
    (\hat{O}_{\max}, L_{t-1}, L_t) \arrow[r,"p_2"'] &  \Lambda(l_1) \times \dots \times \Lambda(l_t)
\end{tikzcd}
\] The map $p_1$ is given by $p_1(L) = (\hat{L}, L_{t-1}, L_t)$. In the map $p_2$ we add the subspace $L_{t-1}$ and $L_t$ to $\hat{\pi}(\hat{L})$, if needed.

\begin{assertion} \label{A:ProjFiberBundlesGenJord} The map $p_1$ is a $\mathbb{K}^{M_1}$-fiber bundle, where \[ M_1 = \sum_{i=1}^{t} l_i(l_i + 1) + \sum_{1 \leq i < j \leq t} 2l_i l_j - \delta,\]and \[\delta= \begin{cases} 0, \quad &\mbox{if }  n_t >2,\\   l_t(l_t + 1) + \sum_{i=1}^{t-1} l_i l_t, \quad &\mbox{if }  n_{t-1}>2, n_t =1 \\  \frac{1}{2}l_t(l_t + 1), \quad &\mbox{if }  n_{t-1}>2, n_t =2 \\ \frac{1}{2}l_{t-1}(l_{t-1} + 1) + l_t(l_t + 1) + \sum_{i=1}^{t-1} l_i l_t, \quad &\mbox{if }  n_{t-1}=2, n_t =1.
\end{cases} \] The map $p_2$ is a 
$\mathbb{K}^{M_2}$-fiber bundle, where $M_1 + M_2 = N$, given by \eqref{Eq:DimNGenJord} . \end{assertion}

\begin{proof}[Proof of Assertion~\ref{A:ProjFiberBundlesGenJord}] Similar to Assertion~\ref{A:ProjFiberBundles}. $p_2$ is a $\mathbb{K}^{M_2}$-fiber bundle by induction hypothesis.  Using Theorem~\ref{T:BiLagrGenDecomp} and Corollary~\ref{Cor:ImageAutIso} we can find a canonical basis $\vec{e}^i_j,\vec{f}^i_j, i=1,\dots, t, j=1,\dots, n$ in $V$ that brings $\hat{L}$ to its canonical form: \[ \hat{L}  = \operatorname{Span}\left\{ \vec{e}^i_{2}, \dots, \vec{e}^i_{n_i-1}\right\}_{i=1,\dots, t}\]  Here $\vec{e}^i_j$ and $\vec{f}^i_j$ are short notations for $l_i$ basis vectors. For instance, $\vec{e}^i_j = \left(e^{i,1}_j, \dots, e^{i,l_i}_j \right)$. The subspace $U = P(\hat{L})$ of $V$ is well-defined and has the form \[  U  = \operatorname{Span}\left\{\vec{e}^i_{3}, \dots, \vec{e}^i_{n_i}\right\}_{i=1,\dots, t}.\] By construction, \[ U \subset L \subset U^{\perp},\] where \[ U^{\perp} = U \oplus \operatorname{Span}\left\{\vec{f}^i_{1}, \vec{f}^i_{2},  \vec{e}^i_{1}, \vec{e}^i_{2}\right\}_{i=1,\dots, t}.\]

Theorem~\ref{T:BiLagrGenDecomp} guarantees a specific structure for the bi-Lagrangian space $L$: \[ L = Z + P(Z) + P^2 (Z) + \dots, \qquad Z = \left\{\vec{u}_1, \dots, \vec{u}_t\right\}. \] The Lagrangian subspaces $L_j = \operatorname{Im} P^{n_j-1} (L) \cap \operatorname{Ker} P$ have the form \begin{equation} \label{Eq:LjImGen} L_j = \operatorname{Span}\left\{ \vec{e}^i_{n}\right\}_{i=1,\dots, j}\end{equation} (if $n_{t-1} \leq 2$ or $n_t \leq 2$ we obtain \eqref{Eq:LjImGen} for the lower-dimensional Jordan blocks by choosing a suitable canonical basis in them). Therefore, the ``top generators'' have the form \[   \vec{u}_1 = \vec{e}^1_1 +  \text{lower terms}.  \] 
Through a suitable basis change in $L$, the vectors $\vec{u}_i$ can be transformed into the following form: \begin{equation} \label{Eq:TopGenGenJord} \vec{u}_i = \vec{e}^i_1 + \sum_{j=1}^t \left( X^i_j \vec{f}^j_{1} + Y^i_j \vec{f}^j_{2}\right), \qquad i = 1,\dots, t. \end{equation} The subspace $L$ given by \eqref{Eq:TopGenStrL} is bi-Lagrangian if and only if \[ (X^i_j)^T = X_j^i, \quad (Y^i_j)^T = Y^i_j.\]  If $n_j = 1$, then there are is no $Y^j_j$ for \eqref{Eq:TopGenStrL} to be fulfilled. Similarly, if $n_t = 1$, then there are no $X^t_t, Y^t_t$ and $Y^i_t$. Since the map $p_1$ is defined by these specific matrices, it becomes a $\mathbb{K}^{M_1}$-fiber bundle. In special cases $n_t \leq 2$ the fiber dimensions of $p_2$ increase due to choosing additional Lagrangian subspaces $L_{t-1}$ or $L_t$. Overall, the sum of fiber dimensions for $p_1$ and $p_2$ remains the same. Assertion~\ref{A:ProjFiberBundlesGenJord} is proved.  \end{proof}

By Assertion~\ref{A:ProjFiberBundlesGenJord} $\pi$ defines a  $\mathbb{K}^{M_1+M_2}$-fiber bundle, as required. \end{itemize}

Theorem~\ref{T:GenJordTopolMaxOrb} is proved. \end{proof}

\subsection{Connectedness of Bi-Lagrangian Grassmannian}

\begin{theorem} \label{T:BLGConnected} All bi-Lagrangian Grassmanians $\operatorname{BLG}(V,\mathcal{P})$ are path connected.  \end{theorem}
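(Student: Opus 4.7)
The plan is to reduce to the Jordan case with a single zero eigenvalue and then prove the claim by induction on $\dim V$, connecting every bi-Lagrangian subspace to the open orbit $O_{\max}$, which is path connected because $\operatorname{Aut}(V,\mathcal{P})$ is connected (Theorem~\ref{T:AutConnected}).

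By Corollary~\ref{C:IsomJordPart} the Kronecker part contributes only one point, and Theorems~\ref{T:JordaMultEigen} and~\ref{T:NonDependEigen} exhibit the Jordan Grassmannian as a direct product of factors of the form $\operatorname{BLG}\!\left(\bigoplus_k\mathcal{J}_{0,2n_k}\right)$. Since direct products preserve path-connectedness, we may assume $(V,\mathcal{P})=\bigoplus_{i=1}^N\mathcal{J}_{0,2n_i}$ with $n_1\geq\cdots\geq n_N$. The base case $\dim V=2$ gives $\operatorname{BLG}(\mathcal{J}_{0,2})=\mathbb{KP}^1$, which is path connected.

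For the inductive step, fix $L\in\operatorname{BLG}(V,\mathcal{P})$ and split into two cases. If $\operatorname{height}(L)<n_1$, then by~\eqref{Eq:OrthKerIm} one has $\operatorname{Im}P^{n_1-1}\subset L\subset\operatorname{Ker}P^{n_1-1}$, and Lemma~\ref{L:AlgIsomBiisotrFactor} identifies the variety of such $L$ with the bi-Lagrangian Grassmannian of the strictly smaller space $\operatorname{Ker}P^{n_1-1}/\operatorname{Im}P^{n_1-1}$; by induction the latter is path connected, and the isomorphism $L\mapsto L/\operatorname{Im}P^{n_1-1}$ is a homeomorphism on this slice, so the inductive path lifts. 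In the remaining case $\operatorname{height}(L)=n_1$, Theorem~\ref{T:ExtractMaxBlockGen} gives a bi-orthogonal decomposition $(V,\mathcal{P})=(V_1,\mathcal{P}_1)\oplus(V_2,\mathcal{P}_2)$ with $L=L_1\oplus L_2$, where $(V_1,\mathcal{P}_1)=\mathcal{J}_{0,2n_1}$ and $L_1$ is generic; induction applied to $(V_2,\mathcal{P}_2)$ produces a path in $\operatorname{BLG}(V_2,\mathcal{P}_2)$ from $L_2$ to a bi-Lagrangian $L_2'$ lying in the maximal orbit, and the continuous direct-sum $L_t=L_1\oplus L_{2,t}$ then lies in $\operatorname{BLG}(V,\mathcal{P})$ and terminates in $O_{\max}$ (since the dimension conditions of Theorem~\ref{T:BiLagrGenDecomp} factor over the bi-orthogonal decomposition). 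Concatenating such a path with a path inside the path-connected $O_{\max}$ produces a path between any two prescribed bi-Lagrangian subspaces.

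The main technical point is the continuity of the lifts, since the bi-Poisson reduction map is not continuous globally (see Remark~\ref{Rem:RedDisc}); however, on the slice where $\operatorname{Im}P^{n_1-1}\subset L\subset\operatorname{Ker}P^{n_1-1}$ is fixed, the reduction is a homeomorphism given in explicit basis coordinates, which suffices for lifting the inductive path. The second case presents no such difficulty because the bi-orthogonal decomposition $V=V_1\oplus V_2$ is fixed before the path is drawn, so $L_t=L_1\oplus L_{2,t}$ depends continuously on $L_{2,t}$ through the standard direct-sum map of Grassmannians.
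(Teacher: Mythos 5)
Your overall strategy matches the paper's (induction on $\dim V$, splitting on whether $\operatorname{height}(L)=n_1$, and funneling everything into the unique open orbit $O_{\max}$), and your Case $2$ and the continuity observation about the reduction map restricted to the slice are both fine. However, Case $1$ has a genuine gap: lifting a path from $\operatorname{BLG}\!\left(\operatorname{Ker}P^{n_1-1}/\operatorname{Im}P^{n_1-1}\right)$ produces a path that stays entirely inside the slice
\[
S=\left\{\,L : \operatorname{Im}P^{n_1-1}\subset L\subset \operatorname{Ker}P^{n_1-1}\,\right\},
\]
whose members all have $\operatorname{height}(L)\le n_1-1$. The open orbit $O_{\max}$ consists of subspaces of height exactly $n_1$ (Theorem~\ref{T:BiLagrGenDecomp}), so $S\cap O_{\max}=\emptyset$. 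Thus Case $1$ only shows that two subspaces with height $<n_1$ can be connected \emph{to each other}; it never produces a path from such an $L$ into $O_{\max}$, and the final concatenation step (``with a path inside $O_{\max}$'') has nothing to attach to. In particular, if you are given one subspace with height $n_1$ and one with height $<n_1$, your argument never links them.

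The paper's proof exits the slice with one additional move that your proposal lacks. After using the inductive hypothesis on $\operatorname{BLG}(U^{\perp}/U)$ to carry $L$ within $S$ to a \emph{decomposable} subspace $L'=\bigoplus_i L'_i$ with $L'_i\subset\mathcal{J}_{0,2n_i}$, it then moves each component $L'_i$ independently to a generic subspace $L''_i\subset\mathcal{J}_{0,2n_i}$, using path-connectedness of the single-block Grassmannians $\operatorname{BLG}(\mathcal{J}_{0,2n_i})$ (Theorem~\ref{Th:OneJordBiLagrOrbits} / Corollary~\ref{Cor:ConnOneJord}) together with Assertion~\ref{A:DecompBiLagr} to keep the direct-sum path inside $\operatorname{BLG}(V,\mathcal{P})$. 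This last leg raises the height and lands in $O_{\max}$. Relatedly, you never treat the one-Jordan-block case separately; in that situation Theorem~\ref{T:ExtractMaxBlockGen} produces a trivial complement, the slice argument again cannot reach $O_{\max}$, and the paper instead invokes the explicit orbit structure (Theorem~\ref{Th:OneJordBiLagrOrbits}, and in effect Corollary~\ref{Cor:ConnOneJord} establishing that $O_{\max}$ is dense). Adding these two pieces would close the gap.
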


\begin{proof}[Proof of Theorem~\ref{T:BLGConnected}] The proof is by induction on $\dim V$. The base ($\operatorname{dim} V =2$) is trivial. The induction step. Let $L\subset (V,\mathcal{P})$ be a bi-Lagrangian subspace. If $(V,\mathcal{P})$ consists of one Jordan block, then $\operatorname{BLG}(V,\mathcal{P})$ is path connected by Theorem~\ref{Th:OneJordBiLagrOrbits}. Otherwise, we can connect $L$ with a generic bi-Lagrangian subspace as follows. Consider two cases as in Section~\ref{SubS:ExtractOneJordSimple}:

\begin{enumerate}
    \item $\operatorname{height}(L) = \operatorname{height}(V)$. By Theorem~\ref{T:ExtractMaxBlockGen} we can extract one Jordan block: \[(V, \mathcal{P}) = (V_1,\mathcal{P}_1) \oplus (V_2, \mathcal{P}_2), \quad L = L_1 \oplus L_2, \quad L_i = L \cap V_i, \quad (V_1,\mathcal{P}_1) =\mathcal{J}_{0, 2n_1}.\] Since the bi-Lagrangian Grassmannians $\operatorname{BLG}(V_i, \mathcal{P}_i)$ are path connected by the induction hypothesis, we can construct paths connecting the subspaces $L_i$ with generic bi-Lagrangian subspaces $L_i' \subset (V_i, \mathcal{P}_i)$. Their sum $L_1' \oplus L_2'$ is generic in $(V,\mathcal{P})$.

    \item $\operatorname{height}(L) \not = \operatorname{height}(V)$. Then $L/U \subset \operatorname{BLG}(U^{\perp}/U)$, where $U= \operatorname{Im}P^{n_1 -1}$. By induction hypothesis,  $\operatorname{BLG}(U^{\perp}/U)$ is path connected. Hence, we can connect $L$ with a sum of bi-Lagrangian subspaces \[ L' = \oplus_{i=1}^N L_i, \qquad L'_i \subset \mathcal{J}_{0, 2n_i}. \] We can connect $L'$ with a generic $L''$ in $(V,\mathcal{P})$, since by Theorem~\ref{Th:OneJordBiLagrOrbits} $\operatorname{BLG}(\mathcal{J}_{0,2n_i})$ are path connected.

\end{enumerate}

  By Theorem~\ref{T:GenJordTopolMaxOrb}, generic orbit of $\operatorname{BLG}(V,\mathcal{P})$ is path connected, implying the path connectedness of the entire space.
Theorem~\ref{T:BLGConnected} is proved. \end{proof}

\section{Two different Jordan blocks} \label{S:TwoJordanBlocks}

This section describes the $\operatorname{Aut}(V, \mathcal{P})$-orbits of $\operatorname{BLG}(V, \mathcal{P})$ for the sum of $2$ distinct Jordan blocks with the same zero eigenvalue: \begin{equation} \label{Eq:JKDecompTwoDistinct} (V, \mathcal{P})= \mathcal{J}_{0, 2n_1} \oplus \mathcal{J}_{0, 2n_2}, \qquad n_1 > n_2.\end{equation} As usual, $P$ denotes the recursion operator. A basis \begin{equation} \label{Eq:StandBasisTwoBlocks} e_1,\dots, e_{n_1} f_1,\dots, f_{n_1}, \hat{e}_1,\dots, \hat{e}_{n_2}, \hat{f}_1,\dots, \hat{f}_{n_2}\end{equation} from the JK theorem~\ref{T:Jordan-Kronecker_theorem} is a \textbf{standard basis} for $ (V, \mathcal{P})$. 

For the sum of distinct Jordan blocks there are two types of bi-Lagrangian subspaces. A bi-Lagrangian subspace $L \subset (V,\mathcal{P})$ is \textbf{semisimple} if there exists a JK decomposition \eqref{Eq:JKDecompTwoDistinct} such that \[ L = L_1 \bigoplus L_2, \qquad L_i =  (L \cap \mathcal{J}_{0, 2n_i}), \quad i=1,2. \] Otherwise $L$ is \textbf{indecomposable} (see Definition~\ref{Def:Decomp}). Respective $\operatorname{Aut}(V, \mathcal{P})$-orbits inherit these terms. 

\begin{enumerate}

\item In Section~\ref{SubS:TwoDistSemi} we describe semisimple bi-Lagrangian subspaces.

\item In Section~\ref{SubS:TwoDistIndecom} we describe indecomposable bi-Lagrangian subspaces. 

\item To provide an illustrative example, Section~\ref{SubS:Exam} examines bi-Lagrangian Grassmanian for the direct sum of the Jordan blocks $\mathcal{J}_{0,6}$ and $\mathcal{J}_{0,2}$.
\end{enumerate}

\subsection{Semisimple bi-Lagrangian subspaces} \label{SubS:TwoDistSemi}

Since we know the structure of bi-Lagrangian subspaces for one Jordan block (see Section~\ref{S:OneJordBlock}), we can easily describe semisimple bi-Lagrangian subspaces. 

\begin{theorem} \label{T:BiLagrTwoDistSemi} Let $(V, \mathcal{P})= \mathcal{J}_{0, 2n_1} \oplus \mathcal{J}_{0, 2n_2}$.

\begin{enumerate}

\item Any semisimple bi-Lagrangian subspace $L \subset (V, \mathcal{P})$ is uniquely characterized by integers $h_1, h_2$ such that \[ h_i \geq \frac{n_i}{2}, \qquad i=1,2,\] and it admits a canonical form \begin{equation} \label{Eq:CanonEqJordSemiSimpleTwo} \begin{gathered} L =  \operatorname{Span}\left\{f_1,\dots, f_{n_1 - h_1}, e_{n_1 - h_1 + 1}, \dots, e_{n_1}\right\} \oplus \\ \oplus \operatorname{Span}\left\{\hat{f}_1,\dots, \hat{f}_{n_2 - h_2}, \hat{e}_{n_2-h_2 +1}, \dots, \hat{e}_{n_2}\right\} \end{gathered} \end{equation} for some standard basis \eqref{Eq:StandBasisTwoBlocks}. The \textbf{type} of $L$ is \begin{equation} \label{Eq:TypeTwoBLocks} H(L) = \left\{ (h_1, n_1), (h_2, n_2) \right\}.\end{equation}

\item The number of semisimple $\operatorname{Aut}(V, \mathcal{P})$-orbits is \begin{equation} \label{Eq:NumOrbDistSemiSimple} \left( \left[\frac{n_1}{2}\right] + 1 \right) \left( \left[\frac{n_2}{2}\right] + 1 \right) .\end{equation}

\item The dimension of the $\operatorname{Aut}(V, \mathcal{P})$-orbit $O_L$ of $L$ with type \eqref{Eq:TypeTwoBLocks} is \begin{equation} \label{Eq:DimODist2Jord} \dim O_L = (2h_1 - n_1) + (2h_2 - n_2) + \Delta_{12},\end{equation}  where  where \begin{equation} \label{Eq:DeltaDimDec2Dist} \Delta_{12} = \max\left(0, h_2 - \left(n_1 - h_1\right)\right) + \max\left(0, h_2 - h_1, \left(n_2 - h_2\right) - \left(n_1 - h_1 \right)\right).\end{equation}  

\end{enumerate}

\end{theorem}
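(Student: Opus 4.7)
\emph{Part (1).} Since $L$ is semisimple by hypothesis, there is a JK decomposition $V = V_1 \oplus V_2$ with $V_i \cong \mathcal{J}_{0,2n_i}$ such that $L = L_1 \oplus L_2$ with $L_i = L \cap V_i$ bi-Lagrangian in $V_i$. I would apply Theorem~\ref{T:BiLagr_One_Jordan_Canonical_Form} inside each individual block $V_i$ to produce a standard basis of $V_i$ and an integer $h_i \geq n_i/2$ realizing $L_i$ in the canonical form of a single Jordan block. Because $V_1$ and $V_2$ are bi-orthogonal in the chosen JK decomposition, concatenating the two standard bases gives a standard basis \eqref{Eq:StandBasisTwoBlocks} of $(V,\mathcal{P})$ in which $L$ takes the advertised form \eqref{Eq:CanonEqJordSemiSimpleTwo}. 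To see that the pair $(h_1, h_2)$ is an invariant of $L$ alone (independent of the choice of semisimple decomposition), I would observe that in canonical form the Jordan normal form of $P|_L$ equals $J(h_1) \oplus J(n_1-h_1) \oplus J(h_2) \oplus J(n_2-h_2)$; since the multiset of block sizes is a bi-Poisson invariant, and since the constraints $h_i \geq n_i/2$ and $n_1 > n_2$ force distinct roles for the four blocks, the ordered pair $(h_1, h_2)$ is uniquely recovered.

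\emph{Part (2).} This is an immediate corollary of Part (1): orbits are indexed bijectively by admissible pairs $(h_1, h_2)$ with $n_i/2 \leq h_i \leq n_i$, giving the count $(\lfloor n_1/2\rfloor + 1)(\lfloor n_2/2\rfloor + 1)$.

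\emph{Part (3).} The plan is to compute $\dim O_L = \dim \operatorname{Aut}(V,\mathcal{P}) - \dim \operatorname{Stab}_{\operatorname{aut}(V,\mathcal{P})}(L)$ using the explicit block-matrix description of $\operatorname{aut}(V,\mathcal{P})$ from Theorem~\ref{T:BiSymp_General_Jordan_Case_Mega} together with $\dim \operatorname{Aut}(V,\mathcal{P}) = 3n_1 + 7n_2$ from Corollary~\ref{C:DimAutJordanCase}. A general $C \in \operatorname{aut}(V,\mathcal{P})$ splits into diagonal blocks $C^{i,i}_s \in \mathfrak{sp}(2)$ (contributing $3n_i$ parameters in total for each $i$) and off-diagonal pairs $(C^{1,2}_s, C^{2,1}_s)$ with $4$ free parameters per index $s = 1, \dots, n_2$, linked by \eqref{E:Cond_on_BiSymp_Jordan_Case}. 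Because $L = L_1 \oplus L_2$ in canonical form, the stabilizer condition $CL \subset L$ decouples: the diagonal part forces $C|_{V_i}$ to stabilize $L_i$ inside $\operatorname{aut}(\mathcal{J}_{0,2n_i})$, contributing exactly $\dim O_{h_i} = 2h_i - n_i$ to $\dim O_L$ by Theorem~\ref{Th:OneJordBiLagrOrbits}; the off-diagonal contribution is the extra term $\Delta_{12}$.

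\emph{Main obstacle.} The technical heart of the proof is the parameter count for the off-diagonal pairs, which must produce exactly \eqref{Eq:DeltaDimDec2Dist}. Writing each $C^{1,2}_s$ as a $2\times 2$ matrix in the canonical basis and tracking, for each $s \in \{1,\dots,n_2\}$, how it maps the basis vectors $\hat{e}_k, \hat{f}_k$ of $L_2$ into $V_1$ (and correspondingly how its partner $C^{2,1}_s$ maps the basis of $L_1$ into $V_2$), the stabilizer condition becomes a system of vanishing equations on the entries of these matrices. The number of equations depends on whether the nilpotent shift by $s$ sends the \emph{$e$-window} $[n_i-h_i+1, n_i]$ or the \emph{$f$-window} $[1, n_i-h_i]$ of one block inside or outside the corresponding window of the other block. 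Summing free parameters across all $s$ produces exactly the two $\max$ terms in \eqref{Eq:DeltaDimDec2Dist}: the summand $\max(0, h_2 - (n_1-h_1))$ measures the overlap obstruction for $C^{1,2}$-blocks mapping $L_2$'s $\hat{e}$-window into $L_1$'s $f$-window, and the three arguments of $\max(0, h_2-h_1, (n_2-h_2)-(n_1-h_1))$ correspond to the three qualitatively different window configurations governing the opposite direction via $C^{2,1}$. Adding these to $(2h_1-n_1)+(2h_2-n_2)$ yields \eqref{Eq:DimODist2Jord}.
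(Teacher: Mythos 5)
Your overall plan matches the paper's: Parts (1) and (2) reduce to Theorem~\ref{T:BiLagr_One_Jordan_Canonical_Form} applied block-by-block, and Part (3) is the Lie-algebra stabilizer count against the explicit description of $\operatorname{aut}(V,\mathcal{P})$ from Theorem~\ref{T:BiSymp_General_Jordan_Case_Mega}, exactly as the paper does in its proof of the general Theorem~\ref{T:DimDecomp}. Your invariance argument for $(h_1,h_2)$ via the Jordan normal form of $P|_L$ is a legitimate alternative to the paper's Lemma~\ref{L:DecomTypeDetermIntersect}, and the verification that the multiset $\{h_1,n_1-h_1,h_2,n_2-h_2\}$ has a unique partition into pairs summing to $n_1$ and $n_2$ (using $n_1>n_2$) is correct.

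There is, however, a concrete error in your account of where the two $\max$ terms in $\Delta_{12}$ come from, and it would trip you up once you sit down to count. You attribute $\max(0, h_2-(n_1-h_1))$ to the $C^{1,2}$ direction and $\max(0, h_2-h_1, (n_2-h_2)-(n_1-h_1))$ to ``the opposite direction via $C^{2,1}$.'' In fact the two directions are not independent: the blocks $C^{2,1}_s$ are determined (up to the sign pattern) by $C^{1,2}_s$ via the compatibility condition \eqref{E:Cond_on_BiSymp_Jordan_Case}, and the paper's Assertion~\ref{A:DecomCondIJ} shows that once $C^{1,2}$ satisfies the stabilizer constraints the partner $C^{2,1}$ does automatically. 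So \emph{both} $\max$ terms come from a case analysis of $C^{1,2}_s$ alone: for $s$ in the top range the $2\times 2$ block $C^{1,2}_s$ must be upper-triangular $\left(\begin{smallmatrix}*&*\\0&*\end{smallmatrix}\right)$ (one lost parameter), while for $s$ in the extremes it must be of the form $\left(\begin{smallmatrix}*&*\\0&0\end{smallmatrix}\right)$ or $\left(\begin{smallmatrix}0&*\\0&*\end{smallmatrix}\right)$ (two lost parameters); the counts of these sub-ranges are $\max(0,h_2-(n_1-h_1))$ and $s_1+s_2=\max(0,h_2-h_1,(n_2-h_2)-(n_1-h_1))$ respectively, and the latter uses the non-obvious fact that at least one of $s_1,s_2$ vanishes (because $n_1\geq n_2$ and $h_i\geq n_i/2$). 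If you analyze $C^{1,2}$ and $C^{2,1}$ as if they were giving independent constraints, you will double-count; and if you try to force each $\max$ into a different direction, the count will not close. You should instead (i) write down explicitly, as in the paper's Assertion~\ref{A:PreserDecompBiLagr}, which entries of each $2\times2$ sub-block of $C^{1,2}_s$ must vanish as a function of the ``window depth'' $\alpha+\beta=n_2+1-s$, (ii) invoke Assertion~\ref{A:DecomCondIJ} to dispose of $C^{2,1}$, and (iii) carry out the arithmetic above.
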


\begin{remark} In Theorem~\ref{T:DimDecomp} there are four possible values for $\Delta_{12}$, i.e.  \begin{equation} \label{Eq:FourDelta} \Delta_{12} = \begin{cases} 0, \qquad & h_1 \geq n_1 -h_1 \geq h_2 \geq n_2 - h_2 \\ h_2 - (n_1 - h_1), \qquad & h_1 \geq h_2 \geq n_1 - h_1 \geq n_2 - h_2, 
\\ n_2 - 2 (n_1 - h_1), \qquad &h_1 \geq h_2 \geq n_2 - h_2 \geq n_1 - h_1, 
\\ 2h_2 - n_1 \qquad & h_2 \geq h_1 \geq n_1-h_1 \geq n_2 - h_2. \end{cases} \end{equation}  \end{remark}

\begin{proof}[Proof of Theorem~\ref{T:BiLagrTwoDistSemi}] Canonical forms and orbit counts follow from Theorem~\ref{T:BiLagr_One_Jordan_Canonical_Form}. Orbit dimensions are computed as in Theorem~\ref{T:BiLagrJord} (see also Corollary~\ref{Cor:NumTypes} below). Theorem~\ref{T:BiLagrTwoDistSemi} is proved. \end{proof}

The four cases of \eqref{Eq:FourDelta} are visualized in Figure~\eqref{Eq:Proj2BlocksSemi}. Jordan blocks are depicted as Young-like diagrams, with shaded areas representing semisimple bi-Lagrangian subspaces.

\begin{equation} \label{Eq:Proj2BlocksSemi}
 \begin{tabular}{|c|c||c|c|} 
     \cline{1-2} & &   \multicolumn{1}{c}{} & \multicolumn{1}{c}{}   \\
     \cline{1-2}  & &   \multicolumn{1}{c}{} & \multicolumn{1}{c}{}   \\
     \cline{1-2} & &   \multicolumn{1}{c}{} & \multicolumn{1}{c}{}   \\
  \cline{1-2} {\cellcolor{gray!25} } & &   \multicolumn{1}{c}{} & \multicolumn{1}{c}{}   \\
  \cline{1-2} {\cellcolor{gray!25} } &  {\cellcolor{gray!25} } &   \multicolumn{1}{c}{} & \multicolumn{1}{c}{}   \\
     \hline {\cellcolor{gray!25} }  & {\cellcolor{gray!25} } &  {\cellcolor{gray!25} }  &  \\ 
   \hline {\cellcolor{gray!25} } &  {\cellcolor{gray!25} }    &  {\cellcolor{gray!25} }  &    \\ \hline
  \end{tabular}  \qquad  \begin{tabular}{|c|c||c|c|} 
     \cline{1-2} & &   \multicolumn{1}{c}{} & \multicolumn{1}{c}{}   \\
     \cline{1-2} {\cellcolor{gray!25} } & &   \multicolumn{1}{c}{} & \multicolumn{1}{c}{}   \\
     \cline{1-2} {\cellcolor{gray!25} } & &   \multicolumn{1}{c}{} & \multicolumn{1}{c}{}   \\
  \cline{1-4} {\cellcolor{gray!25} } &  & {\cellcolor{gray!25} }  &    \\
  \cline{1-4} {\cellcolor{gray!25} } &  &  {\cellcolor{gray!25} }  &    \\
     \hline {\cellcolor{gray!25} }  &  &  {\cellcolor{gray!25} }  &  \\ 
   \hline {\cellcolor{gray!25} } &  {\cellcolor{gray!25} }    &  {\cellcolor{gray!25} }  &    \\ \hline
  \end{tabular} \qquad \begin{tabular}{|c|c||c|c|} 
     \cline{1-2} {\cellcolor{gray!25} } & &   \multicolumn{1}{c}{} & \multicolumn{1}{c}{}   \\
     \cline{1-2} {\cellcolor{gray!25} } & &   \multicolumn{1}{c}{} & \multicolumn{1}{c}{}   \\
     \cline{1-2} {\cellcolor{gray!25} } & &   \multicolumn{1}{c}{} & \multicolumn{1}{c}{}   \\
  \cline{1-4} {\cellcolor{gray!25} } &  &  &    \\
  \cline{1-4} {\cellcolor{gray!25} } &  &  {\cellcolor{gray!25} }  &    \\
     \hline {\cellcolor{gray!25} }  &  &  {\cellcolor{gray!25} }  &  \\ 
   \hline {\cellcolor{gray!25} } &     &  {\cellcolor{gray!25} }  &   {\cellcolor{gray!25} }  \\ \hline
  \end{tabular} \qquad 
  \begin{tabular}{|c|c||c|c|} 
     \cline{1-2}  & &   \multicolumn{1}{c}{} & \multicolumn{1}{c}{}   \\
 \cline{1-4} &  &  {\cellcolor{gray!25} } &    \\
 \cline{1-4} &  &  {\cellcolor{gray!25} } &    \\
  \cline{1-4} {\cellcolor{gray!25} } &  &  {\cellcolor{gray!25} }  &    \\
  \cline{1-4} {\cellcolor{gray!25} } &  {\cellcolor{gray!25} } &  {\cellcolor{gray!25} }  &    \\
     \hline {\cellcolor{gray!25} }  & {\cellcolor{gray!25} }  &  {\cellcolor{gray!25} }  &  \\ 
   \hline {\cellcolor{gray!25} } &   {\cellcolor{gray!25} }  &  {\cellcolor{gray!25} }  &   \\ \hline
  \end{tabular} 
\end{equation}

\subsubsection{Topology of semisimple orbits} \label{S:Top2BlocksSemi}

Let $L \subset \left(V, \mathcal{P}\right) = \mathcal{J}_{0, 2n_1} \oplus \mathcal{J}_{0, 2n_2}$ be a semisimple bi-Lagrangian subspace with type \eqref{Eq:TypeTwoBLocks}. Let $D$ count the $h_j$ exceeding $n_j/2$, i.e. \[ D =  I_{2h_1 >  n_1} + I_{2h_2 >  n_2}.\] We can construct $D$ one-dimensional Lagrangian subspaces \begin{equation} \label{Eq:ProdLag2Dist} L_k \in \Lambda(1) = \mathbb{KP}^1, \qquad k = 1, \dots, D, \end{equation} as follows:

\begin{itemize}

\item As in Section~\ref{S:TopGenOrbit} there is a symplectic flag \[\left\{0\right\} \subset S_1 \subset S_2 = \operatorname{Ker}P\] given by \eqref{Eq:FlagSympGen}.  We take a pair of $2$-dimensional symplectic spaces \[\hat{S}_1 = S_1, \qquad \hat{S}_2 = S_2 / S_1. \]

\item Define \[ \hat{L}_j = P^{h_j-1}(L) \cap \operatorname{Ker} P, \qquad j=1,2. \] If both $h_j > n_j/2$, set \[ L_1 = \hat{L}_1 \cap S_1, \qquad L_2 = \left( \hat{L}_2  + S_1 \right) / S_1.\] Otherwise, consider only $D$ subspaces $L_j$ with $h_j > n_j/2$. \end{itemize}

For example, consider \eqref{Eq:Proj2DistJordTopol} where we realize $\mathcal{J}_{0, 14} \oplus\mathcal{J}_{0, 12}$ as a Young-like diagram. The shaded area and cells with numbers represent a generic bi-Lagrangian subspace $L$. Numbered right-hand cells correspond to Lagrangian subspaces $L_j \subset S_j$. 

\begin{equation} \label{Eq:Proj2DistJordTopol}
  \begin{tabular}{|c|c||c|c|} 
     \cline{1-2}  & &   \multicolumn{1}{c}{} & \multicolumn{1}{c}{}   \\
 \cline{1-4} &  &  2 &    \\
 \cline{1-4} 1 &  &  {\cellcolor{gray!25} } &    \\
  \cline{1-4} {\cellcolor{gray!25} }  &  &  {\cellcolor{gray!25} }  &    \\
  \cline{1-4} {\cellcolor{gray!25} } & &  {\cellcolor{gray!25} }  &    \\
     \hline {\cellcolor{gray!25} }  & {\cellcolor{gray!25} }  &  {\cellcolor{gray!25} }  &  \\ 
   \hline {\cellcolor{gray!25} } &   {\cellcolor{gray!25} }  &  {\cellcolor{gray!25} }  &   \\ \hline
  \end{tabular}  \qquad \to \qquad \begin{tabular}{|c|c||c|c|} 
   \multicolumn{1}{c}{}  & \multicolumn{1}{c}{}  &  \multicolumn{1}{c}{}   &  \multicolumn{1}{c}{}    \\
    \multicolumn{1}{c}{}  & \multicolumn{1}{c}{}  &  \multicolumn{1}{c}{}   &  \multicolumn{1}{c}{}    \\
  \multicolumn{1}{c}{}  & \multicolumn{1}{c}{}  &  \multicolumn{1}{c}{}   &  \multicolumn{1}{c}{}    \\
   \multicolumn{1}{c}{}  & \multicolumn{1}{c}{}  &  \multicolumn{1}{c}{}   &  \multicolumn{1}{c}{}    \\
 \multicolumn{1}{c}{}  & \multicolumn{1}{c}{}  &  \multicolumn{1}{c}{}   &  \multicolumn{1}{c}{} \\
     \multicolumn{1}{c}{}  & \multicolumn{1}{c}{}  &  \multicolumn{1}{c}{}   &  \multicolumn{1}{c}{}   \\
   \hline 1 &  &  2  & \\ \hline
  \end{tabular}
\end{equation}

\begin{theorem} \label{T:2DistJordTopolMaxOrb} Let $\left(V, \mathcal{P}\right) = \mathcal{J}_{0, 2n_1} \oplus \mathcal{J}_{0, 2n_2}$, where $n_1 > n_2$ and 
Let $L \subset \left(V, \mathcal{P}\right)$ be a semisimple bi-Lagrangian subspace with type \eqref{Eq:TypeTwoBLocks}.  Then its 
 $\operatorname{Aut} \left(V, \mathcal{P}\right)$-orbit  has a  structure of a $\mathbb{K}^{N}$-fibre bundle over a product of $D$ Lagrangian Grassmanians: \begin{equation} \label{Eq:ProjOrb2DistJord} \pi: O_{\max}  \xrightarrow{\mathbb{K}^N} \prod_{k=1}^D \Lambda(1) = \prod_{k=1}^D \mathbb{KP}^1. \end{equation} Here $\pi$ is given by \eqref{Eq:ProdLag2Dist} and $D$ counts the $h_j$ exceeding $n_j/2$.  If $D=0$, then $O_L$ is a point.  \end{theorem}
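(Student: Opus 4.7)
The plan is to mirror the strategy of Theorems~\ref{T:EqualJordTopolOrb} and \ref{T:GenJordTopolMaxOrb}, splitting into the three cases $D=0,1,2$ and using bi-Poisson reduction together with the canonical form from Theorem~\ref{T:BiLagrTwoDistSemi}. The case $D=0$ is immediate: both $h_i=n_i/2$, so $L$ is the unique invariant bi-Lagrangian subspace provided by Theorem~\ref{T:InvarBilagr}, hence $O_L$ is a single point (agreeing with the degenerate fibre bundle over a point). The case $D=1$ follows from Theorem~\ref{Th:OneJordBiLagrOrbits} applied to the non-trivial block, after checking that the trivial block (where $h_i=n_i/2$ selects the single invariant subspace) is left untouched by the part of $\operatorname{Aut}(V,\mathcal{P})$ that moves points in the other factor; this is handled by Lemma~\ref{L:ImageAut} applied with the appropriate invariant subspace.

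For the main case $D=2$, I would first verify that $\pi$ is well-defined and surjective. Well-definedness is automatic from Assertion~\ref{A:InducedImP} and the fact that $\hat L_j=P^{h_j-1}(L)\cap\operatorname{Ker}P$ is a basis-free construction. Surjectivity: take any pair $(M_1,M_2)\in\Lambda(\hat S_1)\times\Lambda(\hat S_2)$; using the canonical basis from Theorem~\ref{T:BiLagrTwoDistSemi} together with Theorem~\ref{T:BiSymp_General_Jordan_Case_Mega}, one can write an element of $\operatorname{Aut}(V,\mathcal{P})$ that sends $L$ to a bi-Lagrangian subspace whose image under $\pi$ is $(M_1,M_2)$ — here I would use that the diagonal blocks $C^{i,i}_1$ of the automorphism Lie algebra act as $\operatorname{sp}(2)=\mathfrak{sl}(2)$ on each $\hat S_i$, and the off-diagonal blocks $C^{1,2}_s$, $C^{2,1}_s$ can transport vectors between blocks when $\min(n_1,n_2)-h_1-h_2\geq 0$, which here holds automatically since $h_i\leq n_i$.

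For the fibre analysis, I fix a base point, say $(M_1^0,M_2^0)$ corresponding to the canonical $L$, and describe the subgroup $H\subset\operatorname{Aut}(V,\mathcal{P})$ fixing $\pi(L)$. Using the block-triangular structure from Theorem~\ref{T:BiSymp_General_Jordan_Case_Mega}, one can write any bi-Lagrangian subspace with the given image as
\[
L'=\operatorname{Span}\bigl\{u+\text{lower-order terms},\ v+\text{lower-order terms}\bigr\}\oplus \text{forced }P\text{-orbits},
\]
where $u,v$ are the "top generators" in the two blocks, and the lower-order terms are parametrized by a specific list of free scalars. Applying the isotropy constraints coming from the forms $A_k=B\circ P^k$ (Assertion~\ref{A:SeqForms}) to these free parameters will produce linear (affine) equations on the coefficients, cutting out an affine subspace $\mathbb{K}^N$. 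Comparing with the dimension formula \eqref{Eq:DimODist2Jord} gives $N=(2h_1-n_1)+(2h_2-n_2)+\Delta_{12}-D$, which I would verify case-by-case with the four values of $\Delta_{12}$ in \eqref{Eq:FourDelta}.

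The main obstacle will be the local triviality of the fibre bundle: as highlighted in Remark~\ref{Rem:RedDisc}, bi-Poisson reduction maps are not continuous in general, so one cannot simply quote previous fibre bundle statements. The way around this, as in the proof of Theorem~\ref{Th:EqualJordBiLagrMaxOrbit}, is to cover $\Lambda(\hat S_1)\times\Lambda(\hat S_2)$ by the standard affine charts on each $\mathbb{KP}^1$ factor (charts where $e_{n_1}$ or $f_1$ is non-vanishing in $S_1$, and similarly for $\hat S_2$), and in each chart exhibit an explicit automorphism-equivariant trivialization by choosing a polynomial representative for the top generators $u,v$. The most delicate point is checking consistency of the trivializations across charts, which requires rewriting the canonical form~\eqref{Eq:CanonEqJordSemiSimpleTwo} with respect to a new standard basis in the overlap — this is essentially a computation with the Lie algebra description~\eqref{E:bsp_algebra_matrix}, analogous to the transition-map verification performed at the end of the proof of Theorem~\ref{Th:EqualJordBiLagrMaxOrbit}.
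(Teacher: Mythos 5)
Your proposal follows essentially the same overall strategy as the paper's proof (reduce by cases in $D$, check that $\pi$ is well-defined and surjective, then parametrize the fibre by free coefficients in the ``top generators'' $u_1,u_2$ with orthogonality constraints from $A_k = B\circ P^k$), and your case analysis and fibre-dimension bookkeeping are correct. However, you misdiagnose the ``main obstacle.'' You worry that local triviality needs to be established chart-by-chart because of Remark~\ref{Rem:RedDisc}, but that remark concerns discontinuity of the reduction map $\operatorname{BLG}(V,\mathcal{P})\to\operatorname{BLG}(U^\perp/U)$ \emph{across} orbits of different types; it is irrelevant to $\pi$ restricted to a single orbit. The paper dispatches local triviality with one sentence: $O_L$ is a homogeneous space for $\operatorname{Aut}(V,\mathcal{P})$, the map $\pi$ is $\operatorname{Aut}(V,\mathcal{P})$-equivariant, and the induced action on $\prod\Lambda(1)$ is transitive; therefore $\pi$ is automatically a fibre bundle (an equivariant surjection $G/H\to G/K$, $H\subseteq K$ closed, is always a locally trivial $K/H$-bundle). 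The entire chart-by-chart argument you outline, including the delicate transition-map verification, is unnecessary. Recognizing the homogeneous-space structure also makes surjectivity immediate, so your separate surjectivity step collapses into the same observation. Your $D=1$ treatment is also imprecise as stated --- $O_L$ is the full $\operatorname{Aut}(V,\mathcal{P})$-orbit, not the product of an orbit in one block with a fixed subspace in the other, since off-diagonal blocks of $\operatorname{aut}(V,\mathcal{P})$ genuinely mix the two factors --- but the paper handles $D\neq 2$ with equal brevity (``other cases are similar'') so this is not a fatal gap, merely a place where your wording suggests a product structure that isn't there.
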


\begin{remark} We obtain a bundle over  $X_1 \times X_2$, where each $X_j\in \left\{\operatorname{pt}, \Lambda(1) \right\}$, reflecting the two types of isotropic flags in a 2-dimensional symplectic space: $\left\{0 \right\}$ and  $\left\{0 \right\} \subset U$, where $\dim U =1$. 
\end{remark}

\begin{proof}[Proof of Theorem~\ref{T:2DistJordTopolMaxOrb}] While a proof analogous to Theorems~\ref{T:EqualJordTopolOrb} and \ref{T:GenJordTopolMaxOrb} exists, we outline an alternative approach. Details are omitted for conciseness. To simplify, assume $h_1 = n_1$ or $h_2 = n_2$ (otherwise, apply bi-Poisson reduction w.r.t. $\operatorname{Ker} P$). If both $h_j = n_j$, apply Theorem~\ref{T:GenJordTopolMaxOrb}. We only consider the case $D=2$ (other cases are similar). A semisimple bi-Lagrangian subspace has the form \begin{equation} \label{Eq:GenSpanL2Dist} \begin{gathered} L = \operatorname{Span} \left(u_1, \dots, P^{h_1 - 1} u_1, \quad v_1, \dots, P^{n_1 - h_1 - 1} v_1, \right\} \oplus \\ \oplus \operatorname{Span} \left\{ u_2, \dots, P^{h_2 - 1} u_2, \quad v_2, \dots, P^{n_2 - h_2 - 1} v_2 \right) \end{gathered} \end{equation} If $h_j = n_j$, then $v_j =0$. If $h_j \not = n_j$, then  \[ \operatorname{Span}\left\{v_j,\dots, P^{n_2 - h_2 - 1}v_j \right\} = \operatorname{Ker} P^{n_j - h_j} \cap \operatorname{Span}\left\{ u_1, \dots, P^{h_1 - 1} u_1 , u_2, \dots, P^{h_2 - 1} u_2\right\}^{\perp}. \]
In short, $L$ is determined by $u_1$ and $u_2$. The orbit $O_L$ is a homogeneous space and the induced action on the base $\Lambda(1) \times \Lambda(1)$ is transitive. Hence, the projection $\pi$, given by \eqref{Eq:ProjOrb2DistJord}, is surjective and defines a fiber bundle. It remains to proof that the fibers $F\approx \mathbb{K}^N$. We show it by describing possible $u_1$ and $u_2$.

Fix a standard basis \eqref{Eq:StandBasisTwoBlocks} such that \[P^{h_1-1}u_1 = e_{n_1}, \qquad P^{h_2-1}u_2 = e_{n_2}. \] The vectors $u_1$ and $u_2$ must belong the $\operatorname{Aut}(V,\mathcal{P})$-orbits of $e_{n_1 -h_1 + 1}$ and $\hat{e}_{n_2 - h_2 + 1}$ respectively. By Theorem~\ref{T:VectOrbits}, they have the form \[ u_1 = e_{n_1 -h_1 +1} + w_1 + \hat{w}_1, \qquad u_2 = \hat{e}_{n_2 -h_2 +2} + w_2 + \hat{w}_2,\] where $w_j$ and $\hat{w}_j$ are combinations of $e_i, f_j$ and $\hat{e}_i, \hat{f}_j$ respectively such that  \[ \operatorname{height}(w_1) < h_1, \quad \operatorname{height}(\hat{w}_1) \leq n_2 - n_1 + h_1, \quad \operatorname{height}(w_2) \leq h_2, \quad \operatorname{height}(\hat{w}_2) < h_2. \] Four cases arise from \eqref{Eq:FourDelta}.

\begin{enumerate} 

\item If $h_1 \geq n_1 -h_1 \geq h_2 \geq n_2 - h_2$ (first case in \eqref{Eq:FourDelta}), perform bi-Poisson reduction w.r.t. $\operatorname{Im} P^{h_1}$. By  Lemma~\ref{L:ImageAut} after reduction the automorphism group $\operatorname{Aut} \left(V, \mathcal{P}\right)$ acts independently on each Jordan block, leading to a product orbit structure. 

\item If $h_1 \geq h_2 \geq n_2 - h_2 \geq n_1 - h_1$ (third case in \eqref{Eq:FourDelta}), then $h_1 = n_1$. The generators have the form \[ u_1 = e_1 + \dots, \quad u_2 = \hat{e}_{n_2-h_2+1} + \dots \quad v_2 = \hat{f}_{n_2 - h_2} + \dots\] By choosing a basis in $L$ we can make \[ u_1 = e_{1} + \sum_{j < h_1} b_j f_j +  \sum_{1 \leq j \leq n_2 - h_2} c_j \hat{e}_j + \sum_{n_2 - h_2 < j \leq n_2} d_j \hat{f}_j. \] We get $h_1 + n_2 - 1$ parameters for $u_1$. Since the vectors $P^i u_j$ are all orthogonal, we can bring $u_2$ to the form \[ u_2 = \hat{e}_{n_2 - h_2 + 1} - P^{n_2 - h_2}\left( \sum_{j < h_1} b_j f_j \right) + \sum_{n_2 - h_2 < j < h_2} \hat{d}_j \hat{f}_j. \] We get additional $2h_2 - n_2 - 1$ parameters for $u_2$. Thus, the fiber $F \approx \mathbb{K}^N$, where \[ N= (2h_1 - n_1) + (2h_2 - n_2) +  (n_2 - 2 (n_1 - h_1)) -2 = h_1 + 2h_2 -2, \] as required.

\item If $h_1 \geq h_2 \geq n_1 - h_1 \geq n_2 - h_2$ (second case in \eqref{Eq:FourDelta}), then $h_2 = n_2$ and  the generators have the form \[ u_1 = e_{n_1-h_1 +1} + \dots, \quad v_1 = f_{n_1 - h_1} + \dots, \quad u_2 = \hat{e}_{1} + \dots\] We can choose \[ u_2 = \hat{e}_{1} + \sum_{n_1 -h_1 < j \leq h_2} \hat{b}_j f_j +  \sum_{j < n_2} d_j \hat{f}_j, \] which gives us $h_1 + 3h_2 - n_1 - n_2 -1$ parameters (note that $h_2 = n_2$). The vector $u_1$ serves as a ``generator'' for a $h_1$-height bi-Lagrangian subspace within the $2n_1$-dimensional Jordan block $\operatorname{Span}\left\{P^j u_2\right\}^{\perp}$. That gives us additional $2h_1 - n_1 - 1$ parameters.  Here $F \approx \mathbb{K}^N$, where \[ N= (2h_1 - n_1) + (2h_2 - n_2) +  (h_2 - (n_1 - h_1)) -2 = 3h_1 + 3h_2 -2n_1 - n_2 -2, \] as required.
  
\item  The remaining case $h_2 \geq h_1 \geq n_1-h_1 \geq n_2 - h_2$ (fourth case in \eqref{Eq:FourDelta}) can be treated similarly.  Again, $n_2 = h_2$ but $u_2$ involves $e_j$ due $h_2 \geq h_1$. Hence, \[ u_2 = \hat{e}_{1} + \sum_{n_1 -h_1 < j \leq \leq n_1-h_2+1} \hat{a}_j e_j  + \sum_{n_1 -h_1 < j \leq h_2} \hat{b}_j f_j +  \sum_{j < n_2} d_j \hat{f}_j \] and we get additional $h_2 - h_1$ parameters. In this case  we get $F \approx \mathbb{K}^N$, where \[ N= (2h_1 - n_1) + (2h_2 - n_2) +  (2h_2 - n_1) -2 = 2h_1 + 4h_2 -2n_1 - n_2 -2, \] as required. \end{enumerate}

Theorem~\ref{T:2DistJordTopolMaxOrb} is proved. \end{proof}

\subsection{Indecomposable bi-Lagrangian subspaces} \label{SubS:TwoDistIndecom}
 
We now investigate bi-Lagrangian subspaces in $\mathcal{J}_{0, 2n_1} \oplus \mathcal{J}_{0, 2n_2}$, which cannot be decomposed into independent bi-Lagrangian subspaces within each Jordan block. To begin, we provide their general description.

\begin{theorem} \label{T:CanonIndecomp2Jord}   Let $(V, \mathcal{P})\approx \mathcal{J}_{0, 2n_1} \oplus \mathcal{J}_{0, 2n_2}$, where $n_1 > n_2$, and $P$ be the nilpotent recursion operator.  Any indecomposable bi-Lagrangian subspace $L \subset (V, \mathcal{P})$ admits a basis consisting of vectors $u, v \in V$,  together with an equal number of their images under $P$ and a subset of some standard basis \eqref{Eq:StandBasisTwoBlocks}: \begin{equation} \label{Eq:CanonFormIndecompGen} \begin{gathered} L = \operatorname{Span} \left\{u, Pu, \dots, P^{r-1}u, \quad v, Pv, \dots, P^{r-1}v \right\} \oplus \\ \oplus \operatorname{Span} \left\{e_{n_1}, \dots, e_{n_1 - p_1 +1},\quad f_1, \dots, f_{q_1}, \quad \hat{e}_{n_2}, \dots, \hat{e}_{n_2 - p_2 +1},\quad \hat{f}_1, \dots, \hat{f}_{q_2}\right\}.   \end{gathered} \end{equation} \end{theorem}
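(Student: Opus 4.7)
By Lemma~\ref{L:NonGenDescBiLagr}, the bi-Lagrangian subspace $L$ is a Lagrangian $P$-invariant subspace of the symplectic space $(V,B)$. As a $P$-invariant subspace, $L$ admits a rational canonical decomposition into cyclic $P$-submodules,
\[
L = \bigoplus_{i=1}^M \operatorname{Span}\bigl(w_i, Pw_i, \dots, P^{k_i-1}w_i\bigr), \qquad P^{k_i}w_i = 0,
\]
with $\sum_i k_i = n_1 + n_2$. By Lemma~\ref{L:JNFGeneric} the number of chains is bounded by $M \le 2N = 4$, and $(k_1,\dots,k_M,0,\dots)$ is componentwise majorized by $(n_1, n_1, n_2, n_2)$.

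I would fix an initial standard basis \eqref{Eq:StandBasisTwoBlocks} of $V$ and classify each cyclic component as \emph{pure} (its generator lies, up to modification by lower-height chain elements, inside one of the blocks $\mathcal{J}_{0,2n_j}$) or \emph{mixed}. Pure chains can be brought to the standard-basis form $e_{n_1-p_1+1},\dots,e_{n_1}$ (or the $f$-type chain in block~1, or the hatted analogues in block~2) by combining the one-block canonical form of Theorem~\ref{T:BiLagr_One_Jordan_Canonical_Form} applied inside the appropriate block with the explicit form of $\operatorname{Aut}(V,\mathcal{P})$ from Theorem~\ref{T:BiSymp_General_Jordan_Case_Mega}, which allows realigning the top-height parts across blocks. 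Indecomposability of $L$ rules out the purely pure case, since a decomposition into pure chains inside the two blocks would give $L = (L \cap \mathcal{J}_{0,2n_1}) \oplus (L \cap \mathcal{J}_{0,2n_2})$.

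The main step is to organize the mixed chains into at most two chains of a common length $r$, generated by vectors $u, v \in V$. If two mixed chains have unequal lengths $r_1 > r_2$, I would modify the longer generator by subtracting a suitable multiple of $P^{r_1-r_2}$ applied to the shorter generator; the bi-isotropy of $L$ with respect to $B$ constrains the coefficients, and the ``excess'' top-height portion of the longer chain gets converted into a pure $e$- or $f$-type chain in block~1, which is absorbed into the pure part of the decomposition (increasing $p_1$ or $q_1$). If three or more chains are mixed, the bound $M \le 4$ combined with the dimension identity $\sum k_i = n_1 + n_2$ and the majorization constraint lets me eliminate the extras by the same kind of modification, until precisely two mixed chains of a common length $r$ survive.

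The principal obstacle is executing these reductions consistently without destroying the standard-basis form of the pure chains already placed: the bi-isotropy condition with respect to $B$ imposes orthogonality relations between the top-height parts of all chain generators, and each modification must be verified to preserve both the isotropy and the canonical form of the pure part. A finite induction on the number of mixed chains and on their lengths, organized by a case analysis on where the ``top parts'' of the generators lie in a Young-diagram picture analogous to \eqref{Eq:SumBlocksInvariant}, should close the argument and yield the form \eqref{Eq:CanonFormIndecompGen}.
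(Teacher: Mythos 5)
Your proposal takes a genuinely different route from the paper. You decompose $L$ into cyclic $P$-submodules via the rational canonical form and then try to merge and realign chains, whereas the paper's argument (implicit in the proofs of Theorems~\ref{T:CanonIndecompTypeI} and \ref{T:CanonIndecompTypeII}) picks a single vector $u \in L$ of maximal height $h$, normalizes it using the classification of vector orbits (Theorem~\ref{T:VectOrbits}), observes that $\operatorname{Im}P^h \subset L \subset \operatorname{Ker}P^h$ forces $L$ to contain a specific $n_1$-dimensional admissible bi-isotropic subspace $U$ built from the chain of $u$, and then applies Theorem~\ref{T:BiLagr_One_Jordan_Canonical_Form} to $L/U$ inside $U^\perp/U$, which collapses to a single Jordan block of size $2n_2$. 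The second generator $v$ therefore emerges automatically with the correct structure, and the fact that there are exactly two ``mixed'' generators is a by-product of the quotient being a single block, not something that has to be engineered by hand.

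This is where your proposal has a genuine gap. The cyclic decomposition of $L$ gives the Jordan partition of $P|_L$, but nothing in that decomposition forces the non-pure part to be generated by exactly two vectors whose retained prefixes have a common length $r$; the bound $M \le 4$ together with $\sum k_i = n_1+n_2$ and the majorization constraint from Lemma~\ref{L:JNFGeneric} gives only coarse numerical information, not this structural conclusion. Your proposed repair step, subtracting $P^{r_1-r_2}$ applied to the shorter generator from the longer one, changes the top-height direction but not the chain length, so it does not equalize lengths, and the claim that the ``excess'' is absorbed as a pure $e$- or $f$-chain is exactly the assertion that needs proof. More fundamentally, the role of indecomposability in your argument is limited to ruling out the all-pure case, whereas in the paper's argument indecomposability is used substantively (via Theorem~\ref{T:ExtractOneJord}) to establish that a maximal-height $u$ cannot lie in the automorphism orbit of a standard basis vector, which is precisely what forces $u$ to mix the two blocks and pins down $x = h > r$ in Step~2 of the proof of Theorem~\ref{T:CanonIndecompTypeI}. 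Your sketch has no analogue of this constraint, and the inductive case analysis you gesture at in the last paragraph — which is acknowledged as the main obstacle — is precisely the step that would need the paper's reduction machinery (bi-Poisson reduction plus Corollary~\ref{Cor:ImageAutIso}) to actually close. The observation that the pure parts $L \cap \mathcal{J}_{0,2n_j}$, being $P$-invariant and isotropic, land in the form $\operatorname{Span}\{e_{n_j-p_j+1},\dots,e_{n_j},f_1,\dots,f_{q_j}\}$ is correct, but is best phrased about those intersections directly rather than about individual cyclic chains.
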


The canonical form from Theorem~\ref{T:CanonIndecomp2Jord} can be further refined by specifying the choices for vectors $u, v$ and the parameters $r, p_1, q_1, p_2, q_2$. We describe them in Theorems~\ref{T:CanonIndecompTypeI} and \ref{T:CanonIndecompTypeII} below. Consider two numbers \begin{equation} \label{Eq:HeightUp}   h = \max_{0 \leq m \leq n_1} \left\{L \subseteq \operatorname{Ker} P^m\right\}, \qquad d = \max_{0 \leq m \leq n_1} \left\{ L \subseteq \operatorname{Im} P^m\right\}. \end{equation} We divide indecomposable $L$ into $2$ types\footnote{Alternative divisions exist. For instance, in Section~\ref{SubS:TypeIIStop} we introduce subtype "Type II-S".  We chose a simpler classification scheme for exposition.}.

\begin{definition} \label{D:TypesIndecomp} Indecomposable bi-Lagrangian subspaces $L \subset \mathcal{J}_{0, 2n_1} \oplus \mathcal{J}_{0, 2n_2}$ can be categorized into two distinct types: 

\begin{itemize}

\item \textbf{Type I}. There is $v \in L - \operatorname{Ker}P^{h-1}$ such that $v \in \operatorname{Im}P^{d+1}$.

\item \textbf{Type II}. Any  $v \in L - \operatorname{Ker}P^{h-1}$ satisfies $v \not \in \operatorname{Im}P^{d+1}$.

\end{itemize}

\end{definition}

Obviously, $h, d$ are invariants of $L$ and bi-Lagrangian subspaces from distinct types are non-isomorphic. We begin by establishing a useful result concerning Jordan block extraction in Section~\ref{SubS:ExtractOneJordVer2}, followed by a description of canonical forms and orbit topology for Types I and II in subsequent sections.

\subsubsection{Extracting a Jordan block} \label{SubS:ExtractOneJordVer2}

The next statement generalizes Theorem~\ref{T:ExtractMaxBlockGen}. Recall that we described  $\operatorname{Aut}(V,\mathcal{P})$-orbits of vectors in Section~\ref{SubS:OrbitsVect}.

\begin{theorem} \label{T:ExtractOneJord} Consider a bi-Poisson vector subspace $(V, \mathcal{P}) = \bigoplus_{i=1}^N \mathcal{J}_{0, 2n_i}$, where $n_1 \geq n_2 \geq \dots \geq n_N$. Let $e^i_j, f^i_j$ be its standard basis, where $i=1,\dots, N, j=1,\dots, n_{i}$, and $L \subset (V, \mathcal{P})$ be a bi-Lagrangian subspace. Assume that there exists a vector $v$ such that 

\begin{enumerate}

\item $v$ has maximal possible height: \[ \operatorname{height}(v) = \operatorname{height}(L) = h.\]

\item The $\operatorname{Aut}(V,\mathcal{P})$-orbit of $v$ contains a basis vector $e^{i_0}_{j_0}$.

\end{enumerate}

Then there exists a decomposition of $(V, \mathcal{P})$ and $L$: \begin{equation} \label{Eq:Decomp2CompBiLagrVer2} (V, \mathcal{P}) = (V_1,\mathcal{P}_1) \oplus (V_2, \mathcal{P}_2), \quad L = L_1 \oplus L_2, \quad L_i = L \cap V_i, \end{equation} such that $(V_1,\mathcal{P}_1) =\mathcal{J}_{0, 2n_{i_0}}$ and $\operatorname{height}(L_1) = h$.\end{theorem}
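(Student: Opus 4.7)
The plan is to reduce via an automorphism to the case $v = e^{i_0}_{j_0}$, identify $L \cap V_1$ explicitly as a bi-Lagrangian subspace of the $i_0$-th Jordan block $V_1 = \mathcal{J}_{0, 2n_{i_0}}$, and then apply bi-Poisson reduction to split off this block, following the template of the proof of Theorem~\ref{T:ExtractMaxBlockGen} but adapted to non-maximal height.

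First, by hypothesis there exists $g \in \operatorname{Aut}(V,\mathcal{P})$ with $gv = e^{i_0}_{j_0}$. Replacing $L$ with $gL$ (the sought decomposition transforms covariantly under automorphisms), I may assume $v = e^{i_0}_{j_0}$ in the fixed standard basis. Since heights are orbit invariants and $Pe^{i_0}_k = e^{i_0}_{k+1}$, we have $h = \operatorname{height}(e^{i_0}_{j_0}) = n_{i_0} - j_0 + 1$. Set $V_1 = \mathcal{J}_{0,2n_{i_0}}$ and $V_2 = \bigoplus_{i \neq i_0} \mathcal{J}_{0,2n_i}$, so that $V = V_1 \oplus V_2$ is bi-orthogonal.

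Next I identify $L_1 := L \cap V_1$. The assumption $\operatorname{height}(L) = h$ gives $L \subset \operatorname{Ker}P^h$; dualizing via $L = L^\perp$ and formula~\eqref{Eq:OrthKerIm} yields $L \supset \operatorname{Im}P^h$. A direct calculation in the standard basis shows $\operatorname{Im}P^h \cap V_1 = \operatorname{Span}\bigl\{e^{i_0}_{h+1},\dots,e^{i_0}_{n_{i_0}},f^{i_0}_1,\dots,f^{i_0}_{j_0-1}\bigr\}$, and combining this with $v = e^{i_0}_{j_0} \in L$ and the $P$-invariance $e^{i_0}_{j_0},\dots,e^{i_0}_{n_{i_0}} \in L$ shows that $L$ contains
\[
L_1' = \operatorname{Span}\bigl\{e^{i_0}_{j_0},\dots,e^{i_0}_{n_{i_0}},\ f^{i_0}_1,\dots,f^{i_0}_{j_0-1}\bigr\}.
\]
This $L_1'$ is exactly the canonical bi-Lagrangian subspace of height $h$ in $V_1$ from Theorem~\ref{T:BiLagr_One_Jordan_Canonical_Form}: it has dimension $n_{i_0}$, and bi-isotropy is immediate from $B(e^{i_0}_k,f^{i_0}_l) = \delta^k_l$ and $A(e^{i_0}_k,f^{i_0}_l) = \delta^k_{l-1}$, both of which vanish for indices $k \geq j_0 > l$. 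Since $L \cap V_1$ is bi-isotropic in $V_1$ and contains the bi-Lagrangian (hence maximal bi-isotropic) subspace $L_1'$, we conclude $L_1 = L \cap V_1 = L_1'$, which is bi-Lagrangian in $V_1$ and has height exactly $h$.

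Finally, since $L_1$ is bi-Lagrangian in $V_1$, a dimension count together with $V_2 \subset L_1^\perp$ gives $L_1^\perp = L_1 \oplus V_2$. The chain $L_1 \subset L = L^\perp \subset L_1^\perp$ lets me invoke Corollary~\ref{C:ContainBiisotAdm}: the image $L/L_1$ is bi-Lagrangian in $L_1^\perp/L_1 \cong V_2$. Writing any $w \in L \subset L_1 \oplus V_2$ as $w = u + w_2$ with $u \in L_1$ and $w_2 \in V_2$, one has $w_2 = w - u \in L \cap V_2$, so $L = L_1 \oplus L_2$ with $L_2 := L \cap V_2$ bi-Lagrangian in $V_2$. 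The principal step requiring care is translating the height hypothesis and the orbit condition into the explicit identification $L \cap V_1 = L_1'$; once this is in place, the decomposition follows cleanly from the bi-Poisson reduction machinery of Section~\ref{S:BiPoisSect}.
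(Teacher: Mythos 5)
Your proof is correct and takes essentially the same route as the paper's: normalize $v$ to $e^{i_0}_{j_0}$ via the automorphism (the paper does this by re-choosing the standard basis, which amounts to the same thing), use the height hypothesis to trap $L$ between $\operatorname{Im}P^h$ and $\operatorname{Ker}P^h$, observe that the explicit subspace $L_1'$ (the paper's $U$) sits inside $L$ and is bi-Lagrangian in $V_1$, and split off the block via the bi-Poisson reduction machinery. Your added steps — identifying $L\cap V_1 = L_1'$ by maximality of bi-isotropy, and spelling out that the sandwich $L_1 \subset L \subset L_1^\perp = L_1 \oplus V_2$ forces $L = L_1 \oplus (L \cap V_2)$ — are details the paper's terse proof leaves implicit, and they make the argument cleaner.
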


In Figure~\eqref{Eq:ExtrEqVer2} we illustrate Theorem~\ref{T:ExtractOneJord} for $\mathcal{J}_{0, 8} \oplus\mathcal{J}_{0, 6} \oplus \mathcal{J}_{0, 4}$. Cells labeled N indicate $\operatorname{height}(L) = 2$ the shaded cell represents the vector $v = e^{2}_2$.

\begin{equation} \label{Eq:ExtrEqVer2}
  \begin{tabular}{|c|c||c|c||c|c|} 
  \cline{1-2} N & N & \multicolumn{1}{c}{}& \multicolumn{1}{c}{} & \multicolumn{1}{c}{} & \multicolumn{1}{c}{} \\
 \cline{1-4} N & N & N & N & \multicolumn{1}{c}{} & \multicolumn{1}{c}{} \\
     \hline  &  & {\cellcolor{gray!25} }  & & &  \\ 
   \hline  &   &  &  & &  \\ \hline
  \end{tabular} 
  \qquad \to \qquad 
   \begin{tabular}{|c|c|} 
  \multicolumn{1}{c}{}& \multicolumn{1}{c}{}  \\
 \hline N & N \\
     \hline  {\cellcolor{gray!25} }  &  \\ 
   \hline   {\cellcolor{gray!25} }  & {\cellcolor{gray!25} }   \\ \hline
  \end{tabular} \oplus  \begin{tabular}{|c|c||c|c|} 
  \cline{1-2} N & N & \multicolumn{1}{c}{} & \multicolumn{1}{c}{} \\
 \cline{1-2} N & N &  \multicolumn{1}{c}{} & \multicolumn{1}{c}{} \\
     \hline  &   & &  \\ 
   \hline  &   & &  \\ \hline
  \end{tabular} 
\end{equation}

\begin{proof}[Proof of Theorem~\ref{T:ExtractOneJord}]  Consider a standard basis, where $v = e^{i_0}_{j_0}$. Define $V_1$ as the $i_0$-th Jordan block and $V_2$ as the sum of all other Jordan blocks in that basis. Since $\operatorname{height}(L) = h$ we have \[ \bigoplus_{i=1}^N \mathcal{J}_{0, 2n_i}^{ \leq n_i -h} \subset L \subset \bigoplus_{i=1}^N \mathcal{J}_{0, 2n_i}^{\leq h}\] and $j_0 = n_{i_0} - h +1$.  Since $L$ is $P$-invariant, $e^{i_0}_j\in L$, for all $j \geq j_0$. Therefore, $L$ contains the subspace \[ U =\operatorname{Span} \left\{e^{i_0}_{n_{i_0} - h +1}, \dots, e^{i_0}_{n_{i_0}}, f^{i_0}_{1}, \dots, f^{i_0}_{n_{i_0} - h} \right\}.\] Using the bi-Poisson reduction (Theorem~\ref{T:BiPoissReduction}) for $U = L_1$ we get that $L = L_1 \oplus L_2$, where $L_2$ is a bi-Lagrangian subspace of $(V_2, \mathcal{P}_2)$. We got the required decomposition \eqref{Eq:Decomp2CompBiLagrVer2}. Theorem~\ref{T:ExtractOneJord}  is proved. \end{proof}

\subsubsection{Canonical form. Type I} 

\begin{theorem} \label{T:CanonIndecompTypeI} For any indecomposable bi-Lagrangian subspaces $L \subset \mathcal{J}_{0, 2n_1} \oplus \mathcal{J}_{0, 2n_2}$ of type I there exists a standard basis \eqref{Eq:StandBasisTwoBlocks} such that $L$ has the form \eqref{Eq:CanonFormIndecompGen}, where \begin{equation} \label{Eq:TypeIParam} \begin{gathered}  u = e_{n_1 - h + 1} + \hat{e}_{n_2 - d - r +1}, \qquad v =  f_{n_1 - h + r} - \hat{f}_{n_2-d}, \\  p_1 = h - r, \qquad q_1 = n_1 - h, \qquad p_2 =  d \qquad q_2 = n_2 - d - r. \end{gathered} \end{equation}  The parameters $h, d$ and $r$  are uniquely defined, with the restrictions $r>0$ and \begin{equation} \label{Eq:CondIndecTypeI} p_1 \geq q_1 > q_2 > p_2.\end{equation} \end{theorem}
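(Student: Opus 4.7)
The plan is to derive the claimed normal form by starting from the general canonical form in Theorem~\ref{T:CanonIndecomp2Jord} and specializing the generators using the Type~I hypothesis, then verifying the parameter formulas and inequalities via the intrinsic invariants $h$ and $d$.

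First, I would fix intrinsic invariants. The integers $h$ and $d$ from \eqref{Eq:HeightUp} are manifestly $\operatorname{Aut}(V,\mathcal{P})$-invariant, since $\operatorname{Ker}P^k$ and $\operatorname{Im}P^k$ are invariant subspaces (Section~\ref{SubS:InvSubspaces}). From the general form \eqref{Eq:CanonFormIndecompGen}, the chain length $r$ can be read off as the invariant
\[
r \;=\; \dim\bigl((L\cap\operatorname{Ker}P^{h})/((L\cap\operatorname{Ker}P^{h-1})+(L\cap\operatorname{Ker}P^{h}\cap\operatorname{Im}P^{h}))\bigr)^{\!*}
\]
(or equivalently as the length of the ``mixed'' chain in any representation \eqref{Eq:CanonFormIndecompGen}), so $r$ is also an invariant of $L$.

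Next, I would use the Type~I hypothesis to fix the generator $v$. By definition of Type~I there is a vector $v_\star\in L$ of height $h$ with $v_\star\in\operatorname{Im}P^{d+1}$. Since $L$ is bi-Lagrangian, it contains $\operatorname{Im}P^{h}+\operatorname{Ker}P^{d}$, which is admissible and bi-isotropic; applying bi-Poisson reduction (Theorem~\ref{T:BiPoissReduction}) relative to this subspace and invoking Theorem~\ref{T:BiSymp_General_Jordan_Case_Mega} to choose a standard basis \eqref{Eq:StandBasisTwoBlocks} adapted to $v_\star$, I can arrange $v=f_{n_1-h+r}-\hat f_{n_2-d}$, with the minus sign forced by the bi-isotropy $A_\lambda(v,v)=0$ and by the sign conventions in \eqref{Eq:StandJord}. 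The dual generator $u$ is then constrained by the bi-Lagrangian condition $L=L^{\perp}$ combined with the $P$-invariance to have the form $u=e_{n_1-h+1}+c\,\hat e_{n_2-d-r+1}$ for some $c\neq0$; rescaling the second block by a bi-Poisson automorphism absorbs $c$, giving $u$ in the stated form, and the remaining lower-height corrections in $u$ can be subtracted off using the block-matrix entries in \eqref{E:bsp_algebra_matrix}.

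Then I would compute the semisimple parameters and verify the inequalities. Requiring the $P$-chains spanned by $u,v$ together with the semisimple piece $\{e_{n_1-p_1+1},\dots,e_{n_1},f_1,\dots,f_{q_1},\hat e_{n_2-p_2+1},\dots,\hat e_{n_2},\hat f_1,\dots,\hat f_{q_2}\}$ to be bi-isotropic, of total dimension $n_1+n_2$, and to exhaust $\operatorname{Im}P^{h}+\operatorname{Ker}P^{d}\subset L$, yields $p_1=h-r$, $q_1=n_1-h$, $p_2=d$, $q_2=n_2-d-r$ by direct dimension counting. The inequality $r>0$ records the indecomposability of $L$ (if $r=0$, the generators drop out and $L$ becomes the semisimple sum of Theorem~\ref{T:BiLagrTwoDistSemi}); the chain $p_1\ge q_1$ is the one-Jordan-block bound $h\ge n_1/2$ applied inside block~1; and $q_1>q_2>p_2$ rephrases $n_1-h>n_2-d-r>d$, which captures both the ``linking'' between blocks (Type~I places $v$ in $\operatorname{Im}P^{d+1}$, forcing strict inequality $q_2>p_2$) and the asymmetry $n_1>n_2$. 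Uniqueness of $h,d,r$ is automatic from their invariant descriptions.

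The main obstacle I anticipate is the normalization step for $u$ and $v$: the automorphism group $\operatorname{Aut}(V,\mathcal{P})$ has the constrained block-matrix form \eqref{E:bsp_algebra_matrix}, so off-diagonal transformations linking the two Jordan blocks are limited by \eqref{E:Cond_on_BiSymp_Jordan_Case}. I expect the delicate part to be verifying that every simplification step — adjusting the coefficient in $\hat e_{n_2-d-r+1}$, removing lower-height remainders from $u$, and aligning the semisimple completion — can actually be implemented by a bi-Poisson automorphism and not merely by an arbitrary $P$-commuting linear map.
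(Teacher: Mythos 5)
Your proposal follows the same broad strategy the paper uses (reduce $d$ to $0$, normalize the two generators, count dimensions, verify the inequalities), but it contains a genuine confusion that would derail the argument and omits the two technical steps that actually do the work.

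The confusion: you take the Type~I definition's vector $v_\star\in L-\operatorname{Ker}P^{h-1}$ with $v_\star\in\operatorname{Im}P^{d+1}$ and try to normalize it to the theorem's $v = f_{n_1-h+r}-\hat f_{n_2-d}$. But $v_\star$ has height exactly $h$, while the theorem's $v$ has height $n_1-h+r$, which is strictly less than $h$ unless $p_1=q_1$. The Type~I vector corresponds to the theorem's $u$, not $v$. Indeed, with $d=0$, $u=e_{n_1-h+1}+\hat e_{n_2-r+1}$ has height $h$ and lies in $\operatorname{Im}P$ precisely because $h<n_1$ and $r<n_2$ — this is what Type~I asserts. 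The paper starts from $u$ (a vector of maximal height) and then produces $v$ as a ``complementary'' chain in the reduction $L/U$; you have reversed the roles, and the normalization you sketch for $u$ (``constrained by $L=L^\perp$ and $P$-invariance to have the form $e_{n_1-h+1}+c\,\hat e_{n_2-d-r+1}$'') would in fact be the harder of the two directions.

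The missing steps are exactly the ones you flag at the end as ``delicate'', but they are not merely delicate — they are where the proof lives. First, $r$ is not fixed by your proposed formula (that formula is circular: it presupposes the existence of a representation \eqref{Eq:CanonFormIndecompGen} and that all such representations give the same chain length). The paper instead picks, among all vectors $u\in L$ of maximal height $h$ written as $e_{n_1-h+1}+\hat e_{n_2-r+1}$, the one with $r$ \emph{minimal}, and this minimality is used twice: to conclude $r<n_2$ (Type~I forces some maximal-height vector into $\operatorname{Im}P$) and to force the leading coefficient $b_{n_2}\neq0$ in the description of $L/U$, which is what allows the clean normalization of $v$. Uniqueness of $r$ is only established afterward, and the paper's remark gives the correct invariant (the Jordan form of $P|_L$), not a quotient-dimension formula. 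Second, before the single-block canonical form (Theorem~\ref{T:BiLagr_One_Jordan_Canonical_Form}) can be applied to $L/U$, one must know that $L/U$ has height exactly $n_2$; the paper proves this by showing that otherwise $\operatorname{Ker}P\subset L$, so $d>0$, contradicting the reduction to $d=0$. You do not address this, and without it the description of $L/U$ via a single generator fails. Finally, removing the lower-height remainders in $u$ cannot in general be done by an element of $\operatorname{Aut}(V,\mathcal P)$ alone; the paper does it by an explicit change of the basis vectors $\hat f_j$ of the second block that preserves the standard form, which is a different (and necessary) move from what you sketch.
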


\begin{remark} We can rewrite \eqref{Eq:CondIndecTypeI} as \[ \max\left(0, (n_2 - d) - (n_1 - h)\right) < r \leq \min\left(2h - n_1, n_2 - 2d - 1\right).\] The parameter $r$ exists only if \[\frac{n_1}{2} < h \leq n_1, \qquad 0 \leq d < \frac{n_2 - 1}{2}, \qquad n_2 <  h+ d < n_1 - 1.\]  \end{remark}

\begin{remark} The subspace $L$ from Theorem~\ref{T:CanonIndecompTypeI} can also be expressed as \begin{equation} \label{Eq:TypeISimpleForm} L = \operatorname{Span} \left\{u, \dots P^{h-1}u, \quad v, \dots, P^{n_1 - h + r-1}v \right\} + \operatorname{Im}P^{h} + \operatorname{Ker} P^d.\end{equation} \end{remark}

An example of a Type I indecomposable subspace of $\mathcal{J}_{0, 18} \oplus \mathcal{J}_{0, 10}$ is visualized in Figure~\eqref{Eq:TypeIIndecomp}. The parameters are \[ p_1 = 4, \quad q_1 = 3, \quad p_2 = 1, \quad q_2 = 2, \quad r = 2.\] Jordan blocks are depicted as Young-like diagrams, a bi-Lagrangian subspace is depicted using shaded and labeled cells:

\begin{itemize}
    \item Bottom shaded column heights: $p_1, q_1, p_2, q_2$ (left to right).
    
    \item The labeled cells $P1, P2, M1$, and $M2$ represent specific linear combinations of basis vectors. There are a total of $4r$ labeled cell groups, with the $Pj$ cells corresponding to the vectors $u, \dots, P^{r-1}u$, and the $Mj$ cells corresponding to the vectors $v, \dots, P^{r-1}v$. 
\end{itemize}  

\begin{equation} \label{Eq:TypeIIndecomp}
 \begin{tabular}{|c|c||c|c|} 
   \cline{1-2}  & &   \multicolumn{1}{c}{} & \multicolumn{1}{c}{}   \\
     \cline{1-2}  & &   \multicolumn{1}{c}{} & \multicolumn{1}{c}{}   \\
     \cline{1-2} & &   \multicolumn{1}{c}{} & \multicolumn{1}{c}{}   \\
\cline{1-2} P1 & &   \multicolumn{1}{c}{} & \multicolumn{1}{c}{}   \\
  \hline P2 & M1 &  &    \\
   \hline {\cellcolor{gray!25} }  & M2 &    &  M1 \\ 
   \hline {\cellcolor{gray!25} } &  {\cellcolor{gray!25} }    &   P1 & M2  \\
     \hline {\cellcolor{gray!25} }  & {\cellcolor{gray!25} } &   P2  &  {\cellcolor{gray!25} }  \\ 
   \hline {\cellcolor{gray!25} } &  {\cellcolor{gray!25} }    &   {\cellcolor{gray!25} }  & {\cellcolor{gray!25} }   \\ \hline
  \end{tabular}
\end{equation}

\begin{proof}[Proof of Theorem~\ref{T:CanonIndecompTypeI}] The proof strategy is centered on constructing a Jordan basis within $L$ by successively selecting Jordan chains and applying the bi-Poisson reduction. The proof is in several steps:

\begin{enumerate}
    \item \textit{We can assume that $d=0$}, otherwise \[ \operatorname{Ker} P^d \subset L \subset \operatorname{Im} P^d \] and we can perform bi-Poisson reduction w.r.t. $\operatorname{Ker}P^{d}$. We can bring the original subspace to the canonical form using Corollary~\ref{Cor:ImageAutIso}. 

    \item Any vector $u \in L$ can be expressed as \begin{equation} \label{Eq:FormUEq1} u = e_{n_1 - x + 1} + \hat{e}_{n_2 - r +1}\end{equation} in a suitable basis. \textit{We claim that for any $u \in L$ with \begin{equation} \label{Eq:HeightEq} \operatorname{height}(u) = \operatorname{height}(L) = h\end{equation} the parameter}  \[ \label{} x =h > r. \] Otherwise $r = h \geq x$ and by Theorem~\ref{T:VectOrbits} the automorphism orbit of $u$ contains $\hat{e}_{n_2 - r +1}$. Theorem~\ref{T:ExtractOneJord} implies that $L$ is decomposable, leading to a contradiction.

    \item \label{Step:SmallR} Choose a vector $u \in L$ from the previous step \textit{such that the index $r$ is as small as possible.} Definition of $h$ (Equation~\eqref{Eq:HeightUp}) implies \begin{equation} \label{Eq:ImLKerh}\operatorname{Im} P^h \subset L \subset \operatorname{Ker} P^h.\end{equation} Hence, $L$ contains the subspace \[ U = \operatorname{Span} \left\{ e_{n_1 - h + 1} + \hat{e}_{n_2 - r +1}, \dots,  e_{n_1 - h + r} + \hat{e}_{n_2} , \quad e_{n_1 - h + r +1}, \dots, e_{n_1}, \quad  f_{1}, \dots, f_{n_1 - h}\right\}  \] due to $P$-invariance. Moreover, $U \subset L \subset U^{\perp}$, where \[ U^{\perp} = U \oplus \operatorname{Span} \left\{\hat{e}_1, \dots, \hat{e}_{n_2}, w, Pw, \dots, P^{n_2 - 1}w\right\}, \qquad w = f_{n_1 - h + r} - \hat{f}_{n_2}. \]

    \item \label{Step:RLessN2} \textit{For type I subspaces $r < n_2$}. The vectors that satisfy \eqref{Eq:HeightEq} form the set $L - \operatorname{Ker}P^{h-1}$. If all these vectors satisfy $r = n_2$, then none of them belongs to $\operatorname{Im}P$, which is contradictory with Definition~\ref{D:TypesIndecomp}.
    
    \item \textit{The subspace $L/ U$ in $U^{\perp}/U$ has height $n_2$}. We have $h < n_1$, otherwise $L$ is decomposable by Theorem~\ref{T:ExtractOneJord}. By \eqref{Eq:ImLKerh}, \[ \operatorname{Im}P^{n_1 -1} = \operatorname{Span}\left\{ e_{n_1}, f_1\right\} \subset L.\] Similarly, if $\operatorname{height} (L/U) < n_2$, then \[\operatorname{Span}\left\{\hat{e}_{n_2}, \hat{f}_1 - f_{n_1 - h - n_2 + r +1} \right\} \subset L/U.\] Here we put $f_m =0$ for $m < 0$. By Step~\ref{Step:RLessN2}, $r < n_2$. Thus, $f_{n_1 - h - n_2 + r +1} \in U \subset L$, as its height does not exceed $n_1 - h$. We get that \[ \operatorname{Span} \left\{e_{n_1}, f_1, \hat{e}_{n_2}, \hat{f}_1 \right\}  =\operatorname{Ker} P  \subset L.\] Hence, $L \subset \operatorname{Im}P$ and  $d > 0$, leading to a contradiction.

    \item By the preceding step and Theorem~\ref{T:BiLagr_One_Jordan_Canonical_Form},  $L/ U$ has the form \[ \operatorname{Span} \left\{v, Pv,\dots, P^{n_2-1}v \right\}, \qquad v = \sum_{j=1}^{n_2} a_j \hat{e}_j + \sum_{j=1}^{n_2} b_{n_2 - j + 1} P^{j-1} w, \quad (a_{1}, b_{n_2}) \not = (0,0). \] \textit{We claim that $b_{n_2} \not = 0$}. Otherwise the vector \[ u - \frac{1}{a_1} P^{n_2 - r} v \in L \] has the form \eqref{Eq:FormUEq1} with a smaller value of the parameter $r$. We get a contradiction.

    \item \label{Step:ChangeFTypeI} \textit{The subspace $L$ can be reduced to the canonical form, given by    \eqref{Eq:CanonFormIndecompGen} and \eqref{Eq:TypeIParam}.} We need to eliminate all $a_j$ and $b_j$ except for $b_{n_2}$. By the preceding step, $b_{n_2} \not = 0$.  Substituting $v$ with a linear combination of $P^j v$ we get $b_j = 0$ for all $j < n_2$. We can make $a_j = 0$ by changing the basis vectors $\hat{f}_j$ as follows: \[\hat{f}_{n_2}' = \hat{f}_{n_2} + \sum_{j=1}^{n_2} a_j \hat{e}_j, \qquad \hat{f}_{n_2 -j}' = P^j \left(\hat{f}_{n_2}'\right). \] The new basis is also standard. In the new basis \[ L = U \oplus \operatorname{Span} \left\{v, Pv,\dots, P^{n_2-1}v \right\}, \qquad v =  f_{n_1-h +r} + \hat{f}_{n_2}'. \] It is easy to check that \eqref{Eq:CanonFormIndecompGen} and \eqref{Eq:TypeIParam} are satisfied. 

\item \textit{The parameters $h, d$ and $r$ satisfy \eqref{Eq:CondIndecTypeI}.} 

\begin{itemize}

\item $p_1 \geq q_1$ since $\operatorname{height}(u) \geq \operatorname{height}(v)$ by \eqref{Eq:HeightEq}. 

\item $q_1 > q_2$, otherwise $\operatorname{height}(f_{n_1 - h+r}) \leq n_2$  and by Theorem~\ref{T:ExtractOneJord} $L$ is decomposable. 

\item $q_2 > p_1$, since $p_1 = u = 0$ and $r < n_2$.

\end{itemize}

\item \textit{All $h, d, r$ satisfying \eqref{Eq:CondIndecTypeI} are possible.} Easily verified by computation.

\item \textit{The parameters $h, d$ and $r$  are uniquely defined}. For $h$ and $d$ it is obvious. $r$ was uniquely determined in Step~\ref{Step:SmallR}. 
    
\end{enumerate}

Theorem~\ref{T:CanonIndecompTypeI} is proved. \end{proof}

\begin{remark} The Jordan normal form of the restriction $P$ to $L$ is \[ P\bigr|_{L} \sim J(h)\oplus J(n_1 - h + r) \oplus J(n_2 - d - r) \oplus J(d),\]  with the parameter $r$ uniquely specified by this form.
 Perform bi-Poisson reduction w.r.t. \[ U = \mathcal{J}_{0, 2n_1}^{\leq n_1 - h} \oplus \mathcal{J}_{0, 2n_2}^{\leq d}. \] The Jordan normal form of the restriction $P$ to $L/(L\cap U)$ is \[ P\bigr|_{L/(L\cap U)} \sim J(2h - n_1)\oplus J(n_2 - 2d).  \] The type of the indecomposable subspace is determined by this Jordan normal form (cf. proof of Theorem~\ref{T:UniqTypeII}). \end{remark}

\subsubsection{Dimension and topology of orbits. Type I}

Let $L \subset \left(V, \mathcal{P}\right) = \mathcal{J}_{0, 2n_1} \oplus \mathcal{J}_{0, 2n_2}$ be an indecomposable bi-Lagrangian subspace with type I (see Definition~\ref{D:TypesIndecomp}). The orbit of $L$ will be described in a manner analogous to Section~\ref{S:Top2BlocksSemi}. We start with a trivial statement about $P^j(L) \cap \operatorname{Ker}P$. 

\begin{assertion} For the subspace $L$ from Theorem~\ref{T:CanonIndecompTypeI} \[ P^j\left(L\right) \cap \operatorname{Ker}P = \begin{cases}\operatorname{Span}\left\{e_{n_1}, f_1, \hat{e}_{n_2}, \hat{f}_1\right\}  \qquad & 0 \leq j < p_2,\\  \operatorname{Span}\left\{e_{n_1}, f_1, \hat{f}_1\right\}  \qquad & p_2 \leq j < q_2,\\  \operatorname{Span}\left\{e_{n_1}, f_1, \right\}  \qquad & q_2 \leq j < q_1 + r,\\ \operatorname{Span}\left\{e_{n_1}\right\}  \qquad & q_1 + r \leq j < h, \\ \left\{0 \right\} \qquad & h \leq j.  \end{cases} \] \end{assertion}

Consider subspaces $S_j$ as in Section~\ref{S:Top2BlocksSemi}. Put \[\begin{gathered} \hat{L}_1 =  P^{h-1}(L \cap \operatorname{Im}P^{d+1}) \cap \operatorname{Ker} P, \qquad   \hat{L}_2 =  P^{p_2}(L) \cap \operatorname{Ker} P, \\ L_1 = \hat{L}_1 \cap S_1, \qquad L_2 = \hat{L}_2/ S_1.\end{gathered} \]  If $p_1 > q_1$, then $\hat{L}_1 =  P^{h-1}(L) \cap \operatorname{Ker} P$. For $p_1 = q_1$ we replace $L$ with $L \cap \operatorname{Im}P^{d+1}$ to ensure that the resulting subspace $L_1$ is one-dimensional.  The process yields a pair of one-dimensional (Lagrangian) subspaces $L_j \subset S_j$.  Define \begin{equation} \label{Eq:PiIndecTypeI} \pi(L) = (L_1, L_2) \in \Lambda(1) \times \Lambda(1). \end{equation} For instance, in \eqref{Eq:Proj2DistJordTopolTypeI} we visualise  subspaces $L_j \subset S_j$ for $L$ from \eqref{Eq:TypeIIndecomp}.

\begin{equation} \label{Eq:Proj2DistJordTopolTypeI}
  \begin{tabular}{|c|c||c|c|} 
   \cline{1-2}  & &   \multicolumn{1}{c}{} & \multicolumn{1}{c}{}   \\
     \cline{1-2}  & &   \multicolumn{1}{c}{} & \multicolumn{1}{c}{}   \\
     \cline{1-2} & &   \multicolumn{1}{c}{} & \multicolumn{1}{c}{}   \\
\cline{1-2} P1 & &   \multicolumn{1}{c}{} & \multicolumn{1}{c}{}   \\
  \hline P2 & M1 &  &    \\
   \hline {\cellcolor{gray!25} }  & M2 &    &  M1 \\ 
   \hline {\cellcolor{gray!25} } &  {\cellcolor{gray!25} }    &   P1 & M2  \\
     \hline {\cellcolor{gray!25} }  & {\cellcolor{gray!25} } &   P2  &  {\cellcolor{gray!25} }  \\ 
   \hline {\cellcolor{gray!25} } &  {\cellcolor{gray!25} }    &   {\cellcolor{gray!25} }  & {\cellcolor{gray!25} }   \\ \hline
  \end{tabular}  \qquad \to \qquad \begin{tabular}{|c|c||c|c|} 
     \multicolumn{1}{c}{}  & \multicolumn{1}{c}{}  &  \multicolumn{1}{c}{}   &  \multicolumn{1}{c}{}    \\
     \multicolumn{1}{c}{}  & \multicolumn{1}{c}{}  &  \multicolumn{1}{c}{}   &  \multicolumn{1}{c}{}    \\
   \multicolumn{1}{c}{}  & \multicolumn{1}{c}{}  &  \multicolumn{1}{c}{}   &  \multicolumn{1}{c}{}    \\
    \multicolumn{1}{c}{}  & \multicolumn{1}{c}{}  &  \multicolumn{1}{c}{}   &  \multicolumn{1}{c}{}    \\
  \multicolumn{1}{c}{}  & \multicolumn{1}{c}{}  &  \multicolumn{1}{c}{}   &  \multicolumn{1}{c}{}    \\
   \multicolumn{1}{c}{}  & \multicolumn{1}{c}{}  &  \multicolumn{1}{c}{}   &  \multicolumn{1}{c}{}    \\
 \multicolumn{1}{c}{}  & \multicolumn{1}{c}{}  &  \multicolumn{1}{c}{}   &  \multicolumn{1}{c}{} \\
     \multicolumn{1}{c}{}  & \multicolumn{1}{c}{}  &  \multicolumn{1}{c}{}   &  \multicolumn{1}{c}{}   \\
   \hline 1 &  &   & 2 \\ \hline
  \end{tabular}
\end{equation}

\begin{theorem} \label{T:2DistJordTopolMaxOrbIndecTypeI} Let $\left(V, \mathcal{P}\right) = \mathcal{J}_{0, 2n_1} \oplus \mathcal{J}_{0, 2n_2}$, where $n_1 > n_2$ and $L \subset \left(V, \mathcal{P}\right)$ be a Type I indecomposable bi-Lagrangian subspace from Theorem~\ref{T:CanonIndecompTypeI}. Then its 
 $\operatorname{Aut} \left(V, \mathcal{P}\right)$-orbit  has a  structure of a $\mathbb{K}^{N}\times \mathbb{K}^*$-fibre bundle over a product of Lagrangian Grassmanians: \begin{equation} \label{Eq:ProjOrb2DistJordTypeI} \pi: O_{\max}  \xrightarrow{\mathbb{K}^{N}\times \mathbb{K}^*} \Lambda(1) \times \Lambda(1) = \mathbb{KP}^1 \times \mathbb{KP}^1. \end{equation} Here $\pi$ is given by \eqref{Eq:PiIndecTypeI} and \[ N =  2h - n_1 + n_2 - 2d + r - 3.\]  \end{theorem}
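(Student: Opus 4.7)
The plan is to mimic the fiber-bundle arguments of Theorems~\ref{T:EqualJordTopolOrb}, \ref{T:GenJordTopolMaxOrb}, and especially the semisimple case Theorem~\ref{T:2DistJordTopolMaxOrb}, but track carefully the \emph{gluing parameter} between the two Jordan blocks that forces an extra $\mathbb{K}^*$ factor. First I would reduce to $d=0$ by performing bi-Poisson reduction with respect to $U=\mathcal{J}^{\leq d}_{0,2n_1}\oplus \mathcal{J}^{\leq d}_{0,2n_2}$. Since $\operatorname{Ker}P^d\subset L\subset\operatorname{Im}P^d$ by definition of $d$, Lemma~\ref{L:AlgIsomBiisotrFactor} identifies the orbit with the analogous orbit in the reduced space, and Corollary~\ref{Cor:ImageAutIso} guarantees the automorphism action is surjectively lifted. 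Next I would check surjectivity of $\pi$: on the canonical form~\eqref{Eq:TypeIParam} the images $L_1\subset S_1$ and $L_2\subset S_2/S_1$ are one-dimensional, and the induced action of $\operatorname{Aut}(V,\mathcal{P})$ on each factor $\mathbb{KP}^1$ is transitive (as in Theorem~\ref{T:2DistJordTopolMaxOrb}).

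For the fiber analysis, I would fix a base point $(L_1,L_2)\in \mathbb{KP}^1\times\mathbb{KP}^1$ and parameterize all bi-Lagrangian subspaces in $O_L$ mapping to it. By Theorem~\ref{T:CanonIndecompTypeI}, each such subspace has the form~\eqref{Eq:TypeISimpleForm}, so it is determined by the pair of top-level generators
\[
u = \alpha\,e_{n_1-h+1}+\beta\,\hat{e}_{n_2-d-r+1}+\text{lower terms},\qquad
v = \gamma\,f_{n_1-h+r}+\delta\,\hat{f}_{n_2-d}+\text{lower terms},
\]
with the ``lower terms'' being combinations of strictly lower-height basis vectors in the orbits of $e_{n_1-h+1}$ and $f_{n_1-h+r}$ respectively, as dictated by Theorem~\ref{T:VectOrbits}. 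The requirement that $L$ is $P$-invariant, bi-isotropic, has the prescribed height-profile, and remains \emph{indecomposable} (i.e.\ retains the gluing between blocks) will force the coefficients $\alpha,\beta,\gamma,\delta$ to be nonzero, while fixing the image $(L_1,L_2)$ will fix $\alpha$ and $\delta$ (for instance, to $1$). The residual ambiguity after this normalization is a single nonzero scalar that tracks the ratio of the ``mixing'' coefficients between the two blocks in the generators $u$ and $v$. This is precisely the $\mathbb{K}^*$ factor: if this scalar were allowed to vanish the subspace would decompose, contradicting Type I, and no automorphism preserving $(L_1,L_2)$ can rescale it (the separate block rescalings act in opposite ways on $u$ and $v$ because of the bi-Lagrangian pairing $B(P^iu, P^jv)$).

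I would then count the remaining affine parameters: each of the ``lower-order'' coefficients in $u$ is constrained (via a standard change of basis of the form in Step~\ref{Step:ChangeFTypeI} of the proof of Theorem~\ref{T:CanonIndecompTypeI}) to lie in a subspace of dimension one less than the naive count, because the choices that correspond to shifts along the Jordan chain $Pu,\ldots,P^{h-1}u$ are absorbed by reparameterizing $L$. A direct bookkeeping should yield $2h-n_1+n_2-2d+r-3$ affine coordinates; this can also be cross-checked by subtracting $\dim(\mathbb{KP}^1\times\mathbb{KP}^1)=2$ and the contribution of the $\mathbb{K}^*$ from an independent computation of $\dim O_L$ (performed analogously to Theorem~\ref{T:BiLagrJord} by computing the stabilizer inside $\operatorname{aut}(V,\mathcal{P})$ using Theorem~\ref{T:BiSymp_General_Jordan_Case_Mega}). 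Local triviality of the bundle follows as in Theorem~\ref{T:2DistJordTopolMaxOrb} by choosing the standard affine charts on each $\mathbb{KP}^1$ and writing the generators as polynomial expressions in the chart coordinates.

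The main obstacle I anticipate is cleanly isolating the $\mathbb{K}^*$-factor and showing that it is genuinely nontrivial rather than being absorbed into one of the $\mathbb{K}^N$-directions. Concretely, one must verify that the stabilizer of $(L_1,L_2)$ inside the image of $\operatorname{Aut}(V,\mathcal{P})$ acts on the space of allowed gluings with exactly a one-dimensional kernel, and that the non-vanishing constraint (which comes from the Type I condition combined with the uniqueness of $r$ in Theorem~\ref{T:CanonIndecompTypeI}) is the only constraint cutting $\mathbb{K}$ down to $\mathbb{K}^*$. Once this step is controlled, the rest is bookkeeping.
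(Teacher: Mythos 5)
Your proposal follows the same structural outline as the paper's proof: reduce to $d=0$ by bi-Poisson reduction with respect to $\operatorname{Ker}P^d$, appeal to Corollary~\ref{Cor:ImageAutIso} for surjectivity of the induced automorphism action, observe that the orbit is homogeneous so $\pi$ is a fiber bundle, then parameterize the fiber by the two generators $u,v$ via \eqref{Eq:TypeISimpleForm}, using Theorem~\ref{T:VectOrbits} to constrain their shapes. Up to this point you and the paper coincide. However, there are two problems with the rest.

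First, the essential step --- the actual parameterization of $u$ and $v$ and the count that the residual parameters form $\mathbb{K}^{N}\times\mathbb{K}^*$ with the stated $N$ --- is not executed; you assert that ``direct bookkeeping should yield'' the answer. This bookkeeping is exactly where all the content lives. The paper normalizes the coefficient of $e_{n_1-h+1}$ in $u$ and of $\hat{f}_{n_2}$ in $v$ to $1$ (not because ``fixing $(L_1,L_2)$'' determines them, but because generators may be rescaled without changing $L$), then reduces $u$ and $v$ modulo $\operatorname{Span}\{P^ju\}$, $\operatorname{Span}\{P^jv\}$ and $\operatorname{Im}P^h$ to a normal form
\[
u = e_{n_1-h+1}+\textstyle\sum_{j=n_1-h+1}^{h-1}b_jf_j+\textstyle\sum_{j=1}^r c_{n_2-j+1}\hat e_{n_2-j+1},\quad v=\hat f_{n_2}+\textstyle\sum \hat b_jf_j+\textstyle\sum_{j=1}^{n_2-1}\hat c_{n_2-j+1}\hat e_{n_2-j+1},
\]
and derives from $B(P^iu,P^jv)=0$ that $\hat b_{n_2-h+x}=c_{n_2-x+1}$ for $x=1,\dots,r$, so the $\hat b$'s contribute nothing new. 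Counting $b$'s, $c$'s, $\hat c$'s then gives $(2h-n_1-1)+r+(n_2-1)=N+1$ coordinates with exactly one of them, $c_{n_2-r+1}$, constrained to $\mathbb{K}^*$. None of this appears in your proposal.

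Second, your description of the $\mathbb{K}^*$ factor is not correct. It is not a ``ratio of the mixing coefficients in $u$ and $v$'': the orthogonality relation above forces the top mixing coefficient of $v$ to \emph{equal} that of $u$, so any such ratio is identically $1$. The $\mathbb{K}^*$ factor is the single coefficient $c_{n_2-r+1}$, and it is a fiber coordinate in the chosen trivialization, not an invariant of $L$. Relatedly, the assertion that ``separate block rescalings act in opposite ways on $u$ and $v$'' is false under the paper's normalization (anchoring $u$ at $e_{n_1-h+1}$ and $v$ at $\hat f_{n_2}$): the rescaling $e_i\mapsto\lambda e_i,\ f_i\mapsto\lambda^{-1}f_i,\ \hat e_j\mapsto\mu\hat e_j,\ \hat f_j\mapsto\mu^{-1}\hat f_j$ multiplies \emph{both} mixing coefficients by $\mu/\lambda$. (You would get opposite scalings only if you anchored $v$ at its $f$-part rather than its $\hat f$-part, but then the invariant would be a product, not a ratio.) The nonvanishing of $c_{n_2-r+1}$ does come from the Type~I / indecomposability condition as you suggest, so that part of the intuition is sound, but the mechanism you describe for isolating the $\mathbb{K}^*$ does not go through as stated.
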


\begin{proof}[Proof of Theorem~\ref{T:2DistJordTopolMaxOrbIndecTypeI}] The proof is analogous to Theorem~\ref{T:2DistJordTopolMaxOrb} (it can also be proved similar to Theorems~\ref{T:EqualJordTopolOrb} and \ref{T:GenJordTopolMaxOrb}). To simplify, assume $d=0$ (otherwise, perform bi-Poisson reduction w.r.t. $\operatorname{Ker} P^d$). The orbit $O_L$ is a homogeneous space and by Theorem~\ref{T:CanonIndecompTypeI} the induced action on the base $\Lambda(1) \times \Lambda(1)$ is transitive. Hence, the projection $\pi$, given by \eqref{Eq:PiIndecTypeI}, is surjective and defines a fiber bundle. It remains to proof that the fibers $F\approx \mathbb{K}^{N}\times \mathbb{K}^*$. We show it by describing possible $u$ and $v$ ($L$ is determined by $u$ and $v$ according to \eqref{Eq:TypeISimpleForm}). Fix a standard basis \eqref{Eq:StandBasisTwoBlocks} such that \[P^{h_1-1}u = e_{n_1}, \qquad P^{n_2 -1}v = f_{1} + w, \] where $w$ is a combination of $e_i, f_j$. The vectors $u$ and $v$ must belong the $\operatorname{Aut}(V,\mathcal{P})$-orbits of $u = e_{n_1 - h + 1} + \hat{e}_{n_2 - r +1}$ and $v =  f_{n_1 - h + r} - \hat{f}_{n_2}$ respectively. By Theorem~\ref{T:VectOrbits}, they have the form \[ u = e_{n_1 -h +1} + w_1 + \hat{w}_1, \qquad v = \hat{f}_{n_2 } + w_2 + \hat{w}_2,\] where $w_j$ and $\hat{w}_j$ are combinations of $e_i, f_j$ and $\hat{e}_i, \hat{f}_j$ respectively such that  \[ \operatorname{height}(w_1) < h, \quad \operatorname{height}(\hat{w}_1) \leq  r, \quad \operatorname{height}(w_2) \leq n_1 - h + r, \quad \operatorname{height}(\hat{w}_2) < n_2. \] By replacing $u$ and $v$ with suitable linear combinations of $P^j u, P^j v$ and $\operatorname{Im} P^{h}$ we make \[ \begin{gathered} u = e_{n_1 - h + 1} + \sum_{j = n_1 - h +1}^{h - 1} b_j f_j +  \sum_{j=1}^{r} c_{n_2 - j + 1} \hat{e}_{n_2 - j +1}, \\ v = \hat{f}_{n_2} + \sum_{j = n_2 - h+ 1}^{n_2 - h+ r} \hat{b}_j f_j +  \sum_{j=1}^{n_2-1} \hat{c}_{n_2 - j + 1} \hat{e}_{n_2 - j +1}. \end{gathered} \] where $c_{n_2 - r +1} \not = 0$. Thus, for the vector $u$ we get $2h - n_1 + r - 1$ parameters: one parameter $c_{n_2 - r + 1} \in \mathbb{K}^*$ and all other $b_i, c_{n_2 - j + 1} \in \mathbb{K}$. Since all $P^j u$ and $P^j v$ are orthogonal, \[ \hat{b}_{n_2 - h + x} = c_{n_2 - x + 1}, \qquad x = 1, \dots r. \] We get another  $n_2- 1$ parameters $\hat{c}_{n_2 - j + 1} \in \mathbb{K}$ for $v$. Hence, $F \approx \mathbb{K}^N \times \mathbb{K}^*$, where \[ N= 2h - n_1 + n_2  + r - 3, \] as required. Theorem~\ref{T:2DistJordTopolMaxOrbIndecTypeI} is proved. \end{proof}

\subsubsection{Canonical form. Type II}

\begin{theorem}  \label{T:CanonIndecompTypeII} For any indecomposable bi-Lagrangian subspaces $L \subset \mathcal{J}_{0, 2n_1} \oplus \mathcal{J}_{0, 2n_2}$ of type II there exists a standard basis \eqref{Eq:StandBasisTwoBlocks} such that $L$ has the form \eqref{Eq:CanonFormIndecompGen}, where \begin{equation} \label{Eq:TypeIIParam} \begin{gathered}  u = e_{n_1 - h + 1} + \hat{e}_{d +1}, \qquad v =  f_{n_1 - h + r} - \hat{f}_{d+r} + \delta \cdot \hat{e}_{n_2 - z + 1}, \\  p_1 = h - r, \qquad q_1 = n_1 - h, \qquad p_2 =   n_2 - d - r \qquad q_2 = d. \end{gathered} \end{equation} We require that $r > 0$ and \begin{equation} \label{Eq:TypeIICondpq1} p_1 \geq q_1 > q_2, \qquad p_1 > p_2 \geq q_2 \end{equation} with an additional condition  \begin{equation} \label{Eq:TypeIICondpq2}  p_1 = q_1, \quad \Rightarrow \quad p_2 = q_2.\end{equation} The parameter $ \delta \in \left\{0, 1 \right\}$. If $\delta = 1$, then the parameter $z$ satisfies \begin{equation} \label{Eq:TypeIICondZ} \max(p_2 - r, q_2) < z-r < \min(p_2, q_1).\end{equation} \end{theorem}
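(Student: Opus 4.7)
The plan is to follow the blueprint of Theorem~\ref{T:CanonIndecompTypeI} with the modifications dictated by the Type~II hypothesis, which forces every maximal-height vector in $L$ to lie outside $\operatorname{Im}P^{d+1}$. After reducing to $d = 0$ by bi-Poisson reduction with respect to $\operatorname{Ker}P^{d}$ (Lemma~\ref{L:AlgIsomBiisotrFactor} and Corollary~\ref{Cor:ImageAutIso}), pick $u \in L$ of maximal height $h$, choosing $u$ so that a derived index $r$ is minimal. Theorem~\ref{T:VectOrbits}, applied to $u$, provides a standard basis in which $u = e_{n_1-h+1} + \hat{e}_{d+1}$; here the $\hat{e}_{d+1}$-position (height $n_2-d$)---rather than a lower $\hat{e}_{j}$ with $j>d+1$ as in the Type~I case---is precisely what the Type~II condition enforces, since $\hat{e}_{d+1}\notin \operatorname{Im}P^{d+1}$. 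Theorem~\ref{T:ExtractOneJord} guarantees that the resulting $r$ is strictly positive: otherwise $L$ would split off a Jordan block, contradicting indecomposability.

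Because $L$ is $P$-invariant, contains $\operatorname{Im}P^{h}$, and pairs trivially with itself, it contains the subspace
\[
U \;=\; \operatorname{Span}\{u, Pu, \dots, P^{h-1}u\} \;+\; \operatorname{Span}\{f_1, \dots, f_{n_1 - h}\},
\]
which is $P$-invariant and bi-isotropic of dimension $n_1$. A direct computation of $U^{\perp}$ shows that $L/U \subset U^{\perp}/U$ is bi-Lagrangian of dimension $n_2$ in a quotient whose Jordan--Kronecker decomposition reduces to a single Jordan block of size $2n_2$, to which Theorem~\ref{T:BiLagr_One_Jordan_Canonical_Form} applies. Extracting a $P$-chain generator $v$ of $L/U$, lifting it to $V$, and normalizing by (i) subtracting suitable multiples of $P^{j}u$ and elements of $\operatorname{Im}P^{h}$, (ii) performing the $\hat{f}$-basis change used in Step~\ref{Step:ChangeFTypeI} of the Type~I proof, and (iii) rescaling, yields
\[
v \;=\; f_{n_1 - h + r} - \hat{f}_{d + r} + \delta\,\hat{e}_{n_2 - z + 1}
\]
with $\delta \in \mathbb{K}$ and some integer $z$; the last leftover automorphism is a single $\mathbb{K}^{*}$-action, which normalizes $\delta$ into $\{0, 1\}$.

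The main obstacle is the emergence of the new parameters $\delta$ and $z$, which have no Type~I analogue. Their irreducibility reduces to showing that any further automorphism that would eliminate $\delta$ or shift $z$ outside the range~\eqref{Eq:TypeIICondZ} would simultaneously destroy the already-fixed form of $u$ or of the lower-height $P^{j}u, P^{j}v$ terms; tracking these interferences carefully is the delicate part. Constraint~\eqref{Eq:TypeIICondpq1} will follow from $\operatorname{height}(u) \geq \operatorname{height}(v)$, from the Type~II requirement $u \notin \operatorname{Im}P^{d+1}$, and from Theorem~\ref{T:ExtractOneJord} applied repeatedly to rule out covert decompositions. The boundary implication~\eqref{Eq:TypeIICondpq2} corresponds to the degeneration $p_1 = q_1$ where an ``$e \leftrightarrow f$''-swapping automorphism of the first block becomes available and forces $p_2 = q_2$. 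Finally, uniqueness of $(h, d, r, \delta, z)$ will be established by exhibiting each as a basis-free invariant: $h$ and $d$ from the filtration~\eqref{Eq:HeightUp}, $r$ from the Jordan normal form of $P|_{L/(L \cap U_{0})}$ with $U_{0} = \mathcal{J}_{0, 2n_1}^{\leq n_1 - h} \oplus \mathcal{J}_{0, 2n_2}^{\leq d}$, and $(\delta, z)$ from further intersection dimensions $\dim\bigl(L \cap \operatorname{Ker}P^{j} \cap \operatorname{Im}P^{k}\bigr)$.
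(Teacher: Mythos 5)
Your skeleton matches the paper's: reduce to $d=0$ by bi-Poisson reduction, fix $u$ of maximal height, form $U$ as the $P$-chain of $u$ together with $f_1,\dots,f_{n_1-h}$, pass to $L/U$ inside the single Jordan block $U^{\perp}/U \approx \mathcal{J}_{0,2n_2}$, and apply Theorem~\ref{T:BiLagr_One_Jordan_Canonical_Form} there, then normalize. The $\mathbb{K}^*$-scaling that forces $\delta \in \{0,1\}$ is also the paper's move. However, there are two genuine gaps.

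First, the line ``choosing $u$ so that a derived index $r$ is minimal'' is a Type~I artifact that does not apply here. In Type~I, $r$ appears in the form $u = e_{n_1-x+1}+\hat{e}_{n_2-r+1}$ of Eq.~\eqref{Eq:FormUEq1}, and there \emph{is} a choice to minimize. In Type~II the hypothesis $u \notin \operatorname{Im}P^{d+1}$ forces $u = e_{n_1-h+1}+\hat{e}_{d+1}$ on the nose; there is no free index left at this stage. The parameter $r$ of Eq.~\eqref{Eq:TypeIIParam} is produced only later, from the structure of $L/U$, and calling it ``derived from $u$'' invites an incorrect identification.

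Second, and more seriously, you never explain \emph{how} $r$, $\delta$, $z$ are read off. The paper writes the generator $v$ of $L/U$ as $\sum a_j \hat{e}_j + \sum b_{n_2-j+1}P^{j-1}w$ with $w = f_{n_1-h+n_2}-\hat{f}_{n_2}$ and then splits into three cases according to the vanishing pattern of the $b$'s: all $b_j=0$ gives $\delta=0$, $r=n_2-z$; $b_z\ne 0$ gives $\delta=0$, $r=z$; and $b_z=\dots=b_{y-1}=0$, $b_y\ne 0$ gives $r=y$, with $\delta=1$ possible only here after the residual constant $C$ has been rescaled. Your normalization steps (i)--(iii) are the right tools, but without this trichotomy you cannot say what value of $r$ emerges or when the $\hat{e}_{n_2-z+1}$ term is irreducible rather than removable. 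Relatedly, the constraints~\eqref{Eq:TypeIICondZ} on $z$ are not, in the paper's argument, consequences of ``any further automorphism would destroy the form''; they are proved by showing that $z$ outside the band forces $L$ to be decomposable (via Theorem~\ref{T:ExtractOneJord} applied to $v$) or to violate the Type~II hypothesis (a combination of $u,v$ of maximal height lands in $\operatorname{Im}P^{d+1}$), which are existence-side statements rather than uniqueness-side ones. Your uniqueness sketch via Jordan forms of $P|_{L/(L\cap U_0)}$ is correct but answers a different question (Theorem~\ref{T:UniqTypeII}), not the one at hand.
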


\begin{remark} In Theorem~\ref{T:CanonIndecompTypeII} we can rewrite restriction on $r$  as \[ 0 < r \leq \min\left(2h - n_1, n_2 - 2d\right),\] and the restrictions on $h,d$ as \[\frac{n_1}{2} < h \leq n_1, \qquad 0 \leq d < \frac{n_2}{2}, \qquad n_2 <  h+ d < n_1.\] The condition on $z$ is \begin{equation} \label{Eq:CondZInt} z \in (q_1 + r, q_2 +r) \cap (p_2, p_2 +r). \end{equation} \end{remark}

\begin{remark} The subspace $L$ from Theorem~\ref{T:CanonIndecompTypeII} can also be expressed as \begin{equation} \label{Eq:TypeIISimpleForm} L = \operatorname{Span} \left\{u, \dots P^{h-1}u, \quad v, \dots, P^{n_1 - h + r-1}v \right\} + \operatorname{Im}P^{h} + \operatorname{Ker} P^d.\end{equation} \end{remark}

Two examples of a Type II indecomposable subspace of $\mathcal{J}_{0, 18} \oplus \mathcal{J}_{0, 12}$ is visualized in Figure~\eqref{Eq:TypeIIndecomp}. The parameters associated with the left-hand side example are \[ p_1 = 5, \quad q_1 = 2, \quad p_2 = 3, \quad q_2 = 1, \quad r = 2, \quad \delta = 0.\] For the right-hand side the parameters are \[ p_1 = 4, \quad q_1 = 3, \quad p_2 = 3, \quad q_2 = 1, \quad r = 2, \quad \delta = 1.\] Jordan blocks are depicted as Young-like diagrams, a bi-Lagrangian subspace is depicted using shaded and labeled cells:

\begin{itemize}
    \item Bottom shaded column heights: $p_1, q_1, p_2, q_2$ (left to right).
    
    \item The labeled cells $P1, P2, M1$, and $M2$ represent specific linear combinations of basis vectors. There are a total of $4r$ labeled cell groups, with the $Pj$ cells corresponding to the vectors $u, \dots, P^{r-1}u$, and the $Mj$ cells corresponding to the vectors $v, \dots, P^{r-1}v$. 
    
\item Due to \eqref{Eq:CondZInt}, the M1 entry in the third column must be located within one of the $Pj$ cells and its height must fall between the heights of other $M1$ cells.

\end{itemize}  

\begin{equation} \label{Eq:TypeIIIndecomp} \begin{tabular}{|c|c||c|c|} 
   \cline{1-2}  & &   \multicolumn{1}{c}{} & \multicolumn{1}{c}{}   \\
     \cline{1-2}  & &   \multicolumn{1}{c}{} & \multicolumn{1}{c}{}   \\
     \cline{1-2} P1 & &   \multicolumn{1}{c}{} & \multicolumn{1}{c}{}   \\
\hline P2 & & &    \\
  \hline {\cellcolor{gray!25} } &  & P1 &    \\
   \hline {\cellcolor{gray!25} }  & M1 &  P2  &   \\ 
   \hline {\cellcolor{gray!25} } &  M2   &    {\cellcolor{gray!25} }  & M1  \\
     \hline {\cellcolor{gray!25} }  & {\cellcolor{gray!25} } &   {\cellcolor{gray!25} }   &  M2  \\ 
   \hline {\cellcolor{gray!25} } &  {\cellcolor{gray!25} }    &   {\cellcolor{gray!25} }  & {\cellcolor{gray!25} }   \\ \hline
  \end{tabular} \qquad 
 \begin{tabular}{|c|c||c|c|} 
   \cline{1-2}  & &   \multicolumn{1}{c}{} & \multicolumn{1}{c}{}   \\
     \cline{1-2}  & &   \multicolumn{1}{c}{} & \multicolumn{1}{c}{}   \\
     \cline{1-2} & &   \multicolumn{1}{c}{} & \multicolumn{1}{c}{}   \\
\hline P1 & & &    \\
  \hline P2 & M1 & P1 &    \\
   \hline {\cellcolor{gray!25} }  & M2 &  P2 M1  &   \\ 
   \hline {\cellcolor{gray!25} } &  {\cellcolor{gray!25} }    &    {\cellcolor{gray!25} }  & M1  \\
     \hline {\cellcolor{gray!25} }  & {\cellcolor{gray!25} } &   {\cellcolor{gray!25} }   &  M2  \\ 
   \hline {\cellcolor{gray!25} } &  {\cellcolor{gray!25} }    &   {\cellcolor{gray!25} }  & {\cellcolor{gray!25} }   \\ \hline
  \end{tabular}
\end{equation}

\begin{proof}[Proof of Theorem~\ref{T:CanonIndecompTypeII}] Proof follows Theorem~\ref{T:CanonIndecompTypeI}.  up to Step~\ref{Step:RLessN2}. For Type II subspaces, by Definition~\ref{D:TypesIndecomp}, Formula~\eqref{Eq:FormUEq1} takes the form \[u = e_{n_1 - h + 1} + \hat{e}_{1}.\] We get that $U \subset L \subset U^{\perp}$,  where \[ \begin{gathered} U = \operatorname{Span} \left\{ e_{n_1 - h + 1} + \hat{e}_{1}, \dots,  e_{n_1 - h + n_2} + \hat{e}_{n_2} , \quad e_{n_1 - h + n_2 +1}, \dots, e_{n_1}, \quad  f_{1}, \dots, f_{n_1 - h}\right\}, \\ U^{\perp} = U \oplus \operatorname{Span} \left\{\hat{e}_1, \dots, \hat{e}_{n_2}, w, Pw, \dots, P^{n_2 - 1}w\right\}, \qquad w = f_{n_1 - h + n_2} - \hat{f}_{n_2}. \end{gathered} \] By Theorem~\ref{T:BiLagr_One_Jordan_Canonical_Form}, if $L/ U$ has height $z$, then it has the form \[ \begin{gathered} \operatorname{Span} \left\{v, Pv,\dots, P^{z-1}v \right\} + \operatorname{Ker} \left(P\bigr|_{U^{\perp}/U}\right)^{n_2 - z}, \\ v = \sum_{j=n_2 - z + 1}^{z} a_j \hat{e}_j + \sum_{j=n_2 - z + 1}^{z} b_{n_2 - j + 1} P^{j-1} w, \quad (a_{n_2 - z + 1}, b_{z}) \not = (0,0). \end{gathered} \] There are several cases:

\begin{enumerate}[label=(\alph{enumi})]
    
    \item Assume that all $b_j = 0$. Then \[ L/ U = \operatorname{Span} \left\{\hat{e}_{n_2 - z +1}, \dots, \hat{e}_{n_2}, \quad \tilde{v}, \dots, P^{n_2 - z- 1}\tilde{v} \right\}, \qquad \tilde{v} = f_{n_1 - h + n_2 - z} - \hat{f}_{n_2 - z}\] and $L$ takes the form \eqref{Eq:TypeIIParam} with $r = n_2 - z$ and $\delta = 0$.

    \item Assume that $b_z \not = 0$. Similar to Step~\ref{Step:ChangeFTypeI} the coefficients $b_j, j < z$, can be eliminated by replacing $v$ with a linear combination of $P^jv$. Subsequently, we can make all $a_j = 0$ by a change of basis:  \[\hat{f}_{n_2}' = \hat{f}_{n_2} + \sum_{j=1}^{2z-n_2} a_{n_2 - z + j} \hat{e}_j, \qquad \hat{f}_{n_2 -j}' = P^j \left(\hat{f}_{n_2}'\right). \] The new basis is also standard. In the new basis \[ L = U \oplus \operatorname{Span} \left\{ \hat{e}_{z + 1}, \dots, \hat{e}_{n_2} \right\} \oplus  \operatorname{Span} \left\{v, Pv,\dots, P^{z-1}v \right\}, \quad v =  f_{n_1-h +z} - \hat{f}_{z}'. \] $L$ takes the form \eqref{Eq:TypeIIParam} with $r = z$ and $\delta = 0$.

    \item \label{Case:CTypeII} Assume that $b_z,\dots, b_{y-1} = 0$ and $b_{y} \not = 0$. In this case $a_{n_2 - z +1} \not = 0$, since $b_z = 0$. Similar to the previous step, the coefficients $a_{j}, j \geq n_2 - y +1$ and $b_j, j < y$, can be eliminated  by replacing $v$ with a linear combination of $P^jv$. Subsequently, we can make the remaining $a_j = 0$ by a change of basis: \[\hat{f}_{n_2}' = \hat{f}_{n_2} + \sum_{j=1}^{z + y -n_2} a_{n_2 - y + j} \hat{e}_j, \qquad \hat{f}_{n_2 -j}' = P^j \left(\hat{f}_{n_2}'\right). \] The new basis is also standard. In the new basis \[ L = U \oplus \operatorname{Span} \left\{ \hat{e}_{z + 1}, \dots, \hat{e}_{n_2} \right\} \oplus  \operatorname{Span} \left\{v, Pv,\dots, P^{y-1}v \right\}, \quad v =  f_{n_1-h +y} - \hat{f}_{y}' + C \cdot \hat{e}_{n_2 - z+1}. \] If $C = 0$, then $L$ takes the form \eqref{Eq:TypeIIParam} with $r = y$ and $\delta = 0$. If $C \not = 0$, then replace \[ u \to \sqrt{C}u = \sqrt{C}e_{n_1 - h + 1} + \sqrt{C}\hat{e}_{1}, \qquad v \to \frac{1}{\sqrt{C}} v =  \frac{1}{\sqrt{C}}f_{n_1-h +y} - \frac{1}{\sqrt{C}}\hat{f}_{y}' + \sqrt{C} \cdot \hat{e}_{n_2 - z+1}.\] The change of basis \[ e_i, f_i, \hat{e}_j \hat{f}_j \to \sqrt{C} e_i, \frac{1}{\sqrt{C} } f_i, \sqrt{C} \hat{e}_j \frac{1}{\sqrt{C} }\hat{f}_j  \] transforms $L$ into the form \eqref{Eq:TypeIIParam} with $r = y$ and $\delta = 1$.
    
\end{enumerate}

The remainder of the proof is divided into several steps.

\begin{enumerate}

\item \textit{The parameters $h, d$ and $r$ satisfy $r> 0$, \eqref{Eq:TypeIICondpq1} and \eqref{Eq:TypeIICondpq2}}. 

\begin{itemize}

\item $r> 0$, otherwise $L$ is decomposable. 

\item $p_1 \geq q_1$ since $\operatorname{height}(u) \geq \operatorname{height}(v)$ by \eqref{Eq:HeightEq}. 

\item $p_1 > p_2$, otherwise by Theorem~\ref{T:ExtractOneJord} $L$ is decomposable. 

\item $q_1 > q_2$, otherwise $\operatorname{height}(f_{n_1-h +z}) \leq z$  and by Theorem~\ref{T:ExtractOneJord} $L$ is decomposable. 

\item $p_2 \geq q_2$, otherwise $u \in \operatorname{Im}P^{d+1}$ and $L$ is not Type II.

\item If $p_1 = q_1$, then $p_2 =q_2$ (i.e. \eqref{Eq:TypeIICondpq2} holds), otherwise there exists $v \in L - \operatorname{Ker}P^{h-1}$ such that $v \in \operatorname{Im}P^{d+1}$.

\end{itemize}

\item  \textit{If $\delta \not = 0$, then the parameter $z$ satisfies \eqref{Eq:TypeIICondZ}}.
\begin{itemize}
    \item $z - r < q_2$, otherwise $v$ belong to the automorphism orbit of $\hat{e}_{n_2 - z+1}$ and $L$ is decomposable. 
    \item $z - r < p_2$, otherwise a linear combination of u and v can be constructed that belongs to both $L - \operatorname{Ker}P^{h-1}$ and $ \operatorname{Im}P^{d+1}$, which contradicts the assertion that L is a Type II subspace.
    
    \item  $z > q_2 + r$ by construction (see Case~\ref{Case:CTypeII}).

    \item $z > p_2$, otherwise $\hat{e}_{n_2 - z+1} \in L$ and we can simplify the formulas.
\end{itemize}  

\item \textit{All $h, d, r, z$ and $\delta$ satisfying the conditions of 
Theorem~\ref{T:CanonIndecompTypeII} are possible.} Easily verified by computation.

\end{enumerate}

Theorem~\ref{T:CanonIndecompTypeII} is proved. \end{proof}

\begin{theorem} \label{T:UniqTypeII} The parameters $h, d, r, \delta$ and $z$ in Theorem~\ref{T:CanonIndecompTypeII} are uniquely defined.
\end{theorem}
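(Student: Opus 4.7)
The plan is to extract each of $h,d,r,\delta,z$ as an invariant of the pair $(L,V)$ under the $\operatorname{Aut}(V,\mathcal{P})$-action, in order of increasing subtlety. First, $h$ and $d$ are intrinsic by definition \eqref{Eq:HeightUp}. Next, using the canonical basis of Theorem~\ref{T:CanonIndecompTypeII}, I would compute the Jordan normal form of the restriction $P|_{L}$ by counting $\dim(L\cap\ker P^{k})$ with the known heights of the basis elements (the two chains $\{u,Pu,\dots,P^{r-1}u\}$ and $\{v,Pv,\dots,P^{r-1}v\}$ together with the $e,f,\hat e,\hat f$-chains). A module-theoretic check, using the relations $P^{r}u\in\langle e_{n_{1}-p_{1}+1}\rangle+\langle\hat e_{n_{2}-p_{2}+1}\rangle$ and $P^{r}v\in\langle f_{q_{1}}\rangle+\langle\hat f_{q_{2}}\rangle+\langle\hat e_{n_{2}-p_{2}+1}\rangle$ in $L/PL$, yields
\[
P|_{L}\;\sim\;J(h)\oplus J(n_{1}-h+r)\oplus J(n_{2}-d-r)\oplus J(d).
\]
Since this Jordan type is intrinsic and $h,d,n_{1},n_{2}$ are known, the unordered pair $\{n_{1}-h+r,\;n_{2}-d-r\}$ is determined.

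In generic situations this pair pins down $r$ because only one of the two candidate assignments falls in the admissible range \eqref{Eq:TypeIICondpq1}. In the degenerate situation, where two admissible values $r,r'$ satisfy $r+r'=h-n_{1}+n_{2}-d$ and therefore give the same Jordan type, I would invoke the finer invariant
\[
(k,l)\;\longmapsto\;\dim\!\bigl(L\cap\ker P^{k}\cap\operatorname{Im} P^{l}\bigr),
\]
which depends on the embedding $L\subset V$ and not merely on the abstract module $L$. A direct count in the canonical basis shows that this function records the Young diagram of the basis vectors in the $(\text{height},\text{codepth})$-plane, and the diagrams for $r$ and $r'$ differ; this fixes $r$ completely.

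For $\delta$, the $\delta=1$ variant introduces into $v$ a summand $\hat e_{n_{2}-z+1}$ whose height $z$ is \emph{strictly between} those of the $f$- and $\hat f$-summands of $v$, creating an asymmetry between the two Jordan blocks of $V$ that no bi-Poisson automorphism can remove (because $\operatorname{Aut}(V,\mathcal{P})$ preserves every subspace of the form $\ker P^{k}\cap\operatorname{Im} P^{l}$). I would detect this asymmetry by comparing the values of the two-parameter invariant above—or, equivalently, the Jordan type of $P$ on the derived quotient $L/(L\cap\operatorname{Im} P^{h})$—with the value they take for the model $\delta=0$ subspace having the same $(h,d,r)$: a specific entry differs by $1$ precisely when $\delta=1$. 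Once $\delta=1$ is recognised, the parameter $z$ is recovered as the unique index at which the function $m\mapsto\dim\!\bigl(L\cap\ker P^{n_{1}-h+r}\cap\operatorname{Im} P^{m}\bigr)$ jumps beyond the value dictated by the $\delta=0$ model, i.e.\ at $m=z-r$.

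The main obstacle will be this final step: verifying that the chosen invariant really does separate $\delta=0$ from $\delta=1$, and for $\delta=1$ genuinely isolates the correct $z$, once $(h,d,r)$ are fixed. This amounts to a careful bookkeeping argument mirroring case~(c) of the proof of Theorem~\ref{T:CanonIndecompTypeII}: one must check that the scalar $C$ appearing there cannot be altered by any admissible change of standard basis compatible with $(h,d,r)$, so that the binary invariant $\delta=[C\neq 0]$ and the derived height $z$ are honest $\operatorname{Aut}(V,\mathcal{P})$-invariants of $L$. The remaining routine verifications (that the Jordan-type computation above is correct, and that the height $z$ is read off unambiguously from the jumping locus) are straightforward, so this final separation argument is where the real work lies.
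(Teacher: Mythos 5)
Your proposal follows the same three-stage skeleton as the paper's proof (recover $h,d$ from \eqref{Eq:HeightUp}; use a module-type invariant to get $r$; use a finer derived invariant to get $\delta,z$), but it diverges from the paper at the second and third stages, and in doing so it actually flags a real subtlety that the paper's proof glosses over. The paper asserts at its Step~2 that the Jordan form
$P|_L \sim J(h)\oplus J(n_1-h+r)\oplus J(n_2-d-r)\oplus J(d)$
determines $r$ \emph{uniquely}; this is not literally true, because the middle pair $\{n_1-h+r,\;n_2-d-r\}$ is only an unordered multiset. For instance, with $n_1=10,\ n_2=6,\ h=8,\ d=1$ one gets the same multiset $\{8,4,3,1\}$ for both $r=1$ and $r=2$, and both values satisfy the admissibility conditions \eqref{Eq:TypeIICondpq1}. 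You are right that this ambiguity occurs and must be broken by extra information. Your chosen tie-breaker, the two-parameter function
$(k,l)\mapsto\dim\bigl(L\cap\ker P^{k}\cap\operatorname{Im} P^{l}\bigr)$,
does work: e.g.\ in the example above the value at $(k,l)=(4,3)$ is $12$ for $r=2$ and $11$ for $r=1$. The paper instead breaks the ambiguity with the bi-Poisson reduction $L/(L\cap U)$ for the hyperinvariant subspace $U=\mathcal{J}_{0,2n_1}^{\le n_1-h}\oplus\mathcal{J}_{0,2n_2}^{\le d}$, whose Jordan form (Step~3) together with the Step~2 multiset pins down $r$ because the identity $r_A+r_B=h-n_1+n_2-d$ cannot coincide with $n_2-2d$ unless $q_1=q_2$, which is excluded for Type~II. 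Both mechanisms are legitimate; yours is more homogeneous (one two-parameter invariant for everything) while the paper's reuses machinery it has already set up.

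Two smaller points worth flagging. First, you describe the ambiguity as arising only in ``degenerate'' situations, but as the example shows it is quite generic, so your fall-back mechanism should be treated as the main argument rather than an exceptional patch. Second, your final step quotients by $L\cap\operatorname{Im}P^{h}$, whereas the paper quotients by $L\cap U$ with the genuinely mixed subspace $U$ above; when $h>n_2$ these differ in the second Jordan block, so if you stick with $\operatorname{Im}P^{h}$ you will need to recheck that the resulting Jordan form still distinguishes $\delta=0$ from $\delta=1$ and exposes $z$. You already flag this as the place where the real work lies, which is the correct assessment; the verification that the scalar $C$ in case~(c) of the proof of Theorem~\ref{T:CanonIndecompTypeII} cannot be killed by an admissible change of basis is exactly the content that the paper encodes by reading $x$ off the reduced Jordan form.
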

\begin{proof}[Proof of Theorem~\ref{T:UniqTypeII}]

\begin{enumerate}

\item $h$ and $d$ are given by \eqref{Eq:HeightUp}.

\item By Theorem~\ref{T:CanonIndecompTypeII}, \[  \begin{gathered} L = \operatorname{Span} \left\{u, Pu, \dots, P^{h-1}u, \quad v, Pv, \dots, P^{n_1 - h + r -1}v \right\} \oplus \\ \oplus \operatorname{Span} \left\{\hat{e}_{d + r +1}, \dots, \hat{e}_{n_2}, ,\quad \hat{f}_{1}, \dots, \hat{f}_d\right\}.   \end{gathered} \] The Jordan normal form of the restriction $P$ to $L$ is \[ P\bigr|_{L} \sim J(h)\oplus J(n_1 - h + r) \oplus J(n_2 - d - r) \oplus J(d),\] with the parameter $r$ uniquely specified by this form.

\item Perform bi-Poisson reduction w.r.t. \[ U = \mathcal{J}_{0, 2n_1}^{\leq n_1 - h} \oplus \mathcal{J}_{0, 2n_2}^{\leq d}. \] The Jordan normal form of the restriction $P$ to $L/(L\cap U)$ is \[ P\bigr|_{L/(L\cap U)} \sim J(2h-n_1)\oplus J(x) \oplus J(n_2 - 2d - x), \quad x = \begin{cases} r, \quad &\mbox{if } \delta = 0, \\ z-d, \quad &\mbox{if } \delta = 1. \end{cases} \] Note that in the case $\delta =0$ and $n_2 - 2d = r$ this Jordan normal form becomes \[P\bigr|_{L/(L\cap U)} \sim J(2h-n_1)\oplus J(n_2 - 2d). \] From this form, the values of the parameters $\delta$ and $z$ can be inferred.
\end{enumerate}

Theorem~\ref{T:UniqTypeII} is proved. \end{proof}

\subsubsection{Dimension and topology of orbits. Type II. Special case} \label{SubS:TypeIIStop}

We begin by examining the orbit topology for specific case of Type II indecomposable subspaces, with the general case to be explored in Section~\ref{SubS:TypeIItopGen}.

\begin{definition} \label{Def:TypeIIS}  A Type II indecomposable bi-Lagrangian subspace  $L \subset \left(V, \mathcal{P}\right) = \mathcal{J}_{0, 2n_1} \oplus \mathcal{J}_{0, 2n_2}$ is categorized as \textbf{Type II-S} if the parameters introduced in Theorem~\ref{T:CanonIndecompTypeII} satisfy \[ p_1 = q_1, \qquad p_2 = q_2.\]
\end{definition}
 
In other words, \[ r = 2h - n_1 = n_2 - 2d, \qquad \delta = 0.\] Note that it is possible only if the half-sizes of Jordan blocks share the same parity: \[ n_1 \equiv n_2 \pmod{2}. \] By Theorem~\ref{T:CanonIndecompTypeII} there exist a canonical basis such that  \begin{equation} \label{Eq:CanonFormTypeIISpecial} \begin{gathered}  L = \operatorname{Span} \left\{e_{\frac{n_1 - r}{2} +1} + \hat{e}_{\frac{n_2 - r}{2} +1}, \dots,  e_{\frac{n_1 + r}{2}} + \hat{e}_{\frac{n_2 + r}{2}}\right\} \oplus \\ \oplus \operatorname{Span} \left\{f_{\frac{n_1 + r}{2}} - \hat{f}_{\frac{n_2 + r}{2}},\dots, f_{\frac{n_1 - r}{2} +1} + \hat{f}_{\frac{n_2 - r}{2} +1} \right\} \oplus \\ \oplus \operatorname{Span} \left\{e_{n_1}, \dots, e_{\frac{n_1 + r}{2} +1},\quad f_1, \dots, f_{\frac{n_1 - r}{2}}, \quad \hat{e}_{n_2}, \dots, \hat{e}_{\frac{n_2 + r}{2} +1},\quad \hat{f}_1, \dots, \hat{f}_{\frac{n_2 - r}{2}}\right\}.   \end{gathered} \end{equation} An example of such subspace for $(V,\mathcal{P}) = \mathcal{J}_{0, 12} \oplus \mathcal{J}_{0, 8}$ is visualized in Figure~\ref{Eq:TypeIIIndecompSpecial}.  
\begin{equation} \label{Eq:TypeIIIndecompSpecial} \begin{tabular}{|c|c||c|c|} 
   \cline{1-2}  & &   \multicolumn{1}{c}{} & \multicolumn{1}{c}{}   \\
     \cline{1-2}  & &   \multicolumn{1}{c}{} & \multicolumn{1}{c}{}   \\
 \hline  P1  & M1 &    &   \\ 
 \hline   P2 &  M2   &    P1  & M1  \\
     \hline {\cellcolor{gray!25} }  & {\cellcolor{gray!25} } &   P2   &  M2  \\ 
   \hline {\cellcolor{gray!25} } &  {\cellcolor{gray!25} }    &   {\cellcolor{gray!25} }  & {\cellcolor{gray!25} }   \\ \hline
  \end{tabular} 
\end{equation}

\begin{theorem} \label{T:Type2Special} Let $\left(V, \mathcal{P}\right) = \mathcal{J}_{0, 2n_1} \oplus \mathcal{J}_{0, 2n_2}$, where $n_1 > n_2$ and $n_1 \equiv n_2 \pmod{2}$. Type II-S indecomposable bi-Lagrangian subspaces $L \subset \left(V, \mathcal{P}\right)$ determined  by the parameter $r$ from Theorem~\ref{T:CanonIndecompTypeII},  subject to the conditions   \[ 1 \leq r \leq n_2, \qquad r \equiv n_2 \pmod{2}.\] The total number of distinct $\operatorname{Aut}(V,\mathcal{P})$-orbits of Type II-S is \[\left[ \frac{n_2}{2}\right].\] The orbit $O_r$ associated with the parameter $r$  has dimension $3r$ and it is diffeomorphic to  $(r-1)$-order jet space space of the $2$-dimensional special linear group: \[ O_r \approx J^{r-1}_0(\mathbb{K}, \operatorname{SL}(2,\mathbb{K})). \]\end{theorem}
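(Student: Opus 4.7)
\textit{Plan.} My plan addresses classification, dimension, and topology in three stages. First, I specialize the canonical form of Theorem~\ref{T:CanonIndecompTypeII} to the Type II-S conditions $p_1 = q_1$ and $p_2 = q_2$ of Definition~\ref{Def:TypeIIS}. The parameter relations \eqref{Eq:TypeIIParam} then force $r = 2h - n_1 = n_2 - 2d$, and the range condition \eqref{Eq:TypeIICondZ} becomes empty, so $\delta = 0$ is the only possibility. Hence $r \equiv n_2 \pmod{2}$ and $1 \leq r \leq n_2$, and by Theorem~\ref{T:UniqTypeII} each admissible $r$ determines a unique $\operatorname{Aut}(V,\mathcal{P})$-orbit, giving $\lfloor n_2/2 \rfloor$ orbits in total.

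Second, for the dimension, I perform bi-Poisson reduction by the $\operatorname{Aut}(V,\mathcal{P})$-invariant (Theorem~\ref{T:JordInvSubs}) bi-isotropic (Lemma~\ref{L:InvSubBiIsotr}) subspace
\[
U \;=\; \mathcal{J}_{0, 2n_1}^{\leq (n_1 - r)/2} \;\oplus\; \mathcal{J}_{0, 2n_2}^{\leq (n_2 - r)/2}.
\]
The canonical form \eqref{Eq:CanonFormTypeIISpecial} shows that $U \subset L \subset U^{\perp}$ for every $L \in O_r$, so by Lemma~\ref{L:AlgIsomBiisotrFactor} the locus of bi-Lagrangian subspaces containing $U$ is isomorphic to $\operatorname{BLG}(U^{\perp}/U) \cong \operatorname{BLG}(\mathcal{J}_{0, 2r} \oplus \mathcal{J}_{0, 2r})$. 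The reduced subspace $L/U$ lies in the generic orbit of this bi-Lagrangian Grassmannian, which has dimension $3r$ by Theorem~\ref{Th:EqualJordBiLagrMaxOrbit}; hence $\dim O_r = 3r$.

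For the topology, the $\operatorname{Aut}(V,\mathcal{P})$-action descends to an action of its image $G \subseteq \operatorname{Aut}(U^{\perp}/U)$, described by Lemma~\ref{L:ImageAut} as having off-diagonal blocks truncated by $(n_1 - n_2)/2$ diagonals. In the base case $r = 1$ (which requires $n_1, n_2$ both odd with $n_1 \geq n_2 + 2$), the off-diagonal blocks vanish entirely and $G \cong \operatorname{SL}(2,\mathbb{K}) \times \operatorname{SL}(2,\mathbb{K})$. A direct computation shows that the stabilizer of the reduced Lagrangian $L/U = \operatorname{Span}\{\bar{e}^1 + \bar{e}^2,\, \bar{f}^1 - \bar{f}^2\}$ is the ``anti-diagonal'' subgroup $\{(A, \sigma(A)) : A \in \operatorname{SL}(2,\mathbb{K})\}$, where $\sigma$ flips the signs of off-diagonal entries; the map $(A, B) \mapsto A\sigma(B)^{-1}$ identifies $G/\operatorname{Stab}$ with $\operatorname{SL}(2,\mathbb{K})$, giving $O_1 \approx \operatorname{SL}(2,\mathbb{K})$. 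For general $r$, I extend this by adapting the matrix parametrization in the proof of Theorem~\ref{Th:EqualJordBiLagrMaxOrbit}: writing $L/U$ through the polynomial pair \eqref{Eq:PQPolynom} subject to \eqref{Eq:PlambdaQlambdaUpto}, projecting onto the constant term yields $\operatorname{SL}(2,\mathbb{K})$, and the higher-order coefficients encode the $(r-1)$-jet.

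The main obstacle is controlling the non-trivial off-diagonal generators of $G$ that appear when $r > (n_1 - n_2)/2$. I expect these extra generators to act by affine translations within individual jet fibers while preserving the projection onto the leading term, so that the base remains $\operatorname{SL}(2,\mathbb{K})$ rather than $\Lambda(2)$. Verifying this will require a careful chart-level computation modeled on the proof of Theorem~\ref{Th:EqualJordBiLagrMaxOrbit}, but with the diagonal count of off-diagonal blocks reduced by $(n_1 - n_2)/2$ as dictated by Lemma~\ref{L:ImageAut}, yielding the claimed diffeomorphism $O_r \approx J^{r-1}_0(\mathbb{K}, \operatorname{SL}(2,\mathbb{K}))$.
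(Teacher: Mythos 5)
Your approach is genuinely different from the paper's, and unfortunately it leaves two substantial gaps. The paper avoids bi-Poisson reduction entirely: it fixes a standard basis, observes that the unique pair of generators $u, v$ of $L$ (as in Theorem~\ref{T:CanonIndecomp2Jord}) has free coefficients $a_j, b_j, c_j, d_j$, and checks directly that the bi-Lagrangian condition on $L$ is literally the condition $\det M(\lambda) \equiv 1 \pmod{\lambda^r}$ on the $2\times 2$ polynomial matrix $M(\lambda)$ built from those coefficients; Assertion~\ref{A:JetSL2} then does the rest. That route never needs to control the image of $\operatorname{Aut}(V,\mathcal{P})$ in $\operatorname{Aut}(U^\perp/U)$, which is exactly where your proposal runs into trouble.

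First, your dimension argument is a non sequitur. Since $h_1 = (n_1-r)/2 \neq (n_2-r)/2 = h_2$, Lemma~\ref{L:ImageAut} says the homomorphism $\operatorname{Aut}(V,\mathcal{P}) \to \operatorname{Aut}(U^\perp/U)$ is \emph{not} surjective, so the $\operatorname{Aut}(V,\mathcal{P})$-orbit $O_r$ is a priori a proper (and possibly lower-dimensional) subset of the $\operatorname{Aut}(U^\perp/U)$-generic orbit of $\operatorname{BLG}(U^\perp/U)$. Knowing $L/U$ lands in a $3r$-dimensional orbit under the \emph{bigger} group does not give $\dim O_r = 3r$. Indeed, Section~\ref{SubS:InfOrb} exhibits exactly this failure mode: after reduction, the image subgroup can be far too small to sweep out the reduced Grassmannian. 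Your dimension claim therefore depends on the topology computation you are about to do, making the paragraph as written either circular or incomplete.

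Second, the topology argument is only verified for $r=1$. There, the condition $n_1 \equiv n_2 \pmod 2$ with $n_1 > n_2$ forces $n_1 - n_2 \geq 2$, so $r - (n_1-n_2)/2 \leq 0$ and the off-diagonal blocks of Lemma~\ref{L:ImageAut} vanish; your stabilizer calculation $\{(A, \sigma(A))\}$ and the identification $G/\operatorname{Stab} \cong \operatorname{SL}(2,\mathbb{K})$ via $(A,B) \mapsto A\sigma(B)^{-1}$ are correct. But once $r > (n_1-n_2)/2$ the image $G$ picks up $r - (n_1-n_2)/2$ genuine off-diagonal generators, and ``I expect these extra generators to act by affine translations within individual jet fibers'' is not yet a proof; that is precisely the content one would have to establish, and it is the part of the argument you have not done. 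The paper's direct parametrization sidesteps this by computing inside $(V,\mathcal{P})$ itself rather than trying to understand the reduced group action.
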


First, let us give another description for the jet space $J^{n}_0\left(\mathbb{K}, \operatorname{SL}\left(2,\mathbb{K}\right)\right)$. Consider the ring $R = \mathbb{K}[x]/x^{n+1}$. The special linear group $\operatorname{SL}(2, R)$ consists of polynomial matrices \[M(\lambda) = M_0 + \lambda M_1 + \dots + \lambda^{n} M_n \] that satisfy \begin{equation} \label{Eq:DetMatUpN1} \det M(\lambda) = 1 \pmod{\lambda^{n+1}}.\end{equation} We consider $\operatorname{SL}(2, R)$ as a subspace of the space of $n+1$ matrices $(M_0, \dots, M_n)$ with the induced topology. 

\begin{assertion} \label{A:JetSL2} The jet space $J^{n}_0\left(\mathbb{K}, \operatorname{SL}\left(2,\mathbb{K}\right)\right)$ of $n$-jets of curves in the special linear group of degree $2$ is diffeomorphic to the special linear group $\operatorname{SL}(2, R)$ over the ring $R = \mathbb{K}[x]/x^{n+1}$.\end{assertion}

\begin{proof}[Assertion~\ref{A:JetSL2}] For any smooth curve $\gamma(\lambda)$ within $\operatorname{SL}(2,\mathbb{K})$, a corresponding element of $\operatorname{SL}(2, R)$ can be obtained by taking the Taylor expansion to order $n$: \[ \gamma(\lambda) = M_0 + \lambda M_1 + \dots + \lambda^n M_n + o(\lambda^n). \] Conversely, any element  $M(\lambda) \in \operatorname{SL}(2, R)$  can be extended to a smooth curve $\gamma(\lambda)$ within $\operatorname{SL}(2,\mathbb{K})$ by expressing one coordinate in terms of the other three. For instance, if \[ M(\lambda) = \left( \begin{matrix} a(\lambda) & b(\lambda) \\ c(\lambda) & d(\lambda) \end{matrix} \right)\] and $a(0) \not = 0$, then we can consider the curve \[ \gamma(\lambda) =  \left( \begin{matrix} \hat{a}(\lambda) & b(\lambda) \\ c(\lambda) & d(\lambda) \end{matrix} \right), \qquad \hat{a}(\lambda) = \frac{1 - b(\lambda) c(\lambda)}{d(\lambda)}. \] The n-jet of this curve $\gamma(\lambda)$ coincides with $M(\lambda)$ and is independent of the specific coordinate choice. Constructed maps yield diffeomorphism 
 $J^{n}_0\left(\mathbb{K}, \operatorname{SL}\left(2,\mathbb{K}\right)\right) \approx \operatorname{SL}(2, R)$. Assertion~\ref{A:JetSL2} is proved.  \end{proof}

 \begin{remark} The jet bundle $J^k G$ of $k$-jets of curves in a Lie group $G$ has a natural Lie group structure, see e.g. \cite{Vizman13}.
 \end{remark}

\begin{proof}[Proof of Theorem~\ref{T:Type2Special}] Fix a standard basis of $(V,\mathcal{P})$. There are unique $u, v \in L$ form Theorem~\ref{T:CanonIndecomp2Jord} such that \[ u= e_{\frac{n_1 - r}{2} + 1} + \sum_{j=1}^r \left( a_j \hat{e}_{\frac{n_2 - r}{2}+j} + b_j  \hat{f}_{\frac{n_2 - r}{2}+j} \right), \quad  v= -f_{\frac{n_1+ r}{2}} + \sum_{j=1}^r \left( c_j \hat{e}_{\frac{n_2 - r}{2}+j} + d_j  \hat{f}_{\frac{n_2 - r}{2}+j} \right). \]  It is easy to check that the corresponding subspace $L$, given by \eqref{Eq:CanonFormIndecompGen}, is bi-Lagrangian if and only if the matrix  \[ \begin{gathered} M(\lambda) = \left( \begin{matrix} a(\lambda) & b(\lambda) \\ c(\lambda) & d(\lambda) \end{matrix} \right), \quad a(\lambda) = a_1 + \dots + \lambda^{r-1}a_r, \quad  b(\lambda) = b_r + \dots + \lambda^{r-1}b_1, \\ c(\lambda) = c_1 + \dots + \lambda^{r-1}c_r, \quad d(\lambda) = d_r + \dots + \lambda^{r-1}d_1.\end{gathered}\] is an element of $\operatorname{SL}(2,  \mathbb{K}[x]/x^{n+1})$, i.e. $M(\lambda)$ satisfies \eqref{Eq:DetMatUpN1}. By Assertion~\ref{A:JetSL2} the orbit $O_r$ is diffeomorphic to $J^{r-1}_0(\mathbb{K}, \operatorname{SL}(2,\mathbb{K}))$. Theorem~\ref{T:Type2Special} is proved. \end{proof}

\subsubsection{Dimension and topology of orbits. Type II. General case} \label{SubS:TypeIItopGen}

Let $L \subset \left(V, \mathcal{P}\right) = \mathcal{J}_{0, 2n_1} \oplus \mathcal{J}_{0, 2n_2}$ be an indecomposable bi-Lagrangian subspace  that is categorized as type II (see Definition~\ref{D:TypesIndecomp}) and does not belong to the Type II-S category (see Definition~\ref{Def:TypeIIS}). The orbit of $L$ will be described in a manner analogous to Section~\ref{S:Top2BlocksSemi}. Consider subspaces $S_j$ as in Section~\ref{S:Top2BlocksSemi}. Put \[ L_1 =  P^{h-1}(L) \cap S_1, \qquad   L_2 =  P^{n_2 - d -r-1}(L) \cap \operatorname{Ker} P / S_1\] We get a pair of one-dimensional\footnote{Type II-S cases were excluded as the subspaces $L_i$ are two-dimensional in these cases.} (Lagrangian) subspaces $L_j \subset S_j$. Define \begin{equation} \label{Eq:PiIndecTypeII} \pi(L) = (L_1, L_2) \in \Lambda(1) \times \Lambda(1). \end{equation} For instance, in Figure~\eqref{Eq:Proj2DistJordTopolTypeII} we visualise  subspaces $L_j \subset S_j$ for one of the subspaces $L$ from \eqref{Eq:TypeIIIndecomp}.

\begin{equation} \label{Eq:Proj2DistJordTopolTypeII}
  \begin{tabular}{|c|c||c|c|} 
   \cline{1-2}  & &   \multicolumn{1}{c}{} & \multicolumn{1}{c}{}   \\
     \cline{1-2}  & &   \multicolumn{1}{c}{} & \multicolumn{1}{c}{}   \\
     \cline{1-2} & &   \multicolumn{1}{c}{} & \multicolumn{1}{c}{}   \\
\hline P1 & & &    \\
  \hline P2 & M1 & P1 &    \\
   \hline {\cellcolor{gray!25} }  & M2 &  P2 M1  &   \\ 
   \hline {\cellcolor{gray!25} } &  {\cellcolor{gray!25} }    &    {\cellcolor{gray!25} }  & M1  \\
     \hline {\cellcolor{gray!25} }  & {\cellcolor{gray!25} } &   {\cellcolor{gray!25} }   &  M2  \\ 
   \hline {\cellcolor{gray!25} } &  {\cellcolor{gray!25} }    &   {\cellcolor{gray!25} }  & {\cellcolor{gray!25} }   \\ \hline
  \end{tabular}  \qquad \to \qquad \begin{tabular}{|c|c||c|c|} 
     \multicolumn{1}{c}{}  & \multicolumn{1}{c}{}  &  \multicolumn{1}{c}{}   &  \multicolumn{1}{c}{}    \\
     \multicolumn{1}{c}{}  & \multicolumn{1}{c}{}  &  \multicolumn{1}{c}{}   &  \multicolumn{1}{c}{}    \\
   \multicolumn{1}{c}{}  & \multicolumn{1}{c}{}  &  \multicolumn{1}{c}{}   &  \multicolumn{1}{c}{}    \\
    \multicolumn{1}{c}{}  & \multicolumn{1}{c}{}  &  \multicolumn{1}{c}{}   &  \multicolumn{1}{c}{}    \\
  \multicolumn{1}{c}{}  & \multicolumn{1}{c}{}  &  \multicolumn{1}{c}{}   &  \multicolumn{1}{c}{}    \\
   \multicolumn{1}{c}{}  & \multicolumn{1}{c}{}  &  \multicolumn{1}{c}{}   &  \multicolumn{1}{c}{}    \\
 \multicolumn{1}{c}{}  & \multicolumn{1}{c}{}  &  \multicolumn{1}{c}{}   &  \multicolumn{1}{c}{} \\
     \multicolumn{1}{c}{}  & \multicolumn{1}{c}{}  &  \multicolumn{1}{c}{}   &  \multicolumn{1}{c}{}   \\
   \hline 1 &  & 2  &  \\ \hline
  \end{tabular}
\end{equation}

\begin{theorem} \label{T:2DistJordTopolMaxOrbIndecTypeII} Let $\left(V, \mathcal{P}\right) = \mathcal{J}_{0, 2n_1} \oplus \mathcal{J}_{0, 2n_2}$, where $n_1 > n_2$ and $L \subset \left(V, \mathcal{P}\right)$ be an indecomposable bi-Lagrangian subspace of Type $II$ such that its parameters from Theorem~\ref{T:CanonIndecompTypeII} satisfy $p_1 > q_1$. The $\operatorname{Aut} \left(V, \mathcal{P}\right)$-orbit of $L$ has a  structure of a $F$-fibre bundle over a product of two Lagrangian Grassmanians: \begin{equation} \label{Eq:ProjOrb2DistJordTypeII} \pi: O_{\max}  \xrightarrow{F} \Lambda(1) \times \Lambda(1) = \mathbb{KP}^1 \times \mathbb{KP}^1, \end{equation} where $\pi$ is given by \eqref{Eq:PiIndecTypeII} and the fiber $F \approx \left(\mathbb{K}^*\right)^M \times \mathbb{K}^N$, where \[ N = 2h - n_1 + n_2 - 2d + \max{(0, 2r-n_2 +2d)} -2 + \delta - M, \qquad M = \begin{cases} 1 &\mbox{if } r = 1, \\  2, \quad &\mbox{if } r > 1. \end{cases} \]. \end{theorem}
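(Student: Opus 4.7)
My plan is to follow the template used in Theorems~\ref{T:EqualJordTopolOrb}, \ref{T:GenJordTopolMaxOrb}, \ref{T:2DistJordTopolMaxOrb} and especially \ref{T:2DistJordTopolMaxOrbIndecTypeI}. As a first reduction, I would shrink to the case $d=0$ by applying bi-Poisson reduction with respect to $\operatorname{Ker}P^{d}$: since any $L$ of Type II with given invariants $(h,d,r,\delta,z)$ satisfies $\operatorname{Ker}P^{d}\subset L \subset \operatorname{Im}P^{d}$, Corollary~\ref{Cor:ImageAutIso} guarantees that the orbit in $\operatorname{BLG}(V,\mathcal{P})$ maps isomorphically onto the corresponding orbit in $\operatorname{BLG}(\operatorname{Ker}P^d / \operatorname{Im}P^d)$, and the projection $\pi$ commutes with this reduction. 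Next, the orbit $O_L$ is homogeneous under $\operatorname{Aut}(V,\mathcal{P})$, and Theorem~\ref{T:CanonIndecompTypeII} shows that all canonical presentations are equivalent, so $\operatorname{Aut}(V,\mathcal{P})$ acts transitively on the base $\Lambda(1)\times \Lambda(1)$; together with standard homogeneous space arguments this makes $\pi$ a locally trivial fiber bundle with some fiber $F$.

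It then remains to describe the fiber over the reference point. Using Theorem~\ref{T:CanonIndecompTypeII}, write $L=\operatorname{Span}\{u,Pu,\dots,P^{h-1}u,v,Pv,\dots,P^{n_1-h+r-1}v\}+\operatorname{Im}P^{h}+\operatorname{Ker}P^{d}$ and fix a standard basis so that $\pi(L)$ is pinned: $P^{h-1}u=e_{n_1}$ and $P^{n_2-d-r-1}v$ has its top $\hat{f}$-component at $\hat{f}_{1}$. By Theorem~\ref{T:VectOrbits}, the vectors $u$ and $v$ lie in the orbits of $e_{n_1-h+1}+\hat{e}_{d+1}$ and $f_{n_1-h+r}-\hat{f}_{d+r}+\delta\hat{e}_{n_2-z+1}$ respectively. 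Exploiting the freedom (i) to replace $u,v$ by linear combinations of $P^{j}u,P^{j}v$, (ii) to add elements of $\operatorname{Im}P^{h}$, and (iii) the change-of-basis argument of the kind used in Step~\ref{Step:ChangeFTypeI} of Theorem~\ref{T:CanonIndecompTypeI} (for the $\hat{f}_{j}$-normalization), one normalizes $u$ and $v$ to a canonical form
\begin{gather*}
u = e_{n_1-h+1}+\sum b_j f_j + \sum c_k \hat{e}_k,\qquad
v = \hat{f}_{d+r}' + \sum \hat{b}_j f_j + \sum \hat{c}_k \hat{e}_k + \delta\,\alpha\,\hat{e}_{n_2-z+1},
\end{gather*}
with a finite list of free scalar parameters, constrained by $L$ being bi-isotropic (the orthogonality relations $B(P^{i}u,P^{j}v)=0$, as in the proof of Theorem~\ref{T:2DistJordTopolMaxOrbIndecTypeI}, identify $\hat{b}$-coefficients with $c$-coefficients).

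Counting: the free $\mathbb{K}$-valued parameters produce the $\mathbb{K}^{N}$ factor, whose exponent is the total dimension of $O_L$ (computable a posteriori from $\dim\operatorname{Aut}(V,\mathcal{P})-\dim\mathrm{St}_L$, or simply from the product of ranges of $b_j,c_k,\hat{c}_k$) minus the contribution of the $\mathbb{K}^{*}$ factors and minus $\dim(\Lambda(1)\times\Lambda(1))=2$. The $\mathbb{K}^{*}$ factors arise from those leading coefficients that must be \emph{nonzero} in order to preserve $\operatorname{height}(u)=h$ and $\operatorname{height}(v)=n_1-h+r$ as well as the Type-II condition that the top component of $u$ lies in the second Jordan block (the coefficient $c_{d+1}$) and, when $r>1$, the condition that the top of $v$ along $\hat{f}$ also persists (giving a second $\mathbb{K}^{*}$ factor). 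This gives $M=1$ when $r=1$ and $M=2$ when $r>1$. When $\delta=1$, the single coefficient $\alpha$ in front of $\hat{e}_{n_2-z+1}$ contributes one additional $\mathbb{K}$-parameter, accounting for the $+\delta$ in the formula; finally, the term $\max(0,2r-n_2+2d)$ reflects the "overlap" between the Jordan chain generated by $u$ and the height band of $v$: only when the ranges of $\hat{e}$-components in $u$ and of $\hat{f}$-components in $v$ collide (that is, when $2r>n_2-2d$) are some $c_k,\hat{c}_k$ forced to coincide, and it is exactly this collision that contributes this extra dimension.

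The main obstacle will be the fourth paragraph: the bookkeeping of the overlap term $\max(0,2r-n_2+2d)$ and the precise number of $\mathbb{K}^{*}$ factors, since both the boundary case $r=1$ and the $\delta=1$ case introduce subtle exceptions. I would treat these by writing $u$ and $v$ as polynomials in $P$ acting on the standard basis vectors, enforcing the bilinear-form vanishing relations $B(P^iu,P^jv)=0$ explicitly, and then checking that the resulting affine algebraic fibre is a trivial bundle by exhibiting a global section — for instance the canonical representatives supplied by Theorem~\ref{T:CanonIndecompTypeII}. Combining these ingredients yields the claimed structure of $O_L$ as a $(\mathbb{K}^{*})^{M}\times \mathbb{K}^{N}$-bundle over $\mathbb{KP}^{1}\times\mathbb{KP}^{1}$, with the stated formula for $N$.
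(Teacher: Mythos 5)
Your proposal follows the same strategy as the paper's proof: reduce to $d=0$ via bi-Poisson reduction, observe transitivity of the induced action on $\Lambda(1)\times\Lambda(1)$ to get a fiber bundle, then parametrize the fiber by normalizing $u$ and $v$ using Theorem~\ref{T:VectOrbits}, the $P^j$-combination freedom, and the orthogonality relations $B(P^iu,P^jv)=0$. The only place where your account drifts from the actual mechanism is the heuristic for the term $\max(0,2r-n_2+2d)$: you describe it as coefficients $c_k,\hat{c}_k$ being "forced to coincide," which would kill freedom, and then in the same breath say this collision adds dimension — a self-contradiction. In the paper these are not identifications but genuinely extra free parameters: after normalization, $v$ retains the independent $\hat{e}$-coefficients $\hat{c}_{n_2-j+1}$ for $j$ in the overlapping range $(n_2-r,\,\max(r,n_2-r)]$, which is nonempty precisely when $2r>n_2$ (with $d=0$), giving $\max(0,2r-n_2)$ additional $\mathbb{K}$-parameters. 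Likewise, the two $\mathbb{K}^*$ factors come concretely from $c_1\neq 0$ (Type II) and from $\hat{b}_{n_1-h+1}=-c_r\neq 0$ (the $x=1$ orthogonality relation), which collapse to a single parameter when $r=1$; this matches your count of $M$ but your phrasing in terms of "height preservation of $v$" is a bit loose. You flag these as the delicate points and propose to settle them by explicit enforcement of $B(P^iu,P^jv)=0$, which is exactly the computation the paper carries out; so the route is the same, modulo the muddled intuitive gloss on the overlap term that would be resolved by the calculation you propose.
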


\begin{proof}[Proof of Theorem~\ref{T:2DistJordTopolMaxOrbIndecTypeII}] The proof is analogous to Theorem~\ref{T:2DistJordTopolMaxOrb} (it can also be proved similar to Theorems~\ref{T:EqualJordTopolOrb} and \ref{T:GenJordTopolMaxOrb}). To simplify, assume $d=0$ (otherwise, perform bi-Poisson reduction w.r.t. $\operatorname{Ker} P^d$). The orbit $O_L$ is a homogeneous space and by Theorem~\ref{T:CanonIndecompTypeI} the induced action on the base $\Lambda(1) \times \Lambda(1)$ is transitive. Hence, the projection $\pi$, given by \eqref{Eq:PiIndecTypeII}, is surjective and defines a fiber bundle. It remains to demonstrate that the fibers $F$ are diffeomorphic to the specified spaces. We show it by describing possible $u$ and $v$ ($L$ is determined by $u$ and $v$ according to \eqref{Eq:TypeIISimpleForm}). Since $u$ belongs to the automorphism orbit of $e_{n_1 - h+1} + \hat{e}_1$ there is a standard basis \eqref{Eq:StandBasisTwoBlocks} such that \[ \begin{gathered} u = e_{n_1 - h+1} + c_1 \hat{e}_1 + w_1 + \hat{w}_1, \\ w_1 \in \operatorname{Span}\left\{ e_i, f_j\right\}, \quad \operatorname{height}(w_1) < h, \qquad \hat{w}_1\in \operatorname{Span}\left\{ e_i, f_j\right\}, \quad \operatorname{height}(\hat{w}_1) < n_2, \end{gathered} \] where $c_1\not = 0$.  It is possible to eliminate only one of the coefficients preceding $e_{n_1 - h+1}$ and $\hat{e}_1$  through multiplication of $u$ by a constant factor. The projection $\pi$ takes the form: \[ L_1 = \operatorname{Span}\left\{ e_1\right\}, \qquad L_2 =\operatorname{Span} \left\{ \hat{e}_1\right\}.\] Since $L$ contains the vectors $P^ju$ and the subspace $\operatorname{Im}P^h$ we can replace $u$ with \[ u = e_{n_1 - h+1} + \sum_{j=n_1 - h+1}^{h-1} b_j f_j + c_1 \hat{e}_1 + \sum_{j=1}^{n_2 - 1} \left( c_{n_2 - j+1} \hat{e}_{n_2 - j+1} + d_j \hat{f}_j\right).\] Since $v$ belongs to the  $\operatorname{Aut}(V,\mathcal{P})$-orbit of $f_{n_1 - h + r} - \hat{f}_{r} + \delta \hat{e}_{n_2 - z + 1}$, by Theorem~\ref{T:VectOrbits},  it has the form \[ \begin{gathered} v =  w_2 + \hat{w}_2, \\ w_2 \in \operatorname{Span}\left\{ e_i, f_j\right\}, \quad \operatorname{height}(w_2) \leq n_1 - h + r, \qquad \hat{w}_2\in \operatorname{Span}\left\{ \hat{e}_i, \hat{f}_j\right\}, \qquad \operatorname{height}(\hat{w}_2) \leq \hat{z}, \end{gathered} \] where  we put \[ \hat{z} = \begin{cases} z, \quad &\mbox{if } \delta = 1, \\  r, \quad &\mbox{if } \delta = 0. \end{cases} \]  By substituting $v$ with an appropriate linear combination of $P^ju$ and vectors from $\operatorname{Im}P^h$, the vector takes the following form: \[v =  \sum_{j = n_1 - h +1}^{n_1 - h+r} \hat{b}_j f_j + \sum_{j=1}^{\hat{z}}\left(\hat{c}_{n_2 - j+1} \hat{e}_{n_2 - j+1} + \hat{d}_j \hat{f}_j \right). \] Here $\hat{d}_r \not = 0$, and in the case $\delta = 1$ also $\hat{c}_{n_2 - z +1} \not = 0$ (see proof of Theorem~\ref{T:CanonIndecompTypeII}). Replacing $u$ and $v$ with suitable linear combinations with $P^jv$ we get \[ \begin{gathered}  v =  \sum_{j = n_1 - h +1}^{n_1 - h+r} \hat{b}_j f_j + \delta \cdot \hat{c}_{n_2 - z + 1}\hat{e}_{n_2 - z + 1} +  \sum_{j=1}^r\hat{c}_{n_2 - j+1} \hat{e}_{n_2 - j+1} +  \hat{f}_r, \\ u = e_{n_1 - h+1} + \sum_{j=n_1 - h+1}^{h-1} b_j f_j + c_1  \hat{e}_1 + \sum_{j=1}^{n_2 - 1} c_{n_2 - j+1} \hat{e}_{n_2 - j+1} + \sum_{j=r+1}^{n_2 - 1} d_j \hat{f}_j.  \end{gathered} \] In the basis from Theorem~\ref{T:CanonIndecompTypeII} the subspace $L$ includes the subspace $\operatorname{Span}\left\{\hat{e}_{r+1},\dots, \hat{e}_{n_2}\right\}$. Therefore, $L$ contains images under $P$ of the component of v associated with the smaller Jordan block: \[ P^{j} \left(c_1 \hat{e}_1 + \sum_{j=1}^{n_2 - 1} c_{n_2 - j+1} \hat{e}_{n_2 - j+1} + \sum_{j=r+1}^{n_2 - 1} d_j \hat{f}_j\right) = c_1 \hat{e}_{j+1} + \text{lower vectors}, \qquad j \geq r. \] Subtracting linear combinations of these vectors we can bring $u$ and $v$ to the form 
 \begin{equation} \label{Eq:FormTypeIIuv} \begin{gathered}  u = e_{n_1 - h+1} + \sum_{j=n_1 - h+1}^{h-1} b_j f_j + c_1  \hat{e}_1 + \sum_{j=n_2 - r + 1}^{n_2 - 1} c_{n_2 - j+1} \hat{e}_{n_2 - j+1} + \sum_{j=r+1}^{n_2 - 1} d_j \hat{f}_j, \\ v =  \sum_{j = n_1 - h +1}^{n_1 - h+r} \hat{b}_j f_j + \delta \cdot \hat{c}_{n_2 - z + 1}\hat{e}_{n_2 - z + 1} + \sum_{j=n_2  -r +1 }^{\max(r, n_2 - r)}\hat{c}_{n_2 - j+1} \hat{e}_{n_2 - j+1} +  \hat{f}_r. \end{gathered} \end{equation} The sum $\displaystyle \sum_{j=n_2  -r +1 }^{\max(r, n_2 - r)} \hat{c}_{n_2 - j+1} \hat{e}_{n_2 - j+1}$ contains no elements when $r \leq n_2 -r$. Since vectors $P^j u$ and $P^j v$ are orthogonal, we get the following conditions: \begin{equation} \label{Eq:OrthogTypeII} \hat{b}_{n_1 - h + x} + c_{r -x+1} - \sum_{j=1}^{n_2} d_{j + x-1} \hat{c}_j = 0, \qquad x = 1,\dots, r. \end{equation}  Here the coefficients $d_{j+x}$ or $\hat{c}_j$  that do not appear in Equation \eqref{Eq:FormTypeIIuv} are assigned the value zero. Also for $\delta = 0$ we put $\hat{c}_{n_2 - z+1} =0$. The coefficients $\hat{b}_{n_1 - h+x}$ can be found by from \eqref{Eq:OrthogTypeII}.  The orbit of $L$ is determined by the remaining parameters from \eqref{Eq:FormTypeIIuv}. The number of these parameters is \[2h - n_1 + n_2 + \max{(0, 2r - n_2)} - 2 +\delta.\] Let us recall restrictions on the parameters. We previously assumed that $c_1 \not = 0$. Additionally, $\hat{b}_{n_1 - h+1} \not = 0$, since $v$ belongs to the orbit of $f_{n_1 - h + r} - \hat{f}_{r}$. By \eqref{Eq:OrthogTypeII} for $x=1$, \[ \hat{b}_{n_1 - h+1} + c_{r} =0. \] There are two variants:
 
 \begin{enumerate}
     \item  If $r=1$, then $\delta = 0$ and $\hat{b}_{n_1 - h+1} = - c_1 \not = 0$. In this case, the parameter $c_1 \in \mathbb{K}^*$ and all other parameters  belong to the field $\mathbb{K}$. 
     \item If $r>1$, then the parameters $c_1, c_{r} \in \mathbb{K}^*$, while the remaining parameters belong to $\mathbb{K}$.
 \end{enumerate} We got the required restrictions on the parameters. Theorem~\ref{T:2DistJordTopolMaxOrbIndecTypeII} is proved. \end{proof}

\subsection{Example: \texorpdfstring{$\operatorname{BLG}\left(\mathcal{J}_{0, 6} \oplus \mathcal{J}_{0, 2}\right)$}{BLG(J06+J02}} \label{SubS:Exam}

Let us consider a simple example when $(V,\mathcal{P}) = \mathcal{J}_{0,6} \oplus \mathcal{J}_{0,2}$. By Theorem~\ref{T:BiLagrJord}  \[\dim \operatorname{BLG}(\mathcal{J}_{0,6} \oplus \mathcal{J}_{0,2}) = 5.\] The bi-Lagrangian Grassmanian consists of $3$ $\operatorname{Aut}(V,\mathcal{P})$-orbits:

\begin{enumerate}

\item The orbit of maximal dimension, $O_{\max}$, consisting of generic bi-Lagrangian subspaces. By Theorem~\ref{T:GenJordTopolMaxOrb} it is a fiber bundle over a product of two Lagrangian Grassmanians:  \[ \pi: O_{\max}  \xrightarrow{\mathbb{K}^3}  \Lambda(1) \times  \Lambda(1), \qquad  \dim O_{\max} = 5. \]

\item The orbit $O_{s}$, which contains non-generic semisimple bi-Lagrangian subspacess. By Theorem~\ref{T:2DistJordTopolMaxOrb} it is diffeomorphic to the product of two Lagrangian Grassmanians:  \[ O_{s} \approx  \Lambda(1) \times  \Lambda(1) = \mathbb{KP}^1 \times \mathbb{KP}^1, \qquad  \dim O_{s} = 2. \]

\item The orbit $O_{ind}$, consisting of indecomposable bi-Lagrangian subspaces (of Type II-S with parameter $r = 1$). By Theorem~\ref{T:Type2Special} it is diffeomorphic to the special linear group of degree $2$:  \[ O_{ind} \approx  \operatorname{SL}(2,\mathbb{K}), \qquad  \dim O_{ind} = 3. \]

\end{enumerate}

We visualise bi-Lagrangian subspaces from these $3$ orbits in Figure~\eqref{Eq:BiLagrExampleJ6J2} .
\begin{equation}  \label{Eq:BiLagrExampleJ6J2}  \begin{tabular}{|c|c|cc}\cline{1-2}
  {\cellcolor{gray!25} }  & & & \\ \cline{1-2}   
    {\cellcolor{gray!25} }   &   & & \\ \hline
    {\cellcolor{gray!25} } &  & {\cellcolor{gray!25} } &   \multicolumn{1}{|c|}{ }\\ \hline
  \end{tabular} \qquad  \begin{tabular}{|c|c|cc}\cline{1-2}
    & & & \\ \cline{1-2}   
    {\cellcolor{gray!25} }   &   & & \\ \hline
    {\cellcolor{gray!25} } & {\cellcolor{gray!25} } & {\cellcolor{gray!25} } &   \multicolumn{1}{|c|}{ }\\ \hline
  \end{tabular} \qquad 
  \begin{tabular}{|c|c|cc}\cline{1-2}
   & & & \\ \cline{1-2}   
    P1  & P2  & & \\ \hline
    {\cellcolor{gray!25} } & {\cellcolor{gray!25} }  & M1 &   \multicolumn{1}{|c|}{ M2}\\ \hline
  \end{tabular}
\end{equation}

Performing bi-Poisson reduction w.r.t. the subspace $U =\operatorname{Im}P^2$ and using Lemma~\ref{L:AlgIsomBiisotrFactor}, we find that the union of non-maximal orbits is isomorphic to
Lagrangian Grassmanian of a two-dimensional space: \[O_{s} \cup O_{ind} \approx \operatorname{BLG}(U^{\perp}/U) \approx \Lambda(2).\]

\begin{lemma} \label{L:ClosureTop} The closure (in the standard topology) of the largest orbit $O_{max}$ within $ \operatorname{BLG}(\mathcal{J}_{0,6} \oplus \mathcal{J}_{0,2})$ is its union with the smallest orbit: \begin{equation} \label{Eq:ClosureTop} \bar{O}_{\max} = O_{\max} \cup O_{s}. \end{equation} \end{lemma}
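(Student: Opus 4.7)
The plan is to prove the equality via two inclusions, $O_s \subseteq \bar{O}_{\max}$ and $O_{ind} \cap \bar{O}_{\max} = \emptyset$.

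For the first inclusion I would exhibit an explicit one-parameter degeneration. In a standard basis $e_1,e_2,e_3,f_1,f_2,f_3,\hat{e}_1,\hat{f}_1$ of $(V,\mathcal{P})$, consider
\[ L_t = \operatorname{Span}\{te_1+f_1,\ e_2,\ e_3,\ \hat{e}_1\}, \qquad t\in\mathbb{K}. \]
The subspace $L_t$ is $B$-isotropic by inspection and $P$-invariant (since $P(te_1+f_1)=te_2\in L_t$, $Pe_2=e_3\in L_t$, and $Pe_3=P\hat{e}_1=0$), hence bi-Lagrangian of dimension $4$. For $t\neq 0$ the generator $te_1+f_1$ has height $3$, so $L_t\in O_{\max}$; while $L_0=\operatorname{Span}\{f_1,e_2,e_3,\hat{e}_1\}$ is semisimple with heights $(h_1,h_2)=(2,1)$ relative to the standard JK decomposition, placing $L_0$ in $O_s$ by Theorem~\ref{T:BiLagrTwoDistSemi}.

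For the second inclusion I would work via bi-Poisson reduction. Applied to the $\operatorname{Aut}$-invariant bi-isotropic subspace $U=\operatorname{Im}P^2=\operatorname{Span}\{e_3,f_1\}$, Lemma~\ref{L:AlgIsomBiisotrFactor} identifies the closed subvariety $O_s\cup O_{ind}=\{L\supset U\}$ with $\operatorname{BLG}(U^{\perp}/U)\cong \Lambda(2)$, since $P$ acts trivially on the quotient $U^{\perp}/U$. Lemma~\ref{L:ImageAut} shows that the induced action of $\operatorname{Aut}(V,\mathcal{P})$ on $U^{\perp}/U$ factors through a subgroup of $\operatorname{Sp}(2)\times\operatorname{Sp}(2)$ preserving the canonical symplectic splitting $(\operatorname{Ker}P/U)\oplus(\operatorname{Ker}P/U)^{\perp_B}$; the orbits $O_{ind}$ and $O_s$ then correspond to Lagrangians that are respectively transverse or non-transverse to this splitting. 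To rule $O_{ind}$ out of the closure, I would track the auxiliary datum $\ell(L):=P^2L\in\mathbb{P}(U)\cong\mathbb{P}^1$, a continuous map defined on $O_{\max}$; for any sequence $L_n\in O_{\max}$ converging to $L_0\in\Lambda(2)$ the line $\ell(L_n)$ has a subsequential limit $\ell_0$, and the claim is that for $L_n\in O_{\max}$ the plane $L_n\cap\operatorname{Ker}P$ always contains a direction transverse to $U$ which survives in the limit inside $(\operatorname{Ker}P/U)$, forcing the non-transverse intersection that characterizes $O_s$.

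\textbf{Main obstacle.} The hard part is clearly the second inclusion. Semicontinuity of the naive invariants is not enough: the generic Jordan type $(3,1)$ of $P\bigr|_L$ on $O_{\max}$ dominates both $(2,1,1)$ (the type on $O_s$) and $(2,2)$ (the type on $O_{ind}$), so either specialization is a priori compatible with the dominance order. Ruling out $O_{ind}$ as a limit therefore requires the finer secondary-invariant analysis sketched above, and verifying that the pair $(L_0,\ell_0)$ cannot be realized by a transverse Lagrangian in $\Lambda(2)$; this is the step where I expect the bulk of the technical work to concentrate.
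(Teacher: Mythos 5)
Your proof of the first inclusion $O_s\subseteq\bar{O}_{\max}$ is correct, and the explicit degeneration you give is clean. For the second inclusion you correctly identify the obstacle and leave it open, but the obstacle turns out to be fatal: the inclusion $O_{ind}\cap\bar{O}_{\max}=\emptyset$ is in fact \emph{false}, so no amount of secondary-invariant analysis can close the gap. Indeed, Lemma~\ref{L:ClosureTop} itself does not hold. Consider, for $N\neq 0$, the vector
\[
v_N=e_1+Ne_2-Nf_2+f_3+N\hat{e}_1+N\hat{f}_1,\qquad
w_N=-Ne_3+Nf_1+\hat{e}_1-\hat{f}_1,
\]
and set $L_N=\operatorname{Span}\{v_N,\,Pv_N,\,P^2v_N,\,w_N\}$. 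One checks directly that $B(v_N,w_N)=B(Pv_N,w_N)=B(P^2v_N,w_N)=0$, so $L_N$ is a $P$-invariant Lagrangian of height $3$, hence $L_N\in O_{\max}$ for every $N\neq 0$. Passing to the basis
\[
\tfrac1N v_N,\quad \tfrac1N Pv_N,\quad P^2v_N,\quad Pv_N+w_N
\]
of $L_N$, and noting that $P^2v_N=e_3+f_1$ and $Pv_N+w_N=e_2+f_2+\hat{e}_1-\hat{f}_1$ are constant while $\tfrac1N v_N\to e_2-f_2+\hat{e}_1+\hat{f}_1$ and $\tfrac1N Pv_N\to e_3-f_1$, one finds that these four limits are linearly independent and span exactly
\[
L=\operatorname{Span}\{e_3,\ f_1,\ e_2+\hat{e}_1,\ f_2-\hat{f}_1\},
\]
the indecomposable subspace of Assertion~\ref{A:NonDecomp}. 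Hence $L_N\to L$ in $\operatorname{Gr}(4,8)$ with $L\in O_{ind}$, and $\bar{O}_{\max}=\operatorname{BLG}(\mathcal{J}_{0,6}\oplus\mathcal{J}_{0,2})$.

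This also exposes the error in the paper's own proof: the spanning matrix \eqref{Eq:SpanLEx2} used there assumes that the fourth generator $w\in L\cap\operatorname{Ker}P$ can always be brought to the form $(0,\dots,0,c_2,d_2)$, i.e.\ with vanishing $e_3$- and $f_1$-components; but one can only eliminate the $\operatorname{Span}\{P^2v\}$-direction inside $U=\operatorname{Im}P^2$, so when $P^2v=a_1e_3+b_3f_1$ is not proportional to the projection of $w$ onto $U$, the assumed form is unattainable. Concretely, for the family above $p_{2345}(L_N)=4N^2\neq 0$, so the minor in columns $2$--$5$ does not vanish on all of $O_{\max}$, and the argument that $L\notin\bar{O}_{\max}$ collapses. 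In your own sketch the weak spot is precisely the transition from ``$\bar{W}_t=(W_t+U)/U$ is a decomposable Lagrangian for every $t$'' to ``its limit equals $L_\infty/U$''; this fails exactly when the limit $W_\infty$ of $W_t=L_t\cap\operatorname{Ker}P^2$ swallows all of $U$ (which is what happens in the family above), and it is here, not in the dominance-order comparison of Jordan types, that the second inclusion breaks. Both Lemma~\ref{L:ClosureTop} and the deduction in Remark~\ref{Rem:ClosureOrbitsJ2} that $\operatorname{BLG}(\mathcal{J}_{0,6}\oplus\mathcal{J}_{0,2})$ has two irreducible components therefore need to be revised: the correct statement is $\bar{O}_{\max}=\operatorname{BLG}(\mathcal{J}_{0,6}\oplus\mathcal{J}_{0,2})$, and this bi-Lagrangian Grassmannian is irreducible.
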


\begin{proof}[Proof of Lemma~\ref{L:ClosureTop}] The closure $\bar{O}_{\max}$ is closed and $\operatorname{Aut}(V,\mathcal{P})$-subset. Hence it contains $O_{\max}$ and $O_{s}$. It suffices to prove that some indecomposable subspace $L \in O_{ind}$ does not belong the closure $\bar{O}_{\max}$. Consider the subspace $L$, given by \eqref{Eq:BiLagr_NotDecomp}. In the basis from Assertion~\ref{A:NonDecomp} $L$ is generated by the rows of the matrix \begin{equation} \label{Eq:SpanLEx1} \left(\begin{array}{cccccc|cc} 0 & 1 & 0 & 0 & 0 & 0 & 1 & 0 \\ 0 & 0 & 1 & 0 & 0 & 0 & 0 & 0 \\ 0 & 0 & 0 & 1 & 0 & 0 & 0 & 0 \\ \hline 0 & 0 & 0 & 0 & 1  & 0 & 0 & -1 \\   \end{array} \right). \end{equation} Whereas, by Theorem~\ref{T:BiLagrGenDecomp}, a generic bi-Lagrangian subspace $\hat{L} \in O_{\max}$ is spanned by the rows of a matrix \begin{equation} \label{Eq:SpanLEx2} \left(\begin{array}{cccccc|cc} a_1 & a_2 & a_3 & b_1 & b_2 & b_3 & c_1 & d_1 \\ 0 & a_1 & a_2 & b_2 & b_3 & 0 & 0 & 0 \\ 0 & 0 & a_1 & b_3 & 0 & 0 & 0 & 0 \\ \hline 0 & 0 & 0 & 0 & 0  & 0 & c_2 & d_2 \\   \end{array} \right). \end{equation} The minor formed by columns $2$-$5$ is zero for the matrix~\eqref{Eq:SpanLEx2} and nonzero for the matrix~\eqref{Eq:SpanLEx1}. Therefore, $L \not \in \bar{O}_{\max}$ and \eqref{Eq:ClosureTop} holds. Lemma~\ref{L:ClosureTop} is proved. \end{proof}

\begin{remark} \label{Rem:ClosureOrbitsJ2} By the closed orbit lemma (see e.g. \cite[Section 8.3]{Humphreysl75}) an orbit under an algebraic group action over an algebraically closed field is a constructible set. Thus, orbit closure under the Zariski topology coincides with orbit closure under the standard topology. Consider $\operatorname{BLG}(V,\mathcal{P})$ as an algebraic variety. Its  irreducible components are invariant under the action of $\operatorname{Aut}(V,\mathcal{P})$ and hence are unions of
$\operatorname{Aut}(V,\mathcal{P})$-orbits.  Consequently, bi-Lagrangian Grassmanian $ \operatorname{BLG}(\mathcal{J}_{0,6} \oplus \mathcal{J}_{0,2})$ consists of two irreducible components: \[ \bar{O}_{\max} = O_{\max} \cup O_{s}, \qquad \bar{O}_{ind} = O_{ind} \cup O_{s}. \] \end{remark}

\section{Semisimple bi-Lagrangian subspaces} \label{S:DecomposableSection}

Denote a bi-Lagrangian subspace as $(V,\mathcal{P}, L)$, which is trivial if $\dim V = 0$. 

\begin{itemize}
\item The \textbf{direct sum} of bi-Lagrangian subspaces $(V_1, \mathcal{P}_1, L_1)$ and $(V_2, \mathcal{P}_2, L_2)$ is $\left(V_1 \oplus V_2, \mathcal{P}_1 + \mathcal{P}_2, L_1 \oplus L_2\right)$. 

\item Two Lagrangian subspaces are \textbf{isomorphic} $(V_1, \mathcal{P}_1, L_1) \approx (V_2, \mathcal{P}_2, L_2)$ if there is an a linear isomorphism \[ f:(V_1, \mathcal{P}_1, L_1) \to (V_2, \mathcal{P}_2, L_2) \] such that $f^*(\mathcal{P}_2)=\mathcal{P}_1$ and $f(L_1) = L_2$. 
\end{itemize}

\begin{definition} \label{Def:Decomp} A bi-Lagrangian subspace $L \subset (V,\mathcal{P})$  is called

\begin{enumerate}

\item \textbf{Indecomposable} if it is not isomorphic to a direct sum of two non-trivial bi-Lagrangian subspaces. 

\item \textbf{Simple} if contained within a single Jordan block $L \subset \mathcal{J}_{0, 2n}$.

\item \textbf{Semisimple} if it is isomorphic to a direct sum of simple bi-Lagrangian subspaces: \[ (V,\mathcal{P}, L) \approx \bigoplus_{i=1}^N \left( \mathcal{J}_{\lambda_i, 2n_i}, L_i\right).\]

\end{enumerate}

\end{definition}

In Section~\ref{SubS:SemiMark} we prove that a bi-Lagrnangian subspace is semisimple if and only if it is marked and characterize spaces where all bi-Lagrangian subspaces are semisimple.  Then we classify semisimple bi-Lagrangian subspaces (Section~\ref{SubS:TypesSemiSimple}) and compute the dimension of their automorphism orbits (Section~\ref{SubS:Dimension of orbits}).

\subsection{Semisimple subspaces are marked} \label{SubS:SemiMark}

Marked invariant subspaces were introduced in \cite{Gohberg86}.

\begin{definition} \label{Def:Marked} Let $C: V \to V$ be a linear operator.  A $C$-invariant subspace $W \subset V$ is called \textbf{marked} if there is a Jordan basis of $W$ which can be extended to a Jordan basis of $V$. \end{definition}

\begin{theorem} \label{T:Marked} Let $(V,\mathcal{P})$ be a sum of Jordan blocks and $P$ be the recursion operator. A bi-Lagrangian subspace $L \subset (V,\mathcal{P})$ is semisimple if and only if it is marked (as a $P$-invariant subspace). \end{theorem}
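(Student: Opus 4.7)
My plan is to prove both directions separately, with the forward implication being direct and the converse requiring induction on $\dim V$.

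For $(\Rightarrow)$, I will suppose $(V,\mathcal{P},L) \cong \bigoplus_i (\mathcal{J}_{0,2n_i}, L_i)$ with each $L_i$ simple bi-Lagrangian. Invoking Theorem~\ref{T:BiLagr_One_Jordan_Canonical_Form}, a standard basis $e^i_j, f^i_j$ on each block gives $L_i = \operatorname{Span}(e^i_{n_i-h_i+1},\dots,e^i_{n_i},f^i_1,\dots,f^i_{n_i-h_i})$. The sequences $e^i_{n_i-h_i+1}\to \dots \to e^i_{n_i}$ and $f^i_{n_i-h_i}\to\dots\to f^i_1$ form $P$-Jordan chains in $L_i$ of lengths $h_i$ and $n_i-h_i$; their union over $i$ is a Jordan basis of $L$, which I extend to a Jordan basis of $V$ by prepending the missing top segments $e^i_1,\dots,e^i_{n_i-h_i}$ and $f^i_{n_i},\dots,f^i_{n_i-h_i+1}$ in each block.

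For $(\Leftarrow)$, I would induct on $\dim V$, reducing to a single eigenvalue $\lambda = 0$ via Theorem~\ref{T:JordaMultEigen}, and set $n_1 = \max_i n_i$. If $\operatorname{height}(L) = n_1$, some chain of $L$ in the marked Jordan basis already has the maximal length $n_1$ in $V$. Theorem~\ref{T:ExtractMaxBlockGen} lets me split off a bi-orthogonal summand $V_1 \cong \mathcal{J}_{0,2n_1}$ around it, yielding $V = V_1 \oplus V_2$ with $L = L_1 \oplus L_2$ and $L_1$ generic (hence simple) in $V_1$. The auxiliary claim to verify is that for any bi-orthogonal $P$-invariant decomposition $V = V_1 \oplus V_2$ with $L = L_1 \oplus L_2$, the hypothesis that $L$ is marked in $V$ forces each $L_i$ to be marked in $V_i$: an extending chain $(v,Pv,\dots)$ of an $L_2$-chain has its $V_2$-component $v_2$ satisfying the same defining relation $P^{m-h}v_2 = u \in V_2$ (since the complementary $P^{m-h}v_1 \in V_1 \cap V_2 = 0$), and any shortfall in extension length can be filled inside $V_2$ using the identity $\operatorname{Im}P^{k} = \operatorname{Ker}P^{n-k}$ in a single block, so every height-$h$ vector sits inside a full-length chain. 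The induction hypothesis applied to $L_2 \subset V_2$ then closes this case.

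If instead $\operatorname{height}(L) < n_1$, then $L \subset \operatorname{Ker}P^{n_1-1}$ and dualizing gives $U := \operatorname{Im}P^{n_1-1} \subset L^{\perp} = L$, so $U \subset L \subset U^{\perp}$. By Theorem~\ref{T:BiPoissReduction}, $L/U$ is bi-Lagrangian in $U^{\perp}/U$, where each $\mathcal{J}_{0,2n_1}$-summand of $V$ has shrunk to $\mathcal{J}_{0,2(n_1-2)}$. I would then check that a chain of $L$ projects to a chain of $L/U$ of the same length when its $V$-extension has length $< n_1$, and of length one shorter when the $V$-extension has length $n_1$ (its bottom then lying in $U$); the extending $V$-chains descend simultaneously to extending chains of $U^{\perp}/U$, showing $L/U$ is marked and hence, by induction, semisimple. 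Corollary~\ref{Cor:ImageAutIso} provides a surjection $\operatorname{Aut}(V,\mathcal{P}) \to \operatorname{Aut}(U^{\perp}/U)$, allowing me to lift the semisimple JK-decomposition of $U^{\perp}/U$ to one of $V$; in each lifted $\mathcal{J}_{0,2n_1}$-summand the simple piece of $L/U$ combines with the $U$-contribution $\operatorname{Span}(e_{n_1},f_1)$ into a simple bi-Lagrangian of height one greater, giving the semisimple structure of $L$. The hardest step throughout is verifying preservation of the marked property under these two reductions — for the first case via the projection-and-further-extension argument inside a bi-orthogonal summand, for the second via the chain-length bookkeeping under the quotient.
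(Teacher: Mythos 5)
Your $(\Rightarrow)$ direction matches the paper's. Your $(\Leftarrow)$ direction takes a genuinely different route: the paper reads the longest chain of $L$ straight off the marked Jordan basis, observes that it extends to a full $V$-chain of some length $n$ (not necessarily $n_1$), and extracts the skew-symmetric block $\mathcal{J}_{0,2n}$ around that chain; you instead split on whether $\operatorname{height}(L) = n_1$ or not, invoking Theorem~\ref{T:ExtractMaxBlockGen} in the first case and bi-Poisson reduction modulo $\operatorname{Im}P^{n_1-1}$ in the second.

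Your second case contains a genuine error. Corollary~\ref{Cor:ImageAutIso} gives a surjection onto $\operatorname{Aut}(U^{\perp}/U)$ only for $U = \operatorname{Ker}P^h$ with $h \le \lfloor n_N/2 \rfloor$, i.e.\ when all blocks are quotiented \emph{symmetrically}. For $U = \operatorname{Im}P^{n_1-1}$ this fails whenever some $n_i < n_1$: by Lemma~\ref{L:ImageAut} the induced map $\operatorname{aut}(V,\mathcal{P}) \to \operatorname{aut}(U^{\perp}/U)$ is surjective only if the heights $h_i$ of $U$ in each block agree, but here $h_i = \max(0, n_i - n_1 + 1)$ varies. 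More importantly, the implication ``$L/U$ semisimple in $U^{\perp}/U \Rightarrow L$ semisimple in $V$'' is simply false: for $V = \mathcal{J}_{0,6}\oplus\mathcal{J}_{0,2}$ and the indecomposable $L$ of Assertion~\ref{A:NonDecomp}, one has $\operatorname{height}(L) = 2 < 3 = n_1$, $U = \operatorname{Im}P^2$, and $L/U$ is an ordinary Lagrangian plane in $U^{\perp}/U \cong \mathcal{J}_{0,2}\oplus\mathcal{J}_{0,2}$, hence semisimple, yet $L$ itself is indecomposable. So the markedness of $L$ must enter the lift-back; your argument drops it after step (b), which is where the proof breaks.

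Your auxiliary claim in Case~1 is also not established. You write ``an extending chain of an $L_2$-chain,'' but the marked Jordan basis of $L$ supplied by the hypothesis has no reason to respect the decomposition $L = L_1 \oplus L_2$ coming from Theorem~\ref{T:ExtractMaxBlockGen}; a Jordan basis of $L_2$ need not consist of chains appearing in that marked basis, so the projection argument does not apply to them. The paper sidesteps both difficulties precisely by never leaving the marked basis: the block it extracts is built around a chain that is already one of the chains of that basis, and the condition $\operatorname{Im}P^h \subset L \subset \operatorname{Ker}P^h$ (which is automatic from $\operatorname{height}(L) = h$ and $L = L^{\perp}$, and forces $h \ge n/2$) lets the reduction be performed with that same chain as the generator of the split-off piece.
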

\begin{proof}[Proof of Theorem~\ref{T:Marked}] It is evident that semisimple subspaces are marked. To prove the converse,  it suffices to extract one (skew-symmetric) Jordan block. The proof is in several steps.

\begin{enumerate}
    \item \textit{Without loss of generality, $P$ is nilpotent.} Both semisimple and marked subspaces admit an eigendecomposition.

    \item \textit{Extend a Jordan basis of  $L$ to a Jordan basis of $V$.} Let $ e_{n-h+1},\dots, e_n$ be the longest Jordan chain within the subspace $L$, assuming that this chain is embedded within a Jordan chain $e_1, \dots, e_n$ in the space V. 

\item \textit{Extract one Jordan block.} As the Jordan chain $e_1,\dots, e_n$ is part of a Jordan basis, Theorem~\ref{T:VectOrbits} guarantees the existence of a decomposition \begin{equation}  (V, \mathcal{P}) = \mathcal{J}_{0, 2n} \oplus (V', \mathcal{P}'),\end{equation}such that the canonical basis of $\mathcal{J}_{0,2n}$ is $e_1,\dots, e_n, f_1,\dots, f_n$. 

\item \textit{We claim that $L$ also decomposes:} \begin{equation} \label{Eq:DecomLSemiMark} L = L_1 \oplus L_2, \qquad L_1 = L \cap \mathcal{J}_{0, 2n}, \quad L_2 = L \cap (V', \mathcal{P}'). \end{equation} Given that the Jordan chain $ e_{n-h+1},\dots, e_n$ 
  was the longest within $L$,  $\operatorname{height}(L) = h$ and, thus, \[ \operatorname{Im} P^h \subset L \subset \operatorname{Ker}P^h. \] Consequently, $L$ contains the subspace \[ L_1 = \operatorname{Span} \left\{e_{n-h+1},\dots, e_n, f_1,\dots, f_h \right\}.\] Applying Theorem~\ref{T:BiPoissReduction}, we get decomposition~\ref{Eq:DecomLSemiMark}.

\end{enumerate}

By applying a similar decomposition process, we can express $L$ as a direct sum of simple bi-Lagrangian subspaces. Thus, $L$ is semisimple. Theorem~\ref{T:Marked} is proved. \end{proof}

Next statement characterizes the precise conditions under which all bi-Lagrangian subspaces are decomposable.

\begin{theorem}[R.~Bru et al., \cite{Bru91}] Let $C$ be an n X n matrix. Then every $C$-invariant subspace is marked if and only if for every eigenvalue $\lambda_0$ of $C$ the difference between the biggest and the smallest multiplicity of $C$ corresponding to $\lambda_0$
does not exceed $1$. \end{theorem}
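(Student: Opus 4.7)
The plan is to reduce to the nilpotent case and then handle the two implications separately. Decomposing $V$ into generalized eigenspaces yields a $C$-invariant direct sum that splits any invariant subspace correspondingly and assembles Jordan bases blockwise, so both hypothesis and conclusion are preserved. I may therefore assume $C$ is nilpotent with Jordan block sizes $n_1 \geq \cdots \geq n_N$.

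For the forward direction I argue by contrapositive: suppose $n_1 \geq n_N + 2$. Writing $a = n_1$, $b = n_N$, fix Jordan chains $e_1, \ldots, e_a$ and $f_1, \ldots, f_b$ of the extreme blocks with $Ce_i = e_{i+1}$, $Cf_j = f_{j+1}$, and $Ce_a = Cf_b = 0$. Let $v = e_{a-b} + f_1$ and $W = \operatorname{Span}\{v, Cv, \ldots, C^b v\}$. A direct calculation gives $C^j v = e_{a-b+j} + f_{j+1}$ for $0 \leq j < b$ and $C^b v = e_a$, so $W$ is a cyclic $C$-invariant subspace of dimension $b+1$. Jordan chains in $V$ have length $a$ or $b$, and $b < b+1 < a$; hence $W$'s chain can only sit inside a Jordan basis of $V$ as the bottom of some length-$a$ chain, demanding $u \in V$ with $C^{a-b-1} u = v$. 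Expanding $u$ in the basis $\{e_i, f_j\}$, the $f_1$ component of $v$ forces a coefficient on $f_{b-a+2}$, but $b - a + 2 \leq 0$, so no such basis vector exists and $W$ is not marked.

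For the converse, assume all block sizes lie in $\{n-1, n\}$ and proceed by induction on $\dim V$. Let $W \subset V$ be $C$-invariant, set $h = \operatorname{height}(W)$, pick $w \in W$ of height $h$, and decompose $w = w_n + w_{n-1}$ under the splitting $V = V_n \oplus V_{n-1}$ into sums of blocks of each size. The heart of the argument is an extraction lemma: there is a $C$-invariant decomposition $V = U \oplus V'$, with $U$ a single Jordan block of size $n$ or $n-1$, such that $\operatorname{Span}\{w, Cw, \ldots, C^{h-1}w\} \subset U$ and $W = (W \cap U) \oplus (W \cap V')$. Given the lemma, the chain in $W \cap U$ is a segment of a Jordan basis of $W$ sitting inside a Jordan chain of $V$, and the induction hypothesis applied to $W \cap V' \subset V'$ (whose block sizes still lie in $\{n-1, n\}$) supplies the remaining chains.

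The main obstacle is the extraction lemma, and in particular the compatibility clause $W = (W \cap U) \oplus (W \cap V')$. If $w_{n-1} \neq 0$, I lift $w$ upward into $V_{n-1}$ by $n-1-h$ steps to obtain a top-of-chain vector $u$ generating a length-$(n-1)$ block; otherwise $w \in V_n$ and I lift similarly into $V_n$. A classical analog of Theorem~\ref{T:VectOrbits} for the centralizer of $C$ in $\operatorname{GL}(V)$ identifies $u$ with a standard basis vector after an automorphism commuting with $C$, furnishing the $C$-invariant complement $V'$. The delicate point is guaranteeing that every other element of $W$ has trivial $U$-component modulo the extracted chain: this is where the uniformity hypothesis enters, since when block sizes differ by at most one the induced maps between successive layers $C^k V / C^{k+1} V$ are rigid enough that any stray $U$-component of an element of $W$ can be absorbed into $V'$ via a basis adjustment. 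Verifying this absorption rigorously, by a secondary induction on $h$ together with bookkeeping on the filtration $\operatorname{Ker} C^k \cap C^{n-h} V$, is the technical crux.
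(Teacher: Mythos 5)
The paper does not give a proof of this theorem; it is cited as known from Bru, Rodman, and Schneider's paper, so there is no ``paper proof'' to compare your argument to. Judged on its own terms, your proposal is about half complete.

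Your forward direction (the contrapositive) is essentially correct. The construction $v = e_{a-b} + f_1$ and the observation that the $f_1$-coefficient of $C^{a-b-1}u$ must vanish because $f_{b-a+2}$ does not exist is exactly the right obstruction. One imprecision: the sentence ``Jordan chains in $V$ have length $a$ or $b$'' is only true if $V$ has precisely two blocks, and your reduction did not eliminate intermediate block sizes. If $V$ has a block of size exactly $b+1$, then a priori $W$'s chain could be a full $(b+1)$-chain, or if $V$ has a block of size $m$ with $b+1 < m < a$, then $W$'s chain could be the bottom $b+1$ of a length-$m$ chain. Both possibilities do fail, but for reasons you do not state: the bottom $C^bw'$ is proportional to $e_a$, which lies in $\operatorname{Im} C^{b+1}$ (since $a - 1 \geq b+1$), and this forbids it from being the bottom of a length-$(b+1)$ chain; and the $f_1$-coefficient obstruction works verbatim for any $m \geq b+2$, not only $m = a$. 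Either restrict to the two-block subspace $J_a \oplus J_b$ explicitly (since a single non-marked invariant subspace suffices) or supply the missing case analysis.

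The converse direction is not a proof. You reduce it to an ``extraction lemma,'' a compatible single-block decomposition $V = U \oplus V'$ with $W = (W \cap U) \oplus (W \cap V')$, but you explicitly state that verifying the absorption of stray $U$-components of $W$ ``is the technical crux,'' and you do not carry it out. The analogy with Theorem~\ref{T:VectOrbits} and the intuition about rigidity of the maps $C^kV/C^{k+1}V$ are plausible guidance, but they do not constitute an argument. This is precisely the part of Bru--Rodman--Schneider's theorem that has real content, and leaving it as a claim means the proposal establishes only one implication. If you want a complete proof, the extraction lemma must actually be proved, most likely by induction on $h = \operatorname{height}(W)$ with careful bookkeeping of the filtration $\operatorname{Ker} C^k \cap \operatorname{Im} C^\ell$, showing that the hypothesis $n_1 - n_N \leq 1$ rules out exactly the obstruction that your forward direction exhibits.
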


\subsection{Types of semisimple bi-Lagrangian subspaces} \label{SubS:TypesSemiSimple}

First, note that by Theorem~\ref{T:BiLagrKronPart} we can always ``decompose'' all Kronecker blocks (if $\left(V_i, \mathcal{P}_i\right)$ is a Kronecker block, then $L_i$ is its core). Then by Theorem~\ref{T:JordaMultEigen} $L$ decomposes into a sum of $L_{\lambda}$ corresponding to each eigenvalue $\lambda$. Thus, without loss of generality we can assume that $(V, \mathcal{P})$ is a sum Jordan blocks with eigenvalue $0$. Let us describe a canonical form for a semisimple bi-Lagrangian subspace $L \subset (V, \mathcal{P})$.

\begin{definition}
Let $(V, \mathcal{P})$ be a sum of Jordan blocks with zero eigenvalue. A semisimple bi-Lagrangian $L \subset (V, \mathcal{P})$ \textbf{has type} \begin{equation} \label{Eq:TypeDecBiLagr} H(L) = \left\{ \left( h_i, n_i\right)\right\}_{i=1, \dots, t}\end{equation} if there exists a JK decomposition \[ (V, \mathcal{P}) = \bigoplus_{i=1}^t \mathcal{J}_{0, 2n_i}, \qquad L = \bigoplus_{i} L_i, \qquad L_i = L \cap  \mathcal{J}_{0, 2n_i}\] such that $\operatorname{height}(L_i) = h_i$ for $i=1, \dots, t$. Here $h_i \geq \frac{n_i}{2}$. We always assume that $n_1 \geq n_2 \geq \dots \geq n_t$ and if $n_i = n_{i+1}$, then $h_i \geq h_{i+1}$. 
\end{definition}

For example,  the following bi-Lagrangian subspace $L \subset \mathcal{J}_{0,8} \oplus \mathcal{J}_{0, 4} \oplus  \mathcal{J}_{0, 4}$ has the type $\left\{ (3, 4), (2, 2), (1, 2) \right\}$:

\begin{equation} \label{Eq:SumBlocksDecompEx1}
 \begin{tabular}{|c|c|} 
  \cline{1-2}   &   \\
 \cline{1-2}  {\cellcolor{gray!25} }  &   \\
     \hline     {\cellcolor{gray!25} } &   \\ 
   \hline   {\cellcolor{gray!25} } & {\cellcolor{gray!25} }\\ \hline
  \end{tabular} \,
   \begin{tabular}{|c|c|} 
 \multicolumn{1}{c}{} & \multicolumn{1}{c}{}  \\
 \multicolumn{1}{c}{} & \multicolumn{1}{c}{}   \\
     \hline     {\cellcolor{gray!25} } &  \\ 
   \hline    {\cellcolor{gray!25} } &   \\ \hline
  \end{tabular} \,  
   \begin{tabular}{|c|c|} 
   \multicolumn{1}{c}{} & \multicolumn{1}{c}{} \\
   \multicolumn{1}{c}{} & \multicolumn{1}{c}{} \\
     \hline      &  \\ 
   \hline   {\cellcolor{gray!25} } &  {\cellcolor{gray!25} } \\ \hline
  \end{tabular} 
\end{equation}  
Each Jordan block $\mathcal{J}_{0, 2n}$ is visualized as a rectangle with dimensions $n\times 2$, similar to the one depicted in \eqref{Eq:OneJordanBlock_VectorInTable}. Here, the shaded cells within these rectangles correspond to the basis vectors $e_i, f_j$ of the Jordan blocks that together span the semisimple bi-Lagrangian subspace $L$.

Sometimes it will be convenient to group some equal pairs $(h_i, n_i)$ together.  Let  $(V, \mathcal{P})$ be a sum of $l_i$ Jordan
$2n_i \times 2n_i$ blocks with eigenvalue $\lambda = 0$, where $i=1, \dots, t$ and $n_1
\geq n_2 \geq \dots \geq n_t$, i.e. \[ \left(V, \mathcal{P} \right) = \bigoplus_{i=1}^t \left( \bigoplus_{j=1}^{l_i} \mathcal{J}_{0, 2n_i} \right). \]  Let $L \subset (V, \mathcal{P})$ be a semisimple bi-Lagrangian subspace such that each corresponding component $L_i \subset \mathcal{J}_{0, 2n_i}$ have the type $(h_i, n_i)$ for $i=1, \dots, t$. Then the type of $L$ can be formally denote it as \[ H(L) = \left\{ \underbrace{\left( h_i, n_i\right), \dots, \left( h_i, n_i\right)}_{l_i} \right\}_{i=1, \dots, t}\] or, for short, as \begin{equation} \label{Eq:TypeDecBiLagrGrouped} H(L) = \left\{ \left( h_i, n_i\right) \times l_i\right\}_{i=1, \dots, t}\end{equation} 

\begin{remark} Below it will be convenient to work with type $H(L)$ denoted by \eqref{Eq:TypeDecBiLagrGrouped}, where not all equal values are grouped together. In other words, it is possible that $(h_i, n_i) = (h_j, n_j)$ for $i \not = j$. Note that \eqref{Eq:TypeDecBiLagrGrouped} is just a convenient notation. The type $H(L)$ is a set of pairs $(h_i, n_i)$, as in \eqref{Eq:TypeDecBiLagr}, i.e. one pair for each Jordan block. \end{remark}

Using Theorem~\ref{T:BiLagr_One_Jordan_Canonical_Form} we get the following canonical form a semisimple bi-Lagrangian subspace.

\begin{assertion} \label{A:CanonDecompBiLagr} Let $L \subset (V, \mathcal{P}) = \bigoplus_{i=1}^t \left( \bigoplus_{j=1}^{l_i} \mathcal{J}_{0, 2n_i} \right)$ be a bi-Lagrangian subspace with type $H(L) = \left\{ \left( h_i, n_i\right) \times l_i\right\}_{i=1, \dots, t}$. Then there exists a standard basis $e^{ij}_k, f^{ij}_k$ for $i=1, \dots, t, j=1,\dots, l_i, k=1, \dots, n_i$ such that \begin{equation} \label{Eq:CanonBasis} L = \bigoplus_{i,j} L_{ij}, \qquad   L_{ij} = \operatorname{Span} \left( e^{ij}_{n_i -h_i + 1},\quad \dots \quad e^{ij}_{n_i}, f^{ij}_1, \quad \dots \quad, f^{ij}_{n_i - h_i} \right). \end{equation}
\end{assertion}

Now, let us prove that the type of a bi-Lagrangian subspace is uniquely defined. Recall that we described $\operatorname{Aut}(V, \mathcal{P})$-invariant subspaces in Section~\ref{S:InvSubspaces}.

\begin{lemma}  \label{L:DecomTypeDetermIntersect}
Consider a bi-Poisson space $(V, \mathcal{P}) = \bigoplus_{i=1}^N \mathcal{J}_{0, 2n_i}$. Let $L \subset (V, \mathcal{P})$ be a semisimple bi-Lagrangian subspace. Its type $H(L) = \left\{ \left( h_i, n_i\right)\right\}_{i=1, \dots, t}$ is uniquely defined by the dimensions of its intersections with $\operatorname{Aut}(V, \mathcal{P})$-invariant subspaces: \[ \dim \left( L \cap \left(\bigoplus_{i=1}^{N}  \mathcal{J}_{0, 2n_i}^{\leq k_i} \right)\right),\]  where $0 \leq k_{i} - k_{i+1} \leq n_{i} - n_{i+1}$.
\end{lemma}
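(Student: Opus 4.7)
The plan is first to make a direct computation in a canonical basis adapted to $L$, and then to recover the multiset of pairs $(h_i,n_i)$ by an induction over size classes.

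Fix a canonical basis from Assertion~\ref{A:CanonDecompBiLagr}, in which $L = \bigoplus_i L_i$ with $L_i \subset \mathcal{J}_{0,2n_i}$ spanned by $e^i_{n_i-h_i+1},\dots,e^i_{n_i}, f^i_1,\dots,f^i_{n_i-h_i}$. For an invariant subspace $U = \bigoplus_i \mathcal{J}_{0,2n_i}^{\leq k_i}$ the JK decomposition of $L$ is compatible with $U$, so a direct count of basis vectors in $L_i \cap \mathcal{J}_{0,2n_i}^{\leq k_i}$ gives
\[
\dim(L \cap U) \;=\; \sum_{i=1}^N \bigl[\min(h_i,k_i) + \min(n_i - h_i, k_i)\bigr].
\]
Since the left-hand side is manifestly basis-independent, the right-hand side depends only on the multiset $\{(h_i,n_i)\}$, not on the particular choice of canonical basis.

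To invert this data, group Jordan blocks into size classes with distinct sizes $n_1^* > \cdots > n_t^*$ and multiplicities $l_1,\dots,l_t$. The admissibility condition $0 \leq k_i - k_{i+1} \leq n_i - n_{i+1}$ forces $k_i$ to be constant, say $K_j$, on the $j$-th class, with $K_j \geq K_{j+1}$ and $K_j - K_{j+1} \leq n_j^* - n_{j+1}^*$. I would perform induction on $j = t, t-1, \dots, 1$, at each step determining the heights in the $j$-th class. For the step at $j$, set $K_{j'} = n_{j'}^*$ for $j' < j$ (so those classes contribute the saturated constant $l_{j'} n_{j'}^*$), take $K_j = K \in \{\lceil n_j^*/2\rceil + 1, \dots, n_j^*\}$, and choose $K_{j'}$ for $j' > j$ to be the minimal admissible value $\max\bigl(0, K - \sum_{s=j}^{j'-1}(n_s^* - n_{s+1}^*)\bigr)$. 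Subtracting the contribution from classes $j+1, \dots, t$ (known by the induction hypothesis) and taking the discrete difference
\[
\Delta(K) \;=\; \dim(L \cap U_K) - \dim(L \cap U_{K-1}) \;-\; (\text{known correction})
\]
yields $\Delta(K) = |\{i \in \text{class } j : h_i \geq K\}|$. The key point is that for $K > n_j^*/2$ one has $\min(n_j^* - h_i, K) = n_j^* - h_i$ for every $i$ in class $j$ (since $h_i \geq n_j^*/2$), so only the $\min(h_i,K)$ term contributes to the difference. Running $K$ downward from $n_j^*$ determines the number of heights strictly exceeding $n_j^*/2$ at every level, and the heights equal to $n_j^*/2$ (possible only if $n_j^*$ is even) are fixed by subtracting from the multiplicity $l_j$. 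This recovers $\{(h_i, n_i)\}$ inside each class, hence in total.

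The main obstacle is the combinatorial bookkeeping in the inductive step: choosing the $(K_{j'})$ sequences so that admissibility holds at every class boundary, confirming that contributions from classes $j' < j$ are genuinely constant (so they drop out of $\Delta(K)$), and tracking the linear $K$-dependence of the inductively known contribution from classes $j' > j$ precisely enough to subtract it off. Once this setup is verified, the formula for $\Delta(K)$ reads off the multiset of class-$j$ heights directly, using only the constraint $h_i \geq n_i/2$.
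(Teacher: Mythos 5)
Your opening computation matches the paper exactly: for $U = \bigoplus_i \mathcal{J}_{0,2n_i}^{\leq k_i}$ one has
\[
\dim(L \cap U) = \sum_{i=1}^N \bigl[\min(h_i,k_i) + \min(n_i - h_i, k_i)\bigr],
\]
and this is basis-independent. The gap is in the inversion scheme. At the step for class $j$ you set $K_{j'} = n_{j'}^*$ for $j' < j$ and then vary $K_j = K$ over $\lceil n_j^*/2\rceil + 1, \dots, n_j^*$. But the admissibility constraint $0 \leq k_i - k_{i+1} \leq n_i - n_{i+1}$ says that \emph{both} $k_i$ and $n_i - k_i$ are non-increasing in $i$. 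If $K_{j'} = n_{j'}^*$ for a class with the larger block size, then $n_{j'}^* - K_{j'} = 0$, and non-increase of $n_i - k_i$ forces $n_i - k_i = 0$, i.e.\ $k_i = n_i$, on every later block. So saturating the large classes pins $K_j = n_j^*$ and you cannot vary $K$ at all. Concretely, across the boundary between class $j-1$ and class $j$, you have $K_{j-1} - K_j = n_{j-1}^* - K$, and the constraint $K_{j-1} - K_j \leq n_{j-1}^* - n_j^*$ forces $K \geq n_j^*$. For $K < n_j^*$ the chosen subspace is simply not $\operatorname{Aut}(V,\mathcal{P})$-invariant, so its intersection dimension with $L$ is not part of the data the lemma lets you use, and the discrete differences $\Delta(K)$ are not available.

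The paper resolves this by going in the opposite direction: it \emph{lowers} $k$ on the first (largest) classes while keeping later classes one unit higher. For instance, the first pass uses $k_i = n_i - 1$ for $i \leq M_j$ and $k_i = n_i$ for $i > M_j$; the $k$-drop across the class-$j$/class-$(j+1)$ boundary is then $(n_j^* - 1) - n_{j+1}^* = (n_j^* - n_{j+1}^*) - 1$, which sits strictly inside the admissible interval. From $\dim L - \dim(L\cap U)$ one then reads off $\#\{i \leq M_j : h_i = n_i\}$, and varying $j$ gives the per-class count of heights equal to $n_i$. The second pass lowers all $k$'s by one more, reading off heights $\geq n_i - 1$, and so on. If you want to salvage your induction, you would need to replace the saturated choice $K_{j'} = n_{j'}^*$ with $K_{j'} = K + (n_{j'}^* - n_j^*)$ (maximal admissible slope); but then the contribution from the larger classes is no longer the constant $l_{j'} n_{j'}^*$ and has to be tracked as a $K$-dependent quantity, which essentially collapses into the paper's argument.
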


\begin{proof}[Proof of Lemma~\ref{L:DecomTypeDetermIntersect}] Note that \[ \dim \left( L \cap \left(\bigoplus_{i=1}^{N} \mathcal{J}_{0, 2n_i}^{\leq k_i} \right)\right) = \sum_{i=1}^N \min(h_i, k_i) + \min(n_i -h_i, k_i).\] Let there be $d$ distinct values of $n_i$, occurring in groups of sizes $l_1, \dots, l_d$. The number of $n_i$ in the first $j$ groups is $M_j = \sum_{s=1}^j l_j$.  First, we can determine all pairs $(n_i, n_i)$ in the type $H(L)$. Indeed, the number of pairs $(n_i, n_i)$ for the first $j$ distinct values of $n_j$ is \[ \dim L - \dim \left( L  \cap \left(\bigoplus_{i=1}^{M_j} \mathcal{J}_{0, 2n_i}^{\leq n_i -1} \right) \oplus  \bigoplus_{i=M_j+1}^{t} \mathcal{J}_{0, 2n_i}^{\leq n_i} \right).\]  On the next step we can determine all the pairs $(n_i-1, n_i)$ through the numbers \[ \dim L - \dim \left( L  \cap \left(\bigoplus_{i=M_1}^{j} \mathcal{J}_{0, 2n_i}^{\leq n_i -2}\right) \oplus  \bigoplus_{i=M_j+1}^{t} \mathcal{J}_{0, 2n_i}^{\leq n_i-1} \right).\] Then we can determine all other pairs in a similar fashion. Lemma~\ref{L:DecomTypeDetermIntersect} is proved. \end{proof}

\begin{remark} A $P$-invariant subspace $W \subset V$ is called \textbf{marked} if there is a Jordan basis of $W$ which can be extended to a Jordan basis of $V$ (see \cite{Gohberg86}). Semisimple bi-Lagrangian subspaces are marked. By \cite{Ferrer96}, marked subspaces are determined by $\dim L \cap E_d^h$, where $E_d^h = \operatorname{Ker}P^h \cap \operatorname{Im} P^d$, offering an alternative proof for Lemma~\ref{L:DecomTypeDetermIntersect}.
\end{remark}

\begin{corollary} \label{Cor:NumTypes} Consider a bi-Poisson subspace \[\left(V, \mathcal{P} \right) = \bigoplus_{i=1}^t \left( \bigoplus_{j=1}^{l_i} \mathcal{J}_{0, 2n_i} \right),\]  where $n_1 > n_2 > \dots > n_t$. The  number of different types of semisimple bi-Lagrangian subspaces $L\subset (V, \mathcal{P})$ is \[ \prod_{i=1}^T \binom{\left[ \frac{n_i}{2} \right] +  l_i}{l_i}. \]\end{corollary}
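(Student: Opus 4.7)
The plan is to reduce the count to a multiset-counting problem and then apply stars-and-bars. By Assertion~\ref{A:CanonDecompBiLagr}, any semisimple bi-Lagrangian subspace $L$ determines, for each group of $l_i$ equal Jordan blocks of size $2n_i$, an unordered collection of $l_i$ heights $h^{(i)}_j$ lying in the range $\lceil n_i/2 \rceil \leq h^{(i)}_j \leq n_i$, since each simple summand $L_{ij} \subset \mathcal{J}_{0,2n_i}$ has a height satisfying $h \geq n_i/2$ (by Theorem~\ref{T:BiLagr_One_Jordan_Canonical_Form}), and only the unordered collection matters within the group because the blocks in a group are indistinguishable. Conversely, every such unordered collection is realized by the canonical form \eqref{Eq:CanonBasis}.

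Next, I would invoke Lemma~\ref{L:DecomTypeDetermIntersect} to certify that the type $H(L)$ is a genuine invariant: distinct unordered tuples of heights produce semisimple bi-Lagrangian subspaces with distinct intersection dimensions $\dim(L \cap \bigoplus_i \mathcal{J}_{0,2n_i}^{\leq k_i})$ against the $\operatorname{Aut}(V,\mathcal{P})$-invariant subspaces, hence distinct types. Since groups corresponding to distinct block sizes $n_i$ are chosen independently (the type is a disjoint union of the contributions from each group), the total count factors as a product over $i = 1, \dots, t$.

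It remains to count, for each $i$, the number of multisets of size $l_i$ drawn from the set $\{\lceil n_i/2 \rceil, \lceil n_i/2 \rceil + 1, \dots, n_i\}$, whose cardinality equals $n_i - \lceil n_i/2 \rceil + 1 = [n_i/2] + 1$. The classical stars-and-bars formula gives
\[
\binom{([n_i/2]+1) + l_i - 1}{l_i} = \binom{[n_i/2] + l_i}{l_i}.
\]
Multiplying over $i = 1, \dots, t$ yields the claimed product. There is no real obstacle here: the statement is essentially a combinatorial corollary of the canonical form together with the invariance of the type. The only point requiring care is making sure that the grouping in $H(L) = \{(h_i,n_i)\times l_i\}$ identifies tuples that differ only by permuting heights within a group of equal $n_i$, which is exactly what forces multisets (rather than ordered tuples) to enter the count.
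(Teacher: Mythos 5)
Your proof is correct and takes essentially the same approach as the paper: for each size $n_i$ one chooses an unordered multiset of $l_i$ heights in $\{\lceil n_i/2\rceil,\dots,n_i\}$, and the count follows from stars-and-bars. The paper's own argument is just a terser version of this; your explicit appeal to Lemma~\ref{L:DecomTypeDetermIntersect} to certify that distinct multisets give distinct types is the appropriate justification and is the same fact the paper relies on implicitly.
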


\begin{proof}[Proof of Corollary~\ref{Cor:NumTypes}]
For each $i=1, \dots, t$ the $l$ numbers $h_i$ that take one of the $\left[\frac{n_i}{2}\right] + 1$ values, from $\lceil \frac{n_i}{2} \rceil$ to $n_i$. We can easily get the number of types using the stars and bars formula. Corollary~\ref{Cor:NumTypes} is proved. \end{proof}

\subsection{Dimension of orbits} \label{SubS:Dimension of orbits}

\begin{theorem} \label{T:DimDecomp}
Let $L \subset \bigoplus_{i=1}^t \left( \bigoplus_{j=1}^{l_i} \mathcal{J}_{0, 2n_i} \right)$ be a semisimple bi-Lagrangian subspace of type \[ H(L) = \left\{ (h_i, n_i) \times l_i \right\}_{i=1,\dots, t},\]  where $n_i \geq n_{i+1}$ and if $n_i = n_{i+1}$, then $h_i \geq h_{i+1}$. Let $O_L$ be the $\operatorname{Aut}\bigoplus_{i=1}^t \left( \bigoplus_{j=1}^{l_i} \mathcal{J}_{0, 2n_i} \right)$-orbit of $L$. Then its dimension is \begin{equation} \label{Eq:DimDec} \dim O_{L} = \sum_{i=1}^t \left( \frac{l_i (l_i + 1)}{2} (2h_i -n_i) + \sum_{j=i+1}^{t} l_i l_j \Delta_{ij}\right),\end{equation} where \begin{equation} \label{Eq:DeltaDimDec} \Delta_{ij} = \max\left(0, h_j - \left(n_i - h_i\right)\right) + \max\left(0, h_j - h_i, \left(n_j - h_j\right) - \left(n_i - h_i \right)\right).\end{equation}  \end{theorem}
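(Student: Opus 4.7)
My plan is to compute $\dim O_L$ via the orbit-stabilizer relation
\[
\dim O_L \;=\; \dim\operatorname{Aut}(V,\mathcal{P}) \;-\; \dim\operatorname{St}_L,
\]
where $\operatorname{St}_L \subset \operatorname{Aut}(V,\mathcal{P})$ is the stabilizer of $L$. The first term follows from the block description of $\operatorname{aut}(V,\mathcal{P})$ in Theorem~\ref{T:BiSymp_General_Jordan_Case_Mega} (a grouped version of Corollary~\ref{C:DimAutJordanCase}), so the real task is to describe the Lie algebra $\operatorname{st}_L$ of the stabilizer.

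First I would fix a standard basis of $(V,\mathcal{P})$ adapted to $L$ as in Assertion~\ref{A:CanonDecompBiLagr}, so that each $L_{ij}$ has the explicit form $\operatorname{Span}\{e^{ij}_{n_i-h_i+1},\dots,e^{ij}_{n_i},\,f^{ij}_1,\dots,f^{ij}_{n_i-h_i}\}$. By Theorem~\ref{T:BiSymp_General_Jordan_Case_Mega} every element $C \in \operatorname{aut}(V,\mathcal{P})$ is encoded by $2l_i \times 2l_j$ blocks $C^{i,j}_s$ (one for each diagonal $s = 1,\dots,n_{\max(i,j)}$) subject to the antisymmetry relation \eqref{E:Cond_on_BiSymp_Jordan_Case}. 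The invariance condition $C \cdot L \subseteq L$ translates into the following: for each pair $(i,j)$ and each shift $s$, the operator $C^{i,j}_s$ must send the ``upper half'' $\langle e^{j}_{n_j-h_j+1},\dots,e^{j}_{n_j}\rangle$ and the ``lower half'' $\langle f^{j}_1,\dots,f^{j}_{n_j-h_j}\rangle$ of $L_j$ into the union of the two analogous halves of $L_i$, taking into account a height shift of $s-1$ under the action.

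For diagonal pairs $i = j$, combining the symplectic symmetry with the invariance condition forces an upper-block-triangular $\mathfrak{sp}(2l_i)$-type structure (as in the formula \eqref{Eq:StabBiLagrGenBlock} in the proof of Theorem~\ref{T:BiLagrJord}) on exactly $2h_i - n_i$ of the diagonals and kills the remaining ones; summing the parameters contributes the term $\frac{l_i(l_i+1)}{2}(2h_i - n_i)$ to $\dim O_L$. For off-diagonal pairs $i < j$ (the antisymmetry handles the $(j,i)$ blocks), I would enumerate the shifts $s$ on which $C^{i,j}_s$ admits a nonzero upper-triangular piece (always allowed when a top vector of $L_j$ lands in a top vector of $L_i$), plus those admitting a lower-triangular piece, plus those admitting a full $l_i\times l_j$ block. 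The number of shifts contributing beyond the baseline is $\Delta_{ij}$ in \eqref{Eq:DeltaDimDec}: the first maximum counts the shifts $s$ for which the shifted image of $f^{j}_1$ lands inside the range $n_i - h_i + 1, \dots, n_i$ of $L_i$'s top, and the second counts the shifts for which the shifted image of $e^{j}_{n_j-h_j+1}$ does the same. Both resolve into the four explicit cases \eqref{Eq:FourDelta} depending on the relative order of $h_i$, $n_i - h_i$, $h_j$, $n_j - h_j$.

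The main obstacle is the bookkeeping for $\Delta_{ij}$: each of the four cases in \eqref{Eq:FourDelta} requires a separate verification that my counts of admissible diagonals for $C^{i,j}_s$ agree with the two-term maximum formula, and that the contribution of $C^{j,i}_s$ (determined by antisymmetry) does not double-count. Once this is done, summing the contributions over all $i \leq j$ yields \eqref{Eq:DimDec}. As consistency checks, the formula must reduce to Theorem~\ref{T:BiLagrJord} / \eqref{Eq:DimBLGComb} when all $h_i = n_i$ (generic case, where one computes $\Delta_{ij} = n_j$), and to formula \eqref{Eq:DimOLEqJord} in Theorem~\ref{T:BiLagrEqTypesNum} when all $n_i$ coincide; both can be verified in closed form and will serve as guideposts during the case analysis.
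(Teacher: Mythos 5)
Your proposal follows essentially the same route as the paper's proof: orbit–stabilizer, the block description of $\operatorname{aut}(V,\mathcal{P})$ from Theorem~\ref{T:BiSymp_General_Jordan_Case_Mega}, the canonical basis of Assertion~\ref{A:CanonDecompBiLagr}, and a diagonal/off-diagonal split yielding the contributions $\frac{l_i(l_i+1)}{2}(2h_i-n_i)$ and $l_i l_j \Delta_{ij}$ (the paper packages the same counting into Assertions~\ref{A:PreserDecompBiLagr}, \ref{A:AutPresLOneBlock} and \ref{A:DecomCondIJ}). One caution on your diagonal case: the invariance condition does not ``kill'' the diagonals beyond the first $2h_i-n_i$ — those $C^{i,i}_s$ remain arbitrary elements of $\mathfrak{sp}(2l_i)$, and it is only the first $2h_i-n_i$ of them that acquire the additional upper-block-triangular restriction, which is precisely what produces the stated contribution to $\dim O_L$.
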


\begin{remark} In Theorem~\ref{T:DimDecomp} there are four possible values for $\Delta_{ij}$, depending on the numbers $h_i, h_j, n_i-h_i$ and $n_j - h_j$. For $i=1, j=2$ they are given by \eqref{Eq:FourDelta}. The other variants are not possible, since $n_1 \geq n_2$ and if $n_1 =n_2$, then $h_1 \geq h_2$. Note that if $n_1 = n_2 = n$ and $h_1 \geq h_2$, then \[ \Delta_{12} = 2h_1 - n.\]  \end{remark} 

\begin{remark} Theorem~\ref{T:DimDecomp} also applies when all $l_i = 1$ (i.e., no grouping of equal blocks). In other words, if $L \subset \bigoplus_{i=1}^N\left( \mathcal{J}_{0, 2n_i} \right)$ has type $H(L) = \left\{ (h_i, n_i\right)_{i=1,\dots, N}$, where $n_i \geq n_{i+1}$ and $h_1 \geq h_2$ if $n_1 =n_2$. Then the formula~\ref{Eq:DeltaDimDec}  takes the form \begin{equation} \label{Eq:DimOSimpDelta} \dim O_L = \sum_{1 \leq i \leq j \leq N} \Delta_{ij},\end{equation} where $\Delta_{ij}$ is given by \eqref{Eq:DeltaDimDec}. Note that \eqref{Eq:DimOSimpDelta} contains summands $\Delta_{ii} = 2h_i - n_i$. \end{remark}

The proof of Theorem~\ref{T:DimDecomp} is straightforward: we calculate the stabilizer $\operatorname{St}_L$ of $L$ in the Lie algebra of bi-Poisson automorphisms, i.e. we find all  $C \in \operatorname{aut}(V, \mathcal{P})$ such that $CL \subset L$. The Lie algebra $\operatorname{aut}(V, \mathcal{P})$ is described in Theorem~\ref{T:BiSymp_General_Jordan_Case_Mega}. The stabilizer $\operatorname{St}_L$  is its linear subspace, so finding it is a relatively tedious exercise on Linear Algebra.

\subsubsection{Stabilizers of semisimple bi-Lagrangian subspaces} 

We begin by identifying all linear operators in the endomorphism algebra $\operatorname{End}(V)$ that leave the bi-Lagrangian subspace $L$ from Theorem~\ref{T:DimDecomp} invariant. The next statement is proved by direct calculation.

\begin{assertion} \label{A:PreserDecompBiLagr} Let a bi-Lagrangian subspace $L \subset (V, \mathcal{P}) = \bigoplus_{i=1}^t \left( \bigoplus_{j=1}^{l_i} \mathcal{J}_{0, 2n_i} \right)$ have type $H(L) = \left\{ \left( h_i, n_i\right) \times l_i\right\}_{i=1, \dots, t}$. If $L$ is given by \eqref{Eq:CanonBasis}, then a linear map $C \in \operatorname{End}(V)$ preserves $L$, i.e. $CL \subset L$, if and only if it has the following form. In the basis \eqref{Eq:SeverJordBlocksGroupedBasisSt} the matrix of $C$ is \[ C = \left(\begin{matrix} C_{11} & \dots & C_{1t} \\ \vdots & \ddots & \vdots \\ C_{t1} & \dots & C_{tt} \\ \end{matrix}  \right). \] Each $C_{pq}$ is a $2n_p l_p  \times 2n_q l_q$ matrix that has the form \begin{equation} \label{Eq:FormCpq} C_{pq} = \left(\begin{matrix} D_{n_p 1} & \dots & D_{n_p n_q} \\ \vdots & \ddots & \vdots \\ D_{11} & \dots & D_{1n_q} \\ \end{matrix}  \right). \end{equation} The blocks $D_{\alpha \beta}$ are $2l_p \times 2l_q$ matrices that have the following form:

\begin{enumerate}

\item If $\alpha \leq n_1 -h_1$ or $\beta\leq n_2 - h_2$, then $D_{\alpha \beta}$ are arbitrary  $2l_p \times 2l_q$ matrices;

\item If $\alpha > h_1$ and  $\beta > h_2$, then $D_{\alpha \beta} = \left( \begin{matrix} 0 & 0 \\ 0 & 0 \end{matrix} \right)$;

\item If $n_1 -h_1 < \alpha \leq h_1$ and  $\beta > h_2$, then $D_{\alpha \beta} = \left( \begin{matrix} X_{\alpha \beta} & Y_{\alpha \beta} \\ 0 & 0 \end{matrix} \right)$;

\item If $\alpha > h_1$  and $n_2 -h_2 < \beta \leq h_2$, then $D_{\alpha \beta} = \left( \begin{matrix} 0 & Y_{\alpha \beta} \\ 0 & Z_{\alpha \beta} \end{matrix} \right)$;

\item If $n_1 -h_1 < \alpha \leq h_1$ and  $n_2 -h_2 < \beta \leq h_2$, then $D_{\alpha \beta} = \left( \begin{matrix} X_{\alpha \beta} & Y_{\alpha \beta} \\ 0 & Z_{\alpha \beta} \end{matrix} \right)$.

\end{enumerate}

In each case $X_{\alpha \beta}, Y_{\alpha \beta}$ and $Z_{\alpha \beta}$ are arbitrary $l_i \times l_j$ matrices.

\end{assertion}

For example, for two blocks with $n_1 = n_2 = 3$, $l_1 = l_2 = 1$ and $h_1 = h_2 = 2$ the matrices from Assertion~\ref{A:PreserDecompBiLagr} have the following form:
\begin{equation} \label{Eq:StabLEx1} C = \left(\begin{matrix} C_{11} & C_{12} \\ C_{21} & C_{22} \end{matrix} \right), \qquad 
C_{ij} = \left(\begin{array}{cc|cc|cc} 
* & * & 0 & * & 0 & 0 \\
* & * & 0 & * & 0 & 0 \\
\hline
* & * & * & * & * & * \\
* & * & 0 & * & 0 & 0 \\
\hline
* & * & * & * & * & * \\
* & * & * & * & * & * \\
\end{array}\right).
\end{equation}
Let us demonstrate that $C_{11}$ has the required form. If $e_1, e_2, e_3, f_1, f_2, f_3$ is a standard basis of a $6\times 6$ Jordan block, then the corresponding bi-Lagrangian subspace in that block is \[\operatorname{Span}\left\{e_2, e_3, f_1\right\}.\] Then in the basis \[e_1, f_3, e_2, f_1, e_3, f_1\] the matrix $C_{11}$ has to take the form \eqref{Eq:StabLEx1}.

\begin{remark} Note that both the Lie algebra $\operatorname{aut}(V, \mathcal{P})$, described in Theorem~\ref{T:BiSymp_General_Jordan_Case_Mega}, and the stabilizer of $L$, described in Assertion~\ref{A:PreserDecompBiLagr} have the same block diagonal structure with conditions given by linear equations. Hence, it suffices to consider the cases $t=1$ and $t=2$, i.e. when all Jordan are equal, or when there are only two types of Jordan blocks. \end{remark}

\subsubsection{Dimension of orbits for bi-Lagrangian subspaces with type \texorpdfstring{$(h, n) \times l$}{(h, n) l times}} Let us consider the case, when all Jordan blocks are equal and all subspace $L_i$ have the same height.

\begin{assertion} \label{A:AutPresLOneBlock} Let $(V, \mathcal{P}) =  \bigoplus_{j=1}^{l} \mathcal{J}_{0, 2n} $ be a sum $l$ of Jordan $2n \times 2n$ blocks and let $e^i_j, f^i_j, i=1, \dots, l, j=1, \dots, l$ be a standard basis from the JK theorem. Consider the following bi-Lagrangian subspace with height $h$: \[L= \bigoplus_{i=1}^l \operatorname{Span} \left( e^i_{n-h+1}, \dots, e^i_n, \quad f^i_{n-h}, \dots, f^i_1\right). \] The element of the Lie algebra of bi-Poisson automorphisms $C \in \operatorname{aut}\left(V, A, B\right)$ that preserve $L$, i.e. $CL \subset L$, have the following form \[ C = \left(\begin{matrix} C_1 & & & \\ C_2 & C_1 & & \\ \vdots & \ddots & \ddots & \\ C_{n} & \dots &  C_2 & C_1 \end{matrix} \right),  \] where all $2l\times 2l$ matrices $C_i \in \operatorname{sp}(2l)$ and for $i \geq 2h - n$ these matrices have the form \begin{equation} \label{Eq:AutPresLOneBlockCond1} C_i = \left( \begin{matrix} X_{i} & Y_{i} \\ 0 & -X_i^T \end{matrix} \right), \qquad Y_i^T = Y_i.\end{equation} Here $X_i$ and $Y_i$ are $l\times l$ matrices. \end{assertion}

\begin{proof}[Proof of Assertion~\ref{A:AutPresLOneBlock}]  The proof is by direct calculation using Theorem~\ref{T:BiSymp_General_Jordan_Case_Mega}  and Assertion~\ref{A:PreserDecompBiLagr}. In the notations from Assertion~\ref{A:PreserDecompBiLagr} the matrices $D_{ij}$ on diagonals are equal and belong to $\operatorname{sp}(2l)$. In other words, if $i' +j' = i+j$, then $D_{i'j'} = D_{ij}$. If $i+j > n+1$, then $D_{ij} = 0$. Note that if $\alpha > h$, then $\beta \leq n-h$, therefore the only restriction is \begin{equation} \label{Eq:CondAutPrLOneBlock1} D_{\alpha \beta} = \left( \begin{matrix} X_{\alpha \beta} & Y_{\alpha \beta} \\ 0 & Z_{\alpha \beta} \end{matrix} \right), \qquad  n_1 -h_1 < \alpha \leq h_1, \qquad n_2 -h_2 < \beta \leq h_2.\end{equation} We get restrictions on the matrices $D_{\alpha \beta}$ in  the following ``triangle'': \[ \left(\begin{matrix} D_{h, n-h+1} & &  \\ \vdots & \ddots & \\  D_{n-h+1, n-h+1} & \dots  &  D_{n-h+1, h}  \end{matrix} \right). \] There are $2h - n$ different matrices in this triangle, namely, $C_1, \dots, C_{2h-n}$. They satisfy \eqref{Eq:CondAutPrLOneBlock1} and belong to $\operatorname{sp}(2l)$. Thus, they have the form~\eqref{Eq:AutPresLOneBlockCond1}. Assertion~\ref{A:AutPresLOneBlock} is proved. \end{proof}

\begin{corollary} \label{Cor:DimOLEqualHN} Under the conditions of Assertion~\ref{A:AutPresLOneBlock} the dimension of the $\operatorname{Aut}(V, \mathcal{P})$-orbit $O_L$ of $L$ is \[ \dim O_L = (2h-n)\frac{l(l+1)}{2}.\] \end{corollary}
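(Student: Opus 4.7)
The corollary is a direct dimension count using the orbit-stabilizer formula
\[
\dim O_L \;=\; \dim \operatorname{Aut}(V,\mathcal{P}) \;-\; \dim \operatorname{St}_L,
\]
where $\operatorname{St}_L \subset \operatorname{Aut}(V,\mathcal{P})$ is the stabilizer of $L$. Since $\operatorname{Aut}(V,\mathcal{P})$ acts smoothly on the Grassmannian and $O_L \approx \operatorname{Aut}(V,\mathcal{P})/\operatorname{St}_L$ is the corresponding homogeneous space, it suffices to work at the level of Lie algebras and compute $\dim \operatorname{aut}(V,\mathcal{P}) - \dim \operatorname{st}_L$, where $\operatorname{st}_L = \{C \in \operatorname{aut}(V,\mathcal{P}) : CL \subset L\}$.

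The first term is given by Corollary~\ref{C:DimAutJordanCase} applied to $N=l$ equal blocks of size $2n$:
\[
\dim \operatorname{aut}(V,\mathcal{P}) = \sum_{j=1}^{l}(4j-1)\,n = n\bigl(2l(l+1) - l\bigr) = nl(2l+1).
\]
For the second term, Assertion~\ref{A:AutPresLOneBlock} gives an explicit description of $\operatorname{st}_L$: an element is a block-Toeplitz lower triangular matrix with entries $C_1,\dots,C_n \in \operatorname{sp}(2l)$, where the first $2h-n$ of them (those indexed $1 \leq m \leq 2h-n$, as corrected by the proof of that assertion) are further constrained to the form $\bigl(\begin{smallmatrix}X_m & Y_m \\ 0 & -X_m^{T}\end{smallmatrix}\bigr)$ with $Y_m = Y_m^{T}$. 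The remaining $2n-2h$ matrices $C_m$ range freely over $\operatorname{sp}(2l)$.

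A constrained block contributes $l^2 + \tfrac{l(l+1)}{2} = \tfrac{l(3l+1)}{2}$ parameters, while an unconstrained block in $\operatorname{sp}(2l)$ contributes $l(2l+1)$, so
\[
\dim \operatorname{st}_L \;=\; (2h-n)\,\tfrac{l(3l+1)}{2} \;+\; (2n-2h)\,l(2l+1).
\]
Subtracting from $\dim \operatorname{aut}(V,\mathcal{P}) = nl(2l+1)$, the coefficient of $l(2l+1)$ collapses to $2h-n$, giving
\[
\dim O_L = (2h-n)\Bigl[l(2l+1) - \tfrac{l(3l+1)}{2}\Bigr] = (2h-n)\,l\cdot\tfrac{(4l+2)-(3l+1)}{2} = (2h-n)\,\tfrac{l(l+1)}{2}.
\]
There is no substantive obstacle: all structural content has already been established in Assertion~\ref{A:AutPresLOneBlock} and Corollary~\ref{C:DimAutJordanCase}, and the proof is purely an arithmetic bookkeeping of stabilizer parameters. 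The only point to be careful about is the index range of the constrained $C_m$'s (matching the proof of Assertion~\ref{A:AutPresLOneBlock}, where exactly $2h-n$ blocks lie in the relevant triangle of restricted diagonals), after which the simplification to $(2h-n)\,l(l+1)/2$ is immediate.
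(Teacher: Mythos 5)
Your proof is correct and follows the same orbit-stabilizer route as the paper, computing $\dim O_L = \dim\operatorname{aut}(V,\mathcal{P}) - \dim\operatorname{st}_L$ from Corollary~\ref{C:DimAutJordanCase} and Assertion~\ref{A:AutPresLOneBlock}; the paper simply states $\dim\operatorname{St}_L$ directly, whereas you spell out the block-by-block parameter count, which is a useful expansion. You also correctly note that it is the count of constrained blocks, $2h-n$, that matters (matching the proof of Assertion~\ref{A:AutPresLOneBlock}, where the constrained matrices are $C_1,\dots,C_{2h-n}$, rather than the range ``$i\ge 2h-n$'' stated in the assertion).
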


\begin{proof}[Proof of Corollary~\ref{Cor:DimOLEqualHN}]   By Corollary~\ref{C:DimAutJordanCase} the dimension of the automorphism group is \[\dim \operatorname{Aut}\left(V, \mathcal{P}\right) = n l (2l+1) \] and by Assertion~\ref{A:AutPresLOneBlock} the dimension of stabilizer of $L$ is \[ \dim \operatorname{St}_L = n l (2l+1) - (2h-n) \frac{l(l+1)}{2}.\] The orbit dimension is $\dim O_L = \dim \operatorname{Aut}\left(V, \mathcal{P}\right) - \dim \operatorname{St}_L$. Corollary~\ref{Cor:DimOLEqualHN} is proved.  \end{proof}

\subsubsection{Conditions for blocks \texorpdfstring{$C_{ij}$}{Cij} with \texorpdfstring{$i < j$}{i<j}} This section proves that restrictions from  Theorem~\ref{T:BiSymp_General_Jordan_Case_Mega} only need to be applied to the blocks $C_{ij}$, $i \geq  j$ from Assertion~\ref{A:PreserDecompBiLagr}. The conditions for $i<j$ will automatically follow.

\begin{assertion} \label{A:DecomCondIJ} Under the conditions of Theorem~\ref{T:BiSymp_General_Jordan_Case_Mega}, assume that $C \in \operatorname{aut} (V, A, B)$ and that it satisfies the conditons of Assertion~\ref{A:PreserDecompBiLagr} for the matrices $C_{ij}$, where $i \geq j$. Then $C_{ij}$ with $i < j$ also satisfy the conditons from Assertion~\ref{A:PreserDecompBiLagr}. \end{assertion}

\begin{proof}[Proof of Assertion~\ref{A:DecomCondIJ}] The proof is by direct calculation. The blocks in $C_{ij}$ and $C_{ji}$ are linked by the condition \eqref{E:Cond_on_BiSymp_Jordan_Case}. It can be rewritten as follows: \[ C_{ji}^T = B_i C_{ij} B_j^{-1}, \qquad B_i = \left(
\begin{array}{cccc}  & & Q_{2l_i} \\ &  \udots &  \\ 
Q_{2l_i} &   &  \end{array} \right),\qquad B_j = \left(
\begin{array}{cccc}  & & Q_{2l_j} \\ &  \udots &  \\
Q_{2l_j} &  &  \end{array} \right). \] If $C_{ij}$ has the form \eqref{Eq:FormCpq}, then \[ C_{ji}^T = \left(\begin{matrix} \hat{D}_{1n_j}  & \dots & \hat{D}_{11} \\ \vdots & \ddots & \vdots \\\hat{D}_{n_i n_j}  & \dots & \hat{D}_{n_i 1} \\ \end{matrix}  \right), \qquad \hat{D}_{\alpha \beta} = Q_{2l_i} D_{\alpha \beta} Q_{2l_j}. \] If $D_{\alpha \beta} =\left(\begin{matrix} X & Y \\ Z & W \end{matrix} \right)$, then $ \hat{D}_{\alpha \beta}\left(\begin{matrix} -W & Z \\ Y & -X \end{matrix} \right)$. Now, it is easy to check that all $C_{ji}$ with $j<i$ satisfy conditions of  Assertion~\ref{A:PreserDecompBiLagr}. Assertion~\ref{A:DecomCondIJ} is proved.
\end{proof}

\subsubsection{Proof of Theorem~\ref{T:DimDecomp}}

\begin{proof}[Proof of Theorem~\ref{T:DimDecomp}] The proof is by direct calculation. It suffices to describe the stabilizer of $L$ in $\operatorname{aut}(V, \mathcal{P})$, i.e. to apply conditions of Assertion~\ref{A:PreserDecompBiLagr} to the matrices from Theorem~\ref{T:BiSymp_General_Jordan_Case_Mega}. For the diagonal blocks $C_{ii}$ from Assertion~\ref{A:PreserDecompBiLagr} we did it in Assertion~\ref{A:AutPresLOneBlock}. Also in Assertion~\ref{A:DecomCondIJ} we proved that it remains only to consider  the blocks $C_{ij}$ with $i> j$. Since $C \in \operatorname{aut}(V, \mathcal{P})$, by Theorem~\ref{T:BiSymp_General_Jordan_Case_Mega} the matrix $C_{ij}$ has the form \[\left(\begin{matrix} 0 \\ Y_{ij} \end{matrix} \right), \qquad Y_{ij} = \left(\begin{matrix} C^{i,j}_1 & & & \\ C^{i, j}_2 & C^{i,j}_1 &  & \\ \vdots & \ddots & \ddots  & \\ C^{i,j}_{n_j} & \dots & \dots  & C^{i,j}_1\\ \end{matrix}  \right). \] Now consider the blocks $D_{\alpha \beta}$ for the matrix $C_{ij}$ from  Assertion~\ref{A:PreserDecompBiLagr}. We see that the elements on diagonal are equal: \[ \alpha' + \beta'  = \alpha + \beta \qquad \Rightarrow \qquad D_{\alpha'\beta'} = D_{\alpha \beta}.\] And $D_{\alpha \beta} = 0$ if $\alpha + \beta > n_j +1$. Below we assume that $\alpha + \beta \leq n_j +1$. Assertion~\ref{A:PreserDecompBiLagr} gives us the following conditions on the elements $D_{\alpha \beta}$:

\begin{enumerate}

\item If $\alpha + \beta \geq (n_i -h_i +1 ) + (h_j + 1)$, then $D_{\alpha \beta} = \left( \begin{matrix} X_{\alpha \beta} & Y_{\alpha \beta} \\ 0 & 0 \end{matrix} \right)$;

\item If $\alpha + \beta \geq (h_i +1 ) + (n_j - h_j + 1)$, then $D_{\alpha \beta} = \left( \begin{matrix} 0 & Y_{\alpha \beta} \\ 0 & Z_{\alpha \beta} \end{matrix} \right)$;

\item If $\alpha + \beta \geq (n_i -h_i +1 ) + (n_j - h_j + 1)$, then $D_{\alpha \beta} = \left( \begin{matrix} X_{\alpha \beta} & Y_{\alpha \beta} \\ 0 & Z_{\alpha \beta} \end{matrix} \right)$.

\end{enumerate}

Here, if several equalities are satisfied, then we apply all the conditions. Note that $\alpha + \beta \leq n_j +1$, and $n_i \geq n_j$ thus the case \[  \alpha + \beta \geq (h_i +1 ) + (n_j - h_j + 1), \qquad \text{ and } \qquad \alpha + \beta \geq (n_i -h_i +1 ) + (h_j + 1)\] is not possible. Similarly, we did not list zero blocks from Assertion~\ref{A:PreserDecompBiLagr}, because $h_k \geq \frac{n_k}{2}$ and the case $\alpha + \beta \geq (h_i+1) + (h_j + 1)$ is also impossible. It is easy to see that there will be 

\begin{itemize}

\item $s_1 = \max(0, (n_j - h_j) - (n_i - h_i))$ blocks $C^{ij}_{k} = \left( \begin{matrix} * & * \\ 0 & 0 \end{matrix} \right)$;

\item $s_2 = \max(0, h_j - h_i)$ blocks $C^{ij}_{k}= \left( \begin{matrix} 0 & * \\ 0 & * \end{matrix} \right)$;

\item $\max\left(0, h_j - (n_i - h_i)\right)  - s_1 - s_2$ blocks $C^{ij}_{k} = \left( \begin{matrix} * & * \\ 0 & * \end{matrix} \right)$.

\end{itemize}

Note that at least one of $s_1$ and $s_2$ is equal to $0$, since $n_i \geq n_j$ and $h_s \geq \frac{n_s}{2}$. It is easy to check that each pair of block $C_{ij}, C_{ji}$ with $i \not = j$ increases the dimension of the orbit $O_L$ by $l_i l_j \Delta_{ij}$, where $D_{ij}$ is given by \eqref{Eq:DeltaDimDec}. By Assertion~\ref{A:AutPresLOneBlock} the diagonal blocks $C_{ii}$  increases the dimension of the orbit $O_L$ by $\frac{l_i(l_i+1)}{2} (2h-n_i)$. We get the required dimension~\eqref{Eq:DimDec}. Theorem~\ref{T:DimDecomp} is proved. \end{proof}

\section{Real case} \label{S:RealCase}

There exists a natural real analog of the Jordan–Kronecker theorem (see \cite{Thompson91} or \cite{Lancaster05}).

\begin{theorem} Any two skew-symmetric bilinear forms A and B on a real finite-dimensional vector space $V$ can be reduced simultaneously to block-diagonal form; besides, each block is either
a Kronecker block or a Jordan block with eigenvalue $\lambda \in \mathbb{R} \cup \left\{\infty \right\}$ or a real Jordan block with
complex eigenvalue $\lambda = \alpha + i \beta$:
{\scriptsize \begin{equation} \label{Eq:RealJordBlock}  A_i =\left(
\begin{array}{c|c}
  0 & \begin{matrix}
   \Lambda &E&        & \\
      & \Lambda & \ddots &     \\
      &        & \ddots & E  \\
      &        &        & \Lambda   \\
    \end{matrix} \\
  \hline
  \begin{matrix}
  \minus\Lambda  &        &   & \\
  \minus E   & \minus\Lambda &     &\\
      & \ddots & \ddots &  \\
      &        & \minus E   & \minus \Lambda \\
  \end{matrix} & 0
 \end{array}
 \right)
\quad  B_i= \left(
\begin{array}{c|c}
  0 & \begin{matrix}
    E & &        & \\
      & E &  &     \\
      &        & \ddots &   \\
      &        &        & E   \\
    \end{matrix} \\
  \hline
  \begin{matrix}
  \minus E  &        &   & \\
     & \minus E &     &\\
      &  & \ddots &  \\
      &        &    & \minus E  \\
  \end{matrix} & 0
 \end{array}
 \right)  \end{equation}}
Here $\Lambda$ and $E$ are the $2 \times 2$ matrices
\[ \Lambda =\left( \begin{matrix} \alpha & - \beta \\ \beta & \alpha \end{matrix} \right), \qquad 
 E = \left( \begin{matrix} 1 & 0 \\ 0 & 1 \end{matrix} \right).\]
\end{theorem}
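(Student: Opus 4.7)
\smallskip
\noindent\textbf{Proof proposal.} The plan is to deduce the real statement from the complex Jordan--Kronecker theorem (Theorem~\ref{T:Jordan-Kronecker_theorem}) by a standard Galois descent argument. Complexify the data: put $V_{\mathbb{C}} = V \otimes_{\mathbb{R}} \mathbb{C}$ and extend the forms $A, B$ $\mathbb{C}$-bilinearly to $A_{\mathbb{C}}, B_{\mathbb{C}}$. Complex conjugation $\sigma \colon V_{\mathbb{C}} \to V_{\mathbb{C}}$ is an antilinear involution with fixed point set $V$, and $\sigma$ preserves both forms in the sense that $A_{\mathbb{C}}(\sigma u, \sigma v) = \overline{A_{\mathbb{C}}(u,v)}$ and similarly for $B_{\mathbb{C}}$. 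Apply the complex JK theorem to get a JK decomposition
\[
(V_{\mathbb{C}}, \mathcal{P}_{\mathbb{C}}) \;=\; \bigoplus_{i} \mathcal{K}_{2k_i+1} \;\oplus\; \bigoplus_{j} \mathcal{J}_{\lambda_j, 2n_j}.
\]
By uniqueness of the sizes and multiplicities of blocks, $\sigma$ permutes the isotypic components: it sends each Kronecker summand to a Kronecker summand of the same size, each Jordan summand with eigenvalue $\lambda_j \in \mathbb{R}$ or $\lambda_j = \infty$ to one with the same real eigenvalue, and each Jordan summand with non-real eigenvalue $\lambda = \alpha + i\beta$ to one with eigenvalue $\bar{\lambda} = \alpha - i\beta$.

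Next I would treat the $\sigma$-stable pieces. For the Kronecker part and for the sum of Jordan summands with any fixed real eigenvalue, the isotypic component $W \subset V_{\mathbb{C}}$ is $\sigma$-invariant, so $W = W^{\sigma} \otimes_{\mathbb{R}} \mathbb{C}$, and the restrictions $A_{\mathbb{C}}\bigr|_W, B_{\mathbb{C}}\bigr|_W$ come from real forms on $W^{\sigma}$. Applying the standard argument (solve for a standard basis over $\mathbb{C}$, average with $\sigma$, and use the uniqueness of canonical forms up to real base change), one obtains a real basis of $W^{\sigma}$ in which the matrices take the real Kronecker or real Jordan shapes of Theorem~\ref{T:Jordan-Kronecker_theorem}. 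This step is essentially a routine adaptation of Thompson's proof \cite{Thompson91}; the only subtlety is the averaging, and it works verbatim because the canonical coefficients are all real.

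The genuinely new case is a conjugate pair of Jordan blocks with non-real eigenvalues. Group all $\mathcal{J}_{\lambda, 2n}$ with a fixed non-real $\lambda$ together with their conjugates $\mathcal{J}_{\bar\lambda, 2n}$ into a $\sigma$-stable subspace $W = W_{\lambda} \oplus W_{\bar\lambda}$. Pick a standard complex basis $e_1,\dots,e_n, f_1,\dots,f_n$ for one block $\mathcal{J}_{\lambda, 2n} \subset W_{\lambda}$ as in \eqref{Eq:StandJord}; then $\sigma(e_k), \sigma(f_k)$ is a standard basis of the conjugate block $\mathcal{J}_{\bar\lambda, 2n} \subset W_{\bar\lambda}$. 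The $2n$ real vectors
\[
E_k = \tfrac{1}{2}(e_k + \sigma e_k), \qquad E'_k = \tfrac{1}{2i}(e_k - \sigma e_k), \qquad F_k = \tfrac{1}{2}(f_k + \sigma f_k), \qquad F'_k = \tfrac{1}{2i}(f_k - \sigma f_k),
\]
for $k = 1,\dots,n$ form a real basis of $W^{\sigma} \cong \mathbb{R}^{4n}$. Pairing them up as $(E_k, E'_k)$ and $(F_k, F'_k)$ and computing $A$ and $B$ on these pairs (using that $A(e_i, f_j) = \lambda \delta_{ij} + \delta_{i,j-1}$, etc.), one finds exactly the $2\times 2$ matrices $\Lambda$ and $E$ appearing in \eqref{Eq:RealJordBlock}: the $\Lambda$ blocks come from the action of multiplication by $\lambda$ on the real plane $\mathrm{Span}_{\mathbb{R}}(E_k, E'_k)$, and the $E$ blocks come from the pairing identities $A(e_i, f_j) = \delta_{i, j-1}$ and $B(e_i, f_j) = \delta_{ij}$.

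The only real obstacle is book-keeping in this last step, i.e.\ verifying that the chosen real basis puts both matrices simultaneously into the claimed block form. I expect this to be a straightforward but tedious $4\times 4$ verification for $n=1$, followed by an obvious extension to general $n$ by the same formulas that produce the complex Jordan block. Once this is done, assembling all $\sigma$-stable pieces yields a real basis of $V$ in which $A$ and $B$ are simultaneously block-diagonal with the listed blocks, completing the proof.
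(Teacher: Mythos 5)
The paper does not prove this theorem at all: Section~\ref{S:RealCase} simply states it and cites \cite{Thompson91} and \cite{Lancaster05}. So the question is only whether your proposed argument is sound on its own terms.

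Your overall strategy (Galois descent from the complex JK theorem via the conjugation $\sigma$ on $V_{\mathbb{C}}$) is a legitimate route, but the argument as written contains a real gap at its central step. You claim that ``by uniqueness of the sizes and multiplicities of blocks, $\sigma$ permutes the isotypic components'' of a given complex JK decomposition. This does not follow. Uniqueness in the JK theorem is a statement about abstract invariants (sizes, types, eigenvalues), not about the summands as subspaces: the decomposition~\eqref{Eq:JKDecomp} is highly non-unique. For an arbitrarily chosen complex JK decomposition, $\sigma$ carries each summand to a summand of \emph{some other} JK decomposition, and there is no reason the two families of subspaces should match. The failure is already visible for Kronecker blocks: take $A=B=0$ on $\mathbb{R}^3$, so $V_{\mathbb{C}}$ splits into three $1$-dimensional $\mathcal{K}_1$ blocks; choosing those blocks to be $\mathbb{C}(1,i,0)$, $\mathbb{C}(0,1,i)$, $\mathbb{C}(i,0,1)$ gives a decomposition that $\sigma$ neither fixes nor permutes. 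The deeper reason is that the split $V=V_K\oplus V_J$ itself is not canonical (the paper stresses this in Section~\ref{SubS:CoreMantle}): only the core $K$ and the mantle $M=K^\perp$ are invariantly defined. To make your argument work you would need to first produce a $\sigma$-equivariant JK decomposition; the natural way is to (i) observe that the real core $K$ and mantle $M$ complexify to $K_{\mathbb{C}}$ and $M_{\mathbb{C}}$, which are $\sigma$-stable, (ii) note that on $M_{\mathbb{C}}/K_{\mathbb{C}}$ the generalized eigenspaces of the recursion operator are canonical and satisfy $\sigma(\mathcal{J}_\lambda)=\mathcal{J}_{\bar\lambda}$, and (iii) handle the Kronecker part separately, for which a direct reduction over $\mathbb{R}$ (as in Thompson's or Gantmacher's treatment) is cleaner than descent. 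Only after that is the permutation claim justified, and only on the canonical pieces. This missing step is the actual content of the theorem, not bookkeeping.

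Two minor remarks. First, in the conjugate-pair computation, the basis $(E_k,E'_k,F_k,F'_k)$ you write down does not yield the blocks $\Lambda$ and $E$ of~\eqref{Eq:RealJordBlock} without a sign flip (e.g.\ $F'_k\mapsto -F'_k$) and an overall rescaling: as written, the $B$-block comes out as $\tfrac12\operatorname{diag}(1,-1)$ rather than $E$, and the $A$-block comes out as $\tfrac12\bigl(\begin{smallmatrix}\alpha&\beta\\ \beta&-\alpha\end{smallmatrix}\bigr)$ rather than $\Lambda$. This is fixable, and you do flag it as ``bookkeeping'', but it is worth noting that the naive formulas are not quite right. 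Second, in the real-eigenvalue case you invoke ``averaging with $\sigma$''; this needs a small argument since averaging a basis need not preserve the property of being a standard basis, and one typically instead shows that the transition matrix between a chosen basis and its $\sigma$-image is a real automorphism of a real canonical pencil, then extracts a real standard basis from that.
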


In the real case in the Jordan-Kronecker decomposition we should "group together" subspaces corresponding to pairs of complex conjugate eigenvalues $\alpha_j \pm i \beta_j$: \begin{equation} \label{Eq:JKDecomIntroReal}
 (V, \mathcal{P}) = \bigoplus_{j=1}^{S_1}\left(\bigoplus_{k=1}^{N_j} \mathcal{J}^{\mathbb{R}}_{\lambda_j, 2n_{j,k}} \right) \oplus \bigoplus_{j=1}^{S_2}\left(\bigoplus_{k=1}^{M_j} \mathcal{J}^{\mathbb{R}}_{\alpha_j \pm i \beta_j, 4m_{j,k}} \right) \oplus  \bigoplus_{j=1}^q \mathcal{K}^{\mathbb{R}}_{2k_j+1}
\end{equation} We use superscripts to distinguish between real and complex vector spaces: $\mathbb{R}$ for real, $\mathbb{C}$ for complex. For instance, $\mathcal{J}_{\lambda, 2n}^{\mathbb{C}}$ denotes a complex Jordan block.  Next statement shows that real Jordan blocks $\mathcal{J}^{\mathbb{R}}_{\alpha \pm i \beta, 4m}$  are realifications of the complex Jordan blocks $\mathcal{J}^{\mathbb{C}}_{\alpha + i \beta, 2m}$.

\begin{lemma} \label{L:CompStrRealJord} Each real Jordan block $\mathcal{J}^{\mathbb{R}}_{\alpha \pm i \beta, 4m}$ admits a natural complex structure $J = \frac{S - \alpha}{\beta}$, where $S$ is the semisimple part of the recursion operator. Complexification w.r.t. $J$ transforms $\mathcal{J}^{\mathbb{R}}_{\alpha \pm i \beta, 4m}$ into the complex Jordan block $\mathcal{J}^{\mathbb{C}}_{\alpha + i \beta, 2m}$. \end{lemma}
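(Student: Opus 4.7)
The plan is to work in the standard basis of $\mathcal{J}^{\mathbb{R}}_{\alpha \pm i\beta, 4m}$ given by \eqref{Eq:RealJordBlock} and verify the claim by direct computation, while emphasizing that the definition $J = \beta^{-1}(S - \alpha I)$ is intrinsic (basis-independent) because the semisimple part $S$ of $P$ is canonically defined by the Jordan--Chevalley decomposition. First I would compute $P = B^{-1} A = \operatorname{diag}(J_0^{T}, J_0)$, where $J_0$ is the $2m \times 2m$ block--upper--triangular matrix with $\Lambda$ on the block diagonal and $E$ on the block superdiagonal. Writing $P = S + N$ for the Jordan--Chevalley decomposition, $S$ consists of $\Lambda$ (or $\Lambda^{T}$) on every diagonal $2\times 2$ block and zero elsewhere. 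Since
\[
\Lambda - \alpha E = \beta \begin{pmatrix} 0 & -1 \\ 1 & 0 \end{pmatrix},
\]
the operator $\beta^{-1}(S - \alpha I)$ is block--diagonal with rotations squaring to $-I_2$, so $J^{2} = -I$ and $J$ is a genuine complex structure on $V$.

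Next I would check that $J$ is compatible with the pencil. Because $P$ is self--adjoint with respect to both $A$ and $B$, so is its semisimple part $S$ (the semisimple part is a polynomial in $P$), and hence so is $J$; equivalently, $A(Ju,v) = A(u, Jv)$ and likewise for $B$. Consequently each form $\omega \in \mathcal{P}$ extends to a $\mathbb{C}$--bilinear skew--symmetric form on the complex space $(V, J)$ by
\[
\omega^{\mathbb{C}}(u,v) := \omega(u,v) - i\,\omega(Ju,v),
\]
and $\mathbb{C}$--bilinearity $\omega^{\mathbb{C}}(Ju,v) = i\,\omega^{\mathbb{C}}(u,v)$ is immediate from $J^{2} = -I$.

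Finally, to identify the resulting complex bi--Poisson space with $\mathcal{J}^{\mathbb{C}}_{\alpha+i\beta, 2m}$, I would observe that $P$ is $\mathbb{C}$--linear on $(V, J)$ (it commutes with $J$), and that on this complex space $S = \alpha I + \beta J$ acts as scalar multiplication by $\alpha + i\beta$. Thus $P^{\mathbb{C}} = (\alpha + i\beta) I + N^{\mathbb{C}}$ with $N^{\mathbb{C}}$ complex--nilpotent. To exhibit the canonical form \eqref{Eq:StandJord}, I would pick a complex basis $\tilde{e}_1,\dots,\tilde{e}_m, \tilde{f}_1,\dots,\tilde{f}_m$ of $(V,J)$ by selecting one real vector from each $2\times 2$ block of \eqref{Eq:RealJordBlock} (the partner vector becoming its $J$--image) and then compute the matrices of $A^{\mathbb{C}}$ and $B^{\mathbb{C}}$ in this basis. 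The main obstacle is exactly this last step: a notation--heavy coordinate check that the block entries match \eqref{Eq:StandJord} for a Jordan block of size $2m$ with eigenvalue $\alpha + i\beta$, possibly after rescaling the chosen representatives by a common scalar to normalize the off--diagonal entries of $B^{\mathbb{C}}$ to $1$.
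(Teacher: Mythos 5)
Your proposal is correct and is in the same coordinate-based spirit as the paper, which merely displays the matrices of $P$ and $J$ in the standard basis and leaves the verification implicit; you supply the missing details (basis-independence via the Jordan--Chevalley decomposition over $\mathbb{R}$, the block computation $(\Lambda-\alpha E)^2=-\beta^2 E$, and the self-adjointness of $J$ inherited from $P$).

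One remark that dissolves what you call the ``main obstacle'': the final ``notation-heavy coordinate check'' is unnecessary. Once you know that $J$ commutes with $P$ and is self-adjoint for both $A$ and $B$, the complexified forms $A^{\mathbb{C}}$ and $B^{\mathbb{C}}$ are well-defined $\mathbb{C}$-bilinear skew forms with $B^{\mathbb{C}}$ non-degenerate (any $u$ in its radical lies in the radical of $B$, hence $u=0$) and $(B^{\mathbb{C}})^{-1}A^{\mathbb{C}}=P$, now viewed as a $\mathbb{C}$-linear operator. On the $2m$-dimensional complex space, $P = (\alpha+i\beta)\operatorname{id} + N^{\mathbb{C}}$, and $N^{\mathbb{C}}=\operatorname{diag}(N_0^T,N_0)$ with $N_0$ of height $m$, so its Jordan form is $J_m(\alpha+i\beta)\oplus J_m(\alpha+i\beta)$. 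By the Jordan--Kronecker theorem a nondegenerate pencil is determined by the Jordan normal form of its recursion operator (block sizes occurring in pairs), so the complexified pencil must be $\mathcal{J}^{\mathbb{C}}_{\alpha+i\beta,2m}$ — no explicit basis required. Incidentally, when carrying out the block computation note that $P=\operatorname{diag}(U^T,U)$, so the upper-left half of $P$ carries $\Lambda^T$ (not $\Lambda$) on its diagonal and the upper-left half of $J$ carries $-J_2$; this does not affect the argument since $(\pm J_2)^2=-I_2$ in either case.
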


Simply speaking, for the Jordan block \eqref{Eq:RealJordBlock} the recursion operator $P$ and the complex structure $J$ are { \scriptsize \[ P = \left(
\begin{array}{c|c}
  \begin{matrix}
  \Lambda  &        &   & \\
   E   & \Lambda &     &\\
      & \ddots & \ddots &  \\
      &        &  E   &  \Lambda \\
  \end{matrix} & 0 \\
  \hline
 0 &  \begin{matrix}
   \Lambda &E&        & \\
      & \Lambda & \ddots &     \\
      &        & \ddots & E  \\
      &        &        & \Lambda   \\
    \end{matrix} 
 \end{array}
 \right), \qquad J = \left(
\begin{array}{c|c}
  \begin{matrix}
  J_2  &        &   & \\
      & J_2 &     &\\
      &  & \ddots &  \\
      &        &     &  J_2 \\
  \end{matrix} & 0 \\
  \hline
 0 &  \begin{matrix}
   J_2 &&        & \\
      & J_2 &  &     \\
      &        & \ddots &   \\
      &        &        & J_2   \\
    \end{matrix} 
 \end{array}
 \right), \]} where $J_2 = \left(\begin{matrix} 0 & -1 \\ 1 & 0 \end{matrix}  \right)$. We are ready to describe the real bi-Lagrangian Grassmanians, denoted by $\operatorname{BLG}_{\mathbb{R}}(V, \mathcal{P})$.  For a complex manifold $M$,  we use $M_{\mathbb{R}}$ to denote its realification. 

\begin{theorem} \label{T:RealBiLagr} Let $\mathcal{P}$ be a pencil of $2$-forms on a real vector space $V$ with Jordan--Kronecker decomposition \eqref{Eq:JKDecomIntroReal}. Then the bi-Lagrangian Grassmanian is isomorphic to the direct product  \begin{equation} \label{Eq:DecomRealBLG} \operatorname{BLG}_{\mathbb{R}}\left(V, \mathcal{P} \right) \approx \prod_{j=1}^{S_1} \operatorname{BLG}_{\mathbb{R}} \left( \bigoplus_{k=1}^{N_j} \mathcal{J}^{\mathbb{R}}_{\lambda_j, 2n_{j,k}} \right) \times \prod_{j=1}^{S_2} \operatorname{BLG}_{\mathbb{R}} \left(\bigoplus_{k=1}^{M_j} \mathcal{J}^{\mathbb{R}}_{\alpha_j \pm i \beta_j, 4m_{j,k}} \right).\end{equation} Moreover, for the real Jordan blocks the real bi-Lagrangian Grassmanian is isomorphic to the realification of the corresponding complex bi-Lagrangian Grassmanian: \begin{equation} \label{Eq:BiLagRealJord} \operatorname{BLG}_{\mathbb{R}} \left(\bigoplus_{k=1}^{M_j} \mathcal{J}^{\mathbb{R}}_{\alpha_j \pm i \beta_j, 4m_{j,k}} \right) \approx \left(\operatorname{BLG} \left(\bigoplus_{k=1}^{M_j} \mathcal{J}^{\mathbb{C}}_{\alpha_j + i \beta_j, 2m_{j,k}} \right)\right)_{\mathbb{R}}. \end{equation} 
\end{theorem}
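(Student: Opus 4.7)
The plan is to prove Theorem~\ref{T:RealBiLagr} in two stages: first establish the product decomposition \eqref{Eq:DecomRealBLG} by adapting Theorems~\ref{T:BiLagrKronPart} and \ref{T:JordaMultEigen} to the real setting, then identify the real bi-Lagrangian Grassmannian of the complex-eigenvalue blocks with the realification of a complex one via the complex structure supplied by Lemma~\ref{L:CompStrRealJord}.

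For the decomposition \eqref{Eq:DecomRealBLG}, the separation of the Kronecker core from the Jordan part in Theorem~\ref{T:BiLagrKronPart} relies only on Assertion~\ref{A:BiLagrMaxIsotrRegular} and Lemma~\ref{L:KLM}, both of which hold verbatim over $\mathbb{R}$; so every $L \in \operatorname{BLG}_{\mathbb{R}}(V,\mathcal{P})$ splits as $L = K \oplus L_J$ with $L_J$ bi-Lagrangian in $M/K$. For the Jordan summand I would imitate Theorem~\ref{T:JordaMultEigen}: after replacing $B$ by a regular form, assume $B$ nondegenerate and apply the real version of Lemma~\ref{L:NonGenDescBiLagr}, so that $L_J$ is $P$-invariant and $B$-Lagrangian. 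The real primary decomposition of $P$ groups each conjugate pair $\alpha_j \pm i\beta_j$ into a single real primary component $\bigoplus_k \mathcal{J}^{\mathbb{R}}_{\alpha_j \pm i\beta_j, 4m_{j,k}}$; these components are mutually $B$-orthogonal because their annihilating polynomials are coprime and $P$ is $B$-self-adjoint, and every $P$-invariant subspace splits accordingly, giving \eqref{Eq:DecomRealBLG}.

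For the identification \eqref{Eq:BiLagRealJord}, let $V' = \bigoplus_k \mathcal{J}^{\mathbb{R}}_{\alpha \pm i\beta, 4m_k}$ and set $J = \beta^{-1}(S - \alpha\operatorname{Id})$, where $S$ is the semisimple part of $P$. Since $S$ is a real polynomial in $P$, so is $J$; hence $J$ commutes with $P$, satisfies $B(Ju,v) = B(u,Jv)$, and leaves every real $P$-invariant subspace invariant. Combined with $J^2 = -\operatorname{Id}$ and the skew-symmetry of $A,B$, these properties imply that the formulas $\tilde{B}(u,v) = B(u,v) - iB(u,Jv)$ and $\tilde{A}(u,v) = A(u,v) - iA(u,Jv)$ define a pencil $\tilde{\mathcal{P}}$ of $\mathbb{C}$-bilinear skew-symmetric forms on $(V',J)$ whose recursion operator is $P$ acting $\mathbb{C}$-linearly with semisimple part acting as multiplication by $\alpha + i\beta$; this pencil therefore has JK form $\bigoplus_k \mathcal{J}^{\mathbb{C}}_{\alpha+i\beta, 2m_k}$. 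A real subspace $L \subset V'$ is then bi-Lagrangian for $\mathcal{P}$ if and only if it is $J$-invariant (automatic from $P$-invariance) and $\mathbb{C}$-bi-Lagrangian for $\tilde{\mathcal{P}}$, the latter equivalence coming from the identity $\tilde{B}|_{L\times L}=0 \iff B|_{L\times L}=0$ when $L$ is $J$-invariant, and likewise for $\tilde{A}$. This gives the bijection of underlying sets, which is an isomorphism once both sides are viewed as subvarieties of the real Grassmannian of $V'$ cut out by the same real polynomial equations.

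The main obstacle I expect is verifying the two technical compatibilities above, namely that $\tilde{A},\tilde{B}$ really are $\mathbb{C}$-bilinear on $(V',J)$ and that the resulting complex pencil has precisely one block of complex size $2m_k$ for each real block of size $4m_k$, rather than a double collection for both eigenvalues $\alpha \pm i\beta$. Both reduce to an explicit block computation using the canonical forms of $A$, $B$ and $J$ displayed in Lemma~\ref{L:CompStrRealJord}, together with the observation that on $(V',J)$ multiplication by $J$ \emph{becomes} multiplication by $i$, so the operator that reads as $\alpha \operatorname{Id} + \beta J + N$ over $\mathbb{R}$ reads as $(\alpha + i\beta)\operatorname{Id} + N$ over $\mathbb{C}$; this collapses the conjugate pair to a single complex eigenvalue and halves the block size.
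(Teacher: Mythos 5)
Your proof is correct and follows the same overall strategy as the paper: reduce to the Jordan case, split off the $P$-primary components (grouping each conjugate pair $\alpha_j\pm i\beta_j$ together), and then identify the real bi-Lagrangian Grassmannian of a conjugate-pair block with the realification of a complex one using the complex structure $J$ from Lemma~\ref{L:CompStrRealJord}. The paper states the complexification step in a single sentence — ``after complexification w.r.t.\ $J$ each bi-Lagrangian subspace remains bi-Lagrangian'' — whereas you supply the explicit formulas $\tilde{B}(u,v)=B(u,v)-iB(u,Jv)$, $\tilde{A}(u,v)=A(u,v)-iA(u,Jv)$, verify $\mathbb{C}$-bilinearity and skew-symmetry from $J$ being $B$-self-adjoint and $J^2=-\operatorname{Id}$, check that $\tilde{B}\vert_{L\times L}=0\iff B\vert_{L\times L}=0$ on a $J$-invariant $L$, and observe that on $(V',J)$ the real operator $\alpha\operatorname{Id}+\beta J+N$ reads as $(\alpha+i\beta)\operatorname{Id}+N$, collapsing the conjugate pair. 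This is the same route, worked out at a finer level of detail, and is a useful elaboration of what the paper leaves implicit; the only thing you might note in passing is that the Jordan--Chevalley decomposition over $\mathbb{R}$ (a perfect field) does give $S$ as a real polynomial in $P$, which is what makes $J$ a polynomial in $P$ and hence preserve every $P$-invariant subspace.
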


\begin{proof}[Proof of Theorem~\ref{T:RealBiLagr} ] Decomposition~\eqref{Eq:DecomRealBLG} is proved similarly to Theorems~
\ref{T:BiLagrKronPart} and \ref{T:JordaMultEigen}. By Lemma~\ref{L:CompStrRealJord} the sum $\mathcal{W}_j = \bigoplus_{k=1}^{M_j} \mathcal{J}^{\mathbb{R}}_{\alpha_j \pm i \beta_j, 4m_{j,k}}$ of Jordan blocks with the same eigenvalue $\alpha_j \pm i \beta_j$ possesses a natural complex structure $J$. Note that by Lemma~\ref{L:CompStrRealJord} $J$ is a polynomial of the recursion operator $P$ on $\mathcal{W}_j$. Each bi-Lagrangian subspace $L \subset \mathcal{W}_j$ is $P$-invariant and thus it is also $J$-invariant. Therefore, after complexification w.r.t. $J$ each bi-Lagrangian subspace remains bi-Lagrangian. This proves \eqref{Eq:BiLagRealJord}. 
Theorem~\ref{T:RealBiLagr}  is proved.  \end{proof}

All the results of the previous sections (excluding Theorem~\ref{T:JordaMultEigen} on eigenvalue decomposition) remain true for the bi-Lagrangian Grassmanians $\operatorname{BLG}_{\mathbb{R}} \left( \bigoplus_{k=1}^{N_j} \mathcal{J}^{\mathbb{R}}_{\lambda_j, 2n_{j,k}} \right)$.

\section{Question and Open Problems} \label{S:OpenProblems}

We present a collection of open questions related to the structure of bi-Lagrangian Grassmannians.  These questions serve as stepping stones for future research, guiding us towards a more comprehensive understanding of their structure and behavior.

\subsection{Automorphism orbits}

We start with problems about $\operatorname{Aut}(V,\mathcal{P})$-orbits.

\subsubsection{Number of orbits}

Section~\ref{SubS:InfOrb} shows that bi-Lagrangian Grassmanians $\operatorname{BLG}(V,\mathcal{P})$ may have an infinite number of  $\operatorname{Aut}(V,\mathcal{P})$-orbits. However, some special cases, like sums of equal Jordan blocks, have a finite number of automorphism orbits (see Theorem~\ref{T:BiLagrEqTypesNum}). This motivates the following question:

\begin{problem} \label{Prob:InfOrb} When the number of $\operatorname{Aut}(V, \mathcal{P})$-orbits of a bi-Lagrangian Grassmanian $\operatorname{BLG}(V, \mathcal{P})$ is infinite? A particularly interesting challenge is to find examples that:

\begin{enumerate}
    \item Minimize the dimension of $V$.
    \item Have a nilpotent recursion operator $P = B^{-1}A$ with the smallest possible height.
\end{enumerate} \end{problem}

A first step to tackling Problem~\ref{Prob:InfOrb} 
is analyzing low-dimensional bi-Lagrangian Grassmanians.

\begin{problem} Describe $\operatorname{BLG}(V, \mathcal{P})$ for the following cases: when $V$ is low-dimensional; when the recursion operator $P = B^{-1}A$ has small height.
\end{problem}

\begin{remark} It may be fruitful to compare the structure of bi-Lagrangian subspaces with that of invariant subspaces for nilpotent operators. The behavior of invariant subspaces is heavily dependent on the operator's height $n$, with a distinct shift in behavior at $n=6$. For $n < 6$, the number of indecomposable subspace types is finite, whereas for $n>6$, the problem becomes "wild" and defies complete classification. The case $n=6$ has been extensively explored. See e.g. \cite{Ringel08}, \cite{Ringel24}  for more details. \end{remark}

\subsubsection{Minimal orbits}
Having investigated top-dimensional orbits, we propose exploring minimal-dimensional ones, crucial for bi-integrability (see Section~\ref{SubS:BiIntProb}).

\begin{problem} \label{Prob:MinDimOrb} Classify minimal-dimensional $\operatorname{Aut}(V, \mathcal{P})$-orbits in $\operatorname{BLG}(V, \mathcal{P})$.
\end{problem}

\begin{remark} For $(V, \mathcal{P}) = \bigoplus_{i=1}^N \mathcal{J}_{0, 2n_i}$, candidate minimal-orbit bi-Lagrangian subspaces satisfy \[ U \subset L \subset U^{\perp}, \qquad U = \bigoplus_{i=1}^N \mathcal{J}^{\leq n_i /2}_{0, 2n_i}.\] Figure~\eqref{Eq:BiLagrMinOrb} visualizes such subspaces for For $\mathcal{J}_{0, 8} \oplus \mathcal{J}_{0, 6}$. The shaded area is $U$, union of shaded and marked cells is $U^{\perp}$:
 \begin{equation} \label{Eq:BiLagrMinOrb}
 \begin{tabular}{|c|c|c|c|} 
  \cline{1-2} &  & \multicolumn{1}{c}{} & \multicolumn{1}{c}{} \\
\hline    &   & &  \\ 
     \hline     {\cellcolor{gray!25} } & {\cellcolor{gray!25} }  & X & X  \\ 
   \hline   {\cellcolor{gray!25} } & {\cellcolor{gray!25} } &   {\cellcolor{gray!25} } & {\cellcolor{gray!25} }  \\ \hline
  \end{tabular} 
\end{equation}
\end{remark}

\subsubsection{Topology of orbits}

The next question concerns the topology of $\operatorname{Aut}(V,\mathcal{P})$-orbits. Theorems~\ref{T:EqualJordTopolOrb}, \ref{T:GenJordTopolMaxOrb} \ref{T:2DistJordTopolMaxOrb}, \ref{T:2DistJordTopolMaxOrbIndecTypeI}, \ref{T:Type2Special} and \ref{T:2DistJordTopolMaxOrbIndecTypeII} revealed a striking similarity: all studied orbits are $(\mathbb{K}^*)^{M_1}\times \mathbb{K}^{M_2}$-fiber bundles over homogeneous spaces. This shared structure naturally leads us to the next question:

\begin{problem} \label{Prob:AutHomog} Consider a sum of Jordan blocks \[(V, \mathcal{P}) = \bigoplus_{i=1}^t \left(\bigoplus_{j=1}^{l_i} \mathcal{J}_{0, 2n_i} \right), \qquad n_1 > \dots > n_t.\] Let $L \subset (V,\mathcal{P})$ be a bi-Lagrangian subspace. 

\begin{enumerate}

\item Does its orbit $O_L$ admit a structure of a $(\mathbb{K}^*)^{M_1}\times \mathbb{K}^{M_2}$-fibre over a homogeneous space for a product of symplectic groups:  \begin{equation} \label{Eq:HomogFibre} \pi: O_L  \xrightarrow{(\mathbb{K}^*)^{M_1}\times \mathbb{K}^{M_2}}  \left(\operatorname{Sp}(2l_1) \times \dots \times \operatorname{Sp}(2l_t)\right) / G.\end{equation} 

\item If the answer to the previous question is affirmative, can we describe these homogeneous spaces $\left(\operatorname{Sp}(2l_1) \times \operatorname{Sp}(2l_t)\right) / G$?

\end{enumerate}

\end{problem}

The first step to proving \eqref{Eq:HomogFibre} is examining the group of automorphisms.

\begin{claim} \label{Claim:Aut} Consider a sum of equal Jordan blocks $(V, \mathcal{P}) = \bigoplus_{i=1}^t \left(\bigoplus_{j=1}^{l_i} \mathcal{J}_{0, 2n} \right)$, where $n_1 > \dots > n_t$.  We claim that the group of bi-Poisson automorphisms $\operatorname{Aut}(V,\mathcal{P})$ admits a Levi decomposition\footnote{Although the Levi decomposition is guaranteed for complex automorphism groups \eqref{Eq:LeviAut} due to simple connectivity (see \cite[Theorem 3.18.13]{Varadarajan84} ), the real case requires further analysis. Here, we claim that the Levi decomposition exists even for real automorphism groups, despite the non-trivial fundamental group ($\pi_1\left(\operatorname{Sp}(2n, \mathbb{R})\right) = \mathbb{Z}$).} of the form:\begin{equation} \label{Eq:LeviAut} \operatorname{Aut}(V,\mathcal{P}) = R \rtimes S, \end{equation} where \begin{itemize}

    \item The semisimple subgroup is isomorphic to the product of symplectic groups: \begin{equation} \label{Eq:SemiSimpleAutGroup} S \approx \operatorname{Sp}(2l_1) \times \dots \times \operatorname{Sp}(2l_t).\end{equation}

    \item The radical $R$ is simply connected. This implies it is diffeomorphic to $\mathbb{K}^{\dim R}$ (see \cite[Theorem 3.18.11]{Varadarajan84}).
\end{itemize}
\end{claim}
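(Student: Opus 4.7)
The plan is to construct the Levi decomposition at the Lie algebra level and then exponentiate, leveraging the connectedness of $\operatorname{Aut}(V,\mathcal{P})$ from Theorem~\ref{T:AutConnected}. Using the explicit description in Theorem~\ref{T:BiSymp_General_Jordan_Case_Mega}, I would set
\[
\mathfrak{s} = \{C \in \operatorname{aut}(V,\mathcal{P}) : C^{i,j}_s = 0 \text{ unless } s = 1 \text{ and } i = j\},
\]
\[
\mathfrak{r} = \{C \in \operatorname{aut}(V,\mathcal{P}) : C^{i,i}_1 = 0 \text{ for all } i\}.
\]
The projection $C \mapsto (C^{1,1}_1, \dots, C^{t,t}_1)$ identifies $\mathfrak{s}$ with $\bigoplus_i \mathfrak{sp}(2l_i)$, and gives a vector-space splitting $\operatorname{aut}(V,\mathcal{P}) = \mathfrak{s} \oplus \mathfrak{r}$.

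To verify this is a Levi decomposition at the Lie algebra level, I would use the bi-filtration of $V$ by the subspaces $\operatorname{Ker} P^b \cap \operatorname{Im} P^a$, which every $C \in \operatorname{aut}(V,\mathcal{P})$ preserves because it commutes with $P$. In the Jordan--Kronecker canonical basis, the associated bi-graded space decomposes canonically as $\bigoplus_{i,k} \operatorname{gr}_{i,k} V$, with one $2l_i$-dimensional symplectic piece per pair $(i, k)$ with $1 \le k \le n_i$; the operator $P$ canonically identifies adjacent pieces along $k$. Consequently the induced action on the graded is specified by one element of $\mathfrak{sp}(2l_i)$ per $i$, producing a Lie algebra homomorphism $\pi: \operatorname{aut}(V,\mathcal{P}) \to \bigoplus_i \mathfrak{sp}(2l_i)$ whose restriction to $\mathfrak{s}$ is the above identification and whose kernel is exactly $\mathfrak{r}$. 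Hence $\mathfrak{s}$ is a subalgebra, $\mathfrak{r}$ is an ideal, and nilpotency of $\mathfrak{r}$ follows from the finite length of the bi-filtration, since elements of $\mathfrak{r}$ strictly raise the filtration degree.

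At the group level, $R := \exp(\mathfrak{r})$ is a connected, simply connected nilpotent Lie subgroup, diffeomorphic to $\mathbb{K}^{\dim \mathfrak{r}}$ via the exponential map (see \cite[Theorem 3.18.11]{Varadarajan84}), because $\mathfrak{r}$ is nilpotent. The connected Lie subgroup $S \subset \operatorname{Aut}(V,\mathcal{P})$ with Lie algebra $\mathfrak{s}$ can be realized explicitly as the subgroup of automorphisms acting ``uniformly'' on each Jordan block (those with $C^{i,j}_s = 0$ for $(i,j,s) \ne (i,i,1)$); this is a closed Lie subgroup, and the description in Theorem~\ref{T:BiSymp_General_Jordan_Case_Mega} identifies it with $\prod_i \operatorname{Sp}(2l_i)$. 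Since $\operatorname{Aut}(V,\mathcal{P})$ is connected and the Lie algebra splits as $\mathfrak{r} \rtimes \mathfrak{s}$ with $R \cap S = \{e\}$, we obtain the semidirect decomposition $\operatorname{Aut}(V,\mathcal{P}) = R \rtimes S$.

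The main hurdle will be the clean verification that $\mathfrak{r}$ is a Lie ideal: a priori, brackets $[C^{i,j}_1, C^{j,i}_1]$ of off-diagonal $s = 1$ blocks could produce contributions to the $s = 1$ diagonal and thereby escape $\mathfrak{r}$. Direct matrix computation using \eqref{E:bsp_algebra_matrix} shows this does not happen, but is tedious because $X_{ij}$ for $i \ne j$ is placed at the top or bottom of its submatrix depending on $\operatorname{sgn}(n_i - n_j)$. The bi-filtration viewpoint bypasses this: off-diagonal $s = 1$ elements raise the grading index by at least one on the associated graded, so their brackets land in higher-shift components of $\mathfrak{r}$. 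A secondary technical issue is the real case---since $\operatorname{Sp}(2l_i,\mathbb{R})$ has $\pi_1 = \mathbb{Z}$, the group $\operatorname{Aut}(V,\mathcal{P})$ is not simply connected---but the explicit realization of $S$ above gives it precisely the right structure, consistent with the claim.
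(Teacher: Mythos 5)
The statement you are proving is not actually proven in the paper: it appears as a \emph{Claim} in Section~\ref{S:OpenProblems}, immediately followed by a Remark sketching the identification of the candidate semisimple part, and then by a Problem which reads ``Prove/disprove Claim~\ref{Claim:Aut}.'' So there is no paper proof to compare against; what you have written is a proposed resolution of that open problem.

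Your strategy matches the paper's hint at the starting point — take $\mathfrak{s}$ to be the block-scalar part $(C^{1,1}_1,\dots,C^{t,t}_1)$, which is visibly $\bigoplus_i \mathfrak{sp}(2l_i)$, and let $\mathfrak{r}$ be its complement — but the part the paper leaves open is exactly what you supply: a clean reason why $\mathfrak{r}$ is an ideal, is nilpotent, and exponentiates to a closed simply connected normal complement. The bi-filtration device is the right mechanism. I checked it as follows. For a vector of definite bi-degree $(h,d)$ (height $h$, depth $d$) one has $h+d = n_i$ for exactly one $i$, so the nonzero bi-graded pieces for different $i$ never collide, and each piece has dimension $2l_i$. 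Any $C$ commuting with $P$ can only lower $h$ and raise $d$; if $C\in\mathfrak r$ the bi-degree-zero part vanishes, so $C$ strictly decreases the single integer $\phi=h-d$. This makes every element of $\mathfrak r$ a nilpotent matrix, shows $\mathfrak r$ is the kernel of the graded map $\pi$, and the multiplicativity $(CD)_0 = C_0 D_0$ of the degree-zero part shows $\pi$ is a Lie algebra homomorphism, so $\mathfrak r$ is an ideal without any case-by-case block computation. At the group level, your construction indeed gives $R\cap S=\{e\}$ (elements of $R$ have $C^{i,i}_1 = I$, elements of $S$ are block-scalar), $R$ normal, $R\cdot S$ an open subgroup, and connectedness of $\operatorname{Aut}(V,\mathcal{P})$ from Theorem~\ref{T:AutConnected} forces $R\cdot S=\operatorname{Aut}(V,\mathcal{P})$. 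This evades the usual real-case subtlety (in the Levi--Malcev theorem over $\mathbb{R}$ the intersection $R\cap S$ is only a priori finite central) precisely because $S$ is explicit. I see no gap that would break the argument.

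Two places where a written-up version should be tightened. First, ``elements of $\mathfrak r$ strictly raise the filtration degree'' is imprecise: since $C$ can only lower height and raise depth, the monotone quantity is $\phi=h-d$, and you should name it and verify that equality $\phi'=\phi$ together with $h'\le h$, $d'\ge d$ forces $(h',d')=(h,d)$ (hence is killed by the degree-zero hypothesis). Second, the phrase ``one $2l_i$-dimensional symplectic piece per pair $(i,k)$'' quietly assumes a canonical symplectic form on every graded piece; this is easiest to justify on a single piece per $i$ — the top piece $\operatorname{gr}_{(n_i,0)}$, where Assertion~\ref{A:InducedImP} already shows $A_{n_i-1}=B\circ P^{n_i-1}$ induces a symplectic form — and since $P$ intertwines adjacent pieces and $C$ preserves $B$ and $P$, the value of $\pi(C)$ on this one piece already lands in $\mathfrak{sp}(2l_i)$, which is all you need. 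Neither issue is a real obstruction; with those details filled in, your argument resolves the paper's open problem.
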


\begin{remark} Elements of $\operatorname{Aut}(\bigoplus_{i=1}^t \left(\bigoplus_{j=1}^{l_i} \mathcal{J}_{0, 2n} \right))$ have the form \eqref{E:bsp_algebra_matrix}. The semisimple subgroup is readily identifiable within this form. The diagonal elements $C^{i,i}_{1} \in \operatorname{Sp}(2l_i)$, for $i=1\dots,t$, generate the semisimple subgroup \eqref{Eq:SemiSimpleAutGroup}. All that remains to prove is that the subgroup where $C^{i,i}_{1} = I_{2l_i}$, for $i=1\dots,t$, is solvable and simply connected. \end{remark}

\begin{problem} Prove/disprove Claim~\ref{Claim:Aut}.
\end{problem}

\subsubsection{Jet spaces and group structures}

The most peculiar examples of orbits within the bi-Lagrangian Grassmannian are the generic orbit for the sum of equal Jordan blocks and the orbits associated with Type II-S indecomposable subspaces, both of which exhibit the structure of jet spaces (Theorems~\ref{T:EqualJordTopolOrb} and \ref{T:Type2Special}). This phenomenon is likely attributed to the presence of the recursion operator $P$. A comprehensive description of all orbits sharing this property would be an intriguing area of future research.

\begin{problem} What $\operatorname{Aut}(V,\mathcal{P})$-orbits of $\operatorname{BLG}(V,\mathcal{P})$ are jet spaces? \end{problem}

Notably, Type II-S orbits also possess a group structure (Theorem~\ref{T:Type2Special}). It is unclear whether this is a specific instance or indicative of a deeper underlying phenomenon. 

\begin{problem} What $\operatorname{Aut}(V,\mathcal{P})$-orbits of $\operatorname{BLG}(V,\mathcal{P})$ admit a (natural) group structure? \end{problem}

\subsection{Global Structure of bi-Lagrangian Grassmanian}

The next group of question concern global properties of $\operatorname{BLG}(V,\mathcal{P})$.

\subsubsection{Smoothness}

The question about smoothness of bi-Lagrangian Grassmanians was previously posed in \cite[Problem 13]{BolsinovIzosimomKonyaevOshemkov12},   \cite[Section 2.2.2]{Rosemann15} and \cite[Problem 5.3]{BolsinovOpen17}.

\begin{problem} Find necessary and sufficient conditions for a bi-Lagrangian Grassmannian $\operatorname{BLG}(V, \mathcal{P})$ to be a smooth algebraic variety. If $\operatorname{BLG}(V, \mathcal{P})$  is not smooth, what types of singularities can it have?   \end{problem}

\begin{remark}A bi-Lagrangian Grassmanian $\operatorname{BLG}(V,\mathcal{P})$ is probably not smooth, except when all Jordan blocks are $2\times 2$-blocks, when it is diffeomorphic to a product of Lagrangian Grassmanians: \[\operatorname{BLG}\left(\bigoplus_{j=1}^{S}\left(\bigoplus_{k=1}^{N_j} \mathcal{J}_{\lambda_j, 2} \right) \oplus  \bigoplus_{j=1}^q \mathcal{K}_{2k_j+1}\right) \approx \prod_{j=1}^S \Lambda(N_{j}).  \] \end{remark}

\subsubsection{Orbit closures and irreducible components}

Like piecing together a puzzle, so far we've analyzed the individual components of automorphism orbits. Now, we aim to see the complete image.

\begin{problem} Describe the closure $\bar{O}_{\max}$ (in the standard topology) of the highest-dimensional $\operatorname{Aut}(V,\mathcal{P})$-orbit  in $\operatorname{BLG}(V,\mathcal{P})$. Does it coincide with the set of all semisimple bi-Lagrangian subspaces (cf. Section~\ref{SubS:Exam})? \end{problem}

\begin{remark} As noted in Remark~\ref{Rem:RedDisc}, a bi-Poisson reduction map is not continuous. While it maps the top-dimensional orbit to a subset of another top-dimensional orbit, it is not guaranteed that the closure of the original orbit is mapped to the closure of its image. \end{remark}

A more in-depth exploration of the bi-Lagrangian Grassmannian as an algebraic variety is warranted.

\begin{problem} Count the number of irreducible components of $\operatorname{BLG}(V,\mathcal{P})$ (cf. Remark~\ref{Rem:ClosureOrbitsJ2}). \end{problem}

\subsubsection{Cohomology of bi-Lagrangian Grassmanian}

To date, algebraic invariants of bi-Lagrangian Grassmanians is an enigma shrouded in mystery.

\begin{problem} Calculate standard algebraic invariants of bi-Lagrangian Grasmmanian $\operatorname{BLG}(V, \mathcal{P})$:

\begin{enumerate}

\item Its fundamental group $\pi_1(\operatorname{BLG}(V, \mathcal{P}))$.

\item Its cohomology ring $H^{*}(\operatorname{BLG}(V, \mathcal{P}))$.

\end{enumerate}
  \end{problem}

The cohomology ring for Grassmanians are well-known (see e.g. \cite{BerryTilton20} and the references therein).  For Lagrangian Grassmannians, refer to \cite{Coskun13}, \cite{Kolhatkar04} or \cite{Tamkavis05} and their references.

\subsection{Decomposition of Bi-Lagrangian subspaces}

\begin{problem} Is the decomposition of a bi-Lagrangian subspace into indecomposables unique? Formally, given two decompositions  \[ (V,\mathcal{P}, L) \approx \bigoplus_{i=1}^N (V_i,\mathcal{P}_i, L_i) \approx \bigoplus_{i=1}^{N'} (V'_i,\mathcal{P}'_i, L'_i), \]  with indecomposable components, does $N = N'$ and and exist a permutation $\pi$ with \[(V_{\pi(i)},\mathcal{P}_{\pi(i)}, L_{\pi(i)}) \approx (V'_i,\mathcal{P}'_i, L'_i), \qquad i=1,\dots, N. \] 
\end{problem}

\begin{remark} Invariant subspaces of nilpotent operators form an additive category with unique decompositions (Krull-Remak-Schmidt property, see e.g. \cite{Ringel08}). In contrast, bi-Lagrangian subspaces is not an additive category, forming only a monoid under direct sums.  \end{remark}

\subsection{New methods for integrating bi-Hamiltonian systems} \label{SubS:BiIntProb}

The connection between bi-Lagrangian subspaces and bi-integrable bi-Hamiltonian systems was briefly explored in Section~\ref{SubS:Motivation}. The fundamental challenge lies in resolving the Generalized Argument Shift Conjecture~\ref{Conj:GenArgConj}. To advance the field, we suggest:

\begin{problem} Develop new integrability techniques of bi-Hamiltonian systems using $\operatorname{Aut}(V, \mathcal{P})$-invariant subspaces.\end{problem}

Consider a pencil of compatible Poisson structures $\mathcal{P} = \left\{\mathcal{A} + \lambda \mathcal{B} \right\}$ on a manifold $M$. $\operatorname{Aut}(T^{*}_{x} M, \mathcal{P}_x)$-invariant bi-Lagrangian subspaces $L_x \subset (T^*_xM, \mathcal{P}_x)$ form an integrable bi-Lagrangian distribution $\mathcal{L} \subset (T^*M, \mathcal{P})$. The objective is to describe such distributions for some interesting bi-Hamiltonian systems.

\begin{remark} Standard integrals of bi-Hamiltonian systems \[v = \mathcal{A}_{\lambda}df_{\lambda}, \qquad \mathcal{A}_{\lambda} =\mathcal{A} + \lambda \mathcal{B}, \quad \lambda \in\bar{\mathbb{K}}\] include (local) Casimir functions of the Poisson brackets $\mathcal{A}_{\lambda}$ and the eigenvalues $\lambda_j(x)$. Integrating bi-Hamiltonian systems might use
$\operatorname{Aut}$-invariant bi-Lagrangian subspaces \textit{post Casimir/eigenvalue bi-Poisson reduction} (which can be done similar to \cite[Theorem 5.9]{Kozlov23JKRealization}). Note that differentials of eigenvalues $d\lambda_j(x)$ distinguish bi-Lagrangian subspaces within some some low-dimensional orbit. Hence, an analysis of small bi-Lagrangian orbits is imperative (Problem~\ref{Prob:MinDimOrb}). \end{remark}

Of particular interest are the semidirect sums  $\operatorname{gl}(kl) + \left(\mathbb{R}^{kl}\right)^k$, where $k \geq 2, l \geq 1$. As it was shown in \cite{Vorushilov19} their JK invariants consist of $k-1$ Jordan blocks for each of  $\frac{kl(l+1)}{2}$ eigenvalues. The sizes of these blocks are \[ \underbrace{2,\dots, 2}_{k-2}, 4.\] The case $l=1$ is covered in \cite{BolsZhang}. We are interested in the case $k=1$, where there is one Jordan $4\times 4$ blocks for each eigenvalue and by Theorem~\ref{T:InvarBilagr} there is an invariant bi-Lagrangian distribution.

\begin{problem} Let $\mathfrak{g} = \mathfrak{gl}(4)+(\mathbb{R}^4)^2$. For a generic pair $x,a \in \mathfrak{g}^*$ define a pair of matrices \[ A_x = \left(c^i_{jk} x_i\right), \qquad A_a = \left(c^i_{jk} x_i\right), \] where $c^i_{jk}$ are structural constants of $\mathfrak{g}$. The invariant bi-Lagrangian distribution $\mathcal{L} \subset T^*\mathfrak{g}^*$ has the form   \[ \mathcal{L}_x = \bigcap_{\lambda_j(x) \in \sigma(P_x)} \operatorname{Im} (P_x - \lambda_j(x)), \qquad P_x = A_a^{-1}A_x. \] Can the distribution\footnote{It is integrable by \cite[Corollary 2.5]{BolsinovNijenhuis} or \cite[Theorem 5]{Kozlov17}} $\mathcal{L}$ be represented by polynomials $f_1, \dots, f_6$ on $\mathfrak{g}^*$? Are there easy-to-use expressions for the $f_i$? Does the same hold for $\operatorname{gl}(2l) + \left(\mathbb{R}^{2l}\right)^2$? \end{problem}

To delve deeper, consider Lie algebras possessing specific JK invariants from \cite{Kozlov23JKRealization}. 

\subsection{Pencils of symmetric and skew matrices}

\cite{Thompson91} offers a canonical form for pencils $\mathcal{P} = \left\{A+\lambda B\right\}$ on a real or complex vector space $V$, where $A$ and $B$ are symmetric or skew matrices: \[A^T = \varepsilon_1 A, \qquad B^T = \varepsilon_2 B, \qquad \varepsilon_i \in \left\{-1, 1\right\}.\] Let $\operatorname{Aut}(V,A,B)$ denote the set of endomorphisms $C \in \operatorname{End}(V)$ such that \[A(Cu, Cv) = A(u,v), \qquad B(Cu, Cv) = B(u,v), \qquad \forall u, v\in V.\] This paper focused on skew-symmetric matrix pairs. The cases of "symmetric + symmetric" and "symmetric + skew-symmetric" merit further investigation.

\begin{problem} Let $A$ be a symmetric form and $B$ be either symmetric or skew-symmetric forms on $V$. 

\begin{enumerate}

\item Describe the automorphism group $\operatorname{Aut}(V,A,B)$ (or its Lie algebra).

\item Analyze the action of $\operatorname{Aut}(V, A, B)$ on the set of $k$-dimensional subspaces $V$. What are generic orbits\footnote{Determining the canonical form of the pencil $\mathcal{P}\bigr|_{W}$ restricted to a generic hyperspace $W \subset V$ is a separate challenge. Knowledge of generic covectors $\alpha \in V^*$ offers a potential solution.}? Are there invariant and fixed subspaces?

\end{enumerate}

\end{problem}

Let $B$ be a symplectic form on $V$, i.e. \[ B^T = - B, \qquad \operatorname{Ker} B = 0.\] We can replace a symmetric form $A$ with a skew-adjoint operator $P = B^{-1}A$:\[B(u, Pv) = - B(Pu, v), \qquad \forall u, v \in V.\]

In \cite{Lancaster94} it was proved that all (real) maximal $P$-invariant isotropic subspaces have the same dimension. 

\begin{problem} Investigate maximal $P$-invariant isotropic subspaces of a symplectic space, when $P$ is a skew-adjoint operator (i.e. $A^T = A$ for $A = BP$). Describe generic subspaces and $\operatorname{Aut}(V, A, B)$-orbit topology. \end{problem}

The general case may be overly intricate. As a first step,  consider orbits containing stable $P$-invariant $B$-Lagrangian subspaces, as characterized in \cite{RanRodman88} and \cite{RanRodman89}. Note that our investigation is not limited to symmetric and skew-symmetric matrices. For instance, the stability of $P$-invariant $B$-Lagrangian subspaces has been studied for a more general class of matrices $P$ and $B$, as exemplified by \cite{Mehl09} and related research.

\end{document}